\documentclass[11pt,letterpaper]{amsart}
\usepackage[foot]{amsaddr}

\usepackage{mathtools}
\usepackage{amsmath}
\usepackage[T1]{fontenc}
\usepackage[latin9]{inputenc}
\usepackage{geometry}
\usepackage{wrapfig}
\usepackage{multicol}
\usepackage{graphicx}
\usepackage{soul}
\usepackage{xcolor}
\usepackage{amssymb}
\usepackage{placeins}
\usepackage{bbm}
\usepackage{cite}
\usepackage{caption}
\usepackage{enumerate}
\usepackage{afterpage}
\usepackage{enumitem}
\usepackage{bmpsize}
\usepackage{hyperref}
\usepackage{tabu}
\usepackage{enumitem}
\numberwithin{equation}{section}
\usepackage{stmaryrd}
\usepackage{tikz}
\usetikzlibrary{matrix,graphs,arrows,positioning,calc,decorations.markings,shapes.symbols}
\usetikzlibrary{calc,intersections,angles,quotes,arrows.meta}

\makeatletter
\renewcommand{\email}[2][]{%
  \ifx\emails\@empty\relax\else{\g@addto@macro\emails{,\space}}\fi%
  \@ifnotempty{#1}{\g@addto@macro\emails{\textrm{(#1)}\space}}%
  \g@addto@macro\emails{#2}%
}
\makeatother

\newtheorem{theorem}{Theorem}[section]
\newtheorem{lemma}[theorem]{Lemma}
\newtheorem{proposition}[theorem]{Proposition}

{ \theoremstyle{definition}
\newtheorem{definition}[theorem]{Definition}}
{ \theoremstyle{remark}
\newtheorem{remark}[theorem]{Remark}}

\newcommand{\ice}{\mathrm{Inter}}
\newcommand{\im}{\mathsf{i}}
\newcommand{\Real}{\operatorname{Re}\hspace{0.5mm}}
\newcommand{\Imag}{\operatorname{Im}\hspace{0.5mm}}
\newcommand{\weyl}{W^\circ}
\newcommand{\weylc}{\overline{W}}
\newcommand{\hk}{p}
\newcommand{\cev}[1]{\reflectbox{\ensuremath{\vec{\reflectbox{\ensuremath{#1}}}}}}

\newcommand{\kgeo}{K^{\mathrm{geo}}}
\newcommand{\kbm}{K^{\mathrm{BM}}}
\newcommand{\lpplen}{\mathfrak{L}^{\mathrm{\scriptscriptstyle LPP}; \scriptscriptstyle N}}
\newcommand{\lpplinen}{L^{\mathrm{\scriptscriptstyle LPP}; \scriptscriptstyle N}}
\newcommand{\lpplenc}{\mathcal{L}^{\mathrm{\scriptscriptstyle LPP}; \scriptscriptstyle N}}

\newcommand{\hsa}{\mathcal{A}^{\mathrm{hs}; \varpi}}
\newcommand{\hski}{K^{\mathrm{hs};\infty}}
\newcommand{\hsii}{I^{\mathrm{hs};\infty}}
\newcommand{\hsri}{R^{\mathrm{hs};\infty}}
\newcommand{\hsai}{\mathcal{A}^{\mathrm{hs}; \infty}}
\newcommand{\hsmi}{M^{\mathsf{S}; \mathrm{hs}; \infty}}

\newcommand{\Leb}{\mathrm{Leb}}
\newcommand{\SFt}{S_2}
\newcommand{\SFb}{S_1}
\newcommand{\GFt}{G_2}

\newcommand{\zc}{z_c}
\newcommand{\pq}{p_1}

\newcommand{\sigmap}{\sigma_2}

\newcommand{\hp}{h_2}
\newcommand{\pp}{p_2}

\title{Pinning in non-critical half-space geometric last passage percolation}
\date{\today}
\author{Sayan Das}
\author{Evgeni Dimitrov}
\author{Zongrui Yang}

\begin{document}

\begin{abstract} We study a symmetrized (half-space) version of geometric last passage percolation with a boundary parameter $c$ that interpolates between subcritical, critical, and supercritical behavior. This model gives rise to a family of interlacing random curves, or a line ensemble, which encode both the usual last passage time and its higher-rank analogues. Although these ensembles are understood in most space--time regions, their behavior near the diagonal---where the boundary effects are strongest---has remained unclear outside the critical regime.

We determine the universal scaling limits of the line ensemble in this near-diagonal region for both subcritical ($c < 1$) and supercritical ($c > 1$) phases. In the subcritical case, after appropriate centering and scaling, the entire line ensemble converges to the pinned half-space Airy line ensemble, a universal Brownian Gibbsian object recently constructed as a canonical limit for half-space models in the KPZ universality class in arXiv:2601.04546. In the supercritical case, we prove an analogous convergence together with a curve-separation phenomenon: the lower curves converge to the same pinned half-space Airy limit, while the top curve decouples and converges to Brownian motion.

These results essentially complete the asymptotic description of half-space geometric last passage percolation and provide a new rigorous instance of the pinned half-space Airy line ensemble as a universal scaling limit.
\end{abstract}

\maketitle

\tableofcontents

%
%
\section{Introduction and main results}\label{Section1}

%
%
\subsection{Half-space geometric last passage percolation}\label{Section1.1} We begin by introducing {\em symmetrized geometric last passage percolation (LPP)}. The model depends on two parameters $q \in (0,1)$ and $c \in [0, q^{-1})$. Let $W = (w_{i,j}: i,j \geq 1)$ be an array such that the collection $(w_{i,j}: 1 \leq i \leq j)$ consists of independent geometric random variables, with $w_{i,j} \sim \mathrm{Geom}(q^2)$ when $i \neq j$ and $w_{i,i} \sim \mathrm{Geom}(cq)$. We impose the symmetry constraint $w_{i,j} = w_{j,i}$ for all $i,j \geq 1$. Here, we write $X \sim \mathrm{Geom}(\alpha)$ to mean $\mathbb{P}(X = k) = \alpha^k (1-\alpha)$ for $k \in \mathbb{Z}_{\geq 0}$. In other words, the entries of $W$ are independent geometric random variables, conditioned to form a symmetric matrix. 

We visualize the weight $w_{i,j}$ as being associated with the point $(i,j)$ on the lattice $\mathbb{Z}^2$ (see the left side of Figure \ref{Fig.Grid}). An {\em up-right path} $\pi$ in $\mathbb{Z}^2$ is a (possibly empty) sequence of vertices $\pi = (v_1, \dots, v_r)$ with $v_i \in \mathbb{Z}^2$ and $v_i - v_{i-1} \in \{(0,1), (1,0)\}$. For an up-right path $\pi$ contained in $\mathbb{Z}_{\geq 1}^2$, we define its {\em weight} by
\begin{equation}\label{Eq.PathWeight}
W(\pi) = \sum_{v \in \pi} w_v.
\end{equation}
For any $(m,n) \in \mathbb{Z}_{\geq 1}^2$, the {\em last passage time} $G_1(m,n)$ is defined by
\begin{equation}\label{Eq.LPT}
G_1(m,n) = \max_{\pi} W(\pi),
\end{equation} 
where the maximum is taken over all up-right paths from $(1,1)$ to $(m,n)$. 

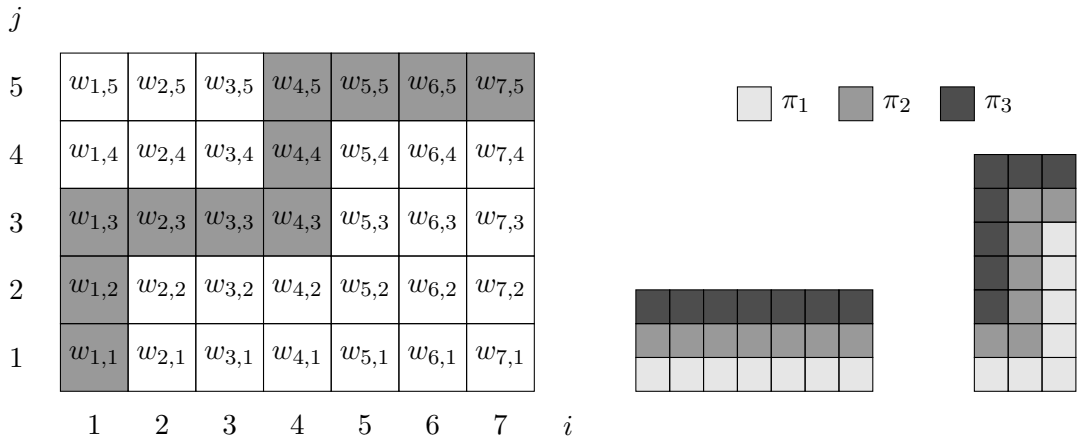
\begin{figure}[ht]
\centering
\begin{tikzpicture}[scale=0.9]

\def\m{7} 
\def\n{5} 
\def\k{3} 

\definecolor{Bg}{gray}{1.0}        
\definecolor{C1}{gray}{0.9} 
\definecolor{C2}{gray}{0.6} 
\definecolor{C3}{gray}{0.3} 

\begin{scope}[shift={(0,0)}]


  \foreach \j in {1,...,\m}{
      \node at (\j+0.5, -0.5) {\(\j\)};
  }
  \foreach \j in {1,...,\n}{
      \node at (0.35,-0.5 + \j) {\(\j\)};
  }

  \foreach \x/\y in {1/1, 1/2, 1/3, 2/3, 3/3, 4/3, 4/4, 4/5, 5/5, 6/5, 7/5} {
  \fill[C2] (\x,\y) rectangle ++(1,-1);
  
}

\foreach \i in {1,...,\n}{
  \foreach \j in {1,...,\m}{
    \draw[black] (\j,\i) rectangle ++(1,-1);
    \node at (\j+0.5, \i-0.5) {$w_{\j, \i}$};
  }
}

\node at (\m+1.5,-0.5) {$i$};
\node at (0.35,-0.5 + \n + 1) {$j$};

\end{scope}

\begin{scope}[shift={(9,0)}]

  \foreach \x/\y in {1/1, 2/1, 3/1, 4/1, 5/1, 6/1, 7/1} {
  \fill[C1] (\x/2,\y/2) rectangle ++(1/2,-1/2);
  
}

  \foreach \x/\y in {1/2, 2/2, 3/2, 4/2, 5/2, 6/2, 7/2} {
  \fill[C2] (\x/2,\y/2) rectangle ++(1/2,-1/2);
  
}

  \foreach \x/\y in {1/3, 2/3, 3/3, 4/3, 5/3, 6/3, 7/3} {
  \fill[C3] (\x/2,\y/2) rectangle ++(1/2,-1/2);
  
}

\foreach \x/\y in {1/1, 2/1, 3/1, 3/2, 3/3, 3/4, 3/5} {
  \fill[C1] (5+ \x/2,\y/2) rectangle ++(1/2,-1/2);
  
}

  \foreach \x/\y in {1/2, 2/2, 2/3, 2/4, 2/5, 2/6, 3/6} {
  \fill[C2] (5+ \x/2,\y/2) rectangle ++(1/2,-1/2);
  
}

  \foreach \x/\y in {1/3, 1/4, 1/5, 1/6, 1/7, 2/7, 3/7} {
  \fill[C3] (5+ \x/2,\y/2) rectangle ++(1/2,-1/2);
  
}

\foreach \i in {1,...,3}{
  \foreach \j in {1,...,7}{
    \draw[black] (\j/2,\i/2) rectangle ++(1/2,-1/2);
    \draw[black] (\i/2 +5, \j/2) rectangle ++(1/2,-1/2);
  }
}

\begin{scope}[shift={(1,-1)}]
  \draw[fill=C1] (1,5.5) rectangle ++(0.5,-0.5);
  \node[anchor=west] at (1.5,5.25) {$\pi_1$};
  \draw[fill=C2] (2.5,5.5) rectangle ++(0.5,-0.5);
  \node[anchor=west] at (3,5.25) {$\pi_2$};
  \draw[fill=C3] (4,5.5) rectangle ++(0.5,-0.5);
  \node[anchor=west] at (4.5, 5.25) {$\pi_3$};
\end{scope}
\end{scope}

\end{tikzpicture}
\caption{The left side depicts the array $W = (w_{i,j}: i,j \geq 1)$ and an up-right path $\pi$ (in gray) that connects $(1,1)$ to $(7,5)$. The right side depicts $k = \min(m,n)$ pairwise disjoint up-right paths, with $\pi_i$ connecting $(1,i)$ to $(m, n-k+i)$ that cover the whole $n \times m$ rectangle.} \label{Fig.Grid}
\end{figure}

The symmetry $w_{i,j} = w_{j,i}$ immediately implies that $G_1(m,n) = G_1(n,m)$ for all $m,n$. Consequently, it suffices to study the values of $G_1$ in the region $n \geq m \geq 1$ (or symmetrically $m \geq n \geq 1$). Thus, all information about the model is contained in an octant rather than the full quadrant. For this reason, the model is often referred to as {\em half-space LPP}. 

When viewed on this octant, the diagonal $\{(m,m): m \geq 1\}$ plays the role of a boundary. The parameter $c$ governs the distribution of the weights along this boundary and is therefore called a {\em boundary parameter}. Larger values of $c$ correspond to heavier weights on the diagonal, while smaller values correspond to lighter ones. The value $c = 1$ is {\em critical}, and the model exhibits qualitatively different behavior when transitioning from the {\em subcritical} regime $c \in [0,1)$ to the {\em supercritical} regime $c \in (1, q^{-1})$. Because of this phase transition, it is natural to study the asymptotic behavior of $G_1(m,n)$ when $c$ is fixed in either regime or when $c$ is tuned near criticality as $c = 1 - \varpi \alpha_q n^{-1/3}$. Here, $\alpha_q$ is a $q$-dependent constant defined differently in different works, and $\varpi \in \mathbb{R}$ is fixed.

The first asymptotic results for symmetrized LPP were obtained by Baik and Rains \cite{BR01c}. Building on their earlier studies \cite{BR01a,BR01b}, they proved that the random variable $G_1(n,n)$ has fluctuations governed by the GOE Tracy-Widom distribution $F_{\mathrm{GOE}}$ when $c = 1$, and by the GSE Tracy-Widom distribution $F_{\mathrm{GSE}}$ when $c \in [0,1)$. The precise definitions of these distributions can be found in \cite{TW05}. They also showed that if $c = 1 - \varpi \alpha_q n^{-1/3}$, then the fluctuations of $G_1(n,n)$ are described asymptotically by a family of cross-over distributions $F_{\mathrm{cross}}(\cdot; \varpi)$ interpolating between $F_{\mathrm{GOE}}$ when $\varpi = 0$, $F_{\mathrm{GSE}}$ when $\varpi \rightarrow \infty$, and the normal distribution $\Phi$ when $\varpi \rightarrow -\infty$. 

In subsequent work \cite{SI04}, Imamura and Sasamoto investigated the related {\em polynuclear growth model (PNG)} at the special parameter values $c = 0$ and $c = 1$. For any fixed $\kappa \in (0,1)$, they proved that, after suitable scaling, the joint distributions of $G_1(t,n)$ for $t$ near $\kappa n$ converge to those of the Airy process. They also analyzed the finite-dimensional distributions of $G_1(t,n)$ for $t$ near $n$ and expressed their limits as Fredholm Pfaffians, which differ in the cases $c = 0$ and $c = 1$. Later, Betea, Bouttier, Nejjar, and Vuleti{\'c} \cite{BBNV18} computed the corresponding limits of $G_1(t,n)$ when $c = 1 - \varpi \alpha_q n^{-1/3}$, obtaining a one-parameter family of Fredholm Pfaffians indexed by $\varpi$. The expressions discovered in \cite{SI04} for $c = 0$ and $c = 1$ arise as the special cases $\varpi = 0$ and $\varpi \rightarrow \infty$, respectively. The formulas in \cite{BBNV18} were also obtained earlier by Baik, Barraquand, Corwin, and Suidan in the context of LPP with exponential weights \cite{BBCS18}. \\

It turns out that $G_1(m,n)$ can be naturally embedded into a sequence $G(m,n) = (G_k(m,n): k \geq 1)$ of higher-rank last passage times, which we now describe. For $k = 1, \dots, \min(m,n)$, let
\begin{equation}\label{Eq.HRLPT}
G_k(m,n) = \max_{\pi_1, \dots, \pi_k} \left[ W(\pi_1) + \cdots + W(\pi_k) \right],
\end{equation}
where the maximum is taken over all $k$-tuples of pairwise disjoint up-right paths $(\pi_1, \dots, \pi_k)$ such that $\pi_i$ connects $(1,i)$ to $(m, n-k+i)$. Note that $G_{\min(m,n)}(m,n) = \sum_{i = 1}^m \sum_{j = 1}^n w_{i,j}$, since one can find $\min(m,n)$ disjoint paths of this type whose union covers all vertices in the $n \times m$ rectangle, see the right side of Figure \ref{Fig.Grid}. When $k \geq \min(m,n) + 1$, no such family of $k$ disjoint paths exists, and by convention we set
\begin{equation}\label{Eq.HRLPT2}
G_k(m,n) = \sum_{i = 1}^m \sum_{j = 1}^n w_{i,j} \mbox{ for } k \geq \min(m,n) + 1.
\end{equation}

Our main object of interest is not the quantities $G_{k}(m,n)$ themselves, but rather their successive differences, defined by
\begin{equation}\label{Eq.LPTLambdas}
\lambda_1(m,n) = G_1(m,n) \mbox{ and } \lambda_k(m,n) = G_k(m,n) - G_{k-1}(m,n) \mbox{ for }k\geq 2.
\end{equation}
From \cite[(2.12)]{DY25b}, it follows that $\lambda(m,n) = (\lambda_k(m,n): k \geq 1)$ is a {\em partition} (i.e. a decreasing sequence of non-negative integers that is eventually zero). In addition, if we fix $n$ and let $m$ vary over $\mathbb{Z}_{\geq 0}$, with the convention $\lambda_k(0,n) = 0$ for all $k \geq 1$, these partitions {\em interlace}, meaning that 
\begin{equation}\label{Eq.InterlaceIntro}
\lambda_1(m,n) \geq \lambda_{1}(m-1,n) \geq \lambda_{2}(m,n) \geq  \lambda_{2}(m-1,n) \geq \cdots.
\end{equation}
By linearly interpolating the points $(m, \lambda_k(m,n))$, we may view $\{\lambda_k(t,n)\}_{k \geq 1}$ as a sequence of random continuous functions in $t$ (see Figure \ref{Fig.DiscreteLE}), or equivalently as a {\em line ensemble} (see Definition \ref{Def.LineEnsembles} for a formal definition).
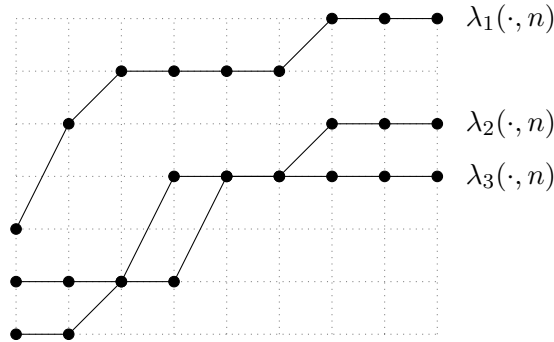
\begin{figure}[ht]
  \begin{center}
    \begin{tikzpicture}[scale=0.7]
    \begin{scope}
        \def\r{0.1}
    \draw[dotted, gray] (0,0) grid (8,6);

        \draw[fill = black] (0,2) circle [radius=\r];
        \draw[fill = black] (1,4) circle [radius=\r];
        \draw[fill = black] (2,5) circle [radius=\r];
        \draw[fill = black] (3,5) circle [radius=\r];
        \draw[fill = black] (4,5) circle [radius=\r];
        \draw[fill = black] (5,5) circle [radius=\r];
        \draw[fill = black] (6,6) circle [radius=\r];
        \draw[fill = black] (7,6) circle [radius=\r];
        \draw[fill = black] (8,6) circle [radius=\r];
        \draw[-][black] (0,2) -- (1,4);
        \draw[-][black] (1,4) -- (2,5);
        \draw[-][black] (2,5) -- (3,5);
        \draw[-][black] (3,5) -- (4,5);
        \draw[-][black] (4,5) -- (5,5);
        \draw[-][black] (5,5) -- (6,6);
        \draw[-][black] (6,6) -- (7,6);
        \draw[-][black] (7,6) -- (8,6);

        \draw[fill = black] (0,1) circle [radius=\r];
        \draw[fill = black] (1,1) circle [radius=\r];
        \draw[fill = black] (2,1) circle [radius=\r];
        \draw[fill = black] (3,3) circle [radius=\r];
        \draw[fill = black] (4,3) circle [radius=\r];
        \draw[fill = black] (5,3) circle [radius=\r];
        \draw[fill = black] (6,4) circle [radius=\r];
        \draw[fill = black] (7,4) circle [radius=\r];
        \draw[fill = black] (8,4) circle [radius=\r];
        \draw[-][black] (0,1) -- (1,1);
        \draw[-][black] (1,1) -- (2,1);
        \draw[-][black] (2,1) -- (3,3);
        \draw[-][black] (3,3) -- (4,3);
        \draw[-][black] (4,3) -- (5,3);
        \draw[-][black] (5,3) -- (6,4);
        \draw[-][black] (6,4) -- (7,4);
        \draw[-][black] (7,4) -- (8,4);

        \draw[fill = black] (0,0) circle [radius=\r];
        \draw[fill = black] (1,0) circle [radius=\r];
        \draw[fill = black] (2,1) circle [radius=\r];
        \draw[fill = black] (3,1) circle [radius=\r];
        \draw[fill = black] (4,3) circle [radius=\r];
        \draw[fill = black] (5,3) circle [radius=\r];
        \draw[fill = black] (6,3) circle [radius=\r];
        \draw[fill = black] (7,3) circle [radius=\r];
        \draw[fill = black] (8,3) circle [radius=\r];
        \draw[-][black] (0,0) -- (1,0);
        \draw[-][black] (1,0) -- (2,1);
        \draw[-][black] (2,1) -- (3,1);
        \draw[-][black] (3,1) -- (4,3);
        \draw[-][black] (4,3) -- (5,3);
        \draw[-][black] (5,3) -- (6,3);
        \draw[-][black] (6,3) -- (7,3);
        \draw[-][black] (7,3) -- (8,3);

        \draw (9.4, 6) node{$\lambda_1(\cdot,n)$};
        \draw (9.4, 4) node{$\lambda_2(\cdot,n)$};
        \draw (9.4, 3) node{$\lambda_3(\cdot,n)$};

    \end{scope}

    \end{tikzpicture}
  \end{center}
  \caption{The figure depicts the top three curves in $\{\lambda_{k}(\cdot,n)\}_{k \geq 1}$.}
  \label{Fig.DiscreteLE}
\end{figure}

The line ensembles $\{\lambda_k(t,n)\}_{k \geq 1}$ have attracted considerable attention in recent years, and we summarize the main results concerning them in the next few paragraphs, see also Figure \ref{Fig.Regions}. Exact formulas are available for these ensembles separately in the regions $t \in [0,n]$ and $t \in [n, \infty)$, and analyses in the literature have typically focused on only one of these intervals at a time. However, because the formulas in the two regions are structurally similar, once a certain behavior is established in one interval, one expects that an analogous argument yields the corresponding result in the other. \\

\begin{figure}[ht]
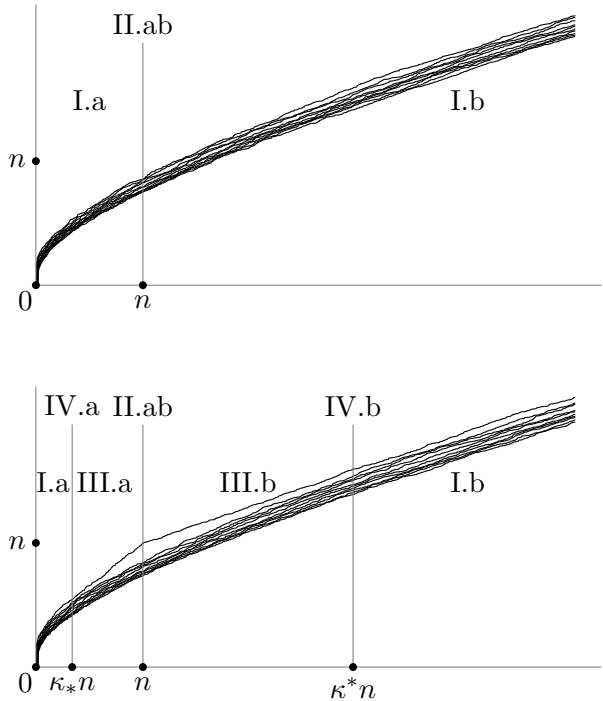

\centering
\begin{tikzpicture}[x=1in,y=1in] 

  
  \node[anchor=south west, inner sep=0, xscale=0.5, yscale=0.5] at (0,2) {\input{LPPSub.pgf}};
  \node[anchor=south west, inner sep=0, xscale=0.5, yscale=0.5] at (0,0) {\input{LPPSuper.pgf}};

  \draw[gray] (0.64,0.28) -- (0.64,1.75);
  \draw[gray] (0.64,0.28) -- (3.6,0.28);
  \draw[gray] (0.83,0.28) -- (0.83,1.55);
  \draw[gray] (1.2,0.28) -- (1.2,1.55);
  \draw[gray] (2.3,0.28) -- (2.3,1.55);

  \fill (0.64, 0.28) circle (1.5pt)
      node[xshift=-4pt, yshift=-6pt] {$0$};  
  \fill (0.83, 0.28) circle (1.5pt) node[below=0pt] {$\kappa_* n$};
  \fill (1.2, 0.28) circle (1.5pt) node[below=0pt] {$n$};
  \fill (2.3, 0.28) circle (1.5pt) node[below=0pt] {$\kappa^* n$};
  \fill (0.64, 0.93) circle (1.5pt) node[left=0pt] {$n$};

  \node at (0.73,1.25) {$\mathrm{I.a}$};
  \node at (0.83,1.65) {$\mathrm{IV.a}$};
  \node at (1,1.25) {$\mathrm{III.a}$};
  \node at (1.2,1.65) {$\mathrm{II.ab}$};
  \node at (1.75,1.25) {$\mathrm{III.b}$};
  \node at (2.3,1.65) {$\mathrm{IV.b}$};
  \node at (2.9,1.25) {$\mathrm{I.b}$};

  \draw[gray] (0.64,2.28) -- (0.64,3.75);
  \draw[gray] (0.64,2.28) -- (3.6,2.28);
  \draw[gray] (1.2,2.28) -- (1.2,3.55);

  \fill (0.64, 2.28) circle (1.5pt)
      node[xshift=-4pt, yshift=-6pt] {$0$};  
  \fill (1.2, 2.28) circle (1.5pt) node[below=0pt] {$n$};
  \fill (0.64, 2.93) circle (1.5pt) node[left=0pt] {$n$};

  \node at (0.92,3.25) {$\mathrm{I.a}$};
  \node at (1.2,3.65) {$\mathrm{II.ab}$};
  \node at (2.9,3.25) {$\mathrm{I.b}$};


\end{tikzpicture}

\caption{Phase diagram for $\{\lambda_{k}(t,n)\}_{k \geq 1}$. The top and bottom panels correspond to the subcritical $c \in [0,1)$ and supercritical $c \in (1, q^{-1})$ regimes, respectively.}
\label{Fig.Regions}
\end{figure}

{\bf \raggedleft Subcritical regime.} In \cite{Zhou25}, Zhou showed that when $c \in (0,1)$ and $\kappa \in (0,1)$, the line ensembles $\{\lambda_{k}(t,n)\}_{k \geq 1}$ with $t$ near $\kappa n$ converge (after an appropriate shift and scaling) to the {\em Airy line ensemble} introduced in \cite{CorHamA}. This corresponds to region $\mathrm{I.a}$ in the top half of Figure \ref{Fig.Regions}, and an analogous statement is expected to hold for region $\mathrm{I.b}$.\\

{\bf \raggedleft Supercritical regime.} When $c \in (1, q^{-1})$, the model develops several regions exhibiting distinct behaviors (see the bottom half of Figure \ref{Fig.Regions}). These regions are encoded by the intervals $(0, \kappa_*)$, $(\kappa_*,1)$, $(1, \kappa^*)$, and $(\kappa^*, \infty)$, corresponding to $\mathrm{I.a}$, $\mathrm{III.a}$, $\mathrm{III.b}$, and $\mathrm{I.b}$, respectively. The constants $\kappa_*$ and $\kappa^*$ are given explicitly by
\begin{equation}\label{Eq.DefKappas}
\kappa_* = \frac{(1-qc)^2}{(c-q)^2}, \quad \kappa^* = \frac{(c-q)^2}{(1-qc)^2}.
\end{equation}
In addition, one observes different behaviors at the transition points $\kappa_*$, $1$, and $\kappa^*$, corresponding to $\mathrm{IV.a}$, $\mathrm{II.ab}$, and $\mathrm{IV.b}$, respectively. We note that $\mathrm{II.a}$ and $\mathrm{II.b}$ correspond to the regions immediately to the left and right of $1$. The two sides can be analyzed separately, but their joint analysis remains out of reach due to a lack of exact formulas.

In \cite{Zhou25}, Zhou showed that when $c \in (1,q^{-1})$ and $\kappa \in (0,\kappa_*)$, corresponding to region $\mathrm{I.a}$, the fluctuations of the line ensembles $\{\lambda_{k}(t,n)\}_{k \geq 1}$ with $t$ near $\kappa n$ are described by the Airy line ensemble. A similar result is expected to hold when $\kappa \in (\kappa^*, \infty)$, corresponding to region $\mathrm{I.b}$. Moreover, when $\kappa = \kappa_*$, corresponding to $\mathrm{IV.a}$, the same paper showed that the fluctuations are governed asymptotically by the {\em Airy wanderer line ensemble} from \cite{AFM10, CorHamA, ED24a}. A similar statement is expected to hold when $\kappa = \kappa^*$, corresponding to $\mathrm{IV.b}$.  

The behavior when $\kappa \in (\kappa_*, 1)$, corresponding to $\mathrm{III.a}$, was investigated by the second author and Zhou in \cite{DZ25}, where it was shown that the ensemble undergoes a curve-separation phenomenon. Specifically, what happens on $[\kappa_* n, n]$ is that the top curve $\lambda_1(t,n)$ separates from the rest and behaves like a Brownian motion. On the other hand, the remaining curves $\{\lambda_{k}(t,n)\}_{k \geq 2}$ converge for $t$ near $\kappa n$ (after a suitable shift and scaling) to the Airy line ensemble. A similar result is expected to hold in region $\mathrm{III.b}$, that is, when $\kappa \in (1, \kappa^*)$. \\

{\bf \raggedleft Critical regime.} In \cite{DY25}, the second and third authors investigated the behavior of $\{\lambda_{k}(t,n)\}_{k \geq 1}$ for $t$ near and to the right of $n$, corresponding to region $\mathrm{II.b}$, and $c = 1 - \varpi \alpha_q n^{-1/3}$. In this case, the limiting behavior is described by
a one-parameter family (indexed by $\varpi$) of line ensembles $\hsa = \{\hsa_k\}_{k \geq 1}$, called the {\em (critical) half-space Airy line ensembles}. The latter are natural half-space analogues of the classical Airy line ensemble and are expected to arise as universal scaling limits of critical half-space models in the {\em Kardar-Parisi-Zhang (KPZ)} universality class. A similar result is expected to hold in region $\mathrm{II.a}$.\\

The aforementioned works give a fairly complete description of the line ensembles $\{\lambda_{k}(t,n)\}_{k \geq 1}$, but an important gap remains: their behavior when $t$ is near $n$ in both the subcritical and supercritical regimes, corresponding to $\mathrm{II.ab}$ in Figure \ref{Fig.Regions}. In this region, $\{\lambda_{k}(t,n)\}_{k \geq 1}$ naturally fit into the framework of {\em half-space Gibbsian line ensembles}, which was introduced by Barraquand, Corwin, and the first author in \cite{BCD24}. That work proposed the existence of a {\em pinned half-space Airy line ensemble}, expected to describe the scaling limit of various subcritical half-space models in the KPZ universality class. It was further conjectured that this ensemble is locally described by ordered, pairwise pinned Brownian motions; see \cite[Figure 2(B)]{BCD24} or the right side of Figure \ref{Fig.Simulations} for a discrete illustration of this pinning phenomenon. This property was formalized by the first author in joint work with Serio \cite{DasSerio25} and is now referred to as the {\em pinned half-space Brownian Gibbs property}. Very recently, the second and third authors, together with Serio \cite{DSY26}, formally constructed the pinned half-space Airy line ensemble, denoted $\hsai = \{\hsai_k\}_{k \geq 1}$. Specifically, they defined $\hsai$ as the weak $\varpi \rightarrow \infty$ limit of the critical half-space Airy line ensembles $\hsa$ from \cite{DY25}, and established several fundamental properties of this new object.   

The {\bf main goal of the present paper} is to show that the pinned half-space Airy line ensemble $\hsai$ governs the scaling-limit fluctuations of $\{\lambda_{k}(t,n)\}_{k \geq 1}$ for $t$ near and to the right of $n$, corresponding to region $\mathrm{II.b}$. More precisely, in the subcritical regime $c \in [0,1)$, we prove that the appropriately shifted and scaled curves $\{\lambda_{k}(t,n)\}_{k \geq 1}$ converge weakly to $\hsai$; see Theorem \ref{Thm.AiryLimit}(a). In the supercritical regime $c \in (1,q^{-1})$, we establish an analogous convergence together with a curve-separation phenomenon similar to that observed in \cite{DZ25}. In this case, the lower curves $\{\lambda_{k}(t,n)\}_{k \geq 2}$ converge to $\hsai$, while the top curve $\lambda_{1}(t,n)$ decouples and converges to a (time-reversed) Brownian motion; see Theorems \ref{Thm.AiryLimit}(b) and \ref{Thm.BrownianLimit}. These results complete the analysis of $\{\lambda_{k}(t,n)\}_{k \geq 1}$ in region $\mathrm{II.b}$ for both phases. We expect analogous behavior in region $\mathrm{II.a}$, but do not pursue this direction here.

%
%
\subsection{Main results}\label{Section1.2} For notational convenience, we define $\lpplen = \{\lpplinen_i\}_{i \geq 1}$ via
\begin{equation}\label{Eq.Deflpplen}
\lpplinen_i(t) = \lambda_i(t +N, N), \mbox{ for }i \geq 1, t \geq 0,
\end{equation}
where $\lambda_i(\cdot, N)$ are as in (\ref{Eq.LPTLambdas}), and extended to $[0,\infty)$ by linear interpolation as in Figure \ref{Fig.DiscreteLE}. In this section, we present the main results of the paper, which describe the asymptotic behavior of $\lpplen$ as $N \rightarrow \infty$ when $q \in (0,1)$, $c \in [0, q^{-1})$ and $c \neq 1$. We first introduce the limiting object, which is called the pinned half-space Airy line ensemble and denoted by $\hsai$. The precise definition is given in Proposition \ref{Prop.PHSALE}, and to state it we require a few definitions.

\begin{definition}\label{Def.S1Contours} For a fixed $z \in \mathbb{C}$ and $\varphi \in (0, \pi)$, we denote by $\mathcal{C}_{z}^{\varphi}=\{z+|s|e^{\mathrm{sgn}(s)\im\varphi}: s\in \mathbb{R}\}$ the infinite contour oriented from $z+\infty e^{-\im\varphi}$ to $z+\infty e^{\im\varphi}$. 
\end{definition}

\begin{definition}\label{Def.Measures} For a finite set $\mathsf{S} = \{s_1, \dots, s_m\} \subset \mathbb{R}$, we let $\mu_{\mathsf{S}}$ denote the counting measure on $\mathbb{R}$, defined by $\mu_{\mathsf{S}}(A) = |A \cap \mathsf{S}|$. We also let $\mathrm{Leb}$ be the usual Lebesgue measure on $\mathbb{R}$ and $\mu_{\mathsf{S}} \times \mathrm{Leb}$ the product measure on $\mathbb{R}^2$.
\end{definition}

\begin{proposition}\label{Prop.PHSALE} There exists a unique line ensemble $\hsai = \{\hsai_{i}\}_{i \geq 1}$ on $[0, \infty)$, which satisfies the following properties. Firstly, the line ensemble is ordered, meaning that almost surely
\begin{equation}\label{Eq.PHSALEOrd}
\hsai_{1}(t) \geq \hsai_{2}(t) \geq \cdots \mbox{ for all } t \in [0,\infty).
\end{equation}
In addition, if $\mathsf{S} = \{s_1, \dots, s_m\} \subset (0,\infty)$, then the random measure
\begin{equation}\label{Eq.HSAPointProcess}
\hsmi(\omega, A) = \sum_{i \geq 1} \sum_{j = 1}^m {\bf 1}\{(s_j, \hsai_i(s_j,\omega) ) \in A\}
\end{equation}
is a Pfaffian point process on $\mathbb{R}^2$ with reference measure $\mu_{\mathsf{S}} \times \mathrm{Leb}$ as in Definition \ref{Def.Measures} and correlation kernel $\hski$, given by
\begin{equation}\label{Eq.DefK}
\begin{split}
&\hski(s,x; t,y) = \begin{bmatrix}
    \hski_{11}(s,x;t,y) & \hski_{12}(s,x;t,y)\\
    \hski_{21}(s,x;t,y) & \hski_{22}(s,x;t,y) 
\end{bmatrix} \\
&= \begin{bmatrix}
    \hsii_{11}(s,x;t,y) & \hsii_{12}(s,x;t,y) + \hsri_{12}(s,x;t,y) \\
    -\hsii_{12}(t,y;s,x) - \hsri_{12}(t,y;s,x) & \hsii_{22}(s,x;t,y) + \hsri_{22}(s,x;t,y) 
\end{bmatrix},
\end{split}
\end{equation}
where the kernels $\hsii_{ij}, \hsri_{ij}$ are defined as follows. We have
\begin{equation}\label{Eq.DefII}
\begin{split}
\hsii_{11}(s,x;t,y) = &\frac{1}{(2\pi \im)^2} \int_{\mathcal{C}_{1+s}^{\pi/3}}dz \int_{\mathcal{C}_{1+t}^{\pi/3}} dw \frac{(z + s - w - t)H(z,x;w,y)}{4(z + s + w + t)(z+s)(w+ t)}, \\
\hsii_{12}(s,x;t,y) = &\frac{1}{(2\pi \im)^2} \int_{\mathcal{C}_{1+s}^{\pi/3}}dz \int_{\mathcal{C}_{1+t}^{\pi/3}} dw \frac{(z + s - w + t) H(z,x;w,y)}{2(z+ s)(z + s + w -t)}, \\
\hsii_{22}(s,x;t,y) = &\frac{1}{(2\pi \im)^2} \int_{\mathcal{C}_{1+ s}^{\pi/3}}dz \int_{\mathcal{C}_{1+t}^{\pi/3}} dw \frac{(z - s - w + t)H(z,x;w,y)}{z - s + w - t},
\end{split}
\end{equation}
where we have set 
\begin{equation}\label{Eq.DefH}
H(z,x;w,y) = e^{z^3/3 + w^3/3 - xz - y w},
\end{equation}
and the contours $\mathcal{C}_{z}^{\varphi}$ are as in Definition \ref{Def.S1Contours}. We also have that 
\begin{equation}\label{Eq.DefR12I}
\hsri_{12}(s,x;t,y) = - \frac{{\bf 1}\{s < t\} }{\sqrt{4\pi (t-s)}} \cdot \exp \left(\frac{- (s-t)^4 + 6 (x+ y)(s-t)^2 + 3 (x-y)^2}{12 (s-t)} \right), 
\end{equation}
\begin{equation}\label{Eq.DefR22I}
\begin{split}
\hsri_{22}(s,x;t,y) = \frac{H(s,x;t,y) (y-t^2 -x + s^2) }{2\pi^{1/2} (t+s)^{3/2} } \cdot \exp \left(-\frac{(y - t^2 - x +s^2)^2}{4(t+s)}\right).
\end{split}
\end{equation}
\end{proposition}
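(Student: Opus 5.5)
The plan is to realize $\hsai$ as the weak $\varpi \to \infty$ limit of the critical half-space Airy line ensembles $\hsa$ of \cite{DY25}, following the construction in \cite{DSY26}. Uniqueness will then be a soft consequence of the point-process description.

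\textbf{Uniqueness.} Suppose two line ensembles on $[0,\infty)$ are both ordered as in (\ref{Eq.PHSALEOrd}) and both have $\hsmi$ equal to the Pfaffian point process with kernel $\hski$, for every finite $\mathsf{S} \subset (0,\infty)$.
\begin{itemize}
\item A Pfaffian point process is determined by its correlation functions. Hence the law of the random measure $\hsmi$ is the same for both ensembles.
\item Because the curves are ordered, the points of $\hsmi$ on the fibre $\{s_j\} \times \mathbb{R}$ are exactly $\hsai_1(s_j) \geq \hsai_2(s_j) \geq \cdots$, counted with multiplicity.
\item The map sending a configuration that is locally finite and bounded above on each fibre to its ordered list of coordinates is measurable.
\item So the joint law of $(\hsai_i(s_j))_{i \geq 1,\, 1 \le j \le m}$ is determined for every finite $\mathsf{S} \subset (0,\infty)$.
\item By continuity of the curves this extends to $s = 0$. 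Finite-dimensional distributions determine the law on $C([0,\infty))^{\mathbb{N}}$ with the product topology of uniform convergence on compacts.
\end{itemize}

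\textbf{Existence.}
\begin{enumerate}
\item Recall from \cite{DY25} that $\hsa$ is ordered and that its fixed-time point processes are Pfaffian with an explicit kernel $K^{\mathrm{hs};\varpi}$. This kernel has the same double-contour structure as (\ref{Eq.DefII}), with additional factors depending on $\varpi$, for example $(z+\varpi)$-type terms in the integrands and residue terms.
\item Show that $K^{\mathrm{hs};\varpi} \to \hski$ as $\varpi \to \infty$, uniformly for $x,y$ in compact sets and $s,t$ in compact subsets of $(0,\infty)$. The $\varpi$-dependent factors should tend to $1$ on the contours $\mathcal{C}^{\pi/3}_{1+s}$. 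Contour deformations past the poles produce the residue terms, and in the limit these residues give $\hsri_{12}$ and $\hsri_{22}$. The Gaussian forms in (\ref{Eq.DefR12I}) and (\ref{Eq.DefR22I}) come from evaluating the resulting single integrals explicitly.
\item Prove bounds of the form $|K^{\mathrm{hs};\varpi}_{ij}(s,x;t,y)| \leq C e^{-c(x+y)}$ for $x,y$ bounded below, uniformly in large $\varpi$. Using Hadamard's inequality, these give convergence of Fredholm Pfaffians and hence weak convergence of the point processes.
\item Prove tightness of $\{\hsa\}_{\varpi}$ in $C([0,\infty))^{\mathbb{N}}$. For this I would use the half-space Brownian Gibbs property of $\hsa$ together with one-point tightness, which follows from the kernel bounds. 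The argument would be the Corwin--Hammond type, adapted to half-space, as done in \cite{DSY26}.
\item Take a subsequential limit. Ordering passes to the limit. The point processes of the limit have kernel $\hski$, so the limit has the required properties. By the uniqueness argument, the full family converges.
\end{enumerate}

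\textbf{Main obstacle.} I expect the hardest parts to be the $\varpi$-uniform estimates: uniform tightness of $\hsa$ near $t = 0$, and uniform upper-tail decay of the kernels. As $\varpi$ grows, the boundary increasingly pins pairs of curves, and control must not degrade. I would first try to obtain the estimates directly from the kernel asymptotics with steepest-descent bounds on the contours, and to invoke the Gibbs-property tightness machinery from \cite{DSY26} and \cite{DasSerio25} only at positive times, extending to $[0,\infty)$ via the pinned Brownian comparison.
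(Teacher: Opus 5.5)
Your proposal is correct and follows the paper's route. For existence, the paper does not re-derive anything: it cites \cite[Theorems 1.13 and 1.16]{DSY26}, which construct the ensemble exactly as the $\varpi \to \infty$ limit of the critical half-space Airy line ensembles. For uniqueness, the paper uses the same chain you describe: a Pfaffian point process is determined by its kernel \cite[Proposition 5.8(3)]{DY25}, ordering then fixes the finite-dimensional distributions \cite[Corollary 2.20]{ED24a}, and these determine the law \cite[Lemma 3.1]{DM21}. The only difference is that you sketch the content of \cite{DSY26} (kernel limits, uniform tail bounds, uniform-in-$\varpi$ tightness near $t=0$) instead of quoting it, and you correctly identify the boundary tightness as the hard input.
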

\begin{proof} The existence part follows from \cite[Theorems 1.13 and 1.16]{DSY26}, while the uniqueness part follows from \cite[Proposition 5.8(3)]{DY25}, \cite[Corollary 2.20]{ED24a} and \cite[Lemma 3.1]{DM21}.
\end{proof}

We next explain our parameter choice and how we scale our ensembles.
\begin{definition}\label{Def.ScaledLPPBot}
Fix $q \in (0,1)$, $c \in [0, q^{-1})$, $N \in \mathbb{N}$ and let $\lpplen = \{\lpplinen_{i}\}_{i \geq 1}$ be as in (\ref{Eq.Deflpplen}). We define the constants
\begin{equation}\label{Eq.DefSigmaFIntro}
\begin{split}
&\sigma = \frac{q^{1/2}}{1- q}, \hspace{2mm} f = \frac{q^{1/3}}{2 (1 + q)^{2/3}},
\end{split}
\end{equation}
and the rescaled processes
\begin{equation}\label{Eq.LppScaled}
\lpplenc_i(t) = \sigma^{-1} N^{-1/3} \cdot \left(\lpplinen_i(tN^{2/3}) - \frac{2qN}{1-q} - \frac{qtN^{2/3}}{1-q}\right) \mbox{ for } i \in \mathbb{N}, t \in [0,\infty).
\end{equation}
\end{definition}

With the above notation in place, we can state our first main result.
\begin{theorem}\label{Thm.AiryLimit} Assume the notation from Definition \ref{Def.ScaledLPPBot} and let $\hsai$ be as in Proposition \ref{Prop.PHSALE}. 
\begin{enumerate}
\item[(a)] If $c \in [0,1)$, then 
\begin{equation}\label{Eq.ConvAiryA}
\left(\lpplenc_i(t): i \in \mathbb{N}, t \in [0,\infty) \right) \Rightarrow \left((2f)^{-1/2} \hsai_i(ft) - 2^{-1/2}f^{3/2}t^2 : i \in \mathbb{N}, t \in [0,\infty)\right).
\end{equation}
\item[(b)] If $c \in (1, q^{-1})$, then
\begin{equation}\label{Eq.ConvAiryB}
\left(\lpplenc_{i+1}(t): i \in \mathbb{N}, t \in [0,\infty) \right) \Rightarrow \left((2f)^{-1/2} \hsai_i(ft) - 2^{-1/2}f^{3/2}t^2 : i \in \mathbb{N}, t \in [0,\infty)\right).
\end{equation}
\end{enumerate}
\end{theorem}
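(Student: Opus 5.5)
The plan follows the now-standard two-step scheme for line ensemble convergence: finite-dimensional convergence plus tightness, then identification of the limit through the uniqueness part of Proposition \ref{Prop.PHSALE}.

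\textbf{Step 1: finite-dimensional convergence.} For fixed finite $\mathsf{S}=\{s_1,\dots,s_m\}\subset(0,\infty)$, the point process $\{(s_j,\lambda_i(N+s_jN^{2/3},N))\}$ is a Pfaffian point process. Its correlation kernel has a double-contour-integral form (as used in \cite{DY25} for the critical case), valid for all $c\in[0,q^{-1})$. We would rescale this kernel as in (\ref{Eq.LppScaled}) and carry out a steepest descent analysis around the double critical point of the geometric LPP action, which gives the factors $\sigma,f$.

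In the subcritical case $c<1$, the boundary pole coming from $c$ (roughly at $z=c$ or $1/c$ in the original variables) stays at macroscopic distance from the critical point. It therefore does not interact with the deformed contours. In rescaled variables this corresponds to $\varpi=\varpi_N\to\infty$, and we expect the kernel to converge to $\hski$ after the conjugation and the change $t\mapsto ft$, $x\mapsto (2f)^{-1/2}x-2^{-1/2}f^{3/2}t^2$. The residue terms $\hsri_{12},\hsri_{22}$ should arise from the poles crossed when the contours are arranged to be steepest descent.

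In the supercritical case $c>1$, the pole lies on the wrong side. Deforming contours through it produces an extra rank-one-type residue contribution, which corresponds to the top curve $\lambda_1$ living on the scale $N^{1/2}$ above the rest, as in \cite{DZ25}. Here we would pass to the process of the lower curves: truncate to $x$ in a bounded window where $\lambda_1$ is absent with high probability. Equivalently, we show the residue contribution, restricted to compact $x$-windows, vanishes after suitable conjugation. Uniform exponential tail bounds on the kernel entries then give convergence of Fredholm Pfaffians, hence of the correlation functions and gap probabilities, and therefore finite-dimensional convergence of the top $k$ curves at the times $s_j$ (using also the ordering).

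\textbf{Step 2: tightness.} The ensemble $\lpplen$ satisfies a discrete half-space interlacing Gibbs property of the type in \cite{BCD24,DY25}. For $c<1$ the boundary interaction is of the pinned type, so we can use the pinned half-space Gibbs framework of \cite{DasSerio25}, combined with the one-point tightness from Step 1, to get tightness of every fixed curve on compacts of $[0,\infty)$. In case (b) we apply the Gibbs property to $\{\lambda_k\}_{k\ge2}$ conditioned on $\lambda_1$. Since $\lambda_1$ is far above, it acts as an asymptotically irrelevant ceiling, so the same argument applies.

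\textbf{Step 3: identification.} Any subsequential limit is ordered (by interlacing) and has the finite-dimensional Pfaffian structure with kernel $\hski$ (after the affine change). By the uniqueness in Proposition \ref{Prop.PHSALE}, the limit is the right-hand side of (\ref{Eq.ConvAiryA}) or (\ref{Eq.ConvAiryB}).

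The main obstacle is Step 1 near the boundary. This requires uniform kernel estimates with $c$ fixed rather than tuned, and in case (b) the careful separation of the $\lambda_1$ residue. I would first handle the subcritical case by mimicking the \cite{DY25} estimates with $\varpi_N\to\infty$, and then adapt the contour deformation from \cite{DZ25} to the supercritical case.
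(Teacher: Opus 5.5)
The overall scheme (finite-dimensional convergence from kernel asymptotics, tightness from the Gibbs property, identification by uniqueness) matches the paper, and your Step 3 matches too, but Step 2 has a genuine gap, and it is where the paper's main new work lies. Your Step 1 is, in the paper, a fairly direct adaptation of \cite{DY25,DZ25}, so you have misplaced the main obstacle.

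\textbf{Tightness near $t=0$ in case (a).} One-point tightness from the kernel is available only at $t>0$. The limiting Pfaffian structure exists only for $\mathsf{S}\subset(0,\infty)$, since curves $2i-1$ and $2i$ of $\hsai$ coincide at $t=0$. So all control at the origin must come from the Gibbs property, and this is exactly where $c\neq 1$ causes trouble. The weight $c^{\lambda_1-\lambda_2+\lambda_3-\cdots}$ at time $0$ pushes odd and even curves in opposite directions, so no single curve can be compared with a Brownian bridge near $0$. The existing discrete criterion \cite[Theorem 5.1]{D24b} only gives tightness on $(0,\beta)$. Citing \cite{DasSerio25} does not close this: it is a continuum, positive-temperature framework for the half-space KPZ equation, not a tool for discrete interlacing ensembles. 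What is missing, and what the paper supplies, is the following.
\begin{enumerate}
\item[(i)] Group the curves into pairs $(2i-1,2i)$ and use the interacting pair Gibbs property (Definition \ref{Def.IPGP}).
\item[(ii)] Prove that discrete interacting pairs converge under diffusive scaling to pinned pairs of reverse Brownian motions (Lemma \ref{Lem.ConvOfInterPairs}). The core input is that $(L_1(0),L_2(0))$ collapse onto a single Gaussian (Lemma \ref{Lem.ConvAtOrigin}). This comes from an exact contour-integral formula for the pair partition function (Lemma \ref{Lem.FinitePartitionFunctionA}) plus steepest descent. It also needs convergence of interlacing geometric bridges with random, degenerate boundary data (Lemmas \ref{Lem.MonCoupBer} and \ref{Lem.ConvBridge}).
\item[(iii)] Combine these with a no-big-max bound that reaches the origin (Lemma \ref{lem.nobigmax}) and a modulus-of-continuity estimate (Lemma \ref{S42L}). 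This is assembled in Theorem \ref{Thm.Tightness}.
\end{enumerate}

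\textbf{Case (b).} Saying that $\lambda_1$ is an irrelevant ceiling and ``the same argument applies'' hides two issues.

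First, $\{\lambda_k\}_{k\ge 2}$ alone has no interacting pair Gibbs property. Conditioning on $\lambda_1$ gives pairs $(\lambda_{2i},\lambda_{2i+1})$ with boundary parameter $1/c<1$, plus the ceiling $\lambda_1$, and Theorem \ref{Thm.Tightness} does not allow a ceiling. The paper builds an auxiliary $1/c$-ensemble with the same data at time $T\asymp N^{2/3}$. It then proves total-variation proximity (Proposition \ref{Prop.Proximity}) using the event $\{\lambda_1(N,N)\ge \lambda_2(N+T,N)\}$. On this event, interlacing with $\lambda_1$ holds automatically because the paths are increasing.

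Second, you need this event to have probability tending to one. That requires the order-$N$ separation of $\lambda_1$ at the diagonal, with $N^{1/2}$ fluctuations around $C^{\mathrm{top}}N>\frac{2q}{1-q}N$. You assert this but do not derive it; the paper proves it through the edge kernel analysis (Proposition \ref{Prop.FinitedimEdge}).

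The same input is needed in your Step 1 for $c>1$. There, uniform tail bounds on $[a,\infty)$ cannot hold, because $\lambda_1$ lies in that interval. You must instead show two things: the expected number of points in $[a,\infty)$ tends to $1$ (the $+1$ comes from the pole at $z=c$), and $\lambda_1$ escapes. Together these give upper tightness of $\lambda_2$ (Lemma \ref{Lem.UpperTailBot}).
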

\begin{remark}\label{Rem.AiryLimit} The convergence in (\ref{Eq.ConvAiryA}) and (\ref{Eq.ConvAiryB}) is that of random elements in $C(\mathbb{N} \times [0,\infty))$, where the latter space is endowed with the topology of uniform convergence over compact sets. See Definition \ref{Def.LineEnsembles} for the details.
\end{remark}
\begin{remark}\label{Rem.AiryLimit2} In plain words, Theorem \ref{Thm.AiryLimit}(a) shows that the curves $\{\lpplenc_i\}_{i \geq 1}$ converge jointly to $\{\hsai_i\}_{i \geq 1}$ when $c \in [0,1)$, see the bottom part of Figure \ref{Fig.Simulations}. When $c \in (1, q^{-1})$, the top curve $\lpplenc_1$ escapes to $+\infty$, see Theorem \ref{Thm.BrownianLimit} below for a precise statement, while by Theorem \ref{Thm.AiryLimit}(b) the remaining curves $\{\lpplenc_{i+1}\}_{i \geq 1}$ converge jointly to $\{\hsai_i\}_{i \geq 1}$, see the top part of Figure \ref{Fig.Simulations}.
\end{remark}
\begin{remark}\label{Rem.AiryLimit3} As we explain in Section \ref{Section1.3}, the local distribution of $\lpplen$ can be described in terms of interlacing geometric random walks with jump parameter $q$ that interact at the origin through
$$c^{\left(\lpplinen_1(0) - \lpplinen_2(0)\right) + \left(\lpplinen_3(0) - \lpplinen_4(0)\right) + \cdots}.$$
When $c \in [0,1)$, the latter term causes an attraction between the curves $\lpplinen_{2i-1}$ and $\lpplinen_{2i}$ at the origin, see the bottom part of Figure \ref{Fig.Simulations}. Asymptotically, this causes a pinning of the curves of index $2i-1$ and $2i$ at the origin, a phenomenon that is now well understood for the pinned Airy line ensemble, see \cite[Theorems 1.20 and 1.26]{DSY26}. When $c \in (1, q^{-1})$ one can rewrite the above interaction suggestively as
$$c^{\lpplinen_1(0)} \times (1/c)^{\left(\lpplinen_2(0) - \lpplinen_3(0) \right) + \left(\lpplinen_4(0) - \lpplinen_5(0)\right) + \cdots}.$$
As $c > 1$, the first term causes $\lpplinen_1$ to go very high, while the second term causes the same pinning effect as before but now for the curves of index $2i$ and $2i+1$, see the top part of Figure \ref{Fig.Simulations}. 
\end{remark}
\begin{remark}\label{Rem.Airylimit4} When $c = 1$ (the only value of $c$ not considered in Theorem \ref{Thm.AiryLimit}), we have that (\ref{Eq.ConvAiryA}) holds with $\hsai$ replaced with $\mathcal{A}^{\mathrm{hs}; 0}$ --- this is the (critical) half-space Airy line ensemble from \cite{DY25} with $\varpi = 0$. The latter statement follows from \cite[Theorem 1.4]{DY25} and its proof.
\end{remark}

\begin{figure}[ht]
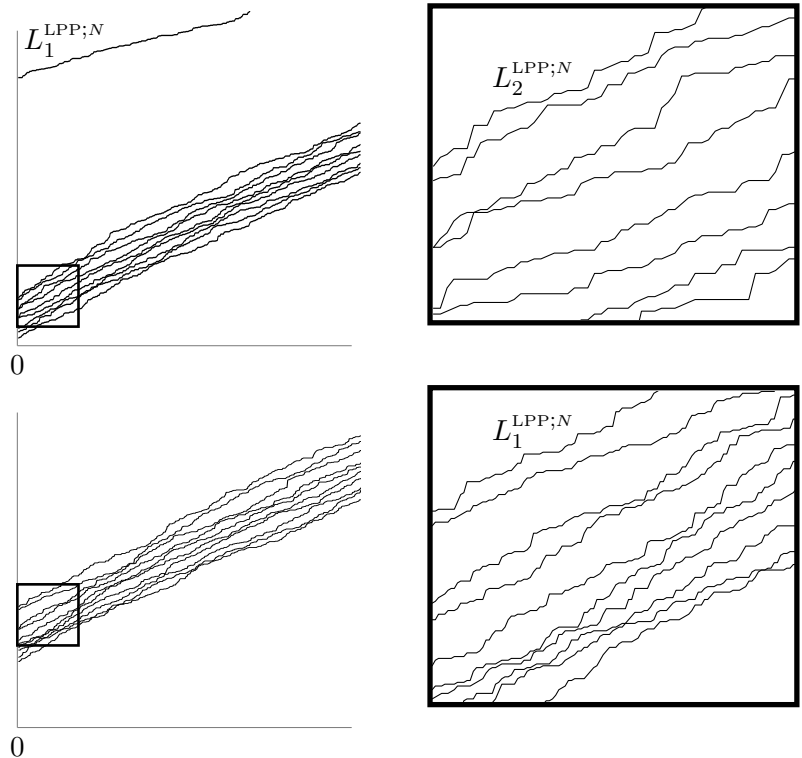

\centering

\begin{tikzpicture}[x=1in,y=1in] 


  \begin{scope}
  \clip (-2,2.1) rectangle (-0.2,3.85);

  \node[anchor=south west, inner sep=0, xscale=0.5, yscale=0.63] at (-2.4,1.75) {\input{LPPSuperFull.pgf}};
  \end{scope}
  
  \node[anchor=south west, inner sep=0, xscale=0.5, yscale=0.4] at (-2.4,0.2) {\input{LPPSubFull.pgf}};

  \node[anchor=south west, inner sep=0, xscale=0.5, yscale=0.5] at (-0.2,0.0) {\input{LPPSubPart.pgf}};
  \node[anchor=south west, inner sep=0, scale=0.5] at (-0.2,2.0) {\input{LPPSuperPart.pgf}};

  \draw[white, fill=white] (0.1,0.22) rectangle (0.16,1.88);

  \draw[gray] (-2,2.1) -- (-2,3.75);
  \draw[gray] (-2,2.1) -- (-0.25,2.1);
  \node at (-2,2) {$0$};
  \draw[gray] (-2,0.1) -- (-2,1.75);
  \draw[gray] (-2,0.1) -- (-0.25,0.1);
  \node at (-2,0) {$0$};
  \node at (-1.75,3.7) {$\lpplinen_1$};
  \node at (0.7,3.5) {$\lpplinen_2$};
  \node at (0.7,1.65) {$\lpplinen_1$};

  \draw[line width=2pt] (0.16,0.22) rectangle (2.08,1.88);
  \draw[line width=2pt] (0.16,2.22) rectangle (2.08,3.88);
   
  \draw[line width=1pt] (-2,0.53) rectangle (-1.68,0.85);
  \draw[line width=1pt] (-2,2.2) rectangle (-1.68,2.52);

\end{tikzpicture}

\caption{The figure depicts simulations of the curves of $\lpplen$ for $N = 500$, $q = 0.5$, $c = 1.4$ (top) and $c = 0.8$ (bottom).}
\label{Fig.Simulations}
\end{figure}

We end this section by describing the limiting behavior of $\lpplinen_1$ when $c \in (1, q^{-1})$. We introduce some relevant parameters and rescaling in the following definition.
\begin{definition}\label{Def.TopCurveScaledLPP}
Fix $q \in (0,1)$, $c \in (1, q^{-1})$, and define
\begin{equation}\label{Eq.ConstTopIntro}
\bar{\kappa} = \frac{(c-q)^2}{(1-qc)^2} - 1 > 0, \hspace{2mm} p^{\mathrm{top}} = \frac{q}{c-q}, \hspace{2mm} C^{\mathrm{top}} = \frac{q (c^2 - 2qc + 1)}{(c-q)(1-qc)}.
\end{equation}
Let $N \in \mathbb{N}$, and $\lpplinen_{1}$ be as in (\ref{Eq.Deflpplen}). Define the rescaled process $\mathcal{U}_1^{\mathrm{top},N}$ on $[0, \bar{\kappa})$ by
$$\mathcal{U}_1^{\mathrm{top},N}(t) = [p^{\mathrm{top}} (1 + p^{\mathrm{top}})]^{-1/2} N^{-1/2} \left( \lpplinen_{1}(tN) - C^{\mathrm{top}} N -  p^{\mathrm{top}} t N   \right) \mbox{ for } t \in [0, \bar{\kappa}).$$
\end{definition}

With the above notation in place, we can state our second main result.
\begin{theorem}\label{Thm.BrownianLimit} Assume the same notation as in Definition \ref{Def.TopCurveScaledLPP}, and let $(B_t: t \geq 0)$ be a standard Brownian motion. Then, $\mathcal{U}_1^{\mathrm{top},N} \Rightarrow W$ in $C\left([0, \bar{\kappa})\right)$ (with the topology of uniform convergence over compact sets), where $W_t = B_{\bar{\kappa} - t}$ for $t \in [0, \bar{\kappa})$.
\end{theorem}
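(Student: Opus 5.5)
The idea is to exploit the Gibbs description from Remark~\ref{Rem.AiryLimit3}. Conditionally on the lower curves, the top curve $\lpplinen_1$ is an interlacing geometric random walk with jump parameter $q$, tilted by $c^{\lpplinen_1(0)}$. Fix a right anchor $M$ and condition on $\lpplinen_1(M)$. Then $c^{\lpplinen_1(0)} = c^{\lpplinen_1(M)} \cdot c^{-\sum_{m=1}^{M}(\lpplinen_1(m)-\lpplinen_1(m-1))}$, so the tilt is a product over increments. It converts the $\mathrm{Geom}(q)$ increments into independent $\mathrm{Geom}(q/c)$ increments, read backwards from $M$. These have mean $\frac{q/c}{1-q/c} = p^{\mathrm{top}}$ and variance $p^{\mathrm{top}}(1+p^{\mathrm{top}})$. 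This matches the normalization in Definition~\ref{Def.TopCurveScaledLPP} and explains the time reversal $W_t = B_{\bar\kappa - t}$. The remaining coupling is the interlacing constraint with $\lpplinen_2$. The natural anchor is $t = \bar\kappa$, i.e. $m = \kappa^* N$, the transition point $\mathrm{IV.b}$, where the top curve rejoins the bulk and should have only $O(N^{1/3})$ fluctuations.

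The steps are as follows.

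\textbf{Step 1 (anchor).} Show that $\lpplinen_1(\bar\kappa N) - C^{\mathrm{top}}N - p^{\mathrm{top}}\bar\kappa N = o_{\mathbb{P}}(N^{1/2})$. The natural tool is the Pfaffian kernel for $\lambda(\cdot,N)$ in the region $t \geq n$, analyzed by steepest descent at the double critical point corresponding to $\kappa^*$. This is the analogue of Zhou's $\mathrm{IV.a}$ analysis, and only tightness of the $N^{1/3}$-scaled top curve is needed. Slightly to the left of $\bar\kappa N$, the pole of the integrand at the boundary parameter $c$ produces a Gaussian contribution. This Gaussian part gives an alternative route to Step~1 and also a cross-check of the one-dimensional marginals: it should yield variance $p^{\mathrm{top}}(1+p^{\mathrm{top}})(\bar\kappa - t)N$.

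\textbf{Step 2 (the top curve separates).} Show that for each $\epsilon>0$, with high probability
\[
\lpplinen_1(m) - \lpplinen_2(m) \geq \delta N \quad \text{for all } m \in [0,(\bar\kappa-\epsilon)N],
\]
for some $\delta = \delta(\epsilon) > 0$. Theorem~\ref{Thm.AiryLimit}(b) and the results of region $\mathrm{III.b}$ place $\lpplinen_2$ near the lower parabola-type curve, with slope $q/(1-q)$ and intercept $2q/(1-q)$. Meanwhile the line $C^{\mathrm{top}}N + p^{\mathrm{top}}m$ lies strictly above it on $[0,\kappa^* N)$ and touches it exactly at $\kappa^* N$. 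Concretely, I would prove a one-point lower bound on $\lpplinen_1$ at finitely many mesh points, and an upper bound on $\lpplinen_2$ using monotone coupling and the known limits of the lower curves.

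\textbf{Step 3 (coupling).} On the event of Step~2, combine the anchor with Gibbs resampling on $[0,(\bar\kappa-\epsilon)N]$. Since the gap to $\lpplinen_2$ is of order $N$, the interlacing constraint has probability $1-o(1)$ under the free $\mathrm{Geom}(q/c)$ backward walk started from $\lpplinen_1((\bar\kappa-\epsilon)N)$. Hence the conditional law of the top curve is close in total variation to that free walk. Donsker's theorem, together with Step~1 and a short-interval estimate on $[(\bar\kappa-\epsilon)N, \bar\kappa N]$, gives convergence to $B_{\bar\kappa - t}$ on $[0,\bar\kappa-\epsilon]$. Letting $\epsilon \to 0$ yields the theorem.

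I expect the main obstacle to be Step~1 together with the matching in Step~2 near $t = \bar\kappa$. There the gap closes, so the interlacing constraint is not negligible and the anchor must be controlled at scale $o(N^{1/2})$ through exact-formula asymptotics. I would first try to get tightness of the $N^{1/3}$-rescaled top curve at $\kappa^* N$ from the Fredholm Pfaffian upper and lower tail bounds. If that fails, I would use an anchor at $(\bar\kappa - \epsilon)N$ with Gaussian marginal computed directly from the residue at $c$. Consistency of these marginals across $\epsilon$ would then identify the limit.
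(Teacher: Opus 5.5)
\textbf{The gap is in Step 2.} Neither source you give for the upper bound on $\lpplinen_2$ can be used.

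\emph{Theorem \ref{Thm.AiryLimit}(b) is circular here.} It is proved after, and from, the top-curve results: Proposition \ref{Prop.Proximity} uses Proposition \ref{Prop.FinitedimEdge}, and Step 4 of Lemma \ref{Lem.UpperTailBot} uses Proposition \ref{Prop.TightnessTopCurve} (see Remark \ref{Rem.BrownianLimit2}). In any case it only describes $\lpplinen_2$ in an $N^{2/3}$-window near $t=0$, not at times $m\asymp N$.

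\emph{Region III.b and monotone coupling do not help either.} Convergence of the lower curves in region III.b is not available; the paper only says it is expected. A monotone coupling cannot replace it, because the Gibbs property of $\lpplinen_1$ is conditional on $\lpplinen_2$. Some integrable input is needed to push $\lpplinen_2$ down macroscopically.

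\emph{The missing idea is the one-time particle count of Lemma \ref{Lem.UpperTailTop2}.} For fixed $\kappa_j<\bar\kappa$,
$\sum_{i\ge1}\mathbb{P}(Y_i^{j,N}\ge \sigma_2^{-1}[h_1(\kappa_j)-h_2(\kappa_j)]N^{1/2}+1)\to 1$.
This comes from deforming the contours to pass through $z_c(\kappa_j)$: the residue at $z=c$ contributes exactly $1$, and everything else decays. Since $h_1<h_2$ on $[0,\bar\kappa)$ and the top particle sits at $h_2(\kappa_j)N+O(N^{1/2})$ with high probability, this forces $\lpplinen_2(\lfloor \kappa_j N\rfloor)\le h_1(\kappa_j)N+O(N^{1/2})$. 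Because $\lpplinen_2$ is increasing and $h_2$ is linear, a finite mesh then gives the uniform bound $\lpplinen_2(\lfloor tN\rfloor)<h_2(t)N-\epsilon N$ on $[0,s]$ (Lemma \ref{Lem.SecondCurveUnifLow}). This bound against the deterministic line $h_2(t)N$ is what Step 3 actually needs, not a gap between the two random curves. It is what makes the \emph{free} walk from the anchor satisfy the interlacing constraint with probability $1-o(1)$, uniformly over the boundary data.

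\textbf{Drop Step 1.} At $\bar\kappa$ you have $z_c(\bar\kappa)=c$, so the double critical point collides with the pole at $c$. Lemma \ref{Lem.PrelimitKernelEdge} and Proposition \ref{Prop.KernelConvEdge} are set up only for $\kappa<\bar\kappa$. Your ``short-interval estimate'' on $[(\bar\kappa-\epsilon)N,\bar\kappa N]$ is exactly where the interlacing with $\lpplinen_2$ binds, so as stated it is unsupported.

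\textbf{Your fallback suffices.} Anchor at $sN$ with $s=\bar\kappa-\epsilon$, and use $\mathcal{U}_1^{\mathrm{top},N}(s)\Rightarrow \mathcal{N}(0,\bar\kappa-s)$. This is the $s=t$ case of Proposition \ref{Prop.KernelConvEdge}, where the residue at $w=c$ is the Gaussian term. You also need the upper-tail bound of Lemma \ref{Lem.UpperTailTop} to pass from kernel convergence to the law of the top particle.

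With this and the corrected Step 2, your argument closes, and it is a genuinely different route from the paper's. The paper conditions on both $\lpplinen_1(0)$ and $\lpplinen_1(s_N)$ (Lemma \ref{Lem.TopGibbsProperty}). That kills the $c$-tilt and leaves a uniform geometric bridge, so the paper needs the KMT bridge coupling (Proposition \ref{Prop.KMT}) for tightness and multi-time convergence (Proposition \ref{Prop.FinitedimEdge}) to identify the limit. Your one-sided anchor keeps the tilt, which makes the curve a free backward $\mathrm{Geom}(q/c)$ walk. Then Donsker's theorem plus a single one-point marginal is enough. The anchor is asymptotically independent of the increments because the conditional law is close in total variation to the free walk.
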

\begin{remark}\label{Rem.BrownianLimit} Informally, Theorem \ref{Thm.BrownianLimit} states that the top curve $\{\lpplinen_{1}(tN): t \in [0, \bar{\kappa})\}$ asymptotically follows the deterministic line $C^{\mathrm{top}}N + p^{\mathrm{top}} t N$, and that its fluctuations around this line are of order $N^{1/2}$, converging to a time-reversed Brownian motion. One directly checks that when $c \in (1, q^{-1})$, we have
$$C^{\mathrm{top}} - \frac{2q}{1-q} = \frac{q (c-1)^2}{(c-q)(1-cq)(1-q)} > 0,$$
so that $\lpplinen_{1}$ is macroscopically separated from $\lpplinen_{2}$, see the top part of Figure \ref{Fig.Simulations}.
\end{remark}
\begin{remark}\label{Rem.BrownianLimit2} In the notation of Section \ref{Section1.1}, Theorem \ref{Thm.BrownianLimit} describes the asymptotic behavior of $\lambda_1(tN,N)$ for $t \in (1, \kappa^*)$, corresponding to region $\mathrm{III.b}$ in Figure \ref{Fig.Regions}. It is analogous to \cite[Theorem 1.2]{DZ25} that established the corresponding statement for region $\mathrm{III.a}$, and hence not conceptually new.
One reason we show this statement is that it allows us to argue that the curves $\lpplinen_1$ and $\lpplinen_2$ separate, which is a required input for the proof of Theorem \ref{Thm.AiryLimit}(b). We furthermore expect the theorem to be useful in future applications like the analysis of stationary measures for the half-space geometric LPP.
\end{remark}

%
%
\subsection{Key ideas and paper outline}\label{Section1.3} In order to prove Theorems \ref{Thm.AiryLimit} and \ref{Thm.BrownianLimit}, we follow a general two-step strategy:
\begin{enumerate}
  \item[I.] Establish finite-dimensional convergence of the sequence of random curves.
  \item[II.] Prove that the sequence is tight in the appropriate space of continuous functions, endowed with the topology of uniform convergence over compact sets.
\end{enumerate}

The key ingredient underlying both steps is a distributional equality between half-space LPP and the Pfaffian Schur processes introduced in \cite{BR05}. These processes are defined in Section \ref{Section4.1}, and their connection to LPP is established in Proposition \ref{Prop.LPPandSchur}. Once this identity is shown, we can leverage the structural properties of the Pfaffian Schur processes to verify each of the two steps above, as we explain next.

We begin with the finite-dimensional convergence. A central feature of the Pfaffian Schur processes is that they form Pfaffian point processes with an explicit correlation kernel, given by double contour integrals. The relevant formula is recalled from \cite{BR05} in Proposition \ref{Prop.SchurPfaffianKernel}. In Lemmas \ref{Lem.PrelimitKernelBot} and \ref{Lem.PrelimitKernelEdge}, we derive two alternative representations of the correlation kernel that are well suited for asymptotic analysis in the two scaling regimes corresponding to Theorems \ref{Thm.AiryLimit} and \ref{Thm.BrownianLimit}. In Section \ref{Section5}, we show pointwise convergence of these kernels using the {\em method of steepest descent}. The precise statements are given in Propositions \ref{Prop.KernelConvBot} and \ref{Prop.KernelConvEdge}, whose proofs rely on suitable adaptations of arguments from \cite{DY25} and \cite{DZ25}. 

Under general conditions, convergence of the correlation kernels implies convergence of the associated point processes in the vague topology (see \cite{DY25}). To strengthen this to finite-dimensional convergence, we employ a general framework from \cite{ED24a}, which reduces the problem to establishing one-point tightness from above for our curves. This is carried out in Section \ref{Section7.1} for the scaling in Theorem \ref{Thm.AiryLimit}, and in Section \ref{Section6.1} for the scaling in Theorem \ref{Thm.BrownianLimit}. The finite-dimensional convergence itself is then proved in Proposition \ref{prop:finite dimensional convergence} in Section \ref{Section7.2} (for the scaling in Theorem \ref{Thm.AiryLimit}), and in Proposition \ref{Prop.FinitedimEdge} in Section \ref{Section6.2} (for the scaling in Theorem \ref{Thm.BrownianLimit}).

We next turn to the proof of tightness for our curves, i.e., the second step above. The key property we exploit is that a Pfaffian Schur process admits a representation as a product of skew Schur symmetric functions. This leads to the following formula for any partitions $\lambda^0, \dots, \lambda^M$:
\begin{equation}\label{Eq.S1ParttionsSchur}
\begin{split}
&\mathbb{P}\left(\lpplinen_i(j) = \lambda_i^j \mbox{ for } i \geq 1, j = 1, \dots, M \vert \lpplinen_i(M) = \lambda_i^M \mbox{ for } i \geq 1\right) \\
& \propto {\bf 1}\{\lambda^0 \preceq \cdots \preceq \lambda^M\} \cdot c^{\sum_{i \geq 1} (-1)^{i-1}\lambda_i^0} \cdot q^{-\sum_{i \geq 1} \lambda_i^0},
\end{split}
\end{equation}
where for two partitions $\lambda = (\lambda_1, \lambda_2, \dots)$ and $\mu = (\mu_1, \mu_2, \dots)$, we write $\mu \preceq \lambda$ if $\lambda_1 \geq \mu_1 \geq \lambda_2 \geq \mu_2 \geq \cdots$. If $\mu \preceq \lambda$, we also say that $\lambda$ and $\mu$ {\em interlace}. 

Equation (\ref{Eq.S1ParttionsSchur}) provides an interpretation of the curves of $\lpplinen_i$ as (reverse) independent geometric random walks with geometric jumps that have been conditioned to interlace and to interact at the origin through the parameter $c$. When $c = 1$ (or critically scaled around $1$), which is the only value not considered in this paper, the measures in (\ref{Eq.S1ParttionsSchur}) were studied in \cite{DZ25}. In that work, tightness of the ensembles was established via a strong comparison between the geometric random walk trajectories and Brownian motions, using a version of the Koml{\' o}s-Major-Tusn{\' a}dy (KMT) coupling. 

In contrast, when $c \neq 1$, this approach breaks down as the interaction at the origin becomes too strong, and the curves can no longer be treated individually. At a heuristic level, when $c \in [0, 1)$, the odd-indexed curves develop a large negative drift and tend to escape to $-\infty$, while the even-indexed curves develop a large positive drift and tend to escape to $+\infty$. However, due to the interlacing constraints, the upward-moving curves end up colliding with the negatively-moving ones, ultimately producing the pinning phenomenon at the origin. When $c \in (1, q^{-1})$, a similar pairwise collision occurs, except that the top curve does escape to $+\infty$ as there is no curve above it.

To address this difficulty, it is beneficial to group the curves into pairs: $(\lpplinen_{2i-1}, \lpplinen_{2i})$ when $c \in [0,1)$, and $(\lpplinen_{2i}, \lpplinen_{2i-1})$ when $c \in (1, q^{-1})$. In Section \ref{Section2} we introduce a general class of line ensembles whose distributions have a structure analogous to (\ref{Eq.S1ParttionsSchur}). We formalize this via the notion of the {\em interacting pair Gibbs property}, see Definition \ref{Def.IPGP}. For such ensembles, when $c \in [0,1)$, we establish a general tightness criterion in Theorem \ref{Thm.Tightness}, which shows that the interacting pair Gibbs property, together with one-point tightness of the curves, implies tightness of the entire ensemble. We view Theorem \ref{Thm.Tightness} as one of the main technical contributions of the paper. The necessary notation and auxiliary results for its proof are developed across Sections \ref{Section2}, \ref{Section3}, and \ref{SectionA}.

Theorem \ref{Thm.Tightness} is applied in Section \ref{Section7.3} to prove the tightness of the curves in Theorem \ref{Thm.AiryLimit}(a). When $c \in (1, q^{-1})$, tightness for Theorem \ref{Thm.BrownianLimit} is established in Section \ref{Section6.3}. The main idea is to show that (1) the second curve $\lpplinen_{2}$ is typically much lower than $\lpplinen_{1}$, and (2) when sufficiently separated from $\lpplinen_{2}$, the curve $\lpplinen_{1}$ behaves like a geometric random walk bridge, and therefore has a well-controlled modulus of continuity. We refer the interested reader to the beginning of Section \ref{Section6.1} for a more detailed description of this heuristic. 

Finally, when $c \in (1, q^{-1})$, we establish tightness of the curves in Theorem \ref{Thm.AiryLimit}(b) in Section \ref{Section7.4}. In contrast to Theorem \ref{Thm.AiryLimit}(a), we cannot directly apply Theorem \ref{Thm.Tightness}, as it requires $c \in [0, 1)$, and a different approach is needed. The key observation is that, conditioned on $\lpplinen_1$, the remaining curves $\{\lpplinen_{i+1}\}_{i \geq 1}$ behave like a line ensemble $\hat{\mathfrak{L}}^{\mathrm{\scriptscriptstyle LPP}; \scriptscriptstyle N}$ in which the parameter $c$ has been replaced with $1/c$. The latter can already be seen heuristically from (\ref{Eq.S1ParttionsSchur}): removing $\lambda_1^0$ and shifting indices by one yields an analogous formula with $c$ replaced with $1/c$. 

To make this heuristic precise, we introduce in Section \ref{Section7.4} a new ensemble $\hat{\mathfrak{L}}^{\mathrm{bot}, N}$, which coincides with $\{\lpplinen_{i+1}\}_{i \geq 1}$ at a large positive time $T$, and is extended to $[0,T]$ via the unique Gibbsian extension with parameter $1/c$. The key properties of this ensemble are that (1) it is close in total variation distance to $\{\lpplinen_{i+1}\}_{i \geq 1}$ (see Proposition \ref{Prop.Proximity}), and (2) it satisfies the tightness criterion of Theorem \ref{Thm.Tightness}. Informally, $\hat{\mathfrak{L}}^{\mathrm{bot}, N}$ has the same distribution as $\{\lpplinen_{i+1}\}_{i \geq 1}$ without enforcing the interlacing constraint between the top two curves $\lpplinen_{1}$ and $\lpplinen_{2}$. However, since Theorem \ref{Thm.BrownianLimit} implies that $\lpplinen_{1}$ is much higher than $\lpplinen_{2}$, the constraint is likely to be satisfied regardless. This is an intuitive explanation for why $\{\lpplinen_{i+1}\}_{i \geq 1}$ is close in distribution to $\hat{\mathfrak{L}}^{\mathrm{bot}, N}$. A complete proof of this fact is considerably more involved and occupies most of Section \ref{Section7.4}. Once the above two properties are established for $\hat{\mathfrak{L}}^{\mathrm{bot}, N}$, we apply Theorem \ref{Thm.Tightness} to deduce tightness for this sequence, and then transfer this property back to $\{\lpplinen_{i+1}\}_{i \geq 1}$.

%
%
\subsection{Related works and future directions}\label{Section1.4} Our work fits into the broader program of studying scaling limits of half-space integrable models, and provides the first concrete example of convergence to the pinned half-space Airy line ensemble introduced in \cite{DSY26}.

From a technical perspective, our approach is closest to \cite{DY25} and \cite{DZ25}. As discussed in Section \ref{Section1.1}, \cite{DY25} analyzes the same model in the regime when $c$ is critically scaled around $1$. While the asymptotic analysis leading to finite-dimensional convergence is very similar to ours, the tightness argument in \cite{DY25} is considerably simpler. As we explain in Section \ref{Section1.3}, the main additional difficulty in the present work arises from the pinning of the curves, which prevents one from treating each curve individually via a comparison with avoiding Brownian motions or Brownian bridges. To overcome this, we instead group consecutive curves into pairs and compare each pair to what we call a {\em pinned pair} of avoiding reverse Brownian motions. At the level of the full ensemble, this leads to a comparison with a sequence of avoiding pinned pairs, which we term a {\em pinned reverse Brownian line ensemble}. Furthermore, in the supercritical regime $c > 1$, the top curve in our model escapes to $+\infty$, which introduces additional challenges compared to \cite{DY25}; these can be handled using ideas from \cite{DZ25}.

Beyond \cite{DY25} and \cite{DZ25}, there has been substantial recent progress on positive-temperature analogues of our model. In the seminal work \cite{BCD24}, Barraquand, Corwin, and the first author initiated a systematic study of general half-space Gibbsian line ensembles, a class that includes our LPP model as a special case. Although \cite{BCD24} focuses primarily on the half-space log-gamma polymer, it introduced several key ideas that have since driven further developments; see \cite{DasSerio24,daszhu24,gin24} and the references therein. Using the geometric RSK correspondence \cite{COSZ, OSZ14, NZ17, BZ19}, they showed that the free energy process of the half-space log-gamma polymer can be embedded as the lowest-indexed curve of a half-space log-gamma line ensemble. In \cite{BCD24}, they proved tightness of the top curve under 1:2:3 KPZ scaling in the critical and subcritical regimes, and this was later extended to tightness of all curves in \cite{DasSerio24}. Similar results were subsequently obtained for the half-space KPZ equation in \cite{DasSerio25}. 

In contrast, much less is known in the supercritical regime $c > 1$ regarding process-level limits of the half-space log-gamma line ensemble and the half-space KPZ equation. In this direction, \cite{daszhu24} established a weak form of curve separation for the half-space log-gamma polymer, which allowed them to conclude Gaussian fluctuations in an $o(\sqrt{N})$ window around the boundary. Later, \cite{gin24} proved analogous Gaussian behavior at the boundary for general (non-integrable) half-space LPP and polymer models under a law of large numbers separation hypothesis.

One of the main reasons that \cite{DasSerio24} and \cite{DasSerio25} establish only tightness (rather than full convergence) is the lack of sufficiently tractable exact formulas. Although the half-space log-gamma polymer model is integrable via its connection to the half-space Whittaker process \cite{bbc20}, the resulting formulas are not expressed in terms of Pfaffian point processes and are not readily amenable to asymptotic analysis. At present, pointwise convergence is known only at the boundary, due to \cite{IMS22}, which relates the half-space model to a free-boundary version of the Schur process. Nevertheless, it is widely expected that these models should converge to the same pinned half-space Airy line ensemble limit.

A key advantage of the present work is the integrable structure of the model: it admits a Pfaffian point process representation with an explicit correlation kernel given by double contour integrals. This provides access to exact formulas which, although still requiring substantial analysis, allow us to obtain a detailed description of the convergence of all curves of our ensemble. We expect that our techniques can be extended to other integrable half-space models, such as exponential, Bernoulli, Poisson, or Brownian LPP. 

While positive-temperature models like the log-gamma polymer are not known to admit a Pfaffian point process structure, we anticipate that several of the techniques developed here --- particularly those related to line ensembles --- can be adapted to that setting. For instance, the aforementioned convergence to a pinned reverse Brownian line ensemble, see Lemma \ref{Lem.ConvOfInterPairs} for a precise statement, relies only minimally on integrable input and should extend to the broader framework of Macdonald processes, which include the log-gamma polymer as a degeneration. To complete the proof of weak convergence to the pinned Airy line ensemble in such settings, a promising direction would be to establish a strong characterization for this object, analogous to the one recently obtained for the (full-space) Airy line ensemble in \cite{AH23}.

\subsection*{Acknowledgments}
E.D. was partially supported by Simons Foundation International through Simons Award TSM-00014004. Z.Y. was partially supported by Ivan Corwin's NSF grants DMS: 1811143, DMS: 2246576, Simons Foundation Grant 929852, and the Fernholz Foundation's `Summer Minerva Fellows' program.

%
%
\section{Gibbsian line ensembles}\label{Section2} As mentioned in Section \ref{Section1.3}, we establish tightness of the curves in Theorems \ref{Thm.AiryLimit} and \ref{Thm.BrownianLimit} by exploiting the Gibbs property of the line ensemble $\lpplen$. In this section, we introduce the necessary notation and results to formulate this property precisely and to carry out the tightness proof later in the paper.

%
%
\subsection{Brownian line ensembles}\label{Section2.1} In this section, we recall some basic definitions and notation regarding line ensembles, largely following \cite[Section 2]{DEA21}.

Given two integers $a \leq b$, we let $\llbracket a, b \rrbracket$ denote the set $\{a, a+1, \dots, b\}$. We also set $\llbracket a,b \rrbracket = \emptyset$ when $a > b$, $\llbracket a, \infty \rrbracket = \{a, a+1, a+2 , \dots \}$, $\llbracket - \infty, b\rrbracket = \{b, b-1, b-2, \dots\}$ and $\llbracket - \infty, \infty \rrbracket = \mathbb{Z}$. Given an interval $\Lambda \subseteq \mathbb{R}$, we endow it with the subspace topology of the usual topology on $\mathbb{R}$. We let $(C(\Lambda), \mathcal{C})$ denote the space of continuous functions $f: \Lambda \rightarrow \mathbb{R}$ with the topology of uniform convergence over compact sets, see \cite[Chapter 7, Section 46]{Munkres}, and Borel $\sigma$-algebra $\mathcal{C}$. Given a set $\Sigma \subseteq \mathbb{Z}$, we endow it with the discrete topology and denote by $\Sigma \times \Lambda$ the set of all pairs $(i,x)$ with $i \in \Sigma$ and $x \in \Lambda$ with the product topology. We also denote by $\left(C (\Sigma \times \Lambda), \mathcal{C}_{\Sigma}\right)$ the space of real-valued continuous functions on $\Sigma \times \Lambda$ with the topology of uniform convergence over compact sets and Borel $\sigma$-algebra $\mathcal{C}_{\Sigma}$. We typically take $\Sigma = \llbracket 1, N \rrbracket$ with $N \in \mathbb{N} \cup \{\infty\}$. We now define the notion of a line ensemble.
\begin{definition}\label{Def.LineEnsembles}
Let $\Sigma \subseteq \mathbb{Z}$ and $\Lambda \subseteq \mathbb{R}$ be an interval. A {\em $\Sigma$-indexed line ensemble $\mathcal{L}$} is a random variable defined on a probability space $(\Omega, \mathcal{F}, \mathbb{P})$ that takes values in $\left(C (\Sigma \times \Lambda), \mathcal{C}_{\Sigma}\right)$. Intuitively, $\mathcal{L}$ is a collection of random continuous curves (sometimes referred to as {\em lines}), indexed by $\Sigma$, each of which maps $\Lambda$ into $\mathbb{R}$. We will often slightly abuse notation and write $\mathcal{L}: \Sigma \times \Lambda \rightarrow \mathbb{R}$, even though it is not $\mathcal{L}$ which is such a function, but $\mathcal{L}(\omega)$ for every $\omega \in \Omega$. For $i \in \Sigma$, we write $\mathcal{L}_i(\omega) = (\mathcal{L}(\omega))(i, \cdot)$ for the curve of index $i$ and note that the latter is a map $\mathcal{L}_i: \Omega \rightarrow C(\Lambda)$, which is $\mathcal{F}/\mathcal{C}$ measurable. 

We say that a line ensemble $\mathcal{L}$ is {\em non-intersecting} if $\mathbb{P}$-almost surely $\mathcal{L}_i(t) > \mathcal{L}_j(t)$ for all $i < j$ and $t \in \Lambda$. We say that a line ensemble $\mathcal{L}$ is {\em ordered} if $\mathbb{P}$-almost surely $\mathcal{L}_i(t) \geq \mathcal{L}_j(t)$ for all $i < j$ and $t \in \Lambda$. If $[a,b] \subseteq \Lambda$, we let $\mathcal{L}_i[a,b]$ denote the restriction of $\mathcal{L}_i$ to $[a,b]$, and note that the latter is a measurable map $\mathcal{L}_i[a,b]: \Omega \rightarrow C([a,b])$. If $\Sigma_1 \subseteq \Sigma$ and $[a,b] \subseteq \Lambda$, we let $\mathcal{L}\vert_{\Sigma_1 \times [a,b]}$ denote the restriction of $\mathcal{L}$ to $\Sigma_1 \times [a,b]$, which is a measurable map $\mathcal{L}\vert_{\Sigma_1 \times [a,b]}: \Omega \rightarrow C(\Sigma_1 \times [a,b])$.
\end{definition}
\begin{remark}\label{Rem.Polish} As shown in \cite[Lemma 2.2]{DEA21}, we have that $C(\Sigma \times \Lambda)$ is a Polish space, and so a line ensemble $\mathcal{L}$ is just a random element in $C(\Sigma \times \Lambda)$ in the sense of \cite[Section 3]{Billing}.
\end{remark}

If $W_t$ denotes a standard one-dimensional Brownian motion, then the process
$$\tilde{B}(t) =  W_t - t W_1, \hspace{5mm} 0 \leq t \leq 1,$$
is called a {\em Brownian bridge (from $\tilde{B}(0) = 0$ to $\tilde{B}(1) = 0$)}. Given $a,b,x,y \in \mathbb{R}$ with $a < b$, we define a random variable on $(C([a,b]), \mathcal{C})$ through
\begin{equation}\label{Eq.BBDef}
B(t) = (b-a)^{1/2} \cdot \tilde{B} \left( \frac{t - a}{b-a} \right) + \left(\frac{b-t}{b-a} \right) \cdot x + \left( \frac{t- a}{b-a}\right) \cdot y, 
\end{equation}
and refer to this random variable as a {\em Brownian bridge (from $B(a) = x$ to $B(b) = y$)}. If $\vec{x}, \vec{y} \in \mathbb{R}^k$, we let $\mathbb{P}_{\mathrm{free}}^{a,b,\vec{x}, \vec{y}}$ denote the law of $k$ independent Brownian bridges $\{B_i:[a,b] \rightarrow \mathbb{R}\}_{i = 1}^k$ from $B_i(a) = x_i$ to $B_i(b) = y_i$. We write $\mathbb{E}_{\mathrm{free}}^{a,b,\vec{x}, \vec{y}}$ for the expectation with respect to this measure.

We next introduce the notion of an $(f,g)$-avoiding Brownian bridge ensemble from \cite[Definition 2.7]{DEA21}. For $k \in \mathbb{N}$, we denote the open and closed {\em Weyl chambers} in $\mathbb{R}^k$ by
\begin{equation}\label{Eq.WeylChamber}
\weyl_k = \{(x_1, \dots, x_k) \in \mathbb{R}^k: x_1 > \cdots > x_k\}, \hspace{2mm}  \weylc_k = \{(x_1, \dots, x_k) \in \mathbb{R}^k: x_1 \geq \cdots \geq x_k\}.
\end{equation}

\begin{definition}\label{Def.fgAvoidingBE}
Fix $k \in \mathbb{N}$, $\vec{x}, \vec{y} \in \weyl_k$, $a,b \in \mathbb{R}$ with $a < b$, and two continuous functions $f: [a,b] \rightarrow (-\infty, \infty]$ and $g: [a,b] \rightarrow [-\infty,\infty)$. This means that either $f \in C([a,b])$ or $f \equiv \infty$, and similarly for $g$. In addition, we assume that $f(t) > g(t)$ for $t \in[a,b]$, $f(a) > x_1$, $f(b) > y_1$, $g(a) < x_k$, $g(b) < y_k$. With this data we let $\mathbb{P}_{\mathrm{avoid}}^{a,b,\vec{x},\vec{y}, f,g}$ denote the law of $k$ independent Brownian bridges $\{B_i\}_{i =1}^k$ with law $\mathbb{P}_{\mathrm{free}}^{a,b,\vec{x}, \vec{y}}$, conditioned on the event 
\begin{equation}\label{Eq.DefEavoid}
E^{f,g}_{\mathrm{avoid}} = \{f(t) > B_1(t) > B_2(t) > \cdots > B_k(t) > g(t) \mbox{ for all } t\in [a,b]\}.
\end{equation}
As explained in \cite[Definition 2.7]{DEA21}, we have that $E^{f,g}_{\mathrm{avoid}}$ is measurable and $\mathbb{P}_{\mathrm{free}}^{a,b,\vec{x}, \vec{y}}(E^{f,g}_{\mathrm{avoid}}) > 0$, so that the law $\mathbb{P}_{\mathrm{avoid}}^{a,b,\vec{x},\vec{y}, f,g}$ on $C(\llbracket 1, k \rrbracket \times [a,b])$ is well-defined. The expectation with respect to $\mathbb{P}_{\mathrm{avoid}}^{a,b,\vec{x},\vec{y}, f,g}$ is denoted by $\mathbb{E}_{\mathrm{avoid}}^{a,b,\vec{x},\vec{y}, f,g}$. When $f = \infty$ and $g = -\infty$, we drop them from the notation and simply write $\mathbb{P}_{\mathrm{avoid}}^{a,b,\vec{x},\vec{y}}$ and $\mathbb{E}_{\mathrm{avoid}}^{a,b,\vec{x},\vec{y}}$.
\end{definition}

Our next task is to extend $\mathbb{P}_{\mathrm{avoid}}^{a,b,\vec{x},\vec{y}}$ from deterministic $\vec{x}, \vec{y} \in \weyl_k$ to {\em random} $\vec{X},\vec{Y} \in \weylc_k$. The key to this extension is contained in the following lemma.
\begin{lemma}\label{Lem.BridgeEnsemblesCty} Fix $k \in \mathbb{N}$, $\vec{x}, \vec{y} \in \weylc_k$, and $a,b \in \mathbb{R}$ with $a < b$. 
\begin{enumerate}
\item[(a)] Suppose $\vec{x}\,^n, \vec{y}\,^n \in \weyl_k$ satisfy $\lim_{n \rightarrow \infty} \vec{x}\,^n = \vec{x}$, $\lim_{n \rightarrow \infty} \vec{y}\,^n = \vec{y}$. Then, as $n \rightarrow \infty$, the measures $\mathbb{P}_{\mathrm{avoid}}^{a,b,\vec{x}\,^n,\vec{y}\,^n}$ converge weakly to a probability measure on $C(\llbracket 1, k \rrbracket \times [a,b])$. We denote this limiting measure by $\mathbb{P}_{\mathrm{avoid}}^{a,b,\vec{x},\vec{y}}$ and write $\mathbb{E}_{\mathrm{avoid}}^{a,b,\vec{x},\vec{y}}$ for the expectation with respect to this measure. 
\item[(b)] Suppose $\vec{x}\,^n, \vec{y}\,^n \in \weylc_k$ satisfy $\lim_{n \rightarrow \infty} \vec{x}\,^n = \vec{x}$, $\lim_{n \rightarrow \infty} \vec{y}\,^n = \vec{y}$. Then, $\mathbb{P}_{\mathrm{avoid}}^{a,b,\vec{x}\,^n,\vec{y}\,^n} \Rightarrow \mathbb{P}_{\mathrm{avoid}}^{a,b,\vec{x},\vec{y}}$.
\end{enumerate}
\end{lemma}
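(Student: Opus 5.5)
The plan is to realize $\mathbb{P}_{\mathrm{avoid}}^{a,b,\vec{x},\vec{y}}$ through an explicit density formula that is continuous in the boundary data, rather than work directly with the conditioning. This is the standard route (Karlin--McGregor / Dyson Brownian motion). By Brownian scaling and translation we take $[a,b] = [0,1]$. For $\vec x,\vec y\in\weyl_k$ and $0<t_1<\dots<t_m<1$, the Karlin--McGregor formula writes the finite-dimensional density of $\mathbb{P}_{\mathrm{avoid}}^{0,1,\vec x,\vec y}$ at points $\vec z^{\,1},\dots,\vec z^{\,m}\in\weyl_k$ as
\[
\frac{\det[p_{t_1}(x_i,z^1_j)]\,\prod_{r=1}^{m-1}\det[p_{t_{r+1}-t_r}(z^r_i,z^{r+1}_j)]\,\det[p_{1-t_m}(z^m_i,y_j)]}{\det[p_1(x_i,y_j)]},
\]
where $p_t$ is the heat kernel.

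\textbf{Step 1: extend to the boundary of the chamber.} Numerator and denominator both vanish when coordinates of $\vec x$ or $\vec y$ coincide, but they do so to the same order. Write $p_t(x,z)=(2\pi t)^{-1/2}e^{-x^2/2t}e^{-z^2/2t}e^{xz/t}$ and use the expansion
\[
\det[e^{x_iz_j/t}] = \Delta(\vec x)\,\Delta(\vec z)\,\Phi_t(\vec x,\vec z)
\]
with $\Phi_t$ real-analytic and positive. This is a standard confluent-Vandermonde/Schur-function fact; when there are ties, one divides by the Vandermonde of each tied block. Cancelling the factors $\Delta(\vec x)\Delta(\vec y)$ from numerator and denominator gives a ratio that is jointly continuous in $(\vec x,\vec y)\in\weylc_k\times\weylc_k$ and whose denominator stays strictly positive. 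Hence the finite-dimensional densities converge locally uniformly as $\vec x^{\,n}\to\vec x$ and $\vec y^{\,n}\to\vec y$. Scheff\'e's lemma then gives convergence of the finite-dimensional marginals in total variation, provided the limiting densities integrate to $1$. For this we use Fatou's lemma together with the consistency of the limit, or the fact that total mass $1$ passes to the limit under dominated convergence with a Gaussian envelope.

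\textbf{Step 2: tightness in $C(\llbracket 1,k\rrbracket\times[0,1])$.} This is the main obstacle, because the density description does not control the behavior near the endpoints, where the limit data may be degenerate. Away from the endpoints the argument is routine. On $[\delta,1-\delta]$, condition on the values at $\delta$ and $1-\delta$. By the Gibbs property of $\mathbb{P}_{\mathrm{avoid}}$ (which follows from the formula) these values lie in $\weyl_k$ almost surely and are tight by Step 1. Monotone coupling (Lemma-type results of \cite{DEA21}) or the explicit transition densities then bound the moduli of continuity.

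Near $t=0$ I would instead compare with Dyson Brownian motion. For fixed $\vec x^{\,n}\in\weyl_k$ the law is a Doob $h$-transform of independent Brownian motions, and via the Markov property the path on $[0,\delta]$ has law absolutely continuous with respect to a Dyson Brownian motion started from $\vec x^{\,n}$. The Radon--Nikodym derivative is the ratio of Step 1 kernels, which is bounded uniformly in $n$ on events where $\vec Z(\delta)$ lies in a compact set. Dyson Brownian motion from $\vec x^{\,n}$ converges to Dyson Brownian motion from $\vec x\in\weylc_k$ (by the same confluent density, or SDE continuity in the initial condition), which yields uniform control of $\sup_{t\le\delta}|B_i(t)-x^n_i|$. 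Time reversal handles $t$ near $1$ in the same way.

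\textbf{Step 3: conclude.} Tightness together with convergence of the finite-dimensional marginals gives weak convergence for part (a). The limit is determined by the limiting density formula alone, so it does not depend on the approximating sequence, and the measure $\mathbb{P}_{\mathrm{avoid}}^{0,1,\vec x,\vec y}$ is well-defined. For part (b), note that Steps 1 and 2 only used $\vec x^{\,n},\vec y^{\,n}\in\weylc_k$ through the continuity of the desingularized formula. Alternatively, use a diagonal argument: choose $\tilde x^{\,n},\tilde y^{\,n}\in\weyl_k$ with $\mathbb{P}_{\mathrm{avoid}}^{0,1,\tilde x^{\,n},\tilde y^{\,n}}$ within $1/n$ of $\mathbb{P}_{\mathrm{avoid}}^{0,1,\vec x^{\,n},\vec y^{\,n}}$ in the L\'evy--Prokhorov metric, and apply (a) to this sequence.
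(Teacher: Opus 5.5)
Your proof is correct in outline, but for part (a) you take a genuinely different route from the paper. The paper does not prove (a) from scratch. It quotes it as the special case $f=\infty$, $g=-\infty$ of \cite[Lemma 4.17]{DSY26}. It then proves (b) by the same diagonal argument you give as your alternative: fix a bounded continuous $F$ and $\varepsilon>0$, use (a) to choose $\vec u^{\,n},\vec v^{\,n}\in\weyl_k$ within $1/n$ of $\vec x^{\,n},\vec y^{\,n}$ whose $F$-expectations are within $\varepsilon$, and apply (a) to $(\vec u^{\,n},\vec v^{\,n})\to(\vec x,\vec y)$.

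Your self-contained argument has three ingredients: Karlin--McGregor densities; cancellation of $\Delta(\vec x)\Delta(\vec y)$ via the kernel $\det[e^{x_iz_j/t}]/(\Delta(\vec x)\Delta(\vec z))$, which is strictly positive on the closed chambers (e.g.\ by the HCIZ integral); and a Doob-transform comparison with Dyson Brownian motion near the endpoints. What it buys is an explicit description of the limiting finite-dimensional laws. From that, independence of the approximating sequence and the fact that interior-time marginals lie in $\weyl_k$ are immediate. The citation buys brevity and generality: \cite[Lemma 4.17]{DSY26} also covers continuous barriers $f,g$, where no determinantal formula is available and your argument would not extend.

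A few details to tighten:
\begin{enumerate}
\item For total mass $1$ of the limiting densities, use the Gaussian envelope. Fatou alone only gives mass at most $1$.
\item Make the Radon--Nikodym comparison with Dyson Brownian motion at a \emph{fixed} time $\delta_0$, and only afterwards choose the small $\delta$. The derivative is a continuous positive function of $(\vec x^{\,n},\vec y^{\,n},\vec Z(\delta_0))$, hence bounded on $\{\vec Z(\delta_0)\in K\}$. As written, both $K$ and the bound depend on $\delta$.
\item Say explicitly where uniformity in $n$ of the Dyson estimate comes from: the synchronous coupling together with concavity of $\log\Delta$. This gives $\sup_t|X^n(t)-X^m(t)|\le|\vec x^{\,n}-\vec x^{\,m}|$.
\item Your first option for (b) is not justified as stated. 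Step 2 used the $h$-transform representation, which is only available for boundary data in $\weyl_k$. Use the diagonal argument instead; it is exactly the paper's.
\end{enumerate}
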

\begin{remark}\label{Rem.BridgeEnsemblesCty} We mention that in part (a), the measures $\mathbb{P}_{\mathrm{avoid}}^{a,b,\vec{x}\,^n,\vec{y}\,^n}$ are as in Definition \ref{Def.fgAvoidingBE}. The limiting measure $\mathbb{P}_{\mathrm{avoid}}^{a,b,\vec{x},\vec{y}}$ is readily seen to be independent of the sequences $\vec{x}\,^n, \vec{y}\,^n \in \weyl_k$, and to agree with Definition \ref{Def.fgAvoidingBE} whenever $\vec{x}, \vec{y} \in \weyl_k$. Consequently, part (a) provides us with a family of measures $\mathbb{P}_{\mathrm{avoid}}^{a,b,\vec{x},\vec{y}}$ for all $\vec{x}, \vec{y} \in \weylc_k$ that extends the one in Definition \ref{Def.fgAvoidingBE}. Part (b) of the lemma shows that this new family is continuous in its boundary data.
\end{remark}
\begin{proof} Part (a) is a special case of \cite[Lemma 4.17]{DSY26}, corresponding to $f_n = f = \infty$ and $g_n = g = -\infty$. In the remainder, we establish part (b).

Define the usual max norm on $\mathbb{R}^k$ by $\|\vec{u}\|_{\infty} = \max_{1 \leq i \leq k} |u_i|$. Fix $\varepsilon > 0$ and a bounded continuous $F: C(\llbracket 1, k \rrbracket \times [a,b]) \rightarrow \mathbb{R}$. From part (a), we can find $\vec{u}\,^n, \vec{v}\,^n \in \weyl_k$, such that 
$$\|\vec{u}\,^n - \vec{x}\,^n\|_{\infty} \leq 1/n, \hspace{2mm} \|\vec{v}\,^n - \vec{y}\,^n\|_{\infty} \leq 1/n, \hspace{2mm} \left| \mathbb{E}_{\mathrm{avoid}}^{a,b,\vec{x}\,^n,\vec{y}\,^n}\left[F(\mathcal{Q})\right] -\mathbb{E}_{\mathrm{avoid}}^{a,b,\vec{u}\,^n,\vec{v}\,^n}\left[F(\mathcal{Q})\right]  \right| \leq \varepsilon.$$
The above imply $\lim_{n \rightarrow \infty} \vec{u}\,^n = \vec{x}$, $\lim_{n \rightarrow \infty} \vec{v}\,^n = \vec{y}$, and so from part (a), we conclude
$$\limsup_{n \rightarrow \infty}\left| \mathbb{E}_{\mathrm{avoid}}^{a,b,\vec{x}\,^n,\vec{y}\,^n}\left[F(\mathcal{Q})\right] -\mathbb{E}_{\mathrm{avoid}}^{a,b,\vec{x},\vec{y}}\left[F(\mathcal{Q})\right]  \right| \leq \varepsilon + \limsup_{n \rightarrow \infty}\left| \mathbb{E}_{\mathrm{avoid}}^{a,b,\vec{u}\,^n,\vec{v}\,^n}\left[F(\mathcal{Q})\right] -\mathbb{E}_{\mathrm{avoid}}^{a,b,\vec{x},\vec{y}}\left[F(\mathcal{Q})\right]  \right| = \varepsilon.$$
As $F$ and $\varepsilon$ were arbitrary, we conclude $\mathbb{P}_{\mathrm{avoid}}^{a,b,\vec{x}\,^n,\vec{y}\,^n} \Rightarrow \mathbb{P}_{\mathrm{avoid}}^{a,b,\vec{x},\vec{y}}$.
\end{proof}

Suppose that $\mu$ is a probability measure on $\weylc_k \times \weylc_k$, which is endowed with the subspace topology of $\mathbb{R}^{2k}$ and corresponding Borel $\sigma$-algebra. If $\vec{X}, \vec{Y} \in \weylc_k$ are random vectors defined on the same probability space with law $\mu$, we let $\mathbb{P}_{\mathrm{avoid}}^{a,b,\mu}$ denote the law of a $\llbracket 1, k \rrbracket$-indexed line ensemble on $[a,b]$ whose law conditional on $(\vec{X}, \vec{Y})$ is given by $\mathbb{P}_{\mathrm{avoid}}^{a,b,\vec{X}, \vec{Y}}$ as in Lemma \ref{Lem.BridgeEnsemblesCty}(a). More precisely, for any Borel set $A \in \mathcal{C}_{\llbracket 1, k \rrbracket}$, we let
\begin{equation}\label{Eq.RandomBryMeas}
\mathbb{P}_{\mathrm{avoid}}^{a,b,\mu}(A) := \int_{\weylc_k \times \weylc_k} \mathbb{E}_{\mathrm{avoid}}^{a,b,\vec{x},\vec{y}} \left[{\bf 1}\{\mathcal{Q} \in A\}  \right]\mu(dx, dy).  
\end{equation}

Let us briefly explain why (\ref{Eq.RandomBryMeas}) makes sense. Specifically, we seek to show that for any bounded measurable $F: C(\llbracket 1, k \rrbracket \times [a,b]) \rightarrow \mathbb{R}$, we have that the map
\begin{equation}\label{Eq.MeasWellDef}
(\vec{x}, \vec{y}) \rightarrow \mathbb{E}_{\mathrm{avoid}}^{a,b,\vec{x},\vec{y}} \left[F(\mathcal{Q})  \right],
\end{equation}
is measurable. As the latter is a standard monotone class argument, we will be brief. 

Firstly, we note that if $F: C(\llbracket 1, k \rrbracket \times [a,b]) \rightarrow \mathbb{R}$ is a bounded continuous function, then from Lemma \ref{Lem.BridgeEnsemblesCty}(b), we know that the map (\ref{Eq.MeasWellDef}) is bounded and continuous, and hence measurable. 

For $\phi , \psi  \in C(\llbracket 1, k \rrbracket \times [a,b])$, let 
$$d(\phi,\psi) := \sup_{(i,t) \in \llbracket 1, k \rrbracket \times [a,b]}|\phi(i,t) - \psi(i,t)|,$$
which is a metric on $C(\llbracket 1, k \rrbracket \times [a,b])$ that generates the topology of uniform convergence over compact sets. If $A \subseteq C(\llbracket 1, k \rrbracket \times [a,b])$ is closed and $\phi \in C(\llbracket 1, k \rrbracket\times [a,b])$, we let $d(\phi, A) := \inf\{d(\phi,\psi): \psi \in A\}$, and define the functions
$$\rho_n(\phi):= \max(1 - nd(\phi,A), 0),$$
which are readily seen to be bounded and continuous on $C(\llbracket 1, k \rrbracket \times [a,b])$. In addition, $\rho_n \rightarrow {\bf 1}_A$ pointwise. Consequently, by the bounded convergence theorem, we have for each $\vec{x}, \vec{y}$
$$ \mathbb{E}_{\mathrm{avoid}}^{a,b,\vec{x},\vec{y}} \left[\rho_n(\mathcal{Q})  \right] \rightarrow \mathbb{E}_{\mathrm{avoid}}^{a,b,\vec{x},\vec{y}} \left[{\bf 1}\{\mathcal{Q}\in A \}  \right].$$
We conclude that (\ref{Eq.MeasWellDef}) defines a bounded measurable function whenever $F = {\bf 1}_A$ for a closed set $A \subseteq C(\llbracket 1, k \rrbracket \times [a,b])$ as the pointwise limit of bounded continuous (and hence measurable) functions. 

The arguments in the last paragraph show that if $\mathcal{H}$ denotes the family of bounded measurable $F$ that satisfy (\ref{Eq.MeasWellDef}), then it satisfies the conditions of the Monotone class theorem, see \cite[Theorem 5.2.2]{Durrett} with the $\pi$-system $\mathcal{A}$ given by the collection of closed sets. The latter theorem implies that $\mathcal{H}$ contains all bounded measurable $F$, which establishes (\ref{Eq.MeasWellDef}).

We denote the expectation with respect to the measure $\mathbb{P}_{\mathrm{avoid}}^{a,b,\mu}$ by $\mathbb{E}_{\mathrm{avoid}}^{a,b,\mu}$, and note that by (\ref{Eq.RandomBryMeas}) and a second application of the Monotone class theorem \cite[Theorem 5.2.2]{Durrett}, we have for all bounded measurable $F: C(\llbracket 1, k \rrbracket \times [a,b]) \rightarrow \mathbb{R}$ that \begin{equation}\label{Eq.RandomBryExp}
\mathbb{E}_{\mathrm{avoid}}^{a,b,\mu}\left[F(\mathcal{Q})\right] = \int_{\weylc_k \times \weylc_k} \mathbb{E}_{\mathrm{avoid}}^{a,b,\vec{x},\vec{y}} \left[F(\mathcal{Q}) \right]\mu(dx, dy),  
\end{equation}
where the latter integral is well-defined as the integrand is bounded and measurable from (\ref{Eq.MeasWellDef}).\\

Our goal for the remainder of this section is to introduce the pinned half-space Brownian Gibbs property. This property was originally introduced by the first author and Serio \cite{DasSerio25}, although our discussion below will be closer to \cite[Section 4.1]{DSY26}. We mention that in \cite[Section 4.1]{DSY26} the property was formulated in terms of 3D Bessel bridges, but it will be convenient for us to rephrase it in terms of the measures $\mathbb{P}_{\mathrm{avoid}}^{a,b,\mu}$ from (\ref{Eq.RandomBryMeas}), and we start by explaining the connection between the two. 

The following definition introduces the 3D Bessel bridge.
\begin{definition}\label{Def.Bessel}
Suppose $b,y>0$. Let $\{\vec{W}_t\}_{t\ge 0}$ be a standard three-dimensional Brownian motion. The \textit{3D Bessel bridge} on $[0,b]$ to $y$ is the stochastic process $\{V_t = \lVert \vec{W}_t\rVert_2 : t\in[0,b]\}$ conditioned on $V_b = y$. (For a formal definition of this conditioning, see \cite[Section XI.3]{RY}.) 
\end{definition}

\begin{remark}\label{Rem.Bessel}
The 3D Bessel bridge $\{V_t\}_{t\in[0,b]}$ to $y$ is uniquely characterized by its finite-dimensional distributions, which can be found in \cite[p. 464]{RY}, and we recall below. For $t>0$, $x\in\mathbb{R}$, define the standard heat kernel $\hk_t(x,y) = (2\pi t)^{-1/2}e^{-(x-y)^2/2t}$. Fix $0<t_1<t_2<\cdots<t_k < t_{k+1} = b$, $y_1,\dots,y_k>0$, and $y_{k+1}=y$. Then the joint density $f_{({t_1},\dots,{t_k})}$ of $(V_{t_1},\dots,V_{t_k})$ is given by
\begin{equation}\label{Eq.BesselDensity}
f_{({t_1},\dots,{t_k})}(y_1,\dots,y_k) = \frac{b}{t_1} \cdot \frac{y_1}{y} \cdot \frac{\hk_{t_1}(0,y_1)}{\hk_b(0,y)}\prod_{i=1}^k \left[\hk_{t_{i+1}-t_i}(y_i,y_{i+1}) - \hk_{t_{i+1}-t_i}(y_i, -y_{i+1})\right].
\end{equation}
\end{remark}

The next definition isolates the main special case of $\mathbb{P}_{\mathrm{avoid}}^{a,b,\mu}$ from (\ref{Eq.RandomBryMeas}) that we consider.
\begin{definition}\label{Def.PinnedPair} Fix $b > 0$ and $\vec{y} \in \weylc_2$. Suppose that $\mu$ is the measure on $\weylc_2 \times \weylc_2$, defined by 
$$\mu(A) = \mathbb{P}\left(((Z,Z),(y_1,y_2)) \in A\right),$$
where $Z$ is a normal random variable with mean  $(y_1 + y_2)/2$ and variance $b/2$. Let $\mathbb{P}_{\mathrm{pin}}^{b, \vec{y}}$ be equal to $\mathbb{P}_{\mathrm{avoid}}^{a,b,\mu}$ as in (\ref{Eq.RandomBryMeas}) for this choice of $\mu$ and $a = 0$. If $\mathcal{Q} = (\mathcal{Q}_1, \mathcal{Q}_2)$ has law $\mathbb{P}_{\mathrm{pin}}^{b, \vec{y}}$, we refer to $\mathcal{Q}$ as a {\em pinned pair of avoiding reverse Brownian motions with boundary data $\vec{y}$} or simply a {\em pinned pair} for short.
\end{definition}
\begin{remark}\label{Rem.PinnedPair} If $W_t$ denotes a standard one-dimensional Brownian motion, and if $y, \mu \in \mathbb{R}$, we define the {\em Brownian motion with drift $\mu$ from $B(0) = y$} by
\begin{equation}\label{eq:DefBrownianMotionDrift}
B(t) = y + W_t + \mu t.
\end{equation}
If $b \in (0,\infty)$ we also define the {\em reverse Brownian motion with drift $\mu$ from $\cev{B}(b) = y$} by
\begin{equation}\label{eq:RevDefBrownianMotionDrift}
\cev{B}(t) = B(b-t) \mbox{ for } 0 \leq t \leq b.
\end{equation}
Informally, $\mathbb{P}_{\mathrm{pin}}^{b, \vec{y}}$ in Definition \ref{Def.PinnedPair} can be thought of as the law of two independent reverse Brownian motions $\cev{B}_1, \cev{B}_2$ that start from $\cev{B}_1(b) = y_1$, $\cev{B}_2(b) = y_2$, have zero drift, and have been conditioned to stay ordered $\cev{B}_1(t) \geq \cev{B}_2(t)$ for $t \in [0,b]$ and end in the same location at time $0$. This is the origin of its name.  
\end{remark}

The next lemma explains the relationship between a 3D Bessel bridge and a pinned pair.
\begin{lemma}\label{Lem.BesselToPinned} Fix $\vec{y} \in \weyl_2$ and $b > 0$. Let $U$ be a reverse Brownian motion (with zero drift) on $[0,b]$ with $U(b) = 2^{-1/2}(y_1 + y_2)$ and $V$ an independent 3D Bessel bridge with $V_b=2^{-1/2}(y_1 - y_2)$. Define $B_1 = 2^{-1/2} (U + V)$ and $B_2 = 2^{-1/2}(U-V)$. Then, the law of $(B_1, B_2)$ is given by $\mathbb{P}_{\mathrm{pin}}^{b, \vec{y}}$ as in Definition \ref{Def.PinnedPair}.
\end{lemma}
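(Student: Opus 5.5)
The plan is to compute both laws through the rotation $(u,v) = (2^{-1/2}(x_1+x_2), 2^{-1/2}(x_1-x_2))$, which is an orthogonal map of $\mathbb{R}^2$. The key elementary fact is this. If $(\tilde B_1,\tilde B_2)$ has law $\mathbb{P}_{\mathrm{free}}^{0,b,\vec{x},\vec{y}}$, then $\tilde U = 2^{-1/2}(\tilde B_1+\tilde B_2)$ and $\tilde V = 2^{-1/2}(\tilde B_1 - \tilde B_2)$ are independent Brownian bridges. The first, $\tilde U$, goes from $2^{-1/2}(x_1+x_2)$ to $2^{-1/2}(y_1+y_2)$, and the second, $\tilde V$, goes from $2^{-1/2}(x_1-x_2)$ to $2^{-1/2}(y_1-y_2)$. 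This holds because an orthogonal image of a standard planar Brownian bridge is again one. The avoidance event $E_{\mathrm{avoid}}$ (with $f=\infty$, $g=-\infty$) is exactly $\{\tilde V(t) > 0 \mbox{ for all } t\in[0,b]\}$, which depends only on $\tilde V$. Hence for $\vec{x} \in \weyl_2$ the law $\mathbb{P}_{\mathrm{avoid}}^{0,b,\vec{x},\vec{y}}$ is the image under the inverse rotation of (free bridge $\tilde U$) $\otimes$ (Brownian bridge $\tilde V$ conditioned to stay positive).

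Next I take $\vec{x}^{\,n} = (z+1/n, z-1/n) \in \weyl_2$. By Lemma \ref{Lem.BridgeEnsemblesCty}(a), $\mathbb{P}_{\mathrm{avoid}}^{0,b,\vec{x}^{\,n},\vec{y}}$ converges weakly to $\mathbb{P}_{\mathrm{avoid}}^{0,b,(z,z),\vec{y}}$. Since that weak limit is already known to exist, it suffices to identify its finite-dimensional distributions. The $\tilde U$ component does not depend on $n$ beyond its starting point $2^{1/2}z$. For the $\tilde V$ component I write the finite-dimensional density of a Brownian bridge from $\varepsilon = 2^{1/2}/n$ to $v = 2^{-1/2}(y_1-y_2)$ conditioned to stay positive. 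By the reflection principle it is
$$\frac{\prod_{i=0}^{k}\left[\hk_{t_{i+1}-t_i}(y_i,y_{i+1}) - \hk_{t_{i+1}-t_i}(y_i,-y_{i+1})\right]}{\hk_b(\varepsilon,v)-\hk_b(\varepsilon,-v)}, \quad t_0 = 0,\ y_0 = \varepsilon,\ t_{k+1}=b,\ y_{k+1}=v.$$
As $\varepsilon\to 0$ I use $\hk_t(\varepsilon,y)-\hk_t(\varepsilon,-y) = (2\varepsilon y/t)\hk_t(0,y)(1+o(1))$ in both the $i=0$ factor and the denominator. The $\varepsilon$'s cancel and the limit is exactly (\ref{Eq.BesselDensity}). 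A Scheffé-type argument then upgrades this pointwise density convergence to convergence of finite-dimensional laws. Therefore, under $\mathbb{P}_{\mathrm{avoid}}^{0,b,(z,z),\vec{y}}$, the rotated pair is an independent pair: $U$ is a Brownian bridge from $2^{1/2}z$ to $2^{-1/2}(y_1+y_2)$, and $V$ is a 3D Bessel bridge to $2^{-1/2}(y_1-y_2)$.

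Finally I integrate over $z$ using (\ref{Eq.RandomBryExp}) with $Z\sim N((y_1+y_2)/2, b/2)$. Only the $U$ component depends on $z$. Its starting point is $U(0) = 2^{1/2}Z$, which is distributed as $N(2^{-1/2}(y_1+y_2), b)$. A Brownian bridge on $[0,b]$ whose starting point is independent and distributed as $N(U(b), b)$ is a reverse Brownian motion from $U(b)$. To see this, disintegrate a reverse Brownian motion at time $0$: its time-$0$ value is $N(U(b),b)$, and conditionally on that value it is a bridge. Hence the mixture has $U$ a reverse Brownian motion, independent of the Bessel bridge $V$. Inverting the rotation gives the stated law of $(B_1,B_2)$.

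The main obstacle is the middle step. It requires a careful justification of the $\varepsilon\to0$ limit of the conditioned bridge densities. It also requires that finite-dimensional identification, combined with the weak convergence already provided by Lemma \ref{Lem.BridgeEnsemblesCty}(a), pins down the limit as the Bessel bridge. That second point is fine because finite-dimensional distributions determine laws on $C([0,b])$ and the Bessel bridge is characterized by (\ref{Eq.BesselDensity}).
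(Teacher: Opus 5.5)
Your proof is correct, but it takes a different route from the paper's. The paper does no explicit computation. It takes independent reverse Brownian motions from $y_1,y_2$ with drifts $-n$ and $+n$, conditions them to avoid, and shows this one sequence has two weak limits; uniqueness of weak limits then gives the lemma.
\begin{itemize}
\item The first limit is the Bessel construction $(B_1,B_2)$, quoted from \cite[Lemma 4.18]{DSY26}.
\item The second limit is $\mathbb{P}_{\mathrm{pin}}^{b,\vec{y}}$. To get it, the paper conditions on the time-$0$ values (Gibbs property) and imports from \cite{DSY26} that $U^n$ is a driftless reverse Brownian motion independent of $V^n$ and that $V^n(0)\Rightarrow 0$. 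Together these give that the time-$0$ values converge to $(Z,Z)$. It then applies the continuity of $(\vec{x},\vec{y})\mapsto\mathbb{P}_{\mathrm{avoid}}^{0,b,\vec{x},\vec{y}}$ from Lemma \ref{Lem.BridgeEnsemblesCty}.
\end{itemize}
You instead decouple directly. The rotation turns the avoiding bridge into (free bridge)$\otimes$(bridge conditioned to stay positive). The pinned limit $\vec{x}^{\,n}\to(z,z)$ then becomes the classical $\varepsilon\to 0$ limit of a positive-conditioned bridge. You identify that limit with (\ref{Eq.BesselDensity}) through the reflection-principle densities, the expansion $\hk_t(\varepsilon,y)-\hk_t(\varepsilon,-y)\sim(2\varepsilon y/t)\hk_t(0,y)$, and Scheff\'e's lemma. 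Finally, the Gaussian mixing step turns the $U$-bridge into a reverse Brownian motion.

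What each route buys: yours is self-contained and elementary, since it avoids the drifted-process analysis of \cite{DSY26}. It also shows why the time-$0$ law in Definition \ref{Def.PinnedPair} is exactly $N((y_1+y_2)/2,b/2)$: after rotation, $2^{1/2}Z\sim N(U(b),b)$ is precisely the time-$0$ marginal that makes the $U$-bridge a reverse Brownian motion. The paper's route is shorter given the imported lemma and avoids density asymptotics altogether.

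One cosmetic point: in your density formula you reuse $y_i$ for the intermediate positions, which clashes with the boundary data $\vec{y}$; rename them.
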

\begin{proof} Let $B_1^{n}, B_2^{n}$ be two independent reverse Brownian motions on $[0,b]$ from $B_1^n(b) = y_1, B_2^n(b) = y_2$, with drifts $-n$ and $+n$ respectively. Let $(\tilde{B}_1^n, \tilde{B}_2^n)$ denote the random curves with the law of $(B_1^n,B_2^n)$, conditioned on the event $E_{\mathrm{avoid}}=\{B_1^n(t)>B_2^n(t)\mbox{ for }t\in [0,b]\}$. We claim that 
\begin{equation}\label{Eq.BesselToPinnedRed}
(\tilde{B}_1^n, \tilde{B}_2^n) \Rightarrow \mathbb{P}_{\mathrm{pin}}^{b, \vec{y}} \mbox{, and }(\tilde{B}_1^n, \tilde{B}_2^n) \Rightarrow (B_1, B_2),
\end{equation}
which by uniqueness of weak limits implies the statement of the lemma.

The second convergence in (\ref{Eq.BesselToPinnedRed}) was established in \cite[Lemma 4.18]{DSY26}, so we focus on the first. Let $F : C(\llbracket 1, 2 \rrbracket \times [0,b])\to\mathbb{R}$ be any bounded continuous function. Let $\phi(x_1,x_2) =\mathbb{E}[F(\mathcal{B}_1^{x_1},\mathcal{B}_2^{x_2})]$, where $(\mathcal{B}_1^{x_1},\mathcal{B}_2^{x_2})$ has the law $\mathbb{P}_{\mathrm{avoid}}^{0,b,\vec{x},\vec{y}}$. As $F$ is bounded and continuous, by Remark \ref{Rem.BridgeEnsemblesCty}, $\phi$ is also bounded and continuous. By properties of Brownian motion we have
\begin{align}\label{er1}
    \mathbb{E}[F(\tilde B_1^n,\tilde B_2^n)] = \mathbb{E}\left[\mathbb{E}\left[F(\tilde B_1^n,\tilde B_2^n) |(\tilde B_1^n(0),\tilde B_2^n(0)) \right]\right] =\mathbb{E}\left[\phi(\tilde B_1^n(0),\tilde B_2^n(0))\right].
\end{align}

Let $U^n=2^{-1/2}(\tilde B_1^n+\tilde B_2^n)$ and $V^n=2^{-1/2}(\tilde B_1^n-\tilde B_2^n)$. From Steps 1 and 2 in the proof of \cite[Lemma 4.18]{DSY26}, we know that (1) $U^n$ is independent of $V^n$, (2) $U^n$ is a reverse Brownian motion with drift $0$, (3) $V^n(0) \Rightarrow 0$. The latter imply 
\begin{align}\label{er2}
(\tilde B_1^n(0),\tilde B_2^n(0)) \Rightarrow (Z,Z),
\end{align}
where $Z$ is a normal variable with mean $0$ and variance $b/2$. We then obtain
$$\lim_{n \rightarrow \infty} \mathbb{E}[F(\tilde B_1^n,\tilde B_2^n)] = \lim_{n \rightarrow \infty} \mathbb{E}\left[\phi(\tilde B_1^n(0),\tilde B_2^n(0))\right] = \mathbb{E}\left[\phi(Z,Z)\right] = \mathbb{E}_{\mathrm{pin}}^{b, \vec{y}}[F(\mathcal{Q}_1, \mathcal{Q}_2)],$$
where the first equality uses (\ref{er1}), the second uses (\ref{er2}) and the fact that $\phi$ is bounded and continuous, and the third uses the definition of $\phi$ and $\mathbb{P}_{\mathrm{pin}}^{b, \vec{y}}$. The last displayed equation implies the first statement in (\ref{Eq.BesselToPinnedRed}) and hence the lemma.
\end{proof}

We next define the $g$-avoiding pinned reverse Brownian line ensembles. The following is a restatement of \cite[Definition 4.7]{DSY26}.
\begin{definition}\label{Def.PinnedBM}
Suppose $\vec{y}\in \weyl_{2k}$, $b>0$, and $g:[0,b]\to[-\infty,\infty)$ is a continuous function with $g(b)<y_{2k}$. Let $\{U_i\}_{i=1}^k$ be independent reverse Brownian motions (with no drift) on $[0,b]$ from $U_i(b) = 2^{-1/2}(y_{2i-1}+y_{2i})$, and let $\{V_i\}_{i=1}^k$ be independent 3D Bessel bridges on $[0,b]$ to $V_i(b) = 2^{-1/2}(y_{2i-1}-y_{2i})$ as in Definition \ref{Def.Bessel}. For $1\leq i\leq k$, define $B_{2i-1} = 2^{-1/2}(U_i+V_i)$ and $B_{2i} = 2^{-1/2}(U_i-V_i)$. Then we let $\mathbb{P}_{\mathrm{pin}}^{b,\vec{y},g}$ denote the law of $\{B_i\}_{i=1}^{2k}$, conditioned on the event
\begin{equation}\label{Eq.AvoidPinEvent}
E_{\mathrm{avoid}}^{\mathrm{pin}} = \left\{B_{2i}(r) > B_{2i+1}(r) \mbox{ for all } i\in\llbracket 1,k\rrbracket, \, r\in(0,b)\right\},
\end{equation}
where $B_{2k+1} = g$. The expectation with respect to this measure is denoted by $\mathbb{E}_{\mathrm{pin}}^{b,\vec{y},g}$. When $g=-\infty$, we omit the last superscript and simply write $\mathbb{P}_{\mathrm{pin}}^{b,\vec{y}}$, and $\mathbb{E}_{\mathrm{pin}}^{b,\vec{y}}$.
\end{definition}
\begin{remark}\label{Rem.PinnedBMRem1}
We mention that in \cite[Remark 4.8]{DSY26} it is explained why $E_{\mathrm{avoid}}^{\mathrm{pin}}$ is measurable and $\mathbb{P}\left(E_{\mathrm{avoid}}^{\mathrm{pin}}\right) > 0$, so that we can condition on this event, making $\mathbb{P}_{\mathrm{pin}}^{b,\vec{y},g}$ well-defined.  
\end{remark}
\begin{remark}\label{Rem.PinnedBMRem2}
When $k = 1$, $\vec{y} \in \weyl_2$, and $g = -\infty$, we have from Lemma \ref{Lem.BesselToPinned} that $\mathbb{P}_{\mathrm{pin}}^{b,\vec{y}} = \mathbb{P}_{\mathrm{pin}}^{b,\vec{y},-\infty}$ from Definition \ref{Def.PinnedBM} agrees with $\mathbb{P}_{\mathrm{pin}}^{b,\vec{y}}$ from Definition \ref{Def.PinnedPair}, so there is no clash of notation.
\end{remark}
\begin{remark}\label{Rem.PinnedBMRem3}
Lemma \ref{Lem.BesselToPinned} gives an alternative interpretation of the measures $\mathbb{P}_{\mathrm{pin}}^{b,\vec{y},g}$ from Definition \ref{Def.PinnedBM}. Namely, $\mathbb{P}_{\mathrm{pin}}^{b,\vec{y},g}$ is the same as the law of $k$ independent pinned pairs $(B_{2i-1}, B_{2i})$ with boundary data $\vec{y}\,^i = (y_{2i-1}, y_{2i})$ for $i \in \llbracket 1, k \rrbracket$ that are conditioned on $E_{\mathrm{avoid}}^{\mathrm{pin}}$ as in (\ref{Eq.AvoidPinEvent}).
\end{remark}

We end this section with the definition of the partial Brownian Gibbs property from \cite[Definition 2.7]{DM21} and the pinned half-space Brownian Gibbs property from \cite[Definition 4.10]{DSY26}.
\begin{definition}\label{Def.PBGP}
Fix a set $\Sigma = \llbracket 1 , N \rrbracket$ with $N \in \mathbb{N} \cup \{\infty\}$ and an interval $\Lambda \subset \mathbb{R}$.  A $\Sigma$-indexed line ensemble $\mathcal{L}$ on $\Lambda$ is said to satisfy the {\em partial Brownian Gibbs property} if and only if it is non-intersecting and for any finite set $K = \{k_1, k_1 + 1, \dots, k_2 \} \subset \Sigma$ with $k_2 \leq N - 1$, $[a,b] \subset \Lambda$ and any bounded Borel-measurable function $F: C(K \times [a,b]) \rightarrow \mathbb{R}$ we have $\mathbb{P}$-almost surely
\begin{equation}\label{Eq.PBGP}
\mathbb{E} \left[ F(\mathcal{L}|_{K \times [a,b]}) {\big \vert} \mathcal{F}_{\mathrm{ext}} (K \times (a,b))  \right] =\mathbb{E}_{\mathrm{avoid}}^{a,b, \vec{x}, \vec{y}, f, g} \bigl[ F(\tilde{\mathcal{Q}}) \bigr].
\end{equation}
On the left side of (\ref{Eq.PBGP}), we have that
$$\mathcal{F}_{\mathrm{ext}} (K \times (a,b)) = \sigma \left \{ \mathcal{L}_i(s): (i,s) \in \Sigma \times \Lambda \setminus K \times (a,b) \right\},$$
and $ \mathcal{L}|_{K \times [a,b]}$ is the restriction of $\mathcal{L}$ to the set $K \times [a,b]$. On the right side of (\ref{Eq.PBGP}), we have $\vec{x} = (\mathcal{L}_{k_1}(a), \dots, \mathcal{L}_{k_2}(a))$, $\vec{y} = (\mathcal{L}_{k_1}(b), \dots, \mathcal{L}_{k_2}(b))$, $f = \mathcal{L}_{k_1 - 1}[a,b]$ with the convention that $f = \infty$ if $k_1 = 1$, and $g = \mathcal{L}_{k_2 +1}[a,b]$. In addition, $\mathcal{Q} = \{\mathcal{Q}_i\}_{i = 1}^{k_2 - k_1 + 1}$ has law $\mathbb{P}_{\mathrm{avoid}}^{a,b, \vec{x}, \vec{y}, f, g}$ as in Definition \ref{Def.fgAvoidingBE}, and $\tilde{\mathcal{Q}} = \{\tilde{\mathcal{Q}}_{i}\}_{i = k_1}^{k_2}$ satisfies $\tilde{\mathcal{Q}}_i = \mathcal{Q}_{i - k_1 + 1}$.
\end{definition}

\begin{definition}\label{Def.PinnedBGP} Fix a set $\Sigma = \llbracket 1, N \rrbracket$ with $N \in \mathbb{N} \cup \{\infty\}$, and an interval $\Lambda= [0,T]$ with $T \in (0,\infty)$ or $\Lambda = [0,T)$ with $T \in (0, \infty]$. A $\Sigma$-indexed line ensemble $\mathcal{L}$ on $\Lambda$ satisfies the {\em pinned half-space Brownian Gibbs property} if its restriction to $\Lambda\cap(0,\infty)$ is non-intersecting and for any $b \in \Lambda \cap (0, \infty)$, any $k\in\llbracket 1,\lfloor(N-1)/2\rfloor\rrbracket$, and any bounded Borel-measurable function $F: C(\llbracket 1,2k\rrbracket \times [0,b]) \rightarrow \mathbb{R}$,
\begin{equation}\label{Eq.HSPinnedBGP}
\mathbb{E} \left[ F\left(\mathcal{L}|_{\llbracket 1,2k\rrbracket \times [0,b]} \right) \,\big|\, \mathcal{F}_{\mathrm{ext}} (\llbracket 1,2k\rrbracket \times [0,b))  \right] =\mathbb{E}_{\mathrm{pin}}^{b,\vec{y}, g} \bigl[ F( \mathcal{Q} ) \bigr],\quad \mathbb{P}\text{-almost surely.} 
\end{equation}
Here, $g = \mathcal{L}_{2k + 1}[0,b]$, $\vec{y}=(\mathcal{L}_1(b), \dots, \mathcal{L}_{2k}(b))\in\weyl_{2k}$, $\mathcal{Q}$ has law $\mathbb{P}_{\mathrm{pin}}^{b,\vec{y}, g}$, and
\[\mathcal{F}_{\mathrm{ext}} (\llbracket 1,2k\rrbracket \times [0,b)) := \sigma \left\{ \mathcal{L}_i(s): (i,s) \in  \Sigma \times \Lambda \setminus \llbracket 1,2k\rrbracket \times [0,b) \right\}.
\] 
\end{definition}

%
%
\subsection{Geometric line ensembles}\label{Section2.2} In this section, we introduce several discrete analogues of the definitions in Section \ref{Section2.1}. Our discussion largely follows \cite[Section 2.1]{D24b}.

\begin{definition}\label{Def.DLE}
Let $\Sigma \subseteq \mathbb{Z}$ and $\llbracket T_0, T_1 \rrbracket$ be a nonempty integer interval in $\mathbb{Z}$. Consider the set $Y$ of functions $f: \Sigma \times \llbracket T_0, T_1 \rrbracket \rightarrow \mathbb{Z}$ such that $f(i, j+1) - f(i,j) \in \mathbb{Z}_{\geq 0}$ when $i \in \Sigma$ and $j \in\llbracket T_0, T_1 -1 \rrbracket$ and let $\mathcal{D}$ denote the discrete $\sigma$-algebra on $Y$. We call a function $f: \llbracket T_0, T_1 \rrbracket \rightarrow \mathbb{Z}$ such that $f( j+1) - f(j) \in \mathbb{Z}_{\geq 0}$ when $j \in\llbracket T_0, T_1 -1 \rrbracket$  an {\em increasing path} and elements in $Y$ {\em collections of increasing paths}. A {\em $\Sigma$-indexed geometric line ensemble $\mathfrak{L}$ on $\llbracket T_0, T_1 \rrbracket$}  is a random variable defined on a probability space $(\Omega, \mathcal{B}, \mathbb{P})$, taking values in $Y$ such that $\mathfrak{L}$ is a $(\mathcal{B}, \mathcal{D})$-measurable function. Unless otherwise specified, we will assume that $T_0 \leq T_1$ are both integers, although the above definition makes sense if $T_0 = -\infty$, or $T_1 = \infty$, or both.
\end{definition}

\begin{remark} The condition $f(i, j+1) - f(i,j) \in \mathbb{Z}_{\geq 0}$ when $i \in \Sigma$ and $j \in\llbracket T_0, T_1 -1 \rrbracket$ essentially means that for each $i \in \Sigma$ the function $f(i, \cdot)$ can be thought of as the trajectory of a geometric random walk from time $T_0$ to time $T_1$. Here, and throughout the paper, a geometric random variable $X$ with parameter $q \in (0,1)$ has probability mass function $\mathbb{P}(X = k) = (1-q)q^k$ for $k \in \mathbb{Z}_{\geq 0}$.
\end{remark}

We think of geometric line ensembles as collections of random increasing paths on the integer lattice, indexed by $\Sigma$. Observe that one can view an increasing path $L$ on $\llbracket T_0, T_1 \rrbracket$ as a continuous curve by linearly interpolating the points $(i, L(i))$, see Figure \ref{Fig.DiscreteLE}. This allows us to define $(\mathfrak{L}(\omega)) (i, s)$ for non-integer $s \in [T_0,T_1]$ and to view geometric line ensembles as line ensembles in the sense of Definition \ref{Def.LineEnsembles}. In particular, we can think of $\mathfrak{L}$ as a random element in $C (\Sigma \times \Lambda)$ with $\Lambda = [T_0, T_1]$.

We will often slightly abuse notation and write $\mathfrak{L}: \Sigma \times \llbracket T_0, T_1 \rrbracket \rightarrow \mathbb{Z}$, even though it is not $\mathfrak{L}$ which is such a function, but rather $\mathfrak{L}(\omega)$ for each $\omega \in \Omega$. Furthermore, we write $L_i = (\mathfrak{L}(\omega)) (i, \cdot)$ for the index $i \in \Sigma$ path. If $L$ is an increasing path on $\llbracket T_0, T_1 \rrbracket$ and $a, b \in \llbracket T_0, T_1 \rrbracket$ satisfy $a \leq b$ we let $L\llbracket a, b \rrbracket$ denote the restriction of $L$ to $\llbracket a,b\rrbracket$. \\

Let $t_i, z_i \in \mathbb{Z}$ for $i = 1,2$ be given such that $t_1 \leq t_2$ and $z_1 \leq z_2$. We denote by $\Omega(t_1,t_2,z_1,z_2)$ the collection of increasing paths that start from $(t_1,z_1)$ and end at $(t_2,z_2)$, by $\mathbb{P}_{\operatorname{Geom}}^{t_1,t_2, z_1, z_2}$ the uniform distribution on $\Omega(t_1,t_2,z_1,z_2)$ and write $\mathbb{E}^{t_1,t_2,z_1,z_2}_{\operatorname{Geom}}$ for the expectation with respect to this measure. One thinks of the distribution $\mathbb{P}_{\operatorname{Geom}}^{t_1,t_2, z_1, z_2}$ as the law of a random walk with i.i.d. geometric increments with parameter $q \in (0,1)$ that starts from $z_1$ at time $t_1$ and is conditioned to end in $z_2$ at time $t_2$ -- this interpretation does not depend on the choice of $q \in (0,1)$. Notice that by our assumptions on the parameters the set $\Omega(t_1,t_2,z_1,z_2)$ is nonempty.  

Given $k \in \mathbb{N}$, $T_0, T_1 \in \mathbb{Z}$ with $T_0 < T_1$, and $\vec{x}, \vec{y} \in \mathbb{Z}^k$, we let $\mathbb{P}^{T_0,T_1, \vec{x},\vec{y}}_{\operatorname{Geom}}$ denote the law of $k$ independent geometric bridges $\{B_i: \llbracket T_0, T_1 \rrbracket  \rightarrow \mathbb{Z} \}_{i = 1}^k$ from $B_i(T_0) = x_i$ to $B_i(T_1) = y_i$. Equivalently, this is the law of $k$ independent random increasing paths $B_i \in \Omega(T_0,T_1,x_i,y_i)$ for $i \in \llbracket 1, k \rrbracket$ that are uniformly distributed; that is, the uniform measure on 
$$\Omega_{\operatorname{Geom}}(T_0, T_1, \vec{x}, \vec{y}) = \Omega(T_0,T_1,x_1,y_1) \times \cdots \times \Omega(T_0,T_1,x_k,y_k).$$
This measure is well-defined provided that $\Omega(T_0,T_1,x_i,y_i)$ are non-empty for $i \in \llbracket 1, k \rrbracket$, which holds if $y_i \geq x_i$ for all $i \in \llbracket 1, k \rrbracket$.

For any two functions $\phi, \psi: \llbracket T_0, T_1 \rrbracket \rightarrow [-\infty, \infty]$, we say that $\phi$ and $\psi$ {\em interlace}, denoted by $\phi \succeq \psi$ or $\psi \preceq \phi$, if the following holds
\begin{equation}\label{Eq.DefInterlaceFun}
\phi(r-1) \geq \psi(r)  \mbox{ for all $r \in \llbracket T_0 + 1, T_1 \rrbracket$}.
\end{equation}
The next definition introduces the notion of an $(f,g)$-interlacing geometric line ensemble, which in simple terms is a collection of $k$ independent geometric bridges, conditioned on interlacing with each other and the graphs of two functions $f$ and $g$. It is a discrete analogue of the $(f,g)$-avoiding Brownian bridge ensemble from Definition \ref{Def.fgAvoidingBE}. 

\begin{definition}\label{Def.AvoidingLawBer}
Let $k \in \mathbb{N}$ and $\mathfrak{W}_k$ denote the set of signatures of length $k$, i.e.
\begin{equation}\label{Eq.DefSig}
\mathfrak{W}_k = \{ \vec{x} = (x_1, \dots, x_k) \in \mathbb{Z}^k: x_1 \geq  x_2 \geq  \cdots \geq  x_k \}.
\end{equation}
Let $\vec{x}, \vec{y} \in \mathfrak{W}_k$, $T_0, T_1 \in \mathbb{Z}$ with $T_0 < T_1$, and $f: \llbracket T_0, T_1 \rrbracket \rightarrow (-\infty, \infty]$ and $g: \llbracket T_0, T_1 \rrbracket \rightarrow [-\infty, \infty)$ be two functions. With the above data we define the {\em $(f,g)$-interlacing geometric line ensemble on the interval $\llbracket T_0, T_1 \rrbracket$ with entrance data $\vec{x}$ and exit data $\vec{y}$} to be the $\Sigma$-indexed geometric line ensemble $\mathfrak{Q}$ with $\Sigma = \llbracket 1, k\rrbracket$ on $\llbracket T_0, T_1 \rrbracket$ and with the law of $\mathfrak{Q}$ equal to $\mathbb{P}^{T_0,T_1, \vec{x},\vec{y}}_{\operatorname{Geom}}$ (the law of $k$ independent uniform increasing paths $\{B_i: \llbracket T_0, T_1 \rrbracket \rightarrow \mathbb{Z} \}_{i = 1}^k$ from $B_i(T_0) = x_i$ to $B_i(T_1) = y_i$), conditioned on 
\begin{equation}\label{Eq.EventInter}
\begin{split}
\ice  = &\left\{ B_0 \succeq B_1 \succeq B_2 \succeq \cdots \succeq B_{k+1} \right\},
\end{split}
\end{equation}
with the convention that $B_0(x) = f(x)$ and $B_{k+1}(x) = g(x)$.

The above definition is well-posed if there exist $B_i \in \Omega(T_0,T_1,x_i,y_i)$ for $i \in \llbracket 1, k \rrbracket$ that satisfy the conditions in $\ice$. We denote by $\Omega_{\ice}(T_0, T_1, \vec{x}, \vec{y}, f,g)$ the set of collections of $k$ increasing paths that satisfy the conditions in $\ice$ and then the distribution of $\mathfrak{Q}$ is simply the uniform measure on $\Omega_{\ice}(T_0, T_1, \vec{x}, \vec{y}, f,g)$. We denote the probability distribution of $\mathfrak{Q}$ as $\mathbb{P}_{\ice, \operatorname{Geom}}^{T_0,T_1, \vec{x}, \vec{y}, f, g}$ and write $\mathbb{E}_{\ice, \operatorname{Geom}}^{T_0, T_1, \vec{x}, \vec{y}, f, g}$ for the expectation with respect to this measure. If $f=+\infty$ and $g=-\infty$, we drop them from the notation and simply write $\Omega_{\ice}(T_0,T_1,\vec{x},\vec{y})$, $\mathbb{P}^{T_0, T_1, \vec{x},\vec{y}}_{\ice, \operatorname{Geom}}$, and $\mathbb{E}^{T_0, T_1, \vec{x},\vec{y}}_{\ice, \operatorname{Geom}}$.
\end{definition}

The following definition introduces the notion of the interlacing Gibbs property, which is a discrete analogue of the partial Brownian Gibbs property from Definition \ref{Def.PBGP}.
\begin{definition}\label{Def.IGP}
Fix a set $\Sigma = \llbracket 1, N \rrbracket$ with $N \in \mathbb{N} \cup \{\infty\}$ and $T_0, T_1\in \mathbb{Z}$ with $T_0 \leq T_1$. A $\Sigma$-indexed geometric line ensemble $\mathfrak{L} : \Sigma \times \llbracket T_0, T_1 \rrbracket \rightarrow \mathbb{Z}$ is said to satisfy the {\em interlacing Gibbs property} if it is interlacing, meaning that $L_i \succeq L_{i+1}$ for all $i \in \llbracket 1, N-1 \rrbracket$, and for any finite $K = \llbracket k_1, k_2 \rrbracket \subset \llbracket 1, N - 1 \rrbracket$ and $a,b \in \llbracket T_0, T_1 \rrbracket$ with $a < b$ the following holds.  Suppose that $f, g$ are two increasing paths drawn in $\{ (r,z) \in \mathbb{Z}^2 : a \leq r \leq b\}$ and $\vec{x}, \vec{y} \in \mathfrak{W}_k$ with $k=k_2-k_1+1$ are such that $\mathbb{P}(A) > 0$, where $A$ denotes the event $$A =\{ \vec{x} = ({L}_{k_1}(a), \dots, {L}_{k_2}(a)), \vec{y} = ({L}_{k_1}(b), \dots, {L}_{k_2}(b)), L_{k_1-1} \llbracket a,b \rrbracket = f, L_{k_2+1} \llbracket a,b \rrbracket = g \},$$
where if $k_1 = 1$ we adopt the convention $f = \infty = L_0$. Then, writing $k = k_2 - k_1 + 1$, we have for any $\{ B_i \in \Omega(a, b, x_i , y_i) \}_{i = 1}^k$ that
\begin{equation}\label{Eq.SchurEq}
\mathbb{P}\left( L_{i + k_1-1}\llbracket a,b \rrbracket = B_{i} \mbox{ for $i \in \llbracket 1, k \rrbracket$} \, \vert  A \, \right) = \mathbb{P}_{\ice, \operatorname{Geom}}^{a,b, \vec{x}, \vec{y}, f, g} \left( \cap_{i = 1}^k\{ Q_i = B_i \} \right),
\end{equation}
where $\mathfrak{Q} = \{Q_i\}_{i = 1}^k$ has law $\mathbb{P}_{\ice, \operatorname{Geom}}^{a,b, \vec{x}, \vec{y}, f, g}$.
\end{definition}

Condition (\ref{Eq.SchurEq}) is easier to check in practice, but it implies the following stronger property.
\begin{lemma}\label{Lem.StrongGP} Assume the notation from Definition \ref{Def.IGP}. Then, for any $\{ B_i \in \Omega(a, b, x_i , y_i) \}_{i = 1}^k$
\begin{equation}\label{Eq.SchurEqV2}
{\bf 1}_{A} \cdot \mathbb{P}\left( L_{i + k_1-1}\llbracket a,b \rrbracket = B_{i} \mbox{ for $i \in \llbracket 1, k \rrbracket$} \, \vert  \mathcal{F}_{\mathrm{ext}} \right) \overset{a.s.}{=} {\bf 1}_{A} \cdot \mathbb{P}_{\ice, \operatorname{Geom}}^{a,b, \vec{x}, \vec{y}, f, g} \left( \cap_{i = 1}^k\{ Q_i = B_i \} \right),
\end{equation}
where $\mathcal{F}_{\mathrm{ext}} = \sigma\left\{L_i(j): (i,j) \in \Sigma \times \llbracket T_0, T_1 \rrbracket \setminus \llbracket k_1, k_2 \rrbracket \times \llbracket a+1, b-1 \rrbracket\right\}$.
\end{lemma}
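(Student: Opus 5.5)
The plan is to exploit the fact that the ensemble takes values in a countable set $Y$ with the discrete $\sigma$-algebra, so that conditional expectations with respect to $\mathcal{F}_{\mathrm{ext}}$ can be written down explicitly as ratios of probabilities on atoms. Let $E$ denote the event $\{L_{i+k_1-1}\llbracket a,b\rrbracket = B_i \mbox{ for } i \in \llbracket 1,k\rrbracket\}$. Since $\mathcal{F}_{\mathrm{ext}}$ is generated by the countably-valued random vector $\mathfrak{L}^{\mathrm{ext}} := (L_i(j))_{(i,j) \notin \llbracket k_1,k_2\rrbracket \times \llbracket a+1,b-1\rrbracket}$, we have almost surely
$$\mathbb{P}(E \,\vert\, \mathcal{F}_{\mathrm{ext}}) = \sum_{\eta} {\bf 1}\{\mathfrak{L}^{\mathrm{ext}} = \eta\} \cdot \frac{\mathbb{P}(E \cap \{\mathfrak{L}^{\mathrm{ext}} = \eta\})}{\mathbb{P}(\mathfrak{L}^{\mathrm{ext}} = \eta)},$$
the sum running over configurations $\eta$ with $\mathbb{P}(\mathfrak{L}^{\mathrm{ext}} = \eta) > 0$. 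When $N = \infty$ or $T_0, T_1$ are infinite, the index set is countably infinite. The atoms $\{\mathfrak{L}^{\mathrm{ext}} = \eta\}$ may then have probability zero, so I would first handle this by approximation. Let $\mathcal{F}_n$ be generated by the coordinates of $\mathfrak{L}^{\mathrm{ext}}$ in a finite window that increases to the full index set and always contains the boundary data $\{L_{k_1-1}\llbracket a,b\rrbracket, L_{k_2+1}\llbracket a,b\rrbracket, L_{k_1},\dots,L_{k_2} \mbox{ at } a,b\}$. Then $\mathcal{F}_n \uparrow \mathcal{F}_{\mathrm{ext}}$, and by L\'evy's upward theorem $\mathbb{P}(E\,\vert\,\mathcal{F}_n) \to \mathbb{P}(E\,\vert\,\mathcal{F}_{\mathrm{ext}})$ almost surely. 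So it suffices to prove the identity with $\mathcal{F}_{\mathrm{ext}}$ replaced by each $\mathcal{F}_n$, and each $\mathcal{F}_n$ is generated by finitely many integer-valued variables.

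For a fixed $n$, take an atom $\{\mathfrak{L}^{(n)} = \eta\} \subseteq A$ of positive probability; atoms meeting $A$ lie in $A$ because $A$ is determined by the boundary data in the window. The key step is to show
$$\mathbb{P}(E \cap \{\mathfrak{L}^{(n)} = \eta\}) = \mathbb{P}(E \,\vert\, A)\,\mathbb{P}(\mathfrak{L}^{(n)} = \eta).$$
I would get this from (\ref{Eq.SchurEq}) as follows. The event $\{\mathfrak{L}^{(n)} = \eta\}$ is an intersection of $A$ with an event $D$ measurable with respect to coordinates outside $\llbracket k_1,k_2\rrbracket \times \llbracket a,b\rrbracket$ and the curves $k_1-1, k_2+1$ on $\llbracket a,b\rrbracket$. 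The issue is that (\ref{Eq.SchurEq}) conditions only on $A$, not on $A \cap D$, so I need to upgrade it. My intended route is to observe that (\ref{Eq.SchurEq}) says the conditional law of the inner block given $A$ is uniform on $\Omega_{\ice}(a,b,\vec{x},\vec{y},f,g)$. The set of admissible inner configurations is the same for every outer configuration compatible with $A$: interlacing with the outside only involves $f$, $g$ and the endpoint data, and interlacing of the inner curves with curves further away is implied by transitivity through $f$ and $g$. Hence, given $A$, the joint law factors as a product of the inner law and the outer law. I would make this precise by applying (\ref{Eq.SchurEq}) with larger index sets and intervals $K' \supseteq K$, $[a',b'] \supseteq [a,b]$ chosen so that $D$ concerns only boundary data of the larger block. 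Then I would use uniformity on the larger block, which restricts to uniformity on the smaller one with the product structure. I expect this factorization/consistency step to be the main obstacle.

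Given the factorization, summing over atoms yields ${\bf 1}_A\,\mathbb{P}(E\,\vert\,\mathcal{F}_n) = {\bf 1}_A\,\mathbb{P}_{\ice,\operatorname{Geom}}^{a,b,\vec{x},\vec{y},f,g}(\cap_i\{Q_i=B_i\})$ almost surely. Passing to the limit $n\to\infty$ then gives (\ref{Eq.SchurEqV2}).
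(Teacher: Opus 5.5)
Your proposal is correct and follows essentially the standard route: the paper simply cites \cite[Lemma 2.11]{D24b}, and its own proof of Lemma \ref{Lem.StrongIPGP} uses the same mechanism. That mechanism is to reduce to cylinder events in finite windows (via $\pi$--$\lambda$ there, where you use L\'evy's upward theorem), then apply the Gibbs property on a large block containing the window and take ratios of atom probabilities.

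The one thing to fix is how you state the key step. First, the factorization does not follow from the admissible sets alone. Second, you cannot in general choose $K'$, $[a',b']$ so that $D$ involves only boundary data of the larger block. Instead, take the larger block to be the window itself: curves $\llbracket 1, m-1\rrbracket$ on $\llbracket a',b'\rrbracket$ with floor $L_m$, where $m\le N$. Its boundary data together with its coordinates outside $\llbracket k_1,k_2\rrbracket\times\llbracket a+1,b-1\rrbracket$ is exactly the window data. Conditional uniformity on this block then gives ${\bf 1}\{B\in\Omega_{\ice}(a,b,\vec{x},\vec{y},f,g)\}/|\Omega_{\ice}(a,b,\vec{x},\vec{y},f,g)|$ on every atom. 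Since only adjacent curves are constrained, no transitivity argument is needed.
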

\begin{proof} This is \cite[Lemma 2.11]{D24b}.
\end{proof}

Our next task is to introduce a discrete analogue of a pinned pair from Definition \ref{Def.PinnedPair}. If $T_1 \in \mathbb{N}$ and $y \in \mathbb{Z}$, we let $\Omega(T_1,y) = \cup_{x \leq y} \Omega(0, T_1, x ,y)$, which is the collection of increasing paths on $\llbracket 0 , T_1\rrbracket$ whose value at $T_1$ is equal to $y$. If $\vec{y} \in \mathfrak{W}_2$, we let 
\begin{equation}\label{Eq.InteractingPairStateSpace}
\Omega_{\ice}(T_1, \vec{y}) = \{(B_1, B_2) \in \Omega(T_1,y_1) \times \Omega(T_1,y_2): B_1 \succeq B_2 \}.
\end{equation}
For complex parameters $q, c \in \mathbb{C}$ and $\mathfrak{B} = (B_1, B_2) \in \Omega_{\ice}(T_1, \vec{y})$, we define the weight
\begin{equation}\label{Eq.InteractingPairWeight}
W(\mathfrak{B};q,c) = c^{B_1(0) - B_2(0)} \cdot q^{B_1(T_1) - B_1(0)} \cdot q^{B_2(T_1) - B_2(0)}.
\end{equation} 
We will use the above weights to define a probability measure on $\Omega_{\ice}(T_1, \vec{y})$. In order to do this, we require the weights in (\ref{Eq.InteractingPairWeight}) to be non-negative and their sum to be in $(0,\infty)$. The following lemma isolates conditions that ensure this. Its proof is given in Section \ref{SectionA.1}.

\begin{lemma}\label{Lem.FinitePartitionFunction} Fix $T_1 \in \mathbb{N}$, $\vec{y} \in \mathfrak{W}_2$, $q \in (0,1)$, $c \in [0, q^{-1})$. Then, $W(\mathfrak{B};q,c) \geq 0$ for all $\mathfrak{B} = (B_1, B_2) \in \Omega_{\ice}(T_1, \vec{y})$, and also 
\begin{equation}\label{Eq.FinitePartitionFunction}
Z(T_1, \vec{y}; q,c) := \sum_{\mathfrak{B} \in \Omega_{\ice}(T_1, \vec{y})} W(\mathfrak{B};q,c)  \in (0, \infty).
\end{equation}
In particular, the set $\Omega_{\ice}(T_1, \vec{y})$ is non-empty.
\end{lemma}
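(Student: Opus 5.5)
Non-negativity is immediate once we note what each exponent in (\ref{Eq.InteractingPairWeight}) is. For $\mathfrak{B} = (B_1,B_2) \in \Omega_{\ice}(T_1,\vec{y})$, both $B_1(T_1)-B_1(0)$ and $B_2(T_1)-B_2(0)$ are non-negative integers because $B_1,B_2$ are increasing. Interlacing gives $B_1(0) \geq B_1(r-1) \geq B_2(r)$ for every $r \in \llbracket 1, T_1\rrbracket$. In particular $B_1(0) \geq B_2(1) \geq B_2(0)$, so $B_1(0)-B_2(0) \in \mathbb{Z}_{\geq 0}$. Since $q>0$ and $c \geq 0$, with the convention $0^0=1$, every factor is non-negative.

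Non-emptiness comes from an explicit element. Take $B_1 \equiv y_1$ and $B_2 \equiv y_2$ on $\llbracket 0,T_1\rrbracket$. Then $B_1(r-1)=y_1 \geq y_2 = B_2(r)$, so the pair lies in $\Omega_{\ice}(T_1,\vec{y})$, and its weight is $c^{y_1-y_2}$. If $c>0$ this is already positive. If $c=0$ we instead take $B_1 \equiv y_2$ except $B_1(T_1)=y_1$, together with $B_2 \equiv y_2$. This is increasing and interlacing, since $B_1(r-1) = y_2 \geq B_2(r)$ for $r \leq T_1$. Its $c$-exponent is $0$, so its weight is $q^{y_1-y_2}>0$. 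Hence $Z>0$.

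For finiteness, I parametrize paths by their increments. Set $a_j = B_1(j)-B_1(j-1)\geq 0$ and $b_j = B_2(j)-B_2(j-1) \geq 0$ for $j\in\llbracket 1,T_1\rrbracket$. A path ending at $y_i$ is then in bijection with its increment vector, so $B_1(0)=y_1-\sum_j a_j$ and $B_2(0) = y_2 - \sum_j b_j$. The weight becomes
$$c^{\,y_1-y_2-\sum_j a_j+\sum_j b_j}\, q^{\sum_j a_j + \sum_j b_j} = c^{\,y_1-y_2}\,(q/c)^{\sum_j a_j}\,(qc)^{\sum_j b_j}$$
when $c>0$. The factor $(qc)^{\sum b_j}$ is summable because $qc<1$. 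The factor $(q/c)^{\sum a_j}$ need not be summable on its own when $c \leq q$, so the interlacing constraint must be used to control the $a_j$ against the $b_j$. This is the main point of the proof.

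The constraint supplies exactly that control. From $B_1(0) \geq B_2(0)$ we get $\sum_j a_j \leq y_1 - y_2 + \sum_j b_j$. Write $A=\sum_j a_j$ and $S=\sum_j b_j$. Bound the weight by $q^{A+S} c^{\,y_1-y_2-A+S}$, drop the remaining interlacing conditions, and sum over $0 \leq A \leq y_1-y_2+S$. The number of $(a_j)$ with a given sum $A$ is at most $(A+1)^{T_1}$, and the number of $(b_j)$ with a given sum $S$ is at most $(S+1)^{T_1}$. In the exponent $c^{\,y_1-y_2-A+S}$ the power $y_1-y_2-A+S$ is non-negative. Two cases then suffice.
\begin{itemize}
\item If $c\leq 1$, that power of $c$ is at most $1$, and $q^{A+S}$ decays geometrically in both $A$ and $S$ against polynomial counts, so the sum is finite.
\item If $1<c<q^{-1}$, bound $c^{\,y_1-y_2-A+S} \leq c^{\,y_1-y_2}c^{S}$. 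The sum is then at most $c^{\,y_1-y_2}\sum_{A,S}(A+1)^{T_1}(S+1)^{T_1}q^{A}(qc)^{S}<\infty$.
\end{itemize}
The case $c=0$ is covered by the first case as well, since then only configurations with $A = y_1-y_2+S$ contribute. This gives $Z(T_1,\vec{y};q,c)\in(0,\infty)$.
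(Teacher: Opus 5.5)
Your proof is correct and follows essentially the same route as the paper. Through Lemma~\ref{Lem.FinitePartitionFunctionA}, the paper also keeps only the constraint $B_1(0)\ge B_2(0)$ and counts pairs with fixed starting points via the Jacobi--Trudi formula $s_{\vec{y}/\vec{x}}(1^{T_1})=h_{y_1-x_1}h_{y_2-x_2}-h_{y_1-x_2+1}h_{y_2-x_1-1}$, where $h_r=\binom{T_1+r-1}{r}$ grows polynomially in $r$; you reach the same bound more directly by dropping the remaining interlacing conditions. It then concludes from geometric decay in $q$ and $qc$, as you do, and your positive-weight witness for $c=0$ is exactly the paper's, which in fact works for every $c$.

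One harmless slip: $B_1(0)\ge B_1(r-1)$ is false for $r\ge 2$ because $B_1$ is increasing, but you only use the case $r=1$.
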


With the lemma in place, we can define the notion of an interacting pair, which is a discrete analogue of a pinned pair. 
\begin{definition}\label{Def.InteractingPair} Fix $T_1 \in \mathbb{N}$, $\vec{y} \in \mathfrak{W}_2$, $q \in (0,1)$, $c \in [0, q^{-1})$. Let $\Omega_{\ice}(T_1, \vec{y})$ be as in (\ref{Eq.InteractingPairStateSpace}), and define the probability measure on $\Omega_{\ice}(T_1, \vec{y})$ via
\begin{equation}\label{Eq.InteractingPairLaw}
\mathbb{P}^{T_1, \vec{y}}_{\ice; q,c} (\mathfrak{B}) = \frac{W(\mathfrak{B};q,c) }{Z(T_1, \vec{y}; q,c)}, \mbox{ for } \mathfrak{B} \in \Omega_{\ice}(T_1, \vec{y}).
\end{equation}
In (\ref{Eq.InteractingPairLaw}) we have that $W(\mathfrak{B};q,c)$ is as in (\ref{Eq.InteractingPairWeight}), $Z(T_1, \vec{y}; q,c)$ is as in (\ref{Eq.FinitePartitionFunction}), and $\mathbb{P}^{T_1, \vec{y}}_{\ice; q, c}$ defines a probability measure on $\Omega_{\ice}(T_1, \vec{y})$ in view of Lemma \ref{Lem.FinitePartitionFunction}. If $\mathfrak{L} = (L_1,L_2)$ is a geometric line ensemble with law $\mathbb{P}^{T_1, \vec{y}}_{\ice; q,c}$, we refer to it as an {\em interacting pair of reverse interlacing geometric random walks with parameters $q,c$} or simply as an {\em interacting pair} for short.
\end{definition}

The following definition introduces a discrete analogue of the pinned reverse Brownian ensembles from Definition \ref{Def.PinnedBM}. 
\begin{definition}\label{Def.InterlacingInteractingPairs} Fix $T_1 \in \mathbb{N}$, $\vec{y} \in \mathfrak{W}_{2k}$, $q \in (0,1)$, $c \in [0, q^{-1})$, and any function $g:\llbracket 0, T_1 \rrbracket \to[-\infty,\infty)$. Let $\{\mathfrak{B}^i = (B^i_1, B^i_2)\}_{i = 1}^k$ be $k$ independent interacting pairs with laws $\mathbb{P}^{T_1, \vec{y}\,^i}_{\ice; q,c}$, where $\vec{y}\,^i = (y_{2i -1}, y_{2i})$. Let $\mathfrak{Q} = \{Q_i\}_{i = 1}^{2k}$ be the $\llbracket 1, 2k \rrbracket$-indexed geometric line ensemble whose distribution is the same as $(B^1_1, B^1_2, B^2_1, B^2_2, \dots, B^{k}_1, B^k_2)$, conditioned on the event
\begin{equation}\label{Eq.InterlacingInteractingPairs}
E_{\ice} = \left\{ B^{i}_{2} \succeq B^{i+1}_{1} \mbox{ for all } i\in\llbracket 1,k\rrbracket \right\},
\end{equation}
where $B_{1}^{k+1} = g$. We denote by $\Omega_{\ice}(T_1, \vec{y}, g)$ the set of $k$-tuples $\{\mathfrak{B}^i = (B^i_1, B^i_2)\}_{i = 1}^k$ satisfying the conditions in $E_{\ice}$, and note that in view of Lemma \ref{Lem.FinitePartitionFunction}, we have that the above definition is well-posed if for some $(\mathfrak{B}^1, \dots, \mathfrak{B}^k) \in  \Omega_{\ice}(T_1, \vec{y}, g)$, we have $\prod_{i = 1}^k W(\mathfrak{B}^i; q,c) > 0$.

We denote the law of $\mathfrak{Q}$ by $\mathbb{P}^{T_1, \vec{y}, g}_{\ice; q,c}$ and write $\mathbb{E}^{T_1, \vec{y}, g}_{\ice; q,c}$ for the expectation with respect to this measure. When $g=-\infty$, we omit the last superscript and simply write $\mathbb{P}^{T_1, \vec{y}}_{\ice; q,c}$, and $\mathbb{E}^{T_1, \vec{y}}_{\ice; q,c}$.
\end{definition}
\begin{remark}\label{Rem.InterlacingInteractingPairs} When $k = 1$ and $g = -\infty$, we have that $\mathbb{P}^{T_1, \vec{y}}_{\ice; q,c} = \mathbb{P}^{T_1, \vec{y}, -\infty}_{\ice; q,c}$ from Definition \ref{Def.InterlacingInteractingPairs} agrees with $\mathbb{P}^{T_1, \vec{y}}_{\ice; q,c}$ from Definition \ref{Def.InteractingPair}, so there is no clash of notation.
\end{remark}
\begin{remark}\label{Rem.InterlacingInteractingPairs2} A simple condition for $\mathbb{P}^{T_1, \vec{y}, g}_{\ice; q,c}$ being well-posed is that $g$ is increasing and $g(T_1) \leq y_{2k}$. Indeed, in this case we can define 
$$B^i_{1}(r) = B^{i}_2(r) = y_{2i} \mbox{ for } r \in \llbracket 0, T_1 - 1\rrbracket, \hspace{2mm} B^i_1(T_1) = y_{2i-1}, \mbox{ and } B^i_2(T_1) = y_{2i}, \quad \mbox{for $i \in \llbracket 1, k \rrbracket$,} $$
and note that $\{\mathfrak{B}^i = (B^i_1, B^i_2)\}_{i = 1}^k \in  \Omega_{\ice}(T_1, \vec{y}, g)$, while $\prod_{i = 1}^k W(\mathfrak{B}^i; q,c) = q^{\sum_{i = 1}^k (y_{2i-1} - y_{2i})} > 0$.
\end{remark}

The last definition we require is that of the interacting pair Gibbs property, which is a discrete analogue of the pinned half-space Brownian Gibbs property from Definition \ref{Def.PinnedBGP}.
\begin{definition}\label{Def.IPGP} Fix a set $\Sigma = \llbracket 1, N \rrbracket$ with $N \in \mathbb{N} \cup \{\infty\}$, $T_1 \in \mathbb{N} \cup \{\infty\}$, $q \in (0,1)$, and $c \in [0, q^{-1})$. A $\Sigma$-indexed geometric line ensemble $\mathfrak{L}$ on $\llbracket 0, T_1 \rrbracket$ satisfies the {\em interacting pair Gibbs property} if it is interlacing, meaning that $L_i \succeq L_{i+1}$ for all $i \in \llbracket 1, N - 1 \rrbracket$, and for any $T \in \llbracket 1, T_1\rrbracket$ and any $k\in\llbracket 1,\lfloor(N-1)/2\rfloor\rrbracket$ the following holds. Suppose that $g$ is an increasing path on $\llbracket 0, T\rrbracket$, and $\vec{y} \in \mathfrak{W}_{2k}$ are such that $\mathbb{P}(A) > 0$, where $A$ denotes the event 
$$A =\{  \vec{y} = ({L}_{1}(T), \dots, {L}_{2k}(T)), L_{2k+1} \llbracket 0,T \rrbracket = g \}.$$
Then, we have for any $\{ B_i \in \Omega(T,y_i) \}_{i = 1}^{2k}$ that 
\begin{equation}\label{Eq.IPGP}
\mathbb{P}\left( L_{i}\llbracket 0,T \rrbracket = B_{i} \mbox{ for $i \in \llbracket 1, 2k \rrbracket$} \, \vert  A \, \right) = \mathbb{P}^{T, \vec{y}, g}_{\ice; q,c} \left( \cap_{i = 1}^{2k} \{Q_i = B_i\} \right),
\end{equation}
where $\mathfrak{Q} = \{Q_i\}_{i = 1}^{2k}$ has law $\mathbb{P}^{T, \vec{y}, g}_{\ice; q,c}$.
\end{definition}

%
%
\subsection{Properties of discrete Gibbsian ensembles}\label{Section2.3}

Condition (\ref{Eq.IPGP}) is easier to check in practice, but it implies the following stronger property.
\begin{lemma}\label{Lem.StrongIPGP} Assume the same notation as in Definition \ref{Def.IPGP}. Then, for any $\{ B_i \in \Omega(T,y_i) \}_{i = 1}^{2k}$
\begin{equation}\label{Eq.IPGPV2}
{\bf 1}_{A} \cdot \mathbb{P}\left( L_{i}\llbracket 0,T \rrbracket = B_{i} \mbox{ for $i \in \llbracket 1, 2k \rrbracket$} \, \vert  \mathcal{F}_{\mathrm{ext}}^{\mathrm{odd}} \right) \overset{a.s.}{=} {\bf 1}_{A} \cdot  \mathbb{P}^{T, \vec{y}, g}_{\ice; q,c} \left( \cap_{i = 1}^{2k} \{Q_i = B_i\} \right),
\end{equation}
where $\mathcal{F}_{\mathrm{ext}}^{\mathrm{odd}} = \sigma\left\{L_i(j): (i,j) \in \llbracket1,2\lfloor(N-1)/2\rfloor+1\rrbracket \times \llbracket 0, T_1 \rrbracket \setminus \llbracket 1, 2k \rrbracket \times \llbracket 0, T-1 \rrbracket\right\}$.
\end{lemma}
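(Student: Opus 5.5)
The plan is the standard discrete argument used for Lemma \ref{Lem.StrongGP} (cf. \cite[Lemma 2.11]{D24b}): since the ensemble is discrete, it suffices to verify the defining identity of conditional expectation on a generating class of $\mathcal{F}_{\mathrm{ext}}^{\mathrm{odd}}$. Let $M = 2\lfloor (N-1)/2\rfloor + 1$ and write $\mathcal{X}$ for the random vector $(L_i(j))$ indexed by $\llbracket 1, M\rrbracket \times \llbracket 0,T_1\rrbracket \setminus \llbracket 1,2k\rrbracket \times \llbracket 0,T-1\rrbracket$. Since each coordinate is integer-valued, $\mathcal{F}_{\mathrm{ext}}^{\mathrm{odd}}$ is generated by the $\pi$-system of cylinder events
\[
D = \{ L_i(j) = z_{i,j} \mbox{ for } (i,j) \in S\},
\]
with $S$ a finite subset of the index set above. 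By a monotone class argument, it is enough to show that for every such $D$,
\[
\mathbb{P}\bigl(\{L_i\llbracket 0,T\rrbracket = B_i,\ i \in \llbracket 1,2k\rrbracket\} \cap A \cap D\bigr) = \mathbb{P}^{T,\vec{y},g}_{\ice;q,c}\Bigl(\cap_{i=1}^{2k}\{Q_i = B_i\}\Bigr)\, \mathbb{P}(A \cap D).
\]
Here the right-hand factor multiplying $\mathbf{1}_A$ is a deterministic constant on $A$, so it is trivially $\mathcal{F}_{\mathrm{ext}}^{\mathrm{odd}}$-measurable.

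To prove this identity, I first enlarge $S$ so that $D$ fixes finitely many coordinates. I then decompose $A \cap D$ as a disjoint countable union of atoms $A \cap D'$, where $D'$ fixes all coordinates with index set $S' = \bigl(\llbracket 1, M\rrbracket \times \llbracket 0, T_1\rrbracket\bigr)\cap$ a large finite box, minus $\llbracket 1,2k\rrbracket\times\llbracket 0,T-1\rrbracket$. (If $N$ or $T_1$ is infinite, I use finite truncations and pass to the limit by countable additivity.) The key step is the atom-level identity
\[
\mathbb{P}(\,\cdot \mid A \cap D') = \mathbb{P}(\,\cdot \mid A),
\]
for the event in question. This does not follow from Definition \ref{Def.IPGP} directly, since that definition only conditions on $A$. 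I would therefore argue the way \cite{D24b} does. Write
\[
\mathbb{P}(\{L\llbracket 0,T\rrbracket = B\} \cap A \cap D') = \mathbb{P}(\{L\llbracket 0,T\rrbracket = B\} \cap A)\cdot \mathbb{P}(D' \mid \{L\llbracket 0,T\rrbracket = B\}\cap A),
\]
and show that the last factor does not depend on $B$, given $A$ and compatibility. The natural route is to use the Gibbs property of Definition \ref{Def.IPGP} at a larger time $T' \geq T$, and a larger even index $2k'$, chosen so that $D'$ is determined by data at time $T'$, by $L_{2k'+1}$, and by curves inside the resampled block.

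Under $\mathbb{P}^{T',\vec{y}\,',g'}_{\ice;q,c}$, the weight factorizes as a product of the interacting-pair weights (\ref{Eq.InteractingPairWeight}) times interlacing indicators. Because the boundary $c$-interaction $c^{B_1(0)-B_2(0)}$ pairs curves $(2i-1,2i)$, the marginal on $\llbracket 1,2k\rrbracket\times\llbracket 0,T\rrbracket$, given everything else in $D'$, is proportional to
\[
\prod_{i\le k} W(\mathfrak{B}^i) \cdot \mathbf{1}\{\text{interlacing with } g\}.
\]
This is exactly $\mathbb{P}^{T,\vec{y},g}_{\ice;q,c}$, and it does not involve the values outside the block. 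Crucially, the weight factors as the $q$-increments on $\llbracket 0,T\rrbracket$ times the $q$-increments on $\llbracket T,T'\rrbracket$, and the interlacing constraint across time $T$ involves only the fixed values $\vec y$ and $L_i(T+1)$. So the conditional law of the inner block is the same ratio for every $D'$. Summing over atoms then gives the claim.

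The main obstacle is this compatibility step, especially ensuring that conditioning on curves of index above $2k+1$ (up to $M$, which is odd) does not break the pairing structure. This is why the $\sigma$-algebra is restricted to $\llbracket 1, M\rrbracket$: the block $\llbracket 1,2k'\rrbracket$ with $2k' \leq M-1$ always uses a paired structure, and $L_{2k'+1}$ is the floor. I would check carefully that the factorization of $\prod_i W(\mathfrak{B}^i)$ over times $[0,T]$ and $[T,T']$ is exact. The interlacing at the junction, $L_i(T) \geq L_{i+1}(T+1)$, involves only the conditioned values.
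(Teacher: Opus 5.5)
Your proposal is correct and follows essentially the same route as the paper. The paper also applies Definition \ref{Def.IPGP} on a larger box $\llbracket 1,2m\rrbracket\times\llbracket 0,T_1\rrbracket$ with floor $L_{2m+1}$ and conditions on the full off-block configuration $\mu$. It then uses that the pair weights factor as $\prod_{i\le k} c^{B_{2i-1}(0)-B_{2i}(0)}q^{-B_{2i-1}(0)-B_{2i}(0)}$ times a $\mu$-dependent constant, and concludes with a $\pi$--$\lambda$ argument.

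Your one small refinement is choosing a finite $T'\le T_1$ that covers the cylinder. This handles $T_1=\infty$ cleanly, a case the paper glosses over by conditioning on all of $\llbracket 0,T_1\rrbracket$.
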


\begin{remark} Unlike Lemma \ref{Lem.StrongGP}, the external $\sigma$-field in the above lemma only includes an odd number of top curves.
This is necessary due to the paired nature of the interacting pair Gibbs property. The above property may fail when the full external $\sigma$-field is considered as illustrated in the following simple example.

Define $\mathfrak{L}=(L_1,L_2,L_3,L_4)$ on $\llbracket 0,1\rrbracket$ as follows. Let $L_3(0)=L_3(1)=0$. Suppose that $(L_1,L_2)$ has the law $\mathbb{P}_{\ice;q,c}^{1,(2,1),L_3}$. Set $L_4=L_2-2$. Then $\mathfrak{L}$ satisfies the interacting pair Gibbs property by construction. However, as $L_2$ is a deterministic function of $L_4$, the law of $(L_1,L_2)$ conditioned on $(L_3,L_4)$ is not the same as $\mathbb{P}_{\ice;q,c}^{1,(2,1),L_3}$.
\end{remark}

\begin{proof}[Proof of Lemma \ref{Lem.StrongIPGP}] As $\mathfrak{L}$ is interlacing, we see that (\ref{Eq.IPGPV2}) holds trivially if $\vec{B} = (B_1, \dots, B_{2k}) \not \in \Omega_{\ice}(T, \vec{y}, g)$, as both sides are equal to zero. If $\vec{B} \in \Omega_{\ice}(T, \vec{y}, g)$, then (\ref{Eq.IPGPV2}) is equivalent to showing that
\begin{equation}\label{SchurEqV2Red1}
\begin{split}
&{\bf 1}_{A} \cdot \mathbb{P}\left( L_{i}\llbracket 0,T \rrbracket = B_{i} \mbox{ for $i \in \llbracket 1, 2k \rrbracket$} \, \vert  \mathcal{F}_{\mathrm{ext}}^{\mathrm{odd}} \right) \\
&= {\bf 1}_{A} \cdot \frac{\prod_{i=1}^{k} c^{B_{2i-1}(0)-B_{2i}(0)}q^{-B_{2i-1}(0)-B_{2i}(0)}}{\sum_{\vec{B}' \in \Omega_{\ice}(T,\vec{y},g)} \prod_{i=1}^{k} c^{B_{2i-1}'(0)-B_{2i}'(0)}q^{-B_{2i-1}'(0)-B_{2i}'(0)} }.
\end{split}
\end{equation}
Fix $m \in \llbracket 1, \lfloor(N-1)/2\rfloor\rrbracket$, and set $D = \llbracket 1, 2m+1 \rrbracket \times \llbracket 0, T_1 \rrbracket \setminus \llbracket 1,2k \rrbracket \times \llbracket 0, T-1 \rrbracket$. By the defining property of conditional expectation, and the $\pi-\lambda$ theorem, it suffices to show that for any $\mu = (\mu_{i,j}: (i,j) \in D)$, such that $\mathbb{P}(A \cap \{\mathfrak{L}\vert_{D} = \mu \}) > 0$, we have 
\begin{equation}\label{SchurEqV2Red2}
\begin{split}
 &\mathbb{P}\left(A \cap \{ L_{i}\llbracket 0,T \rrbracket = B_{i} \mbox{ for $i \in \llbracket 1, 2k \rrbracket$} \} \cap \{\mathfrak{L}\vert_{D} = \mu \}  \right) \\
 &=  \frac{\mathbb{P}(A \cap \{\mathfrak{L}\vert_{D} = \mu \} ) \prod_{i=1}^{k} c^{B_{2i-1}(0)-B_{2i}(0)}q^{-B_{2i-1}(0)-B_{2i}(0)} }{\sum_{\vec{B}' \in \Omega_{\ice}(T,\vec{y},g)} \prod_{i=1}^{k} c^{B_{2i-1}'(0)-B_{2i}'(0)}q^{-B_{2i-1}'(0)-B_{2i}'(0)}}.
 \end{split}
\end{equation}
Here, $\mathfrak{L}\vert_{D}$ is the restriction of $\mathfrak{L}$ to $D$.

Let $\mathcal{M}$ be the set of pairs $( \vec{v}, \tilde{g})$, $\vec{v} \in \mathfrak{W}_{2m}$ and $\tilde{g}: \llbracket 0, T_1 \rrbracket \rightarrow \mathbb{Z}$ is an increasing path, that satisfy
\begin{equation*}
\mathbb{P}(\{ \vec{L}(T_1) = \vec{v}, L_{2m+1}\llbracket 0, T_1 \rrbracket = \tilde{g} \} \cap A ) > 0,
\end{equation*}
where $\vec{L}(s) = (L_1(s), \dots, L_{2m}(s))$. As $\mathfrak{L}$ satisfies the interacting pair Gibbs property, we have
\begin{equation*}
\begin{split}
&\mathbb{P}\left(A \cap \{ L_{i}\llbracket 0,T \rrbracket = B_{i} \mbox{ for $i \in \llbracket 1, 2k \rrbracket$} \} \cap \{\mathfrak{L}\vert_{D} = \mu \}  \right) \\
&= \sum_{(\vec{v}, \tilde{g}) \in \mathcal{M}} \mathbb{P}( \vec{L}(T_1) = \vec{v}, L_{2m+1}\llbracket 0, T_1 \rrbracket = \tilde{g}  ) C(\vec{v},\tilde{g},q,c) \\
& \times \prod_{i=1}^{k} c^{B_{2i-1}(0)-B_{2i}(0)}q^{-B_{2i-1}(0)-B_{2i}(0)}   \prod_{i=k+1}^{m} c^{\mu_{2i-1}(0)-\mu_{2i}(0)}q^{-\mu_{2i-1}(0)-\mu_{2i}(0)},
\end{split}
\end{equation*}
where $C(\vec{v},\tilde{g},q,c)$ is a constant depending on $\vec{v},\tilde{g},q,c$. We also have 
\begin{equation*}
\begin{split}
&\mathbb{P}(A \cap \{\mathfrak{L}\vert_{D} = \mu \} ) =  \sum_{\vec{B}' \in \Omega_{\ice}(T,\vec{y},g)} \mathbb{P}\left(A \cap \{ L_{i}\llbracket 0,T \rrbracket = B'_{i} \mbox{ for $i \in \llbracket 1, k \rrbracket$} \} \cap \{\mathfrak{L}\vert_{D} = \mu \}  \right) \\
&=  \sum_{\vec{B}' \in \Omega_{\ice}(T,\vec{y},g)} \sum_{(\vec{v}, \tilde{g}) \in \mathcal{M}}  \mathbb{P}( \vec{L}(T_1) = \vec{v}, L_{2m+1}\llbracket 0, T_1 \rrbracket = \tilde{g}  ) C(\vec{v},\tilde{g},q,c)  \\
&\times \prod_{i = 1}^kc^{B'_{2i-1}(0)-B'_{2i}(0)}q^{-B'_{2i-1}(0)-B'_{2i}(0)}  \prod_{i=k+1}^{m} c^{\mu_{2i-1}(0)-\mu_{2i}(0)}q^{-\mu_{2i-1}(0)-\mu_{2i}(0)}.
\end{split}
\end{equation*}
Taking the ratio of the last two displayed equations gives (\ref{SchurEqV2Red2}).
\end{proof}

The next lemma shows that the line ensembles from Definition \ref{Def.InterlacingInteractingPairs} satisfy the interacting pair Gibbs property of Definition \ref{Def.IPGP}.
\begin{lemma}\label{Lem.GibbsBox} Fix $T_1$, $\vec{y}$, $q$, and $c$ as in Definition \ref{Def.InterlacingInteractingPairs}, and suppose that $g: \llbracket 0, T_1 \rrbracket \rightarrow \mathbb{Z}$ is an increasing path with $g(T_1) \leq y_{2k}$. Let $\mathfrak{Q} = \{Q_i\}_{i = 1}^{2k}$ be a line ensemble with law $\mathbb{P}^{T_1, \vec{y}, g}_{\ice; q,c}$, and define the $\llbracket 1, 2k+1 \rrbracket$-indexed line ensemble $\mathfrak{L} = \{L_i\}_{i = 1}^{2k+1}$ on $\llbracket 0, T_1 \rrbracket$ through
$$L_i(s) = Q_i(s) \mbox{ and } L_{2k+1}(s) = g(s) \mbox{ for } i \in \llbracket 1, 2k \rrbracket \mbox{ and } s \in \llbracket 0, T_1 \rrbracket.$$
Then, $\mathfrak{L}$ satisfies the interacting pair Gibbs property of Definition \ref{Def.IPGP} with $N = 2k+1$.
\end{lemma}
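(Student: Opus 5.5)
We verify Definition \ref{Def.IPGP} directly from the explicit law of $\mathfrak{Q}$. Since $g(T_1) \leq y_{2k}$ and $g$ is increasing, Remark \ref{Rem.InterlacingInteractingPairs2} shows the measure is well-posed. By construction $\mathfrak{L}$ is interlacing: within each pair this comes from $\Omega_{\ice}(T_1,\vec y^{\,i})$, and between pairs and with $L_{2k+1} = g$ it comes from $E_{\ice}$. With $N = 2k+1$, the admissible indices in Definition \ref{Def.IPGP} are $k' \in \llbracket 1, k\rrbracket$, and we fix also $T \in \llbracket 1, T_1\rrbracket$.

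The first step is to write the law of $\mathfrak{Q}$ in product form. Unwinding Definitions \ref{Def.InteractingPair} and \ref{Def.InterlacingInteractingPairs}, for any collection of increasing paths $(B_1,\dots,B_{2k})$ on $\llbracket 0,T_1\rrbracket$ with $B_i(T_1)=y_i$, we have
\begin{equation*}
\mathbb{P}(\mathfrak{Q} = \vec B) \propto \mathbf{1}\{B_1 \succeq B_2 \succeq \cdots \succeq B_{2k} \succeq g\} \prod_{i=1}^k c^{B_{2i-1}(0)-B_{2i}(0)} q^{y_{2i-1}+y_{2i}-B_{2i-1}(0)-B_{2i}(0)}.
\end{equation*}
The factor $q^{y_{2i-1}+y_{2i}}$ is constant. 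Moreover, the indicator factorizes over time steps, since interlacing is the collection of local conditions $B_j(r-1)\geq B_{j+1}(r)$. It also involves only nearest neighbours in index.

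The second step is to condition on the event $A = \{(L_1(T),\dots,L_{2k'}(T)) = \vec z,\ L_{2k'+1}\llbracket 0,T\rrbracket = h\}$, where $\vec z \in \mathfrak{W}_{2k'}$ and $h$ is an increasing path. Take any $B_i' \in \Omega(T,z_i)$ for $i\le 2k'$. The probability of $\{L_i\llbracket 0,T\rrbracket = B_i',\ i\le 2k'\}\cap A$ is a sum over all completions, namely the paths $L_i\llbracket T,T_1\rrbracket$ for $i\le 2k'$ and the full paths $L_i$ for $2k'<i\le 2k$. In each summand, the weight splits as a product of three pieces:
\begin{itemize}
\item the factor $\mathbf{1}\{B_1'\succeq\cdots\succeq B_{2k'}'\succeq h\}\prod_{i\le k'} c^{B'_{2i-1}(0)-B'_{2i}(0)}q^{-B'_{2i-1}(0)-B'_{2i}(0)}$;
\item interlacing constraints on times $\llbracket T+1,T_1\rrbracket$ for the top $2k'$ curves, which couple to $L_{2k'+1}$ and depend on $B'$ only through $\vec z$;
\item constraints and weights for the curves of index $>2k'$, which see the top $2k'$ curves on $\llbracket 0,T\rrbracket$ only through $L_{2k'+1}\llbracket 0,T\rrbracket = h$ and the interlacing $L_{2k'}\succeq L_{2k'+1}$.
\end{itemize}

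The one delicate point is the boundary term $L_{2k'}(r-1)\ge L_{2k'+1}(r)$ at $r=T+1$. Here $L_{2k'}(T) = z_{2k'}$ is fixed by $A$, so this term is again independent of $B'$. Interlacing across the pair boundary $2k'\leftrightarrow 2k'+1$ is governed by $E_{\ice}$, which also matches $\mathbb{P}^{T,\vec z,h}_{\ice;q,c}$. Hence the sum factorizes as $F(\vec z,h)$, independent of $B'$, times the first piece above. The first piece is, up to normalization, exactly the weight of $\mathbb{P}^{T,\vec z,h}_{\ice;q,c}$.

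Summing over $B'$ gives $\mathbb{P}(A)$, and the ratio yields (\ref{Eq.IPGP}). The only real work is this bookkeeping: checking that no factor couples the interior $\llbracket 0,T-1\rrbracket$ of the top $2k'$ curves to anything except $h$ and $\vec z$. This holds because the $c$-interaction at time $0$ is purely within pairs, and pairs never straddle the cut at index $2k'$. This is exactly why only even cuts are allowed.
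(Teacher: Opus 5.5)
Your proposal is correct and is essentially the paper's argument. The paper also writes the conditional probability as a sum over completions of the configuration outside $\llbracket 1,2m\rrbracket\times\llbracket 0,b\rrbracket$, splits off the $B'$-dependent factor $\prod_{i\le m} c^{B'_{2i-1}(0)-B'_{2i}(0)}q^{-B'_{2i-1}(0)-B'_{2i}(0)}$, and shows the remaining completion sum is independent of $B'$, then takes the ratio. It proves that independence via an explicit bijection that swaps the block, and the locality of the interlacing constraints you describe is exactly what makes that bijection well defined.
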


\begin{proof}
Fix any $m \in \llbracket 1,k\rrbracket$ and $b \in \llbracket 1, T_1 \rrbracket$.  Suppose $\tilde{g}$ is an increasing path drawn in $\{ (r,z) \in \mathbb{Z}^2 : 0 \leq r \leq b\}$ and $\vec{z} \in \mathfrak{W}_{2m}$, and they satisfy $\mathbb{P}(A) > 0$, where $A$ denotes the event 
$$A =\{ \vec{z} = ({L}_{1}(b), \dots, {L}_{2m}(b)), L_{2m+1} \llbracket 0,b \rrbracket = \tilde{g} \}.$$
We seek to show for any $\vec{B} = (B_1, \dots, B_{2m})$ with $\{ B_i \in \Omega( b, z_i) \}_{i = 1}^{2m}$ that
\begin{equation}\label{Eq.SchurEq4}
\mathbb{P}\left( L_{i}\llbracket 0,b \rrbracket = B_{i} \mbox{ for $i \in \llbracket 1, 2m \rrbracket$} \, \vert  A \, \right) = \mathbb{P}_{\ice;q,c}^{b, \vec{z}, \tilde{g}} \left( \cap_{i = 1}^{2m}\{ Q_i = B_i \} \right),
\end{equation}
where $\mathfrak{Q} = \{Q_i\}_{i = 1}^{2m}$ has law $\mathbb{P}_{\ice;q,c}^{b, \vec{z}, \tilde{g}}$. If $\vec{B} \not\in \Omega_{\ice}(b,\vec{z},\tilde{g})$, then both sides of (\ref{Eq.SchurEq4}) are equal to zero, and so we assume $\vec{B} \in \Omega_{\ice}(b,\vec{z},\tilde{g})$.  

For any $\vec{B}'\in \Omega_{\ice}(b,\vec{z},\tilde{g})$, we define the set of $(2k+1)$-tuples of increasing paths
\begin{equation*}
\begin{split}
    &\Omega_{\ice}^{b,\vec{z},\tilde{g}}(\vec{B}';T_1,\vec{y},g) := \{\vec{L}' = \{L_i'\}_{i = 1}^{2k+1} :  \{L_i'\}_{i = 1}^{2k} \in \Omega_{\ice}(T_1, \vec{y}, g), \hspace{2mm} L_{2k+1}' = g,  \\
    & L_{2m+1}'(s)=\tilde{g}(s) \mbox{ for }s\in \llbracket0,b\rrbracket, \hspace{2mm} L'_{i}\llbracket 0,b \rrbracket = B_{i}' \mbox{ for }i \in \llbracket 1, 2m \rrbracket \}.
    \end{split}
\end{equation*}
We then have by the definition of $\mathbb{P}_{\ice;q,c}^{T_1, \vec{y}, {g}}$ that 
\begin{equation}\label{Eq.OP1}
\begin{aligned}
&\mathbb{P}\left( L_{i}\llbracket 0,b \rrbracket = B_{i} \mbox{ for }i \in \llbracket 1, 2m \rrbracket \vert \, A \, \right) \\ & = \frac{1}{\mathbb{P}(A)} \sum_{\vec{L}' \in \Omega_{\ice}^{b,\vec{z},\tilde{g}}(\vec{B};T_1,\vec{y},g)} \mathbb{P}_{\ice;q,c}^{T_1, \vec{y}, {g}}  \left( \cap_{i = 1}^{2k}\{ Q_i = \tilde{L}_i' \} \right) = \frac{1}{\mathbb{P}(A)} \sum_{\vec{L}' \in \Omega_{\ice}^{b,\vec{z},\tilde{g}}(\vec{B};T_1,\vec{y},g)} C \\ & \times \prod_{i = 1}^{m} c^{B_{2i-1}(0) - B_{2i}(0)} q^{ - B_{2i-1}(0) - B_{2i}(0)} \prod_{i = m+1}^{k} c^{L'_{2i-1}(0) - L'_{2i}(0)} q^{ - L'_{2i-1}(0) - L'_{2i}(0)},
\end{aligned}
\end{equation}
where $C$ is a positive constant that depends on $T_1, \vec{y}, g$. Similarly, we have 
\begin{equation}\label{Eq.OP2}
\begin{aligned}
&1 = \sum_{\vec{B}' \in \Omega_{\ice}(b,\vec{z},\tilde{g})} \mathbb{P} \left( L_{i}\llbracket 0,b \rrbracket = B'_{i} \mbox{ for }i \in \llbracket 1, 2m \rrbracket \vert \, A \, \right) \\
& = \frac{1}{\mathbb{P}(A)} \sum_{\vec{B}' \in \Omega_{\ice}(b,\vec{z},\tilde{g})} \sum_{\vec{L}' \in \Omega_{\ice}^{b,\vec{z},\tilde{g}}(\vec{B}';T_1,\vec{y},g)} \mathbb{P}_{\ice;q,c}^{T_1, \vec{y}, {g}}  \left( \cap_{i = 1}^{2k}\{ Q_i = \tilde{L}_i' \} \right) \\
& = \frac{1}{\mathbb{P}(A)} \sum_{\vec{B}' \in \Omega_{\ice}(b,\vec{z},\tilde{g})} \sum_{\vec{L}' \in \Omega_{\ice}^{b,\vec{z},\tilde{g}}(\vec{B}';T_1,\vec{y},g)} C \\ & \times \prod_{i = 1}^{m} c^{B'_{2i-1}(0) - B'_{2i}(0)} q^{- B'_{2i-1}(0)  - B'_{2i}(0)} \prod_{i = m+1}^{k} c^{L'_{2i-1}(0) - L'_{2i}(0)} q^{ - L'_{2i-1}(0) - L'_{2i}(0)}.
\end{aligned}
\end{equation}

Notice that for any $\vec{B}' \in \Omega_{\ice}(b,\vec{z},\tilde{g})$, we can define a natural bijection $\varphi: \Omega_{\ice}^{b,\vec{z},\tilde{g}}(\vec{B}';T_1,\vec{y},g) \mapsto \Omega_{\ice}^{b,\vec{z},\tilde{g}}(\vec{B};T_1,\vec{y},g)$, by setting $\varphi(\vec{L}') = \vec{L}''$ where 
$$L_i''(s) = \begin{cases} B_i(s) &\mbox{ for } (i,s) \in \llbracket 1,2m \rrbracket \times \llbracket 0, b \rrbracket, \\L_i'(s) &\mbox{ for } (i,s) \in \llbracket 1, 2k+1 \rrbracket \times \llbracket 0 , T_1 \rrbracket \setminus \llbracket 1,2m \rrbracket \times \llbracket 0, b \rrbracket, \end{cases}$$
and this map satisfies
$$\prod_{i = m+1}^{k} c^{L'_{2i-1}(0) - L'_{2i}(0)} q^{ - L'_{2i-1}(0) - L'_{2i}(0)} = \prod_{i = m+1}^{k} c^{L''_{2i-1}(0) - L''_{2i}(0)} q^{ - L''_{2i-1}(0) - L''_{2i}(0)}.$$
Consequently, we can replace the set $\Omega_{\ice}^{b,\vec{z},\tilde{g}}(\vec{B}';T_1,\vec{y},g)$ with $\Omega_{\ice}^{b,\vec{z},\tilde{g}}(\vec{B};T_1,\vec{y},g)$ on the third line of (\ref{Eq.OP2}) without affecting the sum. If we do this substitution and take the ratio of (\ref{Eq.OP1}) and (\ref{Eq.OP2}), we arrive at
\begin{equation*}
\begin{split}
\mathbb{P}\left( L_{i}\llbracket 0,b \rrbracket = B_{i} \mbox{ for }i \in \llbracket 1, 2m \rrbracket \vert \, A \, \right) = \frac{\prod_{i = 1}^{m} c^{B_{2i-1}(0) - B_{2i}(0)} q^{ - B_{2i-1}(0) - B_{2i}(0)}}{\sum_{\vec{B}' \in \Omega_{\ice}(b,\vec{z},\tilde{g})} \prod_{i = 1}^{m} c^{B'_{2i-1}(0) - B'_{2i}(0)} q^{- B'_{2i-1}(0)  - B'_{2i}(0)} },
\end{split}
\end{equation*}
which verifies (\ref{Eq.SchurEq4}) when $\vec{B} \in \Omega_{\ice}(b,\vec{z},\tilde{g})$.
\end{proof}

The next lemma shows that the interacting pair Gibbs property of Definition \ref{Def.IPGP} and the interlacing Gibbs property of Definition \ref{Def.IGP} are naturally compatible.
\begin{lemma}\label{Lem.GibbsConsistent} Fix a set $\Sigma = \llbracket 1, N \rrbracket$ with $N \in \mathbb{N} \cup \{\infty\}$, $T_1 \in \mathbb{N} \cup \{\infty\}$, $q \in (0,1)$, and $c \in [0, q^{-1})$. Suppose that $\mathfrak{L}$ is a $\Sigma$-indexed geometric line ensemble on $\llbracket 0, T_1 \rrbracket$ satisfying the interacting pair Gibbs property of Definition \ref{Def.IPGP}. Then, $\mathfrak{L}|_{\llbracket1,2\lfloor (N-1)/2 \rfloor+1\rrbracket \times \llbracket0,T_1\rrbracket}$ satisfies the interlacing Gibbs property of Definition \ref{Def.IGP}.
\end{lemma}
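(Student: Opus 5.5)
Write $N' = 2\lfloor (N-1)/2\rfloor + 1$ and $\mathfrak{L}' = \mathfrak{L}|_{\llbracket 1,N'\rrbracket\times\llbracket 0,T_1\rrbracket}$. Interlacing of $\mathfrak{L}'$ is inherited from $\mathfrak{L}$. So the task is to verify (\ref{Eq.SchurEq}) for $\mathfrak{L}'$. Fix $K=\llbracket k_1,k_2\rrbracket \subset \llbracket 1, N'-1\rrbracket$, integers $a<b$ in $\llbracket 0,T_1\rrbracket$, and boundary data $\vec x,\vec y,f,g$ with $\mathbb{P}(A)>0$. The plan is to pass through a larger, pair-adapted conditioning and then sum out the extra information.

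Choose $k = \lceil k_2/2 \rceil$, so that $2k \geq k_2$ and $2k+1 \leq N'$; this is allowed since $k_2 \leq N'-1$. Condition on the $\sigma$-algebra $\mathcal{G}$ generated by two pieces of data: $L_{2k+1}\llbracket 0,b\rrbracket$ together with $L_1(b),\dots,L_{2k}(b)$, and all values $L_i(j)$ with $(i,j) \in \llbracket 1,2k\rrbracket \times\llbracket 0,b\rrbracket \setminus K\times\llbracket a+1,b-1\rrbracket$. By Definition \ref{Def.IPGP} with $T=b$, conditionally on the first piece of data the curves $L_1,\dots,L_{2k}$ on $\llbracket 0,b\rrbracket$ have law $\mathbb{P}^{b,\vec y',g'}_{\ice;q,c}$. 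This law is proportional to
\[
{\bf 1}_{\Omega_{\ice}}\cdot\prod_{i=1}^k c^{B_{2i-1}(0)-B_{2i}(0)} q^{-B_{2i-1}(0)-B_{2i}(0)},
\]
which is exactly the weight form used in the proof of Lemma \ref{Lem.StrongIPGP}.

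The key observation concerns what happens when we further condition on the second piece of data, i.e.\ on everything except $L_{k_1},\dots,L_{k_2}$ on the open interval $\llbracket a+1,b-1\rrbracket$. Two cases arise.
\begin{itemize}
\item If $a\geq 1$, the weight depends only on values at time $0$, and these are fixed by the conditioning. The conditional law is then proportional to the interlacing indicator alone. The only constraints that involve the free paths are interlacing with $L_{k_1-1}$ and $L_{k_2+1}$ and with each other on $\llbracket a,b\rrbracket$, together with the fixed endpoints. So the conditional law is uniform on $\Omega_{\ice}(a,b,\vec x,\vec y,f,g)$, i.e.\ it equals $\mathbb{P}_{\ice,\operatorname{Geom}}^{a,b,\vec x,\vec y,f,g}$.
\item If $a=0$, the values at time $0$ are still fixed, because the excluded set is $\llbracket a+1,b-1\rrbracket$. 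The same conclusion follows.
\end{itemize}
One should check that interlacing constraints linking free points to fixed points outside $[a,b]$ only involve the times $a$ and $b$. This holds because $\phi(r-1)\geq\psi(r)$ couples adjacent times only, and times $a$ and $b$ are fixed. The case $k_2=2k$ is covered because $L_{2k+1}$ is part of $g'$.

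Finally, I would take $\mathbb{P}(\cdot\mid A)$ of the indicator and use the tower property. The event $A$ is measurable with respect to the $\sigma$-algebra generated by $\mathcal{G}$ and the first piece of data, since $f$, $g$ and the endpoints are included in it (when $k_2+1 = 2k+1$, $g$ is part of $L_{2k+1}$). Averaging a conditional probability that is constant on $A$ then gives (\ref{Eq.SchurEq}).

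The main obstacle is bookkeeping: the conditioning is on discrete events, so I would argue as in Lemma \ref{Lem.StrongIPGP}. That means writing $\mathbb{P}(A\cap\{\text{paths}=B\}\cap\{\text{rest}=\mu\})$ as a sum over the $\mathcal{G}$-data of the pair-Gibbs weights, and observing that the weight factor and the indicator factorize into a part independent of $B$ times ${\bf 1}\{B\in\Omega_{\ice}(a,b,\vec x,\vec y,f,g)\}$. Taking ratios, the constants cancel and the result is the uniform measure.
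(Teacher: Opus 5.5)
Your proposal is correct and follows essentially the same route as the paper. Both arguments apply the interacting pair Gibbs property on $\llbracket 0,b\rrbracket$ to the top $2m\geq k_2$ curves. They then observe that the pair weights depend only on time-$0$ values, which are the same for every admissible configuration of the block $K\times\llbracket a+1,b-1\rrbracket$, so the block is conditionally uniform on $\Omega_{\ice}(a,b,\vec x,\vec y,f,g)$, and finally sum out the remaining data. The paper writes the last step as an explicit sum over $(\vec z,\tilde g)$ with a bijection between $\Omega_{\ice}(D,\vec z,\tilde g)$ and $\Omega_{\ice}(B,\vec z,\tilde g)$; you write it as conditioning on a finer discrete $\sigma$-algebra followed by the tower property, which is the same computation.
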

\begin{proof}
Fix any finite $K = \llbracket k_1, k_2 \rrbracket \subset \llbracket 1, 2\lfloor(N - 1)/2\rfloor \rrbracket$ and $a,b \in \llbracket 0, T_1 \rrbracket$ with $a < b$.  Suppose $f, g$ are two increasing paths drawn in $\{ (r,z) \in \mathbb{Z}^2 : a \leq r \leq b\}$ and $\vec{x}, \vec{y} \in \mathfrak{W}_k$ with $k=k_2-k_1+1$, and they satisfy $\mathbb{P}(A) > 0$, where $A$ denotes the event $$A =\{ \vec{x} = ({L}_{k_1}(a), \dots, {L}_{k_2}(a)), \vec{y} = ({L}_{k_1}(b), \dots, {L}_{k_2}(b)), L_{k_1-1} \llbracket a,b \rrbracket = f, L_{k_2+1} \llbracket a,b \rrbracket = g \}.$$
As usual, if $k_1 = 1$ we adopt the convention $f = \infty = L_0$. Then, writing $k = k_2 - k_1 + 1$, we seek to show for any $\{ B_i \in \Omega(a, b, x_i , y_i) \}_{i = 1}^k$ that
\begin{equation}\label{Eq.SchurEq3}
\mathbb{P}\left( L_{i + k_1-1}\llbracket a,b \rrbracket = B_{i} \mbox{ for $i \in \llbracket 1, k \rrbracket$} \, \vert  A \, \right) = \mathbb{P}_{\ice, \operatorname{Geom}}^{a,b, \vec{x}, \vec{y}, f, g} \left( \cap_{i = 1}^k\{ Q_i = B_i \} \right),
\end{equation}
where $\mathfrak{Q} = \{Q_i\}_{i = 1}^k$ has law $\mathbb{P}_{\ice, \operatorname{Geom}}^{a,b, \vec{x}, \vec{y}, f, g}$. If $B = (B_1, \dots, B_k) \not\in \Omega_{\ice}(a,b,\vec{x},\vec{y},f,g)$, then both sides of (\ref{Eq.SchurEq3}) are equal to zero, and so we assume $B \in \Omega_{\ice}(a,b,\vec{x},\vec{y},f,g)$. 

Fix $m\in \llbracket 1,(N-1)/2\rrbracket$ with $2m\ge k_2$. For an increasing path $h$ on $\llbracket a,b \rrbracket$ and $\vec{y} \in \mathfrak{W}_{k}$, we let
\begin{equation*}
\begin{split}
&\mathfrak{W}_{2m}(\vec{y}) = \{\vec{z} \in \mathfrak{W}_{2m}: z_{i + k_1-1} = y_i \mbox{ for } i \in \llbracket 1, k \rrbracket \} \mbox{,  }\\
& \Omega(a,b, h) = \{\tilde{h} : \tilde{h} \mbox{ is an increasing path on }\llbracket 0,b \rrbracket \mbox{ with } \tilde{h}(s) = h(s) \mbox{ for } s \in \llbracket a, b \rrbracket \}.
\end{split}
\end{equation*}
If $D = (D_1, \dots, D_k) \in \Omega_\ice(a,b,\vec{x}, \vec{y},f,g)$, $\vec{z} \in \mathfrak{W}_{2m}(\vec{y})$, and $\tilde{g} \in \Omega(a,b, g)$, we define 
\begin{equation*}
\begin{split}
&\Omega_\ice(D, \vec{z}, \tilde{g}) = \big{\{} Q_i \in \cup_{y \in \mathbb{Z}} \Omega(b,y) \mbox{ for } i \in \llbracket 1, 2m + 1 \rrbracket : Q_{k_1 + i - 1}\llbracket a, b \rrbracket = D_i \mbox{ for $i \in \llbracket 1, k\rrbracket$},  Q_i(b) = z_i  \\
&\mbox{ for } i \in \llbracket 1, 2m \rrbracket, Q_{2m+1} \llbracket 0, b \rrbracket = \tilde{g},\mbox{ and } Q_i(r-1) \geq Q_{i+1}(r)  \mbox{ for all $r \in \llbracket  1, b \rrbracket$ and $i \in \llbracket 0 , 2m \rrbracket$} \big{\}},
\end{split}
\end{equation*}
where we adopt the convention that $D_0 = f$, $Q_0 = \infty$. In words, $\Omega_{\ice}(D, \vec{z}, \tilde{g})$ is the set of $(2m + 1)$-tuples of up-right paths drawn in $\{ (r,z) \in \mathbb{Z}^2 : 0 \leq r \leq b\}$, which interlace, and match $\vec{z}$ on the right, $\tilde{g}$ on the bottom, and $D$ in the rectangle $\llbracket k_1-1, k_2 \rrbracket \times \llbracket a, b \rrbracket$.

From (\ref{Eq.IPGP}) we have that if $\tilde{A}(\vec{z}, \tilde{g}) = \{ \vec{z} = ({L}_{1}(b), \dots, {L}_{2m}(b)), L_{2m+1} \llbracket 0,b \rrbracket = \tilde{g} \}$, then for any $\{ \tilde{B}_i \in \Omega(b, z_i) \}_{i = 1}^{2m}$
\begin{equation}\label{YU1}
\mathbb{P}\left( \{ L_{i}\llbracket 0,b \rrbracket = \tilde{B}_{i} \mbox{ for $i \in \llbracket 1, 2m \rrbracket$} \} \cap   \tilde{A}(\vec{z}, \tilde{g})\right) = \mathbb{P}_{\ice;q,c}^{b, \vec{z},  \tilde{g}} \left( \cap_{i = 1}^{2m}\{ Q_i = \tilde{B}_i \} \right) \cdot \mathbb{P} (\tilde{A}(\vec{z}, \tilde{g}) ).
\end{equation}
The latter shows that for any $B = (B_1, \dots, B_k) \in \Omega_\ice(a,b,\vec{x}, \vec{y},f,g)$  
\begin{equation}\label{YU2}
\begin{split}
&\mathbb{P}\left( \{ L_{i + k_1-1}\llbracket a,b \rrbracket = B_{i} \mbox{ for $i \in \llbracket 1, k \rrbracket$} \} \cap  A \right)  =  \sum_{\vec{z} \in \mathfrak{W}_{2m}(\vec{y})} \sum_{\tilde{g} \in  \Omega(a,b, g) }   \mathbb{P} (\tilde{A}(\vec{z}, \tilde{g}) )     \\
& \sum_{(\tilde{B}_1, \dots, \tilde{B}_{2m+1}) \in\Omega_{\ice}(B, \vec{z}, \tilde{g}) }  \mathbb{P}_{\ice;q,c}^{b, \vec{z}, \tilde{g}}  \left( \cap_{i = 1}^{2m}\{ Q_i = \tilde{B}_i \} \right), 
\end{split}
\end{equation}
and also
\begin{equation}\label{YU3}
\begin{split}
&\mathbb{P}\left( A \right)  = \sum_{\vec{z} \in \mathfrak{W}_{2m}(\vec{y})} \sum_{\tilde{g} \in  \Omega(a,b, g) }   \mathbb{P} (\tilde{A}(\vec{z}, \tilde{g}) ) \sum_{D \in \Omega_{\ice}(a,b,\vec{x}, \vec{y},f,g)} \\ 
& \sum_{(\tilde{B}_1, \dots, \tilde{B}_{2m+1}) \in \Omega_{\ice}(D, \vec{z}, \tilde{g}) }  \mathbb{P}_{\ice;q,c}^{b, \vec{z}, \tilde{g}}  \left( \cap_{i = 1}^{2m}\{ Q_i = \tilde{B}_i \} \right).
\end{split}
\end{equation}
Observe that for any $D = (D_1, \dots, D_k) \in \Omega_{\ice}(a,b,\vec{x}, \vec{y},f,g)$ the second lines in (\ref{YU2}) and (\ref{YU3}) are the same. Indeed, the sets $\Omega_{\ice}(D, \vec{z}, \tilde{g})$ and $\Omega_{\ice}(B, \vec{z}, \tilde{g})$ are in a clear bijection, where to go from the first to the second we modify $Q_{k_1 + i-1}(s)$ for $(i,s) \in \llbracket 1, k \rrbracket \times \llbracket a, b \rrbracket$ from matching $D$ to matching $B$. In addition, from Definition \ref{Def.InterlacingInteractingPairs} and (\ref{Eq.InteractingPairWeight}) we have for some $C > 0$, depending on $q$, $c$, $\vec{z}$, $b$, and $\tilde{g}$, that
$$\mathbb{P}_{\ice;q,c}^{b, \vec{z}, \tilde{g}} \left( \cap_{i = 1}^{2m}\{ Q_i = \tilde{B}_i \} \right) = C \cdot \prod_{i = 1}^{m} c^{\tilde{B}_{2i-1}(0) - \tilde{B}_{2i}(0)} q^{-\tilde{B}_{2i-1}(0) - \tilde{B}_{2i}(0)},$$
which shows that the corresponding summands on the second lines of (\ref{YU2}) and (\ref{YU3}) agree.

From the last observation, and taking ratios of (\ref{YU2}) and (\ref{YU3}), we conclude that for $B = (B_1, \dots, B_k) \in \Omega_\ice(a,b,\vec{x}, \vec{y},f,g)$, we have
\begin{equation*}
\begin{split}
&\mathbb{P}\left( \{ L_{i + k_1-1}\llbracket a,b \rrbracket = B_{i} \mbox{ for $i \in \llbracket 1, k \rrbracket$} \} \vert  A \right)  =  \frac{1}{|\Omega_\ice(a,b,\vec{x}, \vec{y},f,g)|},
\end{split}
\end{equation*}
which is precisely \eqref{Eq.SchurEq3}.
\end{proof}

%
%
\section{Properties of geometric line ensembles}\label{Section3} The main result of this section, Theorem \ref{Thm.Tightness}, provides a general tightness criterion for sequences of geometric line ensembles that satisfy the interacting pair Gibbs property of Definition \ref{Def.IPGP}. This criterion serves as the key input for establishing tightness in Theorem \ref{Thm.AiryLimit}. It is stated in Section \ref{Section3.3}, and its proof, which occupies Section \ref{Section3.4}, relies on several technical results concerning geometric line ensembles that are established in Sections \ref{Section3.1} and \ref{Section3.2}. Throughout this section, we continue to use the notation introduced in Section \ref{Section2}.

%
%
\subsection{Convergence of geometric line ensembles}\label{Section3.1} The goal of this section is to show that, under appropriate scaling, the ensembles from Definition \ref{Def.InterlacingInteractingPairs} converge weakly to those from Definition \ref{Def.PinnedBM}. The precise statement is given in Lemma \ref{Lem.ConvOfInterPairs} below, which is the main result of this section. We begin by establishing several statements we require for its proof.

The following lemma shows that geometric line ensembles with laws $\mathbb{P}^{T_0, T_1, \vec{x},\vec{y}}_{\ice, \operatorname{Geom}}$ as in Definition \ref{Def.AvoidingLawBer} can be monotonically coupled in their boundary data. Its proof is given in Section \ref{SectionA.3}.
\begin{lemma}\label{Lem.MonCoupBer} Assume the same notation as in Definition \ref{Def.AvoidingLawBer}. Fix $k \in \mathbb{N}$, $M \in \mathbb{Z}_{\geq 0} \cup \infty$, $T_0, T_1 \in \mathbb{Z}$ with $T_0 < T_1$, and $\vec{x}\,^b, \vec{y}\,^b, \vec{x}\,^t, \vec{y}\,^t \in \mathfrak{W}_k$. We assume that 
\begin{equation}\label{Eq.MonCoupBerIneq}
x_i^t \geq x^b_i \geq x_i^t - M \mbox{, and }  y_i^t \geq y_i^b \geq y_i^t - M \mbox{ for } i \in \llbracket 1, k \rrbracket,
\end{equation}
and also that $\Omega_{\ice}(T_0, T_1, \vec{x}\,^b, \vec{y}\,^b)$ and $\Omega_{\ice}(T_0, T_1, \vec{x}\,^t, \vec{y}\,^t)$ are both non-empty. Then, there exists a probability space $(\Omega, \mathcal{F}, \mathbb{P})$, which supports two $\llbracket 1, k \rrbracket$-indexed geometric line ensembles $\mathfrak{Q}^t$ and $\mathfrak{Q}^b$ on $\llbracket T_0, T_1 \rrbracket$ such that the law of $\mathfrak{Q}^{t}$ {\big (}resp. $\mathfrak{Q}^b${\big )} under $\mathbb{P}$ is given by $\mathbb{P}_{\ice,\operatorname{Geom}}^{T_0, T_1, \vec{x}\,^t, \vec{y}\,^t}$ {\big (}resp. $\mathbb{P}_{\ice,\operatorname{Geom}}^{T_0, T_1, \vec{x}\,^b, \vec{y}\,^b}${\big )} and such that $\mathbb{P}$-almost surely 
$$ Q_i^t(r) \geq Q^b_i(r) \geq Q^t_i(r) - M \mbox{ for all $i \in \llbracket 1, k \rrbracket$ and $r \in \llbracket T_0, T_1 \rrbracket$.} $$ 
\end{lemma}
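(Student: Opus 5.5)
We construct the coupling by the standard Glauber-dynamics (Markov chain) monotone coupling, as in the Bernoulli/geometric analogues of \cite{D24b}. Both $\Omega_{\ice}(T_0,T_1,\vec{x}\,^t,\vec{y}\,^t)$ and $\Omega_{\ice}(T_0,T_1,\vec{x}\,^b,\vec{y}\,^b)$ are finite: each path is increasing with fixed endpoints. The target laws are uniform on these sets.

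\textbf{The chain.} We run a single-site heat-bath chain on each set, driven by common randomness. At each step we pick a uniformly random pair $(i,r)$ with $i \in \llbracket 1,k\rrbracket$ and $r \in \llbracket T_0+1, T_1-1\rrbracket$, and an independent uniform $U \in (0,1)$. Given the current configuration, the admissible values of $Q_i(r)$ form an integer interval $[\ell, u]$. Monotonicity of $Q_i$ gives $Q_i(r-1) \leq Q_i(r) \leq Q_i(r+1)$. Interlacing gives $Q_i(r) \leq Q_{i-1}(r-1)$, $Q_i(r) \geq Q_{i+1}(r+1)$, and symmetrically for the neighbouring curves. We resample $Q_i(r)$ uniformly from $[\ell,u]$ using the same $U$ in both chains, namely the value $\ell + \lfloor U(u-\ell+1)\rfloor$.

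\textbf{Properties of the chain.} The chain is symmetric, so the uniform measure is stationary. It is aperiodic, since holding is possible. Irreducibility needs a short argument: every configuration can be moved, one site at a time, to the pointwise-minimal admissible configuration, which is obtained by lowering values one at a time.

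\textbf{Preserving the sandwich.} We need both $Q^t \geq Q^b$ and $Q^b \geq Q^t - M$ to be preserved. Suppose the current pair satisfies both. The bounds $\ell, u$ are monotone in the neighbouring values, so $\ell^t \geq \ell^b$ and $u^t \geq u^b$. Also $\ell^b \geq \ell^t - M$ and $u^b \geq u^t - M$, because each bound is a max or min of neighbouring values or fixed boundary values, and the boundary values obey \eqref{Eq.MonCoupBerIneq}. The crux is then the elementary fact below.

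For integer intervals $[\ell^b,u^b]$ and $[\ell^t,u^t]$ with $\ell^t \geq \ell^b \geq \ell^t - M$ and $u^t \geq u^b \geq u^t - M$, the quantile map $F(\ell,u,U) = \ell + \lfloor U(u-\ell+1)\rfloor$ satisfies $F^t \geq F^b \geq F^t - M$. The first inequality holds because $F$ is nondecreasing in both $\ell$ and $u$. For the second, note that $F(\ell,u,U) - \ell$ depends only on the length $u-\ell$ and is monotone in it. Hence $F$ is monotone in joint shifts $(\ell+s, u+s)$. This gives $F(\ell^t - M, u^t - M, U) = F^t - M \leq F(\ell^b, u^b, U)$, using coordinatewise monotonicity once more. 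This step, that the $-M$ lower bound survives the quantile coupling, is the only point needing care; the $M=\infty$ case reduces to plain monotonicity.

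\textbf{Initialization and passing to the limit.} We need initial states satisfying the sandwich. Take $Q^t$ to be the pointwise-maximal element of its set and $Q^b$ the pointwise-maximal element of its set. Pointwise maxima exist because the constraints form a lattice, closed under coordinatewise max and min. The maximal configurations are explicit, built from $\min$ of $y$ values and shifted upper neighbours, and the same formula applied to both boundary data preserves $\geq$ and the $-M$ gap, since each formula is a max/min of boundary coordinates.

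Running both chains for $n$ steps gives coupled pairs $(Q^{t,n},Q^{b,n})$ satisfying the sandwich at every $n$. The state space is finite, so the joint laws are tight. By convergence to stationarity of each marginal, any subsequential limit of the joint laws has the required marginals $\mathbb{P}_{\ice,\operatorname{Geom}}^{T_0,T_1,\vec{x}\,^t,\vec{y}\,^t}$ and $\mathbb{P}_{\ice,\operatorname{Geom}}^{T_0,T_1,\vec{x}\,^b,\vec{y}\,^b}$. The sandwich inequalities pass to the limit because they define closed sets.
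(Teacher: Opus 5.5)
Your proposal is correct and follows essentially the same route as the paper: a common-randomness single-site heat-bath chain with the update $\ell+\lfloor U(u-\ell+1)\rfloor$, started from the pointwise-maximal configurations, whose marginals converge to the uniform laws while the sandwich inequalities pass to the limit. The only difference is packaging: the paper gets the lower bound by running a third chain on the shifted data $(\vec{x}\,^t - M, \vec{y}\,^t - M)$, which equals $X^t_n - M$ and stays below $X^b_n$ by monotonicity, whereas you check the same shift-equivariance and monotonicity directly for a single update step.
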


The following lemma shows that, under appropriate scaling, the distribution at time zero of an interacting pair as in Definition \ref{Def.InteractingPair} converges weakly. Its proof is given in Section \ref{SectionA.2}.
\begin{lemma}\label{Lem.ConvAtOrigin} Fix $b > 0$, $q \in (0,1)$, $c \in [0,1)$, $\vec{y} = (y_1, y_2) \in \weyl_2$ and set $p = \frac{q}{1-q}$, $\sigma = \sqrt{p(1+p)}$. Let $\{d_n \}_{n \geq 1}$ be a sequence of positive reals, such that $d_n \rightarrow \infty$. Let $T_n = \lceil b d_n \rceil$ and $Y^n \in \mathfrak{W}_2$ be a sequence, such that 
\begin{equation}\label{Eq.ConvAtOriginYLim}
\lim_{n \rightarrow \infty}\sigma^{-1} d_n^{-1/2} (Y_1^n  - p T_n) = y_1 \mbox{ and }\lim_{n \rightarrow \infty}\sigma^{-1} d_n^{-1/2} (Y_2^n  - p T_n) = y_2.
\end{equation}
Finally, let $\mathfrak{L}^n = \{L^n_i\}_{i = 1}^2$ be a $\llbracket 1, 2 \rrbracket$-indexed geometric line ensemble on $\llbracket 0, T_n \rrbracket$ with law $\mathbb{P}^{T_n, Y^n}_{\ice; q, c}$ as in Definition \ref{Def.InteractingPair}. Then, we have the following weak convergence
\begin{equation}\label{Eq.ConvAtOrigin}
\left(\sigma^{-1} d_n^{-1/2} L^n_1(0) - \frac{y_1 + y_2}{2}, \sigma^{-1} d_n^{-1/2} L^n_2(0) - \frac{y_1 + y_2}{2} \right) \Rightarrow \sqrt{b/2} \cdot (Z,Z),
\end{equation}
where $Z$ is a normal random variable with mean $0$ and variance $1$.
\end{lemma}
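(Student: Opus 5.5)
The plan is to compute the law of $(L^n_1(0), L^n_2(0))$ exactly as a ratio of explicit sums, and then run a local central limit analysis.

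\emph{Exact law.} Fix $x_1 \geq x_2$. Summing the weight (\ref{Eq.InteractingPairWeight}) over all pairs in $\Omega_{\ice}(T_n, Y^n)$ with $B_i(0) = x_i$ gives
\[
\mathbb{P}\bigl(L^n_1(0) = x_1, L^n_2(0) = x_2\bigr) \propto c^{x_1 - x_2}\, q^{Y^n_1 - x_1 + Y^n_2 - x_2}\, N_n(x_1,x_2),
\]
where $N_n(x_1,x_2)$ is the number of interlacing pairs of increasing paths from $(x_1,x_2)$ to $Y^n$. The condition $B_1(r-1) \geq B_2(r)$ becomes a non-crossing condition after a unit time shift of the second path. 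Hence the Lindstr\"om--Gessel--Viennot lemma gives $N_n$ as a $2\times 2$ determinant of binomial coefficients that count increasing lattice paths; these are the skew Schur counts. Multiplying by $(1-q)^{2T_n}$ turns each entry, together with its $q$-power, into a transition probability $P_{m}(u,v)$ of a random walk with i.i.d.\ $\mathrm{Geom}(q)$ increments, taken over $T_n$ or $T_n \pm 1$ steps. This yields
\[
\mathbb{P}\bigl(L^n(0) = (x_1,x_2)\bigr) \propto c^{d}\, \det\bigl[\tilde P(x_j \to Y^n_i)\bigr]_{i,j=1}^2, \qquad d = x_1 - x_2 \geq 0,
\]
with $\tilde P$ these slightly shifted kernels. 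Note that $p = q/(1-q)$ and $\sigma^2 = p(1+p)$ are exactly the mean and variance of a $\mathrm{Geom}(q)$ step.

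\emph{Asymptotics.} Write $x_2 = x$ and $x_1 = x + d$. Using a local CLT with first-derivative control for geometric walks at scale $\sigma d_n^{1/2}$, the determinant is asymptotically
\[
d \cdot \bigl[\partial_u P(u \to Y_1)\, P(u \to Y_2) - P(u \to Y_1)\, \partial_u P(u \to Y_2)\bigr]_{u = x} + O(\text{lattice/shift errors}).
\]
In scaled variables this is, up to a constant factor, $d \cdot d_n^{-1/2}\,(y_1 - y_2)/b \cdot \phi_b(\xi - y_1)\,\phi_b(\xi - y_2)$, where $\xi = \sigma^{-1} d_n^{-1/2}(x - pT_n) + \ldots$ and $\phi_b$ is the centered Gaussian density with variance $b$. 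The unit time shifts only contribute lower-order corrections, and there is an $O(1)$ correction in $d$ from the shifts, which is harmless since it is summable against $c^d$. Since $c < 1$, summing $c^d\,(d + O(1))$ over $d$ gives a finite constant independent of $\xi$. The marginal of $d$ is therefore tight, so $L^n_1(0) - L^n_2(0) = O(1)$, which vanishes under the $d_n^{1/2}$ scaling. Meanwhile $\xi$ has limiting density proportional to
\[
\phi_b(\xi - y_1)\,\phi_b(\xi - y_2) \propto \exp\Bigl(-\bigl(\xi - \tfrac{y_1 + y_2}{2}\bigr)^2 / b\Bigr),
\]
which is normal with mean $(y_1 + y_2)/2$ and variance $b/2$. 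Combining the two statements gives (\ref{Eq.ConvAtOrigin}). Since $y_1 > y_2$, the prefactor $y_1 - y_2$ is strictly positive, so the leading term does not degenerate.

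\emph{Main obstacle.} The hard step is making this rigorous by dominated convergence. It requires:
\begin{enumerate}
\item[(i)] a uniform local CLT, including the discrete derivative, valid for $|\xi| \leq K$ and $d \leq K$;
\item[(ii)] tail bounds that control the normalizing sum, namely the region $d > K$ via $c^d$ times a polynomial bound $\det \leq C\, d\, d_n^{-3/2}$ (the difference of kernels is bounded by $d \sup|\partial P|$), and the region $|\xi| > K$ via Gaussian/Chernoff bounds for geometric walks;
\item[(iii)] a lower bound showing the normalizing constant is of exact order $d_n^{-3/2}$.
\end{enumerate}
For (ii) I would first try the crude estimate $|\det| \leq d \cdot \sup_u |P(u+1 \to Y) - P(u \to Y)| \cdot \sup P$ together with exponential moment tail bounds. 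With these three estimates in hand, the pointwise limit above converts into the stated weak convergence.
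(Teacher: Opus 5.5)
Your argument is correct in substance, but it takes a different route from the paper.

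\textbf{Comparison of the two routes.} You work with the joint mass function of $(L^n_1(0),L^n_2(0))$ directly. Your determinant is exactly the Jacobi--Trudi expression (\ref{Eq.JacobiTrudi}), and no time shifts are needed. With $P_T(r)=\binom{T+r-1}{r}(1-q)^Tq^r$ one has
\[
(1-q)^{2T_n}q^{Y^n_1+Y^n_2-x_1-x_2}s_{Y^n/\vec{x}}(1^{T_n})=P_{T_n}(Y^n_1-x_1)P_{T_n}(Y^n_2-x_2)-P_{T_n}(Y^n_1-x_2+1)P_{T_n}(Y^n_2-x_1-1).
\]
A local CLT then collapses this to $(d+1)$ times a Wronskian. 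That produces $\phi_b(\xi-y_1)\phi_b(\xi-y_2)$ and the gap law $(1-c)^2(d+1)c^d$. Your heuristic even reproduces the constant in (\ref{Eq.PartitionFunctionAsymptotics}).

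The paper never touches the mass function. It observes that the joint characteristic function of the scaled sum and the unscaled gap equals $Z(T_n,Y^n;\hat q,\hat c)/Z(T_n,Y^n;q,c)$ with complex $\hat q,\hat c$, see (\ref{Eq.JointCharFun}). It then performs the sums over $x_2$ and $d$ exactly (a geometric series in $d$ and a Laurent coefficient in $x_2$, Lemma \ref{Lem.FinitePartitionFunctionA}), which gives the single contour integral (\ref{Eq.ContourFormZR1}). Steepest descent is applied on $u=qe^{z}$, $z\in[-\im\pi,\im\pi]$. The factor $u^2-\hat q^2$, which is $O(d_n^{-1/2})$ near the saddle point, plays the role of your Wronskian cancellation. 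The resulting limit $e^{-bs^2}(1-c)^2/(1-ce^{\im t})^2$ is exactly the characteristic function of your limit: a Gaussian sum of variance $2b$, independent of a gap with law $(1-c)^2(k+1)c^k$.

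The paper's route makes every summation exact. It therefore needs no uniform local limit theorem and no tail estimates, only a Laplace estimate on a compact contour plus L\'evy continuity. Your route is elementary and real-variable, and it yields a stronger, local statement. For your step (i), note that $P_T(r+1)/P_T(r)=q(T+r)/(r+1)$. Hence $P_T(r+1)-P_T(r)=-P_T(r)\,[(1-q)(r-pT)+1]/(r+1)$, and the derivative version follows from the ordinary local CLT.

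\textbf{The step that needs care: tails in (ii).} The cancellation in the determinant must be preserved in the tails as well. The per-site mass is of order $d_n^{-3/2}$, and the full normalizing sum is of order $d_n^{-1}$, not $d_n^{-3/2}$ (cf.\ the factor $\sigma^2 d_n$ in (\ref{Eq.PartitionFunctionAsymptotics})). By contrast, each of the two products in the determinant, summed over $|\xi|>K$, is of order $d_n^{-1/2}e^{-cK^2}$. So Chernoff bounds applied to the two terms separately are useless. Your uniform bound $C(d+1)d_n^{-3/2}$ also cannot be summed over all $x$.

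What works is to telescope. Write $f_i(u)=P_{T_n}(Y^n_i-u)$ and $\Delta f(u)=f(u+1)-f(u)$. Then
\[
f_1(x+d)f_2(x)-f_1(x-1)f_2(x+d+1)=\sum_{j=-1}^{d-1}\Delta f_1(x+j)\,f_2(x)-\sum_{j=0}^{d}f_1(x-1)\,\Delta f_2(x+j).
\]
Bound each differenced factor by $\sup_r|P_T(r+1)-P_T(r)|\le C/T$ (e.g.\ by Fourier inversion). Sum the undifferenced factor over the tail region; there it is a single-walk tail probability, at most $C/K^2$ by Chebyshev. This gives a contribution $O\bigl((d+1)d_n^{-1}K^{-2}\bigr)$ for each $d$, which is summable against $c^d$. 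The same estimate with $\sum_x f_i(x)\le 1$ handles the region $d>K$.

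\textbf{Two small corrections.} First, $\xi$ should simply be $\sigma^{-1}d_n^{-1/2}x$; the drift $pT_n$ lives in $Y^n$, not at time $0$. Second, your step (iii) is automatic once the main term has a positive limit, which it does because $y_1>y_2$.
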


We next recall two lemmas from \cite{DSY26}.
\begin{lemma}\label{Lem.NoTouchPinned} Fix $b > 0$, $\vec{y} \in \weyl_2$, and let $f,g \in C([0,b])$ be such that $f(b) > y_1 > y_2 > g(b)$. Then 
\begin{align}
&\mathbb{P}_{\mathrm{pin}}^{b, \vec{y}}\left( \mathcal{Q}_2(t) \geq g(t) \mbox{ for all $t \in [0,b]$ and } \min_{s \in [0,b]} |\mathcal{Q}_2(s)- g(s)| = 0   \right) = 0, \label{Eq.NoTouchPinned} \\
&\mathbb{P}_{\mathrm{pin}}^{b, \vec{y}}\left( \mathcal{Q}_1(t) \leq f(t) \mbox{ for all $t \in [0,b]$ and } \min_{s \in [0,b]} |\mathcal{Q}_1(s)- f(s)| = 0   \right) = 0. \label{Eq.NoTouchPinned2}
\end{align}
\end{lemma}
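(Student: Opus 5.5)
The plan is to reduce the statement to the classical fact that a Brownian bridge cannot touch a continuous barrier from one side without crossing it. Pinning at the origin is handled separately, by noting that the common value at time $0$ has a density. I treat (\ref{Eq.NoTouchPinned}); (\ref{Eq.NoTouchPinned2}) follows by the symmetric argument, exchanging the roles of $\mathcal{Q}_1,\mathcal{Q}_2$ and of $f,g$.

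\emph{Step 1: the neighbourhood of the origin.} By Definition \ref{Def.PinnedPair}, $\mathcal{Q}_1(0)=\mathcal{Q}_2(0)=Z$, where $Z$ is normal with variance $b/2>0$. Hence almost surely $Z\neq g(0)$.
\begin{itemize}
\item If $Z<g(0)$, the event in (\ref{Eq.NoTouchPinned}) fails, since $\mathcal{Q}_2(0)<g(0)$.
\item If $Z>g(0)$, then by continuity of $\mathcal{Q}_2-g$ there is a (random) $n\in\mathbb{N}$ with $\mathcal{Q}_2-g>0$ on $[0,1/n]$.
\end{itemize}
It therefore suffices to show, for each fixed $\varepsilon=1/n$, that
$$\mathbb{P}_{\mathrm{pin}}^{b,\vec y}\Bigl(\mathcal{Q}_2\ge g \text{ on } [\varepsilon,b] \text{ and } \min_{[\varepsilon,b]}(\mathcal{Q}_2-g)=0\Bigr)=0,$$
and then take a countable union over $n$.

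\emph{Step 2: a Gibbs description on $[\varepsilon,b]$.} I would use Lemma \ref{Lem.BesselToPinned}, which writes $\mathcal{Q}_1=2^{-1/2}(U+V)$ and $\mathcal{Q}_2=2^{-1/2}(U-V)$ with $U$ a reverse Brownian motion and $V$ an independent 3D Bessel bridge. The Markov property of $U$ and of $V$, together with the fact that the Bessel bridge on $[\varepsilon,b]$ is a Brownian bridge conditioned to stay positive, shows the following. Conditionally on $\mathcal{F}_\varepsilon=\sigma(\mathcal{Q}_1(\varepsilon),\mathcal{Q}_2(\varepsilon))$, the pair $(\mathcal{Q}_1,\mathcal{Q}_2)$ on $[\varepsilon,b]$ has law $\mathbb{P}_{\mathrm{avoid}}^{\varepsilon,b,\vec x,\vec y}$, where $\vec x=(\mathcal{Q}_1(\varepsilon),\mathcal{Q}_2(\varepsilon))\in\weyl_2$ almost surely.

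Applying the resampling property once more, now to the second curve alone, the conditional law of $\mathcal{Q}_2$ given $\mathcal{Q}_1[\varepsilon,b]$ and $\mathcal{Q}_2(\varepsilon)$ is a Brownian bridge from $\mathcal{Q}_2(\varepsilon)$ to $y_2$ conditioned to stay below $\mathcal{Q}_1$. This conditioning event has positive probability, so the conditional law is absolutely continuous with respect to the free Brownian bridge $\mathbb{P}_{\mathrm{free}}^{\varepsilon,b,x_2,y_2}$.

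\emph{Step 3: the free bridge does not touch $g$ tangentially.} For a free bridge $B$ from $x$ to $y_2$ on $[\varepsilon,b]$, write
$$B=\tilde B+\ell_x,$$
where $\tilde B$ is a bridge from $0$ to $0$ and $\ell_x$ is the linear interpolation from $x$ to $y_2$. The map $x\mapsto \min_{[\varepsilon,b']}(\tilde B+\ell_x-g)$ is strictly increasing for any $b'<b$. Touching on $[\varepsilon,b]$ reduces to touching on some $[\varepsilon,b']$, $b'<b$, because $y_2>g(b)$ and continuity keep $B-g>0$ near $b$.

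Consequently, touching without crossing occurs only for at most one value of $x$, namely the value where this minimum vanishes. I then average over $\mathcal{Q}_2(\varepsilon)$. To do so, I need that $\mathcal{Q}_2(\varepsilon)$, conditionally on $\mathcal{Q}_1$ and $\tilde B$, has no atoms. This I would check from the explicit densities: $U(\varepsilon)$ is Gaussian, and $V(\varepsilon)$ has density (\ref{Eq.BesselDensity}).

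\emph{Main obstacle.} The step I expect to be delicate is the rigorous justification of the conditional structure in Step 2, since $\mathbb{P}_{\mathrm{pin}}^{b,\vec y}$ is defined only through the degenerate boundary data $(Z,Z)$. If the Bessel representation proves awkward, my fallback is to obtain the avoiding-bridge Gibbs property on $[\varepsilon,b]$ from Lemma \ref{Lem.BridgeEnsemblesCty}. I would pass to the limit from non-degenerate entrance data $\vec x\,^n\in\weyl_2$, for which the Gibbs property on sub-intervals is standard.
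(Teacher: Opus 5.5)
The paper gives no argument for this lemma; it is imported verbatim as \cite[Lemma 4.15]{DSY26}. Your proposal is a self-contained proof, and it goes through.

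\begin{itemize}
\item The Gaussian law of the common value $\mathcal{Q}_1(0)=\mathcal{Q}_2(0)$ disposes of $t=0$.
\item The step you flag as the main obstacle is routine. In (\ref{Eq.BesselDensity}) the factors following $V_{t_1}$ are exactly the transition densities of Brownian motion killed at $0$. So given $V_\varepsilon$, the Bessel bridge on $[\varepsilon,b]$ is a Brownian bridge conditioned to stay positive. Given $U(\varepsilon)$, the reverse Brownian motion is a Brownian bridge.
\item Rotating two independent Brownian bridges by $\pi/4$ gives two independent Brownian bridges, and $\{V>0\}$ becomes $\{\mathcal{Q}_1>\mathcal{Q}_2\}$. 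Hence Lemma \ref{Lem.BesselToPinned} yields $\mathbb{P}_{\mathrm{avoid}}^{\varepsilon,b,\vec x,\vec y}$ on $[\varepsilon,b]$ directly, with no limiting procedure.
\end{itemize}

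The citation buys brevity. Your route shows the lemma rests only on Lemma \ref{Lem.BesselToPinned} plus an elementary monotonicity argument for a single Brownian bridge.

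One step should be restated. You ask that $\mathcal{Q}_2(\varepsilon)$ be atomless \emph{conditionally on} $\mathcal{Q}_1$ and $\tilde B$, and propose to read this off from the marginal densities of $U(\varepsilon)$ and $V(\varepsilon)$. Marginal densities do not by themselves give such a conditional statement, but you do not need it.

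\begin{itemize}
\item $\mathbb{P}_{\mathrm{avoid}}^{\varepsilon,b,\vec x,\vec y}$ has density $\mathbf{1}\{\mathcal{Q}_1>\mathcal{Q}_2\}/\rho(\vec x)$ with respect to $\mathbb{P}_{\mathrm{free}}^{\varepsilon,b,\vec x,\vec y}$, where $\rho(\vec x)=\mathbb{P}_{\mathrm{free}}^{\varepsilon,b,\vec x,\vec y}(\mathcal{Q}_1>\mathcal{Q}_2 \mbox{ on } [\varepsilon,b])>0$.
\item Hence the probability of touching on $[\varepsilon,b']$ is at most $\mathbb{E}\bigl[\phi(\mathcal{Q}_2(\varepsilon))/\rho(\mathcal{Q}_1(\varepsilon),\mathcal{Q}_2(\varepsilon))\bigr]$, where $\phi(x)=\mathbb{P}\bigl(\min_{[\varepsilon,b']}(\tilde B+\ell_x-g)=0\bigr)$. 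No conditioning on $\mathcal{Q}_1$ is involved.
\item Your strict monotonicity makes the events $\{\min_{[\varepsilon,b']}(\tilde B+\ell_x-g)=0\}$ pairwise disjoint in $x$. So $\phi>0$ for at most countably many $x$.
\item It therefore suffices that $\mathcal{Q}_2(\varepsilon)=2^{-1/2}(U(\varepsilon)-V(\varepsilon))$ has no atoms. This holds because $U(\varepsilon)$ is Gaussian and independent of $V$.
\end{itemize}
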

\begin{proof} This is \cite[Lemma 4.15]{DSY26}.
\end{proof}
\begin{lemma}\label{Lem.StayInCorridor} Fix $b > 0$,  $\vec{y} \in \weyl_2$, and $f,g \in C([0,b])$, such that $f(b) \ge y_1 > y_2 \ge g(b)$, and $f(t) > g(t)$ for $t\in[0,b]$. For any $\varepsilon > 0$, there exists $\delta > 0$, depending on $\varepsilon, b, y_1, y_2 ,f,g$, such that
\begin{align}\label{Eq.StayInCorridor}
\mathbb{P}_{\mathrm{pin}}^{b,\vec{y}}\left(g(t) - \varepsilon \leq \mathcal{Q}_2(t) \leq \mathcal{Q}_1(t) \leq f(t) + \varepsilon \mbox{ for all }t \in [0,b] \right) &\geq \delta.
\end{align}
Now fix $M>0$ and assume in addition that $f(t) = y_1+m(b-t)$ and $g(t) = y_2+m(b-t)$ for some $m\in\mathbb{R}$ with $|m|\le M$. Then the lower bound $\delta$ in \eqref{Eq.StayInCorridor} may be taken to depend only on $\varepsilon,b,M$.
\end{lemma}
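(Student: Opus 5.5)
We prove Lemma \ref{Lem.StayInCorridor} through the representation of Lemma \ref{Lem.BesselToPinned}. Write $\mathcal{Q}_1 = 2^{-1/2}(U+V)$ and $\mathcal{Q}_2 = 2^{-1/2}(U-V)$. Here $U$ is a reverse Brownian motion from $U(b) = 2^{-1/2}(y_1+y_2)$, and $V$ is an independent 3D Bessel bridge with $V_0=0$ and $V_b = 2^{-1/2}(y_1-y_2)>0$. The target event contains a product event
\[
E_U \cap E_V = \{U \text{ stays in a tube around a deterministic path } u\} \cap \{V \text{ stays in a tube around a deterministic path } v\}.
\]
By independence its probability is $\mathbb{P}(E_U)\,\mathbb{P}(E_V)$. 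The probability that a Brownian motion, or a 3D Bessel bridge, stays in an open tube around a suitable continuous path is positive. So it suffices to choose $u,v$ so that every pair $(U,V)$ in the tubes gives curves in the corridor.

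For the first statement, note that $y_1>y_2$ and $f(b)\ge y_1$, $g(b)\le y_2$. Since $f > g$ on $[0,b]$, we can pick continuous $h_1 \ge h_2$ on $[0,b]$ with the following properties: $h_1(b)=y_1$ and $h_2(b)=y_2$; $h_1=h_2$ near $t=0$ (the pinning at time $0$); $g - \varepsilon/2 \le h_2 \le h_1 \le f+\varepsilon/2$; and $h_1 - h_2 > 0$ on $(0,b]$. One choice is to interpolate toward the midpoint $(f+g)/2$ near $0$ and to $y_1,y_2$ near $b$. Set $u = 2^{-1/2}(h_1+h_2)$ and $v = 2^{-1/2}(h_1-h_2)\ge 0$, so that $v(0)=0$. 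If $\|U-u\|_\infty<\varepsilon/4$ and $\|V - v\|_\infty < \varepsilon/4$, then each $\mathcal{Q}_i$ is within $2^{-1/2}\cdot\varepsilon/2<\varepsilon/2$ of $h_i$. Also $\mathcal{Q}_2 \le \mathcal{Q}_1$ automatically, because $V\ge 0$.

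The positivity of $\mathbb{P}(E_U)$ follows from the support theorem for Brownian motion, after time reversal. For $\mathbb{P}(E_V)$ we need the support property of the 3D Bessel bridge: for continuous $v\ge0$ with $v(0)=0$ and $v(b)=V_b$, the bridge stays within $\eta$ of $v$ with positive probability. We would get this from the density (\ref{Eq.BesselDensity}). Equivalently, the bridge is a Brownian bridge from $0$ to $V_b$ in $\mathbb{R}^3$ in norm, or a 1D Brownian bridge conditioned to stay positive, i.e. an $h$-transform with Radon--Nikodym weight bounded below on the event. Concretely, we take a 3D Brownian bridge from $0$ to $(V_b,0,0)$ and require it to stay within $\eta$ of the path $(v(t),0,0)$. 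Its norm is then within $\eta$ of $v$, and this event has positive probability by the Gaussian support theorem. Setting $\delta := \mathbb{P}(E_U)\mathbb{P}(E_V) > 0$ proves (\ref{Eq.StayInCorridor}).

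For the uniform statement with linear $f,g$, we first shift by $-y_2 - m(b-t)$. This is an exact symmetry: subtracting a linear drift from the reverse Brownian motion $U$ changes the law by a Girsanov factor. Under this shift the problem depends only on $d = y_1-y_2>0$, on $m$, and on $b,\varepsilon$. The main obstacle is uniformity as $d \downarrow 0$ or $d\to\infty$, and in $m$.

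For the drift, the Girsanov density on the tube event is bounded below by $e^{-C(M^2 b + M\varepsilon)}$, uniformly in $|m|\le M$. For $d$ we would use Brownian scaling together with a monotonicity argument. Take $v$ to be the straight line from $0$ to $2^{-1/2}d$ and the tube width proportional to $\varepsilon$. The difference $\mathcal{Q}_1 - \mathcal{Q}_2$ must only stay within $[-\varepsilon, d+2\varepsilon]$ around suitable paths, and since $V\ge0$ we only need $V \le 2^{-1/2}(d + \varepsilon)$-type bounds around the line. For a Bessel bridge, the probability of staying within $\varepsilon$ of its endpoint-line is bounded below uniformly in the endpoint $V_b$. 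We would prove this by writing the bridge as the norm of a 3D Brownian bridge $\vec{W}$ from $0$ to $(V_b,0,0)$: the centered process $\vec{W}_t - (tV_b/b,0,0)$ is a standard 3D Brownian bridge whose law is independent of $V_b$. Hence $\| |\vec{W}_t| - tV_b/b\| \le |\vec{W}_t - (tV_b/b,0,0)|$ is small with a probability depending only on $b,\varepsilon$. This gives $\delta=\delta(\varepsilon,b,M)$.
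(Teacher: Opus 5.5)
The paper gives no argument of its own here: the lemma is imported verbatim from \cite[Lemma 4.16]{DSY26}. Your proposal is a correct self-contained proof, built on the decomposition $\mathcal{Q}_{1,2}=2^{-1/2}(U\pm V)$ of Lemma \ref{Lem.BesselToPinned}. The ingredients all work:
\begin{itemize}
\item Independence of $U$ and $V$ reduces the corridor event to a product of two tube events.
\item The ordering $\mathcal{Q}_2\le\mathcal{Q}_1$ comes for free because $V\ge 0$.
\item Positivity of the Bessel tube probability follows by realizing $V$ as the norm of a 3D Brownian bridge from $0$ to $(V_b,0,0)$. This is legitimate by rotation invariance of 3D Brownian motion started at the origin. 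It is combined with $|\,|\vec W_t|-v(t)\,|\le|\vec W_t-(v(t),0,0)|$ for $v\ge 0$.
\item For the uniform statement, the key fact is that $\vec W_t-(tV_b/b,0,0)$ is a standard 3D Brownian bridge whose law does not depend on $V_b$. This is exactly what gives uniformity in $d=y_1-y_2$.
\end{itemize}
Drop the remark about an $h$-transform ``with weight bounded below''. A 1D Brownian bridge from $0$ and the Bessel bridge are mutually singular, so there is no density to bound. The 3D-bridge argument is the one to keep.

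Two points should be tightened.

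First, the requirements ``$h_1=h_2$ near $0$'' and ``$h_1-h_2>0$ on $(0,b]$'' are incompatible. All you actually use is $h_1(0)=h_2(0)$ and $h_1\ge h_2$. An explicit choice is $h_1=(1-\lambda)\tfrac{f+g}{2}+\lambda(y_1+f-f(b))$ and $h_2=(1-\lambda)\tfrac{f+g}{2}+\lambda(y_2+g-g(b))$. Here $\lambda$ is continuous with $0\le\lambda\le 1$, $\lambda=0$ on $[0,b-\tau]$ and $\lambda(b)=1$. Take $\tau$ so small that the oscillations of $f$ and $g$ on $[b-\tau,b]$ sum to at most $y_1-y_2$. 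Then $g\le h_2\le h_1\le f$.

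Second, in the uniform part you never say what path $U$ should track. The suggestion that essentially only $\mathcal{Q}_1-\mathcal{Q}_2$ must be controlled is wrong, since the corridor constraints involve $U$ as well. The choice that works is $h_{1,2}(t)=y_2+m(b-t)+\tfrac d2(1\pm t/b)$, i.e.
\[
u(t)=2^{-1/2}(y_1+y_2)+2^{1/2}m(b-t),\qquad v(t)=2^{-1/2}dt/b .
\]
These paths lie inside the unfattened corridor. By Girsanov, the $U$-tube probability equals $\mathbb{P}(\sup_{s\le b}|W_s-2^{1/2}ms|<\varepsilon/4)\ge e^{-2^{1/2}M\varepsilon/4-M^2b}\,\mathbb{P}(\sup_{s\le b}|W_s|<\varepsilon/4)$. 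The $V$-tube probability is at least $\mathbb{P}(\sup_{t\le b}|\vec\beta_t|<\varepsilon/4)$ for a standard 3D Brownian bridge $\vec\beta$ on $[0,b]$. The product depends only on $\varepsilon,b,M$.

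The linear profile for $v$ is essential. Any other profile $2^{-1/2}d\,\lambda(t)$ would force the centered bridge to follow $2^{-1/2}d(\lambda(t)-t/b)$, whose probability degenerates as $d\to\infty$. No Brownian-scaling or monotonicity argument is needed.
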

\begin{proof} This is \cite[Lemma 4.16]{DSY26}.
\end{proof}

The next lemma shows that, under appropriate scaling, a sequence of $(\infty, -\infty)$-interlacing geometric line ensembles as in Definition \ref{Def.AvoidingLawBer} converges weakly if its boundary data converges. We mention that a similar statement was established in \cite[Lemma 2.16]{D24b} under the additional assumption that the boundary data is deterministic and converges in the {\em open} Weyl chamber $\weyl_{k}$. The following lemma generalizes this result by allowing the boundary data to be {\em random}, and its assumed weak limit to be in the {\em closed} Weyl chamber $\weylc_k$. The extension from $\weyl_{k}$ to $\weylc_{k}$ is accomplished using the monotone coupling in Lemma \ref{Lem.MonCoupBer}, while the extension from deterministic to random boundaries is handled using Skorohod's Representation Theorem.  
\begin{lemma}\label{Lem.ConvBridge} Fix $k \in \mathbb{N}$, $b > 0$, a sequence of positive real numbers $\{d_n\}_{n \geq 1}$ such that $d_n \rightarrow \infty$, and set $T_n = \lceil b d_n \rceil$. In addition, fix $p \in (0,\infty)$ and set $\sigma = \sqrt{p(1+p)}$. Suppose that $(X^n, Y^n)$ is a sequence of random vectors in $\mathfrak{W}_k \times \mathfrak{W}_k$, such that almost surely $X_i^n \leq Y_i^n$ for $i \in \llbracket 1, k \rrbracket$. Define $(\bar{X}^n, \bar{Y}^n) \in \weylc_k \times \weylc_k$ through
\begin{equation}\label{Eq.SideLim}
\bar{X}_i^n = \sigma^{-1} d_n^{-1/2} X_i^n  \mbox{ and } \bar{Y}_i^n = \sigma^{-1} d_n^{-1/2}(Y_i^n  - p T_n) \mbox{ for } i \in \llbracket 1, k \rrbracket,
\end{equation}
and suppose that $(\bar{X}^n,\bar{Y}^n)$ converges weakly to a random vector $(\bar{X}^{\infty},\bar{Y}^{\infty}) \in \weylc_k \times \weylc_k$ as $n \rightarrow \infty$. Then we have the following statements.
\begin{enumerate}
\item[(a)] There exists $N_0 \in \mathbb{N}$, depending on $k, b$, and the sequence $\{d_n\}_{n \geq 1}$, such that for $n \geq N_0$ the sets $\Omega_{\ice}(0, T_n, X^n, Y^n)$ are non-empty almost surely. Consequently, for $n \geq N_0$ we can define a $\llbracket 1, k \rrbracket$-indexed line ensemble $\mathfrak{L}^n = \{L^n_i\}_{i = 1}^k$ on $\llbracket 0, T_n \rrbracket$, such that for all increasing paths $B_1, \dots, B_k$, we have
\begin{equation}\label{Eq.DefRandomBridge}
\mathbb{P}\left(\cap_{i = 1}^k \{L^n_i = B_i\}\right) = \mathbb{E}\left[ \mathbb{P}_{\ice, \operatorname{Geom}}^{0,T_n, X^n, Y^n} \left(\cap_{i = 1}^k \{Q_i = B_i\}  \right) \right].
\end{equation}
\item[(b)] If $n \geq N_0$ and $\mathfrak{L}^n$ is as in part (a), define the $\llbracket 1, k \rrbracket$-indexed line ensemble $\mathcal{L}^n = \{\mathcal{L}_i^n\}_{i = 1}^k$ through
\begin{equation}\label{Eq.ScaledBridge}
\mathcal{L}_i^n(t) = \sigma^{-1} d_n^{-1/2} \cdot \left( L_i^n(t d_n) - td_n p \right) \mbox{ for } t \in [0,b] \mbox{ and } i \in \llbracket 1, k \rrbracket.
\end{equation}
Then, $\mathcal{L}^n$ converges weakly to $\mathbb{P}_{\operatorname{avoid}}^{0,b,\mu}$ from (\ref{Eq.RandomBryMeas}), where $\mu$ is the law of $(\bar{X}^{\infty},\bar{Y}^{\infty})$.
\end{enumerate}
\end{lemma}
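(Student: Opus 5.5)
Part (a) needs a deterministic construction, and part (b) will follow from three reductions: from random boundary data to deterministic data via Skorohod's representation theorem, from the closed Weyl chamber to the open one via the monotone coupling of Lemma \ref{Lem.MonCoupBer}, and then the open-chamber result \cite[Lemma 2.16]{D24b}.

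\emph{Part (a).} The task is to exhibit interlacing increasing paths from $X^n$ at time $0$ to $Y^n$ at time $T_n$. A natural choice is
\[
B_i(r) = X_i^n \mbox{ for } r \in \llbracket 0, T_n - i \rrbracket, \qquad B_i(r) = Y_i^n \mbox{ for } r \in \llbracket T_n - i + 1, T_n \rrbracket,
\]
that is, the $i$-th path jumps at time $T_n-i+1$, one step earlier than the path below it. Interlacing requires $B_i(r-1) \geq B_{i+1}(r)$. Because $X^n, Y^n \in \mathfrak{W}_k$ and $X_i^n \leq Y_i^n$, this reduces to a check at the jump times. The only potentially problematic inequalities are $X_i^n \geq Y_{i+1}^n$ at the single time where $B_i$ has not yet jumped but $B_{i+1}$ has. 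The staggering is chosen so that this configuration never occurs: when $B_{i+1}$ jumps at $r = T_n-i$, we need $B_i(T_n-i-1) = X_i^n \geq$? This fails, so I would instead stagger in the opposite order. The upper path jumps first, at time $i$, and the $i$-th path equals $Y_i^n$ for $r \geq i$. Then for $r \geq i+1$ we have $B_i(r-1) = Y_i^n \geq B_{i+1}(r)$, and for $r \leq i$ we have $B_{i+1}(r) = X_{i+1}^n \leq X_i^n \leq B_i(r-1)$. This works whenever $T_n \geq k$, which determines $N_0$.

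\emph{Part (b).} By Skorohod's representation theorem I can realize $(\bar X^n, \bar Y^n) \to (\bar X^\infty, \bar Y^\infty)$ almost surely. Conditioning on the boundary data, it then suffices to prove the following deterministic statement: if $\bar x^n \to \bar x$ and $\bar y^n \to \bar y$ in $\weylc_k$, then the rescaled law $\mathbb{P}_{\ice,\operatorname{Geom}}^{0,T_n,x^n,y^n}$ converges to $\mathbb{P}_{\mathrm{avoid}}^{0,b,\bar x,\bar y}$. Given this, bounded convergence applied to $\mathbb{E}[F(\mathcal{L}^n)\mid \bar X^n, \bar Y^n]$ for bounded continuous $F$, combined with (\ref{Eq.RandomBryExp}), yields the claim.

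The deterministic statement is the main obstacle, since the limit lies on the boundary of the Weyl chamber. My plan is as follows. Fix $\varepsilon>0$ and choose $\bar u, \bar v \in \weyl_k$ with $\|\bar u - \bar x\|_\infty, \|\bar v - \bar y\|_\infty \leq \varepsilon$, for example $u_i = x_i + (k-i)\varepsilon/k$, and similarly shifted downward. From these I build integer boundary data $u^{n,\pm}$ and $v^{n,\pm}$ that sandwich $x^n$ and $y^n$ coordinatewise, within $M_n = O(\varepsilon \sigma d_n^{1/2})$, and whose rescalings converge to points of the open chamber $\weyl_k$. Lemma \ref{Lem.MonCoupBer} then couples $\mathfrak{Q}^n$ with an ensemble having this perturbed data so that the two differ uniformly by at most $M_n$, i.e.\ by $O(\varepsilon)$ after scaling. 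The perturbed ensembles converge by \cite[Lemma 2.16]{D24b}, and by Lemma \ref{Lem.BridgeEnsemblesCty}(b) their limits are within $o_\varepsilon(1)$ of $\mathbb{P}_{\mathrm{avoid}}^{0,b,\bar x,\bar y}$. Testing against bounded Lipschitz $F$ and letting $\varepsilon \to 0$ completes the argument.

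One detail needs care. Lemma \ref{Lem.MonCoupBer} requires the two boundary vectors to be ordered coordinatewise, one above the other. I would therefore use a single comparison ensemble with data $x^n + \lceil \varepsilon\sigma d_n^{1/2}\rceil\,(k-i+1)$, which lies above $x^n$ and stays within $M_n$ of it. The non-emptiness of the sets $\Omega_{\ice}$ appearing in Lemma \ref{Lem.MonCoupBer} follows from the construction in part (a).
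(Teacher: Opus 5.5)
Your proposal is correct. Part (a) matches the paper's construction exactly, once you discard the first staggering (which indeed fails) and keep the one in which $B_i$ jumps at time $i$. Your reduction from random to deterministic boundary data, via Skorohod's representation and bounded convergence, is the paper's Step 3.

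Where you differ is in how you close the deterministic, closed-chamber case. The paper splits it into two pieces.
\begin{itemize}
\item \emph{Finite-dimensional convergence.} The data are shifted both up and down by $W^{\pm}$ into $\weyl_k$. Only the one-sided domination from Lemma \ref{Lem.MonCoupBer} is used, together with \cite[Lemma 2.16]{D24b} and the Portmanteau theorem on orthant events. Then $\epsilon \downarrow 0$ is taken via Lemma \ref{Lem.BridgeEnsemblesCty}(b).
\item \emph{Modulus of continuity.} This is controlled from the upward shift using the two-sided estimate $Q^t \ge Q^b \ge Q^t - M$.
\end{itemize}
The two pieces are then combined through the tightness criterion \cite[Lemma 2.4]{DEA21} and the separating-class result \cite[Lemma 3.1]{DM21}.

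You instead use only the upward shift, with $M = k\lceil \epsilon\sigma d_n^{1/2}\rceil$. The two-sided coupling, which survives linear interpolation, gives $\sup|\mathcal{Q}^{n} - \mathcal{Q}^{n,+}| \le k\epsilon + o(1)$ after scaling. Hence $|\mathbb{E}F(\mathcal{Q}^n) - \mathbb{E}F(\mathcal{Q}^{n,+})| \le \mathrm{Lip}(F)(k\epsilon + o(1))$ for bounded Lipschitz $F$. Since such functionals are convergence determining on $C(\llbracket 1,k\rrbracket \times [0,b])$, weak convergence follows in one stroke.

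This is shorter, needs a single comparison ensemble, and avoids both the tightness criterion and the separating-class lemma. The paper's route follows the standard line-ensemble template and isolates the finite-dimensional statement, but it yields no stronger conclusion.

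One presentational point: say explicitly that the $y$-data is shifted by the same vector $\lceil \epsilon\sigma d_n^{1/2}\rceil(k-i+1)$. This guarantees that the shifted data lie in $\mathfrak{W}_k$, preserve $x_i \le y_i$, and satisfy the hypotheses of Lemma \ref{Lem.MonCoupBer}, with non-emptiness supplied by your part (a).
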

\begin{proof} As $d_n \rightarrow \infty$, we can find $N_0$ such that $T_n \geq k$ for $n \geq N_0$. If $T_n\ge k$, we can define for each $i\in \llbracket1,k\rrbracket$ the increasing paths $B_i(j)=X_i^n$ for $j\in \llbracket 0, i-1\rrbracket$ and $B_i(j)=Y_i^n$ for $j\in \llbracket i, T_n\rrbracket$, and note that $\{B_i\}_{i=1}^k \in \Omega_{\ice}(0,T_n,X^n,Y^n)$. Thus the sets $\Omega_{\ice}(0,T_n,X^n,Y^n)$ are non-empty almost surely for all $n \ge N_0$, verifying part (a). For part (b), we split the proof into three steps.\\

\noindent\textbf{Step 1.} In this and the next step, we prove a version of part (b) with deterministic boundary data. For each $(\vec{U},\vec{V}) \in \mathfrak{W}_k\times \mathfrak{W}_k$, we define a $\llbracket 1, k \rrbracket$-indexed line ensemble $\mathfrak{L}^n(\vec{U},\vec{V}) = \{L^n_i(\vec{U},\vec{V})\}_{i = 1}^k$ on $\llbracket 0, T_n \rrbracket$, such that for all increasing paths $B_1, \dots, B_k$, we have
\begin{equation}\label{Eq.DefFixedBridge}
\mathbb{P}\left(\cap_{i = 1}^k \{L^n_i(\vec{U},\vec{V}) = B_i\}\right) =  \mathbb{P}_{\ice, \operatorname{Geom}}^{0,T_n, \vec{U}, \vec{V}} \left(\cap_{i = 1}^k \{Q_i = B_i\}  \right).
\end{equation}
Define the $\llbracket 1, k \rrbracket$-indexed line ensemble $\mathcal{L}^n(\vec{U},\vec{V}) = \{\mathcal{L}_i^n(\vec{U},\vec{V})\}_{i = 1}^k$ through
\begin{equation}\label{Eq.ScaledBridgeV2}
\mathcal{L}_i^n(\vec{U},\vec{V})(t) = \sigma^{-1} d_n^{-1/2} \cdot \left( L_i^n(\vec{U},\vec{V})(t d_n) - td_n p \right) \mbox{ for } t \in [0,b] \mbox{ and } i \in \llbracket 1, k \rrbracket.
\end{equation}
For each $(\vec{u},\vec{v}) \in \weylc_k \times \weylc_k$, let $\mathcal{B}(\vec{u},\vec{v})$ have the law $\mathbb{P}_{\operatorname{avoid}}^{0,b,\vec{u},\vec{v}}$.

We claim that for any sequence $(\vec{U}^n,\vec{V}^n) \in \mathfrak{W}_k\times \mathfrak{W}_k$, satisfying
\begin{equation}\label{Eq.UVDetConv}
\sigma^{-1}d_n^{-1/2}U_i^n \to u_i \mbox{ and }\sigma^{-1}d_n^{-1/2}(V_i^n-pT_n) \to v_i \mbox{ for }i\in \llbracket1,k\rrbracket,
\end{equation}
we have $\mathcal{L}^n(\vec{U}^n,\vec{V}^n) \Rightarrow \mathcal{B}(\vec{u},\vec{v})$. Using the tightness criterion in \cite[Lemma 2.4]{DEA21} and that finite-dimensional sets form a separating class, see \cite[Lemma 3.1]{DM21}, we see that to prove $\mathcal{L}^n(\vec{U}^n,\vec{V}^n) \Rightarrow \mathcal{B}(\vec{u},\vec{v})$ it suffices to establish the following two statements:
\begin{equation}\label{Eq.step1}
    \mathcal{L}^n(\vec{U}^n,\vec{V}^n) \overset{f.d.}{\rightarrow} \mathcal{B}(\vec{u},\vec{v}),
\end{equation}
and for all $i \in \llbracket 1, k \rrbracket$, and $\eta > 0$, we have
\begin{equation}\label{Eq.step2}
    \limsup_{\delta\downarrow 0}\limsup_{n\to \infty} \mathbb{P}\left( \sup_{\substack{s,t\in[0,b] \\ |s-t|\leq\delta}}|\mathcal{L}_i^n(\vec{U}^{n},\vec{V}^{n})(s)-\mathcal{L}_i^n(\vec{U}^{n},\vec{V}^{n})(t)|>\eta\right) = 0.
\end{equation}
We prove (\ref{Eq.step1}) in the remainder of this step and (\ref{Eq.step2}) in the next step.\\

Fix $\epsilon > 0$ and define the vectors
$$\vec{w}^+ = (k\epsilon, \dots, 2 \epsilon, \epsilon), \hspace{2mm} \vec{w}^- = (-\epsilon, -2 \epsilon, \dots, - k \epsilon ), $$
$$\vec{W}^+ = \left(k \lfloor \epsilon \sigma d_n^{1/2}\rfloor, \dots, \lfloor \epsilon \sigma d_n^{1/2}\rfloor\right), \hspace{2mm} \vec{W}^- = (-\lfloor \epsilon \sigma d_n^{1/2}\rfloor, \dots, - k \lfloor \epsilon \sigma d_n^{1/2}\rfloor).$$
We also define the vectors $(\vec{U}^{n,\epsilon,\pm},\vec{V}^{n,\epsilon,\pm})$ and $(\vec{u}^{\,\epsilon,\pm},\vec{v}^{\,\epsilon,\pm})$ via
$$U_i^{n, \epsilon,\pm}=U_i^n + W^{\pm}_i, \hspace{2mm} V_i^{n,\epsilon,\pm}=V_i^n + W_i^{\pm}, \hspace{2mm} u_i^{\epsilon,\pm}=u_i + w_i^{\pm} \mbox{, and }v_i^{\epsilon,\pm}=v_i + w_{i}^{\pm}.$$ 
Let $H$ be a finite subset of $\llbracket 1,k\rrbracket \times [0,b]$ and $(a_{i,t})_{(i,t)\in H}$ be a collection of real numbers. By Lemma \ref{Lem.MonCoupBer} we can find a probability space where
$\mathcal{L}_i^n(\vec{U}^{n},\vec{V}^{n})(t) \ge \mathcal{L}_i^n(\vec{U}^{n,\epsilon,-},\vec{V}^{n,\epsilon,-})(t)$. Thus
\begin{align*}
    \limsup_{n\to \infty} \mathbb{P}\left(\bigcap_{(i,t)\in H} \{\mathcal{L}_i^n(\vec{U}^n,\vec{V}^n)(t) \le a_{i,t}\}\right) & \le \limsup_{n\to \infty} \mathbb{P}\left(\bigcap_{(i,t)\in H} \{\mathcal{L}_i^n(\vec{U}^{n,\epsilon,-},\vec{V}^{n,\epsilon,-})(t) \le a_{i,t}\}\right) \\ & \leq \mathbb{P}\left(\bigcap_{(i,t)\in H} \{\mathcal{B}_i(\vec{u}^{\,\epsilon,-},\vec{v}^{\,\epsilon,-})(t) \le a_{i,t}\}\right),
\end{align*}
where the last inequality follows from \cite[Lemma 2.16]{D24b}, which is applicable as $(\vec{u}^{\,\epsilon,-},\vec{v}^{\,\epsilon,-})\in \weyl_k\times \weyl_k$, and the Portmanteau Theorem. Taking $\epsilon \downarrow 0$, and applying Lemma \ref{Lem.BridgeEnsemblesCty}(b), we conclude
\begin{align*}
    \limsup_{n\to \infty} \mathbb{P}\left(\bigcap_{(i,t)\in H} \{\mathcal{L}_i^n(\vec{U}^n,\vec{V}^n)(t) \le a_{i,t}\}\right) & \le \mathbb{P}\left(\bigcap_{(i,t)\in H} \{\mathcal{B}_i(\vec{u},\vec{v})(t) \le a_{i,t}\}\right),
\end{align*}
By a similar argument, based on applying the monotone coupling in Lemma \ref{Lem.MonCoupBer} to $\mathcal{L}^n(\vec{U}^n,\vec{V}^n)$ and $\mathcal{L}^n(\vec{U}^{n,\epsilon,+},\vec{V}^{n,\epsilon,+})$ instead, we obtain
\begin{align*}
    \liminf_{n\to \infty} \mathbb{P}\left(\bigcap_{(i,t)\in H} \{\mathcal{L}_i^n(\vec{U}^n,\vec{V}^n)(t) < a_{i,t}\}\right) & \ge \mathbb{P}\left(\bigcap_{(i,t)\in H} \{\mathcal{B}_i(\vec{u},\vec{v})(t) < a_{i,t}\}\right).
\end{align*}
The last two displayed equations prove (\ref{Eq.step1}).\\

\noindent\textbf{Step 2}. In this step, we prove \eqref{Eq.step2}. Fix any $\eta,\delta>0$ and take $\epsilon = \eta/2k$. By Lemma \ref{Lem.MonCoupBer} we may find a probability space where
\begin{align*}
  \mathcal{L}_i^n(\vec{U}^{n,\epsilon,+},\vec{V}^{n,\epsilon,+})(t) \ge  \mathcal{L}_i^n(\vec{U}^{n},\vec{V}^{n})(t) \ge \mathcal{L}_i^n(\vec{U}^{n,\epsilon,+},\vec{V}^{n,\epsilon,+})(t) - \epsilon k.
\end{align*}
Consequently, for each $i \in \llbracket 1, k \rrbracket$
\begin{align*}
 & \limsup_{n\to \infty} \mathbb{P}\left( \sup_{\substack{s,t\in[0,b] \\ |s-t|\leq\delta}}|\mathcal{L}_i^n(\vec{U}^{n},\vec{V}^{n})(s)-\mathcal{L}_i^n(\vec{U}^{n},\vec{V}^{n})(t)|>\eta\right) \\ & \le  \limsup_{n\to \infty} \mathbb{P}\left( \sup_{\substack{s,t\in[0,b] \\ |s-t|\leq\delta}}|\mathcal{L}_i^n(\vec{U}^{n,\epsilon,+},\vec{V}^{n,\epsilon,+})(s)-\mathcal{L}_i^n(\vec{U}^{n,\epsilon,+},\vec{V}^{n,\epsilon,+})(t)|+\epsilon k >\eta\right) \\ & \leq  \mathbb{P}\left( \sup_{\substack{s,t\in[0,b] \\ |s-t|\leq\delta}}|\mathcal{B}_i(\vec{u}^{\,\epsilon,+},\vec{v}^{\,\epsilon,+})(s)-\mathcal{B}_i(\vec{u}^{\,\epsilon,+},\vec{v}^{\,\epsilon,+})(t)|\geq \eta/2\right), 
\end{align*}
where the last inequality follows from \cite[Lemma 2.16]{D24b}, which is applicable as $(\vec{u}^{\,\epsilon,+},\vec{v}^{\,\epsilon,+})\in \weyl_k\times \weyl_k$, the fact that $\epsilon k=\eta/2$, and the Portmanteau Theorem. Taking $\delta \downarrow 0$ in the last inequality gives (\ref{Eq.step2}).\\

\noindent\textbf{Step 3}. In this step, we finish the proof of part (b) of the lemma. We seek to show that for any bounded continuous function $F : C(\llbracket 1,k\rrbracket\times[0,b])\to\mathbb{R}$, we have
\begin{equation}\label{Eq.WeakConvToBr}
\lim_{n\to\infty} \mathbb{E}[F(\mathcal{L}^n)] =  \mathbb{E}_{\operatorname{avoid}}^{0,b,\mu}[F({\mathcal B})],
\end{equation}
where $\mathcal{B}=\{\mathcal B_i\}_{i=1}^k$ has the law $\mathbb{P}_{\operatorname{avoid}}^{0,b,\mu}$. Set
\begin{align} \label{Eq.DefPhiBridgeConv}
\phi_n(\vec{U},\vec{V})=\mathbb{E}[F(\mathcal{L}^n(\vec{U},\vec{V}))] \mbox{ and } \phi(\vec{u},\vec{v})=\mathbb{E}[F(\mathcal B(\vec{u},\vec{v}))],
\end{align}
where $\mathcal{L}^n(\vec{U},\vec{V})$ and $\mathcal{B}(\vec{u},\vec{v})$ were defined in Step 1. By Skorohod's Representation Theorem, \cite[Theorem 6.7]{Billing}, we may assume that $(X^n,Y^n), (\bar{X}^\infty,\bar{Y}^{\infty})$ are defined on the same probability space and the convergence $(\bar{X}^n,\bar{Y}^n) \rightarrow (\bar{X}^\infty,\bar{Y}^{\infty})$ is almost sure. Using part (b) for deterministic boundary data (proved in Steps 1 and 2), we get
$$\phi_n(X^n,Y^n) \to \phi(\bar{X}^\infty,\bar{Y}^{\infty}) \mbox{ a.s.}$$
The last statement, (\ref{Eq.DefPhiBridgeConv}), and the bounded convergence theorem imply (\ref{Eq.WeakConvToBr}).
\end{proof}

With the above results in place, we are ready to establish the main result of this section.
\begin{lemma}\label{Lem.ConvOfInterPairs} Let $k, b, \vec{y}, g$ be as in Definition \ref{Def.PinnedBM}. Suppose that $d_n$ is a sequence of positive reals such that $d_n \rightarrow \infty$, and set $T_n = \lceil b d_n \rceil$. Let $g_n: [0, T_n/d_n] \rightarrow [-\infty, \infty)$ be continuous functions such that $g_n \rightarrow g$ uniformly on $[0,b]$. In the case when $g = -\infty$, the last statement means that $g_n = -\infty$ for all large $n$. We also suppose that 
\begin{equation}\label{Eq.EdgeLimIP}
\lim_{n \rightarrow \infty} |g_n(T_n/d_n) - g_n(b)| = 0 \mbox{ if } g \neq -\infty.
\end{equation}

Fix $q\in (0,1)$, $c \in [0,1)$, and put $p = \frac{q}{1-q}$, $\sigma = \sqrt{p(1+p)}$. Define $G_n: [0, T_n] \rightarrow [-\infty, \infty)$ through
$$G_n(s) = \sigma d_n^{1/2} \cdot g_n(s/d_n) + p s,$$
and suppose that $Y^n \in \mathfrak{W}_{2k}$ is a sequence such that 
\begin{equation}\label{Eq.SideLimIP}
 \lim_{n \rightarrow \infty} \sigma^{-1} d_n^{-1/2} \cdot (Y_i^n - p T_n)   = y_i \mbox{ for } i \in \llbracket 1, 2k \rrbracket.
\end{equation}

Then, we have the following statements.
\begin{enumerate}
\item[(a)] There exists $N_0 \in \mathbb{N}$ such that for $n \geq N_0$ the laws $\mathbb{P}_{\ice;q ,c}^{T_n, Y^n,G_n}$ are well-defined, i.e. for some $(\mathfrak{B}^1, \dots, \mathfrak{B}^k) \in  \Omega_{\ice}(T_n, Y^n, G_n)$, we have $\prod_{i = 1}^k W(\mathfrak{B}^i; q,c) > 0$.
\item[(b)] If $\mathfrak{Q}^n = \{Q_i^n\}_{i = 1}^{2k}$ has law $\mathbb{P}_{\ice;q ,c}^{T_n, Y^n,G_n}$, then the sequence of $\llbracket 1,2k \rrbracket$-indexed line ensembles $\mathcal{Q}^n$ on $[0,b]$ defined by
\begin{equation}\label{Eq.ScaledQIP}
\mathcal{Q}_i^n(t) = \sigma^{-1} d_n^{-1/2} \cdot \left( Q^n_i(t d_n) - ptd_n\right) \mbox{ for } n \geq N_0, \hspace{2mm} t \in [0,b] \mbox{ and } i \in \llbracket 1, 2k \rrbracket,
\end{equation}
converges weakly to $\mathbb{P}_{\mathrm{pin}}^{b,\vec{y},g}$ from Definition \ref{Def.PinnedBM} as $n \rightarrow \infty$.
\end{enumerate}
\end{lemma}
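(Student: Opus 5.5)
Part (a) is handled by the explicit construction of Remark \ref{Rem.InterlacingInteractingPairs2}. The one gap is that $G_n$ need not be an integer-valued increasing path. I would therefore read $\mathbb{P}_{\ice;q,c}^{T_n,Y^n,G_n}$ with the interlacing against $G_n$ imposed only at integer points, where $Q(r-1)\ge G_n(r)$. I would then take $B^i_1=B^i_2\equiv Y^n_{2i}$ on $\llbracket 0,T_n-1\rrbracket$ and the endpoint values as in that remark. The only constraint left to check is $Y^n_{2k}\ge G_n(r)$ for $r\le T_n$. This follows for large $n$ because $y_{2k}>g(b)$, the rescaled $G_n$ converges to $g$ uniformly, and (\ref{Eq.EdgeLimIP}) holds. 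Near $r=T_n$ this uses $y_{2k}>g(b)$ directly; away from $T_n$ the drift $p s$ is sublinear relative to $pT_n$ and helps. A short computation confirms that $Y^n_{2k}-G_n(r)\ge \sigma d_n^{1/2}\,(y_{2k}-g(b)-o(1))>0$ uniformly.

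For part (b) I would proceed in three steps: tightness, identification of subsequential limits through a Gibbs-type resampling, and uniqueness. For tightness, I would first compare with the case $g=-\infty$ and $k=1$. There $\mathfrak Q^n$ is a single interacting pair, and Lemma \ref{Lem.ConvAtOrigin} gives the joint law of $(Q^n_1(0),Q^n_2(0))$. Conditional on these values, the pair is uniform on interlacing increasing paths with fixed endpoints, so Lemma \ref{Lem.ConvBridge} applies with random boundary data converging to $((Z',Z'),\vec y)$. Here $Z'$ is normal with mean $(y_1+y_2)/2$ and variance $b/2$, and the limit is exactly $\mathbb{P}^{b,\vec y}_{\mathrm{pin}}$ of Definition \ref{Def.PinnedPair}. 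This settles the unconstrained pair case.

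For general $k$ and $g$, I would write $\mathbb{P}^{T_n,Y^n,G_n}_{\ice;q,c}$ as the product of $k$ independent interacting pairs conditioned on the event $E_{\ice}$ of (\ref{Eq.InterlacingInteractingPairs}). By the previous paragraph, the product measure converges after rescaling to $k$ independent pinned pairs, which by Remark \ref{Rem.PinnedBMRem3} is the unconditioned object in Definition \ref{Def.PinnedBM}. It then suffices to show two things. First, $\mathbb{P}(E_{\ice})$ stays bounded below. Second, $\mathbf 1_{E_{\ice}}$ converges along the coupling to $\mathbf 1_{E^{\mathrm{pin}}_{\mathrm{avoid}}}$ almost surely. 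For this I would use Skorohod's representation so that the rescaled independent pairs converge uniformly almost surely to independent pinned pairs $(B_{2i-1},B_{2i})$.

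On $E^{\mathrm{pin}}_{\mathrm{avoid}}$, which has strict separation on compact subsets of $(0,b)$, uniform convergence gives $E_{\ice}$ for large $n$ away from $0$. Near $0$, all pairs are pinned at distinct heights $\approx B_{2i}(0)$, and these heights are strictly ordered almost surely on the event. On the complement, I would show that the limit either strictly crosses somewhere, so $E_{\ice}$ fails eventually, or lands on the boundary. The boundary case has probability zero by Lemma \ref{Lem.NoTouchPinned} together with independence (conditioning on the neighbouring pair). The positivity of $\mathbb{P}(E^{\mathrm{pin}}_{\mathrm{avoid}})$, and hence the lower bound on $\mathbb{P}(E_{\ice})$, comes from Lemma \ref{Lem.StayInCorridor}: I would place each pair in a disjoint corridor of the form $y_{2i-1},y_{2i}$ plus a common linear drift, kept above $g$.

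The main obstacle is this indicator convergence near $t=0$ and near $t=b$. At $t=b$ the fixed data $y_{2i}>y_{2i+1}$ together with (\ref{Eq.EdgeLimIP}) handles the endpoint. At $t=0$ the discrete interlacing $B^i_2(r-1)\ge B^{i+1}_1(r)$ has only an $O(1)$ lattice shift, which is negligible after scaling $d_n^{-1/2}$, so it causes no difficulty.
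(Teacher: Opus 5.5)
Your proposal is correct and follows essentially the paper's proof: part (a) uses the explicit configuration of Remark \ref{Rem.InterlacingInteractingPairs2} plus the check $Y^n_{2k}\ge G_n(r)$, which the paper argues by contradiction. Part (b) first treats a single unconstrained pair through Lemma \ref{Lem.ConvAtOrigin} and Lemma \ref{Lem.ConvBridge}, then writes the general law as a ratio of expectations against $\mathbf 1_{E_{\ice}}$, couples via Skorohod, and uses Lemma \ref{Lem.NoTouchPinned} for almost sure convergence of the indicator. The differences are minor: you derive $\mathbb{P}(E^{\mathrm{pin}}_{\mathrm{avoid}})>0$ from Lemma \ref{Lem.StayInCorridor} instead of citing Remark \ref{Rem.PinnedBMRem1}, and the announced tightness/identification/uniqueness scaffolding is never needed, since the ratio argument gives the weak convergence directly.
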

\begin{proof} For clarity we split the proof into two steps.\\

{\bf \raggedleft Step 1.} In this step, we prove part (a) of the lemma. As in Remark \ref{Rem.InterlacingInteractingPairs2}, we define
$$B^{i,n}_{1}(r) = B^{i,n}_2(r) = Y^n_{2i} \mbox{ for } r \in \llbracket 0, T_n - 1\rrbracket, \hspace{2mm} B^{i,n}_1(T_n) = Y^n_{2i-1}, \mbox{ and } B^{i,n}_2(T_n) = Y^n_{2i}, \quad \mbox{for $i \in \llbracket 1, k \rrbracket$,} $$
and note that $\mathfrak{B}^n = \{(B^{i,n}_1, B^{i,n}_2)\}_{i = 1}^k \in  \Omega_{\ice}(T_n, Y^n, -\infty)$, while 
$$\prod_{i = 1}^k W(\mathfrak{B}^{i,n}; q,c) = q^{\sum_{i = 1}^k (Y^n_{2i-1} - Y^n_{2i})} > 0.$$
If we show that for all large $n$, $Y^n_{2k} \geq G_n(s)$ for all $s\in \llbracket 0, T_n \rrbracket$, then this would imply $\{\mathfrak{B}^n = (B^{i,n}_1, B^{i,n}_2)\}_{i = 1}^k \in  \Omega_{\ice}(T_n, Y^n, G_n)$, completing the proof of part (a).

Suppose for the sake of contradiction that there is a sequence $n_m \uparrow \infty$, such that 
\begin{equation}\label{Eq.ContradictGY}
Y^{n_m}_{2k} < G_{n_m}(s_{n_m}) = \sigma d_{n_m}^{1/2}g_{n_m}(s_{n_m}/d_{n_m}) + p s_{n_m} \mbox{ for some } s_{n_m}\in \llbracket 0, T_{n_m} \rrbracket.
\end{equation}
The latter shows that $g \neq -\infty$ and so $g \in C([0,b])$.

From the uniform convergence of $g_n$ to $g$ on $[0,b]$ and (\ref{Eq.EdgeLimIP}), we know for some $M \in (0,\infty)$ that $|g_n(s/d_n)| \leq M$ for all $s \in \llbracket 0, T_{n} \rrbracket$. Combining the latter with (\ref{Eq.SideLimIP}) and (\ref{Eq.ContradictGY}) gives
$$pT_{n_m} + O(d_{n_m}^{1/2}) \leq  \sigma d_{n_m}^{1/2} M + p s_{n_m},$$
which shows that $s_{n_m}/d_{n_m} \rightarrow b$ as $m \rightarrow \infty$. Combining the latter with (\ref{Eq.EdgeLimIP}), the continuity of $g$, and the uniform convergence of $g_n$ to $g$ gives
$$\lim_{m \rightarrow \infty} g_{n_m}(s_{n_m}/d_{n_m}) = g(b).$$
The last displayed equation, (\ref{Eq.SideLimIP}), and (\ref{Eq.ContradictGY}) imply $y_{2k} \leq g(b)$, which is our desired contradiction with the assumed conditions in Definition \ref{Def.PinnedBM} that require $y_{2k} > g(b)$. \\

{\bf \raggedleft Step 2.} In this step, we prove part (b) of the lemma. In what follows, we assume that $n \geq N_0$ as in part (a).

Suppose first that $k=1$ and $g_n=-\infty$. If $X^n= (Q^n_1(0), Q^n_2(0))$, then by Lemma \ref{Lem.GibbsConsistent} we have
\begin{align*}
    \mathbb{P}_{\ice;q,c}^{T_n,Y^n}\left(\cap_{i=1}^2 \{Q^n_i=B_i\}\right) = \mathbb{E}\left[\mathbb{P}_{\ice,\mathrm{Geom}}^{0,T_n,X^n,Y^n}\left(\cap_{i=1}^2 \{Q_i=B_i\}\right)\right].
\end{align*}
By Lemma \ref{Lem.ConvAtOrigin}, $(\sigma^{-1}d_n^{-1/2}X_1^n,\sigma^{-1}d_n^{-1/2}X_2^n) \to (Z,Z)$ where $Z$ is a normal random variable with mean $0$ and variance $b/2$. Invoking Lemma \ref{Lem.ConvBridge}(b), and utilizing Definition \ref{Def.PinnedPair}, we conclude the statement of the lemma for $k=1$, $g_n=-\infty$. \\ 

We now complete the proof of the lemma for all $k$ and $g_n$. We seek to show that for any bounded continuous function $F : C(\llbracket 1,2k\rrbracket\times[0,b])\to\mathbb{R}$
\begin{equation}\label{Eq.WeakConvToPinned}
\lim_{n\to\infty} \mathbb{E}_{\ice;q ,c}^{T_n, Y^n,G_n}[F(\mathcal{Q}^n)] =  \mathbb{E}_{\mathrm{pin}}^{b,\vec{y},g}[F({B})].
\end{equation}

For $i\in\llbracket 1,k\rrbracket$, let $(L_{2i-1}^n,L_{2i}^n)$ be $k$ independent interacting pairs with laws $\mathbb{P}_{\ice;q ,c}^{T_n, (Y_{2i-1}^n,Y_{2i}^n)}$. Define the set 
$$E_{\ice} = \{ (\mathcal{L}_1, \dots, \mathcal{L}_{2k+1}): L_{2i} \succeq L_{2i+1} \mbox{ for all }i\in\llbracket 1,k\rrbracket\},$$
where $L_{2k+1}=G_n$, and $\mathcal{L}_i(t)=\sigma^{-1}d_n^{-1/2}(L_i(td_n)-ptd_n)$. By Definition \ref{Def.InterlacingInteractingPairs}, we may write
\begin{equation}\label{eq.Savoidfrac}
    \mathbb{E}_{\ice;q ,c}^{T_n, Y^n,G_n}[F(\mathcal{Q}^n)] = \frac{\mathbb{E}[F(\mathcal L^n)\mathbf{1}_{E_{\ice}} (\mathcal L^n,g_n)]}{\mathbb{E}[\mathbf{1}_{E_{\ice}}(\mathcal L^n,g_n)]}.
\end{equation}
From the $k=1$, $g_n=-\infty$ case of the present lemma (established above), each pair $(\mathcal{L}_{2i-1}^n,\mathcal{L}_{2i}^n)$ converges weakly as $n\to\infty$ to $(B_{2i-1},B_{2i})$ with law $\mathbb{P}_{\mathrm{pin}}^{b,\vec{y}\,^i}$, where $\vec{y}\,^i = (y_{2i-1}, y_{2i})$. By Skorohod's Representation Theorem, \cite[Theorem 6.7]{Billing}, we may assume that $\mathcal{L}^n = (\mathcal{L}_1^n,\dots,\mathcal{L}^n_{2k})$ and $B = (B_1,\dots,B_{2k})$ are defined on the same probability space and the convergence $\mathcal{L}^n \rightarrow B$ (in $C(\llbracket 1, 2k \rrbracket \times [0,b])$) is almost sure.

Using that $\mathcal{L}^n \rightarrow B$ and $g_n \rightarrow g$, we have
$$\mathbf{1}_{E_{\ice}}(\mathcal L^n,g_n) \rightarrow 1 \mbox{ a.s. on the event $E_1 = \{B \in U_{\mathrm{avoid}}^{\mathrm{pin}} \}$},$$
$$\mathbf{1}_{E_{\ice}}(\mathcal L^n,g_n) \rightarrow 0 \mbox{ a.s. on the event $E_2= \left\{B \in \overline{U_{\mathrm{avoid}}^{\mathrm{pin}}}^c \right\}$},$$
where $U_{\mathrm{avoid}}^{\mathrm{pin}}$ is the set in $C(\llbracket 1, 2k \rrbracket \times [0,b])$ satisfying the conditions of the event $E_{\mathrm{avoid}}^{\mathrm{pin}}$ from \eqref{Eq.AvoidPinEvent}.
In addition, by Lemma \ref{Lem.NoTouchPinned}, we have $\mathbb{P}(E_1 \cup E_2) = 1$. From the bounded convergence theorem, 
\begin{equation}\label{Eq.SavoidNumDen}
\begin{split}
&\lim_{n \rightarrow \infty}\mathbb{E}\left[F(\mathcal L^n)\mathbf{1}_{E_{\ice}}(\mathcal L^n,g_n)\right] = \mathbb{E}\left[F(B)\mathbf{1}_{U_{\mathrm{avoid}}^{\mathrm{pin}}}(B) \right] \\
&\lim_{n \rightarrow \infty}\mathbb{E}\left[\mathbf{1}_{E_{\ice}}(\mathcal L^n,g_n)\right] = \mathbb{E}\left[\mathbf{1}_{U_{\mathrm{avoid}}^{\mathrm{pin}}}(B)\right] > 0,
\end{split}
\end{equation}
where in the last inequality we used $\mathbb{P}\left(E_{\mathrm{avoid}}^{\mathrm{pin}}\right) > 0$ by Remark \ref{Rem.PinnedBMRem1}.

Combining (\ref{eq.Savoidfrac}) and (\ref{Eq.SavoidNumDen}), we conclude
$$ \lim_{n \rightarrow \infty}\mathbb{E}_{\ice;q ,c}^{T_n, Y^n,G_n}[F(\mathcal{Q}^n)] = \lim_{n \rightarrow \infty} \frac{\mathbb{E}[F(\mathcal L^n)\mathbf{1}_{E_{\ice}}(\mathcal L^n,g_n)]}{\mathbb{E}[\mathbf{1}_{E_{\ice}}(\mathcal L^n,g_n)]} = \frac{\mathbb{E}\left[F(B)\mathbf{1}_{U_{\mathrm{avoid}}^{\mathrm{pin}}}(B)\right]}{\mathbb{E}\left[\mathbf{1}_{U_{\mathrm{avoid}}^{\mathrm{pin}}}(B)\right]},$$
which implies (\ref{Eq.WeakConvToPinned}) once we note that by Definition \ref{Def.PinnedBM} the right side is $\mathbb{E}_{\mathrm{pin}}^{b,\vec{y},g}[F(\mathcal{L})]$.
\end{proof}

%
%
\subsection{Modulus of continuity estimates}\label{Section3.2} Fix $b > 0$. For a function $f\in C([0,b])$, we define its {\em modulus of continuity} for $\delta>0$ by
\begin{equation}\label{Eq.MOCDef}
w(f,\delta)=\sup_{\substack{x,y\in[0,b] \\ |x-y|\leq\delta}}|f(x)-f(y)|.
\end{equation}
The goal of this section is to establish the following lemma, which shows that the modulus of continuity of each curve of a line ensemble as in Definition \ref{Def.InterlacingInteractingPairs} is well-behaved provided that the boundary data $\vec{y}$ and $g$ are well-behaved.

\begin{lemma}\label{S42L} Fix $q \in (0,1)$, $c \in [0,1)$ and put $p = \frac{q}{1-q}$, $\sigma = \sqrt{p(1+p)}$. In addition, fix $\epsilon$, $\eta$, $M^{\mathrm{bot}}$, $M^{\mathrm{side}}$, $A^{\mathrm{sep}}$, $A^{\mathrm{gap}}\in (0, \infty)$, $\Delta^{\mathrm{gap}}  \in (0,1/2)$, and $k \in \mathbb{N}$. There exist $W_1 \in \mathbb{N}$ and $\delta > 0 $, depending on all previously listed constants, such that the following holds for all $n \geq W_1$. If we assume:
\begin{itemize}
\item $ \vec{y}\in \mathfrak{W}_{2k}$ as in (\ref{Eq.DefSig}), and $|y_i - pn| \leq M^{\mathrm{side}} \cdot n^{1/2}$ for $i \in \llbracket 1, 2k \rrbracket$;
\item $y_i - y_{i+1} \geq A^{\mathrm{sep}} \cdot n^{1/2}$ for $i \in \llbracket 1, 2k -1 \rrbracket$;
\item $g: \llbracket 0, n \rrbracket \rightarrow [-\infty, \infty)$ is increasing and satisfies $g(s) - ps\leq  M^{\mathrm{bot}} \cdot n^{1/2} $ for all $s \in \llbracket 0, n \rrbracket$;
\item $y_{2k} -pn \geq g(s) - ps + A^{\mathrm{gap}} \cdot n^{1/2} $ for all $s \in \llbracket 0, n\rrbracket$ with $s \geq n - \Delta^{\mathrm{gap}} \cdot n$;
\item for an increasing path $Q_i$ on $\llbracket 0, n \rrbracket$, we set $\mathcal{Q}_i(t)= \sigma^{-1}n^{-1/2}(Q_i(tn) - ptn)$ for $t \in [0,1]$;
\end{itemize}
then the following inequality holds for each $i \in \llbracket 1, 2k \rrbracket$
\begin{equation}\label{Eq.MOCEstMultiCurve}
\mathbb{P}_{\ice; q,c }^{n, \vec{y},  g}\left( w(\mathcal{Q}_i, \delta) > \eta \right) < \epsilon.
\end{equation}
\end{lemma}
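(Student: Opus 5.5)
We argue by contradiction and compactness, reducing to the convergence result Lemma \ref{Lem.ConvOfInterPairs}. Suppose the statement fails for some fixed constants. Then for every $\delta = 1/m$ and every threshold there is some $n$ that violates the bound. Taking a diagonal choice, we obtain a sequence $n_m \to \infty$, data $\vec{y}\,^m, g^m$ satisfying the bulleted hypotheses with $n = n_m$, and an index $i$ (which we may fix after passing to a subsequence) such that
\[
\mathbb{P}_{\ice; q,c}^{n_m, \vec{y}\,^m, g^m}\bigl( w(\mathcal{Q}_i, \delta_m) > \eta \bigr) \geq \epsilon
\]
for some $\delta_m \downarrow 0$. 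We will show that, along a further subsequence, the rescaled ensembles converge weakly to a continuous limit. This contradicts the display: for each fixed $\delta$, the modulus $w(\cdot, \delta)$ is continuous on $C([0,1])$, and $w(\mathcal{B}_i,\delta)\to 0$ almost surely as $\delta \to 0$ for a continuous limit $\mathcal{B}$. Applying the Portmanteau theorem to the closed set $\{w(f,\delta) \geq \eta\}$ and noting that $w(\cdot,\delta_m) \leq w(\cdot,\delta)$ once $\delta_m \le \delta$, we get $\epsilon \leq \mathbb{P}(w(\mathcal{B}_i,\delta) \geq \eta) \to 0$.

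\textbf{Extracting limits.} Set $\bar y_j^m = \sigma^{-1} n_m^{-1/2}(y^m_j - pn_m)$. These lie in $[-M^{\mathrm{side}}, M^{\mathrm{side}}]$, so along a subsequence $\bar y^m \to \vec{y}$. The separation hypothesis gives $y_j - y_{j+1} \geq A^{\mathrm{sep}}/\sigma$, hence $\vec{y} \in \weyl_{2k}$. The bottom curve $g^m$ is more delicate, because we have no modulus control on it. I plan to avoid needing its limit. The idea is to sandwich $\mathfrak{Q}$ using monotonicity in $g$: lowering $g$ should stochastically lower the ensemble. This requires a monotone coupling in $g$ for the interacting pair measures. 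That coupling is not yet stated, so it is the step I expect to be the main obstacle.

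The alternative is to replace $g^m$ by a dominating deterministic barrier $\tilde g^m$. Let $\tilde g^m(s) = ps + \sigma n_m^{1/2} h(s/n_m)$, where $h$ is continuous, nondecreasing after adding the drift, $h \geq \sigma^{-1}M^{\mathrm{bot}}$ on $[0, 1-\Delta^{\mathrm{gap}}]$, and $h \le y_{2k} - A^{\mathrm{gap}}/(2\sigma)$ near $1$. Such an $h$ dominates every admissible $g^m$, by the hypothesis $g(s) - ps \le M^{\mathrm{bot}} n^{1/2}$ and the gap condition near $s = n$. With this barrier, Lemma \ref{Lem.ConvOfInterPairs} applies directly with $d_n = n_m$ and $b = 1$, since $g_n \to h$ uniformly and $h(1) < y_{2k}$. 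It yields convergence to $\mathbb{P}^{1,\vec{y},h}_{\mathrm{pin}}$, which is supported on continuous curves.

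\textbf{Comparison, which is the hard step.} We then need to transfer modulus bounds from the $\tilde g$-ensemble to the $g$-ensemble. A lower barrier only shifts curves monotonically, which controls heights but not oscillations. So I would instead argue through the Gibbs property. By Lemma \ref{Lem.GibbsConsistent}, conditionally on their values at time $0$ the curves form an $(\infty, g)$-interlacing geometric ensemble, and Lemma \ref{Lem.ConvBridge} applies once the endpoint values $Q_j(0)$ are tight.

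Concretely, the steps are:
\begin{enumerate}
\item Show that the rescaled values $\mathcal{Q}_j(0)$ are tight. The upper bound comes from coupling with the $g = -\infty$ pairs, whose time-$0$ values converge by Lemma \ref{Lem.ConvAtOrigin}. The lower bound comes from coupling with the barrier $\tilde g$.
\item Pass to a subsequence along which the joint law of $(\mathcal{Q}(0), g^m)$ converges. For $g^m$ we use only the weak upper envelope $h$, and we handle avoidance of $g^m$ by the same indicator-ratio argument as in Step 2 of Lemma \ref{Lem.ConvOfInterPairs}.
\item Verify that the denominator, namely the probability of avoiding $g^m$, stays bounded below. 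This follows from Lemma \ref{Lem.StayInCorridor} together with the dominating barrier $h$.
\end{enumerate}
The uniform lower bound in the third step, and the monotone coupling in $g$ needed in the first step (which I would prove analogously to Lemma \ref{Lem.MonCoupBer}), are where I expect the real work to lie.
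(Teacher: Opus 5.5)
Your contradiction and compactness frame and your closing Portmanteau step are sound. The gap is that the proposal never produces a convergent, or even tight, subsequence of the $g^m$-ensembles, and the route you sketch cannot.

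\textbf{Where it fails.} Step~2 requires a subsequence along which $g^m$ converges. The indicator-ratio argument from Step~2 of Lemma \ref{Lem.ConvOfInterPairs} also needs $g_n\to g$ uniformly. But the hypotheses only bound $g^m$ from above. Since $g^m$ is merely increasing, its rescaling $\sigma^{-1}n^{-1/2}(g^m(tn)-ptn)$ can fall with slope of order $-n^{1/2}$ and jump up arbitrarily. It can therefore oscillate on time scales of order $n^{-1/2}$, and need not have any subsequential limit in $C([0,1])$. Lemma \ref{Lem.ConvBridge} has no lower boundary, so conditioning on the time-$0$ values does not remove $g^m$ either. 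The monotone coupling in $g$ is unproved and, as you yourself observe, controls only heights, not oscillations.

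\textbf{The missing idea.} You never need the $g^m$-ensembles to converge. The constraint $B^k_2\succeq -\infty$ is vacuous, so Definition \ref{Def.InterlacingInteractingPairs} shows that $\mathbb{P}^{n,\vec y,g}_{\ice;q,c}$ is exactly $\mathbb{P}^{n,\vec y}_{\ice;q,c}$ conditioned on $\{Q_{2k}\succeq g\}$. Hence the uniform bound $\mathbb{P}^{n,\vec y}_{\ice;q,c}(Q_{2k}\succeq g)>1/C$, which is your Step~3, immediately gives
\[
\mathbb{P}^{n,\vec y,g}_{\ice;q,c}\bigl(w(\mathcal Q_i,\delta)>\eta\bigr)\le C\,\mathbb{P}^{n,\vec y}_{\ice;q,c}\bigl(w(\mathcal Q_i,\delta)>\eta\bigr).
\]
The right side involves only $\vec y$. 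Your own compactness argument, run with $g=-\infty$ in Lemma \ref{Lem.ConvOfInterPairs}, makes it smaller than $\epsilon/C$. This is the paper's proof, and it makes Steps~1--2, the barrier $\tilde g$, and the coupling in $g$ unnecessary.

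\textbf{Proving the lower bound.} Lemma \ref{Lem.StayInCorridor} concerns a single pinned pair, so one dominating barrier $h$ does not suffice. You must split the $2k$ curves into pairs with mutually interlacing corridors lying above $g$. The paper takes linear corridors $L^n_{2j-1}\ge Q_{2j-1}$ and $Q_{2j}\ge L^n_{2j}$. They have slope $(M^{\mathrm{bot}}+M^{\mathrm{side}}+1)/(\Delta^{\mathrm{gap}}n^{1/2})$ going backward from $y_i\pm\lfloor Rn^{1/2}/4\rfloor$, where $R=\min(1,A^{\mathrm{gap}},A^{\mathrm{sep}})$. It checks $L^n_1\succeq\cdots\succeq L^n_{2k}\succeq g$ using $A^{\mathrm{sep}}$, $M^{\mathrm{side}}$, $M^{\mathrm{bot}}$ and $A^{\mathrm{gap}}$. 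The interlacing probability is then at least the product of the single-pair corridor probabilities. Each of these is bounded below by a contradiction argument using Lemma \ref{Lem.ConvOfInterPairs} with $k=1$, $g=-\infty$, together with the uniform bounded-slope second part of Lemma \ref{Lem.StayInCorridor}.
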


\begin{proof} We claim that there exist $N_1\in \mathbb{N}$ and $C>0$, depending on the constants listed in the lemma, such that for all $n\ge N_1$,
\begin{equation}\label{Eq.Floor>c}
\mathbb{P}_{\ice; q,c }^{n, \vec{y}}({Q}_{2k} \succeq g) > 1/C.
\end{equation}
We further claim that there exist $N_2 \in \mathbb{N}$ and $\delta > 0$, depending on the same set of constants, such that for $n \geq N_2$
\begin{equation}\label{Eq.ModCtyGood}
\mathbb{P}_{\ice; q,c }^{n, \vec{y}}\left( \max_{i \in \llbracket 1, 2k \rrbracket} w(\mathcal{Q}_i,\delta)> \eta \right) <  \epsilon/C.
\end{equation}
Assuming (\ref{Eq.Floor>c}) and (\ref{Eq.ModCtyGood}), we get for $n \geq W_1 := \max(N_1, N_2)$
\begin{align*}
\mathbb{P}_{\ice; q,c }^{n, \vec{y},g}\left( \max_{i \in \llbracket 1, 2k \rrbracket} w(\mathcal{Q}_i,\delta) > \eta\right) &= \mathbb{P}_{\ice; q,c }^{n, \vec{y}}\left( \max_{i \in \llbracket 1, 2k \rrbracket} w(\mathcal{Q}_i,\delta) > \eta \big\vert {Q}_{2k} \succeq g\right)\\
&\le C \cdot \mathbb{P}_{\ice; q,c }^{n, \vec{y}}\left( \max_{i \in \llbracket 1, 2k \rrbracket} w(\mathcal{Q}_i,\delta) >\eta\right) < C\cdot \epsilon/C = \epsilon,
\end{align*}
which implies the statement of the lemma. \\

{\bf \raggedleft Proof of (\ref{Eq.Floor>c}).} Put $R = \min(1, A^{\mathrm{gap}}, A^{\mathrm{sep}})$. Let $n_0 \in \mathbb{N}$ be such that for $n \geq n_0$, we have 
\begin{equation}\label{Eq.LargeN0MOC}
n^{1/2} - \lfloor  Rn^{1/2} /4 \rfloor \geq p, \hspace{2mm} A^{\mathrm{gap}}n^{1/2} - \lfloor  Rn^{1/2} /4 \rfloor \geq p, \hspace{2mm} \lfloor  Rn^{1/2} /4 \rfloor \geq p.  
\end{equation}

For $i\in\llbracket 1,2k\rrbracket$, we define the deterministic linear functions $L^n_i : \llbracket 0,n\rrbracket \to \mathbb{R}$ by
\begin{equation}\label{Eq.LIDef}
L^n_i(s) = y_i + \frac{M^{\mathrm{bot}}+ M^{\mathrm{side}} + 1}{\Delta^{\mathrm{gap}} n^{1/2}} \cdot (n-s)+ps-pn + (-1)^{i-1} \cdot \lfloor R n^{1/2}/4 \rfloor.
\end{equation}
Notice that for $n \geq n_0$
$$L^n_{2i-1}(s) - L^n_{2i}(s+1) = y_{2i-1} - y_{2i} + \frac{M^{\mathrm{bot}}+ M^{\mathrm{side}} + 1}{\Delta^{\mathrm{gap}} n^{1/2}} + 2 \lfloor  Rn^{1/2}/4 \rfloor - p \geq 0 \mbox{ for }i \in \llbracket 1,k \rrbracket,$$
$$L^n_{2i}(s) - L^n_{2i+1}(s+1) = y_{2i} - y_{2i+1} + \frac{M^{\mathrm{bot}}+ M^{\mathrm{side}} + 1}{\Delta^{\mathrm{gap}} n^{1/2}} - 2 \lfloor  Rn^{1/2}/4 \rfloor - p \geq 0 \mbox{ for } i \in \llbracket 1, k-1\rrbracket,$$
where we used that by assumption $y_i - y_{i+1} \geq A^{\mathrm{sep}} n^{1/2} \geq 4 \lfloor Rn^{1/2}/4 \rfloor$ and (\ref{Eq.LargeN0MOC}). This proves that $L^n_i \succeq L^n_{i+1}$ for $i \in \llbracket 1, 2k-1 \rrbracket$ when $n \geq n_0$.

We next show that $L^n_{2k} \succeq g$ when $n \geq n_0$. Indeed, observe that for $s\in \llbracket 0, n-\Delta^{\mathrm{gap}}n\rrbracket \cap \llbracket 0, n-1\rrbracket$
\begin{equation*}
\begin{split}
&L^n_{2k}(s) \ge y_{2k}+(M^{\mathrm{bot}}+M^{\mathrm{side}}+1)n^{1/2}+ps-pn - \lfloor Rn^{1/2}/4 \rfloor \\
& \ge g(s+1)-p+n^{1/2} - \lfloor  Rn^{1/2}/4 \rfloor \ge g(s+1),
\end{split}
\end{equation*}
where we used that $y_{2k} \geq -M^{\mathrm{side}}n^{1/2}+pn$ and $g(s+1) \le ps+p+M^{\mathrm{bot}}n^{1/2}$ from our assumptions and (\ref{Eq.LargeN0MOC}). On the other hand, for $s\in \llbracket n-\Delta^{\mathrm{gap}}n,n-1\rrbracket$, we have
\begin{equation*}
\begin{split}
&L^n_{2k}(s) = y_{2k} + \frac{M^{\mathrm{bot}}+ M^{\mathrm{side}} + 1}{\Delta^{\mathrm{gap}}n^{1/2}} \cdot (n-s)+ps-pn -\lfloor  Rn^{1/2}/4 \rfloor \\
&\ge g(s+1)-p+A^{\mathrm{gap}}n^{1/2} -\lfloor  Rn^{1/2}/4 \rfloor \ge g(s+1),
\end{split}
\end{equation*}
where we used that $y_{2k} -pn \geq g(s+1) - ps-p + A^{\mathrm{gap}} \cdot n^{1/2} $ by assumption and (\ref{Eq.LargeN0MOC}).

Define the events
\[
E^{j,n}_{\mathrm{corr}} = \{L^n_{2j-1}(s) \geq Q_{2j-1}(s) \mbox{ and } Q_{2j}(s) \geq L^n_{2j}(s) \mbox{ for } s \in \llbracket 0, n \rrbracket \}, \qquad j\in\llbracket 1,k\rrbracket.
\]
From Definition \ref{Def.InterlacingInteractingPairs}, we have for $n \geq n_0$
\begin{equation}\label{Eq.Floor>cRed}
\begin{split}
\mathbb{P}_{\ice; q,c }^{n, \vec{y}}(Q_{2k} \succeq g) &\ge \frac{\bigotimes_{j=1}^k\mathbb{P}_{\ice; q,c }^{n, (y_{2j-1},y_{2j})}(\bigcap_{j=1}^k E^{j,n}_{\mathrm{corr}})}{\bigotimes_{j=1}^k\mathbb{P}_{\ice; q,c }^{n, (y_{2j-1},y_{2j})}(E_{\ice})} \ge \prod_{j=1}^k \mathbb{P}_{\ice; q,c }^{n, (y_{2j-1},y_{2j})}(E^{j,n}_{\mathrm{corr}}),
\end{split}
\end{equation}
where we used that $L^n_1 \succeq L^n_2 \succeq \cdots \succeq L^n_{2k} \succeq g$, which in particular implies $\bigcap_{j=1}^k E^{j,n}_{\mathrm{corr}} \subseteq E_{\ice}$.\\

We claim that there exist $N_1\in \mathbb{N}$ and $\epsilon_0>0$, depending on the constants in the statement of the lemma, such that for all $n\ge N_1$, we have
\begin{align}\label{Eq.red2MOC}
    \mathbb{P}_{\ice; q,c }^{n, (y_{2j-1},y_{2j})}(E^{j,n}_{\mathrm{corr}}) \ge \epsilon_0, \mbox{ for }j\in \llbracket1,k\rrbracket.
\end{align}
Combining (\ref{Eq.Floor>cRed}) and (\ref{Eq.red2MOC}), we conclude \eqref{Eq.Floor>c} with $C=\epsilon_0^{-k}$.

Suppose, for the sake of contradiction, that no such $N_1,\epsilon_0$ satisfying \eqref{Eq.red2MOC} exist. Then, there exists a sequence $(y_{2j-1}^{n_m},y_{2j}^{n_m})\in \mathfrak{W}_2$ with $|y_{2j-1}^{n_m}-pn_m|,|y_{2j}^{n_m}-pn_m|\le M^{\mathrm{side}}n_m^{1/2}$ and $y_{2j-1}^{n_m}-y_{2j}^{n_m} \ge A^{\mathrm{sep}}n_m^{1/2}$, such that
\begin{align}\label{Eq.MOCCont}
\lim_{m \rightarrow \infty}\mathbb{P}_{\ice; q,c }^{n_m, (y_{2j-1}^{n_m},y_{2j}^{n_m})}(\mathcal{L}^{n_m}_{2j-1}(t) \ge \mathcal{Q}_{1}(t) \mbox{ and } \mathcal{Q}_{2}(t) \ge \mathcal{L}^{n_m}_{2j}(t) \mbox{ for all } t\in [0,1]) = 0,
\end{align}
where we have set $\mathcal{L}^n_i(t) = \sigma^{-1} n^{-1/2}(L^n_i(tn) - ptn)$. By possibly passing to a subsequence, which we continue to call $n_m$, we may assume that $\sigma^{-1}n_m^{-1/2}(y_{i}^{n_m}-pn_m)\to y_i^{\ast}$ for $i=2j-1,2j$. Using the weak convergence from Lemma \ref{Lem.ConvOfInterPairs} with $k = 1$, $d_m = n_m$, $b = 1$, and $g = -\infty$, we conclude
\begin{equation}\label{Eq.MOCCont1}
\begin{split}
& \liminf_{m \rightarrow \infty}\mathbb{P}_{\ice; q,c }^{n_m, (y_{2j-1}^{n_m},y_{2j}^{n_m})}(\mathcal{L}^{n_m}_{2j-1}(t) \ge \mathcal{Q}_{1}(t) \mbox{ and } \mathcal{Q}_{2}(t) \ge \mathcal{L}^{n_m}_{2j}(t) \mbox{ for all } t\in [0,1]) \\
&\ge   \mathbb{P}_{\mathrm{pin}}^{1, (y_{2j-1}^{\ast},y_{2j}^\ast)}(\ell_{2j-1}^{\ast}(t) > \mathcal{Q}_{1}(t) \mbox{ and } \mathcal{Q}_{2}(t) > \ell_{2j}^{\ast}(t) \mbox{ for all }t\in [0,1]),
\end{split}
\end{equation}
where
$$\ell_{i}^\ast(t)=y_{i}^{\ast}+\frac{M^{\mathrm{bot}}+M^{\mathrm{side}}+1}{\sigma \Delta^{\mathrm{gap}}}(1-t) + (-1)^{i-1} \sigma^{-1}R/4, \mbox{ for }i\in \{2j-1,2j\}, t\in [0,1].$$

From the second part of Lemma \ref{Lem.StayInCorridor}, applied to $b = 1$, $\vec{y} = (y_{2j-1}^{\ast}, y_{2j}^{\ast})$, $M = m = \frac{M^{\mathrm{bot}}+M^{\mathrm{side}}+1}{\sigma \Delta^{\mathrm{gap}}}$, $\varepsilon = \sigma^{-1}R/8$, we conclude that there exists $\delta_0 > 0$, such that
$$\mathbb{P}_{\mathrm{pin}}^{1, (y_{2j-1}^{\ast},y_{2j}^\ast)}(\ell_{2j-1}^{\ast}(t) > \mathcal{Q}_{1}(t) \mbox{ and } \mathcal{Q}_{2}(t) > \ell_{2j}^{\ast}(t) \mbox{ for all }t\in [0,1]) \geq \delta_0.$$
Combining the last displayed equation with (\ref{Eq.MOCCont1}), we obtain our desired contradiction with (\ref{Eq.MOCCont}).\\

{\bf \raggedleft Proof of (\ref{Eq.ModCtyGood}).} Suppose, for the sake of contradiction, that no such $N_2,\delta$ satisfying (\ref{Eq.ModCtyGood}) exist. Then, we can sequences $n_m \uparrow \infty$, $\vec{y}\,^{n_m} \in \mathfrak{W}_{2k}$, satisfying the conditions of the lemma with $n = n_m$, such that for $\delta_m = 1/m$, we have 
\begin{equation}\label{Eq.ModCtyBad}
\mathbb{P}_{\ice; q,c }^{n_m, \vec{y}\,^{n_m}}\left( \max_{i \in \llbracket 1, 2k \rrbracket} w(\mathcal{Q}_i,\delta_m)> \eta \right) \geq \epsilon/C.
\end{equation}

Since $n_m^{-1/2}|y_i^{n_m}-pn_m|\le M^{\mathrm{side}}$ and $n_m^{-1/2}(y^{n_m}_i - y^{n_m}_{i+1}) \ge A^{\mathrm{sep}}$ for $i \in \llbracket 1,2k-1\rrbracket$, by possibly passing to a subsequence, we may assume that $\sigma^{-1}n_m^{-1/2}({y}_i^{n_m}-pn_m) \rightarrow y_i^{\ast}$ for $i\in \llbracket1,2k\rrbracket$ where $\vec{y}^{\ast} \in \weyl_{2k}$. Using the weak convergence from Lemma \ref{Lem.ConvOfInterPairs} (it is applicable as $\vec{y}^{\ast} \in \weyl_{2k}$), and the Portmanteau theorem, we conclude for each $\delta > 0$ that 
\begin{equation*}
\limsup_{m \rightarrow \infty}\mathbb{P}_{\ice; q,c }^{n_m, \vec{y}\,^{n_m}}\left( \max_{i \in \llbracket 1, 2k \rrbracket}w(\mathcal{Q}_i,\delta)\geq \eta\right) \leq \mathbb{P}^{1, \vec{y}^{\ast}}_{\mathrm{pin}} \left(  \max_{i \in \llbracket 1, 2k \rrbracket} w(\mathcal{Q}_i,\delta)\geq \eta \right).
\end{equation*} 

Combining the latter with (\ref{Eq.ModCtyBad}), and the fact that $\delta_m \downarrow 0$, we conclude that for all $\delta > 0$
$$\mathbb{P}^{1, \vec{y}^{\ast}}_{\mathrm{pin}} \left(  \max_{i \in \llbracket 1, 2k \rrbracket} w(\mathcal{Q}_i,\delta)\geq \eta \right) \geq  \epsilon/C.$$
The latter gives our desired contradiction, since $\mathbb{P}^{1, \vec{y}^{\ast}}_{\mathrm{pin}}$-a.s. $\lim_{\delta \rightarrow 0+}\max_{i \in \llbracket 1, 2k \rrbracket} w(\mathcal{Q}_i,\delta) = 0$.
\end{proof}

%
%
\subsection{A general tightness criterion}\label{Section3.3} The goal of this and the next section is to establish the following tightness criterion for sequences of line ensembles that satisfy the interacting pair Gibbs property from Definition \ref{Def.IPGP}.
\begin{theorem}\label{Thm.Tightness} Let $q \in (0,1)$, $c \in [0,1)$, $p = \frac{q}{1-q}$, $\sigma=\sqrt{p(1+p)}$, $k\in\mathbb{N}\cup\{\infty\}$, $\Sigma=\llbracket1,2k+2\rrbracket$, $\beta \in (0, \infty]$ and set $\Lambda=[0,\beta)$. Let $d_n\in(0,\infty)$, $T_n\in\mathbb{Z}_{\geq 0}$ be sequences that satisfy:
\begin{equation}\label{Eq.ConditionsTightness}
d_n\rightarrow\infty, \hspace{2mm} T_n/d_n\rightarrow\beta, \mbox{ as } n \rightarrow \infty.
\end{equation}
We further suppose that we have a sequence of $\Sigma$-indexed geometric line ensembles $\mathfrak{L}^n = \{L^n_i\}_{i \in \Sigma}$ on $\mathbb{Z}_{\geq 0}$ that satisfy the following conditions:
\begin{enumerate}
\item [$\bullet$] for each $t\in (0, \beta)$ and $i\in\llbracket 1, 2k\rrbracket$, the sequence of random variables $\sigma^{-1}d_n^{-1/2} (L_i^n(\lfloor td_n\rfloor)-ptd_n)$ is tight;
\item [$\bullet$] the restriction $\{L_i^n(s):i\in\Sigma \mbox{ and } s\in\llbracket0,T_n\rrbracket\}$ satisfies the interacting pair Gibbs property from Definition \ref{Def.IPGP}.
\end{enumerate}
Then, the sequence of line ensembles $\mathcal{L}^n=\{\mathcal{L}_i^n\}_{i=1}^{2k}\in C(\llbracket1,2k\rrbracket\times\Lambda)$, defined through $\mathcal{L}_i^n(t)=\sigma^{-1}d_n^{-1/2}(L_i^n(td_n)-ptd_n)$, is tight. Moreover, any subsequential limit satisfies the pinned half-space Brownian Gibbs property from Definition \ref{Def.PinnedBGP}.
\end{theorem}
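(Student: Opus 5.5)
Our plan is the standard Corwin--Hammond-type argument, adapted to the interacting pair Gibbs property, which only allows resampling whole prefixes $\llbracket 0,T\rrbracket$ of the top $2k'$ curves. Since the ensemble lives on $[0,\beta)$, tightness in $C(\llbracket 1,2k\rrbracket\times\Lambda)$ reduces to proving, for each fixed finite $k'\le k$ and each $b\in(0,\beta)$, two things on $[0,b]$. The first is one-point tightness at a single time, which is given. The second is the modulus-of-continuity estimate
$$\limsup_{\delta\downarrow0}\limsup_{n}\mathbb{P}\bigl(w(\mathcal{L}^n_i|_{[0,b]},\delta)>\eta\bigr)=0 \quad\text{for } i\le 2k'.$$
We then appeal to \cite[Lemma 2.4]{DEA21}. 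The key tool is Lemma \ref{S42L}. After applying Lemma \ref{Lem.StrongIPGP}, the law of the top $2k'$ curves on $\llbracket 0,T\rrbracket$ (with $T=\lfloor b'd_n\rfloor$ for some $b<b'<\beta$) is $\mathbb{P}_{\ice;q,c}^{T,\vec y,g}$, with $\vec y=(L^n_i(T))_{i\le 2k'}$ and $g=L^n_{2k'+1}\llbracket 0,T\rrbracket$. So it suffices to show that, with probability at least $1-\epsilon$, the boundary data $(\vec y,g)$ satisfy the hypotheses of Lemma \ref{S42L} (with $n$ replaced by $T$) for suitable constants. These hypotheses are: side bounds, separation $A^{\mathrm{sep}}$, an upper bound $M^{\mathrm{bot}}$ on $g$, and a gap $A^{\mathrm{gap}}$ near the right end. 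Here $\Sigma=\llbracket 1,2k+2\rrbracket$ guarantees that curve $2k'+1$ exists.

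The steps, in order, are as follows. \textbf{(1) Side bound.} This follows from the assumed one-point tightness at time $b'$. \textbf{(2) Upper bound on $g$.} Since $L^n_{2k'+1}$ is increasing and $L^n_{2k'+1}(s)\le L^n_{2k'}(s-1)$, we have $g(s)-ps\le L^n_{2k'+1}(T)-ps$. This controls $g$ near $T$, but not for small $s$, where $-ps$ is favorable anyway: $g(s)-ps\le L_{2k'+1}(T)-pT+p(T-s)$ is bad. We therefore bound $g(s)-ps$ from above via tightness at several times $t_j$ and monotonicity between grid points. Uniformly this gives $g(s)-ps\le \max_j (L(t_{j+1}d_n)-pt_jd_n)$, which costs $p(t_{j+1}-t_j)d_n$; that is too large unless we use a mesh of size $O(d_n^{-1/2})$, which is not available. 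Instead, we first establish a uniform upper bound $\sup_{t\le b'}\mathcal{L}^n_1(t)\le M$ w.h.p. To get it, we resample the top pair via the Gibbs property and compare it to a pinned pair: on the event that the top curve at time $t$ is high, an interacting pair started near a bounded value at $b'$ has vanishing chance of such an excursion. This is a limiting statement proved from Lemma \ref{Lem.ConvOfInterPairs} by contradiction, combined with monotonicity in $g$. \textbf{(3) Separation and gap.} The limiting objects are non-intersecting on $(0,\beta)$, so we expect $\mathbb{P}(\mathcal{L}_i^n(b')-\mathcal{L}^n_{i+1}(b')<A)$ to be small for small $A$. We prove this by a "no-bunching" argument: if two curves were within $A$ at time $b'$ with non-negligible probability, then resampling on $[0,b'']$ for $b''>b'$ and using Lemma \ref{Lem.ConvOfInterPairs} together with Lemma \ref{Lem.StayInCorridor} would contradict the continuity and non-touching of the limiting pinned ensemble. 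The gap near $T$ follows from the separation at $b'$ and the modulus of continuity of the lower curve, obtained by induction or downward monotone comparison. Since we need only one time $b'$, we may also randomize $b'$ over an interval and use Fubini to pick a good time.

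\textbf{(4) Pinned Gibbs property of limits.} Take a subsequential limit and use Skorohod representation. For fixed $b$ and $k'$, the conditional law of the top $2k'$ curves given the exterior converges by Lemma \ref{Lem.ConvOfInterPairs}, applied with random but convergent boundary data $(\vec y,g)$, whose limit lies in $\weyl_{2k'}$ a.s. by step (3). This yields \eqref{Eq.HSPinnedBGP}, in the usual way via test functions of the exterior. Non-intersection on $(0,\beta)$ comes from step (3) combined with the resampling.

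The main obstacle is step (3) together with the uniform upper bound in step (2): extracting separation at a fixed time from mere one-point tightness. Unlike the Brownian setting, the pinning means curves $2i-1,2i$ genuinely collide at $0$, so all estimates must be made at positive times and passed through the pair structure via Lemma \ref{Lem.StrongIPGP}, which only conditions on an odd number of top curves.
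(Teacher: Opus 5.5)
Your overall architecture matches the paper's Steps 3--4. You reduce to a modulus-of-continuity bound near $0$, condition on $(\vec y,g)$ at $T\approx b'd_n$ via Lemma \ref{Lem.StrongIPGP}, feed a high-probability good event into Lemma \ref{S42L}, and get the pinned Gibbs property of limits from Lemma \ref{Lem.ConvOfInterPairs} plus Skorohod. The gap lies in the two inputs you flag as the main obstacle. Neither suggested argument works as stated.

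\textbf{The uniform upper bound in step (2).} Resampling the top pair on $\llbracket 0,T\rrbracket$ is circular. Its conditional law $\mathbb{P}^{T,\vec y,g}_{\ice;q,c}$ depends on the floor $g=L^n_3\llbracket 0,T\rrbracket$, and the floor can only push the pair up. The only a priori control on $g$ comes from monotonicity and interlacing, namely $g(s)\le g(T)\le y_2$. That bound is off by order $d_n$ in $g(s)-ps$. So no comparison that is monotone in $g$ can bound $L^n_1$ from above without first bounding $L^n_3$, which is exactly what you want to deduce. Lemma \ref{Lem.ConvOfInterPairs} does not help either, since it needs a uniformly convergent floor.

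The paper (Lemma \ref{lem.nobigmax}) works with the single top curve instead:
\begin{itemize}
\item Let $m\le bd_n$ be the first time with $L^n_1(m)-pm\ge R_1d_n^{1/2}$.
\item Resample $L^n_1$ alone on $\llbracket m,C_n\rrbracket$, with $C_n=\lfloor c_1d_n\rfloor$, using Lemma \ref{Lem.StrongGP}.
\item Apply Lemma \ref{S32L}. It lower-bounds $Q_1$ at $A_n=\lfloor a_1d_n\rfloor$ uniformly over all floors, so here the monotonicity works in your favor.
\item This gives $\mathbb{P}(G\cap E(R)^c)\le 3\,\mathbb{P}(F(R))$, which contradicts one-point tightness at time $a_1$.
\end{itemize}
The needed bound on $g=L^n_{2k_0+1}$ then follows from $L^n_{2k_0+1}\le L^n_1$.

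\textbf{Separation and gap in step (3), and non-intersection in step (4).} Your no-bunching argument through Lemma \ref{Lem.ConvOfInterPairs} presupposes separated, convergent boundary data at $b''$ and a uniformly convergent floor. That is the very tightness and separation you are trying to prove.

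The missing idea is Lemma \ref{Lem.GibbsConsistent}. The interacting pair Gibbs property implies the ordinary interlacing Gibbs property for the top $2k+1$ curves. You are therefore not restricted to resampling prefixes $\llbracket 0,T\rrbracket$ of pairs: single curves can be resampled on arbitrary intervals.

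The paper extends $\mathfrak{L}^n$ constantly to negative times and invokes \cite[Theorem 5.1]{D24b}. This yields tightness on $(0,\beta)$, and shows that any subsequential limit satisfies the partial Brownian Gibbs property there, hence is non-intersecting. It then invokes \cite[Lemma 5.3]{D24b}, which gives the separation and right-end gap at $B_n$ that Lemma \ref{S42L} requires. With these two inputs, the rest of your plan goes through essentially as in the paper.
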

\begin{remark}
The proof of Theorem \ref{Thm.Tightness} is given in Section \ref{Section3.4} and follows closely that of \cite[Theorem 5.1]{D24b}. As $\mathfrak{L}^n$ satisfies the interacting pair Gibbs property, by Lemma \ref{Lem.GibbsConsistent} it satisfies the interlacing Gibbs property. Consequently, as we explain in Section \ref{Section3.4}, we conclude that $\mathfrak{L}^n$ satisfies the conditions of \cite[Theorem 5.1]{D24b}, which implies that $\mathcal{L}^n|_{\llbracket1,2k\rrbracket \times (0, \beta)}$ is tight. In particular, Theorem \ref{Thm.Tightness} strengthens this statement so that tightness is established on the whole interval $[0,\beta)$. Beyond tightness, the theorem also establishes that any subsequential limit satisfies the pinned half-space Brownian Gibbs property.
\end{remark}

We mention that the tightness assumption in Theorem \ref{Thm.Tightness} ensures the existence of functions $\psi(\cdot |i, t): (0,\infty) \rightarrow (0, \infty)$ such that for each $i \in \llbracket 1, 2k \rrbracket$, $t \in (0, \beta)$, $\epsilon \in (0, \infty)$, we have for all $n \in \mathbb{N}$
\begin{equation}\label{S5E1}
\mathbb{P}\left( \left| L_i^n(\lfloor t d_n \rfloor) - ptd_n \right| > d_n^{1/2} \cdot \psi(\epsilon| i, t) \right) \leq \epsilon.
\end{equation}
Throughout our proofs in this and the next section, we will encounter various constants that depend on $q, c, \beta$, the sequences $d_n, T_n$ in the statement of Theorem \ref{Thm.Tightness} and also the functions $\psi(\cdot|i,t)$. We will not list this dependence explicitly.

In the remainder of this section, we establish a no-big-max statement which includes the origin, see Lemma \ref{lem.nobigmax} below. For its proof, we require the following lemma.
\begin{lemma}\label{S32L} Fix $p, M\in (0, \infty)$. There exists $N_2 \in \mathbb{N}$, depending on $p$ and $M$, such that the following holds. If $M_1, M_2 \in \mathbb{R}$ are such that $|M_1 - M_2| \leq M$, $n \geq N_2$, $x, y \in \mathbb{Z}$ are such that $x \leq y$ and $x \geq M_1 n^{1/2}$, $y \geq pn + M_2 n^{1/2}$, $g: \llbracket 0, n \rrbracket \rightarrow [-\infty, \infty)$ is arbitrary such that $\Omega_{\ice}(0,n,x,y, \infty, g) \neq \emptyset$, and $s \in [0, n]$, then we have
\begin{equation}\label{S32E1}
\mathbb{P}_{\ice, \operatorname{Geom}}^{0,n, x, y, \infty, g} \left( Q_1(s) - ps \geq \frac{n - s}{n} \cdot M_1 n^{1/2} + \frac{s}{n} \cdot  M_2 n^{1/2} - 3n^{1/4} \right) \geq \frac{1}{3}.
\end{equation}
\end{lemma}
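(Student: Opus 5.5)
The plan is to reduce, by monotone couplings, to a \emph{free} geometric bridge with deterministic endpoints close to $(0, M_1 n^{1/2})$ and $(n, pn + M_2 n^{1/2})$. For such a bridge the estimate is a centered-fluctuation statement, which I would prove with a strong (KMT-type) coupling to a Brownian bridge.

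\textbf{Step 1: removing the floor $g$.} A single interlacing geometric path $Q_1$ with law $\mathbb{P}_{\ice, \operatorname{Geom}}^{0,n,x,y,\infty,g}$ is a uniform increasing path from $(0,x)$ to $(n,y)$, conditioned to satisfy $Q_1(r-1) \geq g(r)$. I would invoke the standard monotone coupling in the bottom boundary. This is the $g$-version of Lemma \ref{Lem.MonCoupBer}, proved by the same Glauber-dynamics/Markov chain argument in Section \ref{SectionA.3}. It gives a coupling in which $Q_1$ dominates a path $\tilde Q$ with law $\mathbb{P}_{\operatorname{Geom}}^{0,n,x,y}$, i.e.\ the case $g = -\infty$. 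So the probability in (\ref{S32E1}) is at least the same probability for $\tilde Q$.

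\textbf{Step 2: lowering the endpoints.} Set $x' = \lceil M_1 n^{1/2} \rceil$ and $y' = \max(x', \lceil pn + M_2 n^{1/2} \rceil)$. By hypothesis $x' \leq x$ and $y' \leq y$, provided the ceilings are harmless. At worst this costs an $O(1)$ shift, which is absorbed into $3n^{1/4}$. Lemma \ref{Lem.MonCoupBer} with $k=1$ and $M = \infty$ then gives a coupling in which $\tilde Q \geq \hat Q$, where $\hat Q$ has law $\mathbb{P}_{\operatorname{Geom}}^{0,n,x',y'}$. Since $|M_1 - M_2| \leq M$, for $n \geq N_2(p,M)$ we have $y' = pn + M_2 n^{1/2} + O(1) \geq x'$. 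Hence the slope satisfies $(y'-x')/n = p + O(M n^{-1/2})$, which lies in a compact subset of $(0,\infty)$.

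\textbf{Step 3: the free bridge estimate.} It remains to show that, uniformly over $s \in [0,n]$ and over endpoint data as in Step 2, the following holds for all $n \ge N_2$:
$$\mathbb{P}\left(\hat Q(s) \geq \tfrac{n-s}{n}x' + \tfrac{s}{n}y' - 2n^{1/4}\right) \geq \tfrac13.$$
The deterministic line here differs from the one in (\ref{S32E1}) by $O(1)$. For this I would use the KMT-type coupling of geometric bridges with Brownian bridges from \cite{D24b} (the same one used in \cite{DZ25}). It couples $\hat Q$ with a Brownian bridge $B^\sigma$ of variance $\sigma_\ast^2 = p_\ast(1+p_\ast)$, where $p_\ast = (y'-x')/n$, so that
$$\mathbb{P}\Bigl(\sup_{s}\bigl|\hat Q(s) - \tfrac{n-s}{n}x' - \tfrac{s}{n}y' - \sqrt{n}\,B^\sigma(s/n)\bigr| \geq C\log n\Bigr) \leq C n^{-a},$$
with constants uniform for $p_\ast$ in a compact set. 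On the complementary event it suffices that $B^\sigma(s/n) \geq 0$, which by symmetry has probability $1/2$. Therefore the probability is at least $1/2 - Cn^{-a} \geq 1/3$ once $C\log n \leq n^{1/4}$ and $n$ is large. Non-integer $s$ is handled by linear interpolation, since both endpoints $\lfloor s\rfloor$ and $\lceil s\rceil$ obey the same bound up to $O(1)$.

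\textbf{Main obstacle.} I expect the main obstacle to be uniformity. The KMT constants must be uniform over slopes $p_\ast$ near $p$ and over all $s$, including $s$ near $0$ or $n$, where the bridge has tiny variance. The KMT formulation handles this automatically, because the error is additive and the Brownian term is exactly symmetric. The only other point to check is that Step 1's monotonicity in $g$ is available in the appendix, or follows from the same coupling construction as Lemma \ref{Lem.MonCoupBer}.
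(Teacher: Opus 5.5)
Your argument is correct. The paper gives no proof of its own; it only cites \cite[Lemma 3.2]{D24b}. Your route is the standard proof of estimates of this type: use monotone coupling to discard $g$ and lower the endpoints to $\lceil M_1 n^{1/2}\rceil$ and $\lceil pn+M_2 n^{1/2}\rceil$, then apply a KMT coupling with a Brownian bridge and use $\mathbb{P}(B^{\sigma}_{s/n}\geq 0)\geq 1/2$.

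The uniformity you flag is already covered by the paper's own Proposition \ref{Prop.KMT} (from \cite{DW19}). Apply it with the fixed parameter $p$, $A=M+1$ and $\epsilon=1/6$. This works because $|(y'-x')-pn|\leq Mn^{1/2}+1$, and the variance of the bridge plays no role since only its symmetry is used. It even yields the bound with $-n^{1/4}$ in place of $-3n^{1/4}$.
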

\begin{proof} This is \cite[Lemma 3.2]{D24b}.
\end{proof}

\begin{lemma}\label{lem.nobigmax} Adopt the notation and assumptions of Theorem \ref{Thm.Tightness} and fix any $b\in (0,\beta)$. For any $\varepsilon \in (0,1)$, there exist $R_1 > 0$ and $N_1\in \mathbb{N}$, depending on $b$ and $\varepsilon$, such that for all $n\ge N_1$
\begin{equation}\label{S53E70}
\begin{split}
   &\mathbb{P}\left(\max_{s\in [0, \lfloor bd_n \rfloor ]}(L_{1}^n(s) -ps) \ge R_1 d_n^{1/2}\right)  < \varepsilon.
\end{split}
\end{equation}
\end{lemma}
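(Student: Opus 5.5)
The plan is to argue by contradiction using a stopping time and the interlacing Gibbs property, in the spirit of the no-big-max argument of \cite{D24b}. The point of the present lemma is that the window $[0,\lfloor bd_n\rfloor]$ includes the origin. Because the resampling below only uses intervals whose \emph{left} endpoint is an arbitrary time $s \geq 0$, this causes no extra difficulty.

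By Lemma \ref{Lem.GibbsConsistent}, the restriction of $\mathfrak{L}^n$ to the odd number $2k+1$ of top curves on $\llbracket 0,T_n\rrbracket$ satisfies the interlacing Gibbs property of Definition \ref{Def.IGP}. Lemma \ref{Lem.StrongGP} then upgrades this to conditioning on the full external $\sigma$-algebra.

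Fix $b<b_1<b_2<\beta$, and set $S_1=\lfloor b_1 d_n\rfloor$ and $S_2=\lfloor b_2 d_n\rfloor$. For large $n$ we have $S_2\le T_n$, since $T_n/d_n\to\beta$. By (\ref{S5E1}) applied at times $b_1$ and $b_2$ with $i=1$, choose $M>0$ such that with probability at least $1-\varepsilon/8$ each of the following holds:
\[
L_1^n(S_2)-pS_2\ge -Md_n^{1/2}, \qquad L_1^n(S_1)-pS_1\le Md_n^{1/2}.
\]

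Define the stopping time $\sigma=\min\{s\in\llbracket 0,\lfloor bd_n\rfloor\rrbracket: L_1^n(s)-ps\ge R_1d_n^{1/2}\}$, with $\sigma=\infty$ if no such $s$ exists. Let $E=\{\sigma<\infty\}\cap\{L_1^n(S_2)-pS_2\ge -Md_n^{1/2}\}$. We decompose $E$ according to the values of $\sigma=s$, $L_1^n(s)$, $L_1^n(S_2)$ and $L_2^n\llbracket s,S_2\rrbracket$. For each fixed $s$, the event $\{\sigma=s\}$ depends only on $L_1^n$ on $\llbracket 0,s\rrbracket$, so it is measurable with respect to the external data for the interval $\llbracket s,S_2\rrbracket$ with $K=\{1\}$. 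The top boundary is $f=\infty$ and the bottom is $g=L_2^n$; the case $s=0$ is allowed in Definition \ref{Def.IGP}.

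Conditionally, $L_1^n\llbracket s,S_2\rrbracket$ is distributed as $\mathbb{P}^{s,S_2,x,y,\infty,g}_{\ice,\mathrm{Geom}}$ with $x\ge ps+R_1d_n^{1/2}$ and $y\ge pS_2-Md_n^{1/2}$. We translate time by $s$ and apply Lemma \ref{S32L} on an interval of length $m=S_2-s\in[(b_2-b)d_n,\,b_2d_n]$, with $M_1$ and $M_2$ of order $R_1 (d_n/m)^{1/2}$ and $-M(d_n/m)^{1/2}$. Removing the affine shift $ps$ reduces to the setting of that lemma. Its hypothesis $|M_1-M_2|\le M'$ holds for a constant $M'$ depending on $R_1$, $M$, $b$ and $b_2$, and $N_2$ depends only on $p$ and $M'$.

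Evaluating at the time $S_1-s$, which lies a macroscopic fraction $\theta\in[(b_2-b_1)/b_2,\,1)$ away from the right endpoint, the lemma gives, with conditional probability at least $1/3$,
\[
L_1^n(S_1)-pS_1\ge \theta R_1 d_n^{1/2}-(1-\theta)Md_n^{1/2}-3m^{1/4}.
\]

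Choosing $R_1$ so that $\theta_{\min}R_1>3M$, the right-hand side exceeds $Md_n^{1/2}$ for large $n$. Hence
\[
\mathbb{P}\big(L_1^n(S_1)-pS_1>Md_n^{1/2}\big)\ge \tfrac13\,\mathbb{P}(E),
\]
so $\mathbb{P}(E)\le 3\varepsilon/8$, and therefore $\mathbb{P}(\sigma<\infty)\le 3\varepsilon/8+\varepsilon/8<\varepsilon$.

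The main obstacle is making the conditioning on the random left endpoint $\sigma$ rigorous. One has to sum over deterministic $s$ and over the boundary values, and verify that the uniformity in Lemma \ref{S32L} (it holds for arbitrary $g$ and for all $M_1,M_2$ with bounded difference) survives the random interval length. I would handle this exactly as in \cite[Lemma 3.3]{D24b}, by partitioning into the countably many events $\{\sigma=s,\ L_1^n(s)=x,\ L_1^n(S_2)=y,\ L_2^n\llbracket s,S_2\rrbracket=g\}$, each of which is in the external $\sigma$-algebra for $\llbracket s,S_2\rrbracket$.
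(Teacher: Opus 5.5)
Your proposal is correct and is essentially the paper's proof. The paper also decomposes according to the first time $m \le \lfloor bd_n\rfloor$ at which $L_1^n(m)-pm \ge R_1 d_n^{1/2}$, and resamples $L_1^n$ on $\llbracket m, C_n\rrbracket$ via the interlacing Gibbs property (Lemmas \ref{Lem.GibbsConsistent} and \ref{Lem.StrongGP}). It then applies Lemma \ref{S32L} at an intermediate time $A_n=\lfloor a_1 d_n\rfloor$ to contradict one-point tightness there, with your $S_1,S_2$ playing the roles of $A_n,C_n$. The only cosmetic differences are that the paper bounds $|L_1^n(C_n)-pC_n|$ from both sides rather than only from below, and it explicitly tracks the $O(1)$ rounding from translating by $\lfloor pm\rfloor$ and the floors entering your $\theta_{\min}$, which you absorb implicitly.
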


\begin{proof} Let $a_1,c_1 \in (b,\beta)$ with $a_1<c_1$. Set $A_n = \lfloor a_1 d_n \rfloor$, $C_n = \lfloor c_1 d_n \rfloor$, and define the events
$$E(R) = \{|L_1^n(C_n) - p C_n| \geq Rd_n^{1/2}\}, \hspace{2mm} F(R) = \{L_1^n(A_n) - p A_n \geq R d_n^{1/2}\}.$$
From (\ref{S5E1}), we can find $R > 0$ large enough so that 
\begin{equation}\label{S51E3}
\mathbb{P}(E(R)) < \varepsilon/ 4 \mbox{ and } \mathbb{P}(F(R)) < \varepsilon/4.
\end{equation}

Define for $m \in \llbracket 0, \lfloor b d_n\rfloor  \rrbracket$ the variables
$$\tilde{n}_m = C_n - m, \hspace{2mm} \tilde{s}_m = A_n - m.$$
Notice that we can find $\tilde{N}_1 \in \mathbb{N}$, depending on $a_1, c_1, b$, such that for $n \geq \tilde{N}_1$, and $m \in \llbracket 0, \lfloor b d_n\rfloor  \rrbracket$,
\begin{equation}\label{Eq.S33Const}
c_1 \geq \frac{\tilde{n}_m}{d_n} \geq \frac{c_1 - b}{2}, \hspace{2mm} \frac{\tilde{n}_m - \tilde{s}_m}{\tilde{n}_m} \geq \frac{c_1 - a_1}{2 c_1} \mbox{, and } \frac{\tilde{s}_m}{\tilde{n}_m} \leq 1.
\end{equation}
Let $R_1 > 0$ be sufficiently large so that $\frac{c_1 - a_1}{2 c_1} \cdot R_1  > 2R$, and observe from (\ref{Eq.S33Const}) that, by possibly enlarging $\tilde{N}_1$, we have for $n \geq \tilde{N}_1$ and $m \in \llbracket 0, \lfloor b d_n\rfloor  \rrbracket$ that
\begin{equation}\label{Eq.UpperBoundS33}
\frac{\tilde{n}_m - \tilde{s}_m}{\tilde{n}_m} \cdot R_1 d_n^{1/2} - \frac{\tilde{s}_m}{\tilde{n}_m} \cdot  Rd_n^{1/2} - 3{\tilde{n}_m}^{1/4} - 1 \geq R d_n^{1/2}.
\end{equation} 
We finally let $N_1 \geq \tilde{N}_1$ be sufficiently large so that for $n \geq N_1$, we have $T_n \geq C_n$ and $d_n(c_1-b)/2 \geq N_2$, where $N_2$ is as in Lemma \ref{S32L} for $p$ as in the present setup and $M = 2(R_1 + R) \cdot (c_1 - b)^{-1/2}$. This specifies our choices of $R_1, N_1$ for the remainder of the proof and we proceed to establish (\ref{S53E70}).\\

Define the event $G=\sqcup_{m=0}^{\lfloor bd_n\rfloor} G_m$, where
\begin{align}
G_m= \left\{L_1^n(m)-pm \geq R_1 d_n^{1/2}, \ L_1^n(u)-pu <  R_1 d_n^{1/2}, u \in \llbracket 0,m-1\rrbracket\right\}.
\end{align}
To conclude (\ref{S53E70}), it suffices to show that
\begin{align}\label{goal}
\mathbb{P}\left(G\right) < \varepsilon.
\end{align}

As $\mathfrak{L}^n$ satisfies the interacting pair Gibbs property, by Lemma \ref{Lem.GibbsConsistent} it also satisfies the interlacing Gibbs property. Consequently, by Lemma \ref{Lem.StrongGP} we have
\begin{equation}\label{S51E4}
\begin{split}
&\mathbb{P}(G_m \cap F(R) \cap E(R)^c )  = \mathbb{E} \left[{\bf 1}_{G_m} \cdot {\bf 1}_{E(R)^c}  \cdot \mathbb{P}_{\ice, \operatorname{Geom}}^{m, C_n, x, y, \infty, g} \left( Q_1(A_n) - pA_n \geq R d_n^{1/2} \right) \right],
\end{split}
\end{equation}
where $x = L^n_1(m)$, $y = L^n_1(C_n)$, $g = L^n_2\llbracket m, C_n \rrbracket$. Setting $\tilde{x} = x - \lfloor pm \rfloor$, $\tilde{y} = y - \lfloor p m \rfloor$, $\tilde{g}(s) = g(s+m) - \lfloor p m \rfloor$ for $s \in \llbracket 0, C_n - m \rrbracket$, we get by translation
\begin{equation*}
\begin{split}
 \mathbb{P}_{\ice, \operatorname{Geom}}^{m, C_n, x, y, \infty, g} \left( Q_1(A_n) - pA_n \geq R d_n^{1/2} \right)  =  \mathbb{P}_{\ice, \operatorname{Geom}}^{0, \tilde{n}_m, \tilde{x}, \tilde{y}, \infty, \tilde{g}} \left( Q_1(\tilde{s}_m) - p\tilde{s}_m + \lfloor p m \rfloor - pm  \geq R d_n^{1/2} \right).
 \end{split}
\end{equation*}

From Lemma \ref{S32L} applied to $p$ as in the present setup, $M = 2(R_1 + R) \cdot (c_1 - b)^{-1/2}$, $M_1 = R_1 (d_n/\tilde{n}_m)^{1/2}$, $M_2 = - R(d_n/\tilde{n}_m)^{1/2}$, we obtain a.s. on $E(R)^c \cap G_m$:
\begin{equation*}
\mathbb{P}_{\ice, \operatorname{Geom}}^{0, \tilde{n}_m, \tilde{x}, \tilde{y}, \infty, \tilde{g}} \left( Q_1(\tilde{s}_m) - p\tilde{s}_m \geq \frac{\tilde{n}_m - \tilde{s}_m}{\tilde{n}_m} \cdot R_1 d_n^{1/2} - \frac{\tilde{s}_m}{\tilde{n}_m} \cdot R d_n^{1/2} - 3\tilde{n}_m^{1/4} \right) \geq \frac{1}{3}.
\end{equation*}
We mention that $|M_1 - M_2| = (R_1 + R) (d_n/\tilde{n}_m)^{1/2} \leq M$ in view of (\ref{Eq.S33Const}), and that the definitions of $E(R)^c$ and $G_m$ ensure $\tilde{x} \geq M_1 \tilde{n}_m^{1/2}$ and $\tilde{y} \geq p \tilde{n}_m + M_2 \tilde{n}_m^{1/2}$, so that Lemma \ref{S32L} is applicable.

Combining the last two displayed equations with (\ref{Eq.UpperBoundS33}) shows for $n \geq N_1$
$$ {\bf 1}_{G_m} \cdot {\bf 1}_{E(R)^c} \cdot \mathbb{P}_{\ice, \operatorname{Geom}}^{m, C_n, x, y, \infty, g} \left( Q_1(A_n) - pA_n \geq R d_n^{1/2} \right)  \geq {\bf 1}_{E(R)^c} \cdot {\bf 1}_{G_m} \cdot (1/3).$$
Taking expectations on both sides of the last inequality, using (\ref{S51E4}), and summing over $m \in \llbracket 0, \lfloor b d_n\rfloor  \rrbracket$, we obtain
\begin{equation}\label{S51E6}
\mathbb{P}(G \cap F(R) \cap E(R)^c ) \geq (1/3) \cdot \mathbb{P}(E(R)^c \cap G).
\end{equation}
Combining (\ref{S51E3}) with (\ref{S51E6}), we conclude
$$
\mathbb{P}(G) \leq \mathbb{P}(E(R)) + \mathbb{P}(G \cap E(R)^c) < \varepsilon/4 + 3\mathbb{P}( G\cap F(R) \cap E(R)^c) < \varepsilon,$$
which implies (\ref{goal}). 
\end{proof}

%
%
\subsection{Proof of Theorem \ref{Thm.Tightness}}\label{Section3.4} For clarity, we split the proof into four steps. In Step 1, we explain how $\mathfrak{L}^n$ satisfies the conditions of \cite[Theorem 5.1]{D24b} and deduce some consequences of this statement. In Step 2, we show that at a fixed positive time the curves of $\mathcal{L}^n$ are likely to separate from each other and be bounded with high probability. In Step 3, we prove the tightness part of Theorem \ref{Thm.Tightness}, and in Step 4, we prove that any subsequential limit $\mathcal{L}^{\infty}$ of $\mathcal{L}^{n}$ satisfies the pinned half-space Brownian Gibbs property on $\Lambda = [0,\beta)$. \\

\noindent\textbf{Step 1. Verifying \cite[Theorem 5.1]{D24b}.}  We define the $\llbracket 1, 2k+1 \rrbracket$-indexed geometric line ensemble $\hat{\mathfrak{L}}^n = \{\hat{L}_i^{n}\}_{i = 1}^{2k+1}$ on $\mathbb{Z}$ by setting for $i \in \llbracket 1, 2k+1 \rrbracket$
\begin{equation}\label{Eq.ExtensionFillLE}
\hat{L}_i^n(s) = L_i^n(s) \mbox{ for } s \in \mathbb{Z}_{\geq 0} \mbox{, and } \hat{L}_i^n(s) = L_i^n(0) \mbox{ for } s \in \mathbb{Z}_{< 0}.
\end{equation}
We seek to show that $\hat{\mathfrak{L}}^n$ satisfies the conditions of \cite[Theorem 5.1]{D24b} with $p$ and $\beta$ as in the present setup, $\alpha = 0$, $K = 2k$, $K_N = 2k + 1$, $\hat{A}_N = 0$, $\hat{B}_N = T_n$. Indeed, the first two conditions follow directly from our definitions and the assumptions in Theorem \ref{Thm.Tightness}. In addition, we know by assumption that the restriction $\{L_i^n(s):i\in \llbracket 2k + 2 \rrbracket \mbox{ and } s\in\llbracket0,T_n\rrbracket\}$ satisfies the interacting pair Gibbs property from Definition \ref{Def.IPGP}, which by Lemma \ref{Lem.GibbsConsistent} implies that the restriction $\{L_i^n(s):i\in \llbracket 2k + 1 \rrbracket \mbox{ and } s\in\llbracket0,T_n\rrbracket\}$ satisfies the interlacing Gibbs property, verifying the third condition in \cite[Theorem 5.1]{D24b}.

From \cite[Theorem 5.1]{D24b}, we conclude that: 
\begin{enumerate}
\item[(A)] $\mathcal{L}^n \vert_{\llbracket 1, 2k \rrbracket \times (0, \beta)}$ is a tight sequence in $C(\llbracket 1, 2k \rrbracket \times (0, \beta))$; 
\item[(B)] any subsequential limit of $\mathcal{L}^n \vert_{\llbracket 1, 2k \rrbracket \times (0, \beta)}$ satisfies the partial Brownian Gibbs property from Definition \ref{Def.PBGP} as a $\llbracket 1, 2k \rrbracket$-indexed line ensemble on $(0,\beta)$.
\end{enumerate}

\noindent\textbf{Step 2. Ensuring separation.} Fix $\varepsilon > 0$, $k_0 \in \mathbb{N}$ with $k_0 \leq k$, $b \in (0,\beta)$, and set $B_n= \lfloor bd_n \rfloor$. In this step, we show that there exist $W_1\in \mathbb{N}$, $M^{\mathrm{bot}}, M^{\mathrm{side}}, A^{\mathrm{sep}}, A^{\mathrm{gap}} > 0$, and $\Delta^{\mathrm{gap}}\in (0,1/2)$, depending on $\varepsilon$, $k_0$, and $\varepsilon$, such that for $n\ge W_1$
\begin{align} \label{S53E11}
    \mathbb{P}\left( E_1\cap E_2 \cap E_3 \cap E_4\right) > 1-3\varepsilon/8.
\end{align}
where
\begin{align*}
    & E_1=\bigcap_{m=1}^{2k_0-1}\{L_m^n(B_n)-L_{m+1}^n(B_n) \ge A^{\mathrm{sep}}B_n^{1/2}\},\\
    & E_2= \big\{L_{2k_0}^n(B_n)-pB_n  \ge L_{2k_0+1}^n(s)-ps+ A^{\mathrm{gap}}B_n^{1/2}, \mbox{ for all }s\in \llbracket 0, B_n \rrbracket \mbox{ with } s \geq B_n(1 - \Delta^{\mathrm{gap}})\big\},\\
    & E_3=\big\{L_{2k_0+1}^n(s) -ps \le M^{\mathrm{top}}B_n^{1/2} \mbox{ for all } s\in \llbracket 0, B_n \rrbracket \big\}, \\
    &E_4 = \cap_{m=1}^{2k_0}  E_m^{\mathrm{side}}, \mbox{ where }E_m^{\mathrm{side}} = \left\{\left|L_{m}^n(B_n) -pbd_n\right| \le M^{\mathrm{side}}B_n^{1/2} \right\}.
\end{align*}

From Step 1, we know that $\hat{\mathfrak{L}}^n$ satisfies the conditions of \cite[Theorem 5.1]{D24b}, and hence \cite[Lemma 5.3]{D24b}. From \cite[Lemma 5.3]{D24b}, applied to $k = 2k_0$, $\varepsilon$ there replaced with $\varepsilon/8$ here, some fixed $a \in (0,b)$, $b$ there set to some fixed $b_1 \in (b, \beta)$ here, we can find $W_{1,1} \in \mathbb{N}$, $\delta^{\mathrm{sep}}, \Delta^{\mathrm{sep}}> 0$, depending on $a,b_1, k_0$ and $\varepsilon$, such that for all $n \geq W_{1,1}$, we have
\begin{equation}\label{S53E5}
\begin{split}
    &\mathbb{P}\big( E^{\mathrm{sep}}\big) > 1-\varepsilon/8, \mbox{ where }
    E_m^{\mathrm{sep}}= \Big\{\hat{L}_m^n(B_n)-pB_n \ge \hat{L}_{m+1}^n(s)-ps+\delta^{\mathrm{sep}}d_n^{1/2}, \\ &  \mbox{ for all }s\in \mathbb{Z} \cap [ B_n-\Delta^{\mathrm{sep}} d_n, B_n + \Delta^{\mathrm{sep}} d_n]\Big\}.
\end{split}
\end{equation}
Since $B_n^{1/2} \leq b^{1/2} d_n^{1/2}$, we see that if we set $A^{\mathrm{sep}} = A^{\mathrm{gap}} = b^{-1/2}\delta^{\mathrm{sep}}$ and $\Delta^{\mathrm{gap}} = \min(1/4, b^{-1}\Delta^{\mathrm{sep}})$, we have from (\ref{S53E5}) and (\ref{Eq.ExtensionFillLE}), for all $n \geq W_{1,1}$ that
\begin{equation}\label{Eq.ProbBound1}
\mathbb{P}\left( E_1\cap E_2 \right) > 1 - \varepsilon/8.
\end{equation}

Next, we have from Lemma \ref{lem.nobigmax} and the fact that $L_1^n(s) \geq L_{2k_0+1}^n(s)$ for $s \in \mathbb{Z}_{\geq 0}$, that we can find $M^{\mathrm{top}} > 0$ and $W_{1,2} \in \mathbb{N}$, such that for $n \geq W_{1,2}$
\begin{equation}\label{Eq.ProbBound2}
\mathbb{P}\left( E_3 \right) > 1 - \varepsilon/8.
\end{equation}
Lastly, from (\ref{S5E1}), we conclude that we can find $M^{\mathrm{side}} > 0$, and $W_{1,3} \in \mathbb{N}$, such that for $n \geq W_{1,3}$
\begin{equation}\label{Eq.ProbBound3}
\mathbb{P}\left( E_4 \right) > 1 - \varepsilon/8.
\end{equation}

Combining (\ref{Eq.ProbBound1}), (\ref{Eq.ProbBound2}), and (\ref{Eq.ProbBound3}), we conclude (\ref{S53E11}) with $W_1 = \max(W_{1,1}, W_{1,2}, W_{1,3})$.\\

\noindent\textbf{Step 3. Tightness.} In this step, we prove the tightness part of Theorem \ref{Thm.Tightness}. In view of the tightness criterion in \cite[Lemma 2.4]{DEA21} and (\ref{S5E1}), it suffices to show that for each $m \in \llbracket 1, 2k \rrbracket$, $b \in (0, \beta)$, $B_n = \lfloor b d_n \rfloor$, and $\eta>0$, we have
\begin{equation}\label{S53E1}
\begin{split}
 &\lim_{\delta \rightarrow 0} \limsup_{n \rightarrow \infty} \mathbb{P} \left( \hat{w}(L_m^n,\delta) > \eta \right) = 0, \mbox{ where } \\
 &\hat{w}(L_m^n, \delta) = \sigma^{-1} B_n^{-1/2} \sup_{\substack{x, y \in [0,1] \\ |x - y| \leq \delta}} \left|L_m^n(xB_n) - L_m^n(yB_n) \right|.
\end{split}
\end{equation}
In what follows, we fix $m \in \llbracket 1, 2k \rrbracket$, $b \in (0, \beta)$, $\eta > 0$, $\varepsilon \in (0,1)$, and $k_0 \in \llbracket 1, k \rrbracket$ such that $2k_0 \geq m$. We also adopt the same notation as in Step 1 above and assume $n$ is large enough so that $T_n \geq B_n$.

Let us set $\vec{y}=(L_i^n(B_n))_{i=1}^{2k_0}$ and $g=L_{2k_0+1}^n\llbracket 0, B_n \rrbracket$. As $\{L_i^n(s):i\in \llbracket 2k_0 + 1 \rrbracket \mbox{ and } s\in\llbracket0,T_n\rrbracket\}$ satisfies the interacting pair Gibbs property from Definition \ref{Def.IPGP}, we obtain from Lemma \ref{Lem.StrongIPGP} and the tower property of conditional expectation that
\begin{equation}\label{S53E9}
\begin{split}
&\mathbb{P} \left( E_1\cap E_2\cap E_3 \cap E_4 \cap \{\hat{w}(L_m^n, \delta) > \eta \} \right)  = \mathbb{E} \left[ {\bf 1}_{E_1\cap E_2\cap E_3 \cap E_4}   \cdot \mathbb{P}_{\ice; q,c }^{B_n, \vec{y},  g} \left( \hat{w}(Q_m, \delta) > \eta  \right)   \right].
\end{split}
\end{equation}
We observe that on the event $E_1\cap E_2\cap E_3 \cap E_4$, $\vec{y}, g$ almost surely satisfy the conditions of Lemma \ref{S42L} with $n$ replaced with $B_n$ and parameters as in Step 1. From the lemma, we conclude that there exist $W_2 \in \mathbb{N}$ and $\delta > 0$, such that for all $n \geq W_2$
\begin{equation*}
\begin{split}
& {\bf 1}_{ E_1\cap E_2\cap E_3 \cap E_4} \cdot  \cdot \mathbb{P}_{\ice; q,c }^{B_n, \vec{y},  g} \left( \hat{w}(\mathcal{Q}_m, \delta) > \eta  \right) \leq {\bf 1}_{ E_1\cap E_2 \cap E_3 \cap E_4} \cdot \varepsilon/8.
\end{split}
\end{equation*}
Taking expectations on both sides of the last equation and using (\ref{S53E9}) gives for $n \geq W_2$
$$\mathbb{P} \left( E_1\cap E_2\cap E_3 \cap E_4 \cap \{\hat{w}(L_m^n, \delta) > \eta \} \right)\leq \mathbb{P} \left( E_1\cap E_2\cap E_3 \cap E_4  \right) \cdot (\varepsilon/8) .$$
Combining this with (\ref{S53E11}) gives for $n \geq \max(W_1, W_2)$
$$\mathbb{P}(\hat{w}(L_m^n, \delta) > \eta) \leq \mathbb{P} \left(E_1\cap E_2\cap E_3 \cap E_4 \cap \{\hat{w}(L_m^n, \delta) > \eta \} \right) + 3\varepsilon/8 \leq \varepsilon/2, $$
which proves (\ref{S53E1}) and hence the tightness part in Theorem \ref{Thm.Tightness}.\\

\noindent\textbf{Step 4. Pinned half-space Brownian Gibbs property.} In this final step, we show that any subsequential limit $\mathcal{L}^{\infty}$ of $\mathcal{L}^{n}$ satisfies the pinned half-space Brownian Gibbs property on $\Lambda = [0,\beta)$. The proof is quite standard and roughly follows the arguments in \cite[Theorem 2.26(ii)]{DEA21}, \cite[Theorem 5.1]{D24b}, and \cite[Proposition 5.1]{DSY26}, so we will be brief. Let $\mathcal{L}^{\infty}$ be any subsequential limit of $\mathcal{L}^n$, and let $\{N_r\}_{r\geq 1}$ be an increasing sequence such that $\mathcal{L}^{N_r} \Rightarrow \mathcal{L}^{\infty}$. 

Recalling Definition \ref{Def.PinnedBGP}, we see that we need to establish the following statements. Firstly, we seek to show that  
\begin{equation}\label{Eq.NonIntRed}
\mathbb{P}\left(\mathcal{L}^{\infty}_i(t) >\mathcal{L}^{\infty}_{i+1}(t) \mbox{ for all }  t\in (0,\beta), i \in \llbracket 1, 2k-1 \rrbracket \right) = 1.
\end{equation}
Secondly, for each $b \in (0, \beta)$,  $s \in \llbracket 1,k-1\rrbracket$, bounded Borel-measurable $G: C(\llbracket 1, 2s \rrbracket \times [0,b]) \rightarrow \mathbb{R}$ and $V \in \mathcal{F}_{\operatorname{ext}}(\llbracket 1, 2s \rrbracket \times [0,b))$, we have
\begin{equation}\label{Eq.PBGPRed}
\mathbb{E}\left[ G\left( \mathcal{L}^{\infty} \vert_{\llbracket 1, 2s \rrbracket \times [0,b]} \right) \cdot {\bf 1}_V \right] = \mathbb{E}\left[ \mathbb{E}_{\mathrm{pin}}^{b,\vec{y},g} \left[ G(\mathcal{Q}) \right]\cdot {\bf 1}_V \right],
\end{equation}
where $\vec{y} = (\mathcal{L}^{\infty}_{1} (b), \dots, \mathcal{L}^{\infty}_{2s} (b))$, $g = \mathcal{L}^{\infty}_{2s+1}[0,b]$, and $\mathcal{Q}$ has distribution $\mathbb{P}_{\mathrm{pin}}^{b,\vec{y},g} $ as in Definition \ref{Def.PinnedBM}.\\ 

Equation (\ref{Eq.NonIntRed}) follows from the fact that the restriction of $\mathcal{L}^{\infty}$ to $(0,\beta)$ satisfies the partial Brownian Gibbs property, see (B) in Step 1.

To prove \eqref{Eq.PBGPRed}, we fix $m \in \mathbb{N}$, $n_1, \dots, n_m \in \llbracket 1 , 2k \rrbracket$, $t_1, \dots, t_m \in [0, \beta)$ and bounded continuous $h_1, \dots, h_m : \mathbb{R} \rightarrow \mathbb{R}$. Define $R = \{i \in \llbracket 1, m \rrbracket: n_i \in \llbracket 1,2s\rrbracket, t_i \in [0,b]\}$. We claim that 
\begin{equation}\label{S54E7}
\mathbb{E}\left[ \prod_{i = 1}^m h_i(\mathcal{L}^{\infty}_{n_i}(t_i)) \right] = \mathbb{E}\left[ \prod_{i \not \in R} h_i(\mathcal{L}^{\infty}_{n_i}(t_i))  \cdot \mathbb{E}_{\operatorname{pin}}^{b,\vec{y},g} \left[ \prod_{i  \in R} h_i(\mathcal{Q}_{n_i}(t_i))   \right] \right].
\end{equation}
 The fact that (\ref{S54E7}) implies (\ref{Eq.PBGPRed}) follows by repeating verbatim the monotone class argument from Step 2 in the proof of \cite[Theorem 5.1]{D24b}. Hence, we only need to show (\ref{S54E7}).

By Skorohod's representation theorem \cite[Theorem 6.7]{Billing}, we may assume that $\mathcal{L}^{N_r}$, $\mathcal{L}^{\infty}$, are defined on the same space and the convergence is uniform on compact sets $\mathbb{P}$-a.s. as $r\to\infty$. From the a.s. convergence of $\mathcal{L}^{N_r}$ to $\mathcal{L}^{\infty}$ and the fact that $h_i$ are bounded and continuous, we get
\begin{equation}\label{S54E8}
\lim_{r \rightarrow \infty} h_i(\mathcal{L}^{N_r}_{n_i}(t_i)) = h_i(\mathcal{L}^{\infty}_{n_i}(t_i))
\end{equation}
$\mathbb{P}$-a.s. for each $i \in \llbracket 1, m \rrbracket$, and so by the bounded convergence theorem
\begin{equation}\label{S54E9}
\lim_{r \rightarrow \infty} \mathbb{E}\left[ \prod_{i = 1}^m h_i(\mathcal{L}^{N_r}_{n_i}(t_i)) \right] = \mathbb{E}\left[ \prod_{i = 1}^m h_i(\mathcal{L}^{\infty}_{n_i}(t_i)) \right].
\end{equation}
In addition, if we set $T_{N_r} = \lceil b \cdot d_{N_r} \rceil$, $\vec{Y}^{N_r} = (L_{1}^{N_r}(T_{N_r}), \dots, L_{2s}^{N_r}(T_{N_r}))$, $G_{N_r}(t) = L_{2s+1}^{N_r}(t)$ for $t \in [0,T_{N_r}]$, then from the a.s. convergence of $\mathcal{L}^{N_r}$ to $\mathcal{L}^{\infty}$ and (\ref{Eq.NonIntRed}), we know that $\mathbb{P}$-a.s. the sequences $T_{N_r}, d_{N_r}, \vec{Y}^{N_r}, G_{N_r}$ satisfy the conditions of Lemma \ref{Lem.ConvOfInterPairs} with $n$ replaced by $r$, $d_n$ replaced by $d_{N_r}$, $T_n$ replaced by $T_{N_r}$, and $\vec{Y}^{n}$ replaced by $\vec{Y}^{N_r}$. From Lemma \ref{Lem.ConvOfInterPairs}, we conclude 
\begin{equation}\label{S54E10}
\lim_{r \rightarrow \infty} \mathbb{E}^{T_{N_r},\vec{Y}^{N_r},G_{N_r}}_{\mathrm{Inter};q,c} \left[ \prod_{i  \in R} h_i(\hat{\mathcal{Q}}_{n_i}(t_i))   \right] = \mathbb{E}_{\operatorname{pin}}^{b,\vec{y},g} \left[ \prod_{i  \in R} h_i(\mathcal{Q}_{n_i}(t_i))   \right],
\end{equation}
where on the left side of (\ref{S54E10}) we have set for $i \in \llbracket 1,2s \rrbracket$
$$ \hat{\mathcal{Q}}_{i}(t) = \sigma^{-1} d_{N_r}^{-1/2} \cdot (Q_{i}(td_{N_r}) - td_{N_r} p),$$
with $\mathfrak{Q} = \{Q_{i}\}_{i = 1}^{2s }$ having law $\mathbb{P}^{T_{N_r},\vec{Y}^{N_r},G_{N_r}}_{\mathrm{Inter};q,c}$. Combining (\ref{S54E8}) with (\ref{S54E10}) and the bounded convergence theorem gives
\begin{equation}\label{S54E11}
\begin{split}
&\lim_{r \rightarrow \infty} \mathbb{E}\left[ \prod_{i \not \in R} h_i(\mathcal{L}^{N_r}_{n_i}(t_i))  \cdot \mathbb{E}^{T_{N_r},\vec{Y}^{N_r},G_{N_r}}_{\mathrm{Inter};q,c} \left[ \prod_{i \in R} h_i(\hat{\mathcal{Q}}_{n_i}(t_i))   \right] \right]  \\
&= \mathbb{E}\left[ \prod_{i \not \in R} h_i(\mathcal{L}^{\infty}_{n_i}(t_i))  \cdot \mathbb{E}_{\operatorname{pin}}^{b,\vec{y},g} \left[ \prod_{i  \in R} h_i(\mathcal{Q}_{n_i}(t_i))   \right] \right].
\end{split}
\end{equation}
By the interacting pair Gibbs property, see Lemma \ref{Lem.StrongGP}, the terms on the first line in (\ref{S54E11}) agree with those on the left in (\ref{S54E9}), and so the limits agree, which is precisely (\ref{Eq.PBGPRed}).

%
%
\section{Pfaffian Schur processes}\label{Section4} In this section, we introduce a family of {\em Pfaffian Schur processes} and discuss some of their properties. These models are special cases of the processes introduced in \cite{BR05}, which in turn are Pfaffian analogs of the determinantal Schur processes introduced in \cite{OR03}. Our exposition largely follows \cite[Section 5.3]{DY25}, and we refer the reader to \cite{BR05, BBNV18, BBCS18} for further background on Pfaffian Schur processes. Throughout this section, we freely use the definitions and notation concerning Pfaffian point processes from \cite[Section 5.2]{DY25}. We also refer the reader to \cite[Appendix B]{OQR17} and \cite{R00} for additional background on Pfaffian point processes.

%
%
\subsection{Definitions}\label{Section4.1} A {\em partition} is a non-increasing sequence of non-negative integers $\lambda=(\lambda_1\geq\lambda_2\geq \dots)$ with finitely many non-zero elements. For any partition $\lambda$, we denote its {\em weight} by $|\lambda|=\sum_{i=1}^{\infty}\lambda_i$. There is a single partition of weight $0$, which we denote by $\emptyset$.  Given two partitions $\lambda$ and $\mu$, we write $\mu\preceq\lambda$ or $\lambda \succeq \mu$ and say that $\mu$ and $\lambda$ {\em interlace} if $\lambda_1\geq\mu_1\geq\lambda_2\geq\mu_2\geq\dots$.

Given finitely many variables $x_1, \dots, x_n$, we define the {\em skew Schur polynomials} via
\begin{equation}\label{Eq.SkewSchur}
s_{\lambda/ \mu}(x_1, \dots, x_n) = \sum_{\mu = \lambda^{0} \preceq  \lambda^{1} \preceq \cdots \preceq \lambda^{n} = \lambda} \prod_{i = 1}^n x_i^{|\lambda^{i}| - |\lambda^{i-1}|}.
\end{equation}
When $\mu = \emptyset$ we drop it from the notation and write $s_{\lambda}$, which is then the Schur polynomial indexed by $\lambda$. We refer the interested reader to \cite[Section 2]{BG16} for a friendly introduction to Schur symmetric polynomials and to \cite[Chapter I]{Mac} for a comprehensive textbook treatment. We also define the {\em boundary monomial} in a single variable $c$ by
\begin{equation}\label{Eq.DefTau}
\tau_\lambda(c) = c^{\sum_{j=1}^{\infty}(-1)^{j-1}\lambda_j}=c^{\lambda_1-\lambda_2+\lambda_3-\lambda_4+\dots}.
\end{equation}

With the above notation in place, we now define our main object of interest.
\begin{definition}\label{Def.SchurProcess} Fix $M, N \in \mathbb{N}$, $q \in (0, 1)$, and $c \in [0, q^{-1})$. We define the {\em Pfaffian Schur process} to be the probability distribution on sequences of partitions $\lambda^0,\dots,\lambda^M$, given by
\begin{equation}\label{Eq.SchurProcess}
\begin{split}
&\mathbb{P}\left(\lambda^{0},\dots,\lambda^{M}\right)=\frac{1}{Z}\cdot \tau_{\lambda^{0}}(c) \cdot s_{\lambda^{1}/\lambda^{0}}(q)\cdots s_{\lambda^M/\lambda^{M-1}}(q) \cdot s_{\lambda^{M}}(\underbrace{q,q, \dots, q}_{\text{$N$ times}}),
\end{split}
\end{equation}
where the normalization constant was computed explicitly in \cite[Proposition 3.2]{BR05}:
$$ Z =\frac{1}{(1-cq)^N(1-q^2)^{N(N-1)/2 + NM}}.$$
\end{definition}
\begin{remark}\label{Rem.BRSpecial} The measures in Definition \ref{Def.SchurProcess} are special cases of the Pfaffian Schur processes in \cite[Section 3]{BR05}, and correspond to setting $\rho_0^+ = c$, $\rho_i^+ = q$, $\rho_{i}^- = 0$ for $i \in \llbracket 1, M-1 \rrbracket$, and $\rho_M^- = \underbrace{(q,q, \dots, q)}_{\text{$N$ times}}$. 
\end{remark}

The following result explains the connection between the Schur processes and the half-space LPP model from Section \ref{Section1.1}. It is a special case of \cite[Theorem 2.7]{DY25b}. 
\begin{proposition}\label{Prop.LPPandSchur} Let $(\lambda^0, \dots,\lambda^{M})$ be distributed as in (\ref{Eq.SchurProcess}). Then, $(\lambda^0, \dots, \lambda^M)$ has the same distribution as $(\lambda(N,N), \lambda(N+1,N), \dots, \lambda(N+M,N))$ from (\ref{Eq.LPTLambdas}).
\end{proposition}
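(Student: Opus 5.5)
The plan is to reduce the statement to \cite[Theorem 2.7]{DY25b} by identifying parameters. I would not redo the RSK-type bijection, but I will sketch why the identification is natural. First, I would recall that \cite[Theorem 2.7]{DY25b} treats symmetrized geometric LPP with general parameters: weights $w_{i,j}\sim\mathrm{Geom}(a_ia_j)$ off the diagonal and $w_{i,i}\sim\mathrm{Geom}(c\,a_i)$ on it. That theorem asserts that, for fixed $n$, the sequence $(\lambda(m,n))_{m}$ is distributed as a Pfaffian Schur process. This process has boundary monomial $\tau(c)$, one-variable specializations $a_m$ at the successive steps, and a final specialization $(a_1,\dots,a_n)$.

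Second, I would specialize to $a_i\equiv q$, $n=N$, and $m$ ranging over $N,N+1,\dots,N+M$. I would check that the resulting measure on $(\lambda(N,N),\dots,\lambda(N+M,N))$ is exactly (\ref{Eq.SchurProcess}). This means matching the specializations listed in Remark \ref{Rem.BRSpecial}: $\rho_0^+=c$, $\rho_i^+=q$ for the unit steps from $m$ to $m+1$, and $\rho_M^-=(q,\dots,q)$ with $N$ entries.

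Heuristically, the marginal of $\lambda(N,N)$ comes from symmetric RSK on the $N\times N$ symmetric block. There the diagonal weights contribute $c^{\lambda_1-\lambda_2+\cdots}$ and the off-diagonal weights contribute $q^{|\lambda|}$, which gives $\tau_{\lambda^0}(c)s_{\lambda^0}(q^N)$. Each additional column $m\mapsto m+1$ adds $N$ i.i.d. $\mathrm{Geom}(q^2)$ weights, which by Schur-RSK row insertion gives a transition proportional to $s_{\lambda^{j}/\lambda^{j-1}}(q)$. Telescoping the $s_{\lambda^M}(q^N)$ factor then yields (\ref{Eq.SchurProcess}).

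Third, I would confirm the normalization: $Z=(1-cq)^{-N}(1-q^2)^{-N(N-1)/2-NM}$. This is the product over the $N$ diagonal and $N(N-1)/2+NM$ independent off-diagonal geometric weights in the region $\llbracket 1,N+M\rrbracket\times\llbracket 1,N\rrbracket$ modulo symmetry, and it agrees with \cite[Proposition 3.2]{BR05}.

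The main obstacle is purely bookkeeping. Conventions in \cite{DY25b} may index the process with the roles of $m,n$ swapped, or may place the diagonal block at the end rather than the start. One must verify the orientation, namely that $\lambda^0$ corresponds to $m=N$ and that the $q$-specializations correspond to increasing $m$. I would handle this using the symmetry $G_k(m,n)=G_k(n,m)$ if needed.
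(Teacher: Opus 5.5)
Your plan is the paper's proof: a direct application of \cite[Theorem 2.7]{DY25b} with all variables $x_i=q$, and your specialization matching and normalization count agree with it. The one step you gloss over is that this theorem is stated along a down-right path. The paper therefore takes the word $R^M D^N$, obtains the joint law of $\lambda(N+i,N)$, $i\in\llbracket 0,M\rrbracket$, together with the vertical partitions $\mu^{N-j}=\lambda(N+M,N-j)$, and then sums out $\mu^1,\dots,\mu^{N-1}$ via the branching rule $\sum_{\mu^1,\dots,\mu^{N-1}}\prod_{j=0}^{N-1}s_{\mu^{j+1}/\mu^{j}}(q)=s_{\lambda^M}(q,\dots,q)$ (with $\mu^0=\emptyset$, $\mu^N=\lambda^M$). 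This marginalization, not the orientation bookkeeping you flag, is the only work beyond the citation.
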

\begin{proof} From \cite[Theorem 2.7]{DY25b}, applied to $x_i = q$ for $i \geq 1$ and the down-right path $\gamma$ with corresponding word
$$\underbrace{RR \dots R}_{\text{$M$ times}} \underbrace{DD \dots D}_{\text{$N$ times}},$$
we have for any partitions $\lambda^0, \dots, \lambda^M, \mu^{1}, \dots, \mu^{N-1}$ that 
\begin{equation}\label{Eq.DistrEqualLPP}
\begin{split}
&\mathbb{P}\left(\lambda(N+i,N) = \lambda^i \mbox{ for } i \in \llbracket 0, M \rrbracket, \lambda(N+M,N-j) = \mu^{N-j} \mbox{ for } j \in \llbracket 1, N-1 \rrbracket \right) \\
& = \frac{1}{Z}\cdot \tau_{\lambda^0}(c) \cdot \prod_{i = 1}^M s_{\lambda^{i}/\lambda^{i-1}}(q) \cdot \prod_{j = 0}^{N-1}  s_{\mu^{j+1} / \mu^{j}}(q),
\end{split}
\end{equation}
where $\mu^0 = \emptyset$ and $\mu^N = \lambda^M$. From (\ref{Eq.SkewSchur}), we have 
$$s_{\mu^N}(x_1,\dots,x_N) = \sum_{\mu^1, \dots, \mu^{N-1}}\prod_{j = 0}^{N-1}  s_{\mu^{j+1} / \mu^{j}}(x_{j+1}).$$
Summing (\ref{Eq.DistrEqualLPP}) over $\mu^1, \dots, \mu^{N-1}$ and using the last identity gives
$$\mathbb{P}\left(\lambda(N+i,N) = \lambda^i \mbox{ for } i \in \llbracket 0, M \rrbracket \right) = \frac{1}{Z}\cdot \tau_{\lambda^0}(c) \cdot \prod_{i = 1}^M s_{\lambda^{i}/\lambda^{i-1}}(q) \cdot s_{\lambda^M}(\underbrace{q,q, \dots, q}_{\text{$N$ times}}).$$
The last equation and (\ref{Eq.SchurProcess}) imply the statement of the proposition.
\end{proof}

%
%
\subsection{Pfaffian point process and Gibbsian line ensemble structures}\label{Section4.2} In this section, we explain how the Pfaffian Schur process can be interpreted as a Pfaffian point process. In addition, we explain how it can be interpreted as a geometric line ensemble that satisfies the interacting pair Gibbs property of Definition \ref{Def.IPGP}. We freely use the notation from Section \ref{Section2.2}.

For fixed $m \in \mathbb{N}$ and $0\leq M_1<\dots<M_m\leq M$, we consider the point process $\mathfrak{S}(\lambda)$ on $\llbracket 1, m \rrbracket \times \mathbb{Z} \subset \mathbb{R}^2$, defined by 
\begin{equation}\label{Eq.PointProcess}
\mathfrak{S}(\lambda)(A) =\sum_{i \geq 1} \sum_{j=1}^m{\bf 1} \{(j,\lambda^{M_j}_i-i) \in A \}.
\end{equation} 
Our first key result, a special case of \cite[Theorem 3.3]{BR05}, states that $\mathfrak{S}(\lambda)$ in (\ref{Eq.PointProcess}) is a Pfaffian point process on $\mathbb{R}^2$ (supported on $\llbracket 1, m \rrbracket \times \mathbb{Z}$). Below, we let $C_r$ be the positively oriented, zero-centered circle of radius $r > 0$, and let $\operatorname{Mat}_2(\mathbb{C})$ be the set of $2 \times 2$ matrices with complex entries.

\begin{proposition}\label{Prop.SchurPfaffianKernel}
Assume the same notation as in Definition \ref{Def.SchurProcess}, and let $\mathfrak{S}(\lambda)$ be as in (\ref{Eq.PointProcess}). Then $\mathfrak{S}(\lambda)$ is a Pfaffian point process on $\mathbb{R}^2$ with reference measure given by the counting measure on $\llbracket 1,m\rrbracket\times\mathbb{Z}$, and with correlation kernel 
$\kgeo:(\llbracket1,m\rrbracket\times\mathbb{Z})^2\rightarrow\operatorname{Mat}_2(\mathbb{C})$ given as follows. For each $u,v\in\llbracket1,m\rrbracket$ and $x,y \in \mathbb{Z}$,
\begin{equation*}
\begin{split}
\kgeo_{11}(u,x ; v,y) =& \frac{1}{(2\pi \im)^{2}}\oint_{C_{r_{1}}} dz \oint_{C_{r_{1}}} d w\frac{z-w}{(z^{2}-1)(w^{2}-1)(zw-1)} \cdot 
(1-c/z)(1-c/w) \cdot z^{-x }w^{-y }  \\
& \times (1-q/z)^{M_u+N} (1-q/w)^{M_v+N}  (1-q z)^{-N}(1-q w)^{-N}
\end{split}
\end{equation*}
where $r_{1} \in (1, q^{-1})$. In addition,
\begin{equation*}
\begin{split}
\kgeo_{12}(u,x ; v,y) = - \kgeo_{21}(v,y; u,x) =& \frac{1}{(2\pi \im)^{2}}\oint_{C_{r^z_{12}}} d  z \oint_{C_{r^w_{12}}}  dw  \frac{zw-1}{z(z-w)(z^{2}-1)} \cdot \frac{z-c}{w-c} \cdot  z^{-x } w^{y }  \\
&  \times     (1-q/z)^{M_u+N} (1-q/w)^{-M_v-N}(1-qz)^{-N}   (1-qw)^{N},
\end{split}
\end{equation*}
where $r^z_{12} \in (1, q^{-1})$, $r^w_{12} > \max(c, q)$, and $r^w_{12} < r^z_{12}$ when $u \geq v$, while $r^z_{12} < r^w_{12}$ when $u < v$. Finally,
\begin{equation*}
\begin{split}
\kgeo_{22}(u,x ;v,y) =& \frac{1}{(2\pi \im)^{2}}\oint_{C_{r_{2}}}  dz \oint_{C_{r_{2}}} dw \frac{z-w}{zw-1} \cdot \frac{1}{(z-c)(w-c)} \cdot z^{x }w^{y} \\
& 
\times  (1-q/z)^{-M_u-N}(1-q/w)^{-M_v-N} (1-qz)^N (1-qw)^N,
\end{split}
\end{equation*}
where $r_{2} > \max(c,q,1)$.
\end{proposition}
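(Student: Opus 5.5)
This will be a direct specialization of the general Pfaffian Schur process correlation kernel of Borodin and Rains \cite[Theorem 3.3]{BR05}, using the identification of parameters in Remark \ref{Rem.BRSpecial}. In \cite{BR05} the kernel of the point configuration $\{(j, \lambda^{M_j}_i - i)\}$ is a $2\times 2$ matrix of double contour integrals. Its integrands are built from the functions
\[
F(z;j) = \frac{H(\rho^+_{[0,M_j]};z)\,H(\rho^-_{[M_j,M]};z)^{-1}}{\dots}\cdots,
\]
that is, ratios of the generating functions $H(\rho;z)=\prod(1-x z)^{-1}$ of the specializations to the left and right of the time slice $M_j$. The first step is to write these generating functions out for our specializations: $\rho^+_0=c$, then $\rho_i^+=q$ for $i\in\llbracket 1,M\rrbracket$, with all $\rho_i^-=0$ except $\rho_M^-=(q,\dots,q)$ ($N$ copies). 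The boundary specialization $c$ enters through the Pfaffian factor $\tau_{\lambda^0}(c)$. In \cite{BR05} this produces the factors $(1-c/z)$ and $(z-c)^{\pm 1}$ appearing in the kernel.

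Concretely, for time $M_u$ the accumulated ``plus'' specialization consists of $M_u$ copies of $q$. In the Borodin--Rains convention the dual variable $1/z$ appears, which gives $(1-q/z)^{M_u}$. The ``minus'' specialization is $N$ copies of $q$, which gives $(1-qz)^{-N}$. The Cauchy-type symmetric factor $H(\rho^+;\rho^+)$ in the half-space setting contributes the extra $(1-q/z)^N$. Collecting these yields exactly the products $(1-q/z)^{M_u+N}(1-qz)^{-N}$ and their inverses in the $22$ and $12$ entries. The cross factors $\frac{z-w}{(z^2-1)(w^2-1)(zw-1)}$, $\frac{zw-1}{z(z-w)(z^2-1)}$ and $\frac{z-w}{zw-1}$ are the universal ones in \cite[Theorem 3.3]{BR05}. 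The monomials $z^{-x}w^{-y}$ (respectively $z^{x}w^{y}$) come from the shift $\lambda_i - i$.

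The second step is bookkeeping of conventions, which is where I expect the only real work. \cite{BR05} indexes by $\lambda_i-i$ possibly with a $1/2$ shift, and may order the $12$ entry or sign the $21$ entry differently. I will match these against the stated formula, using the antisymmetry $K_{21}(v,y;u,x)=-K_{12}(u,x;v,y)$. If needed, I will conjugate the kernel by a diagonal gauge $\mathrm{diag}(g(u,x),g(u,x)^{-1})$, which leaves Pfaffian correlations unchanged.

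The third step is to check the contour conditions, and I regard this as the main obstacle. The integrands must be analytic between the radii prescribed in \cite{BR05}, which require $|z|,|w|>1$ for the $(z^2-1)$ poles and $|z|<q^{-1}$ for $(1-qz)^{-N}$. They must also enclose $q$ from $(1-q/z)$ and $c$ from $1/(w-c)$. Ordering $r^w_{12}\lessgtr r^z_{12}$ according to $u\ge v$ comes from the $1/(z-w)$ pole and the time ordering in \cite{BR05}.

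The conditions are satisfiable because $c<q^{-1}$, so the annuli $r_1\in(1,q^{-1})$ and $r_2>\max(c,q,1)$ are non-empty. Here I should be careful when $c\ge 1$: the poles at $w=c$ and $z=\pm1$ could otherwise be crossed. It is exactly the $\max(c,q)$ lower bound on the $w$-contour in $K_{12}$ and $K_{22}$ that keeps these poles on the correct side. With these conditions in place, the Schur process is a well-defined probability measure for all $c\in[0,q^{-1})$ by the normalization in Definition \ref{Def.SchurProcess}, and the proposition follows.
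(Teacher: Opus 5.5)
Your route is the paper's route. The paper justifies the proposition solely as the specialization of \cite[Theorem 3.3]{BR05} under the parameter choice of Remark \ref{Rem.BRSpecial}. The only conversions are $w\mapsto 1/w$ in $K_{12}$ and $z\mapsto 1/z$, $w\mapsto 1/w$ in $K_{22}$; this inversion is what turns the Borodin--Rains contour conditions into the lower bounds $r^w_{12}>\max(c,q)$ and $r_2>\max(c,q,1)$.

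Two points in your bookkeeping need correcting, and the second is a step your plan as written would not get through.

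First, the factor $(1-q/z)^{N}$ does not come from a Cauchy factor $H(\rho^+;\rho^+)$. The plus specializations contain no block of $N$ variables, and the factor is already present when $M_u=0$ and $c=0$. It is $H(\rho^-_M;z^{-1})^{-1}$, coming from the minus specialization $\rho^-_M=(q,\dots,q)$. It reflects the self-interaction $\prod_{i<j}(1-x_ix_j)^{-1}$ of those $N$ variables, which $\tau_{\lambda^0}(c)$ produces through the Littlewood identity. You can see this as the factor $(1-q^2)^{-N(N-1)/2}$ in $Z$.

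Second, the $K_{22}$ entry of \cite[Theorem 3.3]{BR05} has a typo: the factor $(1-zw)$ in its denominator should be $(zw-1)$, see \cite[Remark 4.1]{BBCS18} and \cite[Remark 2.6]{G19}.
\begin{itemize}
\item A literal specialization therefore returns $-K_{22}$, while $K_{11}$ and $K_{12}$ match.
\item None of your proposed fixes absorbs this. The gauge $\mathrm{diag}(g,g^{-1})$ multiplies $K_{11}(u,x;v,y)$ by $g(u,x)g(v,y)$ and $K_{22}(u,x;v,y)$ by its reciprocal. So it can flip the sign of $K_{22}$ only together with that of $K_{11}$.
\item Flipping $K_{22}$ alone genuinely changes the two-point function, through the term $-K_{11}(u,x;v,y)K_{22}(u,x;v,y)$ of the $4\times 4$ Pfaffian.
\end{itemize}
So ``matching against the stated formula'' is not a proof here. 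You must either know the correction or verify the sign independently. A two-point correlation in a small case works; a one-point check does not, since it only sees $K_{12}$.

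Finally, in your Step 3 the poles at $z=\pm1$ are kept on the correct side by $r_1, r^z_{12}\in(1,q^{-1})$, not by the lower bound $\max(c,q)$ on the $w$-contours.
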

\begin{remark}\label{Rem.Prop.SchurPfaffianKernel}
Proposition \ref{Prop.SchurPfaffianKernel} is a special case of \cite[Theorem 3.3]{BR05}, corresponding to the choice of specializations as in Remark \ref{Rem.BRSpecial}. The only differences between our formulas and those in \cite[Theorem 3.3]{BR05} (see also \cite[Section 4.2]{BBCS18}) are that we changed variables $w \mapsto 1/w$ within $K_{12}$ and $w \mapsto 1/w, z \mapsto 1/z$ within $K_{22}$. We also mention that the formula for $K_{22}$ in \cite[Theorem 3.3]{BR05} has a small typo: the factor $(1-zw)$ in the denominator of the integrand should be replaced with $(zw-1)$, cf. \cite[Remark 4.1]{BBCS18} and \cite[Remark 2.6]{G19}.
\end{remark}

The second key statement we require is that the line ensemble formed by the partitions of a Pfaffian Schur process satisfies the interacting pair Gibbs property.
\begin{lemma}\label{Lem.SchurGibbs} Assume the same notation as in Definition \ref{Def.SchurProcess} and let $\mathfrak{L} = \{L_i\}_{i \in \mathbb{N}}$ be given by $L_i(j) = \lambda_i^j$ for $i \in \mathbb{N}, j \in \llbracket 0, M \rrbracket$. Then, $\mathfrak{L}$ is an $\mathbb{N}$-indexed geometric line ensemble on $\llbracket 0, M \rrbracket$ that satisfies the interacting pair Gibbs property from Definition \ref{Def.IPGP}.
\end{lemma}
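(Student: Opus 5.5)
The plan is to compute the conditional law directly from the product formula (\ref{Eq.SchurProcess}) and match it with Definition \ref{Def.InterlacingInteractingPairs}. First, by (\ref{Eq.SkewSchur}) with a single variable, $s_{\lambda^{j}/\lambda^{j-1}}(q) = {\bf 1}\{\lambda^{j-1}\preceq\lambda^j\}\, q^{|\lambda^j|-|\lambda^{j-1}|}$. Consequently $\mathbb{P}(\lambda^0,\dots,\lambda^M)$ is supported on interlacing sequences, so $L_i(j+1)\ge L_i(j)$ and $L_i(j)\ge L_{i+1}(j+1)$. In other words, $\mathfrak{L}$ is a geometric line ensemble with $L_i\succeq L_{i+1}$ in the sense of (\ref{Eq.DefInterlaceFun}). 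Only finitely many curves are nonzero, and the remaining ones are identically zero, which is harmless. The weight then telescopes: for fixed $\lambda^T,\dots,\lambda^M$ the probability is proportional to
\[
{\bf 1}\{\lambda^0\preceq\cdots\preceq\lambda^T\}\,\tau_{\lambda^0}(c)\, q^{|\lambda^T|-|\lambda^0|},
\]
with a constant depending only on $\lambda^T,\dots,\lambda^M$.

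Next, fix $T\in\llbracket 1,M\rrbracket$, $k$, $\vec y$, and $g$ with $\mathbb{P}(A)>0$, and let $\{B_i\in\Omega(T,y_i)\}_{i=1}^{2k}$. Compute $\mathbb{P}(\{L_i\llbracket 0,T\rrbracket=B_i,\ i\le 2k\}\cap A)$ by summing the joint law over the curves $L_i\llbracket 0,T\rrbracket$ with $i\ge 2k+2$ and over all values at times $>T$. Write $\tau_{\lambda^0}(c)=\prod_{i\le k}c^{B_{2i-1}(0)-B_{2i}(0)}\cdot c^{\sum_{j\ge 2k+1}(-1)^{j-1}\lambda^0_j}$. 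Since $\lambda^T_i = B_i(T)$ is fixed, $q^{|\lambda^T|-|\lambda^0|}$ factors as $\prod_{i\le 2k}q^{B_i(T)-B_i(0)}$ times a factor involving only curves of index $\ge 2k+1$.

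The interlacing indicator splits in the same way. Among the constraints that involve some $B_i$, there are those among $B_1,\dots,B_{2k}$, the constraint $B_{2k}\succeq g$, and constraints involving times $\ge T$. The last group are fixed by $\vec y$ and the data beyond $T$. The constraint $L_{2k+1}\succeq L_{2k+2}$ involves only $g$ and lower curves. This is exactly why the conditioning event $A$ fixes the whole curve $L_{2k+1}\llbracket 0,T\rrbracket$: it decouples the top $2k$ curves from everything below.

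After the summation, the probability therefore equals $C(\vec y,g)\cdot {\bf 1}\{\vec B\in\Omega_{\ice}(T,\vec y,g)\}\prod_{i=1}^kW((B_{2i-1},B_{2i});q,c)$, where the constant collects the sums over the other variables and does not depend on $\vec B$. Normalizing over $\vec B$, as in the proof of Lemma \ref{Lem.GibbsBox}, gives (\ref{Eq.IPGP}). The denominator is positive because $\mathbb{P}(A)>0$, and Lemma \ref{Lem.FinitePartitionFunction} together with Remark \ref{Rem.InterlacingInteractingPairs2} confirms the normalization is finite.

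The only delicate point is this bookkeeping: checking that the sum over the remaining variables really does not depend on $\vec B$. It does not, because the $B_i$ enter the summand only through $\vec y$, $g$, and their own factors. The rest is routine. Note that the sign pattern of $\tau$ is what makes the pairing $(2i-1,2i)$ with an odd boundary curve $2k+1$ work.
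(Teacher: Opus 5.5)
Your proposal is correct and follows essentially the same route as the paper. Both arguments read off the interlacing from the one-variable skew Schur factors and factor the joint weight into ${\bf 1}\{B_1\succeq\cdots\succeq B_{2k}\succeq g\}\prod_{i=1}^k W((B_{2i-1},B_{2i});q,c)$ times a term depending only on $\vec y$, $g$ and the remaining variables; they then sum out that term and conclude by normalizing, since both sides are probability measures in $\vec B$.
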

\begin{proof}
By Definition \ref{Def.SchurProcess}, for partitions $\lambda^j$, $j\in\llbracket0,M\rrbracket$ satisfying $\mathbb{P}\left(\lambda^{0},\dots,\lambda^{M}\right)>0$, we have $s_{\lambda^j/\lambda^{j-1}}(q)={\bf 1}_{\{\lambda^{j-1}\preceq \lambda^j\}}q^{|\lambda^j|-|\lambda^{j-1}|}>0$ and hence $\lambda^{j-1}\preceq \lambda^j$ for $j\in\llbracket1,M\rrbracket$. Thus, for $i\in\mathbb{N}$ and $j\in\llbracket1,M\rrbracket$, we have $L_i(j)-L_i(j-1)=\lambda_i^j-\lambda_i^{j-1}\ge 0$ and $L_i(j-1)-L_{i+1}(j)=\lambda_i^{j-1}-\lambda_{i+1}^j\ge 0$. Therefore, each $L_i$ is increasing and $L_i\succeq L_{i+1}$ for $i\in\mathbb{N}$.

To show that $\mathfrak{L}$ satisfies the interacting pair Gibbs property, we need to show that for any $T \in \llbracket 1, M\rrbracket$, $k\in\mathbb{N}$, any increasing path $g$ on $\llbracket 0, T\rrbracket$ and $\vec{y} \in \mathfrak{W}_{2k}$ satisfying $\mathbb{P}(A) > 0$, where 
$$A =\{  \vec{y} = ({L}_{1}(T), \dots, {L}_{2k}(T)), L_{2k+1} \llbracket 0,T \rrbracket = g \},$$
we have that for each $B_i \in \Omega(T,y_i)$,  $i\in\llbracket1,2k\rrbracket$,
\begin{equation}\label{Eq.IPGP again}
\mathbb{P}\left( L_{i}\llbracket 0,T \rrbracket = B_{i} \mbox{ for $i \in \llbracket 1, 2k \rrbracket$} \vert  A   \right) = \mathbb{P}^{T, \vec{y}, g}_{\ice; q,c} \left( \cap_{i = 1}^{2k} \{Q_i = B_i\} \right). 
\end{equation}

We begin by calculating the right side of \eqref{Eq.IPGP again}. By Definition \ref{Def.InterlacingInteractingPairs}, we have
\begin{equation}\label{eq:brtrt} 
        \mathbb{P}^{T, \vec{y}, g}_{\ice; q,c} \left( \cap_{i = 1}^{2k} \{Q_i = B_i\} \right) =\frac{ {\bf 1}_{\{B_1\succeq \dots\succeq B_{2k}\succeq g\}}\cdot \prod_{i=1}^k W((B_{2i-1},B_{2i});q,c)   }{\sum\limits_{D_i \in \Omega(T,y_i), i\in\llbracket1,2k\rrbracket} {\bf 1}_{\{D_1\succeq \dots\succeq D_{2k}\succeq g\}}\cdot \prod_{i=1}^k W((D_{2i-1},D_{2i});q,c)},
\end{equation}
where for two increasing paths $(B_1, B_2)$ we let $W\left((B_1,B_2); q,c\right)$ be as in (\ref{Eq.InteractingPairWeight}), and the denominator in (\ref{eq:brtrt}) is in $(0,\infty)$ in view of Remark \ref{Rem.InterlacingInteractingPairs2} and the fact that $g(T) \leq y_{2k}$ as implied by $\mathbb{P}(A) > 0$.

We next consider the left side of \eqref{Eq.IPGP again}. We fix a choice of the paths $B_i \in \Omega(T,y_i)$ for $i\in\llbracket1,2k\rrbracket$. Let $\lambda^{0},\dots,\lambda^{M}$ be any tuple of partitions satisfying that $\lambda^j_i=B_i(j)$ for $i\in\llbracket1,2k\rrbracket$, $j\in\llbracket0,T\rrbracket$, and that $\lambda^j_{2k+1}=g(j)$ for $j\in\llbracket0,T\rrbracket$. By Definition \ref{Def.SchurProcess}, we have
\begin{equation}\label{eq:fwefefwe}
    \begin{split}
        &\mathbb{P}\left(L_i(j)=\lambda_i^j\mbox{ for }i\in\mathbb{N}, j\in\llbracket0,M\rrbracket\right)\\
        &=\frac{1}{Z}\cdot \tau_{\lambda^{0}}(c) \cdot s_{\lambda^{1}/\lambda^{0}}(q)\cdots s_{\lambda^M/\lambda^{M-1}}(q) \cdot s_{\lambda^{M}}(\underbrace{q,q, \dots, q}_{\text{$N$ times}})\\
        &=\frac{1}{Z}\cdot c^{\sum_{i=1}^{\infty}(-1)^{i-1}\lambda_i^0}\cdot{\bf 1}_{\{\lambda^{0}\preceq\cdots\preceq \lambda^M\}}\cdot q^{|\lambda^M|-|\lambda^0|}\cdot s_{\lambda^{M}}(\underbrace{q,q, \dots, q}_{\text{$N$ times}})\\ 
        &=\frac{1}{Z}\cdot{\bf 1}_{\{\lambda^{0}\preceq\cdots\preceq \lambda^M\}}\cdot\prod_{i=1}^k\left(c^{\lambda_{2i-1}^0 - \lambda_{2i}^0} \cdot q^{\lambda_{2i-1}^T - \lambda_{2i-1}^0} \cdot q^{\lambda_{2i}^T - \lambda_{2i}^0}\right) \\  
        &\quad \cdot c^{\sum_{i=2k+1}^{\infty}(-1)^{i-1}\lambda_i^0}\cdot q^{\sum_{i=2k+1}^{\infty}(\lambda^T_i-\lambda^0_i)}\cdot q^{|\lambda^M|-|\lambda^T|}\cdot s_{\lambda^{M}}(\underbrace{q,q, \dots, q}_{\text{$N$ times}})\\
       &=\frac{1}{Z}\cdot {\bf 1}_{\{B_1\succeq \dots\succeq B_{2k}\succeq g\}}\cdot\prod_{i=1}^k W\left((B_{2i-1},B_{2i});q,c\right)\cdot H(\Lambda_{\mathrm{rest}},\vec y,g).
    \end{split}
\end{equation}
Here $\Lambda_{\mathrm{rest}}$ denotes the variables $\lambda_i^j$ such that $i\geq2k+2$ or $j\in\llbracket T+1,M\rrbracket$ (or both), and $H(\Lambda_{\mathrm{rest}},\vec y,g)$ denotes the product of all remaining factors, including the indicator of the remaining interlacing constraints. In particular, it is independent of $B_1,\dots,B_{2k}$ except through $\vec y$ and $g$.

We sum \eqref{eq:fwefefwe} over all the variables in $\Lambda_{\mathrm{rest}}$ and obtain
\begin{equation*}
    \mathbb{P}(\left\{L_{i}\llbracket 0,T \rrbracket = B_{i} \mbox{ for $i \in \llbracket 1, 2k \rrbracket$}\right\} \cap A)=
\frac{1}{Z'} \cdot {\bf 1}_{\{B_1\succeq \dots\succeq B_{2k}\succeq g\}}\cdot\prod_{i=1}^k W\left((B_{2i-1},B_{2i});q,c\right),
\end{equation*}
for a different normalization constant $Z'$ depending on $\vec y$, $g$. Dividing both sides by $\mathbb{P}(A)$ gives 
\[
\mathbb{P}(L_{i}\llbracket 0,T \rrbracket = B_{i} \mbox{ for $i \in \llbracket 1, 2k \rrbracket$} \vert A) = \frac{1}{Z''} \cdot {\bf 1}_{\{B_1\succeq \dots\succeq B_{2k}\succeq g\}}\cdot\prod_{i=1}^k W\left((B_{2i-1},B_{2i});q,c\right),
\]
where $Z'' = \mathbb{P}(A) Z'$ depends on $\vec y$ and $g$. The last displayed equation and (\ref{eq:brtrt}) show that the two sides of \eqref{Eq.IPGP again} agree up to a multiplicative constant that depends on $\vec y$ and $g$. Since both sides define probability measures, we conclude that this constant is $1$, implying \eqref{Eq.IPGP again}.
\end{proof}

%
%
\subsection{Parameter scaling for the Airy limit}\label{Section4.3} In this section, we introduce a rescaling of the Pfaffian Schur processes from Definition \ref{Def.SchurProcess} that captures the asymptotic behavior of all the curves $\{ \lambda_i^j: i \geq 1, j \in \llbracket 0, M \rrbracket\}$ in the subcritical regime $c < 1$, and the bottom curves $\{ \lambda_i^j: i \geq 2, j \in \llbracket 0, M \rrbracket\}$ in the supercritical regime $c > 1$. We then derive an alternative expression for the correlation kernel from Proposition \ref{Prop.SchurPfaffianKernel}, which is more suitable for the asymptotic analysis under this scaling.

We collect various parameters and functions that appear throughout the paper in the following definition. 
\begin{definition}\label{Def.ParametersBulk} Fix $q \in (0,1)$, $c \in [0, q^{-1})$, and set
\begin{equation}\label{Eq.ParametersBot}
\begin{split}
&\sigma_1 = \frac{q^{1/3} (1 + q)^{1/3}}{1- q}, \hspace{2mm} f_1 = \frac{q^{1/3}}{2 (1 + q)^{2/3}}, \hspace{2mm} p_1 = \frac{q}{1-q}, \hspace{2mm} h_1 = \frac{2q}{1-q}.
\end{split}
\end{equation}
We also define for $z \in \mathbb{C} \setminus \{0, q , q^{-1}\}$ the functions
\begin{equation}\label{Eq.DefSGBot}
\begin{split}
&S_1(z) = \log(1 - q/z) - \log(1-qz) - h_1 \log(z), \\
&G_1(z) = \log(1-q/z)- p_1 \log(z) - \log (1-q).
\end{split}
\end{equation}
In equation (\ref{Eq.DefSGBot}) and the rest of the paper we always take the principal branch of the logarithm.
\end{definition}

We next introduce how we rescale our random partitions in the following definition.
\begin{definition}\label{Def.ScalingBulk} Assume the same parameters as in Definition \ref{Def.ParametersBulk}. Fix $m \in \mathbb{N}$, $t_1, \dots, t_m \in [0, \infty)$ with $t_1 < t_2 < \cdots < t_m$, and set $\mathcal{T} = \{t_1, \dots, t_m\}$. We also define for $t \in \mathcal{T}$ the quantity $T_{t} = T_t(N) =  \lfloor t N^{2/3} \rfloor$ and the lattice $\Lambda_t(N) = a_t(N) \cdot \mathbb{Z} + b_t(N)$, where 
\begin{equation}\label{Eq.LatticeBulk}
a_t(N) = \sigma_1^{-1} N^{-1/3}, \mbox{ and } b_t(N) = \sigma_1^{-1} N^{-1/3} \cdot \left( - h_1 N - p_1 T_{t} \right).
\end{equation}

We let $\mathbb{P}_N$ be the Pfaffian Schur process from Definition \ref{Def.SchurProcess}. Here, we assume that $M = M(N)$ is sufficiently large so that $M \geq T_{t_m}$. If $(\lambda^0, \dots, \lambda^M)$ have law $\mathbb{P}_N$, we define the random variables
\begin{equation}\label{Eq.XsBulk}
X_i^{j,N} = \sigma_1^{-1} N^{-1/3} \cdot \left( \lambda_i^{T_{t_j}} - h_1 N - p_1 T_{t_j}  - i\right) \mbox{ for } i \in \mathbb{N} \mbox{ and } j \in \llbracket 1, m \rrbracket.
\end{equation}
\end{definition}

We next introduce certain contours and measures that will be used to define our alternative correlation kernel.

\begin{definition}\label{Def.ContoursBulk} Fix $a \in \mathbb{R}$ and $N \in \mathbb{N}$. We define the contours 
$$\gamma^+_N(a,0) = \{1 + aN^{-1/3} + |s|N^{-1/3} e^{\mathrm{sgn}(s)\im(\pi/3)}: s \in [-N^{1/4}, N^{1/4}] \}, \mbox{and }$$
$$\gamma^-_N(a,0) = \{1 + aN^{-1/3} + |s|N^{-1/3} e^{\mathrm{sgn}(s) \im(2\pi/3)}: s \in [-N^{1/4}, N^{1/4}] \},$$
both oriented in the direction of increasing imaginary part. We further let $\gamma^+_N(a,1)$ be the arc of the $0$-centered circle that connects the points $1 + aN^{-1/3} + N^{-1/12} e^{\im(\pi/3)}$ to $1 + aN^{-1/3} + N^{-1/12} e^{-\im(\pi/3)}$, and $\gamma^-_N(a,1)$ be the arc of the $0$-centered circle that connects the points $1 + aN^{-1/3} + N^{-1/12} e^{\im(2\pi/3)}$ to $1 + aN^{-1/3} + N^{-1/12} e^{-2\im(\pi/3)}$. Both $\gamma^+_N(a,1)$ and $\gamma^-_N(a,1)$ are oriented counter-clockwise. Finally, we set $\gamma_N^+(a) = \gamma_N^+(a,0) \cup \gamma_N^+(a,1)$ and $\gamma_N^-(a) = \gamma_N^-(a,0) \cup \gamma_N^-(a,1)$. See Figure \ref{S21} for an illustration of these contours. We also recall from Section \ref{Section4.2} that $C_r$ is the positively oriented zero-centered circle of radius $r > 0$.
\end{definition}
\begin{figure}[ht]
\scalebox{0.75}{
    \centering
     \begin{tikzpicture}[scale=2.7]
        \draw[->, thick, gray] (-1.4,0)--(1.8,0) node[right]{$\Real$};
        \draw[->, thick, gray] (0,-1.4)--(0,1.5) node[above]{$\Imag$};
        \def\radA{1.344} 
        \def\radB{0.690} 

        \draw[->,thick][black] (1.1,0) -- (1.2,0.17);
        \draw[-,thick][black] (1.2,0.17) -- (1.3,0.34);
        \draw[-,thick][black] (1.1,0) -- (1.2,-0.17);
        \draw[->,thick][black] (1.3,-0.34) -- (1.2,-0.17);
        \draw[->,thick][black] (1.3,0.34) arc (14.66:180:\radA);
        \draw[-,thick][black] (1.3,0.34) arc (14.66:360 - 14.66:\radA);

        \draw[->,thick][black] (0.8,0) -- (0.7,0.17);
        \draw[-,thick][black] (0.7,0.17) -- (0.6,0.34);
        \draw[-,thick][black] (0.8,0) -- (0.7,-0.17);
        \draw[->,thick][black] (0.6,-0.34) -- (0.7,-0.17);        
        \draw[->,thick][black] (0.6,0.34) arc (29.54:180:\radB);
        \draw[-,thick][black] (0.6,0.34) arc (29.54:360 - 29.54:\radB);

        \draw[black, fill = black] (0.95,0) circle (0.02);
        \draw (0.95,-0.1) node{$1$}; 
        \draw[black, fill = black] (1.6,0) circle (0.02);
        \draw (1.6,-0.1) node{$q^{-1}$};
       
        \draw (-1.4,1.2) node{$\gamma_{N}^{+}(a, 1)$};
        \draw[->, >=stealth'] ( - 1.1, 1.2)  to[bend left] ( -0.9, 1);
        \draw (1.6,0.6) node{$\gamma_{N}^{+}(a, 0)$};
        \draw[->, >=stealth'] ( 1.5, 0.5)  to[bend left] ( 1.25, 0.23);
        \draw (-0.7,0.7) node{$\gamma_{N}^{-}(-a, 1)$};
        \draw[->, >=stealth'] ( -0.65, 0.6)  to[bend right] ( -0.48,0.5);
        \draw (0.35,-0.115) node{$\gamma_{N}^{-}(-a, 0)$};
        \draw[->, >=stealth'] ( 0.3, -0.03)  to[bend left] (0.65,0.27);

        \draw (1.1,-1.3) node{$1+ a N^{-1/3}$};
        \draw[-,very thin, dashed][gray] (1.1,-1.1) -- (1.1, 0);
        \draw (0.8,1.4) node{$1 - a N^{-1/3}$};
        \draw[-,very thin, dashed][gray] (0.8,1.2) -- (0.8,0);

        \draw[-,very thin, dashed][gray] (0.6,0.34) -- (2.1,0.34);
        \draw[-,very thin, dashed][gray] (0.6,-0.3) -- (2.1,-0.34);
        \draw[->,very thin][black] (2.1,0) -- (2.1, -0.34);
        \draw[->,very thin][black] (2.1,0) -- (2.1, 0.34);
        \draw (2.5,0) node{$\sqrt{3}N^{-1/12}$};

    \end{tikzpicture} 
}
\caption{The figure depicts the contours $\gamma^+_N(a) = \gamma^+_N(a, 0) \cup \gamma^+_N(a,1)$ and $\gamma^-_N(-a) = \gamma^-_N(-a, 0) \cup \gamma^-_N(-a,1)$ for $a > 0$ from Definition \ref{Def.ContoursBulk}.} 
    \label{S21}
\end{figure}
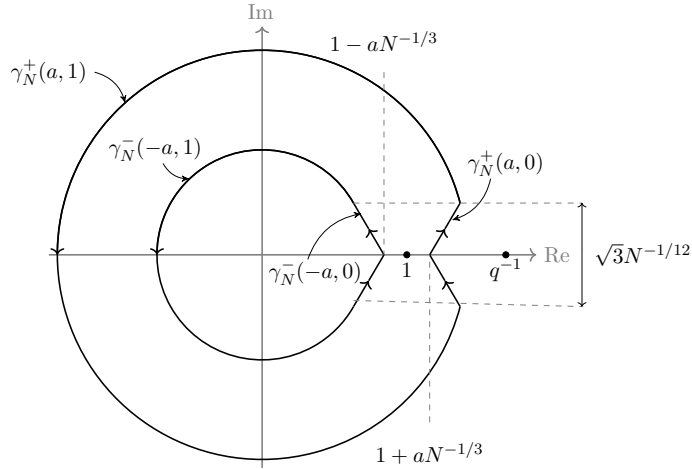

\begin{definition} \label{Def.ScaledLatticeMeasures}
Fix $m \in \mathbb{N}$,  $t_1 < \cdots < t_m$, and set $\mathcal{T} = \{t_1, \dots, t_m\}$. If $\nu = (\nu_{t_1}, \dots, \nu_{t_m})$ is an $m$-tuple of locally finite measures on $\mathbb{R}$, we define the (locally finite) measure $\mu_{\mathcal{T},\nu}$ on $\mathbb{R}^2$ by
\begin{equation}\label{Eq.MuToNu}
\mu_{\mathcal{T},\nu}(A) = \sum_{t \in \mathcal{T}} \nu_t(A_{t}), \mbox{ where } A_{t} = \{ y \in \mathbb{R}: (t,y) \in A\}.
\end{equation}
\end{definition}

With the above notation in place we can state the main result of this section.
\begin{lemma}\label{Lem.PrelimitKernelBot} Assume the same notation as in Definitions \ref{Def.ParametersBulk}, \ref{Def.ScalingBulk} and \ref{Def.ContoursBulk}, and that $c \neq 1$. Let $M^N$ be the point process on $\mathbb{R}^2$, formed by $\{(t_j, X_i^{j,N}): i \geq 1, j \in \llbracket 1, m\rrbracket \}$. Then for all large $N$ (depending on $q$ and $c$), $M^N$ is a Pfaffian point process with reference measure $\mu_{\mathcal{T},\nu(N)}$ and correlation kernel $K^N$ that are defined as follows. 

The measure $\mu_{\mathcal{T},\nu(N)}$ is as in Definition \ref{Def.ScaledLatticeMeasures} for $\nu(N) = (\nu_{t_1}(N), \dots, \nu_{t_m}(N))$, where $\nu_{t}(N)$ is $\sigma_1^{-1} N^{-1/3}$ times the counting measure on $\Lambda_{t}(N)$. 

The correlation kernel $K^N: (\mathcal{T} \times \mathbb{R}) \times (\mathcal{T} \times \mathbb{R}) \rightarrow\operatorname{Mat}_2(\mathbb{C})$ takes the form
\begin{equation}\label{Eq.S6Kdecomp}
\begin{split}
&K^N(s,x; t,y) = \begin{bmatrix}
    K^N_{11}(s,x;t,y) & K^N_{12}(s,x;t,y)\\
    K^N_{21}(s,x;t,y) & K^N_{22}(s,x;t,y) 
\end{bmatrix} \\
&= \begin{bmatrix}
    I^N_{11}(s,x;t,y) & I^N_{12}(s,x;t,y) + R^N_{12}(s,x;t,y) \\
    -I^N_{12}(t,y;s,x) - R^N_{12}(t,y;s,x) & I^N_{22}(s,x;t,y) + R^N_{22}(s,x;t,y) 
\end{bmatrix} ,
\end{split}
\end{equation}
where $I^N(s,x;t,y), R^N(s,x;t,y)$ are defined as follows. The kernels $I^N_{ij}$ are given by
\begin{equation}\label{Eq.DefIN11}
\begin{split}
&I^N_{11}(s,x;t,y) = \frac{1}{(2\pi \im)^{2}}\oint_{\gamma_N^+(1)} dz \oint_{\gamma_N^+(1)} dw F_{11}^N(z,w) H_{11}^N(z,w) \mbox{, where }\\
& F^N_{11}(z,w) = e^{NS_1(z) + NS_1(w)} \cdot e^{T_s G_1(z) + T_t G_1(w)} \cdot e^{- \sigma_1 x N^{1/3} \log (z) - \sigma_1 y N^{1/3} \log(w)  }, \\
&H^N_{11}(z,w) = 4\sigma_1^2 N^{2/3} \cdot  \frac{z-w}{(z^{2}-1)(w^{2}-1)(zw-1)} \cdot ( 1 - c/z) (1 - c/w);
\end{split}
\end{equation}
\begin{equation}\label{Eq.DefIN12}
\begin{split}
&I^N_{12}(s,x;t,y) = \frac{1}{(2\pi \im)^{2}}\oint_{\gamma_N^+(1)} dz \oint_{\gamma_N^-(-1)} dw F_{12}^N(z,w) H_{12}^N(z,w) \mbox{, where }\\
& F^N_{12}(z,w) = e^{NS_1(z) - NS_1(w)} \cdot e^{T_s G_1(z) - T_t G_1(w)} \cdot e^{- \sigma_1 x N^{1/3} \log (z) + \sigma_1 y N^{1/3} \log(w)  }, \\
&H^N_{12}(z,w) =  \sigma_1 N^{1/3} \cdot \frac{zw - 1}{z (z-w)(z^2 - 1)} \cdot \frac{z - c}{w - c};
\end{split}
\end{equation}
\begin{equation}\label{Eq.DefIN22}
\begin{split}
&I^N_{22}(s,x;t,y) = \frac{1}{(2\pi \im)^{2}}\oint_{\gamma_N^-(-1)} dz \oint_{\gamma_N^-(-1)} dw F_{22}^N(z,w) H_{22}^N(z,w) \mbox{, where }\\
& F^N_{22}(z,w) = e^{-NS_1(z) - NS_1(w)} \cdot e^{-T_s G_1(z) - T_t G_1(w)} \cdot e^{ \sigma_1 x N^{1/3} \log (z) + \sigma_1 y N^{1/3} \log(w)  }, \\
&H^N_{22}(z,w) =  \frac{1}{4} \cdot \frac{z-w}{zw - 1} \cdot \frac{1}{(z- c)(w - c)}.
\end{split}
\end{equation}
The kernels $R^N_{ij}$ are given by
\begin{equation}\label{Eq.DefRN12}
\begin{split}
&R^N_{12}(s,x;t,y) = \frac{-{\bf 1}\{s < t \} \cdot \sigma_1 N^{1/3} }{2 \pi \im} \oint_{\gamma_{N}^+(1)}dz e^{(T_s - T_t) G_1(z)} \cdot e^{ (\sigma_1 y N^{1/3}  - \sigma_1 x N^{1/3} - 1) \log (z)} \\
& +  \frac{{\bf 1}\{c > 1\}\cdot \sigma_1 N^{1/3}}{2\pi \im} \oint_{\gamma_N^+(1)}dz \frac{zc - 1}{z(z^2 - 1)} \cdot F_{12}^N(z,c);
\end{split}
\end{equation}
\begin{equation}\label{Eq.DefRN22}
\begin{split}
&R^N_{22}(s,x;t,y) = \frac{{\bf 1}\{c > 1\}}{2\pi \im} \oint_{\gamma^-_N(-1)} dz \frac{F_{22}^N(z,c)}{4(c z - 1)} - \frac{{\bf 1}\{c > 1\}}{2\pi \im} \oint_{\gamma^-_N(-1)} dw \frac{F_{22}^N(c,w)}{4(c w - 1)} \\
& + \frac{1}{2\pi \im}\oint_{\gamma^-_N(-1)} dw  \frac{(1-w^2)}{4(1-c w)(w-c)} \cdot e^{ ( \sigma_1 y N^{1/3} - \sigma_1 x N^{1/3} - 1) \log (w) - T_s G_1(w^{-1}) - T_t G_1(w)}.
\end{split}
\end{equation}
\end{lemma}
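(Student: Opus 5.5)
The plan is to start from Proposition \ref{Prop.SchurPfaffianKernel}, applied with $M_j = T_{t_j}$. The first step is the change of reference measure and of variables. The point $(j, \lambda^{T_{t_j}}_i - i)$ is mapped to $(t_j, X^{j,N}_i)$ by an affine map whose spatial part is $u \mapsto \sigma_1^{-1}N^{-1/3}(u - h_1 N - p_1 T_{t_j})$. The image of $\mathbb{Z}$ under this map is exactly $\Lambda_{t_j}(N)$. Pushing a Pfaffian point process forward under a bijection, and simultaneously rescaling the reference measure by $\sigma_1^{-1}N^{-1/3}$, multiplies the kernel by a factor that we may split as a conjugation. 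Concretely, we set
\[
K^N_{11} = (\sigma_1 N^{1/3})^2 \cdot 4\, \kgeo_{11}, \quad K^N_{12} = \sigma_1 N^{1/3}\kgeo_{12}, \quad K^N_{22} = \kgeo_{22}/4.
\]
These choices keep the total Pfaffian scaling equal to $(\sigma_1N^{1/3})^{n}$, since $\mathrm{Pf}$ of $DKD$ with $D=\mathrm{diag}(2\sigma_1N^{1/3},1/2)$ has determinant factor $\sigma_1^nN^{n/3}$. This accounts for the prefactors $4\sigma_1^2N^{2/3}$, $\sigma_1N^{1/3}$ and $1/4$ in $H^N_{ij}$.

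Next we substitute $x \mapsto \sigma_1 N^{1/3}x + h_1N + p_1T_s$ into $z^{-x}(1-q/z)^{M_u+N}(1-qz)^{-N}$. Using $1-q/z$ and $\log(1-q)$ constants, one checks that this equals $e^{NS_1(z) + T_sG_1(z) - \sigma_1 xN^{1/3}\log z}$ up to a factor $(1-q)^{T_s}$. This factor cancels between the $z$ and $w$ variables in $K_{12}$, and it is removable by a gauge transformation in $K_{11}$ and $K_{22}$; gauge transformations $K_{11}\to g(x)g(y)K_{11}$, $K_{12}\to g(x)g(y)^{-1}K_{12}$, $K_{22}\to g^{-1}g^{-1}K_{22}$ leave Pfaffians unchanged. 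The integrands then take the form $F^N_{ij}H^N_{ij}$.

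The remaining and main step is the contour deformation from the circles $C_{r}$ to $\gamma_N^\pm$. For large $N$ these contours pass near $1$, to the right of it ($z$-contours) or to the left of it ($w$-contours, after $w$ is inverted).

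For $K_{11}$ the $z,w$ contours lie in $(1,q^{-1})$. Deforming $C_{r_1}$ to $\gamma_N^+(1)$ crosses no pole, because the only poles near are at $z=\pm1$, $zw=1$, and $0,q$, all of which stay inside. This gives $I^N_{11}$ with no residue.

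For $K_{12}$ we need $w$ moved from radius $>\max(c,q)$ onto $\gamma^-_N(-1)$, which has radius about $1$ on the left side. Two residues can arise. The first is the pole at $w=c$, which is crossed exactly when $c>1$. Its residue is $\frac{zc-1}{z(z^2-1)}F^N_{12}(z,c)$ times $\sigma_1N^{1/3}$, and this is the second term of $R^N_{12}$. The second is that, when $s<t$ (so $u<v$ and initially $|z|<|w|$), swapping the contour order forces us to cross $w=z$. The residue there is $-\frac{1}{z}\cdot e^{(T_s-T_t)G_1(z)}z^{\sigma_1(y-x)N^{1/3}}$, and this is the first term of $R^N_{12}$.

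For $K_{22}$ both contours go from radius $>\max(c,1)$ to $\gamma_N^-(-1)$. When $c>1$ we cross $z=c$ and $w=c$, which gives the first two terms of $R^N_{22}$; the sign difference comes from antisymmetry. We also cross the pole $zw=1$: when $z$ moves inside radius $1$ while $|w|>1$, the point $z=1/w$ is crossed. Evaluating this residue and then deforming $w$ produces the last term of $R^N_{22}$, after simplifying with $S_1(1/w)=-S_1(w)$ and $(1-c/w)$-type factors.

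I expect this last residue bookkeeping to be the main obstacle. The issues are the exact order of the deformations, the avoidance of double counting at $w=c$ versus $zw=1$ (which requires $c\neq1$, hence the hypothesis), and checking that the $\gamma^\pm_N$ contours separate $1$ from $c$ and $q^{-1}$ for large $N$. The latter holds because $|c-1|>0$ is fixed while the contours are $O(N^{-1/12})$-close to $1$, and this is where "large $N$" is needed.
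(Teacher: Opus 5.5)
Your strategy is the paper's proof. You push forward the kernel of Proposition \ref{Prop.SchurPfaffianKernel} with $M_j=T_{t_j}$ and rescale by $\mathrm{diag}(2\sigma_1N^{1/3},1/2)$. You then conjugate away the constants $(1-q)^{\pm T_s}$, rewrite the integrands through $S_1,G_1$, and deform the circles to $\gamma_N^{\pm}$. The residues collected are at $w=c$ and $w=z$ in $K_{12}$, and at $w=c$, $z=c$, $zw=1$ in $K_{22}$; your two residues for $R^N_{12}$ are correct. Two statements, however, are wrong as written and would derail the computation if followed literally.

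First, in $K_{12}$ the constants do not cancel. The $z$-part produces $(1-q)^{T_s}$ and the $w$-part produces $(1-q)^{-T_t}$, so $K^{\mathrm{geo}}_{12}$ carries $(1-q)^{T_s-T_t}$. This factor disappears only because the same conjugation $g(s)=(1-q)^{-T_s}$ is applied to all three entries simultaneously: $g(s)g(t)K_{11}$, $g(s)g(t)^{-1}K_{12}$, $g(s)^{-1}g(t)^{-1}K_{22}$. This is exactly the paper's conjugation by $f(s,x)=(1-q)^{-T_s}$. It cannot be applied to $K_{11}$ and $K_{22}$ alone without changing the Pfaffians.

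Second, your account of the pole $zw=1$ in $K_{22}$ is backwards. Take $r_2\ge 1+q^{-1}$. Moving $z$ to $\gamma_N^-(-1)$ while $|w|=r_2$ crosses nothing at $z=1/w$, because $|1/w|\le q/(1+q)$ lies well inside $\gamma_N^-(-1)$. The pole is crossed only in the second deformation, once the other variable already lies on $\gamma_N^-(-1)\subset\{|w|<1\}$.

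In the paper's order, one first moves $w$, crossing only $w=c$ when $c>1$. The factor $z-c$ cancels in that residue, so the resulting $z$-integrand is proportional to $1/(zc-1)$. It is moved to $\gamma_N^-(-1)$ without crossing anything, because $c^{-1}$ lies inside that contour. One then moves $z$, crossing $z=c$ and $z=1/w$.

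The residue at $z=1/w$ is already a single integral over $w\in\gamma_N^-(-1)$. In it, the factors $(1-q/z)^{-N}(1-qz)^{N}$ evaluated at $z=1/w$ cancel those in $w$, leaving $(1-qw)^{-T_s}(1-q/w)^{-T_t}$. This is exactly the last term of $R^N_{22}$, and no further deformation in $w$ is needed. Deforming such a $w$-integral in from $C_{r_2}$, as your sketch suggests, would cross the pole at $w=q^{-1}$ (and possibly $w=c$ or $w=c^{-1}$) and create spurious terms.

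Finally, the hypothesis $c\neq1$ enters only by keeping $c$ and $c^{-1}$ a fixed distance from $1$. Then, for large $N$, they lie strictly on definite sides of $\gamma_N^{\pm}$. For instance, $c^{-1}$ lies strictly inside $\gamma_N^-(-1)$ when $c>1$. This is what keeps the poles $z=c$ and $z=1/w$ apart, and what lets the $w=c$ residue be deformed freely.
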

\begin{proof} The proof we present is similar to \cite[Lemma 6.3]{DY25}. We let $N_0 \in \mathbb{N}$ be sufficiently large, so that $\gamma_N^+(1)$ lies outside $C_1$, $\gamma_N^-(-1)$ lies inside $C_1$, $q^{-1}, c$ lie outside $\gamma_N^+(1)$ and $c^{-1}$ lies inside $\gamma_N^{-}(-1)$ if $c > 1$, and $q, c$ lie inside $\gamma_N^-(-1)$ if $c < 1$. In what follows we assume $N \geq N_0$.

Let $f: \mathbb{R} \rightarrow \mathbb{R}$ be a piecewise linear increasing bijection such that $f(i) = t_i$ for $i \in \llbracket 1, m \rrbracket$. Define $\phi_N: \mathbb{R}^2 \rightarrow \mathbb{R}^2$ through 
$$\phi_N(s, x) = \left(f(s), \sigma_1^{-1} N^{-1/3} \cdot \left( x- h_1 N - p_1 \lfloor f(s) N^{2/3} \rfloor  \right) \right),$$   
and observe that $M^N = \mathfrak{S}(\lambda) \phi_N^{-1}$, where $\mathfrak{S}(\lambda)$ is as in Proposition \ref{Prop.SchurPfaffianKernel}. It follows from Proposition \ref{Prop.SchurPfaffianKernel}, \cite[Proposition 5.8(5)]{DY25} with the above $\phi_N$, \cite[Proposition 5.8(4)]{DY25} with $f(s,x) = (1-q)^{-T_s}$, and \cite[Proposition 5.8(6)]{DY25} with $c_1 = 2\sigma_1 N^{1/3}$ and $c_2 = 1/2$ that $M^N$ is a Pfaffian point process on $\mathbb{R}^2$ with reference measure $\mu_{\mathcal{T},\nu(N)}$ and correlation kernel $\tilde{K}^N: (\mathcal{T} \times \mathbb{R}) \times (\mathcal{T} \times \mathbb{R}) \rightarrow\operatorname{Mat}_2(\mathbb{C})$, given by
\begin{equation*}
\tilde{K}^N(s,x;t,y) = \begin{bmatrix} 4\sigma_1^2 N^{2/3} (1-q)^{-T_s - T_t} \kgeo_{11}(\tilde{s},\tilde{x}; \tilde{t},\tilde{y}) &  \sigma_1 N^{1/3} (1-q)^{-T_s + T_t} \kgeo_{12}(\tilde{s},\tilde{x}; \tilde{t},\tilde{y}) \\ \sigma_1 N^{1/3} (1-q)^{T_s - T_t} \kgeo_{21}(\tilde{s},\tilde{x}; \tilde{t},\tilde{y}) & (1/4) (1-q)^{T_s + T_t} \kgeo_{22}(\tilde{s},\tilde{x}; \tilde{t},\tilde{y}) \end{bmatrix},
\end{equation*}
where $\kgeo$ is as in Proposition \ref{Prop.SchurPfaffianKernel} with $\tilde{s} = T_s = \lfloor s N^{2/3} \rfloor$, $\tilde{t} = T_t = \lfloor t N^{2/3} \rfloor$ and
\begin{equation*}
\tilde{x} = h_1 N + p_1 T_s + \sigma_1 N^{1/3} x, \hspace{2mm}\tilde{y} = h_1N + p_1 T_t + \sigma_1 N^{1/3} y.
\end{equation*}
All that remains is to show that $\tilde{K}^N$ agrees with $K^N$ as in the statement of the lemma.\\

We note that we have the following identities
\begin{equation}\label{Eq.ChangeOfVar}
\begin{split}
&z^{\mp \tilde{x}} (1-q/z)^{\pm (T_s +N)}(1-qz)^{\mp N} (1-q)^{\mp T_s} = e^{\pm NS_1(z) \pm T_s G_1(z) \mp \sigma_1 x N^{1/3} \log(z) }, \\
&w^{\mp \tilde{y}} (1-q/w)^{\pm (T_t +N)}(1-qw)^{\mp N} (1-q)^{\mp T_t} = e^{ \pm NS_1(w) \pm T_t G_1(w) \mp \sigma_1 y N^{1/3} \log(w)}.
\end{split}
\end{equation}

{\bf \raggedleft Matching $K^N_{11}$.} If $N \geq N_0$, we may deform both contours $C_{r_1}$ in the definition of $\kgeo_{11}$ in Proposition \ref{Prop.SchurPfaffianKernel} to $\gamma_N^+(1)$ without crossing any of the poles of the integrand and hence without affecting the value of the integral by Cauchy's theorem. Deforming the contours and applying (\ref{Eq.ChangeOfVar}) we obtain $\tilde{K}^N_{11}( s,x; t,y) = I^N_{11}(s,x; t,y) = K^N_{11}(s,x;t,y)$. \\

{\bf \raggedleft Matching  $K^N_{12}$ and $K^N_{21}$.} Since $\tilde{K}^N$ and $K^N$ are both skew-symmetric it suffices to match $K^N_{12}$. If $s \geq t$ and $c < 1$, we may deform $C_{r_{12}^w}$ to $\gamma_N^-(-1)$ and $C_{r_{12}^z}$ to $\gamma_N^+(1)$ without crossing any poles. Performing the deformation and applying (\ref{Eq.ChangeOfVar}) shows $\tilde{K}^N_{12}( s,x; t,y) = I^N_{12}(s,x; t,y) = K^N_{12}(s,x;t,y)$. If $s \geq t$ and $c > 1$, then in the process of deformation we also pick up a residue from the simple pole at $w = c$, producing the second line in (\ref{Eq.DefRN12}). If $s < t$ and $c < 1$, then in the process of deformation we pick up a residue from the simple pole at $w = z$ producing the first line in (\ref{Eq.DefRN12}). Lastly, if $s < t$ and $c > 1$, then we pick up both residues at $w = z$ and $w = c$ producing both lines in (\ref{Eq.DefRN12}). In all cases, we conclude $\tilde{K}^N_{12}( s,x; t,y) = I^N_{12}(s,x; t,y) + R^N_{12}(s,x;t,y) = K^N_{12}(s,x;t,y)$.\\

{\bf \raggedleft Matching  $K^N_{22}$.} Starting from the formula for $\kgeo_{22}$ in Proposition \ref{Prop.SchurPfaffianKernel} with $r_2 $ large (say $r_2 \geq 1 + q^{-1}$), we deform the $w$ contour to $\gamma_{N}^-(-1)$. In the process of deformation we pick up a residue from the simple pole at $w = c$ if $c > 1$, giving
\begin{equation}\label{Eq.K22Res1}
\begin{split}
&\kgeo_{22}(\tilde{s},\tilde{x}; \tilde{t},\tilde{y}) = \frac{1}{(2\pi \im)^{2}}\oint_{C_{r_2}} d  z \oint_{\gamma_N^-(-1)}  dw \frac{z-w}{zw-1} \cdot \frac{1}{(z-c)(w-c)} \cdot z^{\tilde{x}}w^{\tilde{y}} \\
&\times  (1-q/z)^{-T_s-N}(1-q/w)^{-T_t-N} (1-qz)^N (1-qw)^N \\
& + \frac{{\bf 1}\{c > 1\}}{2\pi \im} \oint_{C_{r_2}} d  z  \frac{1}{zc-1} \cdot  z^{\tilde{x}}c^{\tilde{y}}  \cdot  (1-q/z)^{-T_s-N}(1-q/c)^{-T_t-N} (1-qz)^N (1-qc)^N.
\end{split}
\end{equation}
We may now deform the contour $C_{r_2}$ on the third line of (\ref{Eq.K22Res1}) to $\gamma_N^-(-1)$ without crossing any poles, so that from (\ref{Eq.ChangeOfVar}) we obtain
\begin{equation}\label{Eq.K22Match1}
(1/4)(1-q)^{T_s + T_t} \times [\mbox{line 3 in (\ref{Eq.K22Res1})}] = \frac{{\bf 1}\{c > 1\}}{2\pi \im} \oint_{\gamma^-_N(-1)} dz \frac{F_{22}^N(z,c)}{4(c z - 1)}.
\end{equation}

We next deform $C_{r_2}$ in the first line of (\ref{Eq.K22Res1}) to $\gamma_N^-(-1)$. In the process of deformation we cross the simple pole at $z = w^{-1}$ and if $c > 1$ also the one at $z = c$. We conclude
\begin{equation}\label{Eq.K22Res2}
\begin{split}
&[\mbox{lines 1 and 2 in (\ref{Eq.K22Res1})}] = \frac{1}{(2\pi \im)^{2}} \oint_{\gamma_N^-(-1)} d  z \oint_{\gamma_N^-(-1)}  dw \frac{z-w}{zw-1}\cdot \frac{1}{(z-c)(w-c)} \cdot z^{\tilde{x}}w^{\tilde{y}} \\
&\times  (1-q/z)^{-T_s-N}(1-q/w)^{-T_t-N} (1-qz)^N (1-qw)^N \\
& -  \frac{{\bf 1}\{c > 1\}}{2\pi \im} \oint_{\gamma_N^-(-1)}  dw \frac{c^{\tilde{x}}w^{\tilde{y}}}{cw-1} \cdot  (1-q/c)^{-T_s-N}(1-q/w)^{-T_t-N} (1-qc)^N (1-qw)^N \\
&  + \frac{1}{2\pi \im}  \oint_{\gamma_N^-(-1)}  dw \frac{(1-w^2)}{(1-cw)(w-c)} \cdot  w^{\tilde{y} -\tilde{x} - 1} \cdot  (1-wq)^{-T_s}(1-q/w)^{-T_t}.
\end{split}
\end{equation}
Using (\ref{Eq.ChangeOfVar}) we obtain
\begin{equation}\label{Eq.K22Res3}
\begin{split}
&(1/4)(1-q)^{T_s + T_t} \times [\mbox{lines 1 and 2 in (\ref{Eq.K22Res2})}] = I_{22}^N(s,x;t,y), \\
&(1/4)(1-q)^{T_s + T_t} \times [\mbox{line 3 in (\ref{Eq.K22Res2})}] = - \frac{{\bf 1}\{c > 1\}}{2\pi \im} \oint_{\gamma^-_N(-1)} dw \frac{F_{22}^N(c,w)}{4(c  w - 1)}, \\
&(1/4)(1-q)^{T_s + T_t} \times [\mbox{line 4 in (\ref{Eq.K22Res2})}] = \frac{1}{2\pi \im}\oint_{\gamma^-_N(-1)} dw  \frac{(1-w^2)}{4(1-cw)(w-c)}  \\
&\times e^{ ( \sigma_1 y N^{1/3} - \sigma_1 x N^{1/3} - 1) \log (w) - T_s G_1(w^{-1}) - T_t G_1(w)}.
\end{split}
\end{equation}
Combining (\ref{Eq.K22Res1}), (\ref{Eq.K22Match1}), (\ref{Eq.K22Res2}) and (\ref{Eq.K22Res3}), we conclude $\tilde{K}^N_{22}(s,x;t,y) = K^N_{22}(s,x;t,y)$.
\end{proof}

%
%
\subsection{Parameter scaling for the Brownian limit}\label{Section4.4} 
In this section, we introduce a rescaling of the Schur processes from Definition \ref{Def.SchurProcess}, capturing the asymptotic behavior of the top curve $\{ \lambda_1^j: j \in \llbracket 0, M \rrbracket\}$ in the supercritical regime $c >1$. We then derive an alternative formula for the correlation kernel from Proposition \ref{Prop.SchurPfaffianKernel} suitable for the asymptotic analysis in this scaling.

We summarize various parameters and functions that appear throughout the paper in the following definition. 
\begin{definition}\label{Def.ParametersEdge} Fix $q \in (0,1)$, $c \in (1, q^{-1})$, and set
\begin{equation}\label{Eq.ParametersEdge1}
\begin{split}
&p_2 = \frac{q}{c-q}, \hspace{2mm} \sigmap = \sqrt{p_2(1+p_2)} = \frac{q^{1/2}c^{1/2}}{c-q}, \hspace{2mm} \bar{\kappa} = \frac{(c-q)^2}{(1-qc)^2} - 1 > 0.
\end{split}
\end{equation}
For $\kappa \in [0, \bar{\kappa})$ we also define
\begin{equation}\label{Eq.ParametersEdge2}
z_c(\kappa) = \frac{q + \sqrt{1+ \kappa}}{1 + q \sqrt{1+ \kappa}}, \hspace{2mm} h_1(\kappa) = \frac{2q\sqrt{1+\kappa}+ 2q^2 + q^2\kappa}{1-q^2} , \hspace{2mm} h_2(\kappa) = \kappa p_2 + \frac{q (c^2 - 2qc + 1)}{(c-q)(1-qc)}.
\end{equation}
Notice that by our parameter range, we have $q < 1 \leq z_c(\kappa) < c < q^{-1}$.

We also define for $z \in \mathbb{C} \setminus \{0, q , q^{-1}\}$ the functions
\begin{equation}\label{Eq.DefSGTop}
\begin{split}
& \SFb(z;\kappa) = (1+\kappa) \log(1 - q/z) - \log(1-qz) - h_1(\kappa)\log(z), \\ 
&\SFt(z;\kappa) = (1+\kappa)\log(1 - q/z) - \log(1-qz) - h_2(\kappa) \log(z),\\
&\bar{\SFb}(z;\kappa) = \SFb(z;\kappa) - \SFb(\zc(\kappa);\kappa), \hspace{2mm} \bar{\SFt}(z; \kappa) = \SFt(z;\kappa) - \SFt(c;\kappa), \\
&\GFt(z) = \log(1-q/z)- p_2 \log(z), \hspace{2mm} \bar{G}_2(z) = \GFt(z) - \GFt(c).
\end{split}
\end{equation}
\end{definition}

We next introduce how we rescale our random partitions in the following definition.
\begin{definition}\label{Def.ScalingEdge} Assume the same parameters as in Definition \ref{Def.ParametersEdge}. We further fix $m \in \mathbb{N}$, $\kappa_1, \dots, \kappa_m$, such that 
\begin{equation}\label{IneqKappa}
\bar{\kappa} > \kappa_m \geq \kappa_{m-1} \geq \cdots \geq \kappa_1 \geq 0,
\end{equation}
and set $\mathcal{T} = \{\kappa_1, \dots, \kappa_m\}$. For $\kappa \in \mathcal{T}$, we define the lattice $\Lambda_{\kappa}(N) = a_{\kappa}(N) \cdot \mathbb{Z} + b_{\kappa}(N)$, where 
\begin{equation}\label{Eq.LatticeEdge}
a_{\kappa}(N) = \sigmap^{-1} N^{-1/2}, \mbox{ and } b_{\kappa}(N) =  - \sigmap^{-1} N^{1/2} \cdot \hp(\kappa).
\end{equation}
We let $\mathbb{P}_N$ be the Pfaffian Schur process from Definition \ref{Def.SchurProcess}. Here, we assume that $M = M(N)$ is sufficiently large so that $M \geq \lfloor \kappa_m N \rfloor$. If $(\lambda^0, \dots, \lambda^M)$ have law $\mathbb{P}_N$, we define the random variables
\begin{equation}\label{Eq.YEdge}
Y_i^{j,N} = \sigmap^{-1} N^{-1/2} \cdot \left( \lambda_i^{\lfloor \kappa_j N \rfloor} - h_2(\kappa_j)  N  - i\right) \mbox{ for } i \in \mathbb{N} \mbox{ and } j \in \llbracket 1, m \rrbracket.
\end{equation}
\end{definition}

We next introduce certain contours that will be used to define our alternative correlation kernel.

\begin{definition}\label{Def.ContoursEdge} Assume the same parameters as in Definition \ref{Def.ParametersEdge} and fix $\kappa \in [0, \bar{\kappa})$. Suppose $x \in [\zc(\kappa), c]$, $\theta \in (0, \pi)$, $R > q^{-1}$ and $r \in [0, q^{-1} - c]$. With this data we define the contour $C(x, \theta, R,r)$ as follows. Let $z^{\pm}$ be the points where the rays $\{x + te^{\pm \im \theta}: t \geq 0\}$ intersect the $x$-centered circle of radius $r$, and let $\zeta^{\pm}$ be the points where they intersect the $0$-centered circle of radius $R$. The contour $C(x, \theta, R,r)$ consists of three oriented segments that connect $\zeta^-$ to $z^-$, $z^-$ to $z^+$, and $z^+$ to $\zeta^+$, as well as the counterclockwise oriented circular arc of the $0$-centered circle that connects $\zeta^+$ to $\zeta^-$. See the left side of Figure \ref{Fig.ContoursEdge}. We also recall that $C_r$ is the positively oriented zero-centered circle of radius $r > 0$.

For a fixed $a \in \mathbb{C}$ and $\varphi \in (0, \pi)$, we recall from Definition \ref{Def.S1Contours} that $\mathcal{C}_{a}^{\varphi}=\{a+|s|e^{\mathrm{sgn}(s)\im\varphi}, s\in \mathbb{R}\}$ is the infinite contour oriented from $a+\infty e^{-\im\varphi}$ to $a+\infty e^{\im\varphi}$.  For $r > 0$ we define the contour $\mathcal{C}_{a}^{\varphi}[r]$, so that outside of the disc $\{z: |z-a| \leq r\}$ it agrees with $\mathcal{C}_{a}^{\varphi}$ and inside the disc it is a vertical segment that connects $a + re^{-\im \varphi}$ and $a + re^{\im \varphi}$. See the right side of Figure \ref{Fig.ContoursEdge}.
\end{definition}

\begin{figure}[h]
    \centering
     \begin{tikzpicture}[scale=0.75]

        \def\tra{9} 
        \draw[->,thick,gray] (-4,0)--(4.0,0) node[right] {$\Real$};
  \draw[->,thick,gray] (0,-4)--(0,4) node[above] {$\Imag$};

  \def\x{1.2}   
  \def\R{3.4}   
  \def\r{1.0}   
  \def\th{30}   

  \node (O) at (0,0) {};
  \node (X) at (\x,0) {};
  
  \path (X) ++(\th:1) coordinate (Xp);
  \path (X) ++(-\th:1) coordinate (Xm);

  \path[name path=BigCirc]   (O) circle[radius=\R];
  \path[name path=SmallCirc] (X) circle[radius=\r];
  \path[name path=RayPlus]   (X) -- ($(X)+10*(\th:1)$);
  \path[name path=RayMinus]  (X) -- ($(X)+10*(-\th:1)$);

  \path[name intersections={of=RayPlus and BigCirc,by=ZetaPlus}];
  \path[name intersections={of=RayMinus and BigCirc,by=ZetaMinus}];
  \path[name intersections={of=RayPlus and SmallCirc,by=zPlus}];
  \path[name intersections={of=RayMinus and SmallCirc,by=zMinus}];


  \draw[dashed] (O) circle[radius=\R];
  \draw[dashed] (zPlus) arc (30:330:\r);

  \draw[->] (ZetaMinus) -- (zMinus);
  \draw[->] (zMinus) -- (zPlus);
  \draw[->] (zPlus) -- (ZetaPlus);
 \draw[<->,thin] (0,0) -- ++(135:{\R})node[pos=0.5, above] {$R$};
 \draw[<->,thin] (\x,0) -- ++(90:{\r}) node[pos=0.5, left] {$r$};
  \draw let
      \p1 = (ZetaPlus),
      \p2 = (ZetaMinus),
      \n1 = {atan2(\y1,\x1)},                    
      \n2 = {atan2(\y2,\x2)},                    
      \n3 = {ifthenelse(\n2<\n1, \n2+360, \n2)}, 
      \n4 = {(\n1+\n3)/2}                        
    in
      [->] (ZetaPlus) arc[start angle=\n1, end angle=\n4, radius=\R];

  \draw let
      \p1 = (ZetaPlus),
      \p2 = (ZetaMinus),
      \n1 = {atan2(\y1,\x1)},
      \n2 = {atan2(\y2,\x2)},
      \n3 = {ifthenelse(\n2<\n1, \n2+360, \n2)}
    in
      (ZetaPlus) arc[start angle=\n1, end angle=\n3, radius=\R];

  
  \fill (O) circle (1.5pt) node[below left=2pt] {$0$};
  \fill (X) circle (1.5pt) node[below=2pt] {$x$};
   \fill (0.6,0) circle (1.5pt) node[below=2pt]{$z_c$};
  \fill (2.8,0) circle (1.5pt) node[below=0pt, yshift = 2pt] {$q^{-1}$};
  \fill (ZetaPlus)  circle (1.5pt) node[right=0pt] {$\zeta^{+}$};
  \fill (ZetaMinus) circle (1.5pt) node[right=0pt] {$\zeta^{-}$};
  \fill (zPlus)     circle (1.5pt) node[above = 2pt, xshift = 3pt]  {$z^{+}$};
  \fill (zMinus)    circle (1.5pt) node[below =2pt, xshift = 3pt] {$z^{-}$};

  \draw[->] (\x+1.3,0) arc (0:30:1.3);
  \node at (\x+1.5,0.4) {$\theta$};

        \draw[->, thick, gray] ({\tra -2},0)--({\tra + 4},0) node[right]{$\Real$};
        \draw[->, thick, gray] ({\tra + 0},-4)--({\tra + 0},4) node[above]{$\Imag$};
        \fill (\tra, 0) circle (1.5pt) node[below left=2pt] {$0$};

        \fill (\tra - 1.5,0) circle (1.5pt) node[above =2pt] {$a$};
        \draw[->, thick] ({\tra -0.8},-0.7)--({\tra -0.8},0.7);
        \draw[->, thick] ({\tra -0.8},0.7)--({\tra + -0.8 + 2},0.7 + 2);
        \draw[-, thick] ({\tra + -0.8 + 2},0.7 + 2)--({\tra -0.8 + 3.3},4);
         \draw[-, thick] ({\tra -0.8 + 2},-2 - 0.7)--({\tra -0.8},-0.7);
        \draw[->, thick] ({\tra + -0.8 + 3.3},-4)--({\tra -0.8 + 2},-2 - 0.7);

         \draw[<->, very thin] (\tra -1.5,0)--(\tra - 0.8, -0.7) node[pos=0.5, below left, xshift = 2pt, yshift = 2pt] {$r$};
        \draw[->, very thin] (\tra + 1,0) arc (0:45:2.5);
        \node at (\tra + 1.1,1) {$\varphi$};

    \end{tikzpicture} 
    \caption{The left side depicts the contour $C(x,\theta, R,r)$, and the right side depicts the contour $\mathcal{C}_a^{\varphi}[r]$ from Definition \ref{Def.ContoursEdge}.}
    \label{Fig.ContoursEdge}
\end{figure}
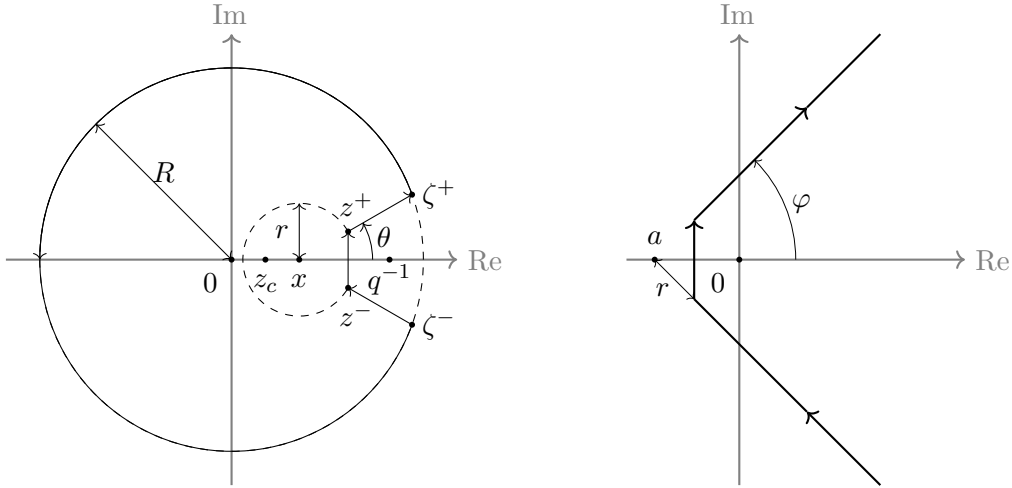

With the above notation in place we can state the main result of this section.
\begin{lemma}\label{Lem.PrelimitKernelEdge} Assume the same notation as in Definitions \ref{Def.ParametersEdge} and \ref{Def.ScalingEdge}. In addition, fix parameters $\theta_{\kappa_1}, \dots, \theta_{\kappa_m} \in (\pi/4, \pi/2)$, $R_{\kappa_1}, \dots, R_{\kappa_m} > q^{-1}$, and the contours
\begin{equation}\label{Eq.ContoursEdge}
\begin{split}
&\Gamma_{\kappa, N} = C\left(c,\theta_{\kappa}, R_\kappa, \sec(\theta_{\kappa}) N^{-1/2}\right), \hspace{2mm} \gamma_{\kappa, N} = C_{\zc(\kappa) + N^{-1/2}}\mbox{ for } \kappa \in \mathcal{T}, \mbox{ and } \\
& \tilde{\gamma}_N = C\left(c, \pi/2, \sqrt{c^2 + N^{-1/6}}, 0 \right),
\end{split}
\end{equation}
as in Definition \ref{Def.ContoursEdge}. Let $M^N$ be the point process on $\mathbb{R}^2$, formed by $\{(\kappa_j, Y_i^{j,N}): i \geq 1, j \in \llbracket 1, m\rrbracket \}$. Then for all large $N$ (depending on $q, c, \mathcal{T}$ and $\{\theta_{\kappa}: \kappa \in \mathcal{T}\}$), $M^N$ is a Pfaffian point process with reference measure $\mu_{\mathcal{T},\nu(N)}$ and correlation kernel $K^N$ that are defined as follows. 

The measure $\mu_{\mathcal{T},\nu(N)}$ is as in Definition \ref{Def.ScaledLatticeMeasures} for $\nu(N) = (\nu_{\kappa_1}(N), \dots, \nu_{\kappa_m}(N))$, where $\nu_{\kappa}(N)$ is $\sigmap^{-1} N^{-1/2}$ times the counting measure on $\Lambda_{\kappa}(N)$. 

The correlation kernel $K^N: (\mathcal{T} \times \mathbb{R}) \times (\mathcal{T} \times \mathbb{R}) \rightarrow\operatorname{Mat}_2(\mathbb{C})$ takes the form
\begin{equation}\label{Eq:EdgeKerDecomp}
\begin{split}
&K^N(s,x; t,y) = \begin{bmatrix}
    K^N_{11}(s,x;t,y) & K^N_{12}(s,x;t,y)\\
    K^N_{21}(s,x;t,y) & K^N_{22}(s,x;t,y) 
\end{bmatrix} \\
&= \begin{bmatrix}
    I^N_{11}(s,x;t,y) & I^N_{12}(s,x;t,y) + R^N_{12}(s,x;t,y) \\
    -I^N_{12}(t,y;s,x) - R^N_{12}(t,y;s,x) & I^N_{22}(s,x;t,y) + R^N_{22}(s,x;t,y)
\end{bmatrix},
\end{split}
\end{equation}
where $I^N_{ij}(s,x;t,y), R^N_{ij}(s,x;t,y)$ are defined as follows. The kernels $I^N_{ij}$ are given by
\begin{equation}\label{Eq.DefIN11Edge}
\begin{split}
&I^N_{11}(s,x;t,y) = \frac{1}{(2\pi \im)^{2}}\oint_{\Gamma_{s, N}} dz \oint_{\Gamma_{t, N}} dw F_{11}^N(z,w) H_{11}^N(z,w) \mbox{, where }\\
& F^N_{11}(z,w) = e^{N\bar{\SFt}(z;s) + N\bar{\SFt}(w;t)} \cdot e^{-  \sigmap x N^{1/2} \log (z/c) -  \sigmap y N^{1/2} \log(w/c)  }, \\
&H^N_{11}(z,w) = \sigmap N^{1/2} \cdot  \frac{(z-w)( 1 - c/z) (1 - c/w)(1-q/z)^{\lfloor s N \rfloor - sN}(1-q/w)^{ \lfloor t N \rfloor - tN} }{(z^{2}-1)(w^{2}-1)(zw-1)(1-q/c)^{ \lfloor s N \rfloor - sN}(1-q/c)^{\lfloor t N \rfloor - tN} };
\end{split}
\end{equation}
\begin{equation}\label{Eq.DefIN12Edge}
\begin{split}
&I^N_{12}(s,x;t,y) = \frac{1}{(2\pi \im)^{2}}\oint_{\Gamma_{s, N}} dz \oint_{\gamma_{t,N}} dw F_{12}^N(z,w) H_{12}^N(z,w) \mbox{, where }\\
& F^N_{12}(z,w) = e^{N\bar{\SFt}(z;s) - N\bar{\SFt}(w;t)}\cdot e^{-  \sigmap x N^{1/2} \log (z/c) +  \sigmap y N^{1/2} \log(w/c)  }, \\
&H^N_{12}(z,w) =   \sigmap N^{1/2} \cdot \frac{(zw - 1)(z-c)(1-q/z)^{\lfloor s N \rfloor - sN}(1-q/c)^{ \lfloor t N \rfloor - tN}}{z (z-w)(z^2 - 1) (w-c)(1-q/w)^{ \lfloor t N \rfloor - tN}(1-q/c)^{ \lfloor s N \rfloor - sN}};
\end{split}
\end{equation}
\begin{equation}\label{Eq.DefIN22Edge}
\begin{split}
&I^N_{22}(s,x;t,y) = \frac{1}{(2\pi \im)^{2}}\oint_{\gamma_{s,N}} dz \oint_{\gamma_{t, N}} dw F_{22}^N(z,w) H_{22}^N(z,w) \mbox{, where }\\
& F^N_{22}(z,w) = e^{-N\bar{\SFt}(z;s) - N\bar{\SFt}(w;t)}\cdot e^{  \sigmap x N^{1/2} \log (z/c) +  \sigmap y N^{1/2} \log(w/c)  }, \\
&H^N_{22}(z,w) =  \sigmap N^{1/2}\cdot \frac{(z-w)(1-q/c)^{\lfloor s N \rfloor - sN}(1-q/c)^{\lfloor t N \rfloor - tN}}{(zw - 1)(z- c)(w - c)(1-q/z)^{\lfloor s N \rfloor - sN }(1-q/w)^{\lfloor t N \rfloor - tN}}.
\end{split}
\end{equation}
The kernels $R^N_{ij}$ are given by
\begin{equation}\label{Eq.DefRN12Edge}
\begin{split}
&R^N_{12}(s,x;t,y) = \frac{-{\bf 1}\{s < t \}   \sigmap N^{1/2} }{2 \pi \im}  \oint_{\tilde{\gamma}_N} dz e^{(s-t)N \bar{\GFt}(z)} \cdot e^{\sigmap (-  x +  y) N^{1/2} \log(z/c)  } \\
&\times \frac{(1-q/z)^{\lfloor sN \rfloor - sN}(1-q/c)^{\lfloor tN \rfloor - tN}}{z(1-q/z)^{\lfloor tN \rfloor - tN}(1-q/c)^{\lfloor sN \rfloor -sN} } + \frac{\sigmap N^{1/2}}{2\pi \im} \oint_{\Gamma_{s,N}} \hspace{-2mm} dz \frac{F_{12}^N(z,c) (zc-1) (1-q/z)^{ \lfloor s N \rfloor - sN}}{z(z^2-1)(1-q/c)^{ \lfloor s N \rfloor-sN}};
\end{split}
\end{equation}
\begin{equation}\label{Eq.DefRN22Edge}
\begin{split}
R^N_{22}(s,x;t,y) = & \frac{\sigmap N^{1/2}}{2\pi \im} \oint_{\gamma_{s,N}} dz \frac{F_{22}^N(z,c)(1-q/c)^{\lfloor s N \rfloor - sN}}{(c z - 1)(1-q/z)^{ \lfloor s N \rfloor - sN}}\\
&- \frac{\sigmap N^{1/2}}{2\pi \im} \oint_{\gamma_{t,N}} dw \frac{F_{22}^N(c,w)(1-q/c)^{ \lfloor t N \rfloor - tN}}{(c  w - 1)(1-q/w)^{ \lfloor t N \rfloor -tN}}.
\end{split}
\end{equation}
\end{lemma}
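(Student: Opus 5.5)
The plan follows the proof of Lemma \ref{Lem.PrelimitKernelBot} step by step, with the scaling of Definition \ref{Def.ScalingEdge} in place of Definition \ref{Def.ScalingBulk}.

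\textbf{Change of variables.} First take a piecewise linear increasing bijection $f$ with $f(i)=\kappa_i$, and set
$$\phi_N(s,x)=\bigl(f(s),\,\sigma_2^{-1}N^{-1/2}(x-h_2(f(s))N)\bigr).$$
Then $M^N=\mathfrak{S}(\lambda)\phi_N^{-1}$, with $M_j=\lfloor \kappa_j N\rfloor$ in Proposition \ref{Prop.SchurPfaffianKernel}. (If some $\kappa_j$ coincide, I would handle this by a limiting/relabelling remark.) Applying \cite[Proposition 5.8]{DY25} gives a Pfaffian point process with reference measure $\mu_{\mathcal{T},\nu(N)}$. The kernel is $\kgeo$ conjugated by the following factors:
\begin{itemize}
\item an overall scaling $c_1=\sigma_2N^{1/2}$, $c_2=1$;
\item a gauge function $f(s,x)=c^{\tilde x}(1-q/c)^{-\lfloor sN\rfloor-N}(1-qc)^{N}$, up to constant factors.
\end{itemize}
The gauge is chosen so that all exponentials become $e^{\pm N\bar S_2(\cdot;s)}$ normalized at $c$. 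One checks the analogue of (\ref{Eq.ChangeOfVar}):
$$z^{-\tilde x}(1-q/z)^{\lfloor sN\rfloor+N}(1-qz)^{-N}=e^{NS_2(z;s)-\sigma_2xN^{1/2}\log z}\,(1-q/z)^{\lfloor sN\rfloor-sN},$$
with $\tilde x=h_2(s)N+\sigma_2N^{1/2}x$. Dividing by the same expression at $z=c$ produces exactly $F^N_{ij}$ together with the fractional-power corrections in $H^N_{ij}$.

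\textbf{Contour deformations.} Next, take $N$ large. Then $\Gamma_{\kappa,N}$ encircles $0,\pm1,q,c^{-1}$ and passes just to the right of $c$, staying to the left of $q^{-1}$, since its radius $R$ exceeds $q^{-1}$ but the arc itself avoids $q^{-1}$. Also $\gamma_{\kappa,N}$ is a circle of radius slightly above $z_c(\kappa)\ge 1$ and below $c$, so it encircles $q,\pm1,c^{-1}$ but not $c$. Because $\Gamma$ uses the big arc of radius $R>q^{-1}$, I need to check that the integrand has no pole at $q^{-1}$ inside; this requires care.

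\begin{itemize}
\item For $K_{11}$, deform both $C_{r_1}$ contours to $\Gamma$. No poles are crossed besides possibly $z=c$, but the factor $(1-c/z)$ kills that one.
\item For $K_{12}$, move $z$ to $\Gamma_{s,N}$ and $w$ from $C_{r^w_{12}}$ to $\gamma_{t,N}$. Since $\gamma_{t,N}$ lies inside $c$ while $r^w_{12}>c$, we always cross $w=c$; this gives the unconditional second term of $R^N_{12}$. When $s<t$ we also cross $w=z$. That residue is a single integral in $z$, which I then deform to $\tilde\gamma_N$, giving the first term with $e^{(s-t)N\bar G_2}$.
\item For $K_{22}$, proceed as in (\ref{Eq.K22Res1})--(\ref{Eq.K22Res3}). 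Shrinking $w$ to $\gamma_{t,N}$ picks up $w=c$. Shrinking $z$ then picks up $z=c$ and $z=w^{-1}$. The residue at $z=w^{-1}$ now sits inside $\gamma_{s,N}$ because $|w|>1$, so it is not crossed. This is why $R^N_{22}$ here has no third term, unlike (\ref{Eq.DefRN22}).
\end{itemize}

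\textbf{Main obstacle.} The hard step is the pole bookkeeping. I must verify that $\Gamma_{\kappa,N}$, which contains both the disc of radius $R>q^{-1}$ and the region near $c$, does not enclose singularities of $(1-qz)^{-N}$ in the $K_{11}$ and $K_{12}$ integrands. If it does, I would instead argue that the integrand in $z$ is analytic outside $C_{r_1}$ up to infinity apart from $q^{-1}$, and handle the $q^{-1}$ contribution separately. I also need to confirm that the residue at $w=z$ for $s<t$ yields the stated $\tilde\gamma_N$ integral with $z^{-1}$ and the correct sign.
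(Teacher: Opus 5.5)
This is essentially the paper's proof: the same $\phi_N$, a gauge $f(s,x)$ that coincides with the paper's, and the residue bookkeeping the paper delegates to Lemma \ref{Lem.PrelimitKernelBot}; your remark that $z=w^{-1}$ stays inside $\gamma_{s,N}$ (as $|w|>1$) correctly explains why $R^N_{22}$ has no third term. The obstacle you flag is not one: $\Gamma_{\kappa,N}$ never meets the half-line $(c+N^{-1/2},\infty)$, because the arc of radius $R_\kappa$ closes the contour through the left half-plane, so $q^{-1}$ lies outside it; and to match the stated diagonal entries exactly, take $c_1=c_2=\sigma_2^{1/2}N^{1/4}$ as the paper does, rather than $c_1=\sigma_2N^{1/2}$, $c_2=1$.
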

\begin{proof} From Definition \ref{Def.ParametersEdge} we can find $N_0$, depending on $q, c, \mathcal{T}$ and $\{\theta_{\kappa}: \kappa \in \mathcal{T}\}$, such that for $N \geq N_0$ and $\kappa \in \mathcal{T}$, we have $q^{-1} - c \geq \sec(\theta_{\kappa}) N^{-1/2}$, and also for $z \in \Gamma_{\kappa, N}$ and $w \in \gamma_{\kappa,N}$, we have 
\begin{equation}\label{Eq.ContoursNestedEdge}
|z| \geq c + N^{-1/2} > \zc(\kappa) = |w| > 1.   
\end{equation}
Throughout the proof we assume that $N$ is sufficiently large so that $N \geq N_0$.

Let $f: \mathbb{R} \rightarrow \mathbb{R}$ be a piecewise linear increasing bijection, such that $f(i) = \kappa_i$ for $i \in \llbracket 1, m \rrbracket$. Define $\phi_N: \mathbb{R}^2 \rightarrow \mathbb{R}^2$ through 
$$\phi_N(s, x) = \left(f(s),  \sigmap^{-1}N^{-1/2} \cdot \left( x- \hp(f(s)) N  \right) \right),$$   
and observe that $M^N = \mathfrak{S}(\lambda) \phi_N^{-1}$, where $\mathfrak{S}(\lambda)$ is as in Proposition \ref{Prop.SchurPfaffianKernel} for $M_j = \lfloor \kappa_j N \rfloor$. It follows from Proposition \ref{Prop.SchurPfaffianKernel} and the change of variables formula \cite[Proposition 5.8(5)]{DY25} with the above $\phi_N$, \cite[Proposition 5.8(4)]{DY25} with 
\begin{equation}
f(s,x) =  \exp \left( \sigmap x N^{1/2} \cdot \log (c)  - N \SFt(s,c) \right) \cdot \frac{1}{(1-q/c)^{ \lfloor sN \rfloor - sN}},
\end{equation}
and \cite[Proposition 5.8(6)]{DY25} with $c_1 = c_2 =  \sigmap^{1/2} N^{1/4}$ that $M^N$ is a Pfaffian point process with reference measure $\mu_{\mathcal{T},\nu(N)}$ and correlation kernel $\tilde{K}^N: (\mathcal{T} \times \mathbb{R}) \times (\mathcal{T} \times \mathbb{R}) \rightarrow\operatorname{Mat}_2(\mathbb{C})$, given by
\begin{equation*}
\tilde{K}^N(s,x; t,y) = \begin{bmatrix}  \sigmap N^{1/2} f(s,x)f(t,y) \kgeo_{11}(\tilde{s},\tilde{x}; \tilde{t},\tilde{y}) & \sigmap N^{1/2} \frac{f(s,x)}{f(t,y)} \kgeo_{12}(\tilde{s},\tilde{x}; \tilde{t},\tilde{y}) \\ \sigmap N^{1/2} \frac{f(t,y)}{f(s,x)} \kgeo_{21}(\tilde{s},\tilde{x}; \tilde{t},\tilde{y}) & \sigmap N^{1/2}\frac{1}{f(s,x)f(t,y)} \kgeo_{22}(\tilde{s},\tilde{x}; \tilde{t},\tilde{y}) \end{bmatrix},
\end{equation*}
where $\tilde{s}  = f^{-1}(s)$, $\tilde{t} =  f^{-1}(t)$ and
\begin{equation*}
\tilde{x} = \hp(s) N  +  \sigmap N^{1/2} x, \hspace{2mm}\tilde{y} = \hp(t) N  +  \sigmap N^{1/2} y.
\end{equation*}
All that remains is to show that $\tilde{K}^N$ agrees with $K^N$ as in the statement of the lemma.

We note that we have the following identities
\begin{equation}\label{Eq.ChangeVarsEdge}
\begin{split}
&z^{\mp \tilde{x}} (1-q/z)^{\pm (1+s) N}(1-qz)^{\mp N } f(s,x)^{\pm 1} = e^{\pm N \bar{\SFt}(z;s)  \mp \sigmap x N^{1/2} \log(z/c) }(1-q/c)^{\mp \lfloor s N \rfloor \pm s N}, \\
&w^{\mp \tilde{y}} (1-q/w)^{\pm (1+t) N}(1-qw)^{\mp  N } f(t,y)^{\pm 1} = e^{ \pm N \bar{\SFt}(w;t) \mp \sigmap y  N^{1/2} \log(w/c)} (1-q/c)^{\mp \lfloor t N \rfloor \pm t N}.
\end{split}
\end{equation}
From this point on the proof is essentially the same as that of Lemma \ref{Lem.PrelimitKernelBot}, where instead of using (\ref{Eq.ChangeOfVar}) we use (\ref{Eq.ChangeVarsEdge}). We omit the details.
\end{proof}

%
%
\section{Kernel asymptotics}\label{Section5} The goal of this section is to show that the kernels from Lemmas \ref{Lem.PrelimitKernelBot} and \ref{Lem.PrelimitKernelEdge} converge pointwise -- the precise statements are given in Propositions \ref{Prop.KernelConvBot} and \ref{Prop.KernelConvEdge}.

%
%
\subsection{Kernel convergence for the Airy limit}\label{Section5.1} The goal of this section is to establish the following statement.

\begin{proposition}\label{Prop.KernelConvBot} 
Assume the same notation as in Lemma \ref{Lem.PrelimitKernelBot} with $\mathcal{T} \subset (0, \infty)$, and recall the contours $\mathcal{C}_z^{\varphi}$ from Definition \ref{Def.S1Contours}. Fix $x_{\infty},y_{\infty} \in \mathbb{R}$, $s,t \in \mathcal{T}$ and sequences $x_N \in \Lambda_s(N)$, $y_N \in \Lambda_{t}(N)$ such that $\lim_{N \rightarrow \infty} x_N = x_{\infty}$ and $\lim_{N \rightarrow \infty} y_N = y_{\infty}$. Then, we have the following limits.
\begin{equation}\label{eq:I11Lim}
\begin{split}
&\lim_{N \rightarrow \infty}  (1-c)^{-2}\sigma_1^{-2}N^{-2/3} I^N_{11}(s,x_N;t,y_N) = I^{\infty}_{11}(s,x_{\infty};t,y_{\infty}) \mbox{, where } I^{\infty}_{11}(s,x;t,y) \\
& = \frac{1}{(2\pi \im)^{2}}\int_{\mathcal{C}^{\pi/3}_{\sigma_1}} dz\int_{\mathcal{C}^{\pi/3}_{\sigma_1}} dw e^{z^3/3 + w^3/3 - f_1 s z^2 - f_1 t w^2 - xz - yw  } \frac{z-w  }{zw (z + w)},
\end{split}
\end{equation}
\begin{equation}\label{eq:I12Lim}
\begin{split}
&\lim_{N\rightarrow \infty} I^N_{12}(s,x_N;t,y_N) = I^{\infty}_{12}(s,x_{\infty};t,y_{\infty}) , \mbox{ where }  I^{\infty}_{12}(s,x;t,y) \\
&= \frac{1}{(2\pi \im)^{2}}\int_{\mathcal{C}^{\pi/3}_{\sigma_1}} dz\int_{\mathcal{C}^{2\pi/3}_{-\sigma_1}} dw e^{z^3/3 - w^3/3 - f_1 s z^2 + f_1 t w^2 - xz + yw  } \frac{z+w }{2z(z-w) },
\end{split}
\end{equation}
\begin{equation}\label{eq:I22Lim}
\begin{split}
&\lim_{N \rightarrow \infty} (1-c)^{2}\sigma_1^{2}N^{2/3} I^N_{22}(s,x_N;t,y_N) =  I^{\infty}_{22}(s,x_{\infty};t,y_{\infty}) , \mbox{ where }  I^{\infty}_{22}(s,x;t,y)  \\
& = \frac{1}{(2\pi \im)^{2}}\int_{\mathcal{C}^{2\pi/3}_{-\sigma_1}} dz\int_{\mathcal{C}^{2\pi/3}_{-\sigma_1}} dw e^{-z^3/3 - w^3/3 + f_1 s z^2 + f_1 t w^2 + xz + y w  } \frac{z-w}{4(z+w) },
\end{split}
\end{equation}
\begin{equation}\label{eq:R12Lim} 
\lim_{N \rightarrow \infty} R^N_{12}(s,x_N;t,y_N) = R^{\infty}_{12}(s,x_{\infty};t,y_{\infty}) ,\mbox{ where } R^{\infty}_{12}(s,x;t,y) = \frac{- {\bf 1}\{s < t\}}{\sqrt{4\pi f_1 (t-s)}} \cdot e^{-\frac{(y - x)^2}{4 f_1 (t-s)}}. 
\end{equation}
In addition,   we have 
\begin{equation}\label{eq:R22Lim}
\lim_{N \rightarrow \infty} (1-c)^{2}\sigma_1^{2}N^{2/3} R^N_{22}(s,x_N;t,y_N) = R^{\infty}_{22}(s,x_{\infty};t,y_{\infty}),
\end{equation} 
where
\begin{equation}\label{eq:forumla R infinity}
R^{\infty}_{22}(s,x;t,y)=-\frac{1}{4\pi\im}\int_{\mathcal{C}^{2\pi/3}_{-\sigma_1}} dw \cdot w e^{f_1 (s + t)w^2+w(y - x )  } . 
\end{equation} 
\end{proposition}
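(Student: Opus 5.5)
The plan is to run a standard steepest descent analysis around the double critical point $z=1$ of $S_1$, following the proof of the analogous convergence in \cite{DY25} (the critical case $c = 1 - \varpi\alpha_q N^{-1/3}$), the only genuinely new feature being that $c$ is now a fixed number different from $1$. First, I will record the Taylor expansions near $z=1$. One checks from Definition \ref{Def.ParametersBulk} that $S_1(1)=\log(1-q)-\log(1-q)=0$, $S_1'(1)=S_1''(1)=0$ (this is what $h_1 = 2q/(1-q)$ is tuned for), and $S_1'''(1) = 2\sigma_1^3$, so that with $z = 1+ \sigma_1^{-1}\tilde z N^{-1/3}$ we get $N S_1(z) = \tilde z^3/3 + O(|\tilde z|^4 N^{-1/3})$. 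Similarly $G_1(1)=G_1'(1)=0$ (since $p_1 = q/(1-q)$), and $G_1''(1)$ produces, after multiplication by $T_s \approx sN^{2/3}$, the term $-f_1 s \tilde z^2$. Finally, $-\sigma_1 x N^{1/3}\log z = -x\tilde z + o(1)$. Under this change of variables the contours $\gamma_N^{\pm}(\pm1,0)$ become truncated versions of $\mathcal{C}^{\pi/3}_{\sigma_1}$ and $\mathcal{C}^{2\pi/3}_{-\sigma_1}$.

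Next, I will treat the prefactors $H^N_{ij}$. Here $c\neq 1$ matters. Near $z=w=1$, the factors $(1-c/z)(1-c/w)\to (1-c)^2$, while $(z^2-1)(w^2-1)(zw-1)\approx 4\sigma_1^{-3}N^{-1}\,\tilde z\tilde w(\tilde z+\tilde w)$ and $z-w\approx\sigma_1^{-1}N^{-1/3}(\tilde z-\tilde w)$. Including the Jacobian $\sigma_1^{-2}N^{-2/3}$, this gives exactly the $(1-c)^2\sigma_1^2N^{2/3}$ normalization and the rational function in (\ref{eq:I11Lim}). For $I_{22}$ the factor $1/((z-c)(w-c))\to (1-c)^{-2}$ gives the normalization in (\ref{eq:I22Lim}). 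For $I_{12}$ the $c$-dependence cancels, since $(z-c)/(w-c)\to1$. The residue terms $R^N_{12}$ (first line) and the last term of $R^N_{22}$ are single integrals; the factor $\frac{1-w^2}{(1-cw)(w-c)}\approx -2\sigma_1^{-1}N^{-1/3}\tilde w/(1-c)^2$ produces $R_{22}^\infty$, and $R_{12}^\infty$ is a Gaussian integral, since the cubic terms cancel and only $f_1(s-t)\tilde z^2$ survives. Crucially, when $c>1$ the extra residue terms involving $F^N_{12}(z,c)$ and $F^N_{22}(z,c)$, $F^N_{22}(c,w)$ must be shown to vanish after the respective scalings. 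These contain $e^{\mp NS_1(c)}$ with $c$ away from $1$, and I expect $\mathrm{Re}\,S_1(c)>0$ (equivalently, the evaluation at $c$ is exponentially small relative to the contour integral, which is $O(1)$ on the contour). Thus they decay like $e^{-\epsilon N}$, which beats the polynomial prefactors $N^{2/3}$.

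The main work, and the expected obstacle, is the global estimate: showing that on $\gamma_N^{+}(1)$, $\mathrm{Re}\,S_1$ is strictly negative away from a neighbourhood of $1$ (and positive on $\gamma_N^-(-1)$), including on the circular arcs $\gamma_N^\pm(\cdot,1)$ of radius $\approx 1\pm N^{-1/12}$ and along the segments beyond $|\tilde z|\sim N^{1/12}$. Here I will reuse the descent estimates of \cite{DY25} for these same contours, since $S_1$, $G_1$ do not depend on $c$: on the arc, $\mathrm{Re}\,S_1(z)$ is monotone in the argument with decay of order $N^{-1/4}$ times $N$. The $c$-dependent prefactors are bounded on the contours because $c$ stays at distance $\gtrsim 1$, the poles having been placed correctly for $N\ge N_0$ in Lemma \ref{Lem.PrelimitKernelBot}. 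Dominated convergence with the cubic bound $e^{-\epsilon|\tilde z|^3}$ on the scaled segments then yields all five limits. The remaining checks are that $\mathrm{Re}\,S_1(c)$ has the right sign for both $c<1$ and $c\in(1,q^{-1})$; I will verify this via the monotonicity of $S_1$ on the real axis, namely $S_1'(x)$ having a double zero at $1$.
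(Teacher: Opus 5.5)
Your proposal is correct and follows the paper's proof essentially step by step. The steps match: the estimates of \cite{DY25} for $S_1,G_1$ on $\gamma_N^{\pm}(\pm1)$; truncation to $\gamma_N^{\pm}(\pm1,0)$; the change of variables $z=1+\sigma_1^{-1}N^{-1/3}\tilde z$ with dominated convergence; and disposal of the ${\bf 1}\{c>1\}$ residue terms (all three carry $e^{-NS_1(c)}$) via $S_1(c)>S_1(1)=0$, which follows from $S_1'>0$ on $(1,q^{-1})$, with no sign check needed for $c<1$ since these terms are then absent.

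Two small corrections. First, the factors accompanying $e^{-NS_1(c)}$ in those residue terms, e.g.\ $e^{-T_tG_1(c)}$ and $e^{\sigma_1 yN^{1/3}\log c}$, are of size $e^{O(N^{2/3})}$ rather than polynomial, which is still harmless. Second, the single integrals in $R^N_{12}$ and $R^N_{22}$ are controlled by the quadratic lower bound on $\Real G_1(z^{\pm1})$ along the segments and by $\Real G_1(z^{\pm1})\gtrsim N^{-1/6}$ on the arcs, not by the cubic bound coming from $S_1$.
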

\begin{proof}
The proof is very similar to that of \cite[Lemma 6.4]{DY25}. For clarity, we split the proof into five steps. 
In the first step, we collect some estimates from \cite[Section 7.1]{DY25} for the functions appearing in the integrands of $I_{ij}^N$ and $R_{ij}^N$ from Lemma \ref{Lem.PrelimitKernelBot}, which will be useful in the proof. In the second step, we show that we can truncate the contours $\gamma^{\pm}_N(\pm1)$ in the definitions of $I_{ij}^N$ and $R_{ij}^N$ to $\gamma^{\pm}_N(\pm1,0)$. The effect of this truncation is asymptotically negligible. In the third step, we prove \eqref{eq:I11Lim}, \eqref{eq:I12Lim} and \eqref{eq:I22Lim}, in the fourth step we prove \eqref{eq:R12Lim}, and in the fifth step we prove \eqref{eq:R22Lim}. We will use a change of variables that brings the truncated contours $\gamma_{N}^{\pm}(\pm1,0)$ to $\mathcal{C}_{\pm\sigma_1}^{\pi/2 \mp \pi/6}$, under which the integrands will have pointwise limits. We then obtain bounds that allow us to use the dominated convergence theorem, from which we conclude the limits of the integrals.

Throughout the proof we fix $A > 1$ such that $x_{\infty},y_{\infty} \in [- A + 1, A - 1]$, and assume that $N$ is sufficiently large so that $x_N, y_N \in [-A,A]$. 
\\

{\bf \raggedleft Step 1. Preliminary estimates.} 
In this step, we collect some estimates from \cite[Section 7.1]{DY25} for the functions appearing in the integrands of $I_{ij}^N$ and $R_{ij}^N$ from Lemma \ref{Lem.PrelimitKernelBot}.
Our functions $S_1$ and $G_1$ match $S$ and $G$ therein, and our $\sigma_1$ matches $\sigma_q$. 
Observe that the definitions of $I_{ij}^N,R_{ij}^N$ only involve the contours $\gamma^{\pm}_N(\pm1)$, which correspond to taking $b=\pm1$ in \cite[Section 7.1]{DY25}. Note that $b\in[-B,B]$ is assumed therein, where $B=\sigma_1^{-1}(|\varpi|+9)$ for a fixed $\varpi\in\mathbb{R}$. Hence, we can fix a large value of $\varpi$ which makes $B>1$. This allows us to take $b=\pm1$ in this step. 

Throughout this step, there will be explicit constants and ones contained in big $O$ notations, which depend on $q$ alone, and some inequalities hold for sufficiently large $N$ depending on $q$ alone.\\

Recall \cite[(7.1),(7.6)]{DY25}: there exist $\delta_0 \in (0, 1/2)$ and $C_0 > 0$ such that 
\begin{equation} \label{Eq:Taylor expansions of S and G}
\begin{split}
    &\left| S_1(z) - \sigma_1^3 (z-1)^3/3 \right| \leq C_0 \cdot |z-1|^4 \mbox{ if } |z-1| \leq \delta_0, \\
    &\left| G_1(z^{\pm 1}) + \frac{q}{2(1-q)^2} \cdot (z-1)^2 \right| \leq C_0 \cdot |z-1|^3 \mbox{ if } |z-1| \leq \delta_0. 
\end{split}
\end{equation}

Taking $b=\pm1$ in  \cite[(7.3), (7.4), (7.5)]{DY25}, there exist  $a_S, b_S > 0$ such that for all large $N$,
\begin{equation} \label{Eq:estimates of S}
\begin{split}
&\Real S_1(z) \leq a_S N^{-1} + b_S N^{-1/3} \cdot  |z-1|^2 - (\sigma_1^3/6) \cdot |z-1|^3  \mbox{ for } z \in \gamma_N^+(1,0), \\
&\Real S_1(w) \geq - a_S N^{-1} - b_S N^{-1/3} \cdot  |w-1|^2 + (\sigma_1^3/6) \cdot |w-1|^3  \mbox{ for }  w \in \gamma_N^-(-1,0), \\
&\Real S_1(z) \leq   - (\sigma_1^3/12) N^{-1/4} \mbox{ for } z \in \gamma_N^+(1,1),  \\
&\Real S_1(w)  \geq  (\sigma_1^3/12) N^{-1/4} \mbox{ for } w \in \gamma_N^-(-1,1).
\end{split}
\end{equation}

Taking $b=\pm1$ in  \cite[(7.8), (7.9), (7.10)]{DY25}, there exist  $a_G, b_G > 0$ such that for all large $N$, 
\begin{equation} \label{Eq:estimates of G}
\begin{split}
&\Real G_1(z^{\pm 1})\geq - a_G N^{-2/3} - b_G N^{-1/3} \cdot  |z-1| + \frac{q}{8(1-q)^2} \cdot |z-1|^2  \mbox{ for } z \in \gamma_N^{\pm}(\pm1,0), \\
&|\Real G_1(z^{\pm 1})|   \leq a_G N^{-2/3} + b_G N^{-1/3} \cdot  |z-1| + \frac{q}{(1-q)^2} \cdot |z-1|^2
\mbox{ for } z \in \gamma_N^{\pm}(\pm1,0), \\
&\Real G_1(z^{\pm 1})   \geq \frac{q \cdot N^{-1/6}  }{16(1-q)^2} \mbox{ for } z \in \gamma_N^{\pm}(\pm1,1), \\
&|G_1(z^{\pm 1})| = O(1) \mbox{ for } z \in \gamma_N^{\pm}(\pm1).
\end{split}
\end{equation}

Taking $b=\pm1$ in \cite[(7.11), (7.13)]{DY25}, for all $x,y \in [-A,A]$ and all large $N$,
\begin{equation} \label{Eq:estimates of exponential terms in F}
\begin{split}
&\left|  \sigma_1 x N^{1/3} \log(z) \right| \leq 2 \sigma_1 A N^{1/3} \cdot |z - 1| \mbox{ for } z \in \gamma^{\pm}_N(\pm1, 0),\\
&\left| \sigma_1 x N^{1/3} \log(z) \right| \leq N^{1/4} \mbox{ for } z \in \gamma^{\pm}_N(\pm1, 1).
    \end{split}
\end{equation}

In the rest of the steps in this proof, we will write $\mathsf{C}$ to mean a large positive generic constant that depends on $c,q, A$, and $\mathcal{T} = \{t_1, \dots, t_m\}$. The values of these constants will change from line to line. In addition, certain inequalities will hold for sufficiently large $N$ depending on $c, q,  A, \mathcal{T}$. \\

{\bf \raggedleft Step 2. Truncation.}  From the definition of $\gamma_N^{\pm}(\pm1)$ and $c\neq1$, we have for all large $N$,
\begin{equation}\label{Eq:estimates of H}
\begin{split}
&\left| H_{11}^N(z,w) \right| \leq \mathsf{C} \cdot N^{5/3} \mbox{ if } z,w \in \gamma_N^+(1), \\
& \left| H_{12}^N(z,w) \right| \leq \mathsf{C} \cdot N  \mbox{ if } z \in \gamma_N^+(1), w \in \gamma_N^{-}(-1), \\
&\left| H_{22}^N(z,w) \right| \leq \mathsf{C} \cdot N^{1/3} \mbox{ if } z,w \in \gamma_N^-(-1),
\end{split}
\end{equation}
for $H_{ij}^N$ from Lemma \ref{Lem.PrelimitKernelBot}.
Furthermore, we have for all large $N$, 
\begin{equation}\label{Eq:estimates of algebraic terms}
    \begin{split}
        &\left|   \frac{zc - 1}{z(z^2 - 1)}\right| \leq\mathsf{C}\cdot N^{1/3}   \mbox{ if } z \in \gamma_N^+(1,1),   \\
        &\left|  \frac{1}{4(c z - 1)}\right|\leq  \mathsf{C}  \mbox{ if } z \in \gamma_N^-(-1,1),   \\
        &\left|  \frac{(1-w^2)}{4(1-c w)(w-c)}\right|\leq  \mathsf{C}  \mbox{ if } w \in \gamma_N^-(-1,1).
    \end{split}
\end{equation}

Combining \eqref{Eq:estimates of H}, \eqref{Eq:estimates of S}, the last line of \eqref{Eq:estimates of G}, and the first two lines of \eqref{Eq:estimates of exponential terms in F}, we conclude that for $x,y \in[-A,A]$, $s, t \in \mathcal{T}$, and all large $N$,
\begin{equation}\label{eq: Truncation of I}
\begin{split}
    &\left| I^N_{11}(s,x; t, y) -  \frac{1}{(2\pi \im)^{2}}\int_{\gamma_N^+(1, 0)} dz \int_{\gamma_N^+(1,0)} dw F_{11}^N(z,w) H_{11}^N(z,w) \right|\leq e^{\mathsf{C} N^{2/3} - (\sigma_1^3/12) N^{3/4} },  \\
    &\left| I^N_{12}(s,x; t, y) -  \frac{1}{(2\pi \im)^{2}}\int_{\gamma_N^+(1, 0)} dz \int_{\gamma_N^-(-1,0)} dw F_{12}^N(z,w) H_{12}^N(z,w) \right|\leq e^{\mathsf{C} N^{2/3} - (\sigma_1^3/12) N^{3/4} }, \\
    &\left| I^N_{22}(s,x; t, y) -  \frac{1}{(2\pi \im)^{2}}\int_{\gamma_N^-(-1, 0)} dz \int_{\gamma_N^-(-1,0)} dw F_{22}^N(z,w) H_{22}^N(z,w) \right| \leq e^{\mathsf{C} N^{2/3} - (\sigma_1^3/12) N^{3/4} }. 
\end{split}
\end{equation}

From the third line of \eqref{Eq:estimates of G} and the second line of \eqref{Eq:estimates of exponential terms in F}, we have that for $x,y \in[-A,A]$, $s, t \in \mathcal{T}$, and all large $N$,
\begin{equation}\label{eq:R12Trunc}
\begin{split}
&\left|  \frac{-{\bf 1}\{s < t \} \cdot \sigma_1 N^{1/3} }{2 \pi \im} \int_{\gamma_{N}^+(1,1)}dz e^{(T_s - T_t) G_1(z)} \cdot e^{ (\sigma_1 y N^{1/3}  - \sigma_1 x N^{1/3} - 1) \log (z)} \right| \\
&\leq \mathsf{C} \cdot  N^{1/3}\cdot\exp \left(2N^{1/4}  - \frac{q \cdot N^{1/2} \cdot (t -s ) }{16(1-q)^2}  \right),
\end{split}
\end{equation}
and also in view of the third line of \eqref{Eq:estimates of algebraic terms},
\begin{equation}\label{eq:R22Trunc}
\begin{split}
&\left|   \frac{1}{2\pi \im}\int_{\gamma^-_N(-1,1)} dw  \frac{(1-w^2)}{4(1-c w)(w-c)} \cdot e^{ ( \sigma_1 y N^{1/3} - \sigma_1 x N^{1/3} - 1) \log (w) - T_s G_1(w^{-1}) - T_t G_1(w)} \right| \\
&\leq \mathsf{C} \cdot  \exp \left(2N^{1/4}  - \frac{q \cdot N^{1/2} \cdot (t +s ) }{16(1-q)^2}  \right).
\end{split}
\end{equation}

Finally, combining the last two lines of \eqref{Eq:estimates of S}, the last line of \eqref{Eq:estimates of G}, the second line of \eqref{Eq:estimates of exponential terms in F}, and the first two lines of \eqref{Eq:estimates of algebraic terms}, we have that for $x,y \in[-A,A]$, $s, t \in \mathcal{T}$, and all large $N$,

\begin{equation}\label{eq:truncation of many single contours}
    \begin{split}
        & \left|\frac{{\bf 1}\{c > 1\}\cdot \sigma_1 N^{1/3}}{2\pi \im} \int_{\gamma_N^+(1,1)}dz \frac{zc - 1}{z(z^2 - 1)} \cdot F_{12}^N(z,c) \right|\leq  e^{ \mathsf{C}\cdot N^{2/3}-NS_1(c)},\\
    &\left|\frac{{\bf 1}\{c > 1\}}{2\pi \im} \int_{\gamma^-_N(-1,1)} dz \frac{F_{22}^N(z,c)}{4(c z - 1)}\right|\leq e^{ \mathsf{C}\cdot N^{2/3}-NS_1(c)},\\
    &\left|  \frac{{\bf 1}\{c > 1\}}{2\pi \im} \int_{\gamma^-_N(-1,1)} dw \frac{F_{22}^N(c,w)}{4(c w - 1)} \right|\leq e^{ \mathsf{C}\cdot N^{2/3}-NS_1(c)}.
    \end{split}
\end{equation}
Note that 
\[
S_1'(u)=\frac{q(q+1)(u-1)^2}{u(u-q)(1-q)(1-qu)}>0\quad\mbox{ for }u\in(1,q^{-1}),
\] 
hence we have
\begin{equation}\label{eq:S being positive for c larger than 1}
    S_1(c)> S_1(1)=0\quad\mbox{ for }c\in(1,q^{-1}).
\end{equation}
Therefore the right sides of \eqref{eq:truncation of many single contours} converge to $0$ as $N\rightarrow\infty$. \\

{\bf \raggedleft Step 3. The limits of $I^N_{11}, I^N_{12}$ and $I^N_{22}$.}
We use the change of variables $z = 1 + \sigma_1^{-1} N^{-1/3} \tilde{z}$ and $w = 1 + \sigma_1^{-1} N^{-1/3} \tilde{w}$. Note that by the definition of $\gamma_N^+(1)$, we have 
\begin{equation}\label{COV1}
\begin{split}
& \frac{(1-c)^{-2}\sigma_1^{-2}N^{-2/3}}{(2\pi \im)^{2}}\int_{\gamma_N^+(1, 0)} dz \int_{\gamma_N^+(1,0)} dw F_{11}^N(z,w) H_{11}^N(z,w) =  \frac{1}{(2\pi \im)^{2}}\int_{\mathcal{C}_{\sigma_1}^{\pi/3}} d\tilde{z} \int_{\mathcal{C}_{\sigma_1}^{\pi/3}} d\tilde{w} \\
& {\bf 1}\{|\mathsf{Im} (\tilde{z})|, |\mathsf{Im} (\tilde{w})| \leq (\sqrt{3}/2) \sigma_1 N^{1/4} \} \cdot  (1-c)^{-2}\sigma_1^{-4}N^{-4/3} \cdot F_{11}^N(z,w) H_{11}^N(z,w).
\end{split}
\end{equation}
Using the definitions of $F^N_{11}$ and $H^N_{11}$ from \eqref{Eq.DefIN11} with $x = x_N$, $y = y_N$, the fact that $c\neq1$ is fixed, and the Taylor expansions of $S,G$ from \eqref{Eq:Taylor expansions of S and G}, we have
\begin{equation}\label{COV2}
\begin{split}
&\lim_{N \rightarrow \infty} F^N_{11}(z ,w) = e^{\tilde{z}^3/3 + \tilde{w}^3/3 - \frac{q \sigma_1^{-2} s}{2(1-q)^2} \cdot \tilde{z}^2 - \frac{q \sigma_1^{-2} t}{2(1-q)^2} \cdot \tilde{w}^2 - x_{\infty} \tilde{z} - y_{\infty} \tilde{w} }, \\
&\lim_{N \rightarrow \infty} (1-c)^{-2}\sigma_1^{-4}N^{-4/3}  H^N_{11}(z , w) = \frac{ \tilde{z} - \tilde{w} }{\tilde{z} \tilde{w} (\tilde{z} + \tilde{w})}.
\end{split}
\end{equation}
We next bound the two functions in \eqref{COV2} that would allow us to use the dominated convergence theorem.
From the first line of \eqref{Eq:estimates of S}, the second line of \eqref{Eq:estimates of G}, and the first line of \eqref{Eq:estimates of exponential terms in F}, we have that for $x, y \in[ - A,A]$, $\tilde{z}, \tilde{w} \in \mathcal{C}_{\sigma_1}^{\pi/3}$ with $|\mathsf{Im} (\tilde{z})|, |\mathsf{Im} (\tilde{w})| \leq (\sqrt{3}/2) \sigma_1 N^{1/4}$ and all large $N$,
\[ 
 \left|F^N_{11}(z ,w) \cdot (1-c)^{-2}\sigma_1^{-4}N^{-4/3}  H^N_{11}(z,w)\right|  
  \leq \exp \left( \mathsf{C} \cdot (1 + |\tilde{z}|^2 + |\tilde{w}|^2) - (1/6)|\tilde{z}|^3 - (1/6) |\tilde{w}|^3 \right) \] 
Using \eqref{eq: Truncation of I}, \eqref{COV1}, \eqref{COV2}, and the dominated convergence theorem, we conclude \eqref{eq:I11Lim}. Note that to identify with the limit in \eqref{eq:I11Lim}, one needs to remove the tildes, and use $f_1= \frac{q \sigma_1^{-2} }{2(1-q)^2}$. 

The limits of $I^N_{12}$ and $I^N_{22}$ are similar to that of $I_{11}^N$ above. Use the same change of variables and note 
\begin{equation}\label{COV3}
\begin{split}
&\lim_{N \rightarrow \infty} \sigma_1^{-2} N^{-2/3} F_{12}^N(z,w) H_{12}^N(z,w) =  \frac{e^{\tilde{z}^3/3 - \tilde{w}^3/3 - f_1 s \tilde{z}^2 + f_1 t \tilde{w}^2 - x_{\infty} \tilde{z} + y_{\infty} \tilde{w}  }  \cdot (\tilde{z}+\tilde{w}) }{2\tilde{z}(\tilde{z}-\tilde{w}) }, \\
& \lim_{N \rightarrow \infty} (1-c)^2 F_{22}^N(z,w) H_{22}^N(z,w) =   \frac{e^{-\tilde{z}^3/3 - \tilde{w}^3/3 + f_1 s \tilde{z}^2 + f_1 t \tilde{w}^2 + x_{\infty} \tilde{z} + y_{\infty} \tilde{w}  } \cdot (\tilde{z}-\tilde{w})}{4(\tilde{z}+\tilde{w}) }.
\end{split}
\end{equation}
From the first and second lines of \eqref{Eq:estimates of S}, the second line of \eqref{Eq:estimates of G}, and the first line of \eqref{Eq:estimates of exponential terms in F}, we have the following bounds when $x, y \in[ - A,A]$, $|\mathsf{Im} (\tilde{z})|, |\mathsf{Im} (\tilde{w})| \leq (\sqrt{3}/2) \sigma_1 N^{1/4}$ and all large $N$: If $\tilde{z} \in \mathcal{C}_{\sigma_1}^{\pi/3}$, $\tilde{w} \in \mathcal{C}_{-\sigma_1}^{2\pi/3}$, we have
\[
\left|F^N_{12}(z ,w) \cdot \sigma_1^{-2} N^{-2/3} H^N_{12}(z ,w)\right|\leq \exp \left( \mathsf{C} \cdot (1 + |\tilde{z}|^2 + |\tilde{w}|^2) - (1/6)|\tilde{w}|^3 - (1/6) |\tilde{z}|^3 \right),
\]
and if $\tilde{z} \in \mathcal{C}_{-\sigma_1}^{2\pi/3}$, $\tilde{w} \in \mathcal{C}_{-\sigma_1}^{2\pi/3}$, we have
\[
\left|F^N_{22}(z ,w) \cdot (1-c)^2   H^N_{22}(z ,w)\right|\leq \exp \left( \mathsf{C} \cdot (1 + |\tilde{z}|^2 + |\tilde{w}|^2) - (1/6)|\tilde{w}|^3 - (1/6) |\tilde{z}|^3 \right)  .
\] 
From \eqref{eq: Truncation of I}, \eqref{COV3}, and the dominated convergence theorem we obtain \eqref{eq:I12Lim} and \eqref{eq:I22Lim}. \\ 

{\bf \raggedleft Step 4. The limit of $R^N_{12}$.} By the exact same argument as \cite[Section 7.2, Step 3]{DY25}, we have 
\begin{equation}\label{eq:limit of first term in R12}
    \begin{split}
    &\lim_{N \rightarrow \infty}  \frac{-{\bf 1}\{s < t \} \cdot \sigma_1 N^{1/3} }{2 \pi \im} \int_{\gamma_{N}^+(1,0)}dz e^{(T_s - T_t) G_1(z)} \cdot e^{ (\sigma_1 y_N N^{1/3}  - \sigma_1 x_N N^{1/3} - 1) \log (z)} \\&=\frac{- {\bf 1}\{s < t\}}{\sqrt{4\pi f_1 (t-s)}} \cdot e^{-\frac{(y - x)^2}{4 f_1 (t-s)}}.
 \end{split}
\end{equation}
We next prove that
 \begin{equation}\label{eq:limit of second term in R12}
 \lim_{N \rightarrow \infty}\frac{{\bf 1}\{c > 1\}\cdot \sigma_1 N^{1/3}}{2\pi \im} \int_{\gamma_N^+(1,0)}dz \frac{zc - 1}{z(z^2 - 1)} \cdot F_{12}^N(z,c)=0.
 \end{equation}
When $c<1$ the statement is trivially correct, so we next assume $c>1$. Using the same change of variables $z = 1 + \sigma_1^{-1} N^{-1/3} \tilde{z}$, we have
\begin{equation*}
    \begin{split}
&\frac{  \sigma_1 N^{1/3}}{2\pi \im} \int_{\gamma_N^+(1,0)}dz \frac{zc - 1}{z(z^2 - 1)} \cdot F_{12}^N(z,c)
\\&=\frac{1}{2\pi \im} \int_{\mathcal{C}^{\pi/3}_{\sigma_1}} d\tilde{z}{\bf 1}\{|\mathsf{Im} (\tilde{z})| \leq (\sqrt{3}/2) \sigma_1  N^{1/4} \}\cdot\frac{zc - 1}{z(z^2 - 1)} \cdot F_{12}^N(z,c).
 \end{split}
\end{equation*}
By the first line of \eqref{Eq:estimates of S}, the second line of \eqref{Eq:estimates of G}, and the first line of \eqref{Eq:estimates of exponential terms in F}, 
for $x, y \in[ - A,A]$, $\tilde{z}  \in \mathcal{C}_{\sigma_1}^{\pi/3}$ with $|\mathsf{Im} (\tilde{z})|  \leq (\sqrt{3}/2) \sigma_1 N^{1/4}$ and all large $N$,
\[
\left|\frac{zc - 1}{z(z^2 - 1)} \cdot F_{12}^N(z,c)\right|\leq\exp \left( \mathsf{C} \cdot (1 + |\tilde{z}|^2 )  - (1/6) |\tilde{z}|^3  \right)\cdot\exp\left(\mathsf{C}   N^{2/3} -NS_1(c)\right).
\]
Since $c>1$, we have $S_1(c)>0$ in view of \eqref{eq:S being positive for c larger than 1}.
Using the dominated convergence theorem, we obtain the limit \eqref{eq:limit of second term in R12}. Combining \eqref{eq:limit of first term in R12}, \eqref{eq:limit of second term in R12},   \eqref{eq:R12Trunc}, and \eqref{eq:truncation of many single contours}, we obtain the limit \eqref{eq:R12Lim}.\\

{\bf \raggedleft Step 5. The limit of $R_{22}^N$.} We first show that
\begin{equation}\label{eq:limit of first and second terms in R22}
    \begin{split}
        & \lim_{N \rightarrow \infty} (1-c)^{2}\sigma_1^{2}N^{2/3} \cdot\frac{{\bf 1}\{c > 1\}}{2\pi \im} \int_{\gamma^-_N(-1,0)} dz \frac{F_{22}^N(z,c)}{4(c z - 1)}=0,\\
        &  \lim_{N \rightarrow \infty} (1-c)^{2}\sigma_1^{2}N^{2/3} \cdot\frac{{\bf 1}\{c > 1\}}{2\pi \im} \int_{\gamma^-_N(-1,0)} dw \frac{F_{22}^N(c,w)}{4(c w - 1)}=0.
    \end{split}
\end{equation}
We only need to prove the first limit in \eqref{eq:limit of first and second terms in R22}. The second limit follows from the first one by the fact that $F_{22}^N(c,w)$ corresponds to swapping $s\leftrightarrow t$ and $x\leftrightarrow y$ in $F_{22}^N(w,c)$.
When $c<1$ the statement is trivially correct, so we next assume $c>1$. 
We again change variables $z = 1 + \sigma_1^{-1} N^{-1/3} \tilde{z}$,  
\begin{equation*}
    \begin{split}
    &(1-c)^{2}\sigma_1^{2}N^{2/3} \cdot\frac{1}{2\pi \im} \int_{\gamma^-_N(-1,0)} dz \frac{F_{22}^N(z,c)}{4(c z - 1)}\\
   & =  \frac{1}{2\pi \im} \int_{\mathcal{C}^{2\pi/3}_{-\sigma_1}} d\tilde{z}{\bf 1}\{|\mathsf{Im} (\tilde{z})| \leq (\sqrt{3}/2) \sigma_1  N^{1/4} \}\cdot (1-c)^{2}\sigma_1 N^{1/3}\cdot\frac{F_{22}^N(z,c)}{4(c z - 1)}.
    \end{split}
\end{equation*}
By the second line of \eqref{Eq:estimates of S}, the second line of \eqref{Eq:estimates of G}, and the first line of \eqref{Eq:estimates of exponential terms in F}, 
for $x, y \in[ - A,A]$, $\tilde{z}  \in \mathcal{C}_{-\sigma_1}^{2\pi/3}$ with $|\mathsf{Im} (\tilde{z})|  \leq (\sqrt{3}/2) \sigma_1 N^{1/4}$ and all large $N$,
\[
\left|(1-c)^{2}\sigma_1 N^{1/3}\cdot\frac{F_{22}^N(z,c)}{4(c z - 1)}\right|\leq\exp \left( \mathsf{C} \cdot (1 + |\tilde{z}|^2 )  - (1/6) |\tilde{z}|^3  \right)\cdot\exp\left(\mathsf{C}   N^{2/3} -NS_1(c)\right).
\]
Using the dominated convergence theorem (in view of $S_1(c)>0$ since $c>1$), we obtain the first limit of \eqref{eq:limit of first and second terms in R22} and hence conclude the proof of \eqref{eq:limit of first and second terms in R22}.

Finally, we analyze the limit of the (truncated) third summand in $(1-c)^{2}\sigma_1^{2}N^{2/3}R_{22}^N$. We again change variables $w = 1 + \sigma_1^{-1} N^{-1/3} \tilde{w}$,  
\begin{equation}\label{eq:third summand R22 COV}
    \begin{split}
        &\frac{(1-c)^{2}\sigma_1^{2}N^{2/3}}{2\pi \im}\int_{\gamma^-_N(-1,0)} \hspace{-3mm} dw  \frac{(1-w^2)}{4(1-c w)(w-c)} \cdot e^{ ( \sigma_1 y N^{1/3} - \sigma_1 x N^{1/3} - 1) \log (w) - T_s G_1(w^{-1}) - T_t G_1(w)}  \\
        &=\frac{1}{2\pi \im}\int_{\mathcal{C}^{2\pi/3}_{-\sigma_1}} d\tilde{w} {\bf 1}\{|\mathsf{Im} (\tilde{w})| \leq (\sqrt{3}/2) \sigma_1  N^{1/4} \}\cdot (1-c)^{2}\sigma_1 N^{1/3}  \\
        &\quad\times \frac{(1-w^2)}{4(1-c w)(w-c)}\cdot e^{ ( \sigma_1 y N^{1/3} - \sigma_1 x N^{1/3} - 1) \log (w) - T_s G_1(w^{-1}) - T_t G_1(w)}.
    \end{split}
\end{equation} 
Using the Taylor expansion of $G_1$ from \eqref{Eq:Taylor expansions of S and G}, we have      
\begin{equation}\label{eq:third summand R22 limit}
    \begin{split}
    &\lim_{N \rightarrow \infty} e^{ ( \sigma_1 y_N N^{1/3} - \sigma_1 x_N N^{1/3} - 1) \log (w) - T_s G_1(w^{-1}) - T_t G_1(w)}=e^{ \tilde{w}(y_{\infty} - x_{\infty}) + f_1 (s + t)\tilde{w}^2 },\\
    & \lim_{N \rightarrow \infty}(1-c)^{2}\sigma_1 N^{1/3}\cdot \frac{(1-w^2)}{4(1-c w)(w-c)}=-\frac{1}{2}\tilde{w}.
    \end{split}
\end{equation} 
By the first line of \eqref{Eq:estimates of G} and the first line of \eqref{Eq:estimates of exponential terms in F},
we have for $x,y \in [- A, A]$, $\tilde{w} \in \mathcal{C}^{2\pi/3}_{-\sigma_1}$ with $|\mathsf{Im} (\tilde{w})| \leq (\sqrt{3}/2) \sigma_1  N^{1/4}$,
\begin{equation*}
    \begin{split}
   & \left|e^{ ( \sigma_1 yN^{1/3} - \sigma_1 x N^{1/3} - 1) \log (w) - T_s G_1(w^{-1}) - T_t G_1(w)}\cdot (1-c)^{2}\sigma_1 N^{1/3}\cdot \frac{(1-w^2)}{4(1-c w)(w-c)}\right|\\
    &\leq \exp \left( \mathsf{C} (1 + |\tilde{w}| ) - \frac{q\sigma_1^{-2} (t+s)|\tilde{w}|^2}{8 (1 -q)^2}   \right).
    \end{split}
\end{equation*} 
Using \eqref{eq:third summand R22 COV}, \eqref{eq:third summand R22 limit}, and the dominated convergence theorem, we obtain
\begin{equation*}
    \begin{split}
        &\lim_{N \rightarrow \infty}\frac{(1-c)^{2}\sigma_1^{2}N^{2/3}}{2\pi \im}\int_{\gamma^-_N(-1,0)} \hspace{-2mm} dw  \frac{(1-w^2)}{4(1-c w)(w-c)} \cdot e^{ ( \sigma_1 y_N N^{1/3} - \sigma_1 x_N N^{1/3} - 1) \log (w) - T_s G_1(w^{-1}) - T_t G_1(w)}  \\
        &=-\frac{1}{4\pi \im}\int_{\mathcal{C}^{2\pi/3}_{-\sigma_1}} d\tilde{w} \cdot\tilde{w}e^{ \tilde{w}(y_{\infty} - x_{\infty}) + f_1 (s + t)\tilde{w}^2 }.
    \end{split}
\end{equation*} 
Combining the last displayed equation with \eqref{eq:limit of first and second terms in R22} and \eqref{eq:R22Trunc}, we obtain \eqref{eq:R22Lim}. Note that to identify with the limit in \eqref{eq:R22Lim}, one needs to remove the tilde.
\end{proof}

%
%
\subsection{Kernel convergence for the Brownian limit}\label{Section5.2} The goal of this section is to show that the kernels from Lemma \ref{Lem.PrelimitKernelEdge} converge pointwise -- see Proposition \ref{Prop.KernelConvEdge}. Before we state this result, we introduce some notation and derive some estimates we require for its proof.

\begin{lemma}\label{Lem.PowerSeriesSG} Assume the notation from Definition \ref{Def.ParametersEdge}. There exist constants $\delta_0 \in (0,1)$ and $C_0 > 0$, depending on $q, \kappa$ and $c$, such that the function $\SFb(z;\kappa)$ is analytic in the disk $\{|z-\zc(\kappa)| < 2\delta_0\}$, the functions $\SFt(z;\kappa), \GFt(z)$ are analytic in the disk $\{|z- c| < 2\delta_0\}$, and the following statements hold. 
If $|z-\zc(\kappa)| \leq \delta_0$, then
\begin{equation}\label{Eq.TaylorS1Kappa}
\left| \SFb(z;\kappa) - \SFb(\zc(\kappa);\kappa)  \right| \leq C_0 |z - \zc(\kappa)|^3.
\end{equation}
If $|z-c| \leq \delta_0$, then
\begin{equation}\label{Eq.TaylorS2}
\left| \SFt(z;\kappa) - \SFt(c;\kappa) - [\sigmap^2(\bar{\kappa} - \kappa)/2c^2] (z-c)^2 \right| \leq C_0 |z - c|^3,
\end{equation}
\begin{equation}\label{Eq.TaylorG2}
\left| \GFt(z) - \GFt(c) + (\sigmap^2/2c^2) (z-c)^2 \right| \leq C_0 |z - c|^3.
\end{equation}
\end{lemma}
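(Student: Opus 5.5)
The plan is a direct Taylor-expansion computation: in each case I would compute the first two derivatives at the expansion point, check which of them vanish, identify the second-order coefficient, and then bound the cubic remainder uniformly on a small disk.

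\textbf{Analyticity.} The singularities of $\SFb,\SFt,\GFt$ lie at $0$, $q$ and $q^{-1}$, together with the branch cut of the principal logarithm on $(-\infty,0]$. Since $q<1\le \zc(\kappa)<c<q^{-1}$, the points $\zc(\kappa)$ and $c$ are at positive distance from this bad set. So I would choose $\delta_0$ smaller than, say, a quarter of $\min(\zc(\kappa)-q,\ q^{-1}-c)$. On the closed disks of radius $3\delta_0/2$ the functions are then analytic and bounded. Cauchy's estimates bound the third derivative by a constant $C_0$ there, and Taylor's theorem with remainder gives all three inequalities once the first and second derivatives are identified.

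\textbf{Expansion of $\SFb$.} For \eqref{Eq.TaylorS1Kappa} it suffices to show $\SFb'(\zc)=0$; no second-order term is needed. One computes
$$z\SFb'(z)=(1+\kappa)\frac{q}{z-q}+\frac{qz}{1-qz}-h_1(\kappa).$$
I would substitute $\zc=(q+r)/(1+qr)$ with $r=\sqrt{1+\kappa}$, using
$$\zc-q=\frac{r(1-q^2)}{1+qr},\qquad 1-q\zc=\frac{1-q^2}{1+qr}.$$
A direct simplification then shows this equals $h_1(\kappa)$, so $\SFb'(\zc)=0$. (In fact $\zc$ is a double critical point, but that is not needed.)

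\textbf{Expansion of $\SFt$.} Here
$$z\SFt'(z)=(1+\kappa)\frac{q}{z-q}+\frac{qz}{1-qz}-h_2(\kappa).$$
Plugging in $z=c$ and using $h_2=\kappa p_2+C^{\mathrm{top}}$ with $p_2=q/(c-q)$, the $\kappa$-terms cancel. What remains is
$$\frac{q}{c-q}+\frac{qc}{1-qc}-\frac{q(c^2-2qc+1)}{(c-q)(1-qc)},$$
which equals $0$ by combining over the common denominator. For the second-order term, differentiate again at $c$:
$$c^2\SFt''(c)=\frac{d}{dz}\bigl(z\SFt'\bigr)\Big|_{c}\cdot c=-\frac{(1+\kappa)qc}{(c-q)^2}+\frac{qc}{(1-qc)^2}.$$
This uses $z\SFt'=0$ at $c$. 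The expression equals
$$\frac{qc}{(c-q)^2}\left[\frac{(c-q)^2}{(1-qc)^2}-1-\kappa\right]=\sigmap^2(\bar\kappa-\kappa).$$
Hence the quadratic coefficient is $\SFt''(c)/2=\sigmap^2(\bar\kappa-\kappa)/2c^2$.

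\textbf{Expansion of $\GFt$.} Here $z\GFt'(z)=q/(z-q)-p_2$, which vanishes at $c$. Then
$$c^2\GFt''(c)=-\frac{qc}{(c-q)^2}=-\sigmap^2,$$
which gives the coefficient $-\sigmap^2/2c^2$.

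The only step with real content is the algebra behind $\SFb'(\zc)=0$ and the cancellation at $c$ for $\SFt$. Both are routine rational-function simplifications and I expect no genuine obstacle. Dependence of $C_0$ and $\delta_0$ on $\kappa$ is allowed by the statement.
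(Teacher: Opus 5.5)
The gap is in your argument for \eqref{Eq.TaylorS1Kappa}. Your treatment of $\SFt$ and $\GFt$ is correct and matches the paper's direct computation: the first derivatives vanish at $c$, you identify the second derivatives $\sigmap^2(\bar\kappa-\kappa)/c^2$ and $-\sigmap^2/c^2$, and Cauchy estimates control the cubic remainder. For $\SFb$, however, you assert that it suffices to show $\SFb'(\zc(\kappa);\kappa)=0$ and that $\zc(\kappa)$ being a double critical point ``is not needed.'' This is false. The bound \eqref{Eq.TaylorS1Kappa} contains no quadratic term. If only the first derivative vanished, Taylor's theorem would give $\SFb(z;\kappa)-\SFb(\zc;\kappa)=\tfrac12\SFb''(\zc;\kappa)(z-\zc)^2+O(|z-\zc|^3)$. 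That cannot be bounded by $C_0|z-\zc|^3$ as $z\to\zc$ unless $\SFb''(\zc;\kappa)=0$. This is why the paper checks that both $\SFb'$ and $\SFb''$ vanish at $\zc(\kappa)$.

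The missing step follows from your own substitution. Since $\SFb'(\zc;\kappa)=0$,
\[
\zc\,\SFb''(\zc;\kappa)=\frac{d}{dz}\bigl(z\SFb'(z;\kappa)\bigr)\Big|_{z=\zc}=-\frac{(1+\kappa)q}{(\zc-q)^2}+\frac{q}{(1-q\zc)^2}.
\]
Set $r=\sqrt{1+\kappa}$, so that $\zc-q=r(1-q^2)/(1+qr)$ and $1-q\zc=(1-q^2)/(1+qr)$. Then both terms equal $q(1+qr)^2/(1-q^2)^2$, they cancel, and $\SFb''(\zc;\kappa)=0$. With this added, the proof is complete.

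Two minor points:
\begin{itemize}
\item Cap $\delta_0$ below $1$, as the statement requires.
\item The principal-branch cuts of $\log(1-q/z)$ and $\log(1-qz)$ are $(0,q]$ and $[q^{-1},\infty)$, not $(-\infty,0]$. Your disks lie in $\{q<\operatorname{Re} z<q^{-1}\}$, so they avoid these cuts anyway.
\end{itemize}
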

\begin{proof} Note that as $q < 1 \leq \zc(\kappa) < c < q^{-1}$, the functions $\SFb(z;\kappa)$ and $\SFt(z;\kappa), \GFt(z)$ are analytic in their respective region with $\delta_0 = (1/2) \cdot \min(1, 1 - q, q^{-1} - c)$. 

By a direct computation we get $\SFb'(\zc(\kappa);\kappa) = \SFb''(\zc(\kappa);\kappa) = 0$, implying (\ref{Eq.TaylorS1Kappa}). In addition, we compute $\SFt'(c;\kappa) = \GFt'(c) = 0$, and 
$$\SFt''(c;\kappa) = \frac{q(c^2-1)(1-q^2)}{c (c-q)^2(1-qc)^2} - \frac{q\kappa}{c(c-q)^2} = \frac{\sigmap^2(\bar{\kappa} - \kappa)}{c^2} \mbox{, } \GFt''(c) = -\frac{q}{c(c-q)^2} = -\frac{\sigmap^2}{c^2},$$
which imply (\ref{Eq.TaylorS2}) and (\ref{Eq.TaylorG2}).
\end{proof}

\begin{lemma}\label{Lem.DecayNearCritTheta} Assume the notation in Lemma \ref{Lem.PowerSeriesSG}, and fix $\theta \in (\pi/4, \pi/2)$. There exist constants $\delta_1 \in (0,\delta_0]$ and $\epsilon_1 > 0$, depending on $q, \kappa, c, \theta$, such that for $r \in [0, \delta_1]$
\begin{equation}\label{Eq.CritDecayS2}
\Real \left [\SFt(z;\kappa) - \SFt(c;\kappa) \right] \leq -\epsilon_1 \cdot r^2, \mbox{ if } z = c+r e^{\pm \im \theta}.
\end{equation}
\end{lemma}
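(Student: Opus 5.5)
The plan is to use the Taylor expansion (\ref{Eq.TaylorS2}) from Lemma \ref{Lem.PowerSeriesSG} and read off the sign of the quadratic term along the rays $c + re^{\pm\im\theta}$.

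\textbf{Quadratic term.} Write $A = \sigmap^2(\bar{\kappa}-\kappa)/(2c^2)$. Since $\kappa \in [0,\bar{\kappa})$, we have $A > 0$. For $z = c + re^{\pm \im\theta}$,
$$\Real\left[A (z-c)^2\right] = A r^2 \cos(2\theta).$$
As $\theta \in (\pi/4,\pi/2)$, we have $2\theta \in (\pi/2,\pi)$, so $\cos(2\theta) < 0$.

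\textbf{Choice of constants.} Set $\epsilon_1 = -A\cos(2\theta)/2 > 0$. Take $\delta_1 = \min(\delta_0, \epsilon_1/C_0)$, where $\delta_0$ and $C_0$ are the constants of Lemma \ref{Lem.PowerSeriesSG}.

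\textbf{Conclusion.} For $r \in [0,\delta_1]$ we have $|z-c| = r \le \delta_0$, so (\ref{Eq.TaylorS2}) applies. It gives
$$\Real\left[\SFt(z;\kappa)-\SFt(c;\kappa)\right] \le A r^2\cos(2\theta) + C_0 r^3 = -2\epsilon_1 r^2 + C_0 r^3.$$
Since $r \le \delta_1 \le \epsilon_1/C_0$, we get $C_0 r^3 \le \epsilon_1 r^2$, and hence the bound is at most $-\epsilon_1 r^2$, which is (\ref{Eq.CritDecayS2}). The case $r = 0$ is trivial.

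There is no real obstacle. The only points to check are that $A > 0$, which uses $\kappa < \bar\kappa$, and that all constants depend only on $q,\kappa,c,\theta$.
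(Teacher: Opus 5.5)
Your proposal is correct and follows the paper's proof essentially verbatim: you use the Taylor bound (\ref{Eq.TaylorS2}), note that $\Real[(z-c)^2] = r^2\cos(2\theta) < 0$, and choose the same $\epsilon_1$ (your $-A\cos(2\theta)/2$ equals the paper's $-[\sigmap^2(\bar{\kappa}-\kappa)/4c^2]\cos(2\theta)$). Your choice of $\delta_1$ with $C_0\delta_1 \le \epsilon_1$ is also the one the paper makes, and it absorbs the cubic error in the same way.
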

\begin{proof} Let $\epsilon_1 = -[\sigmap^2(\bar{\kappa} - \kappa)/4c^2] \cos(2\theta)> 0$ and pick $\delta_1 \in (0, \delta_0]$ small enough so that $C_0 \delta_1 \leq \epsilon_1$. If $z = c + r e^{\pm\im \theta}$, we have $\Real \left[ (z- c)^2 \right] = r^2 \cos(2 \theta)$, and so by (\ref{Eq.TaylorS2}) we conclude for $r \in [0, \delta_1]$ 
$$ \Real \left [\SFt(z;\kappa) - \SFt( c;\kappa) \right] \leq r^2 \left[\cos(2 \theta) \sigmap^2(\bar{\kappa} - \kappa)/2c^2  + C_0 \delta_1\right] \leq r^2 \left[-2\epsilon_1 + C_0 \delta_1\right] \leq -\epsilon_1 \cdot r^2.$$
\end{proof}

\begin{lemma}\label{Lem.DecayNearCritGen} Assume the notation in Lemma \ref{Lem.PowerSeriesSG}. There exist constants $\delta_2 \in (0,\delta_0]$ and $\epsilon_2 > 0$, depending on $q, \kappa, c$, such that for $r \in [0, \delta_2]$
\begin{equation}\label{Eq.CritDecayG2}
\Real \left [\GFt(z) - \GFt(c) \right] \geq \epsilon_2 \cdot r^2, \mbox{ if } z = c + r e^{\pm \im \pi/2}.
\end{equation}
\end{lemma}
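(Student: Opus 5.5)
The plan mirrors the proof of Lemma \ref{Lem.DecayNearCritTheta}, now using the Taylor expansion (\ref{Eq.TaylorG2}) of $\GFt$ around $c$ from Lemma \ref{Lem.PowerSeriesSG}.

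The key observation is that along the vertical direction $z = c + r e^{\pm \im \pi/2} = c \pm \im r$ we have
\[
(z-c)^2 = (\pm \im r)^2 = -r^2,
\]
so that $\Real[(z-c)^2] = -r^2$. The quadratic term in (\ref{Eq.TaylorG2}) therefore contributes
\[
-\Real\left[(\sigmap^2/2c^2)(z-c)^2\right] = (\sigmap^2/2c^2)\, r^2 > 0.
\]
This is positive because $\sigmap > 0$ and $c > 0$. Note that, unlike Lemma \ref{Lem.DecayNearCritTheta}, no assumption on $\kappa$ or on $\theta$ enters: the sign comes purely from $\GFt''(c) = -\sigmap^2/c^2 < 0$ and the vertical direction.

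Concretely, the steps are as follows.

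\begin{itemize}
\item Set $\epsilon_2 = \sigmap^2/(4c^2)$.
\item Choose $\delta_2 \in (0, \delta_0]$ small enough that $C_0 \delta_2 \leq \epsilon_2$, where $C_0, \delta_0$ are as in Lemma \ref{Lem.PowerSeriesSG}.
\item For $r \in [0,\delta_2]$ we have $|z-c| = r \le \delta_0$, so (\ref{Eq.TaylorG2}) applies and gives
\[
\Real[\GFt(z) - \GFt(c)] \ge (\sigmap^2/2c^2)\, r^2 - C_0 r^3 \ge r^2\left(2\epsilon_2 - C_0\delta_2\right) \ge \epsilon_2 r^2 .
\]
\end{itemize}

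The only point requiring care is the sign bookkeeping in (\ref{Eq.TaylorG2}). The expansion reads $\GFt(z) - \GFt(c) = -(\sigmap^2/2c^2)(z-c)^2 + O(|z-c|^3)$, and we combine this with $\Real[(z-c)^2] = -r^2$ on the vertical ray. Once the sign is checked, the argument is immediate. The constants depend only on $q, c$, and hence trivially also on $\kappa$, as the statement allows.
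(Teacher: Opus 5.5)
Your proposal is correct and is essentially the paper's proof: the same choice $\epsilon_2 = \sigmap^2/4c^2$, the same condition $C_0\delta_2 \le \epsilon_2$, and the same use of $\Real[(z-c)^2] = -r^2$ in (\ref{Eq.TaylorG2}) to get $\Real[\GFt(z)-\GFt(c)] \ge r^2(2\epsilon_2 - C_0\delta_2) \ge \epsilon_2 r^2$. One minor quibble: if you take $C_0$ from Lemma \ref{Lem.PowerSeriesSG} as stated, then $\delta_2$ formally inherits a $\kappa$-dependence, which the statement allows in any case.
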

\begin{proof} Set $\epsilon_2 =  \sigmap^2/4c^2$ and pick $\delta_2 \in (0, \delta_0]$ so that $C_0 \delta_2 \leq \epsilon_2$. If $z = c + r e^{\pm\im \pi/2}$, we have $\Real \left[ (z- c)^2 \right] = -r^2$, and so by (\ref{Eq.TaylorG2}) we conclude for $r \in [0, \delta_2]$ 
$$ \Real \left [\GFt(z) - \GFt(c) \right] \geq r^2 \left[(\sigmap^2/2c^2) - C_0 \delta_2\right] \geq r^2 \left[ 2 \epsilon_2 - \epsilon_2 \right] \geq  \epsilon_2 \cdot r^2.$$
\end{proof}

We also require the following simple result.

\begin{lemma}\label{Lem.MedCircles} Assume the notation from Definition \ref{Def.ParametersEdge}. If $R \in (0, \infty)$ and $z(\theta) = R e^{\pm \im \theta}$, then
\begin{equation}\label{Eq.SmallCircleG}
\frac{d}{d\theta}\Real[\GFt(z(\theta))] > 0 \mbox{ for } \theta \in (0, \pi).
\end{equation}
\end{lemma}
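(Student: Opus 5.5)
We compute $\frac{d}{d\theta}\Real[\GFt(Re^{\im\theta})]$ directly. Recall $\GFt(z) = \log(1-q/z) - p_2\log(z)$, so
$$\Real[\GFt(Re^{\im\theta})] = \tfrac12 \log\left|1 - qR^{-1}e^{-\im\theta}\right|^2 - p_2 \log R = \tfrac12\log\left(1 - 2qR^{-1}\cos\theta + q^2R^{-2}\right) - p_2\log R.$$
The term $-p_2\log R$ does not depend on $\theta$. Under $\theta\mapsto-\theta$ the function $\Real\GFt$ is unchanged, since $\GFt(\bar z)=\overline{\GFt(z)}$. So the two sign choices $z(\theta)=Re^{\pm\im\theta}$ give the same function of $\theta$, and it suffices to treat the plus sign.

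Differentiating in $\theta$ gives
$$\frac{d}{d\theta}\Real[\GFt(Re^{\im\theta})] = \frac{qR^{-1}\sin\theta}{1 - 2qR^{-1}\cos\theta + q^2R^{-2}}.$$
The denominator equals $|1-qR^{-1}e^{-\im\theta}|^2$. It is strictly positive unless $qR^{-1}e^{-\im\theta}=1$, which would force $\theta\in 2\pi\mathbb{Z}$. That never happens for $\theta\in(0,\pi)$. The numerator is positive because $q,R>0$ and $\sin\theta>0$ on $(0,\pi)$, which proves \eqref{Eq.SmallCircleG}.

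The only point needing care is the branch of the logarithm. We use the principal branch, and $\Real\log u=\log|u|$ is branch-independent and smooth wherever $u\neq0$. Here $u = 1-q/z$ vanishes only at $z=q$, which is real, so it is never on the open arc. The real part is therefore smooth in $\theta$ there and the computation is valid. There is no real obstacle; the computation is elementary.
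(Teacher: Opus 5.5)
Your proof is correct and matches the paper's: the paper records the same derivative, $\frac{Rq\sin\theta}{R^2+q^2-2Rq\cos\theta}$, which is your formula with numerator and denominator multiplied by $R^2$. Your remarks on the branch of the logarithm and on the $\pm$ symmetry make explicit what the paper leaves implicit.
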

\begin{proof} The result follows from 
\begin{equation*}
\frac{d}{d\theta} \Real [\GFt(z(\theta))] = \frac{Rq \sin(\theta)}{R^2 + q^2 - 2Rq \cos(\theta)} > 0 \mbox{ for } \theta \in (0, \pi).
\end{equation*} 
\end{proof}

In the next few results, we seek to analyze the real parts of $\SFb(\cdot; \kappa)$ and $\SFt(\cdot; \kappa)$ along two types of contours. For this, it will be useful to introduce the following auxiliary functions.
\begin{definition}\label{Def.SHat} Fix $q \in (0,1), c \in (1, q^{-1})$ and set $\hat{\kappa}_0 = \left( \frac{1-qc}{c-q} \right)^2 \in (0,1)$. For $\hat{\kappa} \in (\hat{\kappa}_0, 1]$ and $z \in \mathbb{C} \setminus \{0,q,q^{-1}\}$, we define
\begin{equation}\label{Eq.DefSHat}
\begin{split}
&\hat{S}_1(z; \hat{\kappa}) = \log(1 - q/z) - \hat{\kappa} \log (1 -q z) - \frac{q(q+ 2\hat{\kappa}^{1/2} + q \hat{\kappa})}{1 - q^2} \cdot \log(z), \\
&\hat{S}_2(z;\hat{\kappa}) = \log(1-q/z) - \hat{\kappa} \log(1-qz) - \frac{qc\hat{\kappa} (c-q) + q(1-qc)}{(1-qc)(c-q)} \cdot \log(z).
\end{split}
\end{equation} 
\end{definition}

The functions $\hat{S}_1(z;\hat{\kappa}), \hat{S}_2(z;\hat{\kappa})$ were analyzed in \cite[Section 3]{DZ25} (they are called $S_1(z), S_2(\kappa,z)$ in that paper), and from Definition \ref{Def.ParametersEdge} we have for $\hat{\kappa} = \frac{1}{1 + \kappa}$ that 
\begin{equation}\label{Eq.S2toShat}
\SFb(z;\kappa) = (1+ \kappa) \cdot \hat{S}_1(z;\hat{\kappa}), \hspace{2mm} \SFt(z;\kappa) = (1+ \kappa) \cdot \hat{S}_2(z;\hat{\kappa}).
\end{equation}
Our goal below is to use the results from \cite[Section 3]{DZ25} for the functions $\hat{S}_1(z;\hat{\kappa}), \hat{S}_2(z;\hat{\kappa})$ together with (\ref{Eq.S2toShat}) to establish two statements for $\SFb,\SFt$. We mention that the results in \cite[Section 3]{DZ25} were formulated for $\hat{\kappa} \in (\hat{\kappa}_0,1)$, but they readily extend to $(\hat{\kappa}_0,1]$.

\begin{lemma}\label{Lem.SmallCircleS} Assume the notation from Definition \ref{Def.ParametersEdge}. If $R \in (0, \zc(\kappa)]$ and $z(\theta) = R e^{\pm \im \theta}$, then
\begin{equation}\label{Eq.SmallCircleS}
\frac{d}{d\theta} \Real[\SFb( z(\theta); \kappa)] \geq 0 \mbox{, and }\frac{d}{d\theta} \Real[\SFt( z(\theta); \kappa)] \geq 0 \mbox{ for } \theta \in [0, \pi].
\end{equation}
\end{lemma}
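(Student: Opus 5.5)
The plan is to prove (\ref{Eq.SmallCircleS}) by differentiating in $\theta$ directly rather than going through the functions $\hat{S}_1,\hat{S}_2$ of \cite{DZ25}, and then to reduce the sign question to a quadratic inequality in $R$. By the symmetry $z\mapsto\bar z$ it suffices to treat $z(\theta)=Re^{\im\theta}$.

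\textbf{Step 1: the $\theta$-derivative.} Along the circle, $\Real \log z = \log R$ is constant, so the terms $h_1(\kappa)\log z$ and $h_2(\kappa)\log z$ contribute nothing. Consequently $\SFb$ and $\SFt$ have the same $\theta$-derivative. As in the proof of Lemma \ref{Lem.MedCircles},
$$\frac{d}{d\theta}\Real\log(1-q/z(\theta))=\frac{Rq\sin\theta}{R^2+q^2-2Rq\cos\theta},\qquad \frac{d}{d\theta}\Real\log(1-qz(\theta))=\frac{Rq\sin\theta}{1+q^2R^2-2Rq\cos\theta}.$$
Hence
$$\frac{d}{d\theta}\Real[\SFb(z(\theta);\kappa)]=\frac{d}{d\theta}\Real[\SFt(z(\theta);\kappa)]= Rq\sin\theta\left[\frac{1+\kappa}{R^2+q^2-2Rq\cos\theta}-\frac{1}{1+q^2R^2-2Rq\cos\theta}\right].$$
Both denominators are positive, because $R\le \zc(\kappa)<q^{-1}$ and $R>0$, and $\sin\theta\ge 0$ on $[0,\pi]$.

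\textbf{Step 2: reduction to a quadratic.} By Step 1 it suffices to show
$$(1+\kappa)(1+q^2R^2-2qR\cos\theta)-(R^2+q^2-2qR\cos\theta)\ge 0.$$
The left side is decreasing in $\cos\theta$ (the coefficient of $\cos\theta$ is $-2qR\kappa\le 0$), so the worst case is $\cos\theta=1$. There the expression simplifies to
$$f(R):=(1-q^2)(1-R^2)+\kappa(1-qR)^2 .$$

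\textbf{Step 3: sign of $f$ on $(0,\zc(\kappa)]$.} Write $s=\sqrt{1+\kappa}$. From (\ref{Eq.ParametersEdge2}) one computes
$$1-q\zc(\kappa)=\frac{1-q^2}{1+qs},\qquad 1-\zc(\kappa)^2=-\frac{(1-q^2)\kappa}{(1+qs)^2},$$
so $f(\zc(\kappa))=0$. Also $f(0)=1-q^2+\kappa>0$. Since $f$ is a quadratic with one root at $\zc(\kappa)$ and $f(0)>0$, it remains to show that $f$ has no root in $(0,\zc(\kappa))$; equivalently, that $\zc(\kappa)$ is the smallest positive root of $f$.
\begin{itemize}
\item If the leading coefficient $\kappa q^2-(1-q^2)$ is $\le 0$, then $f$ is concave (or linear) and $f(0)>0$. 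In this case $f$ has at most one positive root, which must be $\zc(\kappa)$, so $f>0$ on $[0,\zc(\kappa))$.
\item If the leading coefficient is positive, $f$ is convex with roots $\zc(\kappa)$ and $\rho:=\frac{1-q^2+\kappa}{(\kappa q^2-1+q^2)\zc(\kappa)}$ (from the product of the roots). I will check $\rho\ge \zc(\kappa)$, equivalently $f'(\zc(\kappa))\le 0$, by direct algebra in $s$ and $q$. I expect $\rho=(s-q)/(1-qs)$ when $qs<1$, and $\rho<0$ when $qs>1$; in the latter case $f$ again has a single positive root.
\end{itemize}

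\textbf{Main obstacle.} The only nontrivial point is this last verification that $\zc(\kappa)$ is the smaller positive root of $f$. It amounts to an explicit factorization of $f$, and I will carry it out by factoring $f$ as a polynomial in $R$ with the known root $\zc(\kappa)$ removed.
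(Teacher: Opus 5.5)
Your argument is correct, and it takes a different route from the paper. The paper rewrites $\SFb,\SFt$ as $(1+\kappa)\hat S_1,(1+\kappa)\hat S_2$ with $\hat\kappa=1/(1+\kappa)$ via (\ref{Eq.S2toShat}), and then imports \cite[Lemma 3.7]{DZ25}. Your computation is self-contained, and it makes two points visible that the citation hides. First, the terms $h_i(\kappa)\log z$ have constant real part on circles, so the two inequalities in (\ref{Eq.SmallCircleS}) are literally the same statement. Second, after taking the worst case $\cos\theta=1$, the whole lemma reduces to the elementary inequality $(1+\kappa)(1-qR)^2\ge (R-q)^2$ on $(0,\zc(\kappa)]$. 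Equality holds exactly at $R=\zc(\kappa)$, so the hypothesis $R\le \zc(\kappa)$ is sharp. The paper's citation buys brevity and keeps the edge analysis uniformly tied to \cite{DZ25}.

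Your Step 3 needs one correction. The leading coefficient of $f$ is $q^2s^2-1$ (with $s=\sqrt{1+\kappa}$), so the convex branch forces $qs>1$. There the second root is $\rho=(s-q)/(qs-1)>0$, not negative as you predicted. A negative $\rho$ would in fact contradict $f(0)>0$: for convex $f$ the product of the roots, $(s^2-q^2)/(q^2s^2-1)$, is positive. The inequality you actually need still holds: $\rho-\zc(\kappa)=\frac{2s(1-q^2)}{(qs-1)(1+qs)}>0$.

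You can avoid the case split entirely. Write
$f(R)=\bigl(s(1-qR)\bigr)^2-(R-q)^2=(1+qs)\bigl(\zc(\kappa)-R\bigr)\bigl[s(1-qR)+R-q\bigr]$.
Since $s\ge 1$ and $1-qR>0$, the bracket satisfies $s(1-qR)+R-q\ge (1-qR)+(R-q)=(1-q)(1+R)>0$.

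A small point in Step 1: the first denominator equals $|z-q|^2$. It vanishes at $R=q$, $\theta=0$, so its positivity does not follow from $R<q^{-1}$. That point is $z=q$, which lies outside the domain of $\SFb$ and $\SFt$, so simply exclude it.
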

\begin{proof} Follows from (\ref{Eq.S2toShat}) and \cite[Lemma 3.7]{DZ25}.
\end{proof}

\begin{lemma}\label{Lem.BigContour} Assume the notation from Definitions \ref{Def.ParametersEdge} and \ref{Def.ContoursEdge}. There exist $R_0 > q^{-1}$, $\theta_0 \in (\pi/4, \pi/2)$ and a function $\psi:(0,\infty) \rightarrow (0,\infty)$, depending on $q,\kappa,c$, such that for any $\varepsilon > 0 $ 
\begin{equation}\label{Eq.DecayBigContour}
\begin{split}
&\Real[\SFb(z;\kappa) - \SFb(\zc(\kappa);\kappa)] \leq - \psi(\varepsilon) \mbox{ if } z \in C(\zc(\kappa), \theta_0, R_0,0) \mbox{ and } |z -\zc(\kappa)| \geq \varepsilon,\\
&\Real[\SFt(z;\kappa) - \SFt(c;\kappa)] \leq - \psi(\varepsilon) \mbox{ if } z \in C(c, \theta_0, R_0,0) \mbox{ and } |z -c| \geq \varepsilon.
\end{split}
\end{equation}
\end{lemma}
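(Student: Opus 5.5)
The plan is to reduce both inequalities in (\ref{Eq.DecayBigContour}) to the corresponding statements for $\hat{S}_1,\hat{S}_2$ from \cite[Section 3]{DZ25}, using the identity (\ref{Eq.S2toShat}) with $\hat{\kappa} = 1/(1+\kappa) \in (\hat{\kappa}_0, 1]$. Since $1+\kappa \geq 1$ is a fixed positive factor, any bound of the form $\Real[\hat{S}_j(z;\hat{\kappa}) - \hat{S}_j(x_j;\hat{\kappa})] \leq -\hat{\psi}(\varepsilon)$ transfers to $\SFb, \SFt$ with $\psi = (1+\kappa)\hat{\psi} \geq \hat\psi$. Here $x_1 = \zc(\kappa)$ and $x_2 = c$ are the critical points identified in Lemma \ref{Lem.PowerSeriesSG}.

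I would proceed in three steps.

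\textbf{Step 1.} Recall from \cite[Section 3]{DZ25} the lemmas establishing that along the rays $x_j + te^{\pm\im\theta_0}$ the real part of $\hat{S}_j$ is strictly decreasing in $t$ for $t$ up to where the ray meets a large circle of radius $R_0$. This should hold for a suitable $\theta_0 \in (\pi/4,\pi/2)$ and $R_0 > q^{-1}$, chosen uniformly in $\hat{\kappa}$ in a compact subset. I would also recall that along the arc of the circle $|z|=R_0$ the real part decreases further as one moves toward the negative real axis. If $\theta_0$ in \cite{DZ25} is not stated in $(\pi/4,\pi/2)$, I would check that monotonicity near the critical point holds for angles slightly below $\pi/2$. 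For $\hat S_2$ this follows from Lemma \ref{Lem.DecayNearCritTheta}, which gives decay for every $\theta\in(\pi/4,\pi/2)$. For $\hat S_1$ there is a double critical point, and $\pi/3$-type directions are natural, so the angle must be chosen where the cubic term gives decay.

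\textbf{Step 2.} Handle the far arc by large-$|z|$ asymptotics. For $|z| = R_0$ large, $\Real\, \SFb$ and $\Real\, \SFt$ behave like $-\log|1-qz| - h\log R_0 + O(1/R_0)$. This is uniformly very negative compared with the value at the critical point, so the arc contributes a strict negative bound once $R_0$ is large. Strict monotonicity along the rays, together with continuity, then gives on the compact set $\{z\in C(x_j,\theta_0,R_0,0): |z-x_j|\ge\varepsilon\}$ a maximum of $\Real[\cdot]$ that is strictly negative. I would call this maximum $-\psi(\varepsilon)$, taking the minimum over $j=1,2$ of the two resulting constants.

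\textbf{Step 3.} Treat the boundary case $\hat\kappa=1$ (i.e.\ $\kappa=0$), which \cite{DZ25} excluded. The monotonicity derivatives are continuous in $\hat\kappa$ and strictly signed, so they extend to $\hat\kappa = 1$ as the paper notes.

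The main obstacle is Step 1: verifying that one common pair $(\theta_0, R_0)$ works for both functions, with $\theta_0\in(\pi/4,\pi/2)$. The ray from $\zc(\kappa)$ must also avoid the pole/branch point $q^{-1}$. My first attempt would be to differentiate $\Real\,\hat S_j(x_j+te^{\im\theta})$ in $t$ explicitly as a rational function and show that its sign is negative, as done in \cite{DZ25}.
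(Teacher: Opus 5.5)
Your reduction is the paper's proof: via (\ref{Eq.S2toShat}) with $\hat\kappa=1/(1+\kappa)$, the lemma is \cite[Lemma 3.9]{DZ25} for $\hat S_1,\hat S_2$, which also supplies the common $(\theta_0,R_0)$ you flag as the main obstacle, together with the paper's remark that the arguments of \cite[Section 3]{DZ25} go through verbatim at $\hat\kappa=1$; so your Steps 1--3 re-derive something that can simply be quoted. If you keep those steps, two slips need fixing:
\begin{itemize}
\item Monotone decrease along the arc $|z|=R_0$ is false in general. For large $R_0$, the $\phi$-derivative of $\Real \hat S_j(R_0e^{\im\phi})$ on $(0,\pi)$ has the sign of $q^2-\hat\kappa$. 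This is positive whenever $\hat\kappa<q^2$, which can happen once $c>(1+q^2)/(2q)$. It does no harm, because your Step 2 asymptotics already control the whole arc.
\item In Step 3, continuity as $\hat\kappa\uparrow 1$ turns strict inequalities into non-strict ones, so it cannot give $\psi>0$. The case $\kappa=0$ is needed later in the paper, so it must be checked by rerunning the argument at $\hat\kappa=1$ rather than obtained by continuity.
\end{itemize}
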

\begin{proof} Follows from (\ref{Eq.S2toShat}) and \cite[Lemma 3.9]{DZ25}.
\end{proof}

\begin{lemma}\label{Lem.DiffS2} Assume the notation from Definition \ref{Def.ParametersEdge}. Then,
\begin{equation}\label{Eq.DiffSBoth}
\SFb(\zc(\kappa);\kappa) - \SFb(c;\kappa) < 0 \mbox{ and }\SFt(\zc(\kappa);\kappa) - \SFt(c;\kappa) > 0. 
\end{equation}
\end{lemma}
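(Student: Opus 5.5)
The key observation is that $\SFb$ and $\SFt$ differ only in the coefficient of $\log z$:
\[
\SFb(z;\kappa) - \SFt(z;\kappa) = (h_2(\kappa) - h_1(\kappa))\log z .
\]
So both inequalities in (\ref{Eq.DiffSBoth}) reduce to statements about a single function of a real variable. I will work on the interval $u \in [\zc(\kappa), c] \subset [1, q^{-1})$, where everything is real-analytic, and compare each function's value at $\zc(\kappa)$ with its value at $c$ through its derivative.

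\textbf{Derivatives.} For real $u>q$,
\[
\SFb'(u;\kappa) = \frac{(1+\kappa)q}{u(u-q)} + \frac{q}{1-qu} - \frac{h_1(\kappa)}{u} = \frac{\Phi(u) - h_1(\kappa)}{u},
\qquad
\Phi(u) := \frac{(1+\kappa)q}{u-q} + \frac{qu}{1-qu}.
\]
Likewise $\SFt'(u;\kappa) = (\Phi(u) - h_2(\kappa))/u$.

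From the proof of Lemma \ref{Lem.PowerSeriesSG} we know $\SFb'(\zc;\kappa)=0$ and $\SFt'(c;\kappa)=0$. These say $\Phi(\zc)=h_1(\kappa)$ and $\Phi(c)=h_2(\kappa)$.

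\textbf{Shape of $\Phi$.} I expect $\Phi$ to be convex on $(q,q^{-1})$ with its minimum at $\zc(\kappa)$. Indeed,
\[
\Phi'(u) = -\frac{(1+\kappa)q}{(u-q)^2} + \frac{q}{(1-qu)^2},
\]
which vanishes exactly when $(u-q)/(1-qu) = \sqrt{1+\kappa}$, i.e. at $u=\zc(\kappa)$. Both terms of $\Phi$ are convex, which confirms the picture. It follows that $\Phi$ is strictly increasing on $(\zc,q^{-1})$. In particular, for $u\in(\zc,c)$,
\[
h_1(\kappa) = \Phi(\zc) < \Phi(u) < \Phi(c) = h_2(\kappa).
\]

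\textbf{First inequality.} For $u\in(\zc,c)$ the bound $\Phi(u)>h_1(\kappa)$ gives $\SFb'(u;\kappa) > 0$. So $\SFb(\cdot;\kappa)$ is strictly increasing on $[\zc,c]$, and therefore $\SFb(\zc;\kappa) < \SFb(c;\kappa)$.

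\textbf{Second inequality.} For $u\in(\zc,c)$ the bound $\Phi(u)<h_2(\kappa)$ gives $\SFt'(u;\kappa)<0$. So $\SFt(\cdot;\kappa)$ is strictly decreasing on $[\zc,c]$, and therefore $\SFt(\zc;\kappa) > \SFt(c;\kappa)$.

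\textbf{Steps to check.}
\begin{itemize}
\item $\zc(\kappa)<c$ strictly. This is equivalent to $\sqrt{1+\kappa}<(c-q)/(1-qc)$, which is exactly $\kappa<\bar{\kappa}$.
\item The identity $\Phi(c)=h_2(\kappa)$. It follows from $\SFt'(c;\kappa)=0$ in Lemma \ref{Lem.PowerSeriesSG}, and can also be checked algebraically.
\item Real branches. On $[1,q^{-1})$ the principal branches of $\log(1-q/u)$, $\log(1-qu)$ and $\log u$ are all real, so differentiating along the real interval is legitimate.
\end{itemize}

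Alternatively, one could invoke the corresponding statement for $\hat S_1,\hat S_2$ in \cite[Section 3]{DZ25} via (\ref{Eq.S2toShat}). The direct argument above is self-contained, so I would use it.
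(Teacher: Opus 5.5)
Your argument is correct, and it takes a different route from the paper. The paper does not compute anything here: it transfers the claim to \cite[Lemmas 3.4 and 3.5]{DZ25} through the rescaling \eqref{Eq.S2toShat}, namely $\SFb = (1+\kappa)\hat{S}_1$ and $\SFt = (1+\kappa)\hat{S}_2$ with $\hat\kappa = 1/(1+\kappa)$. It also relies on the remark that those results extend from $\hat\kappa \in (\hat\kappa_0,1)$ to $\hat\kappa = 1$, which is the case $\kappa = 0$.

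You prove the lemma directly. You write $u\,\SFb'(u;\kappa) = \Phi(u) - h_1(\kappa)$ and $u\,\SFt'(u;\kappa) = \Phi(u) - h_2(\kappa)$ with a common strictly convex $\Phi$ minimized at $\zc(\kappa)$. The identities $\Phi(\zc(\kappa)) = h_1(\kappa)$ and $\Phi(c) = h_2(\kappa)$ do check out algebraically. Combined with $\zc(\kappa) < c \iff \kappa < \bar\kappa$, the strict monotonicity of $\Phi$ on $(\zc(\kappa), q^{-1})$ gives both inequalities at once.

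Your route buys several things:
\begin{itemize}
\item It is self-contained.
\item It handles $\kappa = 0$ without the extension remark.
\item It proves slightly more: $\SFb$ is strictly increasing and $\SFt$ strictly decreasing on all of $[\zc(\kappa), c]$.
\item As a by-product it gives $h_1(\kappa) = \Phi(\zc(\kappa)) < \Phi(c) = h_2(\kappa)$. This is exactly the sign information in \eqref{Eq.DiffHNegative}, which the paper gets from an explicit closed form.
\end{itemize}
The citation route is shorter on the page, but it depends on the external analysis of $\hat{S}_1, \hat{S}_2$.
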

\begin{proof} Follows from (\ref{Eq.S2toShat}) and \cite[Lemmas 3.4 and 3.5]{DZ25}. 
\end{proof}

With the above results in place, we can state and prove the main result of this section.
\begin{proposition}\label{Prop.KernelConvEdge} Assume the same notation as in Lemma \ref{Lem.PrelimitKernelEdge}, where for each $s \in \mathcal{T}$, we set $\theta_s = \theta_0$, $R_s = R_0$ as in Lemma \ref{Lem.BigContour} for $\kappa = s$. If $x_N, y_N \in \mathbb{R}$ are sequences such that $\lim_{N \rightarrow \infty} x_N = x$, $\lim_{N \rightarrow \infty} y_N = y$, then for any $s,t \in \mathcal{T}$
\begin{equation}\label{Eq.KernelLimitTop}
\begin{split}
&\lim_{N \rightarrow \infty} K_{11}^N(s,x_N;t,y_N) = 0, \hspace{2mm} \lim_{N \rightarrow \infty} K_{22}^N(s,x_N;t,y_N) =  0,\\
& \lim_{N \rightarrow \infty} K^N_{12}(s,x_N; t,y_N) =  \kbm\left(\bar{\kappa} - s,  x  ; \bar{\kappa} - t, y  \right), 
\end{split}
\end{equation}
where 
\begin{equation}\label{Eq.KBM}
\kbm\left(s,  x  ; t, y  \right) = \frac{e^{-x^2/2s}}{\sqrt{2\pi s}} - {\bf 1}\{s > t\} \cdot \frac{e^{-(x-y)^2/2(s-t)}}{\sqrt{2\pi (s-t)}}.
\end{equation}
\end{proposition}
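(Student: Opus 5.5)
We follow the steepest descent scheme from the proof of Proposition \ref{Prop.KernelConvBot}, now around the double critical point $c$ of $\SFt(\cdot;\kappa)$ rather than a triple critical point. The key fact is that $\bar{\SFt}(\cdot;s)$ has a nondegenerate quadratic maximum at $z=c$ along $\Gamma_{s,N}$, by Lemmas \ref{Lem.PowerSeriesSG}, \ref{Lem.DecayNearCritTheta} and \ref{Lem.BigContour}. On $\gamma_{s,N}$ (radius $\zc(s)+N^{-1/2}$), Lemma \ref{Lem.SmallCircleS} shows that $\Real \SFt$ is minimized at the real point $\zc(s)$. Lemma \ref{Lem.DiffS2} then gives $-\bar{\SFt}(w;s)\le -[\SFt(\zc(s);s)-\SFt(c;s)]+O(N^{-1/2})<-\epsilon$ there, so every factor $e^{-N\bar{\SFt}(w)}$ with $w\in\gamma_{s,N}$ is exponentially small. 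Throughout, the factors $(1-q/z)^{\lfloor sN\rfloor - sN}$ are bounded above and below on all contours and tend to $1$ near $c$. The $\log(z/c)$ terms contribute at most $e^{C N^{1/2}|z-c|}$, which is dominated after scaling $z=c+c\,\tilde z/(\sigmap N^{1/2})$.

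\textbf{Vanishing terms.} For $I_{22}^N$, both variables lie on small circles, so the integrand is $O(N^{C})e^{-2\epsilon N}$ and $I^N_{22}\to0$. In $R_{22}^N$ the factor $e^{-N\bar{\SFt}(c)}=1$, but the other variable lies on $\gamma$, so again we get exponential decay. For $I_{12}^N$, the $w\in\gamma_{t,N}$ factor decays exponentially while the $z$-integral is $O(N^{C})$, so $I^N_{12}\to0$. The remaining term is $I_{11}^N$. Near $c$ we rescale both variables. Because of the factor $(1-c/z)(1-c/w)$ and $\sigmap N^{1/2}$, the prefactor behaves like $N^{1/2}\cdot N^{-1/2}N^{-1/2}$ times the Jacobians $N^{-1/2}\cdot N^{-1/2}$. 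The integrand is then $O(N^{-3/2})$ times a Gaussian that is integrable along the rays at angle $\theta_0$, with the little vertical segment of length $\sim N^{-1/2}$ avoiding $z=c$. The far parts of the contour are $e^{-\psi N}$ small by Lemma \ref{Lem.BigContour}. Hence $I_{11}^N\to0$, and with it $K_{11}$ and $K_{22}$.

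\textbf{Surviving term of $R_{12}^N$.} The limit of $K_{12}$ comes from $R^N_{12}$. Its second summand, $\frac{\sigmap N^{1/2}}{2\pi\im}\oint_{\Gamma_{s,N}}\frac{(zc-1)F^N_{12}(z,c)}{z(z^2-1)}dz$, rescales to
\[
\frac{1}{2\pi\im}\int e^{\frac{\bar\kappa-s}{2}\tilde z^2 - x\tilde z}\,d\tilde z
\]
along a contour through $\tilde z\approx\sigmap\sec\theta_0\cdot$ small shift at angle $\theta_0$. This is justified using \eqref{Eq.TaylorS2}: $N\bar{\SFt}\to \frac{\bar\kappa-s}{2}\tilde z^2$ and $\sigmap N^{1/2}\log(z/c)\to\tilde z$. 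Deforming to the vertical line and evaluating the Gaussian integral gives $\frac{e^{-x^2/2(\bar\kappa-s)}}{\sqrt{2\pi(\bar\kappa-s)}}$. Dominated convergence is supplied by Lemma \ref{Lem.DecayNearCritTheta} near $c$ and Lemma \ref{Lem.BigContour} away from it, plus the $I_{12}$-type bound on the rest.

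\textbf{First summand of $R_{12}^N$ and the expected obstacle.} The first summand, present for $s<t$, is treated on $\tilde\gamma_N$. The vertical line through $c$ is scaled similarly, and the exponent $(s-t)N\bar{\GFt}$ has $\Real\ge$ positive quadratic decay there by Lemma \ref{Lem.DecayNearCritGen}, since $s-t<0$. Away from $c$ on the arc of radius $\sqrt{c^2+N^{-1/6}}$, Lemma \ref{Lem.MedCircles} gives monotone growth of $\Real\GFt$ in $\theta$, so $\Real (s-t)N\bar{\GFt}\le -cN^{2/3}$. The limit is $-\frac{1}{2\pi\im}\int_{\im\mathbb R} e^{\frac{t-s}{2}\tilde z^2+(y-x)\tilde z}d\tilde z=-\frac{e^{-(x-y)^2/2(t-s)}}{\sqrt{2\pi(t-s)}}$. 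With $\bar\kappa-s>\bar\kappa-t$ this is exactly the indicator term of $\kbm(\bar\kappa-s,x;\bar\kappa-t,y)$. I expect the main obstacle to be the uniform contour estimates: keeping the endpoints of the short segments and the arcs of $\Gamma_{s,N}$, $\tilde\gamma_N$ under control while tracking the polynomial-in-$N$ prefactors against the exponential gains.
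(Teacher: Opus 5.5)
Your proposal is correct and takes essentially the same route as the paper. The paper transfers the steepest-descent proof of \cite[Proposition 5.1]{DZ25}, using exactly the inputs you invoke: Gaussian decay of $\Real \bar{\SFt}$ along $\Gamma_{s,N}$ (Lemmas \ref{Lem.PowerSeriesSG}, \ref{Lem.DecayNearCritTheta}, \ref{Lem.BigContour}); exponential smallness of $e^{-N\bar{\SFt}(w;\kappa)}$ on $\gamma_{\kappa,N}$ up to an $O(N^{-1/2})$ correction (Lemmas \ref{Lem.SmallCircleS}, \ref{Lem.DiffS2}); and the sign-reversed bounds for $\bar{G}_2$ on $\tilde{\gamma}_N$ (Lemmas \ref{Lem.DecayNearCritGen}, \ref{Lem.MedCircles}). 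As in your argument, $I^N_{11}, I^N_{12}, I^N_{22}, R^N_{22}$ vanish, and the two summands of $R^N_{12}$ produce the heat-kernel and Gaussian terms.

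One small imprecision: Lemma \ref{Lem.SmallCircleS} only covers radii at most $\zc(\kappa)$, so on $\gamma_{s,N}$ you must compare with the circle $C_{\zc(s)}$; this comparison is the source of your $O(N^{-1/2})$ term. The same enlarged radius $\zc(\kappa)+N^{-1/2}$ is also what keeps $|zw-1|^{-1}\le C N^{1/2}$ in $H^N_{22}$ when $\kappa=0$.
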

\begin{proof} In \cite{DZ25} a closely related statement appears as Proposition 5.1, and its proof carries over almost verbatim, with Lemmas 3.1, 3.2, 3.3, 3.5, 3.8 and 3.9 in that paper replaced by Lemmas \ref{Lem.PowerSeriesSG}, \ref{Lem.DecayNearCritTheta}, \ref{Lem.DecayNearCritGen}, \ref{Lem.DiffS2}, \ref{Lem.MedCircles} and \ref{Lem.BigContour} in ours, respectively. Since the two setups are quite similar, we only highlight the minor modifications that are required. 

To align the kernel formulas, one should make the substitutions
$$S_2(z;s) \leftrightarrow S_2(s,z), \hspace{2mm} S_2(w;t) \leftrightarrow S_2(t,w), \hspace{2mm} \gamma_{\kappa,N} \leftrightarrow \gamma_{\kappa}.$$
In addition, within $H^N_{ij}$ and $R^N_{12}$, the following replacements are needed:
$${\bf 1}\{s < t\} \leftrightarrow {\bf 1}\{s > t\}, \hspace{2mm} (1-q/c)^{\lfloor tN \rfloor - tN } \leftrightarrow (1-qc)^{tN - \lfloor tN \rfloor }, \hspace{2mm} (1-q/c)^{\lfloor sN \rfloor - sN } \leftrightarrow (1-qc)^{sN - \lfloor sN \rfloor }, $$
$$ (1-q/w)^{\lfloor tN \rfloor - tN } \leftrightarrow (1-qw)^{tN - \lfloor tN \rfloor }, \hspace{2mm} (1-q/z)^{\lfloor sN \rfloor - sN } \leftrightarrow (1-qz)^{sN - \lfloor sN \rfloor }.$$
We note that our contour $\gamma_{\kappa,N}$ is taken to be a circle of radius $\zc(\kappa) + N^{-1/2}$, slightly larger than $\zc(\kappa)$, to ensure that the unit circle is contained within it. This adjustment is necessary when $\kappa = 0$ (for which $\zc(0) = 1$), and guarantees that the zeros of $zw-1$ appearing in $H_{22}^N$ are inside both $\gamma_{s,N}$ and $\gamma_{t,N}$.

With the above substitutions, equations (5.3-5.8) in \cite{DZ25} remain the same, except that (5.5) should be replaced with
\begin{equation}\label{Eq.New55}
\Real [S_2(w;\kappa) - S_2(c;\kappa)] \geq S_2(\zc(\kappa);\kappa) - S_2(c;\kappa) + O(N^{-1/2}), \mbox{ for } \kappa \in \mathcal{T}, w \in \gamma_{\kappa,N}.
\end{equation}
The additional $O(N^{-1/2})$ comes from the difference between $\gamma_{\kappa,N}$ and the circle $C_{\zc(\kappa)}$. Moreover, the signs in \cite[(5.7) and (5.8)]{DZ25} must be reversed so that in our case they read 
\begin{equation}\label{Eq.New57}
\Real [\GFt(z) - \GFt(c)] \geq \epsilon |z-c|^2, \mbox{ for }z\in \tilde{\gamma}_N \mbox{ and } |z-c| \leq N^{-1/12} \leq \delta,
\end{equation}
\begin{equation}\label{Eq.New58}
\Real [\GFt(z) - \GFt(c)] \geq \epsilon N^{-1/6},\mbox{ for }z\in \tilde{\gamma}_N \mbox{ and } |z-c| \geq N^{-1/12}.
\end{equation}

Using (\ref{Eq.New55}), (\ref{Eq.New57}) and (\ref{Eq.New58}) in place of (5.3), (5.7) and (5.8) in \cite{DZ25}, we find that equations (5.9-5.13) in \cite{DZ25} remain valid, except that the third line in \cite[(5.11)]{DZ25} should be replaced by 
\begin{equation}\label{Eq.New511}
|H_{22}^N(z,w)| \leq A_3 N \mbox{ if }z,w \in \cup_{\kappa \in \mathcal{T}}\gamma_{\kappa,N}.
\end{equation}
The additional $O(N^{-1/2})$ in (\ref{Eq.New55}) (compared to \cite[(5.5)]{DZ25}) gets absorbed into \cite[(5.10)]{DZ25}. Also, the extra $N^{1/2}$ in (\ref{Eq.New511}) (compared to \cite[(5.11)]{DZ25}) comes from the term $zw - 1$ in $H_{22}^N(z,w)$.

Using (\ref{Eq.New511}) for the third line in \cite[(5.11)]{DZ25}, we conclude that equations (5.14-5.19) in \cite{DZ25} remain unchanged, except that (5.14) must be replaced by
$$\lim_{N \rightarrow \infty} |I_{22}^N(s,x_N;t,y_N)| \leq \lim_{N \rightarrow \infty} \|\gamma_{s,N}\| \cdot \|\gamma_{t,N}\| \cdot A_3Ne^{-a_3 N} = 0.$$
The first line in (\ref{Eq.KernelLimitTop}) follows from (5.14-5.17) in \cite{DZ25}, while the second line follows from (5.20) in \cite{DZ25}, which in our setting reads
$$\lim_{N \rightarrow \infty} R_{12}^{N,1} = - {\bf 1}\{s <t\} \cdot \frac{e^{-(x-y)^2/2(t-s)}}{\sqrt{2\pi (t-s)}} \mbox{ and } \lim_{N \rightarrow \infty}R^{N,2}_{12} = \frac{e^{-x^2/2(\bar{\kappa}-s)}}{\sqrt{2\pi (\bar{\kappa} - s)}}.$$ 
\end{proof}

%
%
\section{Convergence to Brownian motion}\label{Section6} The goal of this section is to prove Theorem \ref{Thm.BrownianLimit}. The proof is given in Section \ref{Section6.4}, and follows the general outline below: 
\begin{itemize}
\item[(1)] Definition \ref{Def.LETop} introduces a discrete line ensemble $L^N = (L_1^N, L_2^N)$ and its scaled version $\mathcal{L}^N = (\mathcal{L}_1^N, \mathcal{L}_2^N)$, related to the Pfaffian Schur processes from Definition \ref{Def.SchurProcess}. Proposition \ref{Prop.LPPandSchur} implies that under an appropriate restriction $\mathcal{U}_1^{\mathrm{top},N} \overset{d}{=} \mathcal{L}_1^N$.
\item[(2)] Proposition \ref{Prop.TightnessTopCurve} establishes that the sequence $\{\mathcal{L}_1^N \}_{N \geq 1}$ is tight.
\item[(3)] We then show that all subsequential limits of $\mathcal{L}_1^N$ share the same finite-dimensional distributions as $W$. Combined with point (2), this implies $\mathcal{L}_1^N \Rightarrow W$. Hence, by point (1), we conclude $\mathcal{U}_1^{\mathrm{top},N} \Rightarrow W$, establishing Theorem \ref{Thm.BrownianLimit}. 
\end{itemize}

The first point above holds directly from the definition of $L^N$. To establish the third point, we proceed as follows:
\begin{itemize}
\item[(3A)] The vectors $\{Y_1^{j,N}: j \in \llbracket 1, m \rrbracket\}$ from Definition \ref{Def.ScalingEdge} converge weakly to a vector of Brownian motion marginals, see Proposition \ref{Prop.FinitedimEdge}.
\item[(3B)] The $Y_i^{j,N}$ are close to the corresponding scaled marginals of $L_i^N$, see (\ref{Eq.LtoY}).
\end{itemize}
Point (3A) is established using the general framework from \cite{DZ25}, which requires that (3A.1) $\{Y_1^{j,N}\}_{N \geq 1}$ is tight from above (shown in Lemma \ref{Lem.UpperTailTop}), and (3A.2) the measures $M^N$ from Lemma \ref{Lem.PrelimitKernelEdge} converge weakly (established in Step 1 of the proof of Proposition \ref{Prop.FinitedimEdge}, using Lemma \ref{Prop.KernelConvEdge}). 

Lastly, to establish the second point, we show the following: 
\begin{itemize}
\item[(2A)] The second curve $L_2^N$ is typically much lower than the top curve $L_1^N$ -- see Lemma \ref{Lem.SecondCurveUnifLow} for a precise statement.
\item[(2B)] The curve $L_1^N$ has a well-controlled modulus of continuity when $L_2^N$ is very low.
\end{itemize}
Point (2A) above is established by showing that (2A.1) $L_2^N$ is likely very low at finitely many times (this is implied by the technical Lemma \ref{Lem.UpperTailTop2} and the finite-dimensional convergence result Proposition \ref{Prop.FinitedimEdge}), and (2A.2) $L_2^N$ is increasing (this allows us to control $L^N_2$ uniformly by controlling it at finitely many points). Point (2B) is established by showing that (2B.1) $L_1^N$ behaves like a geometric random walk bridge interlacing with $L_2^N$ (see Lemma \ref{Lem.TopGibbsProperty} for a precise statement), and (2B.2) a geometric random walk bridge has a well-behaved modulus of continuity (this follows from a strong coupling with Brownian bridges, see Proposition \ref{Prop.KMT}).

The remainder of this section provides the detailed arguments supporting the above outline.
%
%
\subsection{Upper tail probability estimates}\label{Section6.1} In this section, we establish two results -- Lemmas \ref{Lem.UpperTailTop} and \ref{Lem.UpperTailTop2} -- which show that the variables $Y^{j,N}_i$ from Definition \ref{Def.ScalingEdge} are unlikely to be very large.

\begin{lemma}\label{Lem.UpperTailTop} Assume the same notation as in Definition \ref{Def.ScalingEdge}. For each $j \in \llbracket 1, m \rrbracket$, we have
\begin{equation}\label{Eq.UpperTailTop1}
\lim_{a \rightarrow \infty} \limsup_{N \rightarrow \infty} \mathbb{P}(Y^{j,N}_1 \geq a) = 0.
\end{equation}
\end{lemma}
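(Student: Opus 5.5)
We bound the upper tail of the top particle through the Pfaffian point process of Lemma \ref{Lem.PrelimitKernelEdge}. Since $Y_1^{j,N}$ is the largest point of $M^N$ on the slice $\{\kappa_j\}\times\mathbb{R}$, a union (first moment) bound gives
\begin{equation*}
\mathbb{P}(Y^{j,N}_1 \geq a) \leq \mathbb{E}\left[M^N(\{\kappa_j\}\times[a,\infty))\right] = \int_{[a,\infty)} K^N_{12}(\kappa_j,x;\kappa_j,x)\,\nu_{\kappa_j}(N)(dx),
\end{equation*}
using that the one-point density of a Pfaffian point process is $\mathrm{Pf}$ of the $2\times2$ skew matrix, namely $K^N_{12}(s,x;s,x)$. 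So it suffices to show that $\limsup_N$ of the right side tends to $0$ as $a\to\infty$. With $s=t$ the indicator term in $R^N_{12}$ vanishes, leaving $I^N_{12}(s,x;s,x)$ plus the residue term at $w=c$,
\begin{equation*}
R^{N,2}_{12}(s,x) = \frac{\sigmap N^{1/2}}{2\pi \im} \oint_{\Gamma_{s,N}} dz \frac{F_{12}^N(z,c)(zc-1)(1-q/z)^{\lfloor sN\rfloor - sN}}{z(z^2-1)(1-q/c)^{\lfloor sN\rfloor - sN}}.
\end{equation*}

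We need estimates on $K^N_{12}$ uniform in $x\ge a$ (not only pointwise as in Proposition \ref{Prop.KernelConvEdge}), namely bounds of the form $|K^N_{12}(s,x;s,x)|\le C e^{-\epsilon x}$ for $x\ge a$ and large $N$, plus exponential smallness in $N$ for the $I^N_{12}$ part. The mechanism is the factor $e^{-\sigmap x N^{1/2}\log(z/c)}$: on $\Gamma_{s,N}$ we have $|z|\ge c+N^{-1/2}$, so $\log|z/c|\ge c^{-1}N^{-1/2}/2$ on the relevant part near $c$ and larger away, giving $|e^{-\sigmap xN^{1/2}\log(z/c)}|\le e^{-\epsilon x}$ for $x\ge0$. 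The remaining factors are handled exactly as in the proof of Proposition \ref{Prop.KernelConvEdge}. For $R^{N,2}_{12}$, we change variables $z=c+N^{-1/2}\tilde z$, use Lemma \ref{Lem.PowerSeriesSG}, Lemma \ref{Lem.DecayNearCritTheta} and Lemma \ref{Lem.BigContour} to get Gaussian domination $e^{-\epsilon_1|\tilde z|^2}$, and keep the $x$-dependence: writing $\tilde z = \sec\theta_s + re^{\pm\im\theta_s}$, the real part of $\tilde z$ is at least $\sec\theta_s\cdot\cos\theta_s=1$, so $|e^{-\sigmap x\tilde z/c}|\le e^{-\sigmap x/c}$ up to $O(N^{-1/2}|\tilde z|^2x)$ corrections from the log. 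Those corrections we absorb by restricting $x\le N^{1/8}$ and treating $x>N^{1/8}$ separately using the cruder bound $e^{-\epsilon x}$ from $|z|>c$ times the polynomial prefactor $N^{1/2}$, summable against $\nu(N)$ with mesh $\sigmap^{-1}N^{-1/2}$. For $I^N_{12}$, Lemma \ref{Lem.DiffS2} via (\ref{Eq.New55}) gives $|F^N_{12}H^N_{12}|\le e^{-a_3N}e^{\mathsf{C} x N^{1/2}\cdot 0}$ times $e^{-\epsilon x}$; we must check that the $w$-factor $e^{\sigmap xN^{1/2}\log(w/c)}$ has $|w|=\zc+N^{-1/2}<c$, hence is also $\le 1$ for $x\ge0$, so $I^N_{12}$ contributes $O(N^{A}e^{-a_3N})\sum_x e^{-\epsilon x}\to0$.

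Summing, $\limsup_N \mathbb{P}(Y^{j,N}_1\ge a)\le C\int_a^\infty e^{-\epsilon x}dx\to0$. The main obstacle is making the bound on $R^{N,2}_{12}$ uniform in large $x$ (the interplay between the $N^{-1/2}$ window and $x$ growing); if the splitting at $x=N^{1/8}$ fails, I would instead shift the $z$-contour radius to $c+\lambda N^{-1/2}$ with $\lambda$ depending on $x$ (steepest descent in $x$), which only improves the exponent $e^{-\lambda x}$ against $e^{\mathsf{C}\lambda^2}$.
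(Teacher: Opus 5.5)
Your proposal is correct and follows essentially the paper's route: a first-moment bound through the diagonal $K^N_{12}(\kappa_j,x;\kappa_j,x)$, exponential smallness of the $I^N_{12}$ part coming from $e^{-N\bar{S}_2(w;\kappa_j)}$ on $\gamma_{\kappa_j,N}$ (Lemmas \ref{Lem.SmallCircleS} and \ref{Lem.DiffS2}), and Gaussian control near $z=c$ of the residue term at $w=c$. The only cosmetic difference is that the paper sums the geometric series in $x$ inside the contour integrals, which produces $U_N(a)$ and $V_N(a)$ with the extra factors $(1-w/z)^{-1}$ and $(1-c/z)^{-1}$; you instead bound the kernel pointwise in $x$ and then sum.

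The obstacle you flag is not real. Since $|z|\ge c+N^{-1/2}$ on all of $\Gamma_{\kappa_j,N}$, one has $|e^{-\sigma_2 xN^{1/2}\log(z/c)}|\le e^{-\sigma_2 x/(2c)}$ uniformly for $x\ge 0$. Combined with the $x$-independent bound $|e^{N\bar{S}_2(z;\kappa_j)}|\le B e^{-b|\tilde z|^2}$, this gives $|K^N_{12}(\kappa_j,x;\kappa_j,x)|\le Ce^{-\epsilon x}$ directly, so neither the splitting at $x=N^{1/8}$ nor a contour shift is needed.
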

\begin{proof} For clarity, we split the proof of the lemma into three steps. In the first step, we reduce the proof of (\ref{Eq.UpperTailTop1}) to establishing two asymptotic statements about the functions $F^N_{12}, H^N_{12}$ from (\ref{Eq.DefIN12Edge}), see (\ref{Eq.EdgeFDRed1}). The first of these statements is established in Step 2, and the second in Step 3.\\

{\bf \raggedleft Step 1.} For $j \in \llbracket 1, m \rrbracket$, we define the random measure $M^{j,N}$ on $\mathbb{R}$ through
\begin{equation}\label{Eq.MeasSliceY}
M^{j,N}(A) = \sum_{i \geq 1}{\bf 1}\{Y^{j,N}_i \in A\}.
\end{equation}
For any $a \in \mathbb{R}$ we have
\begin{equation}\label{Eq.TailBoundMoment}
\sum_{i \geq 1} \mathbb{P}\left(Y^{j,N}_i \geq a \right) = \mathbb{E}\Bigg{[} \sum_{i \geq 1} {\bf 1}\{Y_i^{j,N} \in  [a, \infty) \}   \Bigg{]} = \mathbb{E}\left[M^{j,N}([a, \infty)) \right].
\end{equation}
From Lemma \ref{Lem.PrelimitKernelEdge} and \cite[Lemma 5.13]{DY25} we have (for all large $N$) that $M^{j,N}$ is a Pfaffian point process on $\mathbb{R}$ with reference measure $\nu_{\kappa_j}(N)$ and correlation kernel $K^{j,N}(x,y) = K^N(\kappa_j, x; \kappa_j,y)$. Here, $K^N$ is as in (\ref{Eq:EdgeKerDecomp}), where for each $s \in \mathcal{T}$, we set $\theta_s = \theta_0$, $R_s = R_0$, $\psi_s = \psi$ as in Lemma \ref{Lem.BigContour} for $\kappa = s$. From \cite[(2.13)]{ED24a} and \cite[(5.12)]{DY25} we conclude
\begin{equation}\label{Eq.FactMomEdge}
\mathbb{E}\left[M^{j,N}([a, \infty)) \right] = \frac{1}{\sigmap N^{1/2}}\sum_{x \in \Lambda_{\kappa_j}(N), x \geq a_N} K^N_{12}(\kappa_j, x; \kappa_j,x),
\end{equation}
where $a_N = \min\{y \in \Lambda_{\kappa_j}(N): y \geq a\}$. 

Next, we can replace 
$$K^N_{12}(\kappa_j, x; \kappa_j,x) = I^N_{12}(\kappa_j, x; \kappa_j,x) + R^N_{12}(\kappa_j, x; \kappa_j,x),$$
on the right side of (\ref{Eq.FactMomEdge}), exchange the order of the sum and the integrals in the definitions of $I^N_{12}$ and $R^N_{12}$ from (\ref{Eq.DefIN12Edge}) and (\ref{Eq.DefRN12Edge}), and evaluate the resulting geometric series to obtain
\begin{equation}\label{Eq.FactMomEdgeFormula}
\begin{split}
&\mathbb{E}\left[M^{j,N}([a, \infty)) \right] = U_N(a) + V_N(a), \mbox{ where } \\
&U_N(a) = \frac{\sigmap^{-1} N^{-1/2} }{(2\pi \im)^{2}}\oint_{\Gamma_{\kappa_j, N}} \hspace{-3mm} dz \oint_{\gamma_{\kappa_j,N}} dw  \frac{F_{12}^N(z,w) H_{12}^N(z,w)}{1 - w/z} ,\\
& V_N(a) =  \frac{1}{2\pi \im} \oint_{\Gamma_{\kappa_j,N}} dz \frac{F_{12}^N(z,c) (zc-1) (1-q/z)^{ \lfloor \kappa_j N \rfloor - \kappa_j N}}{z(z^2-1)(1-q/c)^{\lfloor \kappa_j N \rfloor - \kappa_j N}} \cdot \frac{1}{1 - c/z}.
\end{split}
\end{equation}
Here, $F^N_{12}, H^N_{12}$ are defined as in (\ref{Eq.DefIN12Edge}) with $s =t = \kappa_j$ and $x = y = a_N$. We mention that the two geometric series involved are absolutely convergent due to (\ref{Eq.ContoursNestedEdge}). Combining the last displayed equation with (\ref{Eq.TailBoundMoment}), we see that to prove (\ref{Eq.UpperTailTop1}), it suffices to show:
\begin{equation}\label{Eq.EdgeFDRed1}
\limsup_{a \rightarrow \infty} \limsup_{N \rightarrow \infty} |U_N(a)| = 0, \mbox{ and } \limsup_{a \rightarrow \infty} \limsup_{N \rightarrow \infty} |V_N(a)| = 0.
\end{equation}

In the remaining steps we verify (\ref{Eq.EdgeFDRed1}) using the lemmas from Section \ref{Section5.2}. In the inequalities below we will encounter various constants $B_i,b_i > 0$ with $B_i$ sufficiently large, and $b_i$ sufficiently small, depending on $q, c, \mathcal{T}, \{\theta_\kappa: \kappa \in \mathcal{T}\}, \{R_{\kappa}: \kappa \in \mathcal{T}\}$ -- we do not list this dependence explicitly. In addition, the inequalities will hold provided that $N$ is sufficiently large, depending on the same set of parameters, which we will also not mention further.\\

{\bf \raggedleft Step 2.} In this step we prove the first statement in (\ref{Eq.EdgeFDRed1}). From (\ref{Eq.ContoursNestedEdge}), for $z \in \Gamma_{\kappa_j, N}$, $w \in \gamma_{\kappa_j,N}$, and $x \geq 0$, we have
\begin{equation}\label{Eq.RationalBoundInUpperTail}
\left| \frac{1}{1 - w/z } \right| \leq \frac{2}{1 - z_c(\kappa_j)/c}, \mbox{ and } \left|e^{-  \sigmap x N^{1/2} \log (z/c) +  \sigmap x N^{1/2} \log(w/c)  } \right| \leq 1.
\end{equation}
From Lemmas \ref{Lem.PowerSeriesSG}, \ref{Lem.DecayNearCritTheta} and \ref{Lem.BigContour}, we have for some $B_1 > 0$ and all $j \in \llbracket 1, m \rrbracket$, $z\in \Gamma_{\kappa_j,N}$ that
\begin{equation}\label{Eq.S2ZBoundInUpperTail}
\left| e^{N \bar{\SFt}(z;\kappa_j)} \right| \leq B_1.
\end{equation}
In addition, from Lemmas \ref{Lem.SmallCircleS} and \ref{Lem.DiffS2}, we have for some $B_2, b_2 > 0$ and all $j \in \llbracket 1, m \rrbracket , w \in \gamma_{\kappa_j, N}$
\begin{equation}\label{Eq.S2WBoundInUpperTail}
\left| e^{- N \bar{\SFt}(w;\kappa_j)} \right| \leq B_2 e^{-b_2 N}.
\end{equation}
Lastly, from the definition of $H^N_{12}(z,w)$ in (\ref{Eq.DefIN12Edge}) we have for some $B_3 > 0$ and all $j \in \llbracket 1 ,m \rrbracket $, $z\in \Gamma_{\kappa_j,N}$, $w \in \gamma_{\kappa_j,N}$ that
\begin{equation}\label{Eq.H12BoundInUpperTail}
\left| H_{12}^N(z,w)\right| \leq B_3 N^{1/2}.
\end{equation}

Combining (\ref{Eq.RationalBoundInUpperTail}), (\ref{Eq.S2ZBoundInUpperTail}), (\ref{Eq.S2WBoundInUpperTail}) and (\ref{Eq.H12BoundInUpperTail}), we obtain for some $B_4, b_4 > 0$ and all $a \geq 0$, $j\in \llbracket 1, m \rrbracket$, $z\in \Gamma_{\kappa_j,N}$, $w \in \gamma_{\kappa_j,N}$ that
\begin{equation}\label{Eq.UDecayTop} 
\left| \frac{F_{12}^N(z,w) H_{12}^N(z,w)}{1 - w/z} \right| \leq B_4  e^{-b_4 N}.
\end{equation}
The last bound and the bounded lengths of $\Gamma_{\kappa_j,N}, \gamma_{\kappa_j,N}$ imply the first statement in (\ref{Eq.EdgeFDRed1}).\\

{\bf \raggedleft Step 3.} Let $\delta_1(\theta), \epsilon_1(\theta)$ be as in Lemma \ref{Lem.DecayNearCritTheta}, $\delta_2, \epsilon_2$ be as in Lemma \ref{Lem.DecayNearCritGen} and set 
$$\delta = \min(\{\delta_1(\theta_{\kappa}): \kappa \in \mathcal{T} \}, \delta_2), \hspace{2mm} \epsilon = \min (\{\epsilon_1(\theta_{\kappa}): \kappa \in \mathcal{T}\}, \epsilon_2), \hspace{2mm} \psi(\varepsilon)=\min_{\kappa \in \mathcal{T}}\psi_{\kappa}(\varepsilon),$$
where we recall that $\psi_{s} = \psi$ are as in Lemma \ref{Lem.BigContour} for $\kappa = s$. Let $\Gamma_{\kappa_j,N}(0)$ denote the part of $\Gamma_{\kappa_j,N}$, contained in the disc $\{z: |z - c| \leq \delta \}$. In addition, let $V_N^0(a)$ be defined analogously to $V_N(a)$, but with $\Gamma_{\kappa_j, N}$ replaced with $\Gamma_{\kappa_j, N}(0)$.

From (\ref{Eq.ContoursNestedEdge}), for some $B_5,b_5 > 0$ and all $j \in \llbracket 1, m \rrbracket$, $z \in \Gamma_{\kappa_j, N} \cup \gamma_{\kappa_j,N}$, $x \geq 0$, we have
\begin{equation}\label{Eq.EdgeB1} 
\begin{split}
&\left| \frac{(zc-1) (1-q/z)^{ \lfloor \kappa_j N \rfloor - \kappa_j N}}{z(z^2-1)(1-q/c)^{\lfloor \kappa_j N \rfloor - \kappa_j N}(1 - c/z)} \right| \leq B_5 N^{1/2}, \mbox{ and } \\
&\left|e^{-  \sigmap x N^{1/2} \log (z/c) +  \sigmap x N^{1/2} \log(c/c)  } \right| \leq B_5 e^{-b_5 x}.
\end{split}
\end{equation}
In addition, from Lemma \ref{Lem.BigContour} we have for all $j \in \llbracket 1, m \rrbracket$, $z\in \Gamma_{\kappa_j, N}\setminus \Gamma_{\kappa_j, N}(0)$
\begin{equation}\label{Eq.EdgeB2} 
\left|e^{N\bar{\SFt}(z;\kappa_j)} \right| \leq e^{- \psi(\delta) N}.
\end{equation}
Combining (\ref{Eq.EdgeB1}) and (\ref{Eq.EdgeB2}), we obtain for some $B_6,b_6 > 0$ and all $j \in \llbracket 1, m \rrbracket$, $z \in \Gamma_{\kappa_j, N}$, $a \geq 0$
\begin{equation}\label{Eq.EdgeB3} 
\left| V_N(a) - V_N^0(a) \right| \leq B_6 e^{-b_6 N - b_5 a_N} \leq B_6 e^{-b_6 N - b_5 a}.
\end{equation}

Next, changing variables $z = c + \tilde{z} N^{-1/2}$, and applying Lemmas \ref{Lem.PowerSeriesSG} and \ref{Lem.DecayNearCritTheta}, we conclude that for some $B_7,b_7 > 0$ and all $j \in \llbracket 1, m \rrbracket$, $z \in \Gamma_{\kappa_j, N}(0)$, we have
$$\left|e^{N\bar{\SFt}(z;\kappa_j)}\right| \leq B_7 e^{-b_7|\tilde{z}|^2}.$$
Combining the latter, with (\ref{Eq.EdgeB1}), we conclude for some $B_8 > 0$ and all $a \geq 0$ 
\begin{equation}\label{Eq.EdgeB4} 
\left|V_N^0(a) \right| \leq B_8 e^{-b_5 a_N} \cdot \int_{\mathcal{C}^{\theta_{\kappa_j}}_0[r_{\kappa_j}]} e^{-b_7|\tilde{z}|^2} |d\tilde{z}|,
\end{equation}
where $|d\tilde{z}|$ denotes integration with respect to arc-length, $r_{\kappa_j} = \sec(\theta_{\kappa_j})$ and the contours $\mathcal{C}_{a}^{\phi}[r]$ are as in Definition \ref{Def.ContoursEdge}. Equations (\ref{Eq.EdgeB3}) and (\ref{Eq.EdgeB4}) imply the second statement in (\ref{Eq.EdgeFDRed1}).
\end{proof}

\begin{lemma}\label{Lem.UpperTailTop2} Assume the same notation as in Definitions \ref{Def.ParametersEdge} and \ref{Def.ScalingEdge}. Then for each $j \in \llbracket 1, m \rrbracket$
\begin{equation}\label{Eq.UpperTailTop2}
\lim_{N \rightarrow \infty} \sum_{i \geq 1} \mathbb{P}(Y^{j,N}_i \geq \sigma_2^{-1} [h_1(\kappa_j) - h_2(\kappa_j)] N^{1/2} + 1) = 1.
\end{equation}
\end{lemma}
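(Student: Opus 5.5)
Lemma \ref{Lem.UpperTailTop2} should be the statement that exactly one particle (the top one) sits above level $h_1(\kappa_j)N$, i.e. the second curve lives at the lower scale $h_1(\kappa_j) N$, strictly below $h_2(\kappa_j)N$. I would follow Step 1 of the proof of Lemma \ref{Lem.UpperTailTop}. Set $a^N = \sigma_2^{-1}[h_1(\kappa_j) - h_2(\kappa_j)] N^{1/2} + 1$, which tends to $-\infty$ (recall $h_1 < h_2$ from Remark \ref{Rem.BrownianLimit}). Then
$$\sum_{i\ge1}\mathbb{P}(Y_i^{j,N}\ge a^N)=\mathbb{E}[M^{j,N}([a^N,\infty))]=U_N(a^N)+V_N(a^N),$$
with $U_N,V_N$ as in (\ref{Eq.FactMomEdgeFormula}). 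Here $x=a_N^\ast=\min\{y\in\Lambda_{\kappa_j}(N):y\ge a^N\}$ is now negative of order $N^{1/2}$. The goal is to show $U_N\to 0$ and $V_N\to 1$.

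For $V_N$, the factor $e^{-\sigma_2 x N^{1/2}\log(z/c)}$ is no longer bounded on $\Gamma_{\kappa_j,N}$, since $|z|>c$ and $x<0$. The fix is to deform $\Gamma_{\kappa_j,N}$ inward to a contour of radius slightly less than $c$, but still larger than $z_c(\kappa_j)$. In doing so we cross the simple pole at $z=c$ coming from $1/(1-c/z)$. The residue there should be exactly $1$: at $z=c$ one has $F_{12}^N(c,c)=1$, the rational factor $(zc-1)/(z(z^2-1))\cdot z/(z-c)$ has residue $(c^2-1)/(c^2-1)=1$, and the floor corrections cancel. This gives $V_N = 1 + \tilde V_N$, where $\tilde V_N$ is an integral over the inner contour.

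To bound $\tilde V_N$, combine the two exponentials:
$$N\bar S_2(z;\kappa_j) - \sigma_2 a N^{1/2}\log(z/c) \approx N[S_2(z)-S_2(c)] - (h_1-h_2)N\log(z/c) = N[S_1(z;\kappa_j) - S_2(c;\kappa_j) + (h_2-h_1)\log c].$$
So on a circle through $z_c(\kappa_j)$ (or a steepest-descent contour through $z_c$), the real part is at most
$$N\bigl[S_1(z_c;\kappa_j) - S_1(c;\kappa_j)\bigr] + O(N^{1/2}).$$
By Lemma \ref{Lem.DiffS2} this exponent is $-bN$, using that $S_1(c)-S_2(c)=(h_2-h_1)\log c$. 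Along the contour, Lemma \ref{Lem.SmallCircleS} (monotonicity on circles of radius $\le z_c$) controls $\Real S_1$, or alternatively the contour $C(z_c,\theta_0,R_0,0)$ from Lemma \ref{Lem.BigContour}. Two details need care. The contour must stay outside the pole at $z=1$; this is delicate when $\kappa_j=0$, so I would use a radius of $z_c+N^{-1/2}$ as in $\gamma_{\kappa,N}$. The $+1$ shift in $a^N$ and the lattice rounding only contribute $O(N^{1/2})$ in the exponent, which the $-bN$ absorbs.

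For $U_N$, the term $e^{+\sigma_2 xN^{1/2}\log(w/c)}$ with $x<0$ and $|w|<c$ is large, and the $z$-factor needs the same deformation. Writing the combined exponents in the same way, the $w$-integrand becomes $e^{-N[S_1(w)-S_1(c)]}$ times bounded factors, and the $z$-integrand becomes $e^{N[S_1(z)-S_1(c)]}$. I would deform the $z$ contour to a steepest-descent contour for $S_1$ through $z_c$ lying outside the $w$ circle (radius $z_c+N^{-1/2}$). This crosses the pole $z=c$, giving a single integral, and possibly the pole $z=w$ from $1/(1-w/z)$ and $1/(z-w)$; I would track these residues. After deformation the $z$ and $w$ exponents pair to $N[S_1(z)-S_1(w)]\le O(N^{1/2}) + o(N)$-type bounds. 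The needed decay comes from $S_1(c)-S_1(z_c)>0$ in the single-integral (residue) terms.

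I expect the main obstacle to be this $U_N$ bookkeeping. One must verify that all residue contributions beyond the $1$ from $V_N$ vanish, and that the double integral near $z=w=z_c$, where $z_c$ is a double critical point of $S_1$ (Lemma \ref{Lem.PowerSeriesSG}), is $o(1)$ despite the $N^{1/2}$ prefactor in $H_{12}^N$. This will require a local cubic steepest-descent estimate on an $N^{-1/3}$ scale.
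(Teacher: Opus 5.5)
Your reduction to $U_N(a_N)\to 0$ and $V_N(a_N)\to 1$ via (\ref{Eq.FactMomEdgeFormula}) coincides with the paper's argument, and so does your treatment of $V_N$. That treatment uses the identity by which the $\hat a_N$-part of the threshold turns $N\bar S_2(z;\kappa_j)$ into $N[S_1(z;\kappa_j)-S_1(c;\kappa_j)]$. You then deform inward past $z=c$, collect the residue exactly $1$, and bound the remainder by $e^{-bN+O(N^{1/2})}$ via Lemma \ref{Lem.DiffS2}. One correction: for the $z$-variable you must use the descent contour $C(z_c,\theta_0,R_0,\cdot)$ of Lemma \ref{Lem.BigContour}, not a circle through $z_c$. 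Lemma \ref{Lem.SmallCircleS} gives the \emph{lower} bound $\Real S_1\ge S_1(z_c)$ on $|z|=z_c$, which is the right direction only for the $w$-variable.

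The genuine gap is $U_N$: the sources of decay you name are not present there, and the actual mechanism is missing.

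First, the $U_N$ integrand has no pole at $z=c$, because the factor $z-c$ sits in the numerator of $H^N_{12}$. So no residue terms carrying $e^{-N[S_1(c)-S_1(z_c)]}$ arise. If the $z$-contour stays outside the $w$-contour, there is no residue at $z=w$ either. What remains is a double integral in which the exponent $N[\bar S_1(z)-\bar S_1(w)]$ is only $O(1)$ near $z_c$, since no $-bN$ is available. This sits against a rational factor $\frac{(zw-1)(z-c)}{(z-w)^2(z^2-1)(w-c)}$ of polynomial size, so "$O(N^{1/2})+o(N)$-type" exponent bounds are useless.

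Second, a cubic steepest-descent analysis at scale $N^{-1/3}$ does not by itself give $o(1)$. For a threshold within $O(N^{1/3})$ of $h_1(\kappa_j)N$, it converges to a positive Airy-kernel quantity: the expected number of particles above a fixed Airy-scale level.

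The missing idea is that the ``$+1$'' in the threshold is what kills $U_N$; you instead treat it as an error absorbed by $-bN$.
\begin{itemize}
\item Write $a_N=\hat a_N+(a_N-\hat a_N)$ with $a_N-\hat a_N\in[1,2]$. The leftover factor is then $e^{\sigma_2(a_N-\hat a_N)N^{1/2}\log(w/z)}$.
\item Put $w$ exactly on $|w|=z_c$; no pole is crossed, since the $w$-singularities are at $0,q,c,z$.
\item Put $z$ on $C(z_c,\theta_0,R_0,\sec(\theta_0)N^{-1/3})$. Then $|z|-|w|\gtrsim N^{-1/3}$ along the whole contour.
\item Consequently the leftover factor is at most $e^{-bN^{1/6}}$. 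Meanwhile $|e^{N[\bar S_1(z)-\bar S_1(w)]}|$ stays bounded by Lemmas \ref{Lem.PowerSeriesSG}, \ref{Lem.SmallCircleS} and \ref{Lem.BigContour}, and the rational factor is $O(N)$. Hence $U_N\to 0$.
\end{itemize}

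The separation scale is essential. If $w$ stays on radius $z_c+N^{-1/2}$ and $z$ sits just outside it, then $N^{1/2}(|z|-|w|)=O(1)$ and there is no decay at all. The separation must be $\gg N^{-1/2}\log N$. The paper takes it of order $N^{-1/3}$, which is the scale on which $N\bar S_1$ stays bounded on the short vertical segment of the $z$-contour.
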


\begin{proof} From (\ref{Eq.TailBoundMoment}), (\ref{Eq.FactMomEdge}) and (\ref{Eq.FactMomEdgeFormula}), we see that it suffices to show for each $j \in \llbracket 1, m \rrbracket$
\begin{equation}\label{Eq.OnlyOne1}
\lim_{N \rightarrow \infty}  |U_N(a_N)| = 0, \mbox{ and }  \lim_{N \rightarrow \infty} V_N(a_N) = 1,
\end{equation}
where $a_N = \min\{y \in \Lambda_{\kappa_j}(N): y \geq \hat{a}_N + 1\}$ and $\hat{a}_N = \sigma_2^{-1} [h_1(\kappa_j) - h_2(\kappa_j)] N^{1/2}$. In equation (\ref{Eq.OnlyOne1}), we have that $U_N(a_N)$, $V_N(a_N)$ are as in (\ref{Eq.FactMomEdgeFormula}) with $x = y = a_N$ (inside $F_{12}^N$).

We start by deforming the contours $\Gamma_{\kappa_j,N}$ and $\gamma_{\kappa_j,N}$ in (\ref{Eq.FactMomEdgeFormula}) to 
$$\tilde{\Gamma}_{\kappa_j, N} := C(\zc(\kappa_j), \theta_{\kappa_j}, R_{\kappa_j}, \sec(\theta_{\kappa_j})N^{-1/3}) \mbox{ and } \tilde{\gamma}_{\kappa_j} := C_{\zc(\kappa_j)},$$
respectively. We recall that these contours were defined in Definition \ref{Def.ContoursEdge} and the parameters $\{\theta_\kappa: \kappa \in \mathcal{T}\}, \{R_{\kappa}: \kappa \in \mathcal{T}\}$ are as in Lemma \ref{Lem.BigContour}. For $U_N(a_N)$ we do not cross any poles, while for $V_N(a_N)$ we cross the simple pole at $z = c$. By the residue theorem we obtain 
\begin{equation}\label{Eq.NewUNVN}
\begin{split}
&U_N(a_N) = \frac{1 }{(2\pi \im)^{2}}\oint_{\tilde{\Gamma}_{\kappa_j, N}} \hspace{-3mm} dz \oint_{\tilde{\gamma}_{\kappa_j}} dw  \tilde{F}_{12}^N(z,w) \tilde{G}^N_{12}(z,w)  \tilde{H}_{12}^N(z,w) ,\\
& V_N(a_N) =  1 + \frac{1}{2\pi \im} \oint_{\tilde{\Gamma}_{\kappa_j,N}} dz \frac{\tilde{F}_{12}^N(z,c)\tilde{G}_{12}^N(z,c) (zc-1) (1-q/z)^{ \lfloor \kappa_j N \rfloor - \kappa_j N}}{(z-c)(z^2-1)(1-q/c)^{\lfloor \kappa_j N \rfloor - \kappa_j N}},
\end{split}
\end{equation}
where 
\begin{equation}\label{Eq.NewFH}
\begin{split}
&\tilde{F}_{12}^N(z,w) = e^{N[\bar{\SFb}(z;\kappa_j) - \bar{\SFb}(w;\kappa_j)]}, \hspace{2mm} \tilde{G}^N_{12}(z,w) = e^{ \sigmap [a_N-\hat{a}_N] N^{1/2} [\log (w)  -\log(z)]  } ,\\
& \tilde{H}_{12}^N(z,w) =   \frac{(zw - 1)(z-c)(1-q/z)^{\lfloor \kappa_j N \rfloor - \kappa_j N}}{(z-w)^2(z^2 - 1) (w-c)(1-q/w)^{ \lfloor \kappa_j N \rfloor - \kappa_j N}}.
\end{split}
\end{equation}
In the remainder we estimate the functions in (\ref{Eq.NewUNVN}) using the lemmas from Section \ref{Section5.2}. In the inequalities below we will encounter various constants $B_i,b_i > 0$ with $B_i$ sufficiently large, and $b_i$ sufficiently small, depending on $q, c, \mathcal{T}, \{\theta_\kappa: \kappa \in \mathcal{T}\}, \{R_{\kappa}: \kappa \in \mathcal{T}\}$ -- we do not list this dependence explicitly. In addition, the inequalities will hold provided that $N$ is sufficiently large, depending on the same set of parameters, which we will also not mention further.\\

From the definitions of the contours $\tilde{\Gamma}_{\kappa_j, N}$ and $\tilde{\gamma}_{\kappa_j}$, we conclude for some $B_1 > 0$ and all $z \in \tilde{\Gamma}_{\kappa_j, N}$, $w \in \tilde{\gamma}_{\kappa_j}$ that
\begin{equation}\label{Eq.BoundsUT21}
\left|\tilde{H}_{12}^N(z,w)\right| \leq B_1 N, \hspace{2mm} \left| \frac{ (zc-1) (1-q/z)^{ \lfloor \kappa_j N \rfloor - \kappa_j N}}{(z-c)(z^2-1)(1-q/c)^{\lfloor \kappa_j N \rfloor - \kappa_j N}}\right| \leq B_1 N^{1/3}.
\end{equation}
From Lemmas \ref{Lem.PowerSeriesSG}, \ref{Lem.SmallCircleS}, \ref{Lem.BigContour} and \ref{Lem.DiffS2}, we conclude for some $B_2,b_2 > 0$ and all $z \in \tilde{\Gamma}_{\kappa_j, N}$, $w \in \tilde{\gamma}_{\kappa_j}$
\begin{equation}\label{Eq.BoundsUT22}
\left|\tilde{F}_{12}^N(z,w) \right| \leq B_2 \mbox{ and } \left|\tilde{F}_{12}^N(z,c) \right| \leq B_2 e^{-b_2N}.
\end{equation}
Using the definitions of the contours $\tilde{\Gamma}_{\kappa_j, N}$ and $\tilde{\gamma}_{\kappa_j}$ and the fact that $a_N \geq \hat{a}_N + 1$, we conclude for some $b_3 > 0$ and all $z \in \tilde{\Gamma}_{\kappa_j, N}$, $w \in \tilde{\gamma}_{\kappa_j}$
\begin{equation}\label{Eq.BoundsUT23}
\left|\tilde{G}_{12}^N(z,w) \right| \leq  e^{-b_3N^{1/6}}.
\end{equation}
Lastly, from the definition of the contour $\tilde{\Gamma}_{\kappa_j, N}$ and the fact that $|a_N - \hat{a}_N| \leq 2$, we conclude for some $B_4 > 0$ and all $z \in \tilde{\Gamma}_{\kappa_j, N}$
\begin{equation}\label{Eq.BoundsUT24}
\left|\tilde{G}_{12}^N(z,c) \right| \leq  e^{B_4N^{1/2}}.
\end{equation}

The first statement in (\ref{Eq.OnlyOne1}) follows from the first line in (\ref{Eq.NewUNVN}), the first inequalities in (\ref{Eq.BoundsUT21}) and (\ref{Eq.BoundsUT22}), and (\ref{Eq.BoundsUT23}). The second statement in (\ref{Eq.OnlyOne1}) follows from the second line in (\ref{Eq.NewUNVN}), the second inequalities in (\ref{Eq.BoundsUT21}) and (\ref{Eq.BoundsUT22}), and (\ref{Eq.BoundsUT24}). This suffices for the proof.
\end{proof}

%
%
\subsection{Finite-dimensional convergence}\label{Section6.2} The goal of this section is to establish the following result.
\begin{proposition}\label{Prop.FinitedimEdge}
 Assume the same notation as in Definition \ref{Def.ScalingEdge}. Then the sequence of random vectors $(Y_1^{1, N}, \dots, Y_1^{m,N})$ converges in the finite-dimensional sense to $\left(B_{\bar{\kappa}-\kappa_1}, \dots, B_{\bar{\kappa}- \kappa_m}\right)$, where $(B_t: t\geq 0)$ is a standard Brownian motion.
\end{proposition}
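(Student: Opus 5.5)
We follow the general framework of \cite{DZ25} (used there for region $\mathrm{III.a}$), as described in the outline (3A) above. That framework shows that if a Pfaffian point process built from the top points converges vaguely and the top particle is tight from above, then the top particle converges in finite-dimensional distributions. Accordingly, the proof splits into two inputs: (3A.1) tightness from above of $Y_1^{j,N}$, and (3A.2) weak convergence of the point processes $M^N$ from Lemma \ref{Lem.PrelimitKernelEdge}.

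\textbf{Step 1: vague convergence of $M^N$.} By Lemma \ref{Lem.PrelimitKernelEdge}, for large $N$ the process $M^N$ formed by $\{(\kappa_j,Y_i^{j,N})\}$ is Pfaffian with reference measure $\mu_{\mathcal{T},\nu(N)}$. Here $\nu_\kappa(N)$ is $\sigma_2^{-1}N^{-1/2}$ times counting measure on a lattice of mesh $\sigma_2^{-1}N^{-1/2}$, so these measures converge vaguely to Lebesgue measure. By Proposition \ref{Prop.KernelConvEdge}, the kernel $K^N$ converges pointwise along converging sequences to
\[
\begin{bmatrix} 0 & \kbm(\bar\kappa-s,x;\bar\kappa-t,y)\\ -\kbm(\bar\kappa-t,y;\bar\kappa-s,x) & 0\end{bmatrix}.
\]
To invoke the criterion from \cite{DY25} (kernel convergence plus uniform bounds imply weak convergence of Pfaffian point processes), we also need the kernel bounds to be uniform over compact $x,y$-sets, with enough decay for $x,y\ge -A$. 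These follow from the estimates (5.9)--(5.19) of \cite{DZ25} as adapted in the proof of Proposition \ref{Prop.KernelConvEdge}, which are uniform for $x,y$ in compacts.

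Next we identify the limit. Since the $11$ and $22$ entries vanish, the Pfaffian reduces to a determinant with kernel $\kbm(\bar\kappa-s,x;\bar\kappa-t,y)$. This kernel is of rank one plus a heat kernel, and it is exactly the correlation kernel of the single-point process $\{(\kappa_j,B_{\bar\kappa-\kappa_j})\}$. Writing $u=\bar\kappa-s$, its form is: the density $p_u(0,x)$ on the diagonal, minus the transition term for $u>u'$. This is the standard fact used in \cite{DZ25} for the one-particle "determinantal" process given by a Markov chain.

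\textbf{Step 2: tightness from above and the conclusion.} Lemma \ref{Lem.UpperTailTop} gives exactly the required upper-tail tightness of $Y_1^{j,N}$. We then apply the \cite{DZ25} framework (equivalently \cite[Proposition 2.21]{ED24a}-type results). Given vague convergence of $M^N$ to a process with exactly one point on each slice, together with top-tightness, the vector $(Y_1^{1,N},\dots,Y_1^{m,N})$ converges to the top points of the limit, i.e.\ to $(B_{\bar\kappa-\kappa_1},\dots,B_{\bar\kappa-\kappa_m})$.

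One subtlety is that the $Y_i^{j,N}$ for $i\ge 2$ must escape to $-\infty$, so that they contribute no mass in the limit. This follows because the limit process has exactly one point per slice, with expected count $1$ on each slice: integrating $p_u(0,x)$ over $x$ gives $1$. Fatou's lemma together with vague convergence then rules out a second point being tight.

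\textbf{Main obstacle.} The main obstacle is Step 1, specifically verifying the uniform domination hypotheses needed to pass from pointwise kernel convergence to convergence of correlation functions on sets $[-A,\infty)$. This is harder than in the Airy scaling because the points of $M^N$ live on a $\sqrt N$ scale with a second curve far below. Our first attempt is to reuse the bounds from \cite{DZ25} verbatim, with the modifications (\ref{Eq.New55})--(\ref{Eq.New511}) already recorded. If those prove insufficient, we would apply the Hadamard-type bound to $\det/\mathrm{Pf}$ to control the factorial moments directly.
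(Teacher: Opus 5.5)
Your proposal is correct and follows essentially the same route as the paper. Kernel convergence (Proposition \ref{Prop.KernelConvEdge}) with \cite[Proposition 5.14]{DY25} gives $M^N \Rightarrow$ the Pfaffian process with the $\kbm$ kernel, which is identified with $\{(\kappa_j,B_{\bar\kappa-\kappa_j})\}$ via \cite[Proposition 5.2]{DZ25}, \cite[Lemma 5.9]{DY25} and the uniqueness statement \cite[Proposition 5.8(3)]{DY25}; then \cite[Lemmas 7.1, 7.2]{DZ25}, fed by Lemma \ref{Lem.UpperTailTop} and the fact that the limit has exactly one atom per slice, give two-sided tightness and the conclusion.

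The ``main obstacle'' you flag is not actually needed. Vague convergence uses only locally uniform kernel convergence, and all control near $+\infty$ is already supplied by Lemma \ref{Lem.UpperTailTop}, so no additional decay bounds on $[-A,\infty)$ are required.
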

\begin{proof} We adopt the same notation as in Lemma \ref{Lem.PrelimitKernelEdge}, where for each $s \in \mathcal{T}$, we set $\theta_s = \theta_0$, $R_s = R_0$ as in Lemma \ref{Lem.BigContour} for $\kappa = s$. For clarity, the proof is divided into two steps. In the first step, we assume that $\{Y^{j,N}_1\}_{N \geq 1}$ is tight for each $j \in \llbracket 1, m \rrbracket$, and conclude the proposition by applying \cite[Lemma 7.1]{DZ25}. In the second step, we prove this tightness using Lemma \ref{Lem.UpperTailTop} and \cite[Lemma 7.2]{DZ25}.\\

{\bf \raggedleft Step 1.} We claim that 
\begin{equation}\label{Eq.TightY}
\mbox{ the sequence } \{Y_1^{j,N}\}_{N \geq 1} \mbox{ is tight for each $j \in \llbracket 1, m \rrbracket$.}
\end{equation}
We prove (\ref{Eq.TightY}) in the next step; here we assume it and complete the proof of the proposition.\\

From \cite[Lemma 7.1]{DZ25} with $r = 1$, 
$$X_i^{j,N} = Y_i^{j,N},  \hspace{2mm} X_1^j = B_{\bar{\kappa} - \kappa_j}, \hspace{2mm} t_j = \kappa_j \mbox{ for } i \geq 1, j \in \llbracket 1, m \rrbracket,$$
whose conditions are met in view of (\ref{Eq.TightY}), we see that it suffices to show that the point processes $M^N$ from Lemma \ref{Lem.PrelimitKernelEdge} converge weakly to the point process $M$ formed by $\{(\kappa_j, B_{\bar{\kappa}- \kappa_j}) : j \in \llbracket 1, m \rrbracket\}$.

By \cite[Proposition 5.2]{DZ25} and a change of variables (see \cite[Proposition 2.13(5)]{ED24a} with $\phi(s,x) = (\bar{\kappa}-s, x)$), $M$ is a determinantal point process on $\mathbb{R}^2$ with reference measure $\mu_{\mathcal{T}} \times \mathrm{Leb}$ and correlation kernel 
$$K^{\mathrm{det}}(s,x;t,y) = \kbm(\bar{\kappa} -s, x; \bar{\kappa} - t,y),$$
where $\kbm$ is as in (\ref{Eq.KBM}). From \cite[Lemma 5.9]{DY25} we conclude that $M$ is a Pfaffian point process on $\mathbb{R}^2$ with the same reference measure and correlation kernel
\begin{equation}\label{Eq.PfKernYLim}
K^{\mathrm{Pf}}(s,x;t,y) = \begin{bmatrix}
    0 & \kbm\left(\bar{\kappa} -s,  x  ; \bar{\kappa} - t, y  \right)\\
    - \kbm\left(\bar{\kappa} - t,y ; \bar{\kappa} - s, x  \right) & 0
\end{bmatrix}.
\end{equation}
From Lemma \ref{Lem.PrelimitKernelEdge} we know (for large $N$) that $M^N$ is a Pfaffian point process on $\mathbb{R}^2$ with correlation kernel $K^N$ as in (\ref{Eq:EdgeKerDecomp}) and reference measure $\mu_{\mathcal{T},\nu(N)}$. From Proposition \ref{Prop.KernelConvEdge} and \cite[Proposition 5.14]{DY25}, $M^N$ converge weakly to a Pfaffian point process $M^{\infty}$ with reference measure $\mu_{\mathcal{T}} \times \mathrm{Leb}$ and correlation kernel $K^{\mathrm{Pf}}$. As $M$ and $M^{\infty}$ are both Pfaffian with the same kernel and reference measure, they have the same law, see \cite[Proposition 5.8(3)]{DY25}, and so $M^N \Rightarrow M$ as desired. \\

{\bf \raggedleft Step 2.} In this step, we prove (\ref{Eq.TightY}) using \cite[Lemma 7.2]{DZ25} with $r = 1$ and $X^N_i = Y^{j,N}_i$ for $i \geq 1$. Note that \cite[(7.7)]{DZ25} holds by the definition of $Y^{j,N}_i$, and condition (3) in \cite[Lemma 7.2]{DZ25} is verified by Lemma \ref{Lem.UpperTailTop}. It remains to check conditions (1) and (2). 

Let $M^{j,N}$ be the point process on $\mathbb{R}$ as in (\ref{Eq.MeasSliceY}), and recall from Step 1 of the proof of Lemma \ref{Lem.UpperTailTop} that $M^{j,N}$ is a Pfaffian point process on $\mathbb{R}$ with reference measure $\nu_{\kappa_j}(N)$ and correlation kernel $K^{j,N}(x,y) = K^N(\kappa_j, x; \kappa_j,y)$, where $K^N$ is as in (\ref{Eq:EdgeKerDecomp}). From Proposition \ref{Prop.KernelConvEdge} we know that $K^{j,N}(x,y)$ converges uniformly over compact sets of $\mathbb{R}^2$ to the kernel
\begin{equation}\label{Eq.KerYLimSlice}
K^{j,\infty}(x,y) = \begin{bmatrix}
    0 & \kbm\left(\bar{\kappa} - \kappa_j,  x  ; \bar{\kappa} - \kappa_j, y  \right)\\
    - \kbm\left(\bar{\kappa} - \kappa_j,  y  ; \bar{\kappa} - \kappa_j, x  \right) & 0
\end{bmatrix}.
\end{equation}
Moreover, the measures $\nu_{\kappa_j}(N)$ converge vaguely to the Lebesgue measure on $\mathbb{R}$. Therefore, by \cite[Proposition 5.10]{DY25} we conclude that $M^{j,N}$ converge weakly to a Pfaffian point process $M^{j,\infty}$ with correlation kernel as in (\ref{Eq.KerYLimSlice}) and with reference measure $\Leb$. This verifies condition (1) in \cite[Lemma 7.2]{DZ25}.

From our work in Step 1 and \cite[Lemma 5.13]{DY25}, we know that the measure $M^{\mathrm{BM}}$, defined by $ M^{\mathrm{BM}}(A) = {\bf 1}\{B_{\bar{\kappa} - \kappa_j} \in A \}$, is a Pfaffian point process on $\mathbb{R}$ with correlation kernel $K^{j,\infty}$ and reference measure $\mathrm{Leb}$. As $M^{j,\infty}$ and $M^{\mathrm{BM}}$ are both Pfaffian with the same kernel and reference measure, they have the same law -- see \cite[Proposition 5.8(3)]{DY25}. As $M^{\mathrm{BM}}(\mathbb{R}) = 1$ almost surely, it follows that $M^{j,\infty}$ satisfies condition (2) in \cite[Lemma 7.2]{DZ25}. 
\end{proof}

%
%
\subsection{Tightness}\label{Section6.3} The following definition introduces certain discrete line ensembles, related to the Pfaffian Schur process from Definition \ref{Def.SchurProcess}.

\begin{definition}\label{Def.LETop} Assume the same parameters as in Definition \ref{Def.ParametersEdge} and fix $T \in (0, \bar{\kappa})$. Let $\mathbb{P}_N$ be the Pfaffian Schur process from Definition \ref{Def.SchurProcess} with $M \geq \bar{\kappa} N$. If $(\lambda^0, \dots, \lambda^N)$ is distributed according to $\mathbb{P}_N$, we define $\mathfrak{L}^N = (L^N_1, L^N_2)$ on $\llbracket 0, M \rrbracket$ through
\begin{equation}\label{Eq.DLETop}
L^{N}_i(s)  =  \lambda^{s}_{i}. 
\end{equation}
By linearly interpolating the points $(s, L^N_i(s))$, we may think of $L_1^N, L_2^N$ as random elements in $C([0,M])$, see Figure \ref{Fig.DiscreteLE}. We also define the rescaled curves
\begin{equation}\label{Eq.RescaledTopLE}
\begin{split}
&\mathcal{L}^{N}_i(t) = \sigmap^{-1} N^{-1/2} \cdot \left(L^{N}_i(tN) -  h_2(t)N\right) \mbox{ for } i = 1,2, t\in [0,T].
\end{split}
\end{equation}
\end{definition}

Our goal in this section is to establish the following statement.
\begin{proposition}\label{Prop.TightnessTopCurve} Assume the same notation as in Definition \ref{Def.LETop}. Then, $\{\mathcal{L}^N_1\}_{N \geq 1}$ is tight.
\end{proposition}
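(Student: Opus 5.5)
We follow the outline (2A)--(2B) at the start of this section. By the tightness criterion \cite[Lemma 2.4]{DEA21}, it suffices to prove one-point tightness of $\mathcal{L}_1^N(0)$ together with the modulus of continuity estimate
$$\lim_{\delta\downarrow0}\limsup_{N\to\infty}\mathbb{P}\bigl(w(\mathcal{L}_1^N,\delta)>\eta\bigr)=0 \quad \mbox{for every } \eta>0,$$
where $w$ is computed on $[0,T]$. One-point tightness holds at every fixed time. Indeed, by (\ref{Eq.YEdge}), $\mathcal{L}_1^N(\kappa)$ differs from $Y_1^{j,N}$ by $O(N^{-1/2})$ when $\kappa=\kappa_j$; the discrepancy comes from $\lfloor\kappa N\rfloor$ versus $\kappa N$ in $h_2$ and from the index shift. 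Proposition \ref{Prop.FinitedimEdge} then applies. To get some room we also fix $T'\in(T,\bar\kappa)$ and work on $[0,T']$.

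\textbf{Step 1 (the second curve is low, (2A)).} We show that for every $R>0$,
$$\mathbb{P}\Bigl(\sup_{s\in\llbracket0,\lceil T'N\rceil\rrbracket}\bigl(L_2^N(s)-h_2(s/N)N\bigr)\ge -RN^{1/2}\Bigr)\to0.$$
We pick a finite mesh $0=\kappa_1<\dots<\kappa_m=T'$ of spacing $\Delta$. Lemma \ref{Lem.UpperTailTop2} says that the expected number of points above level $\hat a_N+1$, where $\hat a_N=\sigma_2^{-1}(h_1-h_2)N^{1/2}$, tends to $1$. Proposition \ref{Prop.FinitedimEdge} says that $Y_1^{j,N}$ is tight, so the top curve is likely above this level (note $h_1<h_2$ by Remark \ref{Rem.BrownianLimit}). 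Combining these, $\mathbb{P}(Y_2^{j,N}\ge\hat a_N+1)\to0$. Hence at each mesh time, $L_2^N$ lies below $h_1(\kappa_j)N+O(N^{1/2})$, which is roughly $h_2(\kappa_j)N-c_0N$ with $c_0>0$ uniform on $[0,T']$.

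Because $L_2^N$ is increasing and $h_2$ is linear with slope $p_2$, for $s\in[\kappa_jN,\kappa_{j+1}N]$ we get
$$L_2^N(s)-h_2(s/N)N\le L_2^N(\kappa_{j+1}N)-h_2(\kappa_jN/N)N\le -c_0N+p_2\Delta N+O(N^{1/2}).$$
Choosing $\Delta<c_0/(2p_2)$ makes this $\le -(c_0/2)N$, which proves the claim.

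\textbf{Step 2 (Gibbs resampling of the top curve, (2B)).} Lemma \ref{Lem.TopGibbsProperty} is not stated above, so we derive what we need. By Lemma \ref{Lem.SchurGibbs} and Lemma \ref{Lem.GibbsConsistent}, $\mathfrak{L}^N$ has the interlacing Gibbs property on intervals $\llbracket a,b\rrbracket$ with $a\ge0$. Applying Lemma \ref{Lem.StrongGP} with $k_1=k_2=1$, the law of $L_1^N\llbracket a,b\rrbracket$ given its endpoints and $L_2^N$ is that of a uniform geometric bridge conditioned on $L_1\succeq L_2$. The interval $\llbracket0,\cdot\rrbracket$ is delicate, since the weight $c^{\lambda_1^0-\cdots}$ at time $0$ is not a bridge law. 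We handle it in one of two ways. Either we include $s=0$ by invoking the interacting-pair representation with the free left endpoint, or, more simply, we shift the window by taking $a=0$ and conditioning on $L_1^N(0)$ itself. The latter is legitimate because Definition \ref{Def.IGP} allows $a=T_0$, and the endpoint value is then just part of the boundary data.

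Fix $a=0$, $b=\lceil T'N\rceil$, and condition on $x=L_1^N(0)$, $y=L_1^N(b)$ and $g=L_2^N$. On the good event $G$ we require:
\begin{itemize}
\item[(i)] $x$ and $y$ lie within $MN^{1/2}$ of $h_2(0)N$ and $h_2(T')N$ respectively;
\item[(ii)] $g$ lies $\ge (c_0/2)N$ below the line $h_2(\cdot)N$.
\end{itemize}
On $G$, the free bridge has slope $(y-x)/b=p_2+O(N^{-1/2})$. By Proposition \ref{Prop.KMT} (strong KMT coupling with Brownian bridges), the free bridge stays within $O(N^{1/2}\log N)$ of its linear interpolation with probability $1-o(1)$, so it interlaces with $g$ with probability $\ge 1/2$. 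Therefore
$$\mathbb{P}(w>\eta\mid\mathcal{F}_{\mathrm{ext}})\le 2\,\mathbb{P}_{\mathrm{Geom}}(w>\eta).$$
Proposition \ref{Prop.KMT} also gives that $\mathbb{P}_{\mathrm{Geom}}(w>\eta)$ is close to the Brownian bridge modulus probability, uniformly in boundary data satisfying (i). That Brownian probability tends to $0$ as $\delta\to0$. Since $\mathbb{P}(G)\ge1-\varepsilon$ by Step 1 and one-point tightness, the modulus estimate follows.

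\textbf{Main obstacle.} The main obstacle is the uniformity in Step 2. We need the geometric bridge with random, possibly slightly off-slope endpoints to be controlled uniformly through KMT, with the variance of the bridge matching $\sigma_2^2$ at slope $p_2$. We also need the endpoint at time $0$ to be harmless. This requires checking that conditioning on $L_1^N(0)$ is compatible with Lemma \ref{Lem.StrongGP}, which it is since $T_0=0$ is allowed in Definition \ref{Def.IGP}.
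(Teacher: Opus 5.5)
Your proposal is correct and follows essentially the same route as the paper. The paper also shows that $L_2^N$ sits a macroscopic distance below $h_2(\cdot)N$ using mesh points, Lemma \ref{Lem.UpperTailTop2}, Proposition \ref{Prop.FinitedimEdge} and monotonicity of $L_2^N$; it then treats $L_1^N$ on $\llbracket 0, \lfloor sN\rfloor\rrbracket$ as a geometric bridge conditioned to interlace with $L_2^N$ (Lemma \ref{Lem.TopGibbsProperty}, obtained exactly as you do from Lemmas \ref{Lem.SchurGibbs} and \ref{Lem.GibbsConsistent} with $a=0$), and uses Proposition \ref{Prop.KMT} both to show the interlacing probability tends to $1$ and to control the modulus of continuity.

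Two small fixes are needed. First, uniform positivity of $h_2-h_1$ on $[0,T']$ does not follow from Remark \ref{Rem.BrownianLimit}, which only covers $\kappa=0$; it comes from the identity $h_1(\kappa)-h_2(\kappa) = -\frac{q(c-q)(\sqrt{1+\kappa}-\sqrt{1+\bar\kappa})^2}{(1-q^2)(1-qc)(1+\bar\kappa)}$. Second, take the right endpoint to be $\lfloor T'N\rfloor$ rather than $\lceil T'N\rceil$, so that one-point control there follows directly from Proposition \ref{Prop.FinitedimEdge}.
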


Before we give the proof of Proposition \ref{Prop.TightnessTopCurve}, we recall a bit of notation from Section \ref{Section2.2} and establish a few auxiliary results. 

Given integers $t_1 \leq t_2$, we call a function $g: \llbracket t_1, t_2 \rrbracket \rightarrow \mathbb{Z}$ such that $g( j+1) - g(j) \in \mathbb{Z}_{\geq 0}$ when $j \in\llbracket t_1, t_2 -1 \rrbracket$ an {\em increasing path (on $\llbracket t_1, t_2 \rrbracket$)}. If $t_i, z_i \in \mathbb{Z}$ for $i = 1,2$ are such that $t_1 \leq t_2$ and $z_1 \leq z_2$, we let $\Omega(t_1,t_2,z_1,z_2)$ be the collection of increasing paths that start from $(t_1,z_1)$ and end at $(t_2,z_2)$. We denote by $\mathbb{P}_{\mathrm{Geom}}^{t_1,t_2, z_1, z_2}$ the uniform distribution on $\Omega(t_1,t_2,z_1,z_2)$ and write $\mathbb{E}^{t_1,t_2,z_1,z_2}_{\mathrm{Geom}}$ for the expectation with respect to this measure. For two increasing paths $f,g: \llbracket t_1, t_2 \rrbracket \rightarrow \mathbb{Z}$, we say that they {\em interlace}, denoted by $f \preceq g$ or $f \succeq g$ if 
\begin{equation}\label{Eq.InterlacePaths}
f(r-1) \geq g(r) \mbox{ for } r \in \llbracket t_1 + 1, t_2 \rrbracket.
\end{equation}

Our first result provides a description of $\mathfrak{L}^N$ from Definition \ref{Def.LETop} in terms of interlacing geometric random walkers.
\begin{lemma}\label{Lem.TopGibbsProperty} Assume the same notation as in Definition \ref{Def.LETop}. Then, $L^N_1, L^N_2$ are a.s. increasing paths on $\llbracket 0, M \rrbracket$. Fix $b \in \llbracket 1, M \rrbracket$, $A, B \in \mathbb{Z}$, and an increasing path $g$ on $\llbracket 0, b \rrbracket$, such that $\mathbb{P}_N(E(A,B,g)) > 0$, where
$$E(A,B,g) = \{ L_1^N(0) = A, \hspace{2mm} L_1^N(b) = B, \hspace{2mm} L^N_2 = g \}.$$
Then, for any $E \subseteq \Omega(0,b,A,B)$, we have 
\begin{equation}\label{Eq.TopGibbsProperty}
\mathbb{P}_N(L^N_1 \in E \vert E(A,B,g)) = \frac{\mathbb{P}_{\mathrm{Geom}}^{0,b, A, B}(\{Q \in E\} \cap \{Q \succeq g\} )}{\mathbb{P}_{\mathrm{Geom}}^{0,b, A, B}(Q \succeq g)},
\end{equation}
where $Q$ has law $\mathbb{P}_{\mathrm{Geom}}^{0,b, A, B}$.
\end{lemma}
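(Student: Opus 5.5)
The plan is to read the statement off the explicit product form of the Pfaffian Schur process in Definition \ref{Def.SchurProcess}, in the same way as in the proof of Lemma \ref{Lem.SchurGibbs}. That the $L^N_i$ are increasing paths is exactly what was shown at the start of that proof: a positive-probability configuration has $s_{\lambda^j/\lambda^{j-1}}(q) = {\bf 1}\{\lambda^{j-1}\preceq\lambda^j\} q^{|\lambda^j|-|\lambda^{j-1}|} > 0$, hence $\lambda^{j-1}_i \leq \lambda^j_i$. I read $L^N_2 = g$ in $E(A,B,g)$ as $L^N_2\llbracket 0,b\rrbracket = g$.

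For the conditional law, fix a full configuration $(\lambda^0,\dots,\lambda^M)$ with $\lambda^j_1 = Q(j)$ for $j \in \llbracket 0,b\rrbracket$, where $Q \in \Omega(0,b,A,B)$, and $\lambda^j_2 = g(j)$. I would write its weight as
$$\frac{1}{Z}\, {\bf 1}\{\lambda^0\preceq\cdots\preceq\lambda^M\}\, c^{\lambda^0_1-\lambda^0_2+\lambda^0_3-\cdots}\, q^{|\lambda^M|-|\lambda^0|}\, s_{\lambda^M}(q,\dots,q).$$
In this expression $c^{\lambda^0_1} = c^A$ is fixed on the event. The exponent $|\lambda^M|-|\lambda^0|$ contributes $q^{\lambda^b_1-\lambda^0_1} = q^{B-A}$ from the first row on $\llbracket 0,b\rrbracket$, and this is also fixed; everything else does not involve $Q(j)$ for $j\in\llbracket 1,b-1\rrbracket$. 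The interlacing indicator splits into two parts. The part involving $Q$ on $\llbracket 0,b\rrbracket$ consists of $Q(j-1)\le Q(j)$ and $Q(j-1)\geq \lambda^j_2 = g(j)$ for $j\le b$, i.e. $Q\succeq g$ in the sense of (\ref{Eq.InterlacePaths}). The other part is the constraint $\lambda^{j-1}_1\ge \lambda^j_2$ and $\lambda^j_1\ge\lambda^{j}_2$ type relations for $j>b$ together with lower rows. This other part depends only on $B$, $g$, and the variables outside $\{L_1(j): j\le b\}$.

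Summing over all the remaining variables (rows $\geq 3$, all rows at times $>b$) therefore produces a factor $C(A,B,g)$ independent of $Q$, so that
$$\mathbb{P}_N(L_1^N\llbracket 0,b\rrbracket = Q,\ E(A,B,g)) = C(A,B,g)\,{\bf 1}\{Q\succeq g\}.$$
Dividing by the sum over $Q \in \Omega(0,b,A,B)$ gives the uniform measure on $\{Q\in\Omega(0,b,A,B): Q\succeq g\}$, which is precisely the right side of (\ref{Eq.TopGibbsProperty}). The denominator is positive because $\mathbb{P}_N(E(A,B,g))>0$.

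The only point needing care is bookkeeping. One must check that no interlacing constraint couples $Q(j)$ for $j\le b-1$ to anything other than $g$, and that the constraint at the endpoint $j=b$ involving $\lambda^b_1 = B$ and row $2$ at later times is absorbed into $C$. It is also where the difference from Lemma \ref{Lem.SchurGibbs} appears: there, pairs had to be resampled because of the factor $c^{\lambda^0_1-\lambda^0_2}$. Here, conditioning on $L_1^N(0)=A$ freezes this $c$-dependence, which is why a plain geometric bridge law results.
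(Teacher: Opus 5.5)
Your argument is correct, but it is more hands-on than the paper's proof.

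The paper proves the lemma in two lines. Lemma \ref{Lem.SchurGibbs} gives the interacting pair Gibbs property for $\mathfrak{L}^N$. Lemma \ref{Lem.GibbsConsistent} upgrades this to the interlacing Gibbs property of Definition \ref{Def.IGP}. Then (\ref{Eq.TopGibbsProperty}) is just the special case $K=\{1\}$, $a=0$, $f=\infty$ of (\ref{Eq.SchurEq}), whose conditioning event is exactly $E(A,B,g)$.

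You instead redo the factorization directly on the Schur weights in (\ref{Eq.SchurProcess}). Your extra observation is that conditioning on $L_1^N(0)=A$ freezes the only $c$-dependent factor that touches the first row.

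Each route buys something different:
\begin{itemize}
\item Yours is self-contained and bypasses the summation/bijection argument in the proof of Lemma \ref{Lem.GibbsConsistent}. It also makes transparent why a plain uniform bridge appears here rather than an interacting pair.
\item The paper's is shorter given the machinery already built. It also shows the conclusion holds for any ensemble with the interacting pair Gibbs property, not only for the Schur process.
\end{itemize}

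Your bookkeeping is right in the edge cases as well. When $b=M$, $s_{\lambda^M}$ depends on the first row only through $\lambda^b_1=B$. The constraint $Q(0)=A\geq g(1)$ at $r=1$ is absorbed harmlessly into ${\bf 1}\{Q\succeq g\}$.
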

\begin{proof} From Lemma \ref{Lem.SchurGibbs}, we know that $\mathfrak{L}^N$ satisfies the interacting pair Gibbs property of Definition \ref{Def.IPGP} as an $\mathbb{N}$-indexed line ensemble on $\llbracket 0, M \rrbracket$. From Lemma \ref{Lem.GibbsConsistent}, we conclude that $\mathfrak{L}^N$ satisfies the interlacing Gibbs property of Definition \ref{Def.IGP}, which implies (\ref{Eq.TopGibbsProperty}).
\end{proof}

The next result shows that $L_2^N$ from Definition \ref{Def.LETop} is with high probability quite low (compared to $h_2(t)N$ and hence $L_1^N$).
\begin{lemma}\label{Lem.SecondCurveUnifLow} Assume the same notation as in Definition \ref{Def.LETop}. For any $s \in [0, \bar{\kappa})$, we can find $\epsilon > 0$, depending on $q,c,s$, such that 
\begin{equation}\label{Eq.SecondCurveUnifLowWithinLemma}
\lim_{N \rightarrow \infty} \mathbb{P}_N \left( L_2^N(\lfloor tN\rfloor ) \geq h_2(t)N - \epsilon N \mbox{ for some }t \in [0, s] \right) = 0.
\end{equation} 
\end{lemma}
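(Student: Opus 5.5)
The plan follows the outline (2A.1)--(2A.2) given at the start of Section \ref{Section6}. Since $L_2^N$ is increasing and $t\mapsto h_2(t)N$ is linear with slope $p_2 N$, a uniform bound follows from bounds at finitely many times. Fix a mesh $0=\kappa_1<\kappa_2<\cdots<\kappa_m=s$ with spacing $\Delta$, where $\Delta$ is small. For $t\in[\kappa_j,\kappa_{j+1}]$ monotonicity gives
$$L_2^N(\lfloor tN\rfloor)\le L_2^N(\lfloor \kappa_{j+1}N\rfloor),\qquad h_2(t)N\ge h_2(\kappa_{j+1})N - p_2\Delta N.$$
It therefore suffices to find $\epsilon_0>0$ such that for each $j$
$$\lim_{N\to\infty}\mathbb{P}_N\left(L_2^N(\lfloor \kappa_j N\rfloor)\ge h_2(\kappa_j)N-\epsilon_0 N\right)=0.$$
We then take $\Delta$ with $p_2\Delta<\epsilon_0/2$ and $\epsilon=\epsilon_0/2$.

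To get the pointwise bound, use Lemma \ref{Lem.UpperTailTop2} together with Proposition \ref{Prop.FinitedimEdge}. Lemma \ref{Lem.DiffS2} (via the identification of $h_1$ with the critical point $\zc$) should give $h_1(\kappa)<h_2(\kappa)$ for $\kappa\in[0,\bar\kappa)$, so $\hat a_N:=\sigma_2^{-1}[h_1(\kappa_j)-h_2(\kappa_j)]N^{1/2}\to-\infty$ linearly in $N^{1/2}$. If needed, I would verify $h_1<h_2$ directly from the formulas in (\ref{Eq.ParametersEdge2}). Lemma \ref{Lem.UpperTailTop2} says $\sum_{i\ge1}\mathbb{P}(Y_i^{j,N}\ge \hat a_N+1)\to1$. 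Proposition \ref{Prop.FinitedimEdge} shows $Y_1^{j,N}$ is tight, so $\mathbb{P}(Y_1^{j,N}\ge \hat a_N+1)\to1$. Subtracting, $\sum_{i\ge2}\mathbb{P}(Y_i^{j,N}\ge\hat a_N+1)\to0$, and in particular $\mathbb{P}(Y_2^{j,N}\ge \hat a_N+1)\to0$. Unwinding (\ref{Eq.YEdge}), this event is
$$\left\{\lambda_2^{\lfloor\kappa_jN\rfloor}\ge h_1(\kappa_j)N+\sigma_2N^{1/2}+2\right\}.$$
Setting $\epsilon_0=\tfrac12\min_j[h_2(\kappa_j)-h_1(\kappa_j)]>0$ yields the pointwise bound for large $N$. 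Because $h_2-h_1$ is continuous and positive on $[0,s]\subset[0,\bar\kappa)$, $\epsilon_0$ can be taken to be $\tfrac12\min_{[0,s]}(h_2-h_1)$, which is independent of the mesh. This removes any circularity in the choice of $\Delta$.

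The main obstacle is confirming $h_1(\kappa)<h_2(\kappa)$ uniformly on $[0,s]$, with the correct sign relation to Lemma \ref{Lem.DiffS2}. The argument would identify $h_1(\kappa)N$ as the law-of-large-numbers location of $\lambda_2$, using $\SFb'(\zc)=0$. The remaining steps are the bookkeeping with floors, where $\lfloor \kappa_j N\rfloor$ versus $\kappa_jN$ shifts values only by $O(1)$.
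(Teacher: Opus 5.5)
Your proposal is correct and follows the paper's proof step for step: you get a pointwise bound on $L_2^N$ at the points of a finite mesh by subtracting the tightness of $Y_1^{j,N}$ (Proposition \ref{Prop.FinitedimEdge}) from Lemma \ref{Lem.UpperTailTop2}, and then pass to all $t\in[0,s]$ using that $L_2^N$ is increasing and $h_2$ is linear. For the inequality $h_1<h_2$ that you flag as the main obstacle, the paper uses the explicit identity $h_1(\kappa)-h_2(\kappa)=-\frac{q(c-q)(\sqrt{1+\kappa}-\sqrt{1+\bar{\kappa}})^2}{(1-q^2)(1-qc)(1+\bar{\kappa})}<0$. Your route through Lemma \ref{Lem.DiffS2} also works without identifying $h_1$ with the critical point: subtracting its two inequalities gives $(h_2(\kappa)-h_1(\kappa))\log(c/\zc(\kappa))>0$, and $\zc(\kappa)<c$.
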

\begin{proof} Notice that for $\kappa \in [0, \bar{\kappa})$ we have 
\begin{equation}\label{Eq.DiffHNegative}
h_1(\kappa)- h_2(\kappa) = - \frac{q(c-q)(\sqrt{1 + \kappa} - \sqrt{1+ \bar{\kappa}})^2}{(1-q^2)(1-qc)(1 + \bar{\kappa})} < 0.
\end{equation}
The latter shows that we can find $\epsilon > 0$, such that for $t \in [0,s]$
\begin{equation}\label{Eq.DiffHs}
h_1(t)- h_2(t) \leq - 3 \epsilon. 
\end{equation}
This specifies our choice of $\epsilon$ for the rest of the proof.

Fix $K \in \mathbb{N}$ sufficiently large, depending on $q,c,s,\epsilon$, so that $\pp(s/K) \leq \epsilon$. In addition, set $\kappa_{j} = (j-1)s/K$ for $j \in \llbracket 1, K+ 1 \rrbracket$. From Proposition \ref{Prop.FinitedimEdge} and (\ref{Eq.DiffHs}), we have for $j \in \llbracket 1, K+1 \rrbracket$
$$\lim_{N \rightarrow \infty}\mathbb{P}_N(Y_1^{j,N} \geq \sigmap^{-1}[h_1(\kappa_j) - h_2(\kappa_j)]N^{1/2} + 1) = 1.$$
The latter and Lemma \ref{Lem.UpperTailTop2} show that 
$$\lim_{N \rightarrow \infty}\mathbb{P}_N(Y_2^{j,N} \geq \sigmap^{-1}[h_1(\kappa_j) - h_2(\kappa_j)]N^{1/2} + 1) = 0.$$
Using the last equation, (\ref{Eq.DiffHs}), and the identity
\begin{equation}\label{Eq.LtoY}
Y_i^{j,N} = \sigmap^{-1}N^{-1/2}(L_i^N(\lfloor \kappa_j N \rfloor) - h_2(\kappa_j)N - i) \mbox{ for }i =1,2,
\end{equation}
we conclude that
\begin{equation}\label{Eq.L2LowAtGrids}
\lim_{N \rightarrow \infty}\mathbb{P}_N \left( L_2^N(\lfloor \kappa_j N \rfloor) \geq h_2(\kappa_j)N - 2\epsilon N \mbox{ for some }j \in \llbracket 1, K+1\rrbracket \right) = 0.
\end{equation}

We finally observe that as $L_2^N$ is increasing, see Lemma \ref{Lem.TopGibbsProperty}, and $h_2$ is linear with slope $p_2$, we have the following inclusion of events for $j \in \llbracket 1, K \rrbracket$ and $N \in \mathbb{N}$
$$\{L_2^N(\lfloor t N \rfloor) \geq h_2(t)N - \epsilon N \mbox{ for some $t \in [\kappa_{j}, \kappa_{j+1}]$}\}$$
$$ \subseteq \{L_2^N(\lfloor \kappa_{j+1} N \rfloor) \geq h_2(\kappa_{j+1})N - p_2(\kappa_{j+1} - \kappa_j) - \epsilon N \}\subseteq \{L_2^N(\lfloor \kappa_{j+1} N \rfloor) \geq h_2(\kappa_{j+1})N - 2\epsilon N \},$$ 
where in the last inclusion we used that $p_2(\kappa_{j+1} - \kappa_j) = p_2(s/K) \leq \epsilon$. The last displayed equation and (\ref{Eq.L2LowAtGrids}) imply the statement of the lemma.
\end{proof}

We next state a strong coupling between geometric random walk bridges and Brownian bridges, which follows from the results in \cite{DW19}. If $W_t$ denotes a standard one-dimensional Brownian motion and $\sigma > 0$, then the process
\begin{equation}\label{Eq.DefBMWithVariance}
B^{\sigma}_t = \sigma (W_t - t W_1), \hspace{5mm} 0 \leq t \leq 1,
\end{equation}
is called a {\em Brownian bridge (from $B^{\sigma}_0 = 0$ to $B_1^{\sigma} = 0$) with variance $\sigma^2$.} When $\sigma^2 = 1$ we drop it from the notation in (\ref{Eq.DefBMWithVariance}) and refer to the latter process as a {\em standard Brownian bridge}. 

With the above notation we state the strong coupling result we use.
\begin{proposition}\label{Prop.KMT} Fix $p \in (0, \infty)$ and set $\sigma = \sqrt{p(1+p)}$. For any $\epsilon, A > 0$ we can find $N_0$ large enough, depending on $p, \epsilon, A $, such that the following holds. For any $n \geq N_0$ and $z \in \mathbb{Z}_{\geq 0}$ such that $|z - pn| \leq A n^{1/2}$ we can find a probability space that supports a Brownian bridge $B^{\sigma}$ with variance $\sigma^2$ and a random increasing path $\ell^{(n,z)}$ with law $\mathbb{P}_{\operatorname{Geom}}^{0, n, 0,z}$ such that 
\begin{equation}\label{Eq.KMT}
\mathbb{P} \left( \Delta(n,z) \geq n^{1/4} \right) < \epsilon, \mbox{ where $\Delta(n,z):=  \sup_{0 \leq t \leq n} \left| n^{1/2} B^\sigma_{t/n} + \frac{t}{n} \cdot z - \ell^{(n,z)}(t) \right|.$}
\end{equation}
\end{proposition}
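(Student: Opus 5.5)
The plan is to deduce Proposition \ref{Prop.KMT} from the KMT-type coupling of \cite{DW19} for random walk bridges, together with a reduction from an arbitrary endpoint $z$ to a walk with matching drift. The key observation is that $\mathbb{P}_{\operatorname{Geom}}^{0,n,0,z}$, the uniform measure on increasing paths from $(0,0)$ to $(n,z)$, is the law of a random walk with i.i.d. geometric increments of any parameter $\tilde q\in(0,1)$, conditioned on its value at time $n$ equaling $z$. I would choose $\tilde q$ so that the increment mean equals $z/n$, that is, $\tilde p=\tilde q/(1-\tilde q)=z/n$. Since $|z-pn|\le An^{1/2}$, we have $|\tilde p-p|\le An^{-1/2}$. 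The increments then have variance $\tilde\sigma^2=\tilde p(1+\tilde p)$, which is within $O(n^{-1/2})$ of $\sigma^2$, and they have uniformly bounded exponential moments for $n$ large.

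The main input is the coupling result of \cite{DW19}. For a lattice random walk with increments having finite exponential moments near the origin, it gives a bridge $\ell$ from $0$ to $z$ and a Brownian bridge $B^{\tilde\sigma}$ with variance $\tilde\sigma^2$ on one space such that
$$\mathbb{E}\bigl[e^{a\Delta}\bigr]\le C e^{\alpha\log n}\, e^{(z-\tilde p n)^2/n},$$
where $\Delta$ is the sup distance between $\ell(t)$ and $n^{1/2}B^{\tilde\sigma}_{t/n}+tz/n$. With our choice of $\tilde q$ the drift term $z-\tilde pn$ vanishes. I need the constants $C,a,\alpha$ to be uniform over increment laws with $\tilde p$ in a compact neighborhood of $p$. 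This uniformity is the point I expect to be the main obstacle, and I would first check whether \cite{DW19} states it. If it does not, the fallback is to fix the single parameter $p$ and allow the endpoint mismatch $|z-pn|\le An^{1/2}$. The bound in \cite{DW19} then carries the factor $e^{A^2}$, which is bounded and harmless. In that case the Brownian bridge has variance exactly $\sigma^2$, the linear interpolation $tz/n$ appears automatically, and no reparametrization is needed.

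Assuming the fallback, Markov's inequality gives
$$\mathbb{P}(\Delta\ge n^{1/4})\le Ce^{A^2}n^{\alpha}e^{-an^{1/4}},$$
which is below $\epsilon$ once $n\ge N_0(p,\epsilon,A)$.

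If instead the reparametrized version is used, one extra step is needed: replacing $B^{\tilde\sigma}$ by $(\sigma/\tilde\sigma)B^{\tilde\sigma}$, a Brownian bridge with variance $\sigma^2$. This costs
$$n^{1/2}\,|\sigma/\tilde\sigma-1|\,\sup_t|B^{\tilde\sigma}_t|=O\bigl(\sup_t|B_t|\bigr),$$
which exceeds $n^{1/4}/2$ only with probability $e^{-cn^{1/2}}$. Combining this with the coupling bound at threshold $n^{1/4}/2$ gives \eqref{Eq.KMT}.
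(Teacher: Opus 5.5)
Your fallback argument is exactly the paper's proof: fix $q$ with $p=q/(1-q)$, invoke \cite[Theorem 4]{DW19} (applicable to geometric increments by \cite[Example 2]{DW19}) to get $\mathbb{E}[e^{a\Delta(n,z)}]\le Cn^{\alpha'}e^{(z-pn)^2/n}\le Cn^{\alpha'}e^{A^2}$, and conclude by Markov's inequality. The reparametrization route, and the uniformity-in-$\tilde p$ question it raises, are unnecessary, because the endpoint mismatch is already absorbed by the bounded factor $e^{A^2}$.
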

\begin{proof} We fix $q \in (0,1)$ such that $p = \frac{q}{1-q}$. From \cite[Example 2, page 719]{DW19} we have that the geometric distribution satisfies the conditions of \cite[Theorem 4]{DW19}. The latter ensures the existence of constants $0< C, a, \alpha' < \infty$ (depending on $p$ alone) such that for any integer $n$ and any $z \in \mathbb{Z}_{\geq 0}$ we can find a Brownian bridge $B^{\sigma}$ with variance $\sigma^2 = p(1+p)$ and a random increasing path $\ell^{(n,z)}$ with law $\mathbb{P}_{\operatorname{Geom}}^{0, n, 0,z}$ defined on the same probability space such that 
$$\mathbb{E}\left[e^{a \Delta(n,z)} \right] \leq C n^{\alpha'} e^{(z- pn)^2/n}.$$
Using the latter and Chebyshev's inequality, we obtain for $|z - pn| \leq A n^{1/2}$
$$\mathbb{P} \left( \Delta(n,z) \geq n^{1/4} \right) \leq e^{-a n^{1/4}} \mathbb{E}\left[e^{a \Delta(n,z)} \right] \leq C n^{\alpha'} e^{A^2 - an^{1/4}}, $$
which implies the statement of the proposition.
\end{proof}

With the above results in place, we are ready to carry out the proof of Proposition \ref{Prop.TightnessTopCurve}.
\begin{proof}[Proof of Proposition \ref{Prop.TightnessTopCurve}]

For clarity, we split the proof into two steps. In the first step, we reduce the proof to establishing a certain control on the modulus of continuity of $L_1^N$, see (\ref{Eq.ModulusTopRed}), and we establish the latter by assuming that a geometric random walk bridge is likely to interlace with a very low curve, see (\ref{Eq.AsymptInterlaceTop}). The statement (\ref{Eq.AsymptInterlaceTop}) is established in the second step.\\

{\bf \raggedleft Step 1.} Fix $s \in (T, \bar{\kappa})$ and set $s_N = \lfloor sN \rfloor$. From Proposition \ref{Prop.FinitedimEdge} and (\ref{Eq.LtoY}), we know that $\mathcal{L}^N_1(0)$ converges to a Gaussian variable with mean zero and variance $\bar{\kappa}$. In particular, $\mathcal{L}^N_1(0)$ is tight. Using the tightness criterion from \cite[Theorem 7.3]{Billing}, it remains to show that for any $\epsilon, \eta > 0$, there exists $\delta > 0$, such that
\begin{equation}\label{Eq.ModulusTopRed}
\begin{split}
&\limsup_{N \rightarrow \infty} \mathbb{P}_N\left( w(L^N_1;\delta, s_N)  \geq \epsilon N^{1/2}   \right) < \eta, \mbox{ where }\\
&w(f;\delta, s_N) := \max_{\substack{x,y \in \llbracket 0, s_N \rrbracket \\ |x-y| \leq \delta s_N }} |f( x) -f(y) - p_2 (x-y)N |.
\end{split}
\end{equation}
In the remainder we fix $\epsilon, \eta > 0$ and proceed to find $\delta > 0$ that satisfies (\ref{Eq.ModulusTopRed}). All constants that appear below depend on $q,c,s,T, \epsilon, \eta$ and all inequalities hold provided that $N$ is sufficiently large, depending on this set of parameters, in addition to other listed ones. We do not mention this further.\\

From Proposition \ref{Prop.FinitedimEdge}, using also (\ref{Eq.LtoY}), we can find $A_1 > 0$ such that 
\begin{equation}\label{Eq.SideTop}
\mathbb{P}_N(|L_1^N(0) - h_2(0)N| \geq A_1 N^{1/2} \mbox{ or } |L_1^N(s_N) - h_2(s)N| \geq A_1N^{1/2} )< \eta/2.
\end{equation}
In addition, from Lemma \ref{Lem.SecondCurveUnifLow} we can find $a_1 > 0$, such that for large $N$
\begin{equation}\label{Eq.SecondCurveUnifLow}
\mathbb{P}_N \left( L_2^N( \lfloor tN \rfloor) \geq h_2(t)N - a_1 N \mbox{ for some }t \in [0, s] \right) < \eta/4.
\end{equation} 

Let $G_N$ denote the set of triplets $(A,B,g)$ where $A, B \in \mathbb{Z}$ and $g$ is an increasing path on $\llbracket 0, s_N \rrbracket$, such that $|A - h_2(0)N|, |B - h_2(s)N| < A_1N^{1/2}$, 
\begin{equation}\label{Eq.GBoundedByH2}
g(\lfloor tN \rfloor) < h_2(t) N - a_1N \mbox{ for all } t \in [0,s],
\end{equation}
and
$$\mathbb{P}_N(L_1^N(0) = A, L_1^N(s_N) = B, L_2^N(t) = g(t) \mbox{ for } t \in \llbracket 0 , s_N \rrbracket) > 0.$$
We claim that
\begin{equation}\label{Eq.AsymptInterlaceTop}
\lim_{N \rightarrow \infty}\inf_{(A,B,g) \in G_N} \mathbb{P}_{\mathrm{Geom}}^{0,s_N, A, B}(Q \succeq g) = 1.
\end{equation}
We establish (\ref{Eq.AsymptInterlaceTop}) in the second step. Here, we assume its validity and conclude the proof of (\ref{Eq.ModulusTopRed}).\\

From (\ref{Eq.SideTop}), (\ref{Eq.SecondCurveUnifLow}) and Lemma \ref{Lem.TopGibbsProperty}, we have for all large $N$
\begin{equation*}
\begin{split}
&\mathbb{P}_N\left( w(L^N_1;\delta, s_N)  \geq  \epsilon N^{1/2}  \right)<  \sum_{(A,B,g) \in G_N} \frac{\mathbb{P}_{\mathrm{Geom}}^{0,s_N, A, B}(\{w(Q;\delta, s_N) \geq \epsilon N^{1/2}  \} \cap \{Q \succeq g\} )}{\mathbb{P}_{\mathrm{Geom}}^{0,s_N, A, B}(Q \succeq g)} \\
& \times \mathbb{P}_N(L_1^N(0) = A, L_1^N(s_N) = B, L_2^N(t) = g(t) \mbox{ for } t \in \llbracket 0 , s_N \rrbracket)  + 3\eta/4.
 \end{split}
\end{equation*}
Combining the latter with (\ref{Eq.AsymptInterlaceTop}), we see that to prove (\ref{Eq.ModulusTopRed}) it suffices to find $\delta > 0$, such that 
\begin{equation}\label{Eq.ModContGeomRedTop}
\limsup_{N \rightarrow \infty} \sup_{(A,B,g) \in G_N} \mathbb{P}_{\mathrm{Geom}}^{0,s_N, A, B}\left(w(Q;\delta, s_N) \geq  \epsilon N^{1/2} \right) \leq \eta/4.
\end{equation}

Fix $(A,B,g) \in G_N$, set $z = B-A$ and observe (by the linearity of $h_2$ and the definition of $G_N$) that $|z - p_2s_N| \leq p_2 + 2A_1N^{1/2} \leq A_0 s_N^{1/2}$. Here, $A_0$ is large enough, so that the last inequality holds for all large $N$ -- its existence is ensured by $s_N \geq TN$ for all large $N$. Let $(\Omega,\mathcal{F},\mathbb{P})$ be the probability space from Proposition \ref{Prop.KMT} for $p = p_2$, $\sigma = \sigmap$, $n = s_N$, $A = A_0$ and $\epsilon = \eta/8$, and observe that by translation
\begin{equation}\label{Eq.TranslationTop}
\mathbb{P}_{\mathrm{Geom}}^{0,s_N, A, B}\left(w(Q;\delta, s_N) \geq  \epsilon N^{1/2}\right) = \mathbb{P}\left(w(\ell^{(s_N,z)};\delta, s_N) \geq  \epsilon N^{1/2} \right). 
\end{equation}
Using the definition of $w(f;\delta, s_N)$ from (\ref{Eq.ModulusTopRed}), the definition of $\Delta(s_N,z)$ from (\ref{Eq.KMT}), and the triangle inequality, we see
\begin{equation}\label{Eq.ModulusBMTop}
\begin{split}
& \mathbb{P}\left(w(\ell^{(s_N,z)};\delta, s_N) \geq  \epsilon N^{1/2} \right)  \leq \eta/8 + \mathbb{P}\left(w(\hat{B};\delta, s_N) \geq  \epsilon  N^{1/2}- 2 s_N^{1/4}  \right) ,
\end{split}
\end{equation}
where $\hat{B}(t) = s_N^{1/2} B^{\sigmap}_{t/s_N} + (t/s_N)z$. 

Using the continuity of $B^{\sigmap}_{t}$ on $[0,1]$, we can find $\delta > 0$, so that for all large $N$ 
$$\delta A_0 s_N^{1/2} < \epsilon N^{1/2}/4 \mbox{, and }\mathbb{P}\left( s_N^{1/2}w_{\delta}(B^{\sigmap}) >  \epsilon N^{1/2} /2  \right) < \eta/8, \mbox{ where }$$
$$w_{\delta}(f) = \sup_{|x-y|\leq \delta, x,y \in [0,1]}|f(x) - f(y)|.$$
By the triangle inequality and the fact that $ \delta A_0 s_N^{1/2} < \epsilon N^{1/2}/4$, we also have 
$$ w(\hat{B};\delta, s_N) \leq  s_N^{1/2}w_{\delta}(B^{\sigmap}) + \sup_{|x-y|\leq \delta, x,y \in [0,1]} \left| x-y \right| \cdot |z - p_2 s_N| \leq s_N^{1/2}w_{\delta}(B^{\sigmap}) + \epsilon N^{1/2} /4.  $$
Using the last two displayed equations, we conclude for all large $N$
$$\mathbb{P}\left(w(\hat{B};\delta, s_N) \geq \epsilon N^{1/2} - 2 s_N^{1/4}  \right) \leq \mathbb{P}\left(s_N^{1/2}w_{\delta}(B^{\sigmap}) \geq 3 \epsilon N^{1/2}/4 - 2 s_N^{1/4}  \right)  < \eta/8.$$
Combining the latter with (\ref{Eq.TranslationTop}) and (\ref{Eq.ModulusBMTop}) gives (\ref{Eq.ModContGeomRedTop}).\\

{\bf \raggedleft Step 2.} In this step, we prove (\ref{Eq.AsymptInterlaceTop}). Using the upper bound (\ref{Eq.GBoundedByH2}), the definition of interlacing (\ref{Eq.InterlacePaths}), and the fact that $h_2(t)$ is linear with slope $p_2$, we see that it suffices to prove
\begin{equation}\label{Eq.LowerBoundGeom}
\lim_{N \rightarrow \infty}\inf_{(A,B,g) \in G_N} \mathbb{P}_{\mathrm{Geom}}^{0,s_N, A, B}\left(Q(t) \geq h_2(t/N)N - a_1N + p_2 \mbox{ for } t \in[0,s_N] \right) = 1.
\end{equation}

Fix $\varepsilon \in (0,1)$ and let $(\Omega,\mathcal{F},\mathbb{P})$ be the probability space from Proposition \ref{Prop.KMT} for $p = p_2$, $\sigma = \sigmap$, $n = s_N$, $A = A_0$ and $\epsilon = \varepsilon$. We recall from Step 1 that $A_0$ is large enough so that $p_2 + 2A_1N^{1/2} \leq A_0 s_N^{1/2}$ for all large $N$. Fix $(A,B,g) \in G_N$, set $z = B-A$, and recall that our choice of $A_0$ ensured $|z - p_2s_N| \leq A_0 s_N^{1/2}$. From Proposition \ref{Prop.KMT} and translation, we conclude for some $N_0$, depending on $\varepsilon$, and all $N \geq N_0$   
\begin{equation}\label{Eq.LowerBoundGeom2}
\begin{split}
&\mathbb{P}_{\mathrm{Geom}}^{0,s_N, A, B}\left(Q(t) \geq h_2(t/N)N - a_1N + p_2 \mbox{ for } t \in[0,s_N] \right)  \\
& = \mathbb{P}\left(\ell^{(s_N,z)}(t) \geq h_2(t/N)N - a_1N -A + p_2 \mbox{ for } t \in[0,s_N] \right)\\
& \geq \mathbb{P}\left(s_N^{1/2}B^{\sigma}_{t/s_N} + (t/s_N) z \geq h_2(t/N)N - a_1N -A + p_2 + s_N^{1/4} \mbox{ for } t \in[0,s_N] \right) -\varepsilon \\
& \geq \mathbb{P}\left(s_N^{1/2}B^{\sigma}_{t/s_N}  \geq A_0 s_N^{1/2}- a_1N + A_1N^{1/2} + p_2 + s_N^{1/4} \mbox{ for } t \in[0,s_N] \right) -\varepsilon,
\end{split}
\end{equation}
 where in going from the third to the fourth line we used $|z-p_2s_N| \leq A_0s_N^{1/2}$, $h_2$ is linear with slope $p_2$, and $|A - h_2(0)N| \leq A_1N^{1/2}$. From (\ref{Eq.LowerBoundGeom2}) we conclude 
\begin{equation*}
\begin{split}
&\liminf_{N \rightarrow \infty} \inf_{(A,B,g) \in G_N} \mathbb{P}_{\mathrm{Geom}}^{0,s_N, A, B}\left(Q(t) \geq h_2(t/N)N - a_1N + p_2 \mbox{ for } t \in[0,s_N] \right) \\
&\geq \liminf_{N \rightarrow \infty} \mathbb{P}\left(B^{\sigma}_{t}  \geq - a_1N s_N^{-1/2}/2  \mbox{ for } t \in[0,1] \right) -\varepsilon = 1- \varepsilon,
\end{split}
\end{equation*}
which implies (\ref{Eq.LowerBoundGeom}) as $\varepsilon \in (0,1)$ was arbitrary.
\end{proof} 

%
%
\subsection{Proof of Theorem \ref{Thm.BrownianLimit}}\label{Section6.4} We continue with the notation of the theorem. For a function $f$, denote by $f[a,b]$ its restriction to the interval $[a,b]$.

Fix $T \in (0, \bar{\kappa})$. By Proposition \ref{Prop.LPPandSchur}, the process $\mathcal{U}_1^{\mathrm{top},N}[0,T]$ has the same distribution as $\mathcal{L}^N_1$ from Definition \ref{Def.LETop}, both viewed as random elements of $C([0,T])$. 

Proposition \ref{Prop.TightnessTopCurve} ensures that the sequence $\{\mathcal{L}^N_1\}_{N \geq 1}$ is tight. Combining this with Proposition \ref{Prop.FinitedimEdge} and (\ref{Eq.LtoY}) yields $\mathcal{L}^N_1 \Rightarrow W[0,T]$. Hence, $\mathcal{U}_1^{\mathrm{top},N}[0,T] \Rightarrow W[0,T]$. Since $T \in (0, \bar{\kappa})$ was arbitrary, the theorem follows.

%
%
\section{Convergence to the pinned Airy line ensemble}\label{Section7} The goal of this section is to prove Theorem \ref{Thm.AiryLimit}. In Section \ref{Section7.1} we establish certain upper tail estimates, see Lemma \ref{Lem.UpperTailBot}, that in Section \ref{Section7.2} are used to establish a finite-dimensional analogue of Theorem \ref{Thm.AiryLimit}, see Proposition \ref{prop:finite dimensional convergence}. The two parts of Theorem \ref{Thm.AiryLimit} are proved in Sections \ref{Section7.3} and \ref{Section7.4} by combining the finite-dimensional convergence of Proposition \ref{prop:finite dimensional convergence} and the tightness criterion of Theorem \ref{Thm.Tightness}.

%
%
\subsection{Upper tail probability estimates}\label{Section7.1} The goal of this section is to prove the following lemma concerning the random variables $X^{j,N}_i$ from Definition \ref{Def.ScalingBulk}. It shows that when $c \in [0,1)$, the variables $X^{j,N}_1$, and when $c \in (1, q^{-1})$, the variables $X^{j,N}_2$, are unlikely to attain large values.  

\begin{lemma}\label{Lem.UpperTailBot} Assume the same notation as in Definition \ref{Def.ScalingBulk}, where $\mathcal{T} \subset (0,\infty)$. For each $j \in \llbracket 1, m \rrbracket$, we have
\begin{equation}\begin{split}\label{Eq.UpperTailBot}
    & \lim_{a \rightarrow \infty} \limsup_{N \rightarrow \infty} \mathbb{P}(X^{j,N}_1 \geq a) = 0\quad\mbox{ if }c\in[0,1),\\
    & \lim_{a \rightarrow \infty} \limsup_{N \rightarrow \infty} \mathbb{P}(X^{j,N}_2 \geq a) = 0\quad\mbox{ if }c\in(1,q^{-1}).
\end{split} \end{equation}
\end{lemma}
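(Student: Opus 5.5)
We follow the first-moment strategy of Lemma \ref{Lem.UpperTailTop}, now with the kernel from Lemma \ref{Lem.PrelimitKernelBot}. Let $M^{j,N}(A)=\sum_{i\geq 1}{\bf 1}\{X_i^{j,N}\in A\}$. By Lemma \ref{Lem.PrelimitKernelBot} and \cite[Lemma 5.13]{DY25}, this is a Pfaffian point process with reference measure $\nu_{t_j}(N)$ and kernel $K^N(t_j,\cdot;t_j,\cdot)$. As in (\ref{Eq.FactMomEdge}), we then have
$$\mathbb{E}[M^{j,N}([a,\infty))]=\sigma_1^{-1}N^{-1/3}\sum_{x\in\Lambda_{t_j}(N),\,x\geq a_N}K^N_{12}(t_j,x;t_j,x).$$

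\textbf{Case $c\in[0,1)$.} Since $s=t$, the term $R^N_{12}$ vanishes and only $I^N_{12}$ remains. On the contours, $|z|>1>|w|$, so the geometric series in $x$ converges absolutely. Summing it inside the double integral produces a factor $(1-w/z)^{-1}$ and a factor $e^{-\sigma_1 a_N N^{1/3}(\log z-\log w)}$. The sum of the left side over $i$ dominates $\mathbb{P}(X_1^{j,N}\geq a)$, so it suffices to show that the resulting integral $U_N(a)$ satisfies $\limsup_a\limsup_N|U_N(a)|=0$.

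To do this, truncate to $\gamma_N^{\pm}(\pm1,0)$ using (\ref{eq: Truncation of I}) and the estimates of Step 2 in Proposition \ref{Prop.KernelConvBot}. Then change variables $z=1+\sigma_1^{-1}N^{-1/3}\tilde z$ and $w=1+\sigma_1^{-1}N^{-1/3}\tilde w$. In these variables:
\begin{itemize}
\item[$\bullet$] $(1-w/z)^{-1}\approx \sigma_1N^{1/3}(\tilde z-\tilde w)^{-1}$, which is bounded by $\mathsf{C}N^{1/3}$ because the contours $\mathcal{C}_{\sigma_1}^{\pi/3}$ and $\mathcal{C}_{-\sigma_1}^{2\pi/3}$ are separated;
\item[$\bullet$] this $N^{1/3}$ cancels the prefactor $\sigma_1^{-1}N^{-1/3}$ of the sum;
\item[$\bullet$] the exponential factor becomes at most $e^{-a\,\mathrm{Re}(\tilde z-\tilde w)}\cdot e^{o(1)}$, and $\mathrm{Re}(\tilde z-\tilde w)\geq 2\sigma_1>0$ on the contours.
\end{itemize}
The bounds on $F^N_{12}H^N_{12}$ from Step 3 of Proposition \ref{Prop.KernelConvBot} give cubic decay $e^{\mathsf{C}(1+|\tilde z|^2+|\tilde w|^2)-(|\tilde z|^3+|\tilde w|^3)/6}$. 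Hence $|U_N(a)|\leq \mathsf{C}e^{-2\sigma_1 a}$ uniformly for large $N$, and the claim follows.

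\textbf{Case $c\in(1,q^{-1})$.} Now $R^N_{12}$ contains the residue term at $w=c$. After summing the geometric series, this becomes
$$V_N(a)=\frac{1}{2\pi\im}\oint_{\gamma_N^+(1)}\frac{zc-1}{z(z^2-1)}\cdot\frac{F_{12}^N(z,c)}{1-c/z}\,dz.$$
This term is \emph{not} small. Deforming the contour outward past the pole $z=c$ gives residue $1$, which accounts for the top particle $X^{j,N}_1$ escaping to $+\infty$. We therefore aim to show
$$\limsup_{a\to\infty}\limsup_{N\to\infty}\Big|\sum_{i\geq1}\mathbb{P}(X_i^{j,N}\geq a)-1\Big|=0.$$
Since $X_1^{j,N}\geq X_2^{j,N}$, this gives $\mathbb{P}(X_2^{j,N}\geq a)\leq\sum_{i\geq 2}\mathbb{P}(X_i^{j,N}\geq a)\leq \sum_{i\geq1}\mathbb{P}(X_i^{j,N}\geq a)-\mathbb{P}(X_1^{j,N}\geq a)$. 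It then suffices to also show $\mathbb{P}(X_1^{j,N}\geq a)\to 1$, meaning the top particle sits at height of order $N^{1/6}$ or higher. This follows from Theorem \ref{Thm.BrownianLimit}, or from Remark \ref{Rem.BrownianLimit}, since $C^{\mathrm{top}}>2q/(1-q)$.

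The $I^N_{12}$ part is handled exactly as in the first case. For $V_N$, we proceed as follows.
\begin{itemize}
\item[$\bullet$] Deform $\gamma_N^+(1)$ to a contour around $z=c$, crossing the simple pole at $z=c$ and picking up residue $1$. This uses $F_{12}^N(c,c)\cdot e^{0}=1$, because the factor in $a$ equals $1$ at $z=c$.
\item[$\bullet$] Equivalently, one can write $V_N=1+(\text{integral over a contour inside } c)$. The remaining integral is then bounded on $\gamma_N^+(1,0)$ by $e^{\mathsf{C}N^{2/3}-NS_1(c)}$, using (\ref{eq:S being positive for c larger than 1}), exactly as in (\ref{eq:truncation of many single contours}) and Step 4 of Proposition \ref{Prop.KernelConvBot}.
\end{itemize}

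\textbf{Main obstacle.} The delicate point is the sign and orientation bookkeeping in this residue. We must ensure that the contour on which the bound $e^{-NS_1(c)}$ holds is the one remaining after extracting the residue. We also need the factor $|z/c|^{-\sigma_1a_NN^{1/3}}$ to stay bounded on that contour, which holds since $|z|\geq 1$ there and $a\ge 0$. Finally, $e^{-NS_1(c)+\mathsf{C}N^{2/3}}$ must still dominate the $O(N^{1/3})$ algebraic prefactor, which it does because $S_1(c)>0$.
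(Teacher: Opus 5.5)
\paragraph{Assessment.} Your treatment of $c\in[0,1)$ is essentially the paper's argument. The paper takes $N\to\infty$ for fixed $a$ and only then $a\to\infty$, instead of proving a bound uniform in $a$. If you want the uniform bound, estimate $|e^{-\sigma_1a_NN^{1/3}(\log z-\log w)}|=(|w|/|z|)^{\sigma_1a_NN^{1/3}}$ directly. The $e^{o(1)}$ Taylor error you invoke is not uniformly small on the truncated contour.

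\paragraph{The gap for $c\in(1,q^{-1})$.} The problem is the step where you sum the residue part of $R^N_{12}$ over $x\geq a_N$.
\begin{itemize}
\item For large $N$, every point of $\gamma_N^+(1)$ satisfies $|z|<c$: the contour stays within $O(N^{-1/12})$ of the unit circle, and $c$ lies outside it. So the summand carries the factor $(c/z)^{\sigma_1xN^{1/3}}$ with $|c/z|>1$, and the geometric series diverges. You cannot exchange it with the $z$-integral; $(1-c/z)^{-1}$ is only a formal value.
\item The integral you write down, $\frac{1}{2\pi\im}\oint_{\gamma_N^+(1)}\frac{(zc-1)F_{12}^N(z,c)}{(z^2-1)(z-c)}\,dz$, is exactly the paper's $V_N(a)$. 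It tends to $0$, being bounded by $e^{\mathsf{C}N^{2/3}-NS_1(c)}$, as you note yourself.
\item Hence your identity $\sum_{i}\mathbb{P}(X_i^{j,N}\geq a)=U_N(a)+V_N(a)$ is off by exactly $1$. Taken literally, it gives $\lim_{a\to\infty}\limsup_{N\to\infty}\sum_i\mathbb{P}(X_i^{j,N}\geq a)=0$, which contradicts $\mathbb{P}(X_1^{j,N}\geq a)\to1$.
\item Your later claims, that $V_N$ ``is not small'' and that $V_N=1+(\text{integral over a contour inside } c)$, contradict your own definition of $V_N$. That definition is already an integral over a contour inside $c$.
\item On $\gamma_N^+(1)$ the factor $|c/z|^{\sigma_1a_NN^{1/3}}$ is not bounded. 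It is $\geq 1$ there and of size $e^{O(N^{1/3})}$; it is harmless only because $e^{\mathsf{C}N^{2/3}-NS_1(c)}$ absorbs it. The reason you give ($|z|\geq 1$) would require $|z|\geq c$.
\end{itemize}

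\paragraph{The missing idea.} You must make the series convergent before summing.
\begin{itemize}
\item The integrand of the $R^N_{12}$ residue term is regular at $z=c$. So you may first deform $\gamma_N^+(1)$ to a circle of radius in $(c,q^{-1})$, where $|c/z|<1$.
\item Sum there. This produces the factor $1/(z-c)$.
\item Deform back to $\gamma_N^+(1)$. This crosses the now genuine simple pole at $z=c$, whose residue is $F_{12}^N(c,c)=1$.
\end{itemize}
The result is $\sum_i\mathbb{P}(X_i^{j,N}\geq a)=U_N(a)+V_N(a)+1$ with $V_N(a)\to0$.

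The paper does the same thing slightly differently. It deforms both contours of $K^N_{12}$ to circles $C_{r^z_{12}}$, $C_{r^w_{12}}$ with $c<r^w_{12}<r^z_{12}<q^{-1}$. This reabsorbs the $w=c$ residue into a single double integral with $|w|<|z|$, which it then sums. Deforming back picks up the $w=c$ residue, which gives $V_N$, and then the $z=c$ residue, which gives the $1$.

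With this correction the rest of your argument goes through as in the paper: the bound on $U_N$, $V_N\to0$ via $S_1(c)>0$, and $\mathbb{P}(X_1^{j,N}\geq a)\to1$. For the last point you use Theorem~\ref{Thm.BrownianLimit}; the paper uses Proposition~\ref{Prop.TightnessTopCurve} together with $h_2(0)>h_1(0)$.
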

\begin{proof} 
The proof of the lemma has a similar structure to the proof of Lemma \ref{Lem.UpperTailTop}, and for clarity is split into four steps. In the first step, we reduce the proof of \eqref{Eq.UpperTailBot} to establishing two asymptotic results involving the function $F^N_{12}$ from \eqref{Eq.DefIN12}, see (\ref{Eq.BulkTail only need to show U and V}), and certain probabilities, see (\ref{Eq.TopAlways1}). The two statements in (\ref{Eq.BulkTail only need to show U and V}) are established in Steps 2 and 3, while (\ref{Eq.TopAlways1}) is established in Step 4.\\

{\bf \raggedleft Step 1.} For $j \in \llbracket 1, m \rrbracket$, we define the random measure $M^{j,N}$ on $\mathbb{R}$ by
\begin{equation}\label{Eq.MeasSliceYbulk}
M^{j,N}(A) = \sum_{i \geq 1}{\bf 1}\{X^{j,N}_i \in A\}.
\end{equation}
For any $a \in \mathbb{R}$ we have
\begin{equation}\label{Eq.TailBoundMomentbulk}
\sum_{i \geq 1} \mathbb{P}\left(X^{j,N}_i \geq a \right) = \mathbb{E}\Bigg{[} \sum_{i \geq 1} {\bf 1}\{X_i^{j,N} \in  [a, \infty) \}   \Bigg{]} = \mathbb{E}\left[M^{j,N}([a, \infty)) \right].
\end{equation}
From Lemma \ref{Lem.PrelimitKernelBot} and \cite[Lemma 5.13]{DY25} we have that, for all large $N$, $M^{j,N}$ is a Pfaffian point process on $\mathbb{R}$ with reference measure $\nu_{t_j}(N)$ and correlation kernel $K^{j,N}(x,y) = K^N(t_j, x; t_j,y)$, where $K^N$ is as in \eqref{Eq.S6Kdecomp}. From \cite[(2.13)]{ED24a} and \cite[(5.12)]{DY25} we have
\begin{equation}\label{Eq.FactMomBulk}
\mathbb{E}\left[M^{j,N}([a, \infty)) \right] = \frac{1}{\sigma_1 N^{1/3}}\sum_{x \in \Lambda_{t_j}(N), x \geq a_N} K^N_{12}(t_j, x; t_j,x),
\end{equation}
where $a_N = \min\{y \in \Lambda_{t_j}(N): y \geq a\}$.  
In the remainder of this step, we will prove that 
\begin{equation} \label{eq:split Mainfty to U and V bulk}
\begin{split} 
&  \mathbb{E}\left[M^{j,N}([a, \infty)) \right] = U_N(a) +V_N(a)+{\bf 1}\{c>1\},  \mbox{ where } \\
&U_N(a) = \frac{1 }{(2\pi \im)^{2}}\oint_{\gamma_N^+(1)} \hspace{-3mm} dz \oint_{\gamma_N^-(-1)} dw   \frac{F_{12}^N(z,w)(zw-1)}{(z-w)^2(z^2-1)}\cdot\frac{z-c}{w-c} ,\\
& V_N(a) =  \frac{{\bf{1}}\{c>1\}}{2\pi \im} \oint_{\gamma_N^+(1)} dz \frac{F_{12}^N(z,c) (zc-1)  }{(z-c)(z^2-1) } .
\end{split}
\end{equation}
Here, $F^N_{12}$ is defined as in \eqref{Eq.DefIN12} with $s =t = t_j$ and $x = y = a_N$. 
We begin by recalling that 
\begin{equation} \label{eq:K12 decompose as I12 and R12}
K^N_{12}(t_j, x; t_j,x) = I^N_{12}(t_j, x; t_j,x) + R^N_{12}(t_j, x; t_j,x),
\end{equation}
where $I^N_{12}$ and $R^N_{12}$ are defined as in \eqref{Eq.DefIN12} and \eqref{Eq.DefRN12}. We split the proof of \eqref{eq:split Mainfty to U and V bulk} into two cases.

When $c\in[0,1)$, we observe that \eqref{eq:split Mainfty to U and V bulk} reduces to $\mathbb{E}\left[M^{j,N}([a, \infty)) \right] = U_N(a)$.
In this case, we also have $R_{12}^N(t_j, x; t_j,x)=0$.  
We exchange the order of the sum and the integrals in $I^N_{12}$ and evaluate the resulting geometric series, which gives us \eqref{eq:split Mainfty to U and V bulk}, through the fact that
\begin{equation} \label{eq:geometric series ealuation}
\sigma_1^{-1} N^{-1/3}H_{12}^N(z,w)\frac{1}{1-w/z}=\frac{zw-1}{(z-w)^2(z^2-1)}\cdot\frac{z-c}{w-c},
\end{equation}
which follows from the definition of $H_{12}^N$ as in \eqref{Eq.DefIN12}.
We mention that the absolute convergence of the geometric series (with parameter $w/z$) follows from $|w|<|z|$ since $z\in\gamma_N^+(1)$ and $w\in\gamma_N^-(-1)$. 

We next prove \eqref{eq:split Mainfty to U and V bulk} for $c\in(1,q^{-1})$. 
In this case, a direct use of \eqref{eq:K12 decompose as I12 and R12} results in a diverging geometric series of $R_{12}^N$. To circumvent this issue, we derive an alternative expression of $K^N_{12}(t_j, x; t_j,x)$. 
Fix $r^z_{12} \in (1, q^{-1})$, $r^w_{12} > \max(c, q)$, and $r^w_{12} < r^z_{12}$. In the definition of $I_{12}^N$ in \eqref{Eq.DefIN12}, we deform the contour of $z$ from $\gamma_N^+(1)$ to the circle $C_{r^z_{12}}$ and the contour of $w$ from $\gamma_N^-(-1)$ to the circle $C_{r^w_{12}}$. In doing so, we cross the simple pole at $w=c$ whose residue equals $R^N_{12}(t_j, x; t_j,x)$. Therefore,
\begin{equation}\label{eq:KN12 alternative}
    K^N_{12}(t_j,x;t_j,x) = \frac{1}{(2\pi \im)^{2}}\oint_{C_{r^z_{12}}} dz \oint_{C_{r^w_{12}}} dw F_{12}^N(z,w) H_{12}^N(z,w),
\end{equation}
where  $F^N_{12}, H^N_{12}$ are defined as in \eqref{Eq.DefIN12} with $s =t = t_j$ and $x = y$. 
Using \eqref{eq:KN12 alternative}, in view of \eqref{Eq.FactMomBulk}, we evaluate the resulting geometric series and combine with \eqref{eq:geometric series ealuation},  which gives us
\[
\mathbb{E}\left[M^{j,N}([a, \infty)) \right] = 
 \frac{1 }{(2\pi \im)^{2}}\oint_{C_{r^z_{12}}} dz \oint_{C_{r^w_{12}}} dw  \frac{F_{12}^N(z,w)(zw-1)}{(z-w)^2(z^2-1)}\cdot\frac{z-c}{w-c}.
\]
We mention that the absolute convergence of the geometric series (with parameter $w/z$) follows from $|w|<|z|$ since $z\in C_{r^z_{12}}$,  $w\in C_{r^w_{12}}$ and $r^w_{12} < r^z_{12}$.
We now deform the contours  $C_{r^z_{12}}$ and $C_{r^w_{12}}$ back to $\gamma_N^+(1)$ and $\gamma_N^-(-1)$, as in Lemma \ref{Lem.PrelimitKernelBot}, to obtain
$ \mathbb{E}\left[M^{j,N}([a, \infty)) \right] = U_N(a) +V_N(a)+1$ and hence \eqref{eq:split Mainfty to U and V bulk} in the case $c\in(1,q^{-1})$. We mention that the term $V_N(a)$ arose from crossing the simple pole at $w=c$ when deforming $C_{r^w_{12}}$ to $\gamma_N^-(-1)$, and the $1$ comes from subsequently crossing the simple pole at $z=c$ when deforming $C_{r^z_{12}}$ to $\gamma_N^+(1)$. We conclude the proof of \eqref{eq:split Mainfty to U and V bulk} by combining the cases $c\in[0,1)$ and $c\in(1,q^{-1})$.

Combining \eqref{eq:split Mainfty to U and V bulk} with \eqref{Eq.TailBoundMomentbulk}, we see that in order to show \eqref{Eq.UpperTailBot}, it suffices to show:
\begin{equation}\label{Eq.BulkTail only need to show U and V}
\lim_{a \rightarrow \infty} \lim_{N \rightarrow \infty} U_N(a) = 0, \mbox{ and } \lim_{a \rightarrow \infty} \lim_{N \rightarrow \infty} V_N(a) = 0,
\end{equation} 
and also that if $c \in (1, q^{-1})$ and $a \in \mathbb{R}$
\begin{equation}\label{Eq.TopAlways1}
 \lim_{N \rightarrow \infty} \mathbb{P}\left(X^{j,N}_1 \geq a \right) = 1.
\end{equation}

In the remaining steps we verify \eqref{Eq.BulkTail only need to show U and V} and (\ref{Eq.TopAlways1}). We will write $\mathsf{C}$ to mean a large positive generic constant that depends on $c,q,a,t_j$. The values of these constants will change from line to line. In addition, certain inequalities will hold for sufficiently large $N$ depending on $c,q,a,t_j$. \\

{\bf \raggedleft Step 2.} In this step we prove the first statement in \eqref{Eq.BulkTail only need to show U and V}. 
In fact, we will show that 
\begin{equation}\label{eq: the limit of UN}
    \lim_{N\rightarrow\infty} U_N(a) =\frac{1}{(2\pi \im)^{2}}\int_{\mathcal{C}_{\sigma_1}^{\pi/3}} dz \int_{\mathcal{C}_{-\sigma_1}^{2\pi/3}} dw \frac{e^{z^3/3 - w^3/3 - f_1 t_j z^2 + f_1 t_j w^2 - a z + a w  }  \cdot (z+w) }{2z(z-w)^2 } .
\end{equation}
The first statement in \eqref{Eq.BulkTail only need to show U and V} then directly follows from \eqref{eq: the limit of UN} by the dominated convergence theorem, in view of the cubic decay of the integrand, and 
$|e^{- a z + a w }|\leq e^{-2a\sigma_1}$ if $a>0$, $z\in\mathcal{C}_{\sigma_1}^{\pi/3}$ and $w\in\mathcal{C}_{-\sigma_1}^{2\pi/3}$. 

The rest of this step is devoted to establishing \eqref{eq: the limit of UN}. Similarly to the proof of Proposition \ref{Prop.KernelConvBot}, we first truncate the contours in the definition of $U_N(a)$ from $\gamma_N^{\pm}(\pm1)$ to $\gamma_N^{\pm}(\pm1,0)$ and show that the error is asymptotically negligible. 
From the definition of $\gamma_N^{\pm}(\pm1)$ and $c\neq1$, we have that for all large $N$,
\begin{equation}\label{Eq:estimates of algebraic terms for truncation} 
 \left| \frac{ (zw-1)}{(z-w)^2(z^2-1)}\cdot\frac{z-c}{w-c} \right| \leq \mathsf{C} \cdot N  \mbox{ if } z \in \gamma_N^+(1), w \in \gamma_N^{-}(-1).
\end{equation}   
From \eqref{Eq:estimates of algebraic terms for truncation}, \eqref{Eq:estimates of S}, the last line of \eqref{Eq:estimates of G}, and  the first two lines of \eqref{Eq:estimates of exponential terms in F}, we have for large $N$, 
\begin{equation}\label{eq: Truncation of UN} 
\left| U_N(a) - \frac{1 }{(2\pi \im)^{2}}\oint_{\gamma_N^+(1,0)} \hspace{-3mm} dz \oint_{\gamma_N^-(-1,0)} dw   \frac{F_{12}^N(z,w)(zw-1)}{(z-w)^2(z^2-1)}\cdot\frac{z-c}{w-c} 
\right|\leq e^{\mathsf{C} N^{2/3} - (\sigma_1^3/12) N^{3/4} }. 
\end{equation}  
 We mention that in deriving \eqref{eq: Truncation of UN}, we were using \eqref{Eq:estimates of exponential terms in F} with $A=|a|+1$. Note that for large $N$, $x = y = a_N\in[-A,A]$, hence the assumption of \eqref{Eq:estimates of exponential terms in F} is satisfied.
We change variables $z = 1 + \sigma_1^{-1} N^{-1/3} \tilde{z}$ and $w = 1 + \sigma_1^{-1} N^{-1/3} \tilde{w}$,
\begin{equation}\label{Eq.change of variables UNa}
\begin{split}
&  \frac{1 }{(2\pi \im)^{2}}\oint_{\gamma_N^+(1,0)} \hspace{-3mm} dz \oint_{\gamma_N^-(-1,0)} dw   \frac{F_{12}^N(z,w)(zw-1)}{(z-w)^2(z^2-1)}\cdot\frac{z-c}{w-c} =  \frac{1}{(2\pi \im)^{2}}\int_{\mathcal{C}_{\sigma_1}^{\pi/3}} d\tilde{z} \int_{\mathcal{C}_{-\sigma_1}^{2\pi/3}} d\tilde{w} \\
& {\bf 1}\{|\mathsf{Im} (\tilde{z})|, |\mathsf{Im} (\tilde{w})| \leq (\sqrt{3}/2) \sigma_1 N^{1/4} \}  \cdot\sigma_1^{-2}N^{-2/3} \cdot \frac{F_{12}^N(z,w)(zw-1)}{(z-w)^2(z^2-1)}\cdot\frac{z-c}{w-c}.
\end{split}
\end{equation}
Using the Taylor expansions of $S_1$ and $G_1$ from \eqref{Eq:Taylor expansions of S and G} and that $a_N \rightarrow a$, we have
\begin{equation}\label{eq:limit of UN integrand}
    \lim_{N \rightarrow \infty} \sigma_1^{-2}N^{-2/3} \cdot \frac{F_{12}^N(z,w)(zw-1)}{(z-w)^2(z^2-1)}\cdot\frac{z-c}{w-c} =  \frac{e^{\tilde{z}^3/3 - \tilde{w}^3/3 - f_1 t_j \tilde{z}^2 + f_1 t_j \tilde{w}^2 - a \tilde{z} + a \tilde{w}  }  \cdot (\tilde{z}+\tilde{w}) }{2\tilde{z}(\tilde{z}-\tilde{w})^2 } 
\end{equation}
From  the first two lines of \eqref{Eq:estimates of S}, the second line of \eqref{Eq:estimates of G}, and the first line of \eqref{Eq:estimates of exponential terms in F} (with $A=|a|+1$), we have  for $\tilde{z} \in \mathcal{C}_{\sigma_1}^{\pi/3}$, $\tilde{w} \in \mathcal{C}_{-\sigma_1}^{2\pi/3}$ with $|\mathsf{Im} (\tilde{z})|, |\mathsf{Im} (\tilde{w})| \leq (\sqrt{3}/2) \sigma_1 N^{1/4}$ and  large $N$,
\[
\left|\sigma_1^{-2}N^{-2/3} \cdot \frac{F_{12}^N(z,w)(zw-1)}{(z-w)^2(z^2-1)}\cdot\frac{z-c}{w-c}\right|\leq \exp \left( \mathsf{C} \cdot (1 + |\tilde{z}|^2 + |\tilde{w}|^2) - (1/6)|\tilde{w}|^3 - (1/6) |\tilde{z}|^3 \right).
\] Using \eqref{eq: Truncation of UN}, \eqref{Eq.change of variables UNa}, \eqref{eq:limit of UN integrand}, and the dominated convergence theorem, we conclude the limit \eqref{eq: the limit of UN}. Note that to identify with the limit in \eqref{eq: the limit of UN}, one needs to remove the tildes.\\

{\bf \raggedleft Step 3.} In this step we prove the second statement in \eqref{Eq.BulkTail only need to show U and V}, i.e. $\lim_{N \rightarrow \infty}V_N(a)=0$. When $c<1$ the statement is trivially correct, so in the rest of this step we will assume $c>1$. We first perform the truncation of the contour.
From the definition of $\gamma_N^{+}(1)$, we have that for all large $N$,
\begin{equation}\label{eq:bound of algebraic terms in truncation of VN}
     \left|\frac{ zc-1  }{(z-c)(z^2-1) } \right|\leq \mathsf{C}\cdot N^{1/3}  \mbox{ if } z \in \gamma_N^+(1).
\end{equation} 
Combining \eqref{eq:bound of algebraic terms in truncation of VN}, the third line of \eqref{Eq:estimates of S}, the last line of \eqref{Eq:estimates of G}, and the second line of \eqref{Eq:estimates of exponential terms in F} (again with $A=|a|+1$), we obtain that for large $N$, 
\begin{equation}\label{eq:trunction VN}
    \left|V_N(a)-\frac{1}{2\pi \im} \oint_{\gamma_N^+(1,0)} dz \frac{F_{12}^N(z,c) (zc-1)  }{(z-c)(z^2-1) } \right|\leq e^{ \mathsf{C}\cdot N^{2/3}-NS_1(c)}.
\end{equation}   
Using the same change of variables $z = 1 + \sigma_1^{-1} N^{-1/3} \tilde{z}$ as before, we have
\begin{equation*}
    \begin{split}
        &\frac{1}{2\pi \im} \oint_{\gamma_N^+(1,0)} dz \frac{F_{12}^N(z,c) (zc-1)  }{(z-c)(z^2-1) } 
=      \frac{1}{2\pi \im} \int_{\mathcal{C}^{\pi/3}_{\sigma_1}}  d\tilde{z} \\
        &{\bf 1}\{|\mathsf{Im} (\tilde{z})| \leq (\sqrt{3}/2) \sigma_1 N^{1/4} \}\cdot     \sigma_1^{-1} N^{-1/3}\cdot  \frac{F_{12}^N(z,c) (zc-1)  }{(z-c)(z^2-1) } .
    \end{split}
\end{equation*} 
From \eqref{eq:bound of algebraic terms in truncation of VN}, the first line of \eqref{Eq:estimates of S}, the second line of \eqref{Eq:estimates of G}, and the first line of \eqref{Eq:estimates of exponential terms in F} (again with $A=|a|+1$),  we have that for $\tilde{z} \in \mathcal{C}_{\sigma_1}^{\pi/3}$ with $|\mathsf{Im} (\tilde{z})|  \leq (\sqrt{3}/2) \sigma_1 N^{1/4}$ and large $N$,
\[
\left|  \sigma_1^{-1} N^{-1/3}\cdot  \frac{F_{12}^N(z,c) (zc-1)  }{(z-c)(z^2-1) }\right|\leq\exp \left( \mathsf{C} \cdot (1 + |\tilde{z}|^2 )  - (1/6) |\tilde{z}|^3 \right)\cdot\exp\left( \mathsf{C}  N^{2/3}   -NS_1(c) \right).
\]
Since $c>1$, we have $S_1(c)>0$ in view of \eqref{eq:S being positive for c larger than 1}.
Using the dominated convergence theorem, and combining with \eqref{eq:trunction VN}, we conclude  $\lim_{N \rightarrow \infty} V_N(a)=0$ and hence the second statement in \eqref{Eq.BulkTail only need to show U and V}.\\

{\bf \raggedleft Step 4.} In this final step, we prove (\ref{Eq.TopAlways1}). We observe from Definition \ref{Def.ScalingBulk}, (\ref{Eq.DLETop}), and (\ref{Eq.RescaledTopLE})
that 
\begin{equation}\label{Eq.X1InTermsOfL}
X_1^{j,N} = \sigma_1^{-1} N^{-1/3} \cdot \left(\sigma_2 N^{1/2}\mathcal{L}_1^N(T_{t_j}/N) +  h_2(T_{t_j}/N)N - h_1 N - p_1 T_{t_j}  - 1\right).
\end{equation}
From (\ref{Eq.ParametersBot}) and (\ref{Eq.ParametersEdge2}), we have
\begin{equation*}
h_2(T_{t_j}/N)N - h_1 N - p_1 T_{t_j}  - 1 = N(h_2(0) - h_1(0)) + O(N^{2/3}),
\end{equation*}
where the constant in the big $O$ notation depends on $q$, $c$, and $t_j$. From (\ref{Eq.DiffHNegative}), we know that $h_2(0) - h_1(0) > 0$, which together with the tightness of $\mathcal{L}_1^N(T_{t_j}/N)$ from Proposition \ref{Prop.TightnessTopCurve}, implies $X_1^{j,N} \Rightarrow \infty$, establishing (\ref{Eq.TopAlways1}).
\end{proof}

%
%
\subsection{Finite-dimensional convergence}\label{Section7.2} In this section, we establish the following result, which shows that the random variables $X_i^{j,N}$ from Definition \ref{Def.ScalingBulk} converge in the finite-dimensional sense.

\begin{proposition}\label{prop:finite dimensional convergence}
    Assume the same notation as in Definition \ref{Def.ScalingBulk}, where $\mathcal{T} \subset (0,\infty)$. For $i\geq1$ and $j\in\llbracket1,m\rrbracket$ set $\hat{X}_i^{j,N}=X_{i}^{j,N}$ if $c\in[0,1)$ and $\hat{X}_i^{j,N}=X_{i+1}^{j,N}$ if $c\in(1,q^{-1})$. Then,  
    $$\hat{X}^N=\left(\hat{X}_i^{j,N}: i\geq1, j\in\llbracket1,m\rrbracket \right) \overset{f.d.}{\rightarrow}\left(\hsai_i(f_1t_j) -  f_1^{2}t_j^2: i\geq1,j\in\llbracket1,m\rrbracket\right),$$ 
    where $\hsai=\{\hsai_i\}_{i\geq1}$ is as in Proposition \ref{Prop.PHSALE}. 
\end{proposition}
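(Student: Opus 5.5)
The plan is to follow the two-step scheme already used for Proposition \ref{Prop.FinitedimEdge}, replacing the Brownian input by the Airy-type kernel limits of Proposition \ref{Prop.KernelConvBot}. We will invoke the general framework of \cite[Lemma 7.1]{DZ25}, or its Pfaffian analogue from \cite{ED24a}, with $X_i^{j,N}=\hat{X}_i^{j,N}$ and $t_j$ replaced by $f_1t_j$. This framework reduces finite-dimensional convergence to two inputs:
\begin{enumerate}
\item[(i)] one-point tightness from above of $\hat{X}_1^{j,N}$, which is exactly Lemma \ref{Lem.UpperTailBot};
\item[(ii)] weak convergence of the point processes formed by $\{(t_j,\hat{X}_i^{j,N})\}$ to the point process formed by $\{(t_j,\hsai_i(f_1t_j)-f_1^2t_j^2)\}$.
\end{enumerate}
For $c>1$ we must first remove the top particle. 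Step 4 of Lemma \ref{Lem.UpperTailBot} shows $X_1^{j,N}\Rightarrow\infty$. Hence on compact sets the point process $M^N$ of Lemma \ref{Lem.PrelimitKernelBot} agrees, with probability tending to one, with the process formed by the $\hat{X}$'s, so weak convergence of $M^N$ transfers directly.

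For (ii), we use Lemma \ref{Lem.PrelimitKernelBot}: $M^N$ is Pfaffian with reference measure $\mu_{\mathcal{T},\nu(N)}$, which converges vaguely to $\mu_{\mathcal T}\times\mathrm{Leb}$, and with kernel $K^N$. We then apply the conjugation \cite[Proposition 5.8(4)]{DY25} with $f(s,x)=(1-c)\sigma_1N^{1/3}$. This exactly matches the normalizations $(1-c)^{\mp2}\sigma_1^{\mp2}N^{\mp2/3}$ in Proposition \ref{Prop.KernelConvBot}, so that the conjugated kernels converge pointwise, and uniformly on compacts, to
\[
\begin{bmatrix} I^\infty_{11} & I^\infty_{12}+R^\infty_{12}\\ -I^\infty_{12}-R^\infty_{12} & I^\infty_{22}+R^\infty_{22}\end{bmatrix}.
\]
By \cite[Proposition 5.14]{DY25}, $M^N$ then converges weakly to a Pfaffian point process $M^\infty$ with this limiting kernel.

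The remaining, and main, task is algebraic: identifying this limiting kernel with $\hski$ after the map $(s,x)\mapsto(f_1s,\,x)$ and the parabolic shift $x\mapsto x+f_1^2s^2$, up to the gauge transformations allowed by \cite[Proposition 5.8]{DY25}, i.e.\ conjugation by a function $g(s,x)$ and rescaling the two components by $c_1,c_2$ with $c_1c_2=1$. Concretely, in each double integral we substitute $z\mapsto z+f_1s$ and $w\mapsto w+f_1t$. This removes the quadratic terms, since
\[
(z+a)^3/3-a(z+a)^2=z^3/3-a^2z-2a^3/3,
\]
moves the contours from $\mathcal{C}^{\pi/3}_{\sigma_1}$ to $\mathcal{C}^{\pi/3}_{1+s}$ after rescaling time, and turns the rational factors $\frac{z-w}{zw(z+w)}$, $\frac{z+w}{2z(z-w)}$, $\frac{z-w}{4(z+w)}$ into those of (\ref{Eq.DefII}). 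For $I_{22}$ we additionally apply $z\mapsto -z$. The leftover factors $e^{\pm(2a^3/3+\dots)}$ are absorbed by a conjugation $g(s,x)$. We also need the following two checks.
\begin{enumerate}
\item[(a)] The heat kernel $R^\infty_{12}$ becomes (\ref{Eq.DefR12I}). This is a direct Gaussian computation with $4f_1(t-s)$ replaced by $4(t-s)$ and the parabola inserted.
\item[(b)] The Gaussian integral $R^\infty_{22}$ in (\ref{eq:forumla R infinity}) evaluates, by completing the square, to (\ref{Eq.DefR22I}).
\end{enumerate}
I expect (b) and the bookkeeping of signs, constants ($4$ versus $2$, the $1/4$ in $H_{22}$) and the exact conjugation to be the most error-prone step. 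Nonetheless it is the same computation carried out for $\varpi\to\infty$ in \cite{DSY26} and \cite[Section 7]{DY25}, so one can match against those formulas rather than redo it from scratch.

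Finally, Proposition \ref{Prop.PHSALE} identifies the point process of $\hsai$ at times $f_1t_j$ as Pfaffian with kernel $\hski$. Uniqueness from \cite[Proposition 5.8(3)]{DY25} then gives $M^\infty\overset{d}{=}$ the target point process. Combining this with (i), the framework of \cite{DZ25} and \cite{ED24a} yields the claimed finite-dimensional convergence.
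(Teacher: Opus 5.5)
Your proposal is correct and follows essentially the same route as the paper. The ingredients all match: convergence of the kernels (Proposition \ref{Prop.KernelConvBot} with \cite[Proposition 5.14]{DY25}), removal of the escaping top particle via (\ref{Eq.TopAlways1}) when $c>1$, identification with $\hski$ through the substitutions $z\mapsto\pm(z-f_1s)$, $w\mapsto\pm(w-f_1t)$ plus a Gaussian evaluation of $R^\infty_{22}$, and the framework of \cite[Propositions 2.19, 2.21 and Corollary 2.20]{ED24a} with Lemma \ref{Lem.UpperTailBot} as the upper-tail input. Two minor points: with the convention of \cite[Proposition 5.8(4)]{DY25}, the conjugating function giving your stated normalizations is $((1-c)\sigma_1N^{1/3})^{-1}$ rather than $(1-c)\sigma_1N^{1/3}$; and tightness of every $\hat X_r^{j,N}$ also requires the one-time limit to have infinitely many atoms, which holds automatically once you identify that limit with the $\hsai$ point process.
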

\begin{proof} 
We use the same notation as in Lemma \ref{Lem.PrelimitKernelBot}. We also define the point process $\hat{M}^N$ on $\mathbb{R}^2$ by
    \begin{equation}\label{eq:def of M hat process}
        \hat{M}^N(A)=\sum_{i\geq1}\sum_{j=1}^m{\bf 1}\{(t_j,\hat{X}_i^{j,N})\in A\}.
    \end{equation}
    The proof has a similar structure to that of \cite[Proposition 8.1]{DZ25}.
    For clarity, we split the proof into three steps. In the first step, we prove the proposition using \cite[Proposition 2.19]{ED24a}, under the assumptions that $\{\hat{X}_r^{j,N}\}_{N\geq1}$ is tight for each $r\geq1$, $j\in\llbracket1,m\rrbracket$ and that the point processes $\hat{M}^N$ converge weakly. In the second step, we prove the weak convergence of $\hat{M}^N$ using the kernel convergence result from Proposition \ref{Prop.KernelConvBot}. In the third step, we establish the tightness of $\{\hat{X}_r^{j,N}\}_{N\geq1}$ using the upper tail estimates of $\hat{X}_1^{j,N}$ from Lemma \ref{Lem.UpperTailBot} and leveraging \cite[Proposition 2.21]{ED24a}. \\
    
{\bf \raggedleft Step 1.} We claim that 
\begin{equation}\label{eq:sequence hat of X being tight}
\{\hat{X}_r^{j,N}\}_{N\geq1} \mbox{ is a tight sequence for each } r\geq1, j\in\llbracket1,m\rrbracket, 
\end{equation}
and
\begin{equation}\label{eq:sequence hat of point processes converge}
    \hat{M}^N\Rightarrow\hat{M},
\end{equation}
where $\hat{M}$ is a Pfaffian point process on $\mathbb{R}^2$ with reference measure $\mu_{\mathcal{T}}\times\mathrm{Leb}$ and correlation kernel
\begin{equation}\label{eq:limit correlation kernel Pfaffian bot converge}
    K^{\infty}(s,x;t,y)=\begin{bmatrix}
    f(s,x) f(t,y) K_{11}(s,x;t,y) & \frac{f(s,x)}{f(t,y)}K_{12}(s,x;t,y)\\
    \frac{f(t,y)}{f(s,x)}K_{21}(s,x;t,y) &  \frac{1}{f(s,x) f(t,y)} K_{22}(s,x;t,y)
\end{bmatrix}.
\end{equation}
Here, the kernels $K_{uv}(s,x;t,y)$ for $u,v\in\{1,2\}$ and the function $f(s,x)$ are given by
\begin{equation}\label{eq:kernel K and function f}
    K_{uv}(s,x;t,y)=K^{\mathrm{hs};\infty}_{uv}(f_1s,x+f_1^2s^2;f_1t,y+f_1^2t^2),\quad f(s,x)=2e^{s^3f_1^3/3-(x+f_1^2s^2)f_1s},
\end{equation}
where we recall that $K^{\mathrm{hs};\infty}$ is as in Proposition \ref{Prop.PHSALE}. We will prove \eqref{eq:sequence hat of X being tight} and \eqref{eq:sequence hat of point processes converge} in the steps below. Here, we assume their validity and complete the proof of the proposition.

Equations (\ref{eq:sequence hat of X being tight}) and (\ref{eq:sequence hat of point processes converge}) verify the conditions of \cite[Proposition 2.19]{ED24a} (with $r=m$ and $X_i^{j,N}=\hat{X}_i^{j,N}$), from which we conclude that $\hat{X}^N=(\hat{X}_i^{j,N}: i\geq1, j\in\llbracket1,m\rrbracket)$ converges in finite-dimensional distribution to a random vector $\hat{X}^{\infty}=(\hat{X}_i^{j,\infty}: i\geq1, j\in\llbracket1,m\rrbracket)$, and that the random measure $\hat{M}^{\infty}$, defined as in \eqref{eq:def of M hat process} with $N=\infty$, has the same distribution as $\hat{M}$. In view of \cite[Corollary 2.20]{ED24a}, to complete the proof of the proposition, we only need to show that
\begin{equation}\label{eq:M hat equal in law to M}
    \hat{M}\overset{d}{=}M,
\end{equation}
where the random measure $M$ is defined by 
\begin{equation*}
    M(A)=\sum_{i\geq1}\sum_{j=1}^m{\bf 1}\{(t_j,\mathcal{A}_i^{\mathrm{hs};\infty}(f_1t_j)-f_1^2t_j^2)\in A\}.
\end{equation*}

Using the conjugation of Pfaffian correlation kernels from \cite[Proposition 5.8 (4)]{DY25}, we know that $\hat{M}$ is a Pfaffian point process with reference measure $\mu_{\mathcal{T}}\times\mathrm{Leb}$ and correlation kernel $K(s,x;t,y)$ as in \eqref{eq:kernel K and function f}. On the other hand, if $\tilde{M}$ is as in \eqref{Eq.HSAPointProcess} with $\mathsf{S}=\{f_1t_1,\dots,f_1t_m\}$, we have that $M=\tilde{M}\phi^{-1}$, where $\phi:\mathbb{R}^2\rightarrow\mathbb{R}^2$, $\phi(s,x)=(f_1^{-1}s,x-s^2)$. Using \cite[Proposition 5.8 (5)]{DY25}, we conclude that $M$ is a Pfaffian point process with reference measure $\mu_{\mathcal{T}}\times\mathrm{Leb}$ and correlation kernel $K(s,x;t,y)$ as in \eqref{eq:kernel K and function f}. Since $M$ and $\hat{M}$ are both Pfaffian point processes with the same correlation kernel and reference measure, we conclude  \eqref{eq:M hat equal in law to M} in view of \cite[Proposition 5.8 (3)]{DY25}.\\

{\bf \raggedleft Step 2.} In this step, we establish \eqref{eq:sequence hat of point processes converge}. 
By a straightforward (albeit tedious) calculation, we have
\begin{equation}\label{eq:KernelIdentify}
\begin{split}
&K^{\infty}(s,x;t,y) = \begin{bmatrix}
    K^{\infty}_{11}(s,x;t,y) & K^{\infty}_{12}(s,x;t,y)\\
    K^{\infty}_{21}(s,x;t,y) & K^{\infty}_{22}(s,x;t,y) 
\end{bmatrix} \\
&= \begin{bmatrix}
    I^{\infty}_{11}(s,x;t,y) & I^{\infty}_{12}(s,x;t,y) + R^{\infty}_{12}(s,x;t,y) \\
    -I^{\infty}_{12}(t,y;s,x) - R^{\infty}_{12}(t,y;s,x) & I^{\infty}_{22}(s,x;t,y) + R^{\infty}_{22}(s,x;t,y) 
\end{bmatrix} ,
\end{split}
\end{equation}
where $K^{\infty}$ on the left side is as in \eqref{eq:limit correlation kernel Pfaffian bot converge}, and $I_{11}^{\infty}, I_{12}^{\infty}, I_{22}^{\infty}, R_{12}^{\infty}$, $R_{22}^{\infty}$ on the right side are as in Proposition \ref{Prop.KernelConvBot}.
Here, the matching for $I_{uv}^{\infty}$, $u,v\in\{1,2\}$ uses the change of variables $z\mapsto\pm(z-f_1s)$ and $w\mapsto\pm(w-f_1t)$ in the double contour integrals. The matching for $R_{12}^{\infty}$ is a direct calculation, and the matching for $R_{22}^{\infty}$ involves deforming $\mathcal{C}^{2\pi/3}_{-\sigma_1}$ to $\im\mathbb{R}$ and evaluating a Gaussian integral.

From Lemma \ref{Lem.PrelimitKernelBot}, we know that for large $N$, $M^N$ is a Pfaffian point process with reference measure $\mu_{\mathcal{T},\nu(N)}$ and correlation kernel $K^N$ as in \eqref{Eq.S6Kdecomp}. By \cite[Proposition 5.8 (4)]{DY25}, we know that $M^N$ is also a Pfaffian point process with reference measure $\mu_{\mathcal{T},\nu(N)}$ and correlation kernel
\begin{equation*}
    \tilde{K}^N(s,x;t,y)=\begin{bmatrix}
    (1-c)^{-2}\sigma_1^{-2}N^{-2/3}K^N_{11}(s,x;t,y) & K^N_{12}(s,x;t,y)\\
    K^N_{21}(s,x;t,y) & (1-c)^{2}\sigma_1^{2}N^{2/3}K^N_{22}(s,x;t,y)
\end{bmatrix}.
\end{equation*}
By \eqref{eq:KernelIdentify}, Proposition \ref{Prop.KernelConvBot}, for each fixed $s,t\in\mathcal{T}$, the kernels $\tilde{K}^N(s,\cdot;t,\cdot)$ converge uniformly over compact subsets of $\mathbb{R}^2$ to $K^{\infty}(s,\cdot;t,\cdot)$ from \eqref{eq:limit correlation kernel Pfaffian bot converge}. Since $K^{\infty}(s,x;t,y)$ is continuous in $x,y$ for fixed $s,t\in\mathcal{T}$ and hence locally bounded, by \cite[Proposition 5.14]{DY25}, $M^N$ converges weakly to $\hat{M}$. 

We note that $M^N$ from Lemma \ref{Lem.PrelimitKernelBot} satisfies
\begin{equation*}
    M^N(A)=\hat{M}^N(A)+{\bf 1}\{c>1\}\cdot\sum_{j=1}^m{\bf 1}\{(t_j,X_1^{j,N})\in A\},
\end{equation*}
which together with $M^N \Rightarrow \hat{M}$, and (\ref{Eq.TopAlways1}), implies \eqref{eq:sequence hat of point processes converge}.\\

{\bf \raggedleft Step 3.} In this step, we establish \eqref{eq:sequence hat of X being tight}. We aim to apply \cite[Proposition 2.21]{ED24a} with $X_i^N=\hat{X}_i^{j,N}$ for $i\geq1$. Condition (3) in \cite[Proposition 2.21]{ED24a} is verified by Lemma \ref{Lem.UpperTailBot}, and it remains to check conditions (1) and (2). Define the point processes $M^{j,N}$ and $\hat{M}^{j,N}$ on $\mathbb{R}$ by
\begin{equation}\label{eq:def of MjN and Mhat jN point processes}
\begin{split}
&\hat{M}^{j,N}(A)=\sum_{i\geq1} {\bf 1}\{ \hat{X}_i^{j,N} \in A\}, \\
&M^{j,N}(A)=\sum_{i\geq1} {\bf 1}\{ X_i^{j,N} \in A\}=\hat{M}^{j,N}(A)+{\bf 1}\{c>1\}\cdot {\bf 1}\{ X_1^{j,N} \in A\}.
\end{split}
\end{equation}
By Lemma \ref{Lem.PrelimitKernelBot} and \cite[Lemma 5.13]{DY25}, we know (for large $N$) that $M^{j,N}$ is a Pfaffian point process on $\mathbb{R}$ with correlation kernel $K^{j,N}(x,y)=K^N(t_j,x;t_j,y)$, where $K^N$ is as in \eqref{Eq.S6Kdecomp}, and with reference measure $\nu_{t_j}(N)$. By Proposition \ref{Prop.KernelConvBot} and \eqref{eq:KernelIdentify}, the kernels $K^{j,N}(\cdot,\cdot)$ converge uniformly over compact subsets of $\mathbb{R}^2$ to $K^{j,\infty}(\cdot,\cdot)$, where $K^{j,\infty}(x,y)=K^{\infty}(t_j,x;t_j,y)$ and $K^{\infty}$ is as in \eqref{eq:limit correlation kernel Pfaffian bot converge}. Note also that the measures $\nu_{t_j}(N)$ converge vaguely to the Lebesgue measure on $\mathbb{R}$. By \cite[Proposition 5.10]{DY25}, we know that $M^{j,N}$ converge weakly to a Pfaffian point process $M^{j,\infty}$ with correlation kernel $K^{j,\infty}$ and with reference measure $\mathrm{Leb}$. Combining this fact with \eqref{Eq.TopAlways1} and \eqref{eq:def of MjN and Mhat jN point processes}, we conclude that $\hat{M}^{j,N}\Rightarrow M^{j,\infty}$. This verifies condition (1) in \cite[Proposition 2.21]{ED24a}.

By a similar argument as in Step 1, $M^{j,\infty}$ has the same distribution as the random measure
\[M^j(A)=\sum_{i\geq1} {\bf 1}\{ \mathcal{A}_i^{\mathrm{hs};\infty}(f_1t_j)-f_1^2t_j^2\in A\},\]
which has infinitely many atoms in view of Proposition \ref{Prop.PHSALE}. This verifies condition (2) in \cite[Proposition 2.21]{ED24a}.

In conclusion, the sequence of random vectors $(\hat{X}_i^{j,N} : i\geq1)$ in $\mathbb{R}^{\infty}$ satisfies the conditions of \cite[Proposition 2.21]{ED24a}, which implies \eqref{eq:sequence hat of X being tight}.
\end{proof}

%
%
\subsection{Proof of Theorem \ref{Thm.AiryLimit}(a)}\label{Section7.3} In this section, we present the proof of Theorem \ref{Thm.AiryLimit}(a). Assume the same parameters as in Definition \ref{Def.ParametersBulk} and that $c \in [0, 1)$. We let $\mathbb{P}_N$ be the Pfaffian Schur process from Definition \ref{Def.SchurProcess}. Here we assume that $M = M_N$ is a sequence such that $M_N \cdot N^{-2/3} \uparrow \infty$. If $(\lambda^0, \dots, \lambda^{M_N})$ is distributed according to $\mathbb{P}_N$, we define the $\mathbb{N}$-indexed geometric line ensemble $\mathfrak{L}^N = \{L_i^N\}_{i \geq 1}$ on $\mathbb{Z}_{\geq 0}$ by
\begin{equation}\label{Eq.ThmAiryALNDef}
L_i^N(s) = \begin{cases} \lambda_i^s - \lfloor h_1 N \rfloor &\mbox{ if } i \in \mathbb{N} , s \in \llbracket 0, M_N \rrbracket \\ \lambda_i^{M_N} - \lfloor h_1 N \rfloor &\mbox{ if }i  \in \mathbb{N}, s \geq M_N + 1. \end{cases}
\end{equation}
As explained in Section \ref{Section2.2}, by linear interpolation we can view $\mathfrak{L}^N$ as random elements in $C(\mathbb{N} \times [0,\infty))$. We also define the scaled ensembles $\mathcal{L}^N = \{\mathcal{L}_i^N\}_{i \geq 1} \in C(\mathbb{N} \times [0,\infty))$ by
\begin{equation}\label{Eq.ThmAiryALNScaledDef}
\mathcal{L}_i^N(t) = \sigma^{-1}N^{-1/3} \cdot \left( L_i^N(tN^{2/3}) - p_1 t N^{2/3} \right) \mbox{ for } i \in \mathbb{N}, t \geq 0,
\end{equation}
where we recall from (\ref{Eq.DefSigmaFIntro}) that $\sigma = \frac{q^{1/2}}{1-q}$. Combining (\ref{Eq.LppScaled}) and Proposition \ref{Prop.LPPandSchur}, we see that we have the following equality in law of random elements in $C(\mathbb{N} \times [0,M_N\cdot N^{-2/3}])$
\begin{equation}\label{Eq.EqualityInLaw}
\begin{split}
&\left(\mathcal{L}_i^N(t): i \in \mathbb{N}, t \in [0, M_N\cdot N^{-2/3}] \right) \\
&\overset{d}{=} \left(\lpplenc_i(t) + \sigma^{-1}N^{-1/3} (h_1 N - \lfloor h_1 N \rfloor) : i \in \mathbb{N}, t \in [0,M_N \cdot N^{-2/3}] \right).
\end{split}
\end{equation}
Since $M_N \cdot N^{-2/3} \uparrow \infty$, it follows from (\ref{Eq.EqualityInLaw}) that to prove (\ref{Eq.ConvAiryA}), it suffices to show 
\begin{equation}\label{Eq.ThmAiryARed1}
\left(\mathcal{L}^N_i(t): i \in \mathbb{N}, t \in [0,\infty) \right) \Rightarrow \left((2f_1)^{-1/2} \hsai_i(f_1t) - 2^{-1/2}f_1^{3/2}t^2 : i \in \mathbb{N}, t \in [0,\infty)\right).
\end{equation}

In order to show (\ref{Eq.ThmAiryARed1}), we first seek to apply our tightness criterion, Theorem \ref{Thm.Tightness} with $q$ and $c$ as in the present setup, $k = \infty$, and $\beta = \infty$, to the ensembles $\mathfrak{L}^N$. Notice that if $\mathcal{T} = \{t_1, \dots, t_m\}$ and $X_i^{j,N}$ are as in Definition \ref{Def.ScalingBulk}, then provided that $M_N \geq t_m N^{2/3}$, we have
\begin{equation}\label{Eq.ThmAiryAXClosetoL}
\left|\sigma_1^{-1}N^{-1/3}\left(L_i^N(\lfloor t_j N^{2/3} \rfloor) - p_1 t_j N^{2/3}\right) - X_i^{j,N}  \right| \leq \sigma_1^{-1}N^{-1/3} \cdot (p_1 + i + 1).
\end{equation}
From (\ref{Eq.ThmAiryAXClosetoL}) and the finite-dimensional convergence in Proposition \ref{prop:finite dimensional convergence}, we conclude that
\begin{equation}\label{Eq.ThmAiryAFDConv}
\begin{split}
&\left( \sigma^{-1}N^{-1/3}\left(L_i^N(\lfloor t_j N^{2/3} \rfloor) - p_1 t_j N^{2/3}\right): i \in \mathbb{N}, j \in \llbracket 1, m \rrbracket \right) \\
&\overset{f.d.}{\rightarrow} \left((2f_1)^{-1/2} \hsai_i(f_1t_j) - 2^{-1/2}f_1^{3/2}t_j^2 : i \in \mathbb{N}, j \in \llbracket 1, m \rrbracket\right),
\end{split}
\end{equation}
where we used that $\sigma_1/\sigma = q^{-1/6}(1+q)^{1/3} = (2f_1)^{-1/2}$.

In view of (\ref{Eq.ThmAiryAFDConv}), we see that for each $t > 0$, the sequence $\sigma^{-1}N^{-1/3}\left(L_i^N(\lfloor t N^{2/3} \rfloor) - p_1 t N^{2/3}\right)$ is tight, verifying the first point in Theorem \ref{Thm.Tightness}. In addition, from Lemma \ref{Lem.SchurGibbs} we know that the restriction $\{L_i^N(s): i \in \mathbb{N}, s \in \llbracket 0, M_N \rrbracket\}$ satisfies the interacting pair Gibbs property from Definition \ref{Def.IPGP}, verifying the second point in Theorem \ref{Thm.Tightness}. Overall, we see that the conditions of Theorem \ref{Thm.Tightness} are satisfied by $\mathfrak{L}^N$ with $d_N = N^{2/3}$ and $T_N = M_N$, which implies that the line ensembles $\mathcal{L}^N$ are tight in $C(\mathbb{N} \times [0,\infty))$.

If $\mathcal{L}^{\infty}$ denotes any subsequential limit of $\mathcal{L}^N$, then from (\ref{Eq.ThmAiryALNScaledDef}) and (\ref{Eq.ThmAiryAFDConv}), we conclude that 
$$\left(\mathcal{L}^{\infty}_i(t): i \in \mathbb{N}, t \in (0,\infty) \right) \overset{f.d.}{=}  \left((2f_1)^{-1/2} \hsai_i(f_1t) - 2^{-1/2}f_1^{3/2}t^2 : i \in \mathbb{N}, t \in (0,\infty)\right).$$
By continuity, the finite-dimensional distributional equality in the last line can be extended to $t \in [0,\infty)$. As finite-dimensional sets form a separating class, see \cite[Lemma 3.1]{DM21}, we conclude that $\mathcal{L}^{\infty}$ has the same law as the right side of (\ref{Eq.ThmAiryARed1}). The latter together with the tightness of $\mathcal{L}^N$ implies (\ref{Eq.ThmAiryARed1}), which concludes the proof of Theorem \ref{Thm.AiryLimit}(a).

%
%
\subsection{Proof of Theorem \ref{Thm.AiryLimit}(b)}\label{Section7.4} In this section, we present the proof of Theorem \ref{Thm.AiryLimit}(b). Assume the same parameters as in Definition \ref{Def.ParametersBulk} and that $c \in (1, q^{-1})$. Similarly to Section \ref{Section7.3}, we let $(\lambda^0, \dots, \lambda^{M_N})$ be distributed according to $\mathbb{P}_N$ as in Definition \ref{Def.SchurProcess} with $M = M_N$, where $M_N \cdot N^{-2/3} \uparrow \infty$. We also let $L_i^N$ and $\mathcal{L}_i^N$ be as in (\ref{Eq.ThmAiryALNDef}) and (\ref{Eq.ThmAiryALNScaledDef}), respectively. In addition, define $\mathfrak{L}^{\mathrm{bot},N} = \{L^{\mathrm{bot},N}_i\}_{i \geq 1}$ and $\mathcal{L}^{\mathrm{bot}, N} = \{\mathcal{L}^{\mathrm{bot}, N}_i\}_{i \geq 1}$ via
\begin{equation}\label{Eq.DefLNBot}
\begin{split}
&L^{\mathrm{bot},N}_i(t) = L^{N}_{i+1}(t), \mbox{ and }\mathcal{L}^{\mathrm{bot}, N}_i(t) = \mathcal{L}^{N}_{i+1}(t) \\
&= \sigma^{-1}N^{-1/3} \cdot \left( L^{\mathrm{bot},N}_{i}(tN^{2/3}) - p_1 t N^{2/3} \right) \mbox{ for } i \in \mathbb{N}, t \geq 0.
\end{split}
\end{equation}
Since $M_N \cdot N^{-2/3} \uparrow \infty$, we see from (\ref{Eq.EqualityInLaw}) and (\ref{Eq.DefLNBot}) that to prove (\ref{Eq.ConvAiryB}), it suffices to show  
\begin{equation}\label{Eq.ThmAiryBRed1}
\left(\mathcal{L}^{\mathrm{bot},N}_{i}(t): i \in \mathbb{N}, t \in [0,\infty) \right) \Rightarrow \left((2f_1)^{-1/2} \hsai_i(f_1t) - 2^{-1/2}f_1^{3/2}t^2 : i \in \mathbb{N}, t \in [0,\infty)\right).
\end{equation} 

In what follows, we deduce (\ref{Eq.ThmAiryBRed1}) from the following claim:
\begin{equation}\label{Eq.ThmAiryBRed2}
    \mbox{ the sequence } \mathcal{L}^{\mathrm{bot},N} \mbox{ is tight in $C(\mathbb{N} \times [0,\infty))$}.
\end{equation}

From (\ref{Eq.ThmAiryAXClosetoL}) and the finite-dimensional convergence in Proposition \ref{prop:finite dimensional convergence}, we conclude that for each $\mathcal{T} = \{t_1, \dots, t_m\} \subset (0,\infty)$
\begin{equation}\label{Eq.ThmAiryBFDConv}
\begin{split}
&\left( \sigma^{-1}N^{-1/3}\left(L_{i}^{\mathrm{bot},N}(\lfloor t_j N^{2/3} \rfloor) - p_1 t_j N^{2/3}\right): i \in \mathbb{N}, j \in \llbracket 1, m \rrbracket \right) \\
&\overset{f.d.}{\rightarrow} \left((2f_1)^{-1/2} \hsai_i(f_1t_j) - 2^{-1/2}f_1^{3/2}t_j^2 : i \in \mathbb{N}, j \in \llbracket 1, m \rrbracket\right).
\end{split}
\end{equation}
Consequently, if $\mathcal{L}^{\infty}$ is any subsequential limit of $\mathcal{L}^{\mathrm{bot},N}$, we have from (\ref{Eq.DefLNBot}) and (\ref{Eq.ThmAiryBFDConv}) that
$$\left(\mathcal{L}^{\infty}_i(t): i \in \mathbb{N}, t \in (0,\infty) \right) \overset{f.d.}{=}  \left((2f_1)^{-1/2} \hsai_i(f_1t) - 2^{-1/2}f_1^{3/2}t^2 : i \in \mathbb{N}, t \in (0,\infty)\right).$$
By continuity, the finite-dimensional distributional equality in the last line can be extended to $t \in [0,\infty)$. As finite-dimensional sets form a separating class, see \cite[Lemma 3.1]{DM21}, we conclude that $\mathcal{L}^{\infty}$ has the same law as the right side of (\ref{Eq.ThmAiryBRed1}). The latter together with the tightness of $\mathcal{L}^{\mathrm{bot},N}$ implies (\ref{Eq.ThmAiryBRed1}).\\

Using the tightness criterion in \cite[Lemma 2.4]{DEA21}, we see that to prove (\ref{Eq.ThmAiryBRed2}), it suffices to show that for each $k_1 \in \mathbb{N}$ and $\beta > 0$, we have that
\begin{equation}\label{Eq.ThmAiryBRed3}
    \mbox{ the sequence } \left(\mathcal{L}^{\mathrm{bot},N}_{i}(t): i \in \llbracket 1, 2k_1 \rrbracket, t \in [0,\beta) \right) \mbox{ is tight in $C(\llbracket 1, 2k_1 \rrbracket \times [0,\beta))$}.
\end{equation} 
Our work so far reduces the proof of Theorem \ref{Thm.AiryLimit}(b) to establishing (\ref{Eq.ThmAiryBRed3}) for some $k_1 \in \mathbb{N}$ and $\beta > 0$, which we fix for the remainder of the proof. Unlike the proof of Theorem \ref{Thm.AiryLimit}(a), we cannot deduce this statement from a direct application of our tightness criterion Theorem \ref{Thm.Tightness}. The obstruction is that the interacting pair Gibbs property satisfied by $\mathfrak{L}^N$ is lost when we restrict to the curves of index $i \geq 2$. To overcome this, we introduce auxiliary ensembles $\hat{\mathfrak{L}}^{\mathrm{bot},N} = \{\hat{L}^{\mathrm{bot},N}_i\}_{i = 1}^{2k_1 + 2}$ in Definition \ref{Def.AuxLE2}. These ensembles are constructed so that (1) they satisfy the interacting pair Gibbs property, and (2) they are close in distribution to $\mathfrak{L}^{\mathrm{bot},N}$, see Proposition \ref{Prop.Proximity} for the precise statement. In Section \ref{Section7.4.2}, we will prove the tightness for (the appropriately scaled) $\{\hat{L}^{\mathrm{bot},N}_i\}_{i = 1}^{2k_1}$ by applying Theorem \ref{Thm.Tightness}, and then transfer this property to $\{\mathcal{L}_{i}^{\mathrm{bot},N}\}_{i = 1}^{2k_1}$ via Proposition \ref{Prop.Proximity}. We mention that a similar argument was carried out in \cite[Section 8]{DZ25}.

%
%
\subsubsection{Auxiliary line ensembles}\label{Section7.4.1} We continue with the notation from the beginning of Section \ref{Section7.4} and introduce the following key sequence of line ensembles.

\begin{definition}\label{Def.AuxLE2} Fix $k_1 \in \mathbb{N}$ and $\beta > 0$, and set $\hat{B}_N = \lfloor  \beta N^{2/3} \rfloor$. We assume that $N$ is sufficiently large so that $M_N \geq \hat{B}_N$. We define the $\llbracket 1, 2k_1+2 \rrbracket$-indexed geometric line ensemble $\hat{\mathfrak{L}}^{\mathrm{bot}, N} = \{\hat{L}^{\mathrm{bot}, N}_i \}_{i = 1}^{2k_1 + 2}$ on $\llbracket 0, \hat{B}_N \rrbracket$, such that for each measurable $A \subseteq C(\llbracket 1, 2k_1 + 2 \rrbracket \times [0, \hat{B}_N])$
\begin{equation}\label{Eq.AuxLELaw}
\mathbb{P}(\hat{\mathfrak{L}}^{\mathrm{bot}, N} \in A) = \mathbb{E}\left[ \mathbb{P}_{\ice; q, c^{-1}}^{\hat{B}_N, \vec{y}, g} \left( \mathfrak{Q} \in A \right) \right],
\end{equation}
where 
\begin{equation}\label{Eq.Boundary}
\begin{split}
&\vec{y} = \left(L^{\mathrm{bot}, N}_1(\hat{B}_N), \dots, L^{\mathrm{bot}, N}_{2k_1 + 2}(\hat{B}_N) \right), \hspace{2mm}  g(s) = L^{\mathrm{bot}, N}_{2k_1+3}(s) \mbox{ for $s \in \llbracket 0, \hat{B}_N \rrbracket$},
\end{split}
\end{equation}
and $\mathfrak{Q} = \{Q_i\}_{i = 1}^{2k_1+2}$ has law $\mathbb{P}_{\ice; q, c^{-1}}^{\hat{B}_N, \vec{y}, g} $ as in Definition \ref{Def.InterlacingInteractingPairs}.
\end{definition}
\begin{remark}\label{Rem.AuxLE} The ensemble $\hat{\mathfrak{L}}^{\mathrm{bot}, N}$ is constructed as follows. First sample $\mathfrak{L}^{\mathrm{bot}, N}$ and record the boundary data $(\vec{y}, g)$ as in (\ref{Eq.Boundary}). Conditioned on this data, sample $\mathfrak{Q}$ according to the measure $\mathbb{P}_{\ice; q, c^{-1}}^{\hat{B}_N, \vec{y}, g}$ on $\Omega_{\ice}(\hat{B}_N, \vec{y}, g)$, and set $\hat{\mathfrak{L}}^{\mathrm{bot}, N} = \mathfrak{Q}$. By Lemma \ref{Lem.GibbsBox}, for each fixed pair $(\vec{y}, g)$, the tuple $(Q_1, \dots, Q_{2k_1 + 2},g)$ satisfies the interacting pair Gibbs property from Definition \ref{Def.IPGP}, with $c$ replaced by $c^{-1}$, as a $\llbracket 1, 2k_1 + 3 \rrbracket$-indexed line ensemble on $\llbracket 0, \hat{B}_N\rrbracket$. Since the law of $(\hat{\mathfrak{L}}_1^{\mathrm{bot}, N}, \dots, \hat{\mathfrak{L}}_{2k_1 + 2}^{\mathrm{bot}, N},g)$ is a convex combination of the laws of $(Q_1, \dots, Q_{2k_1 + 2},g)$, we conclude it also satisfies this property. In particular, directly from Definition \ref{Def.IPGP}, we conclude that $\hat{\mathfrak{L}}^{\mathrm{bot}, N}$ satisfies the interacting pair Gibbs property (with $c$ replaced by $c^{-1}$) as a $\llbracket 1, 2k_1 + 2 \rrbracket$-indexed line ensemble on $\llbracket 0, \hat{B}_N\rrbracket$.
\end{remark}

We now turn to the main result we establish for $\hat{\mathfrak{L}}^{\mathrm{bot}, N}$.
\begin{proposition}\label{Prop.Proximity} Assume the notation from Definition \ref{Def.AuxLE2}. Define the $\llbracket 1,2k_1 + 2\rrbracket$-indexed geometric line ensemble $\tilde{\mathfrak{L}}^{\mathrm{bot},N} = \{\tilde{L}^{\mathrm{bot},N}_{i}\}_{i = 1}^{2k_1 + 2}$ on $\llbracket 0, \hat{B}_N\rrbracket$, by
\begin{equation}\label{Eq.TildeLNBot}
 \tilde{L}^{\mathrm{bot},N}_{i}(s) = L^{\mathrm{bot},N}_{i}(s) = L_{i+1}^N(s) \mbox{ for } i \in \llbracket 1, 2k_1 + 2 \rrbracket, s \in \llbracket 0, \hat{B}_N \rrbracket.
 \end{equation}
Then, for any sequence of measurable sets $A_N \subseteq C(\llbracket 1, 2k_1 + 2\rrbracket \times [0, \hat{B}_N])$, we have
\begin{equation}\label{Eq.Proximity}
\lim_{N \rightarrow \infty} \left| \mathbb{P}\left(\tilde{\mathfrak{L}}^{\mathrm{bot},N} \in A_N\right) - \mathbb{P}\left(\hat{\mathfrak{L}}^{\mathrm{bot},N} \in A_N \right)\right| = 0.
\end{equation}
\end{proposition}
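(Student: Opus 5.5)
The key identity to establish first is the conditional law of $\tilde{\mathfrak{L}}^{\mathrm{bot},N}$ given the exterior data, which I will compare directly to $\mathbb{P}_{\ice; q, c^{-1}}^{\hat{B}_N, \vec{y}, g}$. Write the Schur process weight as in (\ref{Eq.S1ParttionsSchur}). Condition on $L_1^N\llbracket 0,\hat B_N\rrbracket = f$, on $\vec{y}=(L_2^N(\hat B_N),\dots,L_{2k_1+3}^N(\hat B_N))$, and on $g = L^N_{2k_1+4}\llbracket 0,\hat B_N\rrbracket$ (and everything outside). Mimicking the computation in the proof of Lemma \ref{Lem.SchurGibbs}, I expect that the conditional law of $(L_2^N,\dots,L_{2k_1+3}^N)$ on $\llbracket 0,\hat B_N\rrbracket$ is proportional to
\[
{\bf 1}\{f\succeq B_1\succeq\cdots\succeq B_{2k_1+2}\succeq g\}\cdot c^{f(0)}\prod_{i=1}^{k_1+1}(c^{-1})^{B_{2i-1}(0)-B_{2i}(0)} q^{-B_{2i-1}(0)-B_{2i}(0)}.
\]
Here I use that $\tau_\lambda(c)=c^{\lambda_1}\prod(1/c)^{\lambda_2-\lambda_3}\cdots$, and that the factor $c^{f(0)}$ is constant once $f$ is fixed. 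Thus the conditional law is exactly $\mathbb{P}_{\ice; q, c^{-1}}^{\hat{B}_N, \vec{y}, g}$ conditioned on the event $\{f\succeq Q_1\}$. Note that this requires the conditioning to include $f$, and that the $\sigma$-algebra issue from Lemma \ref{Lem.StrongIPGP} is avoided because we condition on the whole exterior.

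Since $\hat{\mathfrak{L}}^{\mathrm{bot},N}$ has the same $(\vec y,g)$ marginal, I would couple the two ensembles by sharing the exterior data $(f,\vec y,g)$. The total variation distance is then bounded by
\[
\mathbb{E}\big[d_{TV}(\mathbb{P}(\cdot\mid f\succeq Q_1),\mathbb{P})\big]\le 2\,\mathbb{E}\big[\mathbb{P}_{\ice; q, c^{-1}}^{\hat B_N,\vec y,g}(f\not\succeq Q_1)\big].
\]
The next step is to relate this unconditional bad-event probability, averaged over the law of $(\vec y,g)$, back to the true ensemble. The ratio of conditional to unconditional measures is $1/\mathbb{P}(f\succeq Q_1)$, so it suffices to show that $\mathbb{P}^{\hat B_N,\vec y,g}_{\ice;q,c^{-1}}(Q_1\le f(\cdot-1))\to 1$ in probability over the random data.

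The main obstacle is this last estimate, since it concerns the auxiliary measure rather than the true one. I would handle it with a good event. By Lemma \ref{Lem.SecondCurveUnifLow} and Theorem \ref{Thm.BrownianLimit} (tightness of $\mathcal{L}_1^N$ near $0$), with high probability $f(s)\ge h_2(0)N - o(N)$ on $\llbracket 0,\hat B_N\rrbracket$. By Proposition \ref{prop:finite dimensional convergence}, $y_1\le h_1N+CN^{1/3}$, and $h_2(0)-h_1(0)>0$ by (\ref{Eq.DiffHNegative}). Under $\mathbb{P}_{\ice;q,c^{-1}}$, $Q_1$ is increasing with $Q_1(s)\le Q_1(\hat B_N)=y_1$, so $Q_1(s)\le y_1< f(s-1)$ deterministically on this good event. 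In particular no probabilistic estimate on $Q_1$ is needed, and monotonicity of paths alone suffices. Hence $\mathbb{P}(f\succeq Q_1)=1$ on an event of probability tending to one, and the two conditional laws coincide there. This gives (\ref{Eq.Proximity}) uniformly in $A_N$.
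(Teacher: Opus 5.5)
Your proposal is correct and is essentially the paper's proof. Your good event is exactly the paper's $E_N=\{L_1^N(0)\ge L_2^N(\hat B_N)\}$: on it, monotonicity of the paths makes the constraint $f\succeq Q_1$ automatic, so the two laws coincide, and $\mathbb{P}(E_N^c)\to0$ because $h_2(0)>h_1$. The only difference is how the agreement is verified: the paper sums over top-curve paths using the partition functions $Z_{\ice}$, while you condition on the full exterior directly from the Schur weights, which is equally valid. Two cosmetic fixes are needed. First, the lower bound on $L_1^N(0)$ comes from Proposition \ref{Prop.FinitedimEdge} (or Theorem \ref{Thm.BrownianLimit}) at time $0$ together with monotonicity of $f$, so Lemma \ref{Lem.SecondCurveUnifLow} is not needed. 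Second, your upper bound on $y_1$ should include the drift $p_1\beta N^{2/3}$, which is harmless since the gap is of order $N$.
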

\begin{proof} For clarity, we split the proof into two steps. In Step 1, we reduce the proof of the proposition to a certain equality of probabilities, see (\ref{Eq.ProximityRed1}). In Step 2, we establish (\ref{Eq.ProximityRed1}) by repeated uses of the interacting pair Gibbs property from Definition \ref{Def.IPGP}, which is enjoyed by $\{L_i^N(s): i \in \mathbb{N}, s \in \llbracket 0, \hat{B}_N \rrbracket\}$ in view of Lemma \ref{Lem.SchurGibbs} and the fact that $M_N \geq \hat{B}_N$. 

Throughout the proof, we use notation from Section \ref{Section2.2} and introduce some additional notation that will be useful below. If $\vec{y} \in \mathfrak{W}_{2k}$, $T_1 > 0$, $q \in (0,1)$, $c \in (0, q^{-1})$ and $g$ is an increasing path on $\llbracket 0, T_1 \rrbracket$ with $g(T_1) \leq y_{2k}$, we let 
\begin{equation}\label{Eq.DefNewPartFun}
\begin{split}
&Z_{\ice}(T_1, \vec{y}; q,c; g) = \sum_{\mathfrak{B}^1 \in \Omega(T_1, \vec{y}\,^1)} \cdots \sum_{\mathfrak{B}^k \in \Omega(T_1, \vec{y}\,^k)} \\
&{\bf 1}\{B_2^i \succeq B_1^{i+1} \mbox{ for all }i \in \llbracket 1, k \rrbracket \} \cdot \prod_{i = 1}^k W(\mathfrak{B}^i; q,c),
\end{split}
\end{equation}
where $\mathfrak{B}^i = (B_1^i, B_2^i)$, $B_1^{k+1} = g$, and $\vec{y}\,^i = (y_{2i -1}, y_{2i})$. We recall from (\ref{Eq.InteractingPairWeight}) that if $\mathfrak{B} = (B_1, B_2)$, then the weight $W(\mathfrak{B};q,c)$ is given by
\begin{equation*}
W(\mathfrak{B};q,c) = c^{B_1(0) - B_2(0)} \cdot q^{B_1(T_1) - B_1(0)} \cdot q^{B_2(T_1) - B_2(0)},
\end{equation*} 
and that the sum in (\ref{Eq.DefNewPartFun}) is in $(0,\infty)$ in view of Lemma \ref{Lem.FinitePartitionFunction} and Remark \ref{Rem.InterlacingInteractingPairs2}.\\

{\bf \raggedleft Step 1.} We introduce the events $E_N = \{L_1^N(0) \geq L_2^N(\hat{B}_N) \}$ and let $H_N$ denote the set of tuples $(\vec{y}, g)$ with $\vec{y} = (y_0, \dots, y_{2k_1+3}) \in \mathfrak{W}_{2k_1 + 4}$ and $g$ is an increasing path on $\llbracket 0, \hat{B}_N \rrbracket$, such that $\mathbb{P}\left(F_N(\vec{y}, g) \right) > 0$, where 
$$F_N(\vec{y}, g)= \{ L_{i+1}^N(\hat{B}_N) = y_i \mbox{ for } i \in \llbracket 0, 2k_1 + 3\rrbracket \mbox{ and } L_{2k_1+5}^N\llbracket 0, \hat{B}_N \rrbracket = g \}.$$

Fix $(\vec{y}, g) \in H_N$ and increasing paths $z_i = (z_i(0), \dots, z_{i}(\hat{B}_N))$ for $i \in \llbracket 1, 2k_1 + 3 \rrbracket$, such that 
$$\mathbb{P}\left(F_N(\vec{y}, g) \cap \{L_{2k_1+4}^N \llbracket 0, \hat{B}_N \rrbracket = z_{2k_1+3}\} \right) > 0.$$
We claim that 
\begin{equation}\label{Eq.ProximityRed1}
\begin{split}
&\mathbb{P}\left(\left\{ \tilde{L}^{\mathrm{bot},N}_{i} = z_i \mbox{ for } i \in \llbracket 1, 2k_1 + 2 \rrbracket  \right\} \cap E_N \cap F_N(\vec{y}, g) \cap \{L_{2k_1 + 4}^N\llbracket 0, \hat{B}_N \rrbracket = z_{2k_1 + 3}\} \right) \\
&= \mathbb{P}\left(\left\{ \hat{L}^{\mathrm{bot},N}_{i} = z_i \mbox{ for } i \in \llbracket 1, 2k_1 + 2 \rrbracket  \right\} \cap E_N \cap F_N(\vec{y}, g) \cap \{L_{2k_1 + 4}^N\llbracket 0, \hat{B}_N \rrbracket = z_{2k_1 + 3}\} \right).
\end{split}
\end{equation}
We establish (\ref{Eq.ProximityRed1}) in Step 2. Here, we assume its validity and conclude the proof of (\ref{Eq.Proximity}).\\

Summing (\ref{Eq.ProximityRed1}) over $(\vec{y},g)$ and $z_{2k_1 + 3}$, we see that for all increasing paths $z_1, \dots, z_{2k_1 + 2}$, we have  
$$\mathbb{P}\left(\left\{ \tilde{L}^{\mathrm{bot},N}_{i} = z_i \mbox{ for } i \in \llbracket 1, 2k_1 + 2 \rrbracket  \right\} \cap E_N  \right) = \mathbb{P}\left(\left\{ \hat{L}^{\mathrm{bot},N}_{i} = z_i \mbox{ for } i \in \llbracket 1, 2k_1 + 2 \rrbracket  \right\} \cap E_N  \right),$$
which implies for all measurable sets $A_N \subseteq C(\llbracket 1, 2k_1 + 2\rrbracket \times [0, \hat{B}_N])$ that
\begin{equation*}
\left| \mathbb{P}\left(\tilde{\mathfrak{L}}^{\mathrm{bot},N} \in A_N\right) - \mathbb{P}\left(\hat{\mathfrak{L}}^{\mathrm{bot},N} \in A_N \right)\right| \leq \mathbb{P}(E_N^c).
\end{equation*}
The last displayed equation would imply (\ref{Eq.Proximity}) if we can show that 
\begin{equation}\label{Eq.ProximityRed2}
\lim_{N \rightarrow \infty} \mathbb{P} \left(L_1^N(0) < L_2^N(\hat{B}_N)\right) = 0.
\end{equation}
From (\ref{Eq.ThmAiryALNDef}) and Proposition \ref{Prop.FinitedimEdge}, we have that
$$\sigmap^{-1}N^{-1/2}(L_1^N(0) - h_2(0)N + h_1 N ) \Rightarrow \sqrt{\bar{\kappa}} \cdot Z,$$
where $Z$ is a normal variable of mean $0$ and variance $1$. In addition, from (\ref{Eq.ThmAiryBFDConv}), we know that 
$$\sigma^{-1}N^{-1/3}\left(L_{2}^{N}(\hat{B}_N) - p_1 \beta N^{2/3}\right) \Rightarrow (2f_1)^{-1/2} \hsai_1(f_1\beta) - 2^{-1/2}f_1^{3/2}\beta^2.$$
From (\ref{Eq.DiffHNegative}), we also have
\begin{equation*}
h_1- h_2(0) = h_1(0) - h_2(0) = - \frac{q(c-q)(1 - \sqrt{1+ \bar{\kappa}})^2}{(1-q^2)(1-qc)(1 + \bar{\kappa})} < 0.
\end{equation*}
The last three displayed equations show that
$$\lim_{N \rightarrow \infty} \mathbb{P}(L_2^N(\hat{B}_N) \geq N^{3/4}) = 0, \hspace{2mm} \lim_{N \rightarrow \infty} \mathbb{P}(L_1^N(0) \leq N^{3/4}) = 0,$$
which by a union bound gives (\ref{Eq.ProximityRed2}).\\

{\bf \raggedleft Step 2.} In this step, we establish (\ref{Eq.ProximityRed1}). We first observe the following tower of equalities
\begin{equation}\label{Eq.TowerA1}
\begin{split}
&\mathbb{P}\left(\left\{ \tilde{L}^{\mathrm{bot},N}_{i} = z_i \mbox{ for } i \in \llbracket 1, 2k_1 + 2 \rrbracket  \right\} \cap E_N \cap F_N(\vec{y}, g)  \cap \{L_{2k_1 + 4}^N\llbracket 0, \hat{B}_N \rrbracket = z_{2k_1 + 3}\} \right) \\
& = \mathbb{P}\left(\left\{ L^{N}_{i+1}(s) = z_i(s) \mbox{ for } (i,s) \in \llbracket 1, 2k_1 + 3 \rrbracket \times \llbracket 0, \hat{B}_N \rrbracket \right\} \cap E_N \cap F_N(\vec{y}, g) \right) \\
& = \mathbb{P}^{\hat{B}_N, \vec{y}, g}_{\ice; q,c}\left(\left\{ Q_{i+1} = z_i \mbox{ for } i \in \llbracket 1, 2k_1 + 3 \rrbracket  \right\} \cap \{ Q_1(0) \geq y_1 \}\right) \mathbb{P}\left(F_N(\vec{y}, g)\right) \\
&=  \frac{\mathbb{P}\left(F_N(\vec{y}, g)\right)}{Z_{\ice}(\hat{B}_N, \vec{y}; q,c; g)}\sum_{x_0 = y_1}^{y_0} \sum_{f \in \Omega(0,\hat{B}_N, x_0, y_0)} {\bf 1}\{(f,z_1, \dots, z_{2k_1+3}) \in \Omega_{\ice}(\hat{B}_N, \vec{y}, g)\}  \\
& \times c^{x_0 -z_1(0)}  q^{y_0 - x_0 + z_1(\hat{B}_N) - z_1(0)} \cdot \prod_{i = 1}^{k_1 + 1} c^{z_{2i}(0) - z_{2i+1}(0)} q^{z_{2i}(\hat{B}_N) - z_{2i}(0) + z_{2i+1}(\hat{B}_N) - z_{2i+1}(0)}.
\end{split}
\end{equation} 
The first equality follows from the definition of $\tilde{L}^{\mathrm{bot},N}_i$ in (\ref{Eq.TildeLNBot}), while the second equality uses that $\{L_i^N(s): i \in \mathbb{N}, s \in \llbracket 0, \hat{B}_N \rrbracket\}$ satisfies the interacting pair Gibbs property from Definition \ref{Def.IPGP} in view of Lemma \ref{Lem.SchurGibbs} and $M_N \geq \hat{B}_N$. The third equality uses the definition of $\mathbb{P}^{\hat{B}_N, \vec{y}, g}_{\ice; q,c}$ from Definition \ref{Def.InterlacingInteractingPairs}, whose implicit normalization constant is precisely $Z_{\ice}(\hat{B}_N, \vec{y}; q,c; g)$ from (\ref{Eq.DefNewPartFun}).

Setting $\vec{v} = (y_1, \dots, y_{2k_1+2})$, we observe the following tower of equalities
\begin{equation}\label{Eq.TowerA2}
\begin{split}
&\mathbb{P}\left(\left\{ \hat{L}^{\mathrm{bot},N}_{i} = z_i \mbox{ for } i \in \llbracket 1, 2k_1 + 2 \rrbracket  \right\} \cap E_N \cap F_N(\vec{y}, g) \cap \{L_{2k_1 + 4}^N\llbracket 0, \hat{B}_N \rrbracket = z_{2k_1 + 3}\} \right) \\
= &\mathbb{P}^{\hat{B}_N, \vec{v}, z_{2k_1 + 3}}_{\ice; q,c^{-1}}\left( Q_{i} = z_i \mbox{ for } i \in \llbracket 1, 2k_1 + 2 \rrbracket  \right)  \mathbb{P}\left(E_N \cap F_N(\vec{y}, g) \cap \{L_{2k_1 + 4}^N\llbracket 0, \hat{B}_N \rrbracket = z_{2k_1 + 3}\} \right) \\
 = &{\bf 1} \{(z_1, \dots, z_{2k_1+2}) \in \Omega_{\ice}(\hat{B}_N, \vec{v}, z_{2k_1 + 3}) \}  \cdot \prod_{i = 1}^{k_1+1} q^{z_{2i-1}(\hat{B}_N) - z_{2i-1}(0) + z_{2i}(\hat{B}_N) - z_{2i}(0)}  \\
 &\times \prod_{i = 1}^{k_1+1} c^{z_{2i}(0) - z_{2i-1}(0)}  \cdot \frac{\mathbb{P}\left(E_N \cap F_N(\vec{y}, g) \cap \{L_{2k_1 + 4}^N\llbracket 0, \hat{B}_N \rrbracket = z_{2k_1 + 3}\} \right)}{Z_{\ice}(\hat{B}_N, \vec{v}; q, c^{-1}; z_{2k_1 + 3})} .
\end{split}
\end{equation}
The first equality uses the definition of $\hat{\mathfrak{L}}^{\mathrm{bot},N}$ from Definition \ref{Def.AuxLE2}, while the second one uses the definition of $\mathbb{P}^{\hat{B}_N, \vec{v}, z_{2k_1 + 3}}_{\ice; q,c^{-1}}$ from Definition \ref{Def.InterlacingInteractingPairs} and (\ref{Eq.DefNewPartFun}).

From (\ref{Eq.TowerA1}) and (\ref{Eq.TowerA2}), we see that both sides of (\ref{Eq.ProximityRed1}) are equal to zero if $(z_1, \dots, z_{2k_1+2}) \not\in \Omega_{\ice}(\hat{B}_N, \vec{v}, z_{2k_1 + 3})$, as the indicators on the next-to-last lines in both (\ref{Eq.TowerA1}) and (\ref{Eq.TowerA2}) vanish. We may thus assume that $(z_1, \dots, z_{2k_1+2}) \in \Omega_{\ice}(\hat{B}_N, \vec{v}, z_{2k_1 + 3})$. In addition, both (\ref{Eq.TowerA1}) and (\ref{Eq.TowerA2}) are equal to zero unless $z_{2k_1 + 3}(\hat{B}_N) = y_{2k_1+3}$. For (\ref{Eq.TowerA1}) this is true due to the vanishing indicator on the fourth line, while for (\ref{Eq.TowerA2}) it holds due to the vanishing probability on the fourth line. We may thus further assume that $z_{2k_1 + 3}(\hat{B}_N) = y_{2k_1+3}$.

Since $x_0 \geq y_1$, we have $f \succeq z_1$ for all $f \in \Omega(0,\hat{B}_N, x_0, y_0)$, and so
$$ \sum_{x_0 = y_1}^{y_0} \sum_{f \in \Omega(0,\hat{B}_N, x_0, y_0)} {\bf 1}\{(f,z_1, \dots, z_{2k_1+3}) \in \Omega_{\ice}(\hat{B}_N, \vec{y}, g)\}  = \sum_{x_0 = y_1}^{y_0} \sum_{f \in \Omega(0,\hat{B}_N, x_0, y_0)} {\bf 1}\{z_{2k_1+3} \succeq g\}.$$
Using the latter and cancelling some common factors in (\ref{Eq.TowerA1}) and (\ref{Eq.TowerA2}), we see that (\ref{Eq.ProximityRed1}) would follow from
\begin{equation}\label{Eq.TowerA3}
\begin{split}
& \frac{\mathbb{P}\left(E_N \cap F_N(\vec{y}, g) \cap \{L_{2k_1 + 4}^N\llbracket 0, \hat{B}_N \rrbracket = z_{2k_1 + 3}\} \right)}{Z_{\ice}(\hat{B}_N, \vec{v}; q, c^{-1}; z_{2k_1 + 3})} = \frac{\mathbb{P}\left(F_N(\vec{y}, g)\right)}{Z_{\ice}(\hat{B}_N, \vec{y}; q,c; g)}\\
& \times \sum_{x_0 = y_1}^{y_0} \sum_{f \in \Omega(0,\hat{B}_N, x_0, y_0)} {\bf 1}\{z_{2k_1+3} \succeq g\}  c^{x_0 -z_{2k_1+3}(0)}  q^{y_0 - x_0 + z_{2k_1 + 3}(\hat{B}_N) - z_{2k_1 + 3}(0)}.
\end{split}
\end{equation} 

We lastly observe the following tower of equalities
\begin{equation}\label{Eq.TowerA4}
\begin{split}
&\mathbb{P}\left(E_N \cap F_N(\vec{y}, g) \cap \{L_{2k_1 + 4}^N\llbracket 0, \hat{B}_N \rrbracket = z_{2k_1 + 3}\} \right) \\
& = \mathbb{P}\left(F_N(\vec{y}, g)\right) \mathbb{P}^{\hat{B}_N, \vec{y}, g}_{\ice; q,c}\left(\left\{ Q_{2k_1+4} = z_{2k_1+3} \right\} \cap \{ Q_1(0) \geq y_1 \}\right) = \frac{\mathbb{P}\left(F_N(\vec{y}, g)\right)}{Z_{\ice}(\hat{B}_N, \vec{y}; q,c; g)}\\
& \times \sum_{x_0 = y_1}^{y_0} \sum_{f \in \Omega(0,\hat{B}_N, x_0, y_0)} \sum_{\vec{w} \in \Omega(\hat{B}_N,\vec{v}, z_{2k_1+3})} {\bf 1}\{f \succeq w_1\} \cdot {\bf 1}\{z_{2k_1+3} \succeq g\} c^{x_0 -z_{2k_1+3}(0)}  \\
& \times   q^{y_0 - x_0 + z_{2k_1 + 3}(\hat{B}_N) - z_{2k_1 + 3}(0)} \cdot \prod_{i = 1}^{k_1 + 1} c^{-w_{2i-1}(0) + w_{2i}(0)} q^{w_{2i-1}(\hat{B}_N) - w_{2i-1}(0) + w_{2i}(\hat{B}_N) - w_{2i}(0)} \\
& = \frac{\mathbb{P}\left(F_N(\vec{y}, g)\right)}{Z_{\ice}(\hat{B}_N, \vec{y}; q,c; g)} \sum_{x_0 = y_1}^{y_0} \sum_{f \in \Omega(0,\hat{B}_N, x_0, y_0)} Z_{\ice}(\hat{B}_N, \vec{v}; q, c^{-1}; z_{2k_1 + 3})  {\bf 1}\{z_{2k_1+3} \succeq g\}   \\
& \times   c^{x_0 -z_{2k_1+3}(0)} q^{y_0 - x_0 + z_{2k_1 + 3}(\hat{B}_N) - z_{2k_1 + 3}(0)}.
\end{split}
\end{equation}
As before, the first equality follows from the interacting pair Gibbs property, while the second uses the definition of $\mathbb{P}^{\hat{B}_N, \vec{y}, g}_{\ice; q,c}$ from Definition \ref{Def.InterlacingInteractingPairs} and (\ref{Eq.DefNewPartFun}). To see the third equality, note that as $x_0 \geq y_1$, we have $f \succeq w_1$ for all $f \in \Omega(0,\hat{B}_N, x_0, y_0)$. This means we can replace ${\bf 1}\{f \succeq w_1\}$ with $1$, and then perform the sum over $\vec{w} \in \Omega(\hat{B}_N,\vec{v}, z_{2k_1+3})$ using (\ref{Eq.DefNewPartFun}), which verifies the third equality.

Equation (\ref{Eq.TowerA4}) implies (\ref{Eq.TowerA3}), completing the proof of (\ref{Eq.ProximityRed1}) and hence the proposition.
\end{proof}

%
%
\subsubsection{Proof of (\ref{Eq.ThmAiryBRed3})}\label{Section7.4.2} In this section, we combine Theorem \ref{Thm.Tightness} and Proposition \ref{Prop.Proximity} to establish (\ref{Eq.ThmAiryBRed3}). We first verify that $\hat{\mathfrak{L}}^{\mathrm{bot}, N}$ satisfies the conditions of Theorem \ref{Thm.Tightness} with $n = N$, $d_N = N^{2/3}$, $T_N = \hat{B}_N$, $c$ replaced with $c^{-1} \in (0,1)$, $k = k_1$, and $q, \beta$ as in the present setup. Here, we note that $\hat{\mathfrak{L}}^{\mathrm{bot}, N}$ is a priori defined on $\llbracket 0, \hat{B}_N \rrbracket$, but we can extend it to a geometric line ensemble on $\mathbb{Z}_{\geq 0}$ by setting $\hat{L}^{\mathrm{bot}, N}_i(s) = \hat{L}^{\mathrm{bot}, N}_i(\hat{B}_N)$ for $s \geq \hat{B}_N + 1$. We continue to refer to this ``constant'' extension by $\hat{\mathfrak{L}}^{\mathrm{bot}, N}$ and it is for it that we verify the conditions of Theorem \ref{Thm.Tightness}.

One readily observes that $d_N \rightarrow \infty$, and $\hat{B}_N/d_N \rightarrow \beta$, verifying (\ref{Eq.ConditionsTightness}). To verify the first point in Theorem \ref{Thm.Tightness}, we seek to show that the sequence
$$[\pq (1 + \pq)]^{-1/2} N^{-1/3} \cdot \left(  \hat{L}^{\mathrm{bot}, N}_i(\lfloor t N^{2/3} \rfloor) - \pq t N^{2/3} \right),$$
is tight for each $t \in (0,\beta)$ and $i \in \llbracket 1, 2k_1 \rrbracket$. The latter follows from the convergence of these variables to $(2f_1)^{-1/2} \hsai_i(f_1t) - 2^{-1/2}f_1^{3/2}t^2$ in view of (\ref{Eq.ThmAiryBFDConv}) and Proposition \ref{Prop.Proximity}. Finally, we already observed in Remark \ref{Rem.AuxLE} that $\hat{\mathfrak{L}}^{\mathrm{bot}, N}$ satisfies the interacting pair Gibbs property (with $c$ replaced by $c^{-1}$) as a $\llbracket 1, 2k_1 + 2 \rrbracket$-indexed line ensemble on $\llbracket 0, \hat{B}_N\rrbracket$, verifying the second point in Theorem \ref{Thm.Tightness}. As all the conditions of the theorem have been verified, we conclude that the sequence of line ensembles $\hat{\mathcal{L}}^{\mathrm{bot},N}=\{\hat{\mathcal{L}}_i^{\mathrm{bot},N}\}_{i=1}^{2k_1}\in C(\llbracket1,2k_1\rrbracket\times[0, \beta))$, defined through $\hat{\mathcal{L}}_i^{\mathrm{bot},N}(t)=\sigma_1^{-1}N^{-1/3}(\hat{L}_i^{\mathrm{bot}, N}(tN^{2/3})-p_1tN^{2/3})$ for $(i, t) \in \llbracket1,2k_1\rrbracket\times[0, \beta)$, is tight. 

Combining the tightness of $\hat{\mathcal{L}}^{\mathrm{bot},N}$ with Proposition \ref{Prop.Proximity} and the tightness criterion in \cite[Lemma 2.4]{DEA21}, we conclude (\ref{Eq.ThmAiryBRed3}).

\begin{appendix}
%
%
\section{Technical lemmas}\label{SectionA} In this section we establish several auxiliary results used in the main text. Specifically, we prove Lemma \ref{Lem.FinitePartitionFunction} in Section \ref{SectionA.1}, Lemma \ref{Lem.ConvAtOrigin} in Section \ref{SectionA.2}, and Lemma \ref{Lem.MonCoupBer} in Section \ref{SectionA.3}.

%
%
\subsection{Partition function formulas}\label{SectionA.1} We continue with the notation from Section \ref{Section2.2}. If $\mathfrak{B} = (B_1, B_2) \in \Omega_{\ice}(T_1, \vec{y})$, we recall from (\ref{Eq.InteractingPairWeight}) that we have the weight
\begin{equation}\label{Eq.InteractingPairWeightA}
W(\mathfrak{B};q,c) = c^{B_1(0) - B_2(0)} \cdot q^{B_1(T_1) - B_1(0)} \cdot q^{B_2(T_1) - B_2(0)}.
\end{equation} 
Our goal is to establish the following statement, from which we subsequently deduce Lemma \ref{Lem.FinitePartitionFunction}.
\begin{lemma}\label{Lem.FinitePartitionFunctionA} Fix $T_1 \in \mathbb{N}$, $\vec{y} \in \mathfrak{W}_2$, $q \in (0,1)$, $c \in [0, q^{-1})$, and complex numbers $\hat{q}, \hat{c} \in \mathbb{C}$ with $|\hat{q}| = q$ and $|\hat{c}| = c$. Then, the series
\begin{equation}\label{Eq.SumOfWeights}
 \sum_{\mathfrak{B} \in \Omega_{\ice}(T_1, \vec{y})} W(\mathfrak{B};\hat{q},\hat{c})  =:Z(T_1, \vec{y}; \hat{q},\hat{c}),
\end{equation}
converges absolutely. In addition, we have the following formula 
\begin{equation}\label{Eq.ContourIntegralForZ}
Z(T_1, \vec{y}; \hat{q},\hat{c}) = \frac{\hat{q}^{y_1 - y_2}}{2 \pi \im} \oint_{C_{r_1}}du \frac{H_{T_1}(u)H_{T_1}(\hat{q}^2/u)}{u^{y_1 - y_2 + 1}(1- u\hat{c}/\hat{q})} - \frac{\hat{q}^{y_1 - y_2+2}}{2\pi \im}\oint_{C_{r_2}}du \frac{H_{T_1}(u) H_{T_1}(\hat{q}^2/u)}{u^{y_1 - y_2 + 3}(1 - \hat{q}\hat{c}/u)},
\end{equation}
where $H_{T_1}(x) = (1- x)^{-T_1}$. The contours $C_{r_i}$ are positively oriented circles that are centered at the origin where $r_1, r_2 \in (q^2,1)$, $r_1$ is sufficiently close to $q^2$ and $r_2$ is sufficiently close to $1$, so that $cr_1/q < 1$ and $qc/r_2 < 1$.
\end{lemma}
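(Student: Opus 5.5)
The plan is to parametrize the pair $(B_1,B_2)$ by its starting values $x_1 = B_1(0)$, $x_2 = B_2(0)$ and count paths, so that the sum \eqref{Eq.SumOfWeights} becomes a double series in $(x_1,x_2)$. Write $d = y_1 - y_2 \geq 0$. For fixed starting points, the weight depends only on $x_1,x_2$:
$$W = \hat c^{\,x_1-x_2}\,\hat q^{\,y_1-x_1+y_2-x_2}.$$
The number of interlacing pairs $B_1 \succeq B_2$ with these endpoints is a finite combinatorial count $N_{T_1}(x_1,x_2;y_1,y_2)$. The constraints are $x_1 \ge x_2$ (up to interlacing), $x_i \le y_i$ and $x_1 \geq B_2(1)$. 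By Lindström--Gessel--Viennot (or equivalently the branching rule for skew Schur polynomials in two variables), this count is a $2\times 2$ determinant of binomial coefficients $\binom{T_1 + m - 1}{m}$, with the interlacing shifted by one step. These binomials are exactly the coefficients of $H_{T_1}(x) = (1-x)^{-T_1} = \sum_m \binom{T_1+m-1}{m} x^m$.

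Absolute convergence follows from the crude bound $N_{T_1} \le \binom{T_1+y_1-x_1-1}{y_1-x_1}\binom{T_1+y_2-x_2-1}{y_2-x_2}$, which drops the interlacing. We then split on the sign of the exponent of $c$. When $x_1 - x_2 \ge 0$ the term $(c/q)^{x_1-x_2}$ may grow, but it is always paired with $q^{(y_1-x_1)+(y_2-x_2)}$. Writing $m_1 = y_1 - x_1$ and $m_2 = y_2 - x_2$ gives $x_1 - x_2 = d - m_1 + m_2$, so the summand is bounded by
$$\binom{\cdot}{m_1}\binom{\cdot}{m_2}\, c^{d}\, (q/c)^{m_1}(qc)^{m_2}.$$
Since $qc < 1$, the sum over $m_2$ converges. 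In the sum over $m_1$, interlacing forces $x_1 \ge x_2$, i.e.\ $m_1 \le d + m_2$, so the $(q/c)^{m_1}$ factor is harmless: combined with $(qc)^{m_2}$ it gives a convergent double series in which the dominant factor is $(q\max(c,1))^{m_2}$ times a polynomial. I will spell out this estimate carefully, because $c > 1$ is the delicate case.

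For the formula, I first substitute the generating-function representation $\binom{T_1+m-1}{m} = \frac{1}{2\pi\im}\oint H_{T_1}(u) u^{-m-1}\,du$ into the determinant and sum the resulting geometric series in $x_1, x_2$ inside the integrals. The two determinant terms (identity and transposition) should produce the two integrals in \eqref{Eq.ContourIntegralForZ}. The first term gives the factor $1/(1-u\hat c/\hat q)$ from summing $(\hat c u/\hat q)^{k}$ over $k = x_1 - x_2 \ge 0$, which needs $|u| < q/c$, whence $r_1$ near $q^2$. The second term, with its shift by one, gives $\hat q^{2} u^{-2}/(1-\hat q\hat c/u)$ and needs $|u| > qc$, whence $r_2$ near $1$. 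After summing the free variable, one of the two contour integrals collapses via the residue at $u$ paired with $\hat q^2/u$, producing the $H_{T_1}(\hat q^2/u)$ factor; this is where the requirement $r_i > q^2$ comes from, namely so that $|\hat q^2/u| < 1$.

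The main obstacle is bookkeeping the interlacing constraint at the boundary. In particular, the conditions $x_1 \ge x_2$ versus $B_1(r-1) \ge B_2(r)$ must be handled so that the LGV sign-reversing involution is valid, with paths shifted by one time step. The geometric sums must also be justified under the integral; this I will do by Fubini, using the absolute convergence established above. Lemma \ref{Lem.FinitePartitionFunction} then follows, since real nonnegative $\hat q = q$ and $\hat c = c$ give nonnegative weights, and positivity is witnessed by the configuration in Remark \ref{Rem.InterlacingInteractingPairs2}.
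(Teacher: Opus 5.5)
Your proposal is correct and is essentially the paper's argument. The paper likewise counts the pairs with $(B_1(0),B_2(0))=\vec x$ as $s_{\vec y/\vec x}(1^{T_1})=h_{y_1-x_1}h_{y_2-x_2}-h_{y_1-x_2+1}h_{y_2-x_1-1}$ via Jacobi--Trudi, bounds the sums using $h_r\le (T_1+r)^{T_1-1}$ against the geometric factors $q^{2n}$ and $(qc)^n$, and then sums geometric series under a contour integral. The only cosmetic difference is that the paper gets the single integral directly from the Laurent coefficient $F_k=\sum_{n\ge 0}\hat q^{2n}h_{n+k}h_n$ of $H_{T_1}(u)H_{T_1}(\hat q^2/u)$ on $q^2<|u|<1$, rather than collapsing a double integral at $v=\hat q^2/u$ as you do.

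When writing it out, attend to three details:
\begin{itemize}
\item Discard the terms with a negative $h$-index before interchanging sum and integral. For fixed $x_1-x_2$, the unrestricted sum over $x_2$ is a bilateral geometric series and diverges.
\item Each Jacobi--Trudi term is separately absolutely summable, because $0\le h_{y_1-x_2+1}h_{y_2-x_1-1}\le h_{y_1-x_1}h_{y_2-x_2}$. You need this to split $Z$ into the two integrals.
\item The case where the constraint $m_1\le d+m_2$ is actually needed is $c<q$, not $c>1$.
\end{itemize}
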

\begin{proof} For clarity, we split the proof into two steps. In the first, we show that the series in (\ref{Eq.SumOfWeights}) is absolutely convergent. In the second step, we prove the formula in (\ref{Eq.ContourIntegralForZ}).\\  

{\bf \raggedleft Step 1.} For $\vec{x} = (x_1, x_2) \in \mathfrak{W}_2$, we let $\Omega_{\ice}(T_1, \vec{x}, \vec{y})$ denote the set of $\mathfrak{B} \in \Omega_{\ice}(T_1, \vec{y})$ with $(B_1(0), B_2(0)) = \vec{x}$. Notice that $\Omega_{\ice}(T_1, \vec{x}, \vec{y})$ is finite and each $\mathfrak{B} \in \Omega_{\ice}(T_1, \vec{x}, \vec{y})$ can be identified with a sequence $(\mu^0, \dots, \mu^{T_1})$ of signatures in $\mathfrak{W}_2$ via $\mu^m = (\mu_1^m, \mu_2^m) = (B_1(m), B_2(m))$ for $m \in \llbracket 0, T_1 \rrbracket$. Moreover, these signatures satisfy
$$\vec{x} = \mu^0 \preceq \mu^1 \preceq \mu^2 \preceq \cdots \preceq \mu^{T_1} = \vec{y},$$
where for $\lambda, \mu \in \mathfrak{W}_2$, we write $\lambda \succeq \mu$ or $\mu \preceq \lambda$ to mean $\lambda_1 \geq \mu_1 \geq \lambda_2 \geq \mu_2$. In particular, we conclude that 
\begin{equation}\label{Eq.Cardinality}
|\Omega_{\ice}(T_1, \vec{x}, \vec{y})| = s_{\vec{y}/\vec{x}}(1^{T_1}),
\end{equation}
where $1^{T_1}$ denotes $T_1$ variables that are all equal to $1$, and $s_{\vec{y}/\vec{x}}$ is the skew Schur polynomial from (\ref{Eq.SkewSchur}). We mention that in (\ref{Eq.SkewSchur}), we defined the skew Schur polynomials for two partitions, but the definition naturally generalizes to two signatures of the same length (in this case $2$). We also have the following Jacobi-Trudi formula for Schur polynomials, see \cite[Chapter I, (5.4)]{Mac},
\begin{equation}\label{Eq.JacobiTrudi}
s_{\vec{y}/\vec{x}}(1^{T_1}) = h_{y_1 - x_1} \cdot h_{y_2 - x_2} - h_{y_1 - x_2 + 1} \cdot h_{y_2 - x_1 - 1},
\end{equation}
where $h_r$ denotes the $r$-th {\em complete homogeneous polynomial} in $T_1$ variables that are all equal to $1$, with the usual convention of $h_r = 0$ for $r < 0$.

Combining (\ref{Eq.InteractingPairWeightA}), (\ref{Eq.Cardinality}) and (\ref{Eq.JacobiTrudi}), we see that
\begin{equation}\label{Eq.FiniteSum1}
\begin{split}
&\sum_{\mathfrak{B} \in \Omega_{\ice}(T_1, \vec{y})} \left|W(\mathfrak{B};\hat{q},\hat{c})\right| \\
&= \sum_{x_2 = - \infty}^{\infty} \sum_{x_1 = x_2}^{\infty}  q^{y_1 -x_1 + y_2 - x_2} c^{x_1 - x_2} \cdot \left( h_{y_1 - x_1} \cdot h_{y_2 - x_2} - h_{y_1 - x_2 + 1} \cdot h_{y_2 - x_1 - 1}\right).
\end{split}
\end{equation}
In the remainder of this step, we show that the above series is finite, for which it suffices to show 
\begin{equation}\label{Eq.FinSum2}
\sum_{x_2 = - \infty}^{\infty} \sum_{x_1 = x_2}^{\infty}  q^{y_1 -x_1 + y_2 - x_2} c^{x_1 - x_2}\cdot h_{y_1 - x_1} \cdot h_{y_2 - x_2} < \infty,
\end{equation} 
\begin{equation}\label{Eq.FinSum3}
\sum_{x_2 = - \infty}^{\infty} \sum_{x_1 = x_2}^{\infty}  q^{y_1 -x_1 + y_2 - x_2} c^{x_1 - x_2}\cdot h_{y_1 - x_2 + 1} \cdot h_{y_2 - x_1 - 1} < \infty.
\end{equation} 
As the proofs of (\ref{Eq.FinSum2}) and (\ref{Eq.FinSum3}) are quite similar, we establish only the former.\\

Using that $h_r = 0$ for $r < 0$ and changing variables $n = y_2 - x_2$, $d = x_1 - x_2$, we see that to prove (\ref{Eq.FinSum2}), it suffices to show that
\begin{equation}\label{Eq.FinSum2R1}
\sum_{n = 0}^{\infty} \sum_{d = 0}^{y_1 - y_2 + n}  q^{y_1 - y_2 + 2n - d} c^{d} \cdot h_{y_1 - y_2 + n - d} \cdot h_{n} < \infty.
\end{equation}
We next have for $r \geq 0$ that
$$h_r = \binom{T_1 + r - 1}{r} \leq (T_1 + r)^{T_1-1},$$
and so we can find a constant $B$, depending on $T_1, y_1, y_2$, such that for $ n\geq 0$ and $d \in \llbracket 0, y_1 - y_2 + n \rrbracket$
$$h_{y_1 - y_2 + n - d} \cdot h_{n} \leq B \cdot (n + 1)^{2T_1 -2}.$$
Using the last estimate, we see that to prove (\ref{Eq.FinSum2R1}) it suffices to show that
\begin{equation}\label{Eq.FinSum2R2}
\sum_{n = 0}^{\infty} \sum_{d = 0}^{y_1 - y_2 + n}  q^{y_1 - y_2 + 2n - d} c^{d} \cdot (n + 1)^{2T_1 -2} < \infty.
\end{equation}
Note that by bounding each geometric summand by the sum of the first and last one, we have
$$\sum_{d = 0}^{y_1 - y_2 + n}  q^{y_1 - y_2 + 2n - d} c^{d} \leq (y_1 - y_2 + n + 1) \cdot \left(q^{y_1 - y_2 + 2n} + (cq)^n c^{y_1 - y_2}  \right),$$
consequently, it suffices to prove that 
\begin{equation}\label{Eq.FinSum2R3}
\sum_{n = 0}^{\infty}   (y_1 - y_2 + n + 1) \cdot \left(q^{y_1 - y_2 + 2n} + (cq)^n c^{y_1 - y_2}  \right) \cdot (n + 1)^{2T_1 -2} < \infty,
\end{equation}
which is clear as $cq < 1$ and $q < 1$ by assumption.\\

{\bf \raggedleft Step 2.} Now that we proved that the series in (\ref{Eq.FiniteSum1}) is absolutely convergent, we can rearrange sums freely and our goal is to manipulate the series in (\ref{Eq.SumOfWeights}), and show (\ref{Eq.ContourIntegralForZ}). In what follows, all series are absolutely convergent. From (\ref{Eq.Cardinality}) and (\ref{Eq.JacobiTrudi}), we see that
\begin{equation}\label{Eq.DecompZS1S2}
\sum_{\mathfrak{B} \in \Omega_{\ice}(T_1, \vec{y})} W(\mathfrak{B};\hat{q},\hat{c}) = \sum_{x_2 = - \infty}^{\infty} \sum_{x_1 = x_2}^{\infty} \sum_{\mathfrak{B} \in \Omega_{\ice}(T_1, \vec{x},\vec{y})} W(\mathfrak{B};\hat{q},\hat{c})  =  S_1 - S_2, \mbox{ where }
\end{equation}
\begin{equation}\label{Eq.DefS1}
S_1 = \sum_{x_2 = - \infty}^{\infty} \sum_{x_1 = x_2}^{\infty} \hat{q}^{y_1 -x_1 + y_2 - x_2} \hat{c}^{x_1 - x_2} \cdot h_{y_1 - x_1} \cdot h_{y_2 - x_2} \mbox{, and }
\end{equation}
\begin{equation}\label{Eq.DefS2}
S_2 = \sum_{x_2 = - \infty}^{\infty} \sum_{x_1 = x_2}^{\infty} \hat{q}^{y_1 -x_1 + y_2 - x_2} \hat{c}^{x_1 - x_2} \cdot  h_{y_1 - x_2 + 1} \cdot h_{y_2 - x_1 - 1}.
\end{equation}

In view of (\ref{Eq.DecompZS1S2}), we see that to show (\ref{Eq.ContourIntegralForZ}), it suffices to prove that 
\begin{equation}\label{Eq.S1Contour}
S_1 = \frac{\hat{q}^{y_1 - y_2}}{2 \pi \im} \oint_{C_{r_1}}du \frac{H_{T_1}(u)H_{T_1}(\hat{q}^2/u)}{u^{y_1 - y_2 + 1}(1- u\hat{c}/\hat{q})},
\end{equation}
\begin{equation}\label{Eq.S2Contour}
S_2 =  \frac{\hat{q}^{y_1 - y_2+2}}{2\pi \im}\oint_{C_{r_2}}du \frac{H_{T_1}(u) H_{T_1}(\hat{q}^2/u)}{u^{y_1 - y_2 + 3}(1 - \hat{q}\hat{c}/u)}.
\end{equation}

Using that $h_r = 0$ for $r < 0$, and changing variables $d = x_1 - x_2$, $n = y_2 - x_2$ in (\ref{Eq.DefS1}), we can rewrite $S_1$ as
\begin{equation}\label{Eq.S1SumId}
S_1 = \sum_{d = 0}^{\infty} \hat{q}^{y_1 - y_2 -d} \hat{c}^d \cdot F_{y_1 - y_2 -d}, \mbox{ where } F_k = \sum_{n = 0}^{\infty}  \hat{q}^{2n} h_{n + k } h_{n}.
\end{equation}
Recall from \cite[Chapter I, (2.5)]{Mac} the generating function for the $h_r$:
$$H_{T_1}(u) = \sum_{r \geq 0} h_r u^r = \frac{1}{(1 - u)^{T_1}},$$
where the series converges absolutely for $|u| < 1$. Using the latter, we see that if $q^2< |u| < 1$, then
$$H_{T_1}(u) H_{T_1}(\hat{q}^2/u) = \left(\sum_{k_1 \geq 0} h_{k_1} u^{k_1} \right) \cdot \left(\sum_{k_2 \geq 0} h_{k_2} (\hat{q}^2/u)^{k_2}\right) = \sum_{k \in \mathbb{Z}} u^{k} \sum_{k_2 \geq 0} h_{k_2 + k} h_{k_2}\hat{q}^{2k_2} = \sum_{k \in \mathbb{Z}} u^k F_k.$$
By the residue theorem, we conclude for each $k \in \mathbb{Z}$
\begin{equation}\label{Eq.FkContour}
F_k = \frac{1}{2\pi \im} \oint_{C_r} du \frac{H_{T_1}(u) H_{T_1}(\hat{q}^2/u)}{u^{k+1}},
\end{equation}
where $C_r$ is a positively oriented circle centered at the origin of radius $r \in (q^2, 1)$.

If we pick $r = r_1$, where we recall that $r_1$ is sufficiently close to $q^2$ so that $cr_1/q < 1$, then using (\ref{Eq.S1SumId}) and (\ref{Eq.FkContour}), we get
\begin{equation*}
\begin{split}
&S_1 = \sum_{d = 0}^{\infty} \hat{q}^{y_1 - y_2-d} \hat{c}^d \cdot \frac{1}{2\pi \im} \oint_{C_{r_1}} du \frac{H_{T_1}(u) H_{T_1}(\hat{q}^2/u)}{u^{y_1 - y_2-d+1}} \\
&= \frac{\hat{q}^{y_1 -y_2}}{2\pi \im} \oint_{C_{r_1}} du \frac{H_{T_1}(u) H_{T_1}(\hat{q}^2/u)}{u^{y_1 - y_2+1}} \sum_{d = 0}^{\infty} (u\hat{c}/\hat{q})^d = \frac{\hat{q}^{y_1 - y_2}}{2\pi \im} \oint_{C_{r_1}} du\frac{H_{T_1}(u) H_{T_1}(\hat{q}^2/u)}{u^{y_1 - y_2+1}(1 - u\hat{c}/\hat{q})}.
\end{split}
\end{equation*}
The latter proves (\ref{Eq.S1Contour}).

Arguing analogously for $S_2$, we change variables $d = x_1 - x_2$, $n = y_2 - x_2-d - 1$ in (\ref{Eq.DefS2}) to get
$$S_2 = \sum_{d = 0}^{\infty} \sum_{n \in \mathbb{Z}}  \hat{q}^{y_1 - y_2 + 2n + d+ 2} \hat{c}^d h_{y_1 -y_2 + n + d + 2} h_{n} = \sum_{d = 0}^{\infty} \hat{q}^{y_1 - y_2 + d+ 2} \hat{c}^d \cdot F_{y_1 - y_2 +d+2}.$$ 
If we pick $r = r_2$, where we recall that $r_2$ is sufficiently close to $1$ so that $cq/r_2 < 1$, then the last identity and (\ref{Eq.FkContour}), give
$$S_2 = \sum_{d = 0}^{\infty} \hat{q}^{y_1 - y_2 + d+ 2} \hat{c}^d \cdot  \frac{1}{2\pi \im} \oint_{C_{r_2}} du \frac{H_{T_1}(u) H_{T_1}(\hat{q}^2/u)}{u^{y_1 - y_2 +d+3}} $$
$$  = \frac{\hat{q}^{y_1 - y_2 +2}}{2\pi \im } \oint_{C_{r_2}} du \frac{H_{T_1}(u) H_{T_1}(\hat{q}^2/u)}{u^{y_1 - y_2 +3}} \sum_{ d = 0 }^{\infty} (\hat{q}\hat{c}/u)^d  = \frac{\hat{q}^{y_1 - y_2 +2}}{2\pi \im } \oint_{C_{r_2}} du \frac{H_{T_1}(u) H_{T_1}(\hat{q}^2/u)}{u^{y_1 - y_2+3} (1 - \hat{q}\hat{c}/u)}.$$
The latter proves (\ref{Eq.S2Contour}). 
\end{proof}

We end this section with the proof of Lemma \ref{Lem.FinitePartitionFunction}.
\begin{proof}[Proof of Lemma \ref{Lem.FinitePartitionFunction}] As $q \in (0,1)$ and $c \in [0,q^{-1})$, we observe directly from (\ref{Eq.InteractingPairWeightA}) that $W(\mathfrak{B};q,c) \geq 0$ for all $\mathfrak{B} = (B_1, B_2) \in \Omega_{\ice}(T_1, \vec{y})$. In particular, $Z(T_1, \vec{y}; q,c) \in [0,\infty]$. From Lemma \ref{Lem.FinitePartitionFunctionA}, we know $Z(T_1, \vec{y}; q,c) < \infty$. Lastly, define $\mathfrak{Q} = (Q_1, Q_2)$ by
$$Q_1(s) = Q_2(s) = y_2 \mbox{ for } s \in \llbracket 0, T_1 - 1\rrbracket, \mbox{ and } (Q_1(T_1), Q_2(T_1)) = (y_1,y_2).$$
Observe that $\mathfrak{Q} \in \Omega_{\ice}(T_1, \vec{y})$ and $W(\mathfrak{Q};q,c) = q^{y_1 - y_2} > 0$, which implies $Z(T_1, \vec{y}; q,c) > 0$.
\end{proof}

%
%
\subsection{Asymptotic behavior at the origin}\label{SectionA.2} In this section, we give the proof of Lemma \ref{Lem.ConvAtOrigin}. 

Adopt the same notation as in the statement of the lemma and denote 
\begin{equation}\label{Eq.UVDefA}
U_n = \sigma^{-1}d_n^{-1/2} (L^n_1(0) + L^n_2(0) - Y_1^n - Y_2^n) \mbox{ and } V_n = L^n_1(0) - L^n_2(0).
\end{equation}
Fixing $s, t \in \mathbb{R}$, we see that 
\begin{equation}\label{Eq.JointCharFun}
\begin{split}
&\mathbb{E}\left[e^{\im s U_n} e^{\im t V_n}\right] = \frac{1}{Z(T_n, Y^n; q,c)}\sum_{\mathfrak{B} \in \Omega_{\ice}(T_1, Y^n)} W(\mathfrak{B};q,c)  e^{\im s \sigma^{-1}d_n^{-1/2} (B_1(0) +B_2(0)-Y_1^n - Y_2^n) } \\
&\times e^{\im t (B_1(0) - B_2(0))} = \frac{1}{Z(T_n, Y^n; q,c)}\sum_{\mathfrak{B} \in \Omega_{\ice}(T_n, Y^n)} W(\mathfrak{B};\hat{q},\hat{c}) =  \frac{Z(T_n, Y^n; \hat{q},\hat{c})}{Z(T_n, Y^n; q,c)},
\end{split}
\end{equation}
where 
\begin{equation}\label{Eq.qchatsA}
\hat{q} = \hat{q}_n = q \cdot e^{-\im s \sigma^{-1}d_n^{-1/2}} \mbox{ and } \hat{c} = c \cdot e^{\im t}.
\end{equation}
We mention that the first equality on the second line of (\ref{Eq.JointCharFun}) used (\ref{Eq.InteractingPairWeightA}).

Equation (\ref{Eq.JointCharFun}) shows that we can analyze the asymptotic behavior of $(L_1^n(0), L_2^n(0))$ using characteristic functions, by analyzing the asymptotics of the partition function $Z(T_n, Y^n; \hat{q},\hat{c})$. In Lemma \ref{Lem.FinitePartitionFunctionA} we derived a contour integral formula for $Z(T_n, Y^n; \hat{q},\hat{c})$ that is suitable for analysis using the method of steepest descent. Our strategy for proving Lemma \ref{Lem.ConvAtOrigin} is then to find precise asymptotics for $Z(T_n, Y^n; \hat{q},\hat{c})$, and then use those to establish convergence of characteristic functions, and hence weak convergence. We begin by writing down the precise statement we establish for $Z(T_n, Y^n; \hat{q},\hat{c})$.

\begin{lemma}\label{Lem.PartitionFunctionAsymptotics} Fix $q \in (0,1)$, $c \in [0,1)$, and set $p = \frac{q}{1-q}$, $\sigma = \sqrt{p(1+p)}$. Let $\{d_n \}_{n \geq 1}$ be a sequence of positive reals, such that $d_n \rightarrow \infty$ as $n \rightarrow \infty$. Fix $b > 0$, put $T_n = \lceil b d_n \rceil$, and let $Y^n \in \mathfrak{W}_2$ be a sequence, such that 
\begin{equation}\label{Eq.LimitDeltaYA}
\lim_{n \rightarrow \infty}\sigma^{-1} d_n^{-1/2} (Y_1^n -Y_2^n) = \alpha > 0.
\end{equation}
Finally, fix $s, t \in \mathbb{R}$, and let $\hat{q}, \hat{c}$ be as in (\ref{Eq.qchatsA}). Then, we have 
\begin{equation}\label{Eq.PartitionFunctionAsymptotics}
\lim_{n \rightarrow \infty} \sigma^2 d_n (1-q)^{2T_n} e^{2\im p T_n s \sigma^{-1} d_n^{-1/2} } Z(T_n, Y^n; \hat{q},\hat{c}) = e^{-bs^2} \cdot \frac{1}{(1 - \hat{c})^2} \cdot e^{-\alpha^2/4b} \cdot \frac{\alpha}{b \sqrt{4\pi b}}.
\end{equation}
\end{lemma}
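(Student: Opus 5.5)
The plan is to start from the contour integral formula \eqref{Eq.ContourIntegralForZ} of Lemma \ref{Lem.FinitePartitionFunctionA}, applied with $T_1=T_n$, $\vec{y}=Y^n$, and with $\hat q,\hat c$ as in \eqref{Eq.qchatsA}; note that $|\hat q|=q$ and $|\hat c|=c$, so the lemma applies. Set $k_n=Y_1^n-Y_2^n$, so that $k_n\sim \alpha\sigma d_n^{1/2}$.

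\textbf{Step 1: move both contours to $|u|=q$.} In the first integral the only pole other than $u=0$ is at $u=\hat q/\hat c$, of modulus $q/c>q$. In the second it is at $u=\hat q\hat c$, of modulus $qc<q$. The factors $H_{T_n}(u)$ and $H_{T_n}(\hat q^2/u)$ are analytic on $q^2<|u|<1$. Hence $C_{r_1}$ and $C_{r_2}$ can both be deformed to the circle $|u|=q$ without crossing any poles, and then
\[
Z=\frac{1}{2\pi\im}\oint_{|u|=q}du\,H_{T_n}(u)H_{T_n}(\hat q^2/u)\,\hat q^{k_n}\Big[\frac{1}{u^{k_n+1}(1-u\hat c/\hat q)}-\frac{\hat q^2}{u^{k_n+3}(1-\hat q\hat c/u)}\Big].
\]
I will parametrize by $u=\hat q e^{\im\theta}$, so that $du=\im u\,d\theta$. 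Then $\hat q^2/u=\hat q e^{-\im\theta}$, and the bracket times $\hat q^{k_n}u$ becomes $e^{-\im k_n\theta}g(\theta)$, where
\[
g(\theta)=\frac{1}{1-\hat c e^{\im\theta}}-\frac{e^{-2\im\theta}}{1-\hat c e^{-\im\theta}}.
\]
The key cancellation is $g(0)=0$ and $g'(0)=2\im/(1-\hat c)^2$, which explains both the factor $(1-\hat c)^{-2}$ and the extra power $d_n^{-1/2}$ compared with a naive saddle-point count.

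\textbf{Step 2: local analysis on the scale $\theta\sim d_n^{-1/2}$.} I will write
\[
H_{T_n}(\hat q e^{\im\theta})H_{T_n}(\hat q e^{-\im\theta})=\exp\!\big(-T_n[\log(1-\hat q e^{\im\theta})+\log(1-\hat q e^{-\im\theta})]\big)
\]
and Taylor expand in $\theta$ and in $\epsilon_n=s\sigma^{-1}d_n^{-1/2}$, using $\hat q=qe^{-\im\epsilon_n}$. The zeroth-order term contributes
\[
(1-\hat q)^{-2T_n}=(1-q)^{-2T_n}e^{-2\im pT_n\epsilon_n}e^{-bs^2+o(1)}.
\]
Its second-order part, coming from $\epsilon_n^2T_n\sim bs^2/\sigma^2$ with the variance coefficient $p(1+p)=\sigma^2$, should produce $e^{-bs^2}$ exactly. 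The first-order-in-$\theta$ terms cancel by the $\theta\mapsto-\theta$ symmetry. The quadratic term gives $e^{-T_n\sigma^2\theta^2+o(1)}$. Substituting $\theta=\tilde\theta\,\sigma^{-1}d_n^{-1/2}$, the prefactor $\sigma^2 d_n$ absorbs $d\theta\cdot g'(0)\theta$, and the integral tends to
\[
\frac{1}{2\pi}\cdot\frac{2\im}{(1-\hat c)^2}\int_{\mathbb{R}}\tilde\theta\,e^{-\im\alpha\tilde\theta-b\tilde\theta^2}\,d\tilde\theta .
\]
This Gaussian integral evaluates to $\frac{1}{(1-\hat c)^2}\cdot\frac{\alpha}{b\sqrt{4\pi b}}e^{-\alpha^2/4b}$, matching the right-hand side of the statement; I will carry the signs carefully here.

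\textbf{Step 3: tails and dominated convergence (the main obstacle).} Away from $\theta=0$ I need uniform bounds. We have $|1-qe^{\im\varphi}|^2=1-2q\cos\varphi+q^2$, which is minimized only at $\varphi=0$, and $\varphi=\theta\mp\epsilon_n$. So on $|\theta|\ge\delta$ the integrand relative to $(1-q)^{-2T_n}$ is at most $e^{-cT_n\delta^2}$, which is negligible. On $|\theta|\le\delta$ I will use the cubic Taylor remainder to get a bound of the form $C\,|g(\theta)|e^{-T_n\sigma^2\theta^2/2}$. The bounds $|g(\theta)|\le C|\theta|$ (using $c<1$, so $1-\hat c e^{\pm\im\theta}$ stays away from $0$) and $|e^{-\im k_n\theta}|=1$ then give an integrable dominating function after rescaling.

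The care needed lies in handling the $n$-dependence of $\hat q$ uniformly. Also, the saddle for the modulus is at $\arg u=0$ rather than at $\theta=0$; this shift of size $\epsilon_n\sim d_n^{-1/2}$ is on the same scale as the window, so it must be tracked in the expansion and cannot be discarded. That accounting is where I expect the $e^{-bs^2}$ and phase factors to come out, and it is the step I would check most carefully.
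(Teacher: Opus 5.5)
Your proposal is correct and follows essentially the same route as the paper: move both contours to $|u|=q$, merge them into one integral whose rational factor vanishes linearly near the critical point (which produces $(1-\hat c)^{-2}$ and the extra factor $d_n^{-1/2}$), then do Laplace-type asymptotics on the scale $d_n^{-1/2}$, bound the tails using the monotonicity of $|1-qe^{\im\varphi}|$, apply dominated convergence, and evaluate a Gaussian integral. The only difference is cosmetic: the paper parametrizes $u=qe^{z}$, so it carries a cross term $2\im bs\tilde z$ and a factor $(\tilde z+\im s)$ into the limit, whereas your centering $u=\hat q e^{\im\theta}$ makes the exponent exactly even in $\theta$; the $s$-dependence then collapses, up to vanishing corrections, into the constant $(1-\hat q)^{-2T_n}$, and contrary to your closing remark no residual shift needs to be tracked.
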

\begin{proof} For clarity, we split the proof into four steps. In Step 1, we utilize our contour integral formula in (\ref{Eq.ContourIntegralForZ}) and rewrite $Z(T_n, Y^n; \hat{q},\hat{c})$ in a way that is suitable for asymptotic analysis, see (\ref{Eq.ContourFormZR2}). The formula in (\ref{Eq.ContourFormZR2}) depends on two functions $S(z)$ and $G_n(z)$ and in Step 2 we analyze the Taylor expansions of these functions, and provide various estimates for them. In Step 3, we find the limit of the left side of (\ref{Eq.PartitionFunctionAsymptotics}), see (\ref{Eq.ZTileLimit}), and in Step 4 we show that it agrees with the right side of (\ref{Eq.PartitionFunctionAsymptotics}).\\

{\bf \raggedleft Step 1.} We start from the formula for $Z(T_n, Y^n; \hat{q},\hat{c})$ from (\ref{Eq.ContourIntegralForZ}). Since the integrands have poles at $u = \hat{q}/\hat{c}$, $u = \hat{q}\hat{c}$, $u = \hat{q}^2$, $u = 0$, and $u = 1$, we see that we can deform both $C_{r_1}$ and $C_{r_2}$ to $C_{q}$ without crossing any poles and hence without affecting the value of the integral by Cauchy's theorem. Performing the contour deformations, and adding the resulting integrals, we arrive at  
\begin{equation}\label{Eq.ContourFormZR1}
Z(T_n, Y^n; \hat{q},\hat{c}) = \frac{\hat{q}^{Y_1^n - Y^n_2 + 1}}{2 \pi \im} \oint_{C_{q}}du \frac{H_{T_n}(u)H_{T_n}(\hat{q}^2/u)}{u^{Y_1^n - Y_2^n + 2}} \cdot \frac{(u^2 - \hat{q}^2)}{(\hat{q}-u\hat{c})(u - \hat{q}\hat{c})}.
\end{equation}

We proceed to change variables $u = q e^z$ and note that 
\begin{equation}\label{Eq.ContourFormZR2}
Z(T_n, Y^n; \hat{q},\hat{c}) = \frac{(\hat{q}/q)^{Y_1^n - Y^n_2 + 1}}{2 \pi \im} \int_{-\im \pi}^{\im \pi} dz \frac{e^{T_n S(z) + T_n G_n(z)}}{e^{z(Y_1^n - Y_2^n + 1)}} \cdot \frac{(q^2e^{2z} - \hat{q}^2)}{(\hat{q}-q\hat{c}e^z)(qe^z - \hat{q}\hat{c})},
\end{equation}
where 
\begin{equation}\label{Eq.DefSInZ}
S(z) = - \log (1 - q e^z) - \log(1 - qe^{-z}) \mbox{ and } G_n(z) = \log(1 - qe^{-z}) - \log(1 - \hat{q}^2e^{-z}/q). 
\end{equation}

{\bf \raggedleft Step 2.} By straightforward computation, we have
$$S'(z) = \frac{q(e^z-e^{-z})}{(1 - qe^z)(1-q e^{-z})} \mbox{, and }S''(z) = \frac{q (e^z q^2 + e^{-z}q^2 - 4 q + e^z + e^{-z})}{(1- q e^z)^2(1- q e^{-z})^2},$$
so that 
$$S'(0) = 0, \hspace{2mm} S''(0) = \frac{2q}{(1-q)^2}.$$
The above suggests that we can find $\delta_0 \in (0,1)$ and $A_0$, depending on $q$, such that $S(z)$ is analytic in the disc $ \{z \in \mathbb{C}: |z| < 2 \delta_0\}$, and for $|z| \leq \delta_0$, we have 
\begin{equation}\label{Eq.TaylorS}
\left|S(z) - S(0) - \frac{qz^2}{(1-q)^2} \right| \leq A_0 |z|^3.
\end{equation}
Pick $\delta_1 \in (0,\delta_0)$ sufficiently small so that $A_0 \delta_1 \leq \frac{q}{2(1-q)^2}$, and note that by (\ref{Eq.TaylorS}), we have for $z = \im v$ with $|v| \leq \delta_1$
\begin{equation}\label{Eq.TaylorSIneq}
\Real\left[S(\im v) - S(0)\right] \leq  - \frac{qv^2}{2(1-q)^2} = - v^2 \sigma^2/2.
\end{equation}

We next observe that if $z = \im v$, then 
$$\Real S(\im v) = - \log(1 +q^2 - 2 q\cos(v) ).$$
In particular, we see that 
$$\frac{d}{dv} \Real S(\im v) = - \frac{2q \sin(v)}{1 + q^2 - 2 q \cos (v)},$$
which is negative for $v \in (0,\pi)$ and positive when $v \in (-\pi, 0)$. Combining the latter with (\ref{Eq.TaylorSIneq}), we conclude for $v \in [-\pi, \pi] \setminus [-\delta_1, \delta_1]$
\begin{equation}\label{Eq.IneqFull}
\Real\left[S(\im v) - S(0)\right] \leq  \Real\left[S(\im \delta_1) - S(0)\right] \leq - \sigma^2\delta_1^2/2 =:-\epsilon_1.
\end{equation}

Turning our attention to $G_n$, we have 
$$\hat{q}^2/q = q e^{-2\im s \sigma^{-1}d_n^{-1/2}} = q - \frac{2\im qs}{\sigma d_n^{1/2}} - \frac{2qs^2}{\sigma^2 d_n} + O(d_n^{-3/2}),$$
where the constant in the big $O$ notation depends on $q,s$. Consequently, by Taylor expansion, we conclude for $z \in [-\im \pi, \im \pi]$
\begin{equation*}
G_n(z) = -\frac{2\im qse^{-z}}{\sigma d_n^{1/2}(1 - q e^{-z})} - \frac{2qs^2e^{-z}}{\sigma^2 d_n(1 - q e^{-z})^2} + O(d_n^{-3/2}),
\end{equation*}
where the constant in the big $O$ notation depends on $q,s$. Subsequently, Taylor expanding in the $z$ variable, we conclude that for some $A_1 > 0$, depending on $q,s$, and all $z \in [-\im \pi, \im \pi]$
\begin{equation}\label{Eq.FullGExpanA}
\left|G_n(z) + \frac{2\im s q d_n^{-1/2}}{\sigma (1 - q)} + \frac{2s^2 q d_n^{-1}}{\sigma^2 (1-q)^2} - \frac{2\im s qz d_n^{-1/2} }{\sigma (1-q)^2} \right| \hspace{-0.5mm} \leq \hspace{-0.5mm} A_1 \left(d_n^{-3/2} + |z| d_n^{-1} + |z|^2d_n^{-1/2}  \right).
\end{equation}

Lastly, we observe that we can find $A_2 > 0$, depending on $q,s,c$, such that for $z \in [-\im \pi, \im \pi]$
\begin{equation}\label{Eq.IneqRatBound}
\left|\frac{\sigma d_n^{1/2} (q^2e^{2z} - \hat{q}^2)}{(\hat{q}-q\hat{c}e^z)(qe^z - \hat{q}\hat{c})}\right| \leq A_2(1 + |z|\sigma d_n^{1/2} + |s|).
\end{equation}

{\bf \raggedleft Step 3.} From (\ref{Eq.ContourFormZR2}) and (\ref{Eq.DefSInZ}), we have
\begin{equation}\label{Eq.ContourFormZR3}
\begin{split}
&\sigma^2 d_n(1-q)^{2T_n} e^{2\im p T_n s\sigma^{-1} d_n^{-1/2} } Z(T_n, Y^n; \hat{q},\hat{c}) \\
&= \frac{(\hat{q}/q)^{Y_1^n - Y^n_2 + 1}}{2 \pi \im} \int_{-\im \pi}^{\im \pi} dz \frac{e^{T_n [S(z) - S(0)] + T_n \bar{G}_n(z)}}{e^{z(Y_1^n - Y_2^n + 1)}} \cdot \frac{ \sigma^2 d_n(q^2e^{2z} - \hat{q}^2)}{(\hat{q}-q\hat{c}e^z)(qe^z - \hat{q}\hat{c})},
\end{split}
\end{equation}
where
\begin{equation}\label{Eq.DefBarGn}
\bar{G}_n(z) = G_n(z) + 2\im p s \sigma^{-1} d_n^{-1/2} = G_n(z) + \frac{2\im sq d_n^{-1/2}}{\sigma (1-q)}.
\end{equation}

Combining (\ref{Eq.IneqFull}), (\ref{Eq.FullGExpanA}), (\ref{Eq.IneqRatBound}), and (\ref{Eq.ContourFormZR3}) with the identities 
\begin{equation}\label{Eq.IdentitiesA}
\left|(\hat{q}/q)^{Y_1^n - Y^n_2 + 1}\right| = 1, \hspace{2mm} \left|e^{z(Y_1^n - Y_2^n + 1)}\right| = 1, \hspace{2mm} \left|e^{T_n(S(z) - S(0))}\right| = e^{T_n\Real[S(z) - S(0)]},
\end{equation}
we see that for some $A_3$, depending on $q,s,c$, we have
\begin{equation}\label{Eq.TruncateZ}
\begin{split}
&\left|\sigma^2 d_n(1-q)^{2T_n} e^{2\im p T_n s\sigma^{-1} d_n^{-1/2} } Z(T_n, Y^n; \hat{q},\hat{c})  - \tilde{Z}_n \right| \leq e^{-\epsilon_1 T_n + A_3 d_n^{1/2} + A_3}, \mbox{ where }\\
& \tilde{Z}_n = \frac{(\hat{q}/q)^{Y_1^n - Y^n_2 + 1}}{2 \pi \im} \int_{-\im \delta_1}^{\im \delta_1} dz \frac{e^{T_n [S(z) -S(0)] + T_n \bar{G}_n(z)}}{e^{z(Y_1^n - Y_2^n + 1)}} \cdot \frac{\sigma^2 d_n(q^2e^{2z} - \hat{q}^2)}{(\hat{q}-q\hat{c}e^z)(qe^z - \hat{q}\hat{c})}.
\end{split}
\end{equation}

We now proceed to change variables $\tilde{z} = z \sigma d_n^{1/2}$ to obtain
\begin{equation}\label{Eq.ZTileCoV}
\begin{split}
&\tilde{Z}_n = \frac{1}{2 \pi \im} \int_{\im \mathbb{R}} d\tilde{z} {\bf 1}\{|\tilde{z}| \leq\delta_1\sigma d_n^{1/2} \} R_n(\tilde{z}) ,\mbox{ where } R_n(\tilde{z}) = (\hat{q}/q)^{Y_1^n - Y^n_2 + 1}  \\
&\times \frac{e^{T_n [S(\tilde{z}\sigma^{-1}d_n^{-1/2}) -S(0)] + T_n \bar{G}_n(\tilde{z}\sigma^{-1} d_n^{-1/2})}}{e^{\tilde{z}\sigma^{-1}d_n^{-1/2}(Y_1^n - Y_2^n + 1)}} \cdot \frac{\sigma d_n^{1/2}(q^2e^{2\tilde{z}\sigma^{-1}d_n^{-1/2}} - \hat{q}^2)}{(\hat{q}-q\hat{c}e^{\tilde{z}\sigma^{-1}d_n^{-1/2}})(qe^{\tilde{z}\sigma^{-1}d_n^{-1/2}} - \hat{q}\hat{c})}.
\end{split}
\end{equation}
By straightforward computation, using (\ref{Eq.qchatsA}), (\ref{Eq.LimitDeltaYA}), (\ref{Eq.TaylorS}), (\ref{Eq.FullGExpanA}), (\ref{Eq.DefBarGn}), and the fact that $T_n = \lceil b d_n \rceil$ and $\sigma^2 = \frac{q}{(1-q)^2}$, we see that 
$$(\hat{q}/q)^{Y_1^n - Y^n_2 + 1} \rightarrow e^{-\im \alpha s}, \hspace{2mm} e^{T_n [S(\tilde{z}\sigma^{-1}d_n^{-1/2}) -S(0)]} \rightarrow e^{b\tilde{z}^2}, \hspace{2mm} e^{\tilde{z}d_n^{-1/2}\sigma^{-1}(Y_1^n - Y_2^n + 1)} \rightarrow e^{\alpha\tilde{z}}, $$
$$ e^{ T_n \bar{G}_n(\tilde{z}\sigma^{-1}d_n^{-1/2})} \rightarrow e^{-2bs^2 + 2\im b s\tilde{z} }, \hspace{2mm} \frac{\sigma d_n^{1/2}(q^2e^{2\tilde{z}\sigma^{-1}d_n^{-1/2}} - \hat{q}^2)}{(\hat{q}-q\hat{c}e^{\tilde{z}\sigma^{-1}d_n^{-1/2}})(qe^{\tilde{z}\sigma^{-1}d_n^{-1/2}} - \hat{q}\hat{c})} \rightarrow \frac{2(\tilde{z} + \im s)}{(1-\hat{c})^2}.$$
In addition, we have from (\ref{Eq.TaylorSIneq}), (\ref{Eq.FullGExpanA}), (\ref{Eq.IneqRatBound}), (\ref{Eq.DefBarGn}), and (\ref{Eq.IdentitiesA}) that for $\tilde{z} \in \im \mathbb{R}$
\begin{equation*}
\begin{split}
&\left|{\bf 1}\{|\tilde{z}| \leq\delta_1\sigma d_n^{1/2} \} R_n(\tilde{z})  \right| \leq A_2 \cdot (1 + |\tilde{z}| + |s|) \cdot e^{- b|\tilde{z}|^2/2 + 2 s^2 (T_n/d_n) + 2|s||\tilde{z}| T_n/d_n} \\
& \times e^{A_1T_n d_n^{-3/2} (1 + \sigma^{-1} |\tilde{z}| + \sigma^{-2}|\tilde{z}|^2)} \leq \exp\left(-b|\tilde{z}|^2/4  + A_4 + A_4 |\tilde{z}|\right),
\end{split}
\end{equation*}
where $A_4$ is a large enough constant that depends on $q,s,c$ and the last inequality holds for all large $n$. In deriving the last inequality, we used that $T_n = \lceil b d_n \rceil$.

The last two displayed equations show that we may apply the dominated convergence theorem to find the limit of $\tilde{Z}_n$, which together with (\ref{Eq.TruncateZ}) gives
\begin{equation}\label{Eq.ZTileLimit}
\begin{split}
&\lim_{n \rightarrow \infty} \sigma^2 d_n(1-q)^{2T_n} e^{2\im p T_n s\sigma^{-1} d_n^{-1/2} } Z(T_n, Y^n; \hat{q},\hat{c}) = \frac{1}{\pi \im }\int_{\im \mathbb{R}} \hspace{-1mm} d\tilde{z} \frac{( \tilde{z} + \im s)}{(1 - \hat{c})^2} e^{b \tilde{z}^2 -\alpha (\tilde{z} + \im s) + 2\im bs\tilde{z} -2bs^2}.
\end{split}
\end{equation}

{\bf \raggedleft Step 4.} From (\ref{Eq.ZTileLimit}), we see that to prove (\ref{Eq.PartitionFunctionAsymptotics}), it suffices to show that 
\begin{equation}\label{Eq.LimitMatchA}
\frac{1}{ \pi \im } \int_{\im \mathbb{R}} d\tilde{z}( \tilde{z} + \im s) e^{b \tilde{z}^2 -\alpha (\tilde{z} + \im s) + 2\im s b \tilde{z} - bs^2} = e^{-\alpha^2/4b} \cdot \frac{\alpha}{b \sqrt{4\pi b}}.
\end{equation}

Setting $\tilde{z} = \im v \sqrt{\pi/b} - \im s$, we get
\begin{equation*}
\begin{split}
&\frac{1}{\pi \im } \int_{\im \mathbb{R}} d\tilde{z}( \tilde{z} + \im s) e^{b \tilde{z}^2 -\alpha (\tilde{z} + \im s) + 2\im bs \tilde{z} - bs^2} =  \frac{\im}{b } \int_{\mathbb{R}} v e^{-\pi v^2 -2\pi\im v \alpha (4\pi b)^{-1/2}} dv\\
&= -\frac{1}{2\pi b} \cdot \frac{d}{d\xi} F(\xi) \vert_{\xi = \xi_0}, \mbox{ where } F(\xi) = e^{-\pi \xi^2} \mbox{ and } \xi_0 = \frac{\alpha}{\sqrt{4\pi b}}.
\end{split}
\end{equation*}
In going from the first to the second line we used properties of the Fourier transform, see Proposition 1.2(v) and Theorem 1.4 in \cite[Chapter 5]{SteinFourier}. The last displayed equation proves (\ref{Eq.LimitMatchA}).
\end{proof}

We end this section with a proof of Lemma \ref{Lem.ConvAtOrigin}.
\begin{proof}[Proof of Lemma \ref{Lem.ConvAtOrigin}] Put 
$$X^n_1 = \sigma^{-1} d_n^{-1/2} \left(L^n_1(0) - \frac{Y_1^n + Y_2^n}{2} + pT_n \right) \mbox{ and } X^n_2 = \sigma^{-1} d_n^{-1/2} \left(L^n_2(0) - \frac{Y_1^n + Y_2^n}{2} + pT_n\right) .$$
Fix $s,t \in \mathbb{R}$ and note that by (\ref{Eq.UVDefA}) and (\ref{Eq.JointCharFun}), we have
\begin{equation}\label{Eq.JointCharFunV2}
\begin{split}
&\mathbb{E}\left[e^{\im s (X^n_1 + X^n_2)} e^{\im t \sigma d_n^{1/2} (X^n_1 - X^n_2)}\right] =  \frac{ e^{2 \im p T_n s \sigma^{-1}d_n^{-1/2} } Z(T_n, Y^n; \hat{q},\hat{c}) }{Z(T_n, Y^n; q,c)},
\end{split}
\end{equation}
where $\hat{q}, \hat{c}$ are as in (\ref{Eq.qchatsA}). Applying Lemma \ref{Lem.PartitionFunctionAsymptotics} with $\alpha = y_1 - y_2 > 0$ twice (once with $s,t$ as above and once with $s = t = 0$), we conclude 
\begin{equation}\label{Eq.LimitJointCharFun}
\begin{split}
&\lim_{n \rightarrow \infty}\frac{ e^{2 \im p T_n s \sigma^{-1}d_n^{-1/2} } Z(T_n, Y^n; \hat{q},\hat{c}) }{Z(T_n, Y^n; q,c)} = \frac{e^{-bs^2} \cdot \frac{1}{(1 - ce^{\im t})^2} \cdot e^{-\alpha^2/4b} \cdot \frac{\alpha}{b \sqrt{4\pi b}}}{ \frac{1}{(1 - c)^2} \cdot e^{-\alpha^2/4b} \cdot \frac{\alpha}{b \sqrt{4\pi b}}} = e^{-bs^2} \cdot \frac{(1- c)^2}{(1 - ce^{\im t})^2}.
\end{split}
\end{equation}
Let $U$ be a normal variable with mean $0$ and variance $2b$, and $V$ an independent variable with $\mathbb{P}(V = k) = (1-c)^2(k+1)c^{k}$ for $k \geq 0$. We observe that 
$$\mathbb{E}[e^{\im s U} \cdot e^{\im t V}] = e^{-bs^2} \cdot \frac{(1-c)^2}{(1-ce^{\im t})^2},$$
which agrees with the right side of (\ref{Eq.LimitJointCharFun}). Combining the latter with (\ref{Eq.JointCharFunV2}), we conclude that the joint characteristic function of $X^n_1 + X^n_2$ and $\sigma d_n^{1/2} (X^n_1 - X^n_2)$ converges pointwise to that of $(U,V)$. From \cite[Theorem 3.10.5]{Durrett}, we conclude that 
$$(X^n_1 + X^n_2, \sigma d_n^{1/2} (X^n_1 - X^n_2))\Rightarrow (U,V),$$
and since $d_n \rightarrow \infty$, we conclude 
$$(X^n_1 + X^n_2, X^n_1 - X^n_2)\Rightarrow (U,0).$$
Combining the latter and the continuous mapping theorem (see \cite[Theorem 2.7]{Billing}), we conclude 
$$(X^n_1, X^n_2) \Rightarrow (U/2, U/2).$$
The last statement implies (\ref{Eq.ConvAtOrigin}) once we use the definition of $X^n_1, X^n_2$ and (\ref{Eq.ConvAtOriginYLim}).
\end{proof}

%
%
\subsection{Monotone coupling}\label{SectionA.3} In this section, we present the proof of Lemma \ref{Lem.MonCoupBer}, which is based on a fairly standard Markov chain Monte Carlo method. When $M = \infty$, the result follows from \cite[Lemma 2.12]{D24b} with $g^{t} = g^b = -\infty$ and here we briefly explain how to modify the argument to the case $M \in \mathbb{Z}_{\geq 0}$. In what follows, we fix $M \in \mathbb{Z}_{\geq 0}$, and continue with the same notation as in the statement of the lemma.

We define for $s \in \llbracket 0, T_1 - T_0 \rrbracket$ and $i \in \llbracket 1, k \rrbracket$
\begin{equation}\label{Eq.DefQMax}
Q_i^{\mathsf{max}, t/b}(T_0 + s) = \min(x^{t/b}_{i-s}, y^{t/b}_i), \mbox{ and } \hat{Q}_i^{\mathsf{max}}(T_0 + s) = \min(x^{t}_{i-s}-M, y^{t}_i-M),
\end{equation}
where $x^{t/b}_m = \infty$ for $m \leq 0$. Setting $\vec{u} = (x_1^t - M, \dots, x_k^t - M)$ and $\vec{v} = (y_1^t - M, \dots, y_k^t - M)$, we have from Step 1 of the proof of \cite[Lemma 2.12]{D24b} that 
$$\mathfrak{Q}^{\mathsf{max}, t/b} = \{Q_i^{\mathsf{max}, t/b}\}_{i = 1}^k \in \Omega_{\ice}(T_0, T_1, \vec{x}\,^{t/b}, \vec{y}\,^{t/b}) \mbox{ and } \hat{\mathfrak{Q}}^{\mathsf{max}} = \{\hat{Q}_i^{\mathsf{max}}\}_{i = 1}^k \in \Omega_{\ice}(T_0, T_1, \vec{u}, \vec{v}).$$

We now construct three Markov chains $\{X_n^b: n \geq 0\}$, $\{X_n^t: n \geq 0\}$, and $\{\hat{X}_n: n \geq 0\}$ on the same probability space, taking values in $\Omega_{\ice}(T_0, T_1, \vec{x}\,^{b}, \vec{y}\,^{b})$, $\Omega_{\ice}(T_0, T_1, \vec{x}\,^{t}, \vec{y}\,^{t})$, and $\Omega_{\ice}(T_0, T_1, \vec{u}, \vec{v})$, respectively. At time $n = 0$, we set $X_0^{t/b} = \mathfrak{Q}^{\mathsf{max}, t/b}$ and $\hat{X}_0 = \hat{\mathfrak{Q}}^{\mathsf{max}}$. From the inequalities involving $\vec{x}\,^{t/b}, \vec{y}\,^{t/b}$ in (\ref{Eq.MonCoupBerIneq}) and (\ref{Eq.DefQMax}), we have for $n = 0$
\begin{equation}\label{Eq.IneqXsA}
X_n^t(i,s) \geq X_n^b(i,s) \geq \hat{X}_n(i,s) \mbox{ for } (i,s) \in \llbracket 1, k \rrbracket \times \llbracket T_0, T_1 \rrbracket.
\end{equation}
We now consider a sequence of i.i.d. uniform points $(A_n,B_n)$ in $\llbracket 1, k \rrbracket \times \llbracket T_0, T_1 \rrbracket$, as well as a sequence of i.i.d. uniform random variables $U_n$ on $(0,1)$. We make the following update for $X_n^t$, $X_n^b$, and $\hat{X}_n$. 
\begin{itemize}
\item If $B_n \in \{T_0, T_1\}$, then set $X_{n+1}^b = X_n^b$, $X_{n+1}^t = X_n^t$, and $\hat{X}_{n+1} = \hat{X}_n$.
\item If $B_n \in \llbracket T_0 + 1, T_1 - 1 \rrbracket$, then set 
\begin{equation}\label{Eq.UpdateBT}
X_{n+1}^{b/t}(i,s) = \begin{cases} X_{n}^{b/t}(i,s) &\mbox{ if } (i,s) \neq (A_n, B_n), \\
C^{b/t}_n + \lfloor U_n \cdot (D^{b/t}_n - C^{b/t}_n + 1) \rfloor, &\mbox{ if } (i,s) = (A_n,B_n),
\end{cases}
\end{equation}
\begin{equation}\label{Eq.UpdateHat}
\hat{X}_{n+1}(i,s) = \begin{cases} \hat{X}_{n}(i,s) &\mbox{ if } (i,s) \neq (A_n, B_n), \\
\hat{C}_n + \lfloor U_n \cdot (\hat{D}_n - \hat{C}_n + 1) \rfloor, &\mbox{ if } (i,s) = (A_n,B_n),
\end{cases}
\end{equation}
where $C^{b/t}_n = \max(X_n^{b/t}(A_n + 1, B_n + 1), X_n^{b/t}(A_n, B_n - 1))$, $D^{b/t}_n = \min(X_n^{b/t}(A_n-1, B_n - 1),(X_n^{b/t}(A_n, B_n + 1)  )$, $\hat{C}_n = \max(\hat{X}_n(A_n + 1, B_n + 1), \hat{X}_n(A_n, B_n - 1))$, $\hat{D}_n = \min(\hat{X}_n(A_n-1, B_n - 1),\hat{X}_n(A_n, B_n + 1))$.
\end{itemize}

As shown in Step 2 of the proof of \cite[Lemma 2.12]{D24b}, we have that:
\begin{enumerate}
\item Each of the processes $\{X_n^b: n \geq 0\}$, $\{X_n^t: n \geq 0\}$, and $\{\hat{X}_n: n \geq 0\}$ is Markov in its own filtration.
\item The inequalities in (\ref{Eq.IneqXsA}) hold for all $n \geq 0$.
\item The Markov chains $\{X_n^b: n \geq 0\}$, $\{X_n^t: n \geq 0\}$, and $\{\hat{X}_n: n \geq 0\}$ are irreducible, aperiodic, and have invariant distributions given by the uniform distributions on the sets $\Omega_{\ice}(T_0, T_1, \vec{x}\,^{b}, \vec{y}\,^{b})$, $\Omega_{\ice}(T_0, T_1, \vec{x}\,^{t}, \vec{y}\,^{t})$, and $\Omega_{\ice}(T_0, T_1, \vec{u}, \vec{v})$, respectively.
\end{enumerate}
In addition, we claim that for all $n \geq 0$
\begin{equation}\label{Eq.ShiftedXs}
\hat{X}_n(i,s) = X_n^t(i,s) - M \mbox{ for all } (i,s) \in \llbracket 1, k \rrbracket \times \llbracket T_0, T_1 \rrbracket.
\end{equation}
Indeed, the latter is clear when $n = 0$ in view of $X_0^{t} = \mathfrak{Q}^{\mathsf{max}, t}$ and $\hat{X}_0 = \hat{\mathfrak{Q}}^{\mathsf{max}}$ and (\ref{Eq.DefQMax}). Assuming (\ref{Eq.ShiftedXs}) holds for $n$, we see that $\hat{C}_n = C_n^t - M$, $\hat{D}_n = D_n^t - M$ and from (\ref{Eq.UpdateBT}) and (\ref{Eq.UpdateHat}) we conclude (\ref{Eq.ShiftedXs}) holds for $n+1$ as well.

From our work in the last paragraph and \cite[Theorem 1.8.3]{Norris}, we conclude that $\{X_n^b: n \geq 0\}$, $\{X_n^t: n \geq 0\}$ converge weakly to $\mathbb{P}_{\ice,\operatorname{Geom}}^{T_0, T_1, \vec{x}\,^b, \vec{y}\,^b}$, $\mathbb{P}_{\ice,\operatorname{Geom}}^{T_0, T_1, \vec{x}\,^t, \vec{y}\,^t}$, respectively. In particular, $\{(X_n^b, X_n^t): n \geq 0\}$ forms a tight sequence. By Prohorov's theorem, we conclude that $\{(X^b_n, X^t_n): n \geq 0\}$ is relatively compact, and suppose that $\{(X_{n_m}^b, X_{n_m}^t): m \geq 0\}$ is a weakly convergent subsequence. By the Skorohod representation theorem, \cite[Theorem 6.7]{Billing}, we can assume that this sequence is defined on the same probability space $(\Omega, \mathcal{F}, \mathbb{P})$ and the convergence is for each $\omega \in \Omega$. Denoting the limit by $(X^b_{\infty}, X^t_{\infty})$ we see that $X^b_{\infty}$ has law $\mathbb{P}_{\ice,\operatorname{Geom}}^{T_0, T_1, \vec{x}\,^b, \vec{y}\,^b}$, while $X^t_{\infty}$ has law $\mathbb{P}_{\ice,\operatorname{Geom}}^{T_0, T_1, \vec{x}\,^t, \vec{y}\,^t}$. From (\ref{Eq.IneqXsA}) and (\ref{Eq.ShiftedXs}) we know that 
$$X_{n_m}^t(i,s) \geq X_{n_m}^b(i,s) \geq X_{n_m}^t(i,s) -M \mbox{ for } (i,s) \in \llbracket 1, k \rrbracket \times \llbracket T_0, T_1 \rrbracket.$$
Taking the limit as $m \rightarrow \infty$, we conclude 
$$X_{\infty}^t(i,s) \geq X_{\infty}^b(i,s) \geq X_{\infty}^t(i,s) - M \mbox{ for } (i,s) \in \llbracket 1, k \rrbracket \times \llbracket T_0, T_1 \rrbracket.$$
In particular, we see that $(\mathfrak{Q}^b, \mathfrak{Q}^t) = (X^b_{\infty}, X^t_{\infty})$ satisfy the conditions of the lemma.
\end{appendix}

\bibliographystyle{alpha}
\bibliography{PD}

\end{document}